\newcommand{\C}{\mathbb{C}}
\newcommand{\A}{\mathcal{A}}
\newcommand{\B}{\mathcal{B}}
\newcommand{\U}{\mathcal{U}}
\newcommand{\I}{\mathcal{I}}
\newcommand{\J}{\mathcal{J}}
\newcommand{\M}{\mathcal{M}}
\newcommand{\g}{\mathfrak{g}}
\newcommand{\Orb}{\mathbb{O}}
\newcommand{\Walg}{\mathcal{W}}
\newcommand{\Z}{\mathbb{Z}}
\newcommand{\gr}{\operatorname{gr}}
\newcommand{\kf}{\mathfrak{k}}
\newcommand{\tf}{\mathfrak{t}}
\newcommand{\hf}{\mathfrak{h}}
\newcommand{\gl}{\mathfrak{gl}}
\newcommand{\HC}{\operatorname{HC}}
\newcommand{\q}{\mathfrak{q}}
\newcommand{\Pb}{\mathbb{P}}
\newcommand{\SL}{\operatorname{SL}}
\newcommand{\SO}{\operatorname{SO}}
\newcommand{\Sp}{\operatorname{Sp}}
\newcommand{\SU}{\operatorname{SU}}
\newcommand{\Str}{\mathcal{O}}
\newcommand{\jet}{J^\infty}
\newcommand{\Spec}{\operatorname{Spec}}
\newcommand{\send}{\mathcal{E}nd}
\newcommand{\Id}{\operatorname{Id}}
\newcommand{\pr}{\operatorname{pr}}
\newcommand{\R}{\mathbb{R}}
\newcommand{\T}{\mathcal{T}}
\newcommand{\F}{\mathcal{F}}
\newcommand{\TT}{\mathscr{T}}
\newcommand{\LL}{\mathcal{L}}
\newcommand{\tatp}{\TT^+(\Q, Y)}
\newcommand{\tat}{\TT(\Q,Y)}
\newcommand{\pic}{\mathscr{PA}}
\newcommand{\quan}{\mathcal{Q}}
\newcommand{\per}{\operatorname{Per}}
\newcommand{\cm}{\C^\times}
\newcommand{\OO}{\mathcal{O}}
\newcommand{\Q}{\OO_\hbar}
\newcommand{\rf}{\mathfrak{r}}
\newcommand{\GR}{G_\R}
\newcommand{\KR}{K_\R}
\newcommand{\wt}{\widetilde}
\newcommand{\spec}{\operatorname{Spec}}
\newcommand{\EE}{\mathcal{E}}
\newcommand{\wHC}{\operatorname{wHC}}
\newcommand{\Weyl}{\mathbb{A}}
\newcommand{\slf}{\mathfrak{sl}}
\newcommand{\Dcal}{\mathcal{D}}
\newcommand{\Hom}{\operatorname{Hom}}
\newcommand{\param}{\mathfrak{P}}
\newcommand{\Ecal}{\mathcal{E}}
\newcommand{\GL}{\operatorname{GL}}
\newcommand{\loc}{\operatorname{Loc}}
\newcommand{\coh}{\operatorname{Coh}}
\newcommand{\der}{{\operatorname{Der}}}
\newcommand{\aut}{{\operatorname{Aut}}}
\newcommand{\autd}{\aut(\Dcal)}
\newcommand{\derd}{\der(\Dcal)}
\newtheorem{Thm}{Theorem}[subsection]
\newtheorem{Prop}[Thm]{Proposition}
\newtheorem{Cor}[Thm]{Corollary}
\newtheorem{Lem}[Thm]{Lemma}
\theoremstyle{definition}
\newtheorem{Ex}[Thm]{Example}
\newtheorem{defi}[Thm]{Definition}
\newtheorem{Rem}[Thm]{Remark}
\numberwithin{equation}{section}
\title{On Harish-Chandra modules over quantizations of nilpotent orbits}
\author{Ivan Losev and Shilin Yu}
\dedicatory{Dedicated to David Vogan on the occasion of his 70th birthday}
\begin{document}
\begin{abstract}
Let $G$ be a semisimple algebraic group over the complex numbers and $K$ be a connected reductive group mapping to $G$
so that the Lie algebra of $K$ gets identified with a symmetric subalgebra of $\g$. So we can talk about Harish-Chandra
$(\g,K)$-modules, where $\g$ is the Lie algebra of $G$.
The goal of this paper is to give a geometric classification of irreducible Harish-Chandra modules with full support
over the filtered quantizations of the algebras of the form $\C[\Orb]$, where $\Orb$ is a nilpotent orbit in $\g$ with
codimension of the boundary at least $4$. Namely, we embed the set of isomorphism classes of irreducible Harish-Chandra modules into the set of
isomorphism classes of irreducible $K$-equivariant suitably twisted local systems on $\Orb\cap \mathfrak{k}^\perp$. We show that
under certain conditions, for example when $K\subset G$ or when $\g\cong \mathfrak{so}_n,\mathfrak{sp}_{2n}$, this embedding is
in fact a bijection. On the other hand, for $\g=\mathfrak{sl}_n$ and $K=\operatorname{Spin}_n$, the embedding is not
bijective and we give a description of the image. Finally, we perform a partial classification for exceptional Lie algebras.
\end{abstract}

\maketitle
\tableofcontents
\section{Introduction}
\subsection{Harish-Chandra modules}
Our base field is $\C$. Let $G$ be a semisimple simply connected algebraic group,
$\g$ be its Lie algebra and $\U$
be the universal enveloping algebra of $\g$. Let $K$ be a connected\footnote{In fact, our framework works for any symmetric pair $(\g, K)$ of Harish-Chandra class (i.e., which corresponds to real reductive Lie group of Harish-Chandra class), where $K$ is allowed to be disconnected.} algebraic group equipped with
a homomorphism to $G$ such that the induced homomorphism of Lie algebras is injective and
identifies the Lie algebra $\kf$ of $K$ with the symmetric subalgebra $\g^{\sigma}$ for some involution
$\sigma$. Note that we do not require that $K$ is a subgroup of $G$.

By a {\it Harish-Chandra} (shortly, HC) {$(\g,K)$-module} one means a finite length $\U$-module such that the action of $\kf$ integrates to that of $K$. A primary reason to be interested in
HC modules is that they are related to representations of real forms of $G$ and their nonlinear covers.
Namely, when $K\hookrightarrow G$, the relevant real group is the real form of $G$ corresponding
to the involution $\sigma$, while, in general, we need to consider a cover $\tilde{G}_{\R}$ of $G_\mathbb{R}$ with $\ker[\tilde{G}_{\R}\rightarrow G_\R]\cong \ker[K\rightarrow G]$.

A basic question motivating the present work is: given a primitive ideal $\J$ (=the annihilator of
an irreducible module) in $\U$ classify all irreducible HC $(\g,K)$-modules whose annihilator
coincides with $\J$.

In this paper we will only deal with very special (but also very interesting) ideals $\J$.

\subsection{Quantizations of nilpotent orbits}
Let $\Orb$ be a nilpotent orbit in $\g^*$, $\overline{\Orb}$ be its closure and $\partial\Orb$
be the boundary.
The algebra of regular functions $\C[\Orb]$ is a graded Poisson algebra. In this case
it makes sense to speak about filtered quantizations of $\C[\Orb]$. These are filtered
algebras $\A$ with an isomorphism $\gr\A\xrightarrow{\sim} \C[\Orb]$ of graded Poisson algebras.

 Until the end of Introduction (unless indicated otherwise), we assume that
$$\operatorname{codim}_{\overline{\Orb}}\partial\Orb\geqslant 4.$$
We have the following properties.
\begin{itemize}
\item The quantizations are parameterized by $H^2(\Orb,\C)$. This is a special case
of \cite[Theorem 3.4]{orbit}.
\item The action of $G$ lifts from $\C[\Orb]$ to $\A$. The lift possesses a unique quantum comoment map, i.e., a $G$-equivariant algebra homomorphism
$\Phi:U(\g)\rightarrow \A$ lifting the classical comoment map $\g\rightarrow \C[\Orb]$.
See \cite[Section 5.3]{orbit}.
\end{itemize}

In this paper we consider the ideals $\J$ that arise as kernels of the quantum comoment maps
$\Phi:U(\g)\rightarrow \A$.
Here is a reason why one cares about such ideals. Consider the {\it canonical} (in the terminology
from \cite{LMM}) quantization $\A_0$, its parameter is $0\in H^2(\Orb,\C)$. The corresponding
kernel $\J$ is a {\it unipotent ideal} in the terminology of \cite{LMM}. All such ideals are maximal,
\cite[Theorem 8.5.1]{LMM} and \cite[Theorem 1.0.2]{MM}. As argued in \cite[Section 6.3.4]{LMM}, all HC $(\g,K)$-modules annihilated by $\J$
should be considered unipotent. Such modules play a special role: one hopes that they are unitary and the general unitary
modules are obtained from unipotent ones by operations that are relatively easy to understand.

On the other hand, assume that $H^2(\Orb,\C)=\{0\}$. All orbits $\Orb$ satisfying this condition
(and the codimension $4$ condition above) are birationally rigid (in the sense of
\cite[Definition 1.2]{orbit}).
As a partial converse, 
\begin{itemize}
\item 
all rigid orbits $\Orb$
satisfy $H^2(\Orb,\C)=\{0\}$, 
\item 
and all but six rigid orbits (in exceptional types) satisfy
the codimension 4 condition, see \cite[Lemma 2.4]{reg}. 
\end{itemize}
Unipotent modules
for rigid orbits play a special role because they cannot be induced.

\subsection{Main results}
Now we state the main results of the paper. While we do get the classification for all quantizations, here we concentrate on the case when
the quantization of $\C[\Orb]$ is canonical, hence $\J$ is the unipotent ideal associated
with $\Orb$. Let $M$ be an irreducible HC $(\g,K)$-module
annihilated by $\J$.
A result of Vogan, \cite[Theorem 1.2]{Vogan}, says that the associated variety of $M$ is irreducible
hence is the closure of a single $K$-orbit in $\Orb\cap \kf^\perp$, where we write $\kf^\perp$
for the annihilator in $\g^*$ of (the image of) $\kf$. So, let $\Orb_K$ denote the open
orbit in the associated variety. We need to classify all irreducible HC modules $M$ annihilated by $\J$ with associated variety
$\overline{\Orb}_K$.

Pick a good filtration on $M$. One can show the restriction of $\gr M$ to $\Orb_K$ is a (strongly) $K$-equivariant twisted local system with half-canonical twist, see \cite[Theorem 8.7]{Vogan} for a related result, we will suitably generalize it in our setting, see Section \ref{subsec:W-algebra_nilpotent}.  We will also see that the twisted local system is irreducible and non-isomorphic irreducible
modules give rise to non-isomorphic twisted local systems.

Here is the first main result of the paper (Theorem \ref{Thm:main1}).

\begin{Thm}\label{Thm:omnibus}
Suppose that one of the following conditions hold:
\begin{enumerate}
\item $K\hookrightarrow G$,
\item $\g$ is of type $B,C$ or $D$,
\item or $\g=\slf_n$  and $\kf\neq \mathfrak{so}_n$.
\end{enumerate}
Then there is a bijection between
\begin{itemize}
\item[(i)]
the irreducible HC $(\g,K)$-modules annihilated by $\J$
with associated variety $\overline{\Orb}_K$,
\item[(ii)] And irreducible equivariant twisted (by half-canonical twist) local systems
on $\Orb_K$.
\end{itemize}
\end{Thm}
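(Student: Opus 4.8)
The plan is to construct the map from (i) to (ii) via the "restriction to the open $K$-orbit" functor already sketched in the introduction, and then to establish injectivity and surjectivity separately. First I would recall the construction: given an irreducible HC $(\g,K)$-module $M$ annihilated by $\J$ with associated variety $\overline{\Orb}_K$, choose a good filtration and form $\gr M$, a $K$-equivariant coherent sheaf on $\overline{\Orb}_K$; restricting to the open orbit $\Orb_K$ and using that $\gr M$ is generically a (twisted) local system by the generalization of Vogan's Theorem 8.7 promised in Section \ref{subsec:W-algebra_nilpotent}, one obtains an object of (ii). The half-canonical twist appears because $\Orb_K$ is a Lagrangian (more precisely isotropic) subvariety of $\Orb$ and $\gr M$ carries a symplectic/Poisson structure forcing a square root of the canonical bundle; I would make this precise by passing to the quantization $\A$ and its HC bimodules, realizing $M$ (locally over $\Orb_K$) as a module over a W-algebra, or over the sheaf of quantized functions on a transverse slice, and identifying the possible such modules with representations of a finite group (the equivariant fundamental group of $\Orb_K$, twisted). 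The statement that this map is well-defined on isomorphism classes and that non-isomorphic $M$ give non-isomorphic local systems (injectivity) is essentially the content of the last paragraph before the theorem, so I would cite it and only verify that independence of the good filtration holds — this follows because any two good filtrations are equivalent, so the associated graded sheaves agree on a dense open set.

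For surjectivity — which is where the hypotheses (1)--(3) enter and which I expect to be the main obstacle — the plan is as follows. Given an irreducible equivariant twisted local system $\LL$ on $\Orb_K$, I want to produce an irreducible HC module $M$ annihilated by $\J$ whose associated graded restricts to $\LL$. The natural strategy is a "quantization then global sections / minimal extension" argument: first quantize $\LL$ to a module over the sheaf of quantized functions on a formal (or analytic) neighborhood of $\Orb_K$ inside $\Orb$, using that the obstruction to quantizing a half-twisted local system on an isotropic subvariety vanishes (this is where the codimension-$\geqslant 4$ condition, ensuring $H^2(\Orb,\C)$ controls quantizations and that $\Orb$ is "nice enough," does real work, together with the W-algebra description from Section \ref{subsec:W-algebra_nilpotent}); then take an appropriate space of distributional/global sections over the $K$-action to get a genuine HC $\A$-module, pulling back along $\Phi$ to get a $(\g,K)$-module annihilated by $\J$; finally pass to the unique irreducible sub/quotient with the correct associated variety. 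The role of conditions (1)--(3) is to guarantee that this irreducible piece actually has associated variety exactly $\overline{\Orb}_K$ (and not something smaller) and that its $\gr$ recovers $\LL$ rather than a proper sub-local-system — equivalently, that a certain "multiplicity one / no degeneration" phenomenon holds. Concretely I expect these conditions are exactly what is needed for the relevant equivariant fundamental group computation to match up on both sides: when $K\hookrightarrow G$, or in types $B,C,D$, or for $\slf_n$ with $\kf\neq\mathfrak{so}_n$, the component group data on the $\g$-side and the $K$-side are compatible, whereas for $\slf_n$ with $\kf=\mathfrak{so}_n$ (the $\Spin_n$ case excluded here) there is a genuine discrepancy, which is precisely why that case is treated separately in the paper.

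The technical heart, and the step I anticipate being hardest, is the local analysis near $\Orb_K$: identifying the category of HC $\A$-modules supported on $\overline{\Orb}_K$ (modulo those supported on the boundary) with the category of equivariant twisted local systems on $\Orb_K$, i.e. an equivalence of the form
\[
\operatorname{HC}_{\overline{\Orb}_K}(\A)\big/\operatorname{HC}_{\partial}(\A)\;\xrightarrow{\ \sim\ }\;\operatorname{Loc}^{K,\text{tw}}(\Orb_K).
\]
I would prove this by a slice argument: replace $\g$ by a transverse Slodowy-type slice to $\Orb$, so that $\A$ becomes a W-algebra (finite W-algebra tensored with a Weyl algebra), at which point HC modules become modules over a finite-dimensional algebra whose irreducibles are indexed by irreducible representations of the (twisted) component group, and matching these with local systems on $\Orb_K$ is a finite group-theoretic bookkeeping. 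Checking that this slice-local equivalence glues to the global statement, and that the glued functor is the same as the naive $\gr$-restriction functor, is the routine-but-lengthy part; the genuinely delicate point is controlling the boundary — showing that $\operatorname{codim}_{\overline{\Orb}_K}\partial\Orb_K$ is large enough (a consequence, via Vogan's irreducibility theorem and the codimension-$4$ hypothesis on $\partial\Orb$) that extension from $\Orb_K$ across the boundary is unobstructed and unique, so that every local system lifts to a global HC module and distinct ones stay distinct.
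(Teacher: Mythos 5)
Your construction of the map, the reduction to the restriction functor $\bullet_{\dagger,\chi}$ to the slice/W-algebra, and the injectivity via the full-embedding property match the paper (Proposition \ref{Prop:adj_properties}, following \cite{HC}). The gap is in your surjectivity argument. You claim that the codimension-$4$ hypothesis on $\partial\Orb$ forces $\operatorname{codim}_{\overline{\Orb}_K}\partial\Orb_K$ to be ``large enough that extension from $\Orb_K$ across the boundary is unobstructed and unique.'' It does not: the Lagrangian condition only gives $\operatorname{codim}_{\overline{\Orb}_K}\partial\Orb_K\geqslant 2$, and codimension exactly $2$ is precisely the threshold where extension \emph{is} obstructed. (The case $\operatorname{codim}>2$ is the easy one, Proposition \ref{Prop:adj_properties}(3), already in \cite{LY}, as the Remark in the introduction notes.) Your slice argument also sits at the wrong point: a Slodowy slice through $\chi\in\Orb_K$ classifies the possible restrictions on the open orbit (this is the full embedding), but it says nothing about whether the microlocal quantization of a given twisted local system has coherent pushforward across the codimension-$2$ $K$-orbits in $\overline{\Orb}_K$, which is the actual surjectivity question.

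The paper's proof of surjectivity is a genuinely local analysis at those codimension-$2$ orbits: coherence of the pushforward is tested on $4$-dimensional transverse slices (Lemmas \ref{Lem:coh_extension1}, \ref{Lem:coh_extension2}, Proposition \ref{Lem:image_description11}, using uniqueness of graded quantizations of vector bundles on Lagrangians, Theorem \ref{thm:quan_lag}), and then one classifies HC modules over quantizations of the five possible $4$-dimensional singularities ($a_2$, $c_2$, $\C^4/\Z_3$, $a_2/S_2$, $\chi$) in Section 6. Hypotheses (1)--(3) enter exactly there, and not through a ``component group mismatch between the $\g$-side and the $K$-side'' as you suggest: under (1) the restricted local system on an $a_2$-slice is forced to be $\operatorname{SO}_3$-equivariant and all such are strongly quantizable; under (2) only slices of type $c_2$, $\C^4/\Z_3$ (or smooth) occur, and these are unobstructive for every involution; under (3) the $a_2$-slices carry the inner involution, again unobstructive. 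The excluded case $(\mathfrak{sl}_n,\operatorname{Spin}_n)$ fails because for the $a_2$-slice with outer involution exactly one of the four irreducible $\operatorname{SL}_2$-equivariant local systems on $(\C^2\setminus\{0\})/\Z_4$ is not quantizable to a HC $\mathscr{D}^\lambda(\Pb^2)$-module (Lemma \ref{Lem:sl3_so3_obstructive}); this is a concrete representation-theoretic fact about the minimal orbit of $\mathfrak{sl}_3$, not a failure of the local systems on $\Orb_K$ to exist. Without this codimension-$2$ extension criterion and the case-by-case slice classification, your outline does not yield surjectivity.
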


We remark that one can describe (ii) in elementary representation theoretic terms.
We will do so in the main body of the paper.

In the case when $\g=\slf_n$ and $K=\operatorname{Spin}_n$ the map from (i) to (ii)
is not surjective (it is known, see, e.g. \cite[Example 12.4]{Vogan}, that in the case when $n=3$ there are four irreducible twisted local
systems and three irreducible HC modules, this case  is of great importance for the paper).
Our second main result is a description of the image, see Theorem \ref{Thm:spin_HC_classif_general}.

We will also compute some examples in exceptional types when $K\rightarrow G$
is not injective. In these cases, it is also a general phenomenon that some of the irreducible twisted local systems do not correspond to any irreducible HC modules. We will also classify the irreducible Harish-Chandra modules over the canonical quantizations of the six rigid orbits $\Orb$ in the exceptional types such that $\operatorname{codim}_{\overline{\Orb}}\partial\Orb=2$, this is done in Appendix \ref{sec:appendix_A}.

\begin{Rem}
An easy case is when $\operatorname{codim}_{\overline{\Orb}_K}\partial\Orb_K\geqslant 3$.
Here we also have a bijection between (i) and (ii) in the theorem. A bit weaker statement was obtained
in \cite{LY}. In this paper we will explain an alternative approach based on the
previous results of the first named author. On the other hand, the results in \cite{LY} can be used to treat some cases when $\operatorname{codim}_{\overline{\Orb}}\partial\Orb  =2$ but the affinization $X = \spec(\C[\Orb])$ is smooth in codimenison 3. See  Appendix \ref{sec:appendix_A}.
\end{Rem}

\begin{Rem}
We now compare the results of this paper to classification results
from \cite{BMSZ1,BMSZ2,BMSZ3,BMSZ4}. Compared to these papers, our description is more geometric and uniform, we do not require the unipotent representations we classify to be special. Also, in our approach, conceptually
non-linear groups are not more difficult than linear ones, and exceptional groups are not more
difficult than classical ones (with a few exceptions; also technically non-linear/exceptional groups are somewhat
more complicated). On the other hand, the four papers mentioned above avoid the restrictive codimension $4$ condition
that is crucial for us. Also these papers prove the unitarity of all the special unipotent representations of real classical Lie groups. 
\end{Rem}

\begin{Rem}
	Recently the preprint \cite{DM} has proven the unitarity of the unipotent representations we classify in the current paper using the theory of mixed Hodge modules.
\end{Rem}

\subsection{Description of approach}\label{SS_intro_approach}
Recall that for the time being we assume that $\J$ is a unipotent, hence maximal, ideal.

First, let us explain a representation theoretic description of the equivariant
half-canonical twisted local systems on $\Orb_K$. Pick a point $\chi\in \Orb_K$.
Let $K_Q$ denote the reductive part of the stabilizer of $\chi$ in $K$. This is a
reductive group, in general, it is disconnected. Let $\omega_{\Orb_K}$ denote the canonical
bundle on $\Orb_K$, it is $K$-equivariant. So $K_Q$ acts on the fiber $(\omega_{\Orb_K})_\chi$
by a character. Let $\rho_{\omega}\in (\kf_Q^*)^{K_Q}$ be one half of this character.
Let $(K_Q,\rho_\omega)\operatorname{-mod}$ denote the full subcategory in the category
of rational representations of $K_Q$ consisting of all modules, where $\kf_Q$ acts via
$\rho_\omega$ (for the action of $\kf_Q$ obtained by differentiating the $K_Q$-action).
This is a semisimple category. It is equivalent to the category of local systems of interest:
an equivalence is given by taking the fiber of a twisted local system at $\chi$. Below in this section
we will identify the category of twisted local systems of interest with $(K_Q,\rho_\omega)\operatorname{-mod}$. Note that a $K_Q$-representation $V$ in $(K_Q,\rho_\omega)\operatorname{-mod}$ is called \emph{admissible} (with respect to $K$) in \cite[Definition 7.13]{Vogan} and the pair $(\chi, V)$ is called a \emph{(nilpotent) admissible $K$-orbit datum}. We also say that the twisted local system corresponding to $V$ is an \emph{admissible $K$-equivariant vector bundle} over $\Orb_K$. If there exists at least one admissible $K$-equivariant vector bundle over $\Orb_K$, we say that $\OO_K$ is \emph{$K$-admissible} or \emph{admissible for $K$} or \emph{admissible for $G_\R$}, the corresponding real group.

Set $\A:=\U/\J$, in fact, since $\J$ is maximal, the algebra $\A$ admits a filtration making it a filtered quantization of $\C[\Orb]$. Let $\HC(\A,K)$ denote the category of all HC $(\g,K)$-modules annihilated by $\J$\footnote{In the general case -- when $\A$ is an arbitrary quantization of $\C[\Orb]$ -- we only have an embedding $\U/\J\hookrightarrow \A$; still
there is a natural bijection between irreducible HC modules over $\U$ with annihilator $\J$ and certain irreducible HC modules over $\A$,
see Proposition \ref{Prop:HC_bijection}}. Let $\HC_{\overline{\Orb}_K}(\A,K)$  denote the full subcategory of $\HC(\A,K)$ consisting of all modules whose associated variety is contained in $\overline{\Orb}_K$.

The first step in our construction is to produce a fully faithful embedding
$\bullet_{\dagger,\chi}: \HC_{\overline{\Orb}_K}(\A,K)\xrightarrow{\sim} (K_Q,\rho_\omega)\operatorname{-mod}$, see
Proposition \ref{Prop:adj_properties} for a more general statement.
The construction of this functor was sketched in \cite[Section 6.1]{Wdim}
and the claim that it is a full embedding is proved as the similar claim for
Harish-Chandra bimodules, \cite[Theorem 1.3.1]{HC}. On the level of objects, the functor sends a HC module
$M$ to the twisted local system $(\gr M)|_{\Orb_K}$. This implies that if $M$ is irreducible,
then so is the corresponding twisted local system. Moreover, non-isomorphic HC modules
give rise to non-isomorphic twisted local systems. What remains is to describe
the essential image of $\bullet_{\dagger,\chi}$.

Here our inspiration comes from \cite{HC_symp}. This part is technical so we will describe
it here in a somewhat informal fashion. Any equivariant twisted local system
on $\Orb_K$ can be uniquely quantized to a sheaf of modules over the microlocalization of
$\A$ to $\Orb$. An irreducible twisted local system $\mathcal{E}$
lies in the image of $\bullet_{\dagger,\chi}$
if and only if the corresponding quantizations has nonzero global sections. To check whether
the quantization has global sections, we first push it to the locus in $\overline{\Orb}_K$
obtained by taking the union of $\Orb_K$ and all codimension $2$ orbits. The property
necessary for $\mathcal{E}\in \operatorname{Im}(\bullet_{\dagger,\chi})$ is that this
pushforward is still a {\it coherent} module, see \cite[Section 2.5]{HC_symp}. It is also close to being sufficient so for the purposes
of this section we will treat it as necessary and sufficient.

A key observation of \cite{HC_symp} (in the context of HC bimodules over quantizations of
conical symplectic singularities)  is that the condition that the pushforward of the quantization of
$\mathcal{E}$ is coherent can be tested on slices to codimension $2$ orbit. Namely, let $\Orb'_K\subset \overline{\Orb}_K$ be a codimension $2$ orbit. Let $S'$ be a slice in $\overline{\Orb}$ to a point of $\Orb'$ (this definition works as stated when $\overline{\Orb}$
is normal at the points of $\Orb'$, in general, in requires a modification that we will make in the main body of the paper).
We can consider the restriction $\mathcal{L}|_{S'}$, this is a twisted local system
on $S'\cap \Orb_K$. The claim that the pushforward of the quantization of $\mathcal{E}$ is coherent
is equivalent to the following condition holding for every codimension
$2$ orbit in $\overline{\Orb}_K$:
\begin{itemize}
\item[(*)] The pushforward of the quantization of $\mathcal{E}|_{S'}$
from $S'\cap \Orb_K$ to $S'\cap \overline{\Orb}_K$ is coherent.
\end{itemize}
All slices $S'$ are known, see \cite{KP1,KP2} for the classical groups, and \cite{FJLS1} for
the exceptional groups. In the setting of Theorem  \ref{Thm:omnibus} we check that (*) is always
satisfied (there is one orbit in type $E_8$, where the slice is too complicated to analyze (*) due to the limit of the paper, but
we can use other methods -- \texttt{atlas} -- in that case. A theoretic proof will be provided in a forthcoming paper \cite{quant_cover}). For $\g=\mathfrak{sl}_n$ and $K=\operatorname{Spin}_n$
and in the number of cases in exceptional types, (*) reduces (modulo some caveats)
to understanding the case of $\g=\mathfrak{sl}_3, K=\operatorname{SL}_2$ and the minimal orbit $\Orb$.

{\bf Acknowledgements}: We would like to thank Jeffrey Adams, Dougal Davis, Baohua Fu, Jia-Jun Ma, Lucas Mason-Brown and Binyong Sun for helpful discussions. The second named author wants to thank Binyong Sun for his hospitality and inspiring discussions during his visits to Institute for Advanced Study in Mathematics of Zhejiang University. The work of I.L. has been partially supported by the NSF under grant DMS-2001139. The work of S.Y. has been partially supported by China NSFC grants (Grant No. 12001453 and 12131018) and Natural Science Foundation of Fujian Province (Grant No. 2022J06005).




\section{Harish-Chandra modules over filtered algebras}
The goal of this section is to establish a general framework for working with Harish-Chandra modules.
\subsection{HC $(\A,\theta)$-modules}\label{SS_HC_mod}
Our setting is as follows. The base field is $\C$. Let
\begin{itemize}
\item $d$ be a positive integer,
\item $\A=\bigcup_{i\geqslant 0}\A_{\leqslant i}$ be a filtered associative
algebra,
\item $\epsilon$ be a primitive $d$th root of $1$,
\item and $\zeta$ be a filtration preserving automorphism of $\A$ with $\zeta^d=1$
\end{itemize}
satisfying:
\begin{enumerate}
\item $[\A_{\leqslant i},\A_{\leqslant j}]\subset \A_{\leqslant i+j-d}$,
\item $\gr\A$ (a commutative algebra) is  finitely generated.
\item $\zeta$ acts on the graded component $(\gr\A)_i$ by $\epsilon^i$.
\end{enumerate}

Now let $\theta$ be an anti-involution of $\A$ preserving the filtration and commuting with $\zeta$. Following \cite[Section 6.1]{Wdim},
we define a Harish-Chandra $(\A,\theta)$-module as follows.

\begin{defi}\label{defi:HC} By a Harish-Chandra $(\A,\theta,\zeta)$-module (or simply a HC $(\A,\theta)$-module) we mean a finitely generated
$\A$-module $M$ with an automorphism $\zeta$ compatible with the eponymous automorphism
of $\A$ that admits a filtration (called {\it good} for $\theta$),
$M=\bigcup_{j\geqslant 0}M_{\leqslant j}$
with the following properties:
\begin{itemize}
\item[(i)] $\gr M$ is finitely generated over $\gr\A$,
\item[(ii)] For all $a\in \A_{\leqslant i}$ and $m\in M_{\leqslant j}$,
we have $(\theta(a)-a)m\subset M_{\leqslant i+j-d}$.
\item[(iii)] $\zeta$ acts on $(\gr M)_i$ by $\epsilon^i$.
\end{itemize}
\end{defi}

HC $(\A,\theta)$-modules form a full subcategory in the category of $(\A,\Z/d\Z)$-modules, it will be denoted by
$\HC(\A,\theta,\zeta)$.

Here is a classical example.
\begin{Ex}\label{Ex:HC_classical}
Let $\g$ be a semisimple Lie algebra and let
$\sigma$ be an involution of $\g$. We write $\U$ for the universal
enveloping algebra of $\g$ equipped with the PBW filtration (and $d=1$, so $\zeta=1$). It comes with
the unique anti-involution $\theta$ extending $-\sigma:\g\rightarrow \g$.
Let $\A=\U$ and $\kf=\g^\sigma$. Then, as was pointed out in
\cite[Section 6.1]{Wdim}, a Harish-Chandra $(\U,\theta)$-module is the same
things as a Harish-Chandra $(\g,\kf)$-module, a classical object of
study in Lie representation theory.
\end{Ex}

\begin{Rem}\label{Rem:HC_bimodules}
Let $\A,\zeta$ be as above. Form the algebra $\A\otimes \A^{opp}$ and consider its anti-involution
$\theta$ given by $\theta(x\otimes y)=y\otimes x$. The HC $(\A\otimes \A^{opp},\theta,\zeta)$-modules will
be called {\it HC $\A$-bimodules}.
\end{Rem}

\subsection{Equivariance}
Now suppose that $K$ is a (not necessarily connected) reductive algebraic group that acts on
$\A$ rationally by algebra automorphisms and  with quantum comoment map $\Phi: \mathfrak{k}\rightarrow \A_{\leqslant d}$.
Suppose that this Hamiltonian action is compatible with
the anti-involution $\theta$ in the following way:
\begin{equation}\label{eq:action_compatible}
\theta(ka)=k^{-1}\theta(a), \quad \theta \Phi(\xi)=-\Phi(\xi), \quad \forall 
\, k\in K, a\in \A, \xi\in \mathfrak{k}.
\end{equation}
Further, suppose that the $K$-action commutes with $\zeta$ and $\zeta.\Phi(\xi)=\Phi(\xi)$
for all $\xi\in \kf$.

Pick $\kappa\in (\kf^*)^K$.
We can speak about $(K,\kappa)$-equivariant $\A$-modules -- those are $\A$-modules $M$
equipped with a rational $K$-action making the structure map $\A\otimes M\rightarrow M$
equivariant and whose differential is $\xi\mapsto \Phi(\xi)-\langle\kappa,\xi\rangle$.

We can also talk about $(K,\kappa)$-equivariant HC modules.

\begin{defi}\label{eq:HC_equiv}
By a  $(K,\kappa)$-equivariant (sometimes to be called twisted equivariant) HC $(\A,\theta,\zeta)$-module we mean a $(K,\kappa)$-equivariant $(\A,\theta,\zeta)$-module that admits a $K$-stable good filtration.
\end{defi}

The corresponding category will be
denoted by $\HC(\A,\theta,\zeta)^{K,\kappa}$ or simply
$\HC(\A,\theta,\zeta)^K$ if $\kappa=0$ (the morphisms in this category are $K$-equivariant $\A$-linear maps; note that the $\A$-linearity implies the $K$-equivariance if $K$ is connected). We note that if $\kappa_0$
is a character of $K$, then there is a natural category
equivalence between $\HC(\A,\theta,\zeta)^{K,\kappa}$
and $\HC(\A,\theta,\zeta)^{K,\kappa+\kappa_0}$ given by tensoring
with the one-dimensional $K$-representation, where $K$
acts via $\kappa_0$.

For example, when $\A=\U$, $\theta$ is as in Example \ref{Ex:HC_classical},
and $K=(G^\sigma)^\circ$, then a $K$-equivariant HC $(\A,\theta)$-module
is the same thing as a classical HC $(\g,K)$-module.

Now we return to the general situation in the beginning of this section.

\begin{Lem}\label{Lem:twisted_equivar}
Suppose that the derived subgroup $(K,K)$ is simply connected. Then
every simple object in $\HC(\A,\theta,\zeta)$ admits a $K$-action making it
an object of $\HC(\A,\theta,\zeta)^{K,\kappa}$ for some $\kappa$.
\end{Lem}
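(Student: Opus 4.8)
The plan is to reduce the statement to a standard fact about extending a Lie algebra action to an action of a group whose derived subgroup is simply connected, applied to a simple object $M$ of $\HC(\A,\theta,\zeta)$. First I would recall that $M$ carries, by definition, an action of $\kf$ via the quantum comoment map $\Phi$, namely $x\mapsto \Phi(x)$ acting on $M$; since $\Phi(\kf)\subset \A_{\leqslant d}$ and $M$ is finitely generated over $\A$, this action is locally finite on a good filtration and, more importantly, I want to argue it is locally finite on all of $M$. The point is that $M$ is simple as an $\A$-module and $\Phi(\kf)$ is a finite-dimensional subspace normalizing nothing in particular; instead I would use that $\gr M$ is finitely generated over $\gr\A$ and that the induced $\kf$-action on $\gr M$ comes from a genuine (rational, locally finite) action coming from the conical structure. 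The cleanest route: the adjoint action of $\Phi(\kf)$ on $\A$ integrates to the rational $K$-action on $\A$ (given), so on the $\A$-bimodule $\A$ the $\kf$-action is locally finite; then, choosing a good filtration on $M$, each graded piece $(\gr M)_j$ is a finitely generated module over the finitely generated commutative algebra $\gr\A$ and the $\kf$-action preserves a $\gr\A$-module structure, hence lies in a finite-dimensional $K$-stable (since $K$ is reductive and acts rationally on $\gr\A$) submodule — so the $\kf$-action on each $M_{\leqslant j}$, and hence on $M$, is locally finite.

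Next, from local finiteness of the $\kf$-action on $M$ I would integrate it to a rational action of the simply connected cover $\widetilde{K}$ of $K$. Here I use that $M$ is a union of finite-dimensional $\kf$-submodules and that finite-dimensional $\kf$-modules integrate to $\widetilde{K}$ — this is where the hypothesis on $(K,K)$ being simply connected enters, after tensoring by a one-dimensional character to absorb the central torus part, or more simply by noting $\widetilde{K}$ splits as (simply connected semisimple) times (torus) and every rational $\kf$-module integrates to such a group. This gives a rational $\widetilde{K}$-action on $M$ whose differential is $x\mapsto \Phi(x)$, and one checks it makes the structure map $\A\otimes M\to M$ equivariant for the $\widetilde{K}$-action on $\A$ pulled back from $K$ (equivariance of the structure map is a closed condition that holds at the Lie algebra level, hence on the connected group $\widetilde{K}$). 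So $M$ becomes a $\widetilde{K}$-equivariant object with the trivial twist.

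Finally I would descend from $\widetilde{K}$ to $K$ up to a twist. The kernel $Z:=\ker(\widetilde{K}\to K)$ is a finite central subgroup, and by Schur's lemma (applicable because $M$ is $\A$-simple, so $\operatorname{End}_\A(M)=\C$, and $Z$ acts by $\A$-linear automorphisms) $Z$ acts on $M$ by a character $\chi: Z\to \C^\times$. Since $Z$ is central in $\widetilde{K}$, which is a product of a simply connected semisimple group and a torus, $\chi$ extends to a character $\tilde\kappa$ of $\widetilde{K}$ — and $\tilde\kappa$ must itself be trivial on the semisimple factor, so it descends to the maximal torus data and, after the standard twist-equivalence $\HC(\A,\theta,\zeta)^{K,\kappa}\cong\HC(\A,\theta,\zeta)^{K,\kappa+\kappa_0}$ recorded just before the Lemma, we may untwist so that $Z$ acts trivially; then the $\widetilde{K}$-action factors through $K$, giving the desired $(K,\kappa)$-equivariant structure with $\kappa=d\tilde\kappa|_{\kf}$ (the scaling by $d$ coming from $\Phi(\kf)\subset\A_{\leqslant d}$ and the normalization of the differential). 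I expect the main obstacle to be the first step — genuinely establishing local finiteness of the $\Phi(\kf)$-action on all of $M$, not merely on associated graded pieces — since one must carefully transfer the local finiteness of the adjoint $K$-action on $\A$ (which is given) to the module $M$ using only that $M$ is finitely generated and admits a good filtration; the rest is a routine integrate-and-twist argument once local finiteness is in hand.
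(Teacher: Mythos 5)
Your integrate-and-twist strategy is close in spirit to the paper's argument, but two of its steps have genuine gaps. First, the local-finiteness argument does not work as written: $\gr M$ carries no a priori $K$-action (it is only a $\gr\A$-module), so there is no ``finite-dimensional $K$-stable submodule'' of it to invoke, and degree considerations alone are insufficient because $\Phi(\kf)\subset \A_{\leqslant d}$ raises filtration degree by $d$, so iterated applications of $\Phi(\kf)$ to a vector escape to ever higher filtration pieces. The missing ingredient is the compatibility (\ref{eq:action_compatible}), $\theta\Phi(x)=-\Phi(x)$, combined with condition (ii) of Definition \ref{defi:HC}: taking $a=\Phi(x)\in\A_{\leqslant d}$ there gives $\Phi(x)M_{\leqslant j}\subset M_{\leqslant j}$, i.e. $\Phi(\kf)$ preserves any good filtration, whose pieces are finite dimensional; local finiteness is then immediate. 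This is the mechanism the paper uses.

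Second, and more seriously, your Step 2 fails as stated: a locally finite $\kf$-action need not integrate to ``(simply connected semisimple)$\,\times\,$(torus)'' with zero twist, because the center $\z(\kf)$ can act with arbitrary complex, non-integral weights --- and a priori not even semisimply --- so no algebraic group integrates it before a shift is made. The twist $\kappa$ cannot be postponed to the finite central kernel $Z$ at the end; it must be introduced before integrating, precisely to absorb the non-integral central character (your Schur's-lemma step only sees the finite kernel, not the connected center, and it presupposes the very $\widetilde{K}$-action whose existence is in question). The paper handles this point using irreducibility: $M$ is generated over $\A$ by a single $\kf$-highest weight vector $v$ (also a $\zeta$-eigenvector), hence $\z(\kf)$ acts diagonalizably and all of its weights differ from the weight of $v$ by weights of $K$ coming from the adjoint action on $\A$; shifting the $\kf$-action by one character $\kappa$ then makes every weight a weight of $K$, and simple connectivity of $(K,K)$ guarantees that the semisimple part integrates, yielding the $(K,\kappa)$-equivariant structure directly. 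You should also record why $\z(\kf)$ acts semisimply (it is exactly the generation by a single weight vector that gives this); without it even the eigenspace decomposition you implicitly use is unavailable.
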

\begin{proof}
Let $M\in \operatorname{Irr}(\HC(\A,\theta,\zeta))$, where $\operatorname{Irr}(-)$ stands for the set of isomorphism classes of irreducible objects in an abelian category in question. By the definition of a HC module,
every vector in $M$ lies in a finite dimensional $\A_{\leqslant d}$-stable
subspace.  The Lie algebra $\kf$
acts on $M$ locally finitely because $\Phi(\kf)\subset \A_{\leqslant d}$. We can decompose $M$ into the
sum $\bigoplus M_{\mu}$, where $\mu$ runs over the set of characters of $\kf$
and $M_{\mu}$ stands for the generalized $\mu$-eigenspace for the center $\mathfrak{z}(\kf)$.
Since $M$ is irreducible it is generated by a highest vector for $\kf$ (and an eigenvector for $\zeta$), say $v$. In particular,
the action of $\mathfrak{z}(\kf)$  on $M$ is diagonalizable. We can shift the
action of $\kf$ by a character, say $\kappa$, of $\kf$ and assume that the weight of
$v$ is a weight for $K$. This gives rise to a $K$-action on $M$.
This action turns $M$ into  a twisted equivariant object in
$\HC(\A,\theta,\zeta)$.
\end{proof}

\subsection{Weakly HC modules}\label{SS_weakly_HC}
Let $\Dcal_\hbar$ be an associative $\C[\hbar]$-algebra with unit. We assume that
$\Dcal_\hbar$ is flat over $\C[\hbar]$ and that $\Dcal_0 = \Dcal_\hbar/ \hbar \Dcal_\hbar$ is a finitely
generated commutative $\C$-algebra.
Let $\theta$ be a $\C[\hbar]$-linear anti-involution on $\Dcal_\hbar$.

Suppose $\A,\zeta,\theta$ are as in Section \ref{SS_HC_mod}.
Define the (modified) Rees algebra for $(\A,\zeta)$ as follows.
For $j\in \Z/d\Z$, let $\A_j$ denote the eigenspace for $\zeta$
with eigenvalue $\epsilon^j$. Then set $R_\hbar(\A):=\bigoplus_{i=0}^\infty
(\A_{\leqslant i}\cap \A_{i+d\Z})\hbar^{i/d}$. This is a graded $\C[\hbar]$-algebra
with $\hbar$ in degree $d$. We can take $\Dcal_\hbar$ to be either $R_\hbar(\A)$ or its
$\hbar$-adic completion. In both cases the anti-involution $\theta$
of $\A$ gives rise to a $\C[\hbar]$-linear anti-involution of $\Dcal_\hbar$
also denoted by $\theta$.


We write $\Dcal_\hbar^{-\theta}$ for the $-1$ eigenspace of $\theta$.
	We have $\Dcal_\hbar=\Dcal_\hbar^{-\theta}\oplus \Dcal_\hbar^\theta$.
	Note that the following inclusions hold:
	\begin{equation}\label{eq:invol_decomp_properties}
	\begin{split}
	&[\Dcal_\hbar^{-\theta},\Dcal_\hbar^{-\theta}]\subset \Dcal_\hbar^{-\theta}, \\
	&a\in \Dcal_\hbar^{-\theta}, b\in \Dcal_\hbar^\theta\Rightarrow ab+ba\in \Dcal_\hbar^{-\theta}.
	\end{split}
	\end{equation}

Set $\J_\hbar:=\hbar \mathcal{D}_\hbar+ \Dcal_\hbar\Dcal_\hbar^{-\theta}$. This is a two-sided ideal of $\Dcal_\hbar$ and is a Lie algebra under taking the bracket defined by
$[a,b]_\hbar:=\frac{1}{\hbar}(ab-ba)$.

\begin{defi}\label{defi:wHC}
	A {\it weakly HC $(\Dcal_\hbar, \theta)$-module}, is a finitely generated
	$\Dcal_\hbar$-module $M_\hbar$ equipped with
	$\C[\hbar]$-bilinear Lie algebra representation
	$\J_\hbar \times M_\hbar\rightarrow M_\hbar, (a,m)\mapsto a.m$, such that
	\begin{enumerate}[(i)]
		\item $am=\hbar(a.m)$, $\forall\, a \in \J_\hbar$.
		\item $(ba).m = b(a.m)$, $\forall\, a \in \J_\hbar, b \in \Dcal_\hbar$.
		\item $a.(bm)=[a,b]_\hbar m+b(a.m), \forall a\in \J_\hbar, b\in \Dcal_\hbar$.
	\end{enumerate}
A weakly HC $(\Dcal_\hbar,\theta)$-module is called HC if it is flat over $\C[\hbar]$.
\end{defi}

\begin{Rem}\label{Rem:flat_wHC}
Note that for a $\C[\hbar]$-flat $\Dcal_\hbar$-module being HC is not a structure but a condition:
$bm\in \hbar M_\hbar$ for all $b\in \Dcal_{\hbar}^{-\theta},m\in M_\hbar$. Then we can define
$b.m$ as $\hbar^{-1}(bm)$.
\end{Rem}

The category of weakly HC $(\Dcal_\hbar, \theta)$-modules will be denoted by $\wHC(\Dcal_\hbar,\theta)$.

Now suppose that we have a $\C^\times$-action on $\Dcal_\hbar$ by $\C$-algebra automorphisms
such that $t.\hbar=t^d\hbar$ for $t \in \C^\times$. Further suppose that 
\begin{itemize}\item either the action is rational or
\item $\Dcal_\hbar$
is complete and separated in the $\hbar$-adic topology and the action is pro-rational.
\end{itemize}
The Rees algebra $R_\hbar(\A)$ satisfies the first condition, while its $\hbar$-adic completion
satisfies the second condition.

\begin{defi}\label{defi:wHC_graded}
A weakly HC $(\Dcal_\hbar,\theta)$-module $M_\hbar$ is {\it graded} if it is equipped with
a (rational/pro-rational) $\C^\times$-action (rational, if the action on $\Dcal_\hbar$ is rational
and pro-rational, if the action on $\Dcal_\hbar$ is pro-rational) such that
the multiplication map $\Dcal_\hbar\otimes M_\hbar\rightarrow M_\hbar$ is
$\C^\times$-equivariant and the Lie algebra action $\J_\hbar
\otimes M_\hbar\rightarrow M_\hbar$ is of degree $-d$.
\end{defi}

The abelian category of graded weakly HC modules will be denoted by $\wHC^{gr}(\Dcal_\hbar,\theta)$.

Now suppose $\Dcal_\hbar=R_\hbar(\A)$ for a filtered algebra $\A$ as in the beginning of this
section.
A connection between $\wHC^{gr}(\Dcal_\hbar,\theta)$ and $\HC(\A,\theta,\zeta)$ is as follows.
Note that we have the functor $\wHC^{gr}(\Dcal_\hbar,\theta)\rightarrow \HC(\A,\theta,\zeta)$
given by $M_\hbar\mapsto M_\hbar/(\hbar-1)M_\hbar$. It is straightforward to check that $M_\hbar/(\hbar-1) M_\hbar$
is indeed a HC $(\A,\theta,\zeta)$-module: a good filtration on $M_\hbar/(\hbar-1) M_\hbar$
comes from the grading on $M_\hbar$ and the automorphism $\zeta$ is induced by $\epsilon\in \C^\times$ acting via the $\cm$-action on $M_\hbar$.

\begin{Lem}\label{Lem:essential_surjectivity}
The functor $\wHC^{gr}(\Dcal_\hbar,\theta)\rightarrow \HC(\A,\theta,\zeta)$, $M_\hbar\mapsto M_\hbar/(\hbar-1)M_\hbar$, is essentially surjective
and it kills precisely the $\hbar$-torsion modules.
\end{Lem}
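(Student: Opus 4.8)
The plan is to prove the two assertions — essential surjectivity and the identification of the kernel — separately, since they are really about the functor $M_\hbar \mapsto M_\hbar/(\hbar-1)M_\hbar$ in opposite directions.

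First I would handle the kernel. Suppose $M_\hbar$ is an $\hbar$-torsion graded weakly HC module, finitely generated over $\Dcal_\hbar = R_\hbar(\A)$; since $\Dcal_\hbar$ is Noetherian (by the finite generation of $\gr \A$) and $M_\hbar$ is finitely generated and $\hbar$-torsion, there is $N$ with $\hbar^N M_\hbar = 0$, so $\hbar$ acts nilpotently and hence $(\hbar - 1)$ acts invertibly on $M_\hbar$; therefore $M_\hbar/(\hbar-1)M_\hbar = 0$. Conversely, suppose $M_\hbar/(\hbar-1)M_\hbar = 0$, i.e. $(\hbar-1)M_\hbar = M_\hbar$. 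Using the grading: write $M_\hbar = \bigoplus_n M_\hbar(n)$; the operator $\hbar$ has degree $d$, so $(\hbar - 1)$ on the finitely generated graded module is, up to a shift, ``lower triangular'' with $-1$ on the diagonal, hence for each homogeneous $m$ of degree $n$ the equation $(\hbar - 1)m' = m$ can be solved, and conversely if $(\hbar-1)$ is surjective a standard graded Nakayama-type argument shows $M_\hbar$ is bounded below in degree only if it is zero — more precisely, pick generators $m_1,\dots,m_r$ in degrees $n_1 \le \dots \le n_r$; surjectivity of $\hbar - 1$ forces $m_1 \in \hbar M_\hbar + (\text{lower degree}) = \hbar M_\hbar$, and iterating, $M_\hbar = \bigcap_k \hbar^k M_\hbar$, which for a finitely generated graded module over the graded ring $R_\hbar(\A)$ (with $\hbar$ in positive degree $d$) forces $M_\hbar = 0$ unless the module is $\hbar$-torsion. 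I would phrase this last step carefully: the correct statement is that $M_\hbar/(\hbar-1)M_\hbar = 0$ iff every element of $M_\hbar$ is annihilated by some power of $\hbar$, which is exactly $\hbar$-torsion (here using finite generation so that a single power $\hbar^N$ works locally on generators).

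For essential surjectivity, I would start with a HC $(\A,\theta)$-module $M$, choose a good (for $\theta$) filtration $M = \bigcup_j M_{\le j}$ that is also $\zeta$-stable — possible since $\zeta$ has finite order and we can average, or rather decompose into $\zeta$-eigenspaces compatibly with the filtration — and form the Rees module $R_\hbar(M) := \bigoplus_i (M_{\le i} \cap M_{i + d\Z})\hbar^{i/d}$, mirroring the construction of $R_\hbar(\A)$. This is naturally a graded $\Dcal_\hbar = R_\hbar(\A)$-module, finitely generated because the filtration is good, and $R_\hbar(M)/(\hbar-1)R_\hbar(M) \cong M$ canonically. It remains to equip $R_\hbar(M)$ with the weakly HC structure, i.e. the Lie algebra action of $\J_\hbar = \hbar \Dcal_\hbar + \Dcal_\hbar \Dcal_\hbar^{-\theta}$ of degree $-d$ satisfying (i)--(iii) of Definition \ref{defi:wHC}. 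The key point is condition (ii) in the definition of a good-for-$\theta$ filtration on $M$: for $a \in \A_{\le i}$ and $m \in M_{\le j}$ we have $(\theta(a)-a)m \in M_{\le i+j-d}$; translating to the Rees picture, this says precisely that for $b \in \Dcal_\hbar^{-\theta}$ (which in Rees degree $i$ corresponds to an element like $\tfrac12(\theta(a)-a)\hbar^{i/d}$ with $a \in \A_{\le i}$) we get $b\, R_\hbar(M) \subset \hbar R_\hbar(M)$, so by Remark \ref{Rem:flat_wHC} — applicable because $R_\hbar(M)$ is $\C[\hbar]$-flat (being a Rees module of a filtered module, it is a free $\C[\hbar]$-module if we are careful, or at least torsion-free) — the action $b.m := \hbar^{-1}(bm)$ is well-defined and automatically satisfies (i), (ii), (iii). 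Then extending from $\Dcal_\hbar^{-\theta}$ to all of $\J_\hbar = \hbar\Dcal_\hbar + \Dcal_\hbar \Dcal_\hbar^{-\theta}$ is forced by the Leibniz rule (iii) together with (i) for the $\hbar \Dcal_\hbar$ part, using the inclusions \eqref{eq:invol_decomp_properties} to see consistency.

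The main obstacle, I expect, is the $\C[\hbar]$-flatness (equivalently torsion-freeness) of $R_\hbar(M)$, which is needed to invoke Remark \ref{Rem:flat_wHC} and to guarantee that $R_\hbar(M)$ is an honest (not just weakly) HC module mapping onto $M$; but in fact for essential surjectivity we only need \emph{some} graded weakly HC module mapping to $M$, and the Rees module of any good filtration is automatically $\hbar$-torsion-free because $M_{\le i} \cap M_{i+d\Z}$ injects into $M_{\le i+d} \cap M_{i+d\Z}$ (the filtration is increasing), so multiplication by $\hbar$ is injective on $R_\hbar(M)$ — this is the standard fact that Rees modules of separated filtrations are $\hbar$-torsion-free, and it uses that the filtration on $M$ is exhaustive and separated (bounded below), which is part of being good. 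So the only genuine care needed is in verifying the compatibility of the constructed $\J_\hbar$-action with the relations, which is a direct but slightly tedious check using \eqref{eq:invol_decomp_properties} and the three conditions of Definition \ref{defi:wHC}; I would present the $\Dcal_\hbar^{-\theta}$ case in full and note that the general case follows by the forced extension via (i) and (iii).
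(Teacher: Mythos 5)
Your proof is essentially the paper's own argument: pick a good filtration, form the modified Rees module $R_\hbar(M)$, note it is $\hbar$-torsion free, and use Remark \ref{Rem:flat_wHC} together with condition (ii) of the good filtration (which gives exactly $\Dcal_\hbar^{-\theta}\,R_\hbar(M)\subset \hbar R_\hbar(M)$, the action on all of $\J_\hbar$ then being $a.m=\hbar^{-1}(am)$) to see that $R_\hbar(M)\in\wHC^{gr}(\Dcal_\hbar,\theta)$ with $R_\hbar(M)/(\hbar-1)\cong M$; the paper treats the kernel description as clear. One correction to your kernel discussion: the intermediate claim that surjectivity of $\hbar-1$ forces $M_\hbar=\bigcap_k\hbar^k M_\hbar$ is false (any nonzero module killed by $\hbar$ has $(\hbar-1)M_\hbar=M_\hbar$ but $\bigcap_k\hbar^k M_\hbar=0$) and is not needed: for homogeneous $m$ of degree $n$ with $(\hbar-1)m'=m$, comparing homogeneous components of $m'$ (finitely many, as the $\C^\times$-action is rational) gives $m'_n=-m$ and $m'_{n+kd}=-\hbar^k m$ for all $k\geqslant 0$, so $\hbar^k m=0$ for large $k$, which is precisely the $\hbar$-torsion criterion you state at the end.
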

\begin{proof}
The description of the kernel is clear.
Let $M \in \HC(\A,\theta,\zeta)$. Pick a good filtration on $M$ as in Definition
\ref{defi:HC}. We can form the (modified) Rees module $R_\hbar(M)$
similarly to what was done for $\A$. By Remark \ref{Rem:flat_wHC}, $R_\hbar(M)$ is an object of
$\wHC^{gr}(\Dcal_\hbar,\theta)$. Of course, $R_\hbar(M)/(\hbar-1)\cong M$.
\end{proof}

We also have a $(K,\kappa)$-equivariant version of weakly HC modules. Let $K$ be an algebraic group which acts rationally or pro-rationally on $\Dcal_\hbar$ by $\C[\hbar]$-linear algebra automorphisms, so that any element $\xi\in \kf$ gives rise to a $\C[\hbar]$-linear endomorphism $\xi_{\Dcal_\hbar}$ of $\Dcal_\hbar$.
Let $\Phi: \kf\rightarrow \Dcal_\hbar$ be a quantum comoment map
meaning that this map is $K$-equivariant and $\hbar^{-1}[\Phi(\xi),\cdot]=\xi_{\Dcal_\hbar}$.
Assume additionally that $\theta$ is $K$-equivariant and $\operatorname{Im} \Phi \subset \J\hbar$.

\begin{defi}\label{defi_wHC_equiv}
Let $M_\hbar\in \wHC(\Dcal_\hbar,\theta)$.
Let $K$ act rationally or pro-rationally on $M_\hbar$ by $\C[\hbar]$-linear automorphisms (so that every element
$\xi\in \kf$ gives rise to a $\C[\hbar]$-linear endomorphism $\xi_{M_\hbar}$ of $M_\hbar$).  We say that
$M_\hbar$ is $(K,\kappa)$-equivariant if
\begin{itemize}
  	\item the multiplication map $\Dcal_\hbar\times M_\hbar\rightarrow M_\hbar$
  and the Lie algebra action map $\J\hbar \times M_\hbar\rightarrow M_\hbar$
  are $K$-equivariant,
  \item $\Phi(\xi).m-\langle\kappa,\xi\rangle=\xi_{M_\hbar}m, \forall \xi\in \kf, \, m \in M_\hbar$.
\end{itemize}
\end{defi}

We denote the corresponding category by $\wHC(\Dcal_\hbar,\theta)^{K,\kappa}$.
Similarly, we can talk about $(K,\kappa)$-equivariant objects in $\wHC^{gr}(\Dcal_\hbar,\theta)$.
Here we assume that the $K$-action commutes with the $\C^\times$-action and that
$\operatorname{Im}\Phi$ lies in the subspace of degree $d$ homogeneous elements with respect to the $\C^\times$-action.

\subsection{Weakly HC modules over formal Weyl algebras}
Here we consider an important special case of weakly HC modules.
Let $V$ be a symplectic vector space. Consider the homogenized Weyl algebra
$\Weyl_\hbar(V)=T(V)[\hbar]/(u\otimes v-v\otimes u-\hbar\omega(u,v))$.

Let $\mathfrak{m}\subset \Weyl_\hbar(V)$ denote the preimage of the maximal ideal of $0$ in $S(V)$
under the epimorphism $\Weyl_\hbar(V)\twoheadrightarrow S(V)$. Let $\Dcal_\hbar$ denote the
$\mathfrak{m}$-adic completion of $\Weyl_\hbar(V)$.

Fix an anti-symplectic involution $\theta$ of $V$. It defines the decomposition
$V=L\oplus L'$, where $L,L'$ are Lagrangian. Namely, $L=V^{-\theta}$ and $L'=V^{\theta}$.
We note that $\C[L][\hbar]$ is a HC $(\Weyl_\hbar, \theta)$-module. Similarly,
$\C[[L,\hbar]]$ is a HC $\Dcal_\hbar$-module.

For a weakly HC $(\Dcal_\hbar,\theta)$-module $M$ we write $M^L$
for the annihilator of $L$ in $M$ (for the Lie algebra action).

\begin{Lem}\label{Lem:wHC_formal_Weyl}
The functors $\bullet^L$ and $\C[[L,\hbar]]\otimes_{\C[[\hbar]]}\bullet$
are mutually quasi-inverse equivalences between $\wHC(\Dcal_\hbar,\theta)$
and $\C[[\hbar]]\operatorname{-mod}$.
\end{Lem}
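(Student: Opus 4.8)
The plan is to exhibit the two functors explicitly and check that the natural maps between the compositions are isomorphisms. Given a weakly HC $(\Dcal_\hbar,\theta)$-module $M$, the submodule $M^L$ is the annihilator of $L=V^{-\theta}$ for the Lie algebra action of $\J_\hbar$; note $L$ generates a Lie subalgebra of $(\J_\hbar,[\cdot,\cdot]_\hbar)$ since $L\subset \Dcal_\hbar^{-\theta}$ and $[\Dcal_\hbar^{-\theta},\Dcal_\hbar^{-\theta}]\subset\Dcal_\hbar^{-\theta}$, and the elements of $L$ commute with each other in $\Dcal_\hbar$ because $L$ is Lagrangian, so their $\hbar$-brackets land in $\hbar\Dcal_\hbar\subset\J_\hbar$ — I should check this is consistent with the Lie action so that $M^L$ really is a $\C[[\hbar]]$-submodule. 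Conversely, $\C[[L,\hbar]]=\C[[L']][\hbar]^\wedge$ carries a $\Dcal_\hbar$-action coming from the identification of $\Dcal_\hbar$ with the completed Weyl algebra (with $L$ acting by the natural "creation" operators and $L'$ by $\hbar$ times derivations), and one checks $\C[[L,\hbar]]$ is a HC $(\Dcal_\hbar,\theta)$-module with $\C[[L,\hbar]]^L=\C[[\hbar]]\cdot 1$. So $N\mapsto \C[[L,\hbar]]\otimes_{\C[[\hbar]]}N$ indeed lands in $\wHC(\Dcal_\hbar,\theta)$, and the composite $(\bullet^L)\circ(\C[[L,\hbar]]\otimes_{\C[[\hbar]]}\bullet)$ is the identity on $\C[[\hbar]]\operatorname{-mod}$ by a direct computation.

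The substantive direction is the other composition: I must show the canonical map $\C[[L,\hbar]]\otimes_{\C[[\hbar]]}M^L\to M$, $\ell\otimes m\mapsto \ell.m$ (using the $\Dcal_\hbar$-module structure on $M$ to let $\ell\in\C[[L,\hbar]]$ act), is an isomorphism for every $M\in\wHC(\Dcal_\hbar,\theta)$. The strategy is to reduce mod $\hbar$ and use completeness. First I would establish that $M^L\neq 0$ when $M\neq 0$: the Lie algebra spanned by $L$ together with $\hbar\Dcal_\hbar$ acts on $M$ and, arguing as in the Stone–von Neumann theorem but in the $\hbar$-filtered/complete setting, the operators coming from $L$ act locally nilpotently modulo $\hbar$ (since $L\cdot M\subset\hbar M$ by the HC condition after passing to the flat quotient, or directly from axiom (i) of Definition \ref{defi:wHC}), so a vector killed by all of $L$ exists after completing. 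Then, mod $\hbar$, the map becomes $\C[[L]]\otimes_\C (M/\hbar M)^L\to M/\hbar M$; I want to show this is an isomorphism of $\Dcal_0=\widehat{S(V)}$-modules, i.e. that $M/\hbar M$ is a free $\C[[L]]$-module generated by $L$-invariants. This is the analogue for the completed polynomial/Weyl setup of the classical fact that any module over the Weyl algebra on which the $L'$-directions act locally finitely is a sum of copies of the polynomial representation; concretely, one filters $M/\hbar M$ by powers of the $L$-action and identifies associated graded pieces with $\C[[L]]\otimes (M/\hbar M)^L$. Having the isomorphism mod $\hbar$, I upgrade to an isomorphism over $\C[[\hbar]]$: both sides are $\hbar$-adically complete and separated (for $M$ this uses finite generation over $\Dcal_\hbar$ and completeness of $\Dcal_\hbar$; for the left side it is manifest), $\hbar$ is a nonzerodivisor on $\C[[L,\hbar]]\otimes M^L$ once $M^L$ is known to be $\C[[\hbar]]$-flat, and a map of complete separated modules inducing an isomorphism mod $\hbar$ is an isomorphism provided we also control $\hbar$-torsion — so I would first treat $\hbar$-flat ("HC") modules, where the argument is clean, and then handle general weakly HC modules by the same filtration-by-$\hbar$-torsion device, or observe that $M^L$ inherits flatness from the mod-$\hbar$ freeness just proved.

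I expect the main obstacle to be the mod-$\hbar$ Stone–von Neumann statement in the completed setting: showing $(M/\hbar M)^L$ is nonzero and that $M/\hbar M$ is generated over $\widehat{S(V)}$ (equivalently over $\C[[L]]$, since $L'$ acts by $0$ mod $\hbar$) by these invariants, with no higher "$\mathrm{Tor}$" obstruction. The finite-generation hypothesis on $M$ over $\Dcal_\hbar$ is essential here and must be used carefully — it guarantees $M/\hbar M$ is finitely generated over $\widehat{S(V)}$, hence one can work with a finite generating set, choose lifts, and run a convergent iterative correction (subtract off $L$-derivatives) that terminates $\mathfrak m$-adically to produce enough $L$-invariant generators. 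Once this is in place, the completeness/flatness bookkeeping in the last paragraph is routine, and the two quasi-inverse claims follow formally.
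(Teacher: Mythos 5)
Your overall strategy (the commutation identity, completeness, a Taylor-type decomposition into $L$-annihilated vectors) points in the right direction, but the detour through reduction mod $\hbar$ introduces gaps that the paper's argument never meets. First, the claim that "the operators coming from $L$ act locally nilpotently modulo $\hbar$" is false: mod $\hbar$ the \emph{module} action of $L$ is zero, while the \emph{Lie} action of a basis $q_1,\dots,q_n$ of $L$ becomes a flat connection ($q_i$ acting as $\partial/\partial p_i$) on a finitely generated $\C[[p_1,\dots,p_n]]$-module, and such derivations are not locally nilpotent (already not on $\C[[p]]$ itself); the existence and spanning property of flat sections is exactly the formal Frobenius statement you are trying to establish, so it cannot be fed in as an input. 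Second, even granting the mod-$\hbar$ statement, your "upgrade" needs the map $M^L\to (M/\hbar M)^{\nabla}$ to be surjective, i.e.\ flat sections must be lifted order by order in $\hbar$ (using vanishing of the first de Rham cohomology of the formal polydisc with coefficients in a free module); this is a genuine step, not bookkeeping, and it is absent. Third, $\hbar$-torsion modules are not handled: for an $\hbar$-torsion $M$ the module $M^L$ is necessarily $\hbar$-torsion, so "observe that $M^L$ inherits flatness" cannot work, and the filtration-by-torsion device would in addition require exactness of $\bullet^L$ on $\wHC(\Dcal_\hbar,\theta)$, which is not available at this stage. (A minor point: in your description of the $\Dcal_\hbar$-module $\C[[L,\hbar]]$ the roles of $L$ and $L'$ are swapped — $L$ must act by $\hbar$ times derivations, otherwise $\C[[L,\hbar]]^L\neq\C[[\hbar]]$.)

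The paper's proof is a single uniform argument with no case split and no passage mod $\hbar$. Since $M$ is finitely generated over the complete algebra $\Dcal_\hbar$, it is complete and separated in the $\mathfrak{m}$-adic topology. Using the identity $q_i.(p_jm)=\delta_{ij}m+p_j(q_i.m)$ and the fact that the operators $q_i.$ commute (because $L$ is Lagrangian), one forms for each $m\in M$ the Taylor-type element $\pi(m)=\sum_{\alpha}\tfrac{(-1)^{|\alpha|}}{\alpha!}\,p^{\alpha}(q^{\alpha}.m)$, which converges $\mathfrak{m}$-adically because $p^{\alpha}\in\mathfrak{m}^{|\alpha|}$, lies in $M^L$, and leads to the statement that every element of $M$ is a unique $\mathfrak{m}$-adically convergent sum $\sum_\alpha p^\alpha m_\alpha$ with $m_\alpha\in M^L$; this gives $\C[[L,\hbar]]\otimes_{\C[[\hbar]]}M^L\xrightarrow{\sim}M$ directly, whether or not $M$ has $\hbar$-torsion (this is the argument of \cite[Proposition 3.3.1]{HC} that the paper invokes). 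So if you wish to keep your two-step structure you must supply the Poincar\'e-lemma lifting of flat sections and a genuine treatment of torsion (most naturally via the explicit projector, at which point you have reproduced the direct argument); otherwise the $\mathfrak{m}$-adic argument is both shorter and strictly more general.
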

\begin{proof}
It is easy to see that $(\C[[L,\hbar]]\otimes_{\C[[\hbar]]}\bullet)^L$
is the identity endo-functor of $\C[[\hbar]]\operatorname{-mod}$.

To prove that the other composition is isomorphic to the identity one argues as follows.
Note that $M$ is complete in the $\mathfrak{m}$-adic topology.
Let $p_1,\ldots,p_n$ be a basis in $L'$ and $q_1,\ldots,q_n$ be the
dual basis in $L$. For $m\in M$, we have $q_i.(p_jm)=m+p_j(q_i.m)$.
Using this identity, we finish the proof of this lemma similarly to the analogous claim in
the proof of \cite[Proposition 3.3.1]{HC}.
\end{proof}

\section{Harish-Chandra modules over quantizations of nilpotent orbits}
\subsection{Quantizations of nilpotent orbits and their HC modules}\label{SS_quant_nilp}
Let $\Orb\subset \g^*$ be a nilpotent orbit. This is a symplectic algebraic variety.
The group $G$ naturally acts on $\Orb$ and the action is Hamiltonian.

The algebra
$\C[\Orb]$ of regular functions on $\Orb$ is a graded Poisson algebra
that comes with a Hamiltonian action of $G$, the corresponding moment
map $\operatorname{Spec}(\C[\Orb])\rightarrow \g^*$ is induced by the inclusion
$\Orb\hookrightarrow \g^*$. The bracket on
$\C[\Orb]$ has degree $-1$. 

\begin{defi}\label{defi:quantization}
Let $A$ be a $\Z_{\geqslant 0}$-graded Poisson algebra with bracket of degree $-d$.
By a {\it filtered quantization} of $A$ we mean a pair $(\Dcal,\iota)$ of
\begin{itemize}
\item
a filtered
associative algebra $\Dcal$ such that $\deg [\cdot,\cdot]\leqslant -d$
\item and a graded Poisson algebra isomorphism $\iota:\gr \Dcal\xrightarrow{\sim} A$.
\end{itemize}
\end{defi}

In what follows in the present section we will place an additional restriction on
$\Orb$. Set $\partial \Orb:=\overline{\Orb}\setminus \Orb$.
We will require that
\begin{equation}\label{eq:codim_condition}
\operatorname{codim}_{\overline{\Orb}}\partial\Orb\geqslant 4.
\end{equation}

The following classical lemma describes the second cohomology $H^2(\Orb,\C)$. Let $e\in \Orb$
and $Q$ be the reductive part of the centralizer of $e$ in $G$.

\begin{Lem}\label{Lem:G_orbit_cohomology}
We have an isomorphism $(\q^*)^Q\xrightarrow{\sim} H^2(\Orb,\C)$ that sends a character of $Q$
to the 1st Chern class of the corresponding line bundle on $\Orb$.
\end{Lem}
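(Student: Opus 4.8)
The plan is to reduce the computation of $H^2(\Orb,\C)$ to the topology of $\Orb$ regarded as a homogeneous space $G/G^e$, and then to use the reductive decomposition of the stabilizer. First I would fix $e\in\Orb$ and write $G^e$ for its centralizer in $G$; since $\Orb\cong G/G^e$ and $G$ is simply connected (hence $\pi_1(G)=1$), the long exact sequence of the fibration $G^e\hookrightarrow G\to G/G^e$, or equivalently the spectral sequence of this fibration, controls the low-degree cohomology of $\Orb$. The key structural input is that $G^e$ is the semidirect product of its reductive part $Q$ with the unipotent radical $U^e$; since $U^e$ is contractible, the inclusion $Q\hookrightarrow G^e$ is a homotopy equivalence, so $\Orb=G/G^e$ is $G$-equivariantly homotopy equivalent to $G/Q$, and in particular $H^\bullet(\Orb,\C)\cong H^\bullet(G/Q,\C)$.

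Next I would analyze $H^2(G/Q,\C)$. Because $G$ is semisimple and simply connected, $H^1(G,\C)=H^2(G,\C)=0$, so in the Serre spectral sequence for $Q\hookrightarrow G\to G/Q$ the edge maps give an exact sequence $0\to H^1(G/Q,\C)\to H^1(Q,\C)\to H^2(G/Q,\C)\to H^2(G,\C)=0$ once one checks that the relevant differential out of $H^0(G/Q,H^1(Q))$ is the only contribution; more precisely, $H^2(G/Q,\C)$ is identified with the cokernel of $H^2(G)\to H^2(Q)$ together with the transgression from $H^1(Q)$. Since $Q$ is reductive, $H^1(Q,\C)\cong (\z(\q))^* = \operatorname{Hom}(Q,\C^\times)\otimes\C$, the space of (differentials of) characters, restricted to the $Q$-invariants; and $H^2(Q,\C)$ contributes nothing new modulo the image from $G$ in the relevant range — this is where one invokes that the restriction $H^2(BG)\to H^2(BQ)$ and the structure of $H^\bullet(Q)$ for reductive $Q$ force $H^2(G/Q,\C)\cong X^*(Q)\otimes_{\Z}\C$. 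Identifying $X^*(Q)\otimes\C$ with $(\q^*)^Q$ (characters of $Q$ are the same as $Q$-invariant linear functionals on $\q$, i.e. functionals vanishing on $[\q,\q]$ and $Q$-invariant) completes the identification of the underlying vector spaces.

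Finally I would check that the isomorphism is the one described, i.e.\ that a character $\lambda$ of $Q$ goes to $c_1$ of the associated line bundle $G\times_{Q}\C_\lambda$ on $G/Q\simeq\Orb$. This is essentially the definition of the Chern–Weil / Borel construction: the map $X^*(Q)\to H^2(G/Q,\Z)$, $\lambda\mapsto c_1(G\times_Q\C_\lambda)$, is exactly the transgression/edge homomorphism appearing above, so the abstract isomorphism produced by the spectral sequence argument coincides with $\lambda\mapsto c_1$. One should also note that the codimension hypothesis \eqref{eq:codim_condition} is what guarantees, via normality of $\overline\Orb$ in codimension one and an excision argument, that this description of $H^2(\Orb,\C)$ is stable and well-behaved; but since the lemma is stated only for $\Orb$ itself (not $\overline\Orb$), the homotopy-equivalence argument already suffices.

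\textbf{Main obstacle.} The delicate point is the spectral sequence bookkeeping showing $H^2(G/Q,\C)\cong X^*(Q)\otimes\C$ with no extra contributions: one must rule out a surviving piece of $H^2(Q,\C)$ and confirm that the differential identifying $H^1(Q)$-classes with $H^2(G/Q)$-classes is injective with the expected image. For $G$ semisimple simply connected this works because $H^2(BG;\Q)$ is large enough to absorb $H^2(Q)$ coming from the semisimple part of $Q$, leaving precisely the torus-character part; making this precise (rather than hand-waving "reductive $Q$") is the real content, and is presumably why the authors call it a classical lemma and cite it rather than reprove it.
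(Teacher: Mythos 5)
The paper itself offers no proof of this lemma (it is quoted as classical, with the relevant reference being Biswas--Chatterjee \cite{coh}), so the comparison is with the standard argument. Your overall strategy is the right one -- replace $\Orb\cong G/G^e$ by $G/Q$ using contractibility of the unipotent radical, use simple connectedness of $G$ to transfer $H^2$ of the base to degree-one data of the fiber, and identify the resulting map with $\lambda\mapsto c_1(G\times_Q\C_\lambda)$ via transgression -- but the execution has two genuine problems. First, the five-term exact sequence is misstated: for the fibration $Q\to G\to G/Q$ it reads
\[
0\to H^1(G/Q)\to H^1(G)\to H^0\bigl(G/Q,\mathcal{H}^1(Q)\bigr)\xrightarrow{\ d_2\ } H^2\bigl(G/Q,\mathcal{H}^0(Q)\bigr)\to H^2(G),
\]
so the map you wrote, $H^1(G/Q)\to H^1(Q)$, does not occur, and the term that maps onto $H^2$ is the \emph{monodromy-invariant} part of $H^1$ of the fiber, with $\mathcal{H}^0(Q)$ a nontrivial local system when $Q$ is disconnected. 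Second, and more seriously, you never address disconnectedness of $G^e$ and $Q$. For nilpotent orbits the component group $A(e)\cong Q/Q^\circ\cong\pi_1(\Orb)$ is typically nontrivial, and its action (the monodromy in the spectral sequence, or equivalently the deck action on the cover $G/(G^e)^\circ\to\Orb$) is exactly what turns the naive answer $\z(\q)^*$ into the asserted $(\q^*)^Q$. Your chain of identifications $H^1(Q,\C)\cong\z(\q)^*\cong\operatorname{Hom}(Q,\C^\times)\otimes\C$ silently conflates $Q$ with $Q^\circ$ (for disconnected $Q$ these three spaces differ), so as written the argument does not produce the $Q$-invariance in the statement, which is its actual content.

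Relatedly, the "main obstacle" you flag -- ruling out a surviving piece of $H^2(Q,\C)$ -- is a red herring: $H^2$ of the fiber sits in $E_2^{0,2}$ and can only affect $H^2$ of the total space $G$, never $H^2$ of the base, so no hypothesis on $H^2(BG)$ "absorbing" it is needed. The points that genuinely require care are the monodromy/invariants just described and the identification of the transgression with the first Chern class of the associated bundle (which you assert correctly but do not argue). A cleaner route, and essentially the classical one: since $G$ is simply connected, $G/(G^e)^\circ$ is the universal cover of $\Orb$ with deck group $A(e)$; the homotopy sequence gives $\pi_2\bigl(G/(G^e)^\circ\bigr)\cong\pi_1\bigl((G^e)^\circ\bigr)\cong\pi_1(Q^\circ)$, Hurewicz gives $H^2\bigl(G/(G^e)^\circ,\C\bigr)\cong\operatorname{Hom}(\pi_1(Q^\circ),\C)\cong\z(\q)^*$, and passing to $A(e)$-invariants (finite covering, $\C$ coefficients) yields $(\q^*)^Q$; the normalization via $c_1$ of $G\times_{G^e}\C_\chi$ then follows from compatibility of this identification with the transgression, as you indicate.
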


To fix the normalization, to a character $\chi:Q\rightarrow \C^\times$ (equivalently,
a character of $Z_G(e)$) we assign the $G$-equivariant line bundle $\mathcal{L}$ on $\Orb$
such that $Z_G(e)$ acts on the fiber of $\mathcal{L}$ at $e\in \Orb$ by $\chi$.

The variety $\operatorname{Spec}(\C[\Orb])$ is singular symplectic, \cite{Beauville}.
In what follows we take a singular symplectic variety $X$ with $\operatorname{codim}_X X^{sing}\geqslant 4$. We also assume that $X$ is conical: meaning that $\C[X]$ admits a $\Z_{\geqslant 0}$-algebra grading
with $\C[X]_0=\{0\}$ and the degree of the Poisson bracket is $-d$ for $d\in \Z_{>0}$.

Filtered quantizations of the algebra $\C[X]$ were classified in \cite[Theorem 3.4]{orbit}
in the general setting. Under the assumption $\operatorname{codim}_X X^{sing}\geqslant 4$, they are parameterized by the points of $\param:=H^2(X^{reg},\C)$ via taking the so called {\it period} of the microlocalization of
the quantization to $X^{reg}$, \cite[Corollary 3.2(1)]{orbit}. We will discuss periods in more detail in
Section \ref{subsec:quan}. Moreover, we can take the universal
quantization $\A_{\param}$ of $\C[X]$, \cite[Corollary 3.2(2)]{orbit}.
This is a filtered $\C[\param]$-algebra with
$\deg \param^*=d$ such that for $\lambda\in \param$, the specialization of $\A_{\param}$
to $\lambda$ is the filtered quantization $\A_\lambda$ of $\C[\Orb]$ corresponding to $\lambda$.
Note that $\A_{\param}^{opp}$ is identified with $\A_{\param}$, where the identification is
$-1$ on $\param^*$. This can be deduced from \cite[Proposition 3.2]{BPW}.
The quantization $\A_\lambda$ of $\C[X]$ corresponding to $\lambda\in \param$
is obtained from $\A_{\param}$ by specializing the parameter to $\lambda$.

Fix a Poisson anti-involution $\theta$ of
$\C[X]$ that preserves the grading. Thanks to the universality property
of $\A_{\param}$ (\cite[Proposition 3.5]{orbit}), $\theta$ lifts to an anti-involution
of $\A_{\param}$ also denoted by $\theta$, compare to the beginning of
\cite[Section 3.7]{orbit}. The action of $\theta$ on $\param^*\subset \A_\param$
comes from the action on $\param=H^2(X^{reg},\C)$.

In particular, we can talk about the category $\HC(\A_{\param},\theta,\zeta)$
of HC $(\A_{\param},\theta)$-modules (see Section \ref{SS_HC_mod}). For $\lambda\in \param$ we write $\HC(\A_\lambda,\theta,\zeta)$
for the full subcategory of $\HC(\A_{\param},\theta,\zeta)$ consisting of all modules, where
the action of $\A_{\param}$ factors through $\A_\lambda$. If $\lambda$ is fixed by $\theta$,
then $\theta$ induces an anti-involution of $\A_\lambda$ (again denoted by $\theta$), then
$\HC(\A_\lambda,\theta,\zeta)$ is indeed the category of HC $(\A_\lambda,\theta)$-modules.

For any $M\in \HC(\A_\lambda,\theta,\zeta)$ we can consider its associated variety, to be denoted
by $\operatorname{V}(M)$. By definition, it is the support of the associated graded of
$M$ with respect to any good filtration.

Recall that $X$ has finitely many symplectic leaves, \cite{Kaledin_symplectic}. Consider the fixed point locus $X^\theta$ that we view as a subvariety in $X$.  

\begin{Lem}\label{Lem:lag_antiinvolution}
The following are true:
\begin{enumerate}
\item For any symplectic leaf $\mathcal{L}\subset X$, the intersection $X^\theta\cap \mathcal{L}$
is Lagrangian in $\mathcal{L}$. In particular, $X^\theta$ is an isotropic subvariety in the sense of
\cite{B_ineq}. 
\item For any $M\in \HC(\A_\lambda,\theta,\zeta)$, we have $\operatorname{V}(M)\subset X^\theta$. In particular,  $M$ is holonomic in the sense of \cite{B_ineq}.
\end{enumerate}
\end{Lem}
\begin{proof}
It is a classical and easy fact that the fixed point locus of an anti-symplectic involution of a
smooth symplectic variety is a Lagrangian subvariety. If $X^\theta\cap \mathcal{L}\neq \varnothing$,
then $\theta$ preserves $\mathcal{L}$ and induces an anti-symplectic involution of $\mathcal{L}$.
So $X^\theta\cap \mathcal{L}$ is Lagrangian, in particular, isotropic. So $X^\theta$ is isotropic
in the sense of \cite{B_ineq}. This proves (1).

The claim that $\operatorname{V}(M)\subset X^\theta$ is an easy consequence of the definition of
a good filtration on a module. Together with (1), this proves (2).
%
\end{proof}

\begin{defi}\label{defi:HC_full_support}
We consider the category of HC $\A_\lambda$-modules
{\it with full support},  $\overline{\HC}(\A_\lambda,\theta,\zeta)$, defined as the quotient of $\HC(\A_\lambda,\theta,\zeta)$ by the full
subcategory of all modules $M$ such that $\operatorname{V}(M)\cap X^{reg}=\varnothing$.
\end{defi}

We now get back to the situation when $X=\operatorname{Spec}(\C[\Orb])$, where $\Orb$
satisfies (\ref{eq:codim_condition}).
Note that the action of $G$ on $\A_{\param}$ is Hamiltonian, this is similar to Step 1 of the proof
of \cite[Proposition 4.5]{orbit} as well as the discussion in the beginning of
\cite[Section 5.3]{orbit}. Let $\Phi:\g\rightarrow \A_{\param}$
denote the quantum comoment map. We also write $\Phi$ for the induced homomorphism $\U\rightarrow
\A_{\param}$.

Let $\sigma$ be an involution of $\g$ and $\kf:=\g^\sigma$. Let $\kf^\perp$
denote the annihilator of $\kf$ in $\g^*$. Suppose $\Orb\cap\kf^\perp\neq \varnothing$. 
It follows that $\Orb$ is stable under $\sigma$ and hence under $\theta=-\sigma$ as well.
So $\theta$ induces an anti-Poisson involution of $\C[\Orb]$.
By the above discussion in this section, we can talk about
HC $(\A_{\param},\theta)$-modules and, for $\lambda\in \param$, about HC $(\A_\lambda,\theta)$-modules.

Note that $X^\theta$ is the preimage of $\kf^\perp$ in $X$.
In particular, thanks to Lemma \ref{Lem:lag_antiinvolution},
for any $K$-orbit $\Orb_K\subset \Orb\cap \kf^\perp$, we have

\begin{equation}\label{eq:codim_condition_K}
\operatorname{codim}_{\overline{\Orb}_K}\partial\Orb_K\geqslant 2,
\end{equation}
where we write $\partial\Orb_K$ for $\overline{\Orb}_K\setminus \Orb_K$.

\subsection{HC $\A_{\param}$- and $\U$-modules}
For $\lambda\in \param$, let $\J_\lambda$ denote the kernel of $\U\rightarrow \A_\lambda$. The goal of this section is to prove the following
proposition.

\begin{Prop}\label{Prop:HC_bijection}
There is a natural bijection between
\begin{itemize}
\item irreducible $(K,\kappa)$-equivariant HC $(\A_\lambda,\theta)$-modules of GK dimension
$\frac{1}{2}\dim \Orb$,
\item and irreducible $(K,\kappa)$-equivariant $\U/\J_\lambda$-modules
of GK dimension $\frac{1}{2}\dim \Orb$.
\end{itemize}
\end{Prop}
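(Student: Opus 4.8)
The plan is to realize both categories (or at least their sets of irreducibles) as subquotients of a single category of HC $(\A_\param,\theta,\zeta)$-modules and to identify them by tracking supports and annihilators. First I would recall that the quantum comoment map $\Phi:\U\to\A_\param$ exists and is $G$-equivariant, so that any $(K,\kappa)$-equivariant $\A_\lambda$-module restricts, via $\Phi$, to a $(K,\kappa)$-equivariant $\U/\J_\lambda$-module; this gives the forward map on objects, and it evidently preserves the GK dimension since the $\U$-action factors through a subalgebra of $\A_\lambda$ whose associated graded is a subalgebra of $\C[\Orb]$ of the same Krull dimension. The real content is that (a) this restriction functor sends irreducibles of GK dimension $\tfrac12\dim\Orb$ to irreducibles of the same GK dimension, and (b) every such irreducible $\U/\J_\lambda$-module arises uniquely this way.

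For the forward direction, the key point is that $\A_\lambda$ is a finitely generated module over the image $\Phi(\U/\J_\lambda)$ \emph{after restricting to the locus of full support}: by Lemma \ref{Lem:lag_antiinvolution}(3), an irreducible HC $(\A_\lambda,\theta)$-module of GK dimension $\tfrac12\dim\Orb$ has zero annihilator in $\A_\lambda$, hence is faithful; and $\C[\Orb]$ is, generically on $\Orb$, a finite extension of the image of $\C[\overline{\Orb}]\subset \C[\g^*]$ up to normalization, so that away from a proper closed subvariety the two module structures see the same support. Concretely I would argue that if $M$ is such an irreducible $\A_\lambda$-module then any nonzero $\U/\J_\lambda$-submodule $N\subset M$ has $\operatorname{V}(N)$ a nonzero closed conical subvariety of $\overline{\Orb}$ stable under the Poisson bracket by functions in $\C[\Orb]$; since $M$ has full support and $\Orb$ is a single leaf, $\operatorname{V}(N)=\overline{\Orb}=\operatorname{V}(M)$, and then $M/N$ has strictly smaller support, forcing (by an induction on length together with the fact that $\A_\lambda$ is generated over $\Phi(\U)$ in a weak sense, or by a direct argument using that $\A_\lambda$ has no proper two-sided ideals of full support) $N=M$. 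Dually, a $\U/\J_\lambda$-module of GK dimension $\tfrac12\dim\Orb$ is automatically acted on by $\A_\lambda$: one quantizes the isomorphism $\C[\Orb]\cong\operatorname{Frac}$-level extension of $\C[\overline{\Orb}]$ using that $\A_\param\cong$ the (canonical, $\theta$-equivariant) quantization and that the microlocalization of $\A_\lambda$ to $\Orb^{reg}=\Orb$ agrees with that of $\U/\J_\lambda$; since $M$ is supported on all of $\overline{\Orb}$, it is recovered from its microlocalization, and the $\A_\lambda$-action extends by continuity/ the universal property. I would then check that this extension is unique because two $\A_\lambda$-actions extending a given $\U/\J_\lambda$-action differ by a derivation of $\A_\lambda$ trivial on $\Phi(\U)$, and such derivations vanish on the generators coming from $\C[\Orb]$ after completing at the generic point of $\Orb$.

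The compatibility with the $(K,\kappa)$-equivariant and $\theta$-structures is then essentially formal: the quantum comoment map $\Phi$ is $G$-equivariant and intertwines the comoment maps, and $\theta$ on $\A_\param$ restricts to the anti-involution on $\U$ coming from $-\sigma$ (this is how $\theta$ was defined on $\A_\param$, via the universal property lifting the Poisson anti-involution induced by $-\sigma$), so the restriction functor and its inverse both respect the data $(K,\kappa,\theta,\zeta)$; in particular a good filtration for one structure induces one for the other. Finally I would note that the two assignments are mutually inverse on isomorphism classes of irreducibles, giving the desired bijection.

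\textbf{Main obstacle.} The hard part will be the essential surjectivity, i.e.\ showing that an irreducible $(K,\kappa)$-equivariant $\U/\J_\lambda$-module $M$ of GK dimension $\tfrac12\dim\Orb$ extends to an $\A_\lambda$-module and that the extension is unique. One must be careful that $\overline{\Orb}$ need not be normal, so $\C[\overline{\Orb}]\to\C[\Orb]$ is only a generically-finite, generically-an-isomorphism map, and one has to work microlocally on $\Orb$ itself (where, by the codimension $\geqslant 4$ hypothesis and \cite[Corollary 3.2]{orbit}, the quantization is rigid and the period machinery applies) rather than on $\overline{\Orb}$; the full-support hypothesis is exactly what lets one pass back from the microlocalization on $\Orb$ to the global module without losing information. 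Controlling the $(K,\kappa)$-equivariant structure through this microlocalization-and-descent step — rather than just the underlying module — is the technical crux.
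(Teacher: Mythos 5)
The central gap is in your forward direction. You assert that the restriction of an irreducible $(K,\kappa)$-equivariant $\A_\lambda$-module $M$ to $\U/\J_\lambda$ is again irreducible, arguing that any nonzero $\U/\J_\lambda$-submodule $N\subset M$ has full support, so $M/N$ has strictly smaller support, ``forcing $N=M$''. That inference is a non sequitur: a quotient of strictly smaller GK dimension need not vanish, and here it typically does not, since $\U/\J_\lambda\subsetneq \A_\lambda$ with $\operatorname{GK-dim}\bigl(\A_\lambda/(\U/\J_\lambda)\bigr)<\dim\Orb$, so an irreducible $\A_\lambda$-module is in general reducible over $\U$. What is true, and what the paper proves (Lemma \ref{Lem:HC_bijection1}), is that $M$ contains a \emph{unique} irreducible $\U/\J_\lambda$-submodule $M^0$ -- this uses that $M$ is a holonomic module over the central reduction $\U/\mathcal{I}$, hence of finite length, together with the bound $\operatorname{GK-dim}(M/M^0)<\frac{1}{2}\dim\Orb$ -- and the bijection is $M\mapsto M^0$, not $M\mapsto M|_{\U}$. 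Your inverse construction is correspondingly off: one cannot in general equip $N$ itself with an $\A_\lambda$-action by microlocal descent (the global sections of the microlocalization form a strictly larger module, so ``$N$ is recovered from its microlocalization'' fails); instead the paper forms $\A_\lambda\otimes_\U N$, checks that the cokernel of $N\to\A_\lambda\otimes_\U N$ has GK dimension $<\frac{1}{2}\dim\Orb$, and takes the quotient $N'$ by the maximal submodule of small GK dimension. Then $N\hookrightarrow N'$, generally properly, $N'$ is simple over $\A_\lambda$, and $M\mapsto M^0$, $N\mapsto N'$ are mutually inverse. Your uniqueness-via-derivations step addresses a question (two $\A_\lambda$-actions on one and the same space) that does not arise in the correct construction.

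You also dismiss as ``essentially formal'' the passage from $(K,\kappa)$-equivariant $\A_\lambda$-modules to HC $(\A_\lambda,\theta)$-modules, but this is the second substantive half of the paper's argument (Lemma \ref{Lem:HC_bijection2}): one must show that every irreducible $(K,\kappa)$-equivariant $\A_\lambda$-module of GK dimension $\frac{1}{2}\dim\Orb$ admits a good filtration for $\theta$, i.e.\ one for which $\C[\Orb]^{-\theta}$ annihilates the associated graded. The paper proves this by restricting $\gr M$ to $\Orb$, where it becomes a $K$-equivariant vector bundle on the Lagrangian $\Orb\cap\kf^\perp$, invoking the codimension-$4$ hypothesis (which yields codimension $\geqslant 2$ of the boundary on the Lagrangian side, see (\ref{eq:codim_K})) to obtain finite generation of global sections, and transporting the resulting filtration back to $M$ along $M\hookrightarrow\tilde M$. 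None of this follows formally from the equivariance of $\Phi$ or the compatibility of $\theta$ with $-\sigma$, so as written your proposal does not establish the HC side of the bijection.
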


The proposition follows from the next two lemmas.

\begin{Lem}\label{Lem:HC_bijection1}
There is a natural bijection between
\begin{itemize}
\item irreducible  $(K,\kappa)$-equivariant $\A_\lambda$-modules of GK dimension
$\frac{1}{2}\dim \Orb$,
\item and irreducible  $(K,\kappa)$-equivariant $\U/\J_\lambda$-modules
of GK dimension $\frac{1}{2}\dim \Orb$.
\end{itemize}
\end{Lem}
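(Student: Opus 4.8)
The plan is to relate irreducible $(K,\kappa)$-equivariant $\A_\lambda$-modules to $(K,\kappa)$-equivariant $\U/\J_\lambda$-modules by sending an $\A_\lambda$-module $M$ to $M$ regarded as a $\U$-module via the quantum comoment map $\Phi\colon\U\to\A_\lambda$, which by construction kills $\J_\lambda$. In the reverse direction, given an irreducible $(K,\kappa)$-equivariant $\U/\J_\lambda$-module $N$ of GK dimension $\tfrac12\dim\Orb$, I would form $\A_\lambda\otimes_{\U/\J_\lambda}N$ (using that $\U/\J_\lambda\hookrightarrow\A_\lambda$, which holds because $\Phi$ has central kernel on $\U$ and $\J_\lambda$ is a two-sided ideal) and argue that it has a unique irreducible quotient of the correct GK dimension, or equivalently pass through an appropriate subquotient. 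The first step, then, is to make these two assignments precise and check that both land in the stated categories; the key numerical input is that $\gr\A_\lambda\cong\C[\Orb]$, so $\A_\lambda$ has GK dimension $\dim\Orb$ and any nonzero $\A_\lambda$-module has GK dimension $\geqslant\tfrac12\dim\Orb$ by the Bernstein inequality (cf.\ \cite[Theorem 1.2]{B_ineq} as invoked in Lemma \ref{Lem:lag_antiinvolution}), so modules of GK dimension exactly $\tfrac12\dim\Orb$ are the ``smallest'' ones.

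The heart of the argument is the following: if $M$ is an irreducible $(K,\kappa)$-equivariant $\A_\lambda$-module of GK dimension $\tfrac12\dim\Orb$, I claim $M$ is also irreducible as a $(K,\kappa)$-equivariant $\U$-module. Indeed, $\A_\lambda$ is generated as a $\U$-bimodule (indeed as a left $\U$-module, since $\gr\A_\lambda=\C[\Orb]=\U/\text{(ideal)}$ at the associated graded level is generated in degree $0$ over the image of $\g$... more carefully, $\C[\Orb]$ is a quotient of $\operatorname{Sym}\g$) — so a $\U$-submodule of $M$ that is also $K$-stable is automatically an $\A_\lambda$-submodule, because the $K$-action together with $\Phi(\g)$ generates $\A_\lambda$ acting on $M$ (this is where one uses that $K$ acts rationally and the grading/filtration compatibility: the span of $K\cdot\Phi(\U)$ is all of $\A_\lambda$). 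Hence irreducibility is preserved. For the converse, given irreducible $N$ over $\U/\J_\lambda$, the module $\A_\lambda\otimes_{\U/\J_\lambda}N$ is $(K,\kappa)$-equivariant and finitely generated; I would show that all its composition factors have GK dimension $\tfrac12\dim\Orb$ (it cannot be larger by holonomicity/Lemma \ref{Lem:lag_antiinvolution}, and the ``smallest GK dimension'' subquotient construction from \cite[Section 3.2]{HC} produces a canonical irreducible one), and that exactly one of them, call it $\tilde N$, satisfies $\tilde N|_{\U}\cong N$ — this last point follows because applying $\Hom_{\U/\J_\lambda}(N,-)$ or restricting back recovers $N$ as the ``generating'' part. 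Mutual inverseness then follows from these two constructions being adjoint and the irreducibility statements above.

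The main obstacle I anticipate is the bookkeeping around the equivalence: showing that the two functors $M\mapsto M|_\U$ and $N\mapsto$ (canonical smallest-GK subquotient of $\A_\lambda\otimes_{\U/\J_\lambda}N$) are mutually inverse \emph{on isomorphism classes of irreducibles}, rather than on whole categories. The subtlety is that $\A_\lambda\otimes_{\U/\J_\lambda}N$ need not be irreducible over $\A_\lambda$ a priori, so one must pin down the correct subquotient and rule out ``spurious'' composition factors of GK dimension $\tfrac12\dim\Orb$ that do not restrict back to $N$; here the irreducibility-preservation claim of the previous paragraph is exactly what's needed, since it forces every GK-dimension-$\tfrac12\dim\Orb$ composition factor to remain irreducible over $\U/\J_\lambda$, and then a multiplicity/length count (using that $N$ appears with multiplicity one in the restriction, which one checks on associated graded, where $\gr(\A_\lambda\otimes_{\U/\J_\lambda}N)$ is a $\C[\Orb]$-module generated by $\gr N$) closes the loop. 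The role of the $\Z/d\Z$-grading by $\zeta$ and the equivariance parameter $\kappa$ is purely formal throughout and does not create genuine difficulty, as these structures are transported along $\Phi$ verbatim.
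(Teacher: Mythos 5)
Your proposal has a genuine gap in the forward direction, and it propagates into the argument that rules out ``spurious'' composition factors. You claim that an irreducible $(K,\kappa)$-equivariant $\A_\lambda$-module $M$ of GK dimension $\tfrac12\dim\Orb$ stays irreducible as a $(K,\kappa)$-equivariant $\U$-module, on the grounds that a $K$-stable $\U$-submodule is automatically $\A_\lambda$-stable because ``the span of $K\cdot\Phi(\U)$ is all of $\A_\lambda$.'' This mechanism fails: the $K$-action on $\A_\lambda$ factors through $G$, and the quantum comoment map $\Phi$ is $G$-equivariant, so $K$ \emph{preserves} the subalgebra $\Phi(\U)=\U/\J_\lambda\subset\A_\lambda$ and $K\cdot\Phi(\U)=\Phi(\U)$. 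In general $\U/\J_\lambda$ is a proper subalgebra of $\A_\lambda$ (this is exactly the situation the lemma is designed to handle; only when $\J_\lambda$ is maximal do the two coincide), so $K$-stability buys nothing and $M|_{\U}$ need not be irreducible. Accordingly, the correct map is not $M\mapsto M|_\U$ but $M\mapsto M^0$, where $M^0$ is an irreducible $\U$-submodule, and one must prove that $M^0$ is the \emph{unique} irreducible $\U/\J_\lambda$-submodule of full GK dimension.

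The ingredient you are missing, and which the paper uses to close both directions, is a multiplicity comparison on $\Orb$: since $\gr\A_\lambda=\C[\Orb]$ has multiplicity $1$ along $\Orb$ and $\U/\J_\lambda\subset\A_\lambda$ has multiplicity at least $1$, the bimodule $\A_\lambda/(\U/\J_\lambda)$ has GK dimension $<\dim\Orb$ and is annihilated by an ideal strictly containing $\J_\lambda$. Combined with holonomicity (your appeal to \cite{B_ineq} is the right tool here, applied to $\U/\mathcal{I}$-modules supported on $\kf^\perp\cap\mathcal{N}$), this forces $\operatorname{GK-dim}(M/M^0)<\tfrac12\dim\Orb$, which is what makes $M^0$ unique and well defined, and likewise shows that $N\to\A_\lambda\otimes_\U N$ is injective with small cokernel; one then takes $N'$ to be the quotient of $\A_\lambda\otimes_\U N$ by its maximal submodule of GK dimension $<\tfrac12\dim\Orb$ and checks simplicity of $N'$ directly (any $\A_\lambda$-submodule not meeting $N$ would have small GK dimension). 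Your reverse-direction sketch is broadly parallel to this, but your way of excluding extra GK-dimension-$\tfrac12\dim\Orb$ factors leans on the false irreducibility-preservation claim and on an unproved ``multiplicity one of $N$ in the restriction''; both should be replaced by the estimate $\operatorname{GK-dim}\bigl(\A_\lambda/(\U/\J_\lambda)\bigr)<\dim\Orb$ together with the holonomicity bound.
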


\begin{Lem}\label{Lem:HC_bijection2}
Every irreducible $(K,\kappa)$-equivariant $\A_\lambda$-module of GK dimension $\frac{1}{2}\dim \Orb$
admits a good filtration (with respect to $\theta$).
\end{Lem}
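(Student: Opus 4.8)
The plan is to reduce the existence of a $\theta$-good filtration to the property that the module $M$ is \emph{holonomic} in the appropriate sense, and then to use the Rees-algebra/$\hbar$-deformation machinery of Section \ref{SS_weakly_HC}. First I would observe that any $(K,\kappa)$-equivariant $\A_\lambda$-module $M$ of GK dimension $\tfrac12\dim\Orb$ automatically has full support on $\Orb$ (its associated variety is $\tfrac12\dim X$-dimensional and, being $G$-stable, must contain $\Orb$), so in particular $\operatorname{Ann}_{\A_\lambda}(M)=0$ by Lemma \ref{Lem:lag_antiinvolution}(3), and the pullback of $M$ along $\Phi:\U\to\A_\lambda$ is an irreducible $(K,\kappa)$-equivariant HC $(\g,K)$-module. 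The point is that the \emph{HC condition} in the sense of Example \ref{Ex:HC_classical}/Definition \ref{defi:HC} is automatic for any finitely generated $\A_\lambda$-module on which $\kf$ acts via $\Phi$: every such module already admits a $\kf$-stable good filtration coming from a finite-dimensional generating subspace, and because $\kf$ is reductive such a filtration can be made $K$-stable. So the content of the lemma is purely the condition (ii) of Definition \ref{defi:HC}, i.e.\ that the good filtration can be chosen so that $(\theta(a)-a)M_{\leqslant j}\subset M_{\leqslant i+j-d}$ for $a\in\A_{\leqslant i}$ — equivalently (passing to Rees modules as in Lemma \ref{Lem:essential_surjectivity} and Remark \ref{Rem:flat_wHC}), that the associated graded of $M$ for some good filtration is set-theoretically supported on $X^\theta=$ (preimage of $\kf^\perp$).

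Next I would pass to the microlocal/$\hbar$-adic picture. Take the Rees module $R_\hbar(M)$ for an arbitrary $K$-stable good filtration; it is a graded $(K,\kappa)$-equivariant module over $\Dcal_\hbar=R_\hbar(\A_\lambda)$, finitely generated and with $\gr M$ supported on $\overline{\Orb}$ (since $M$ has GK dimension $\tfrac12\dim\Orb$ and full support). Microlocalizing to $X^{reg}=\Orb$, we get a coherent module over the microlocalization $\Dcal_\hbar|_{\Orb}$; the key local statement is that on a formal slice transverse to $\Orb^\theta$ inside $\Orb$ this microlocalization looks like a completed Weyl algebra $\Dcal_\hbar$ of the symplectic vector space $T_\chi\Orb$, with $\theta$ acting as an antisymplectic involution, and Lemma \ref{Lem:wHC_formal_Weyl} then forces the microlocalized module to be free of finite rank over $\C[[L,\hbar]]$ — in particular its support is exactly the Lagrangian $\Orb^\theta=\Orb\cap\kf^\perp$. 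The slogan is: \emph{on $\Orb$ the equivariance already pins the characteristic variety onto $X^\theta$}, so there is nothing to do over the smooth locus. The remaining work is to spread this out: over the codimension-$\geqslant 4$ boundary one uses that $\overline\Orb$ has rational Gorenstein (symplectic) singularities and that $M$, being holonomic (Lemma \ref{Lem:lag_antiinvolution}(2)) with no submodule of smaller support, has a good filtration whose associated graded is torsion-free along $\Orb$; a filtration that is $\theta$-compatible over $\Orb$ then automatically extends to a $\theta$-compatible one over all of $\overline\Orb$ because $\C[\overline\Orb]=\C[\Orb]$ (normality plus the codimension condition) and $\gr M$ embeds in its restriction to $\Orb$.

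Concretely the steps, in order, are: (1) show full support and $\operatorname{Ann}=0$, hence reduce to the HC $(\g,K)$-picture and to verifying condition (ii) of Definition \ref{defi:HC}; (2) form $R_\hbar(M)$ for a $K$-stable good filtration and microlocalize to $\Orb$; (3) identify the microlocalization on a formal transverse slice with a completed Weyl algebra carrying the antisymplectic $\theta$, and invoke Lemma \ref{Lem:wHC_formal_Weyl} to conclude the slice module is free over $\C[[L,\hbar]]$, so $\operatorname{V}(\gr M)\cap\Orb=\Orb\cap\kf^\perp$; (4) use $\C[\overline\Orb]=\C[\Orb]$ and torsion-freeness of $\gr M$ along $\Orb$ to upgrade the chosen good filtration (or a modification of it) to one that is good for $\theta$ in the sense of Definition \ref{defi:HC}(ii); (5) make the whole construction $K$-equivariant using reductivity of $K$. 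The main obstacle I anticipate is step (3)–(4): one must be careful that the $\theta$-compatibility obtained microlocally on $\Orb$ is not just a statement about the \emph{support} of $\gr M$ but can actually be realized by an honest filtration on $M$ (a priori different good filtrations give different $\gr M$), and controlling what happens across the singular boundary $\partial\Orb$ — even though it has codimension $\geqslant 4$ — requires knowing that no ``spurious'' associated-variety components can appear there, which is exactly where holonomicity and the inequality of \cite{B_ineq}, together with normality of $\overline\Orb$, have to be used carefully.
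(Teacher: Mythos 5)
Your overall strategy (characterize $\theta$-goodness by the vanishing of the $\C[\Orb]^{-\theta}$-action on $\gr M$, handle the open orbit using equivariance, then repair the filtration across the codimension $\geqslant 4$ boundary) is the same as the paper's, but the two decisive steps are not actually carried out. First, step (3) is circular as written: Lemma \ref{Lem:wHC_formal_Weyl} classifies modules that are already \emph{weakly HC} over the completed Weyl algebra, i.e.\ modules for which $\Dcal_\hbar^{-\theta}M_\hbar\subset \hbar M_\hbar$ holds (Definition \ref{defi:wHC}, Remark \ref{Rem:flat_wHC}) -- and that containment is precisely the statement you are trying to prove, in symbol form the condition that $\C[\Orb]^{-\theta}$ kills $\gr M$. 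What actually does the work on $\Orb$ is more elementary and does not use that lemma: for a $K$-stable good filtration, equivariance forces the image of $\kf$ in $\C[\Orb]$ to annihilate $\gr M$, so $(\gr M)|_{\Orb}$ is a $K$-equivariant coherent sheaf on the smooth Lagrangian $\Orb\cap\kf^\perp$, hence a vector bundle with reduced scheme-theoretic support; since the ideals $\langle\C[\Orb]^{-\theta}\rangle$ and $\C[\Orb]\kf$ have the same localization on $\Orb$ with reduced quotients, $\C[\Orb]^{-\theta}$ kills $(\gr M)|_{\Orb}$. (Also, your "associated variety is $G$-stable" remark is wrong -- it is only $K\times\C^\times$-stable -- though nothing essential hinges on it.)

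Second, step (4) is exactly the place you flag as "the main obstacle," and you do not close it: for the filtration you started with there is no reason that $\gr M$ injects into $\Gamma((\gr M)|_{\Orb})$, since $\gr M$ may have submodules supported on $\partial\Orb\cap\kf^\perp$, and "torsion-freeness of $\gr M$ along $\Orb$" is not a consequence of irreducibility or holonomicity of $M$. The paper resolves this by \emph{changing} the filtration: using (\ref{eq:codim_K}) (codimension $\geqslant 2$ of the Lagrangian boundary, which comes from the codimension $\geqslant 4$ hypothesis) and the vector bundle property, $\Gamma((\gr M)|_{\Orb})$ is finitely generated; one then sets $\tilde{M}_\hbar:=\Gamma(M_\hbar^{loc}|_{\Orb})^{fin}$, specializes at $\hbar=1$ to a filtered module $\tilde{M}$ with $\gr\tilde{M}\hookrightarrow\Gamma((\gr M)|_{\Orb})$, checks that $M\rightarrow\tilde{M}$ is injective with cokernel supported on $\partial\Orb$, and lets $M$ inherit the filtration from $\tilde{M}$; the new $\gr' M$ embeds into $\gr\tilde{M}$ and is therefore killed by $\C[\Orb]^{-\theta}$, which gives $\theta$-goodness. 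Without this construction (or a substitute for it), your argument establishes the desired property only microlocally on $\Orb$, not for an honest filtration of $M$.
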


\begin{proof}[Proof of Lemma \ref{Lem:HC_bijection1}]
The proof is in several steps.

{\it Step 1}.
Take a simple object  $M\in\A_\lambda\operatorname{-mod}^{K,\kappa}$.
It is a finitely generated $(K,\kappa)$-equivariant $\U/\J_\lambda$-module
because $\A_\lambda$ is a finitely generated $\U$-module.
Note that $\J_\lambda$
intersects the center of $\U$ at a maximal ideal, see the discussion in
the beginning of \cite[Section 5.3]{orbit}. Let $\mathcal{I}$ denote
the (two-sided) ideal in $\U$ generated by this intersection. So $\U/\mathcal{I}$
is a filtered quantization of the nilpotent cone  $\mathcal{N}\subset \g^*$.

{\it Step 2}.
Every finitely generated $(K,\kappa)$-equivariant $\U/\mathcal{I}$-module, $M'$, is classically known to have finite length (\cite[Corollary 3.4.7 and Theorem 4.2.1]{Wallach}). Note that $M'$ is supported on
$\kf^\perp\cap \mathcal{N}$. The intersection of $\kf^\perp$ with every $G$-orbit in $\mathcal{N}$ is a Lagrangian subvariety.
Also, by \cite[Theorem 9.11]{KL}, we have $\operatorname{dim} V(M')\geqslant \frac{1}{2}\operatorname{dim} (\U/\operatorname{Ann}_{\U}M')$.
So the following conditions are equivalent:
\begin{itemize}
\item $\operatorname{Ann}_\U(M')\supsetneq \mathcal{J}_\lambda$,
\item and $V(M')\cap \Orb=\varnothing$.
\item $\dim V(M')<\frac{1}{2}\dim \Orb$.
\end{itemize}


{\it Step 3}.
Let $M^0$ be an irreducible $\U$-submodule of $M$. Then $\A_\lambda M^0$
is a nonzero $\A_\lambda$-submodule hence $\A_\lambda M^0=M$.
Recall that $\A_\lambda$
is finite over $\U/\J_\lambda$. It follows that
$$\operatorname{GK-dim}M^0=\operatorname{GK-dim}M,$$
where we write $\operatorname{GK-dim}$ for the Gelfand-Kirillov dimension of $\U$-modules.

Also note that the multiplicity of $\A_\lambda$ on $\Orb$ is $1$, while the multiplicity of
$\U/\J_\lambda$ is at least $1$.  Hence $\operatorname{GK-dim}(\A_\lambda/ (\U/\J_\lambda))<\dim \Orb$.
In particular, $\A_\lambda/(\U/\J_\lambda)$ is annihilated by an ideal $\J$
that strictly contains $\J_\lambda$. Since $M$ and hence $M/M^0$ are holonomic $\U/\mathcal{I}$-modules, it follows that
$$\operatorname{GK-dim}(M/M^0)<\frac{1}{2}\dim \Orb.$$

{\it Step 4}.
So if $M$ has GK dimension $\frac{1}{2}\dim \Orb$, then $M^0$ is the unique irreducible $\U/\J_\lambda$-submodule of $M$. This gives rise to a
map between the sets of irreducibles with GK dimension $\frac{1}{2}\dim \Orb$:
 $$\operatorname{Irr}_{\frac{1}{2}\dim \Orb}(\A_\lambda\operatorname{-mod}^{K,\kappa})\rightarrow \operatorname{Irr}_{\frac{1}{2}\dim \Orb}(\U/\J_\lambda\operatorname{-mod}^{K,\kappa}), \quad M\mapsto M^0.$$

Let us prove that this map is a bijection. Take $N\in \operatorname{Irr}_{\frac{1}{2}\dim \Orb}(\U/\J_\lambda\operatorname{-mod}^{K,\kappa})$.
Consider the $\A_\lambda$-module $\A_\lambda\otimes_\U N$.
It is $(K,\kappa)$-equivariant because the action of $K$ on $\A_\lambda$ is Hamiltonian.
We have the map
$N\rightarrow \A_\lambda\otimes_\U N$ induced by $\U/\J_\lambda\hookrightarrow \A_\lambda$.
Similarly to Step 3 in the proof, the cokernel of this map is annihilated by an ideal strictly containing $\J_\lambda$
and is holonomic. It follows that the map $N\rightarrow \A_\lambda\otimes_\U N$ is nonzero, hence an embedding. As noted above in  Step 2, this means that the cokernel has GK
dimension less than $\frac{1}{2}\dim \Orb$.

Let $N'$ be the quotient of $\A_\lambda\otimes_\U N$
by the maximal $\A_\lambda$-submodule with GK dimension $<\frac{1}{2}\dim \Orb$.
The embedding $N\hookrightarrow \A_\lambda\otimes_\U N$ gives rise
to a $\U/\J_\lambda$-module embedding $N\rightarrow N'$.
Therefore $N'=\A_\lambda N$. If an $\A_\lambda$-submodule of $N'$ intersects trivially with $N$ then it has
GK-dimension $<\frac{1}{2}\dim \Orb$. But $N'$ has no such submodules. So
every $\A_\lambda$-submodule of $N'$ must contain $N$. Therefore $N'$ is simple.

It is easy to see that the maps $M\mapsto M^0$ and $N\mapsto N'$ are mutually inverse.
\end{proof}

\begin{proof}[Proof of Lemma \ref{Lem:HC_bijection2}]
Let $M$ be an irreducible $(K,\kappa)$-equivariant $\A_\lambda$-module.
Let $M=\bigcup_{j}M_{\leqslant j}$ be a filtration such that $\gr M$
is a finitely generated $\C[\Orb]$-module. We claim that this filtration
is good (for $\theta$) if and only if
\begin{itemize}
\item[(*)] $a$ acts by zero on $\gr M$ for every $a\in \C[\Orb]^{-\theta}$,
the $-1$ eigenspace for the action of $\theta$ on $\C[\Orb]$.
\end{itemize}
It is clear that every good filtration for $\theta$ has the desired property.
On the other hand, $\param^*\subset \gr\A_{\param}$ acts by
$0$ on $\gr M$ and the image of $(\gr \A_{\param})^{-\theta}$
in $\C[\Orb]=(\gr \A_{\param})/(\param^*)$ coincides with
$\C[\Orb]^{-\theta}$. So (*) implies the filtration is good
for $\theta$ (recall that in this case $d=1$).

Let $\A_{\lambda,\hbar}$, $\U_\hbar$ denote the Rees algebras of $\A_{\lambda}$, $\U$
and let $\A_{\lambda,\hbar}^{loc}$, $\U_\hbar^{loc}$ denote their microlocalizations
to $\g^*$.  Note that the natural homomorphism $\U_\hbar^{loc}\rightarrow \A_{\lambda,\hbar}^{loc}$
is an epimorphism over $\Orb$. Pick a $K$-stable filtration on $M$ such that
$\gr M$ is finitely generated over $\C[\kf^\perp]$. Note that $\gr M$ is set
theoretically supported on  $\kf^\perp\cap \overline{\Orb}$ and the
restriction to $\Orb$ is, in fact, scheme theoretically supported on
the Lagrangian subvariety $\Orb\cap \kf^\perp$.
The ideals $\langle \C[\Orb]^{-\theta} \rangle$ generated by $\C[\Orb]^{-\theta}$ and the ideal $\C[\Orb]\kf$ in $\C[\Orb]$ have the same localizations
to $\Orb$, as the localizations to $\Orb$ of the quotients by these ideals are reduced. It follows that $\C[\Orb]^{-\theta}$ acts by
$0$ on $(\gr M)|_{\Orb}$. Since $\operatorname{codim}_{\overline{\Orb}}\partial \Orb\geqslant 4$,
we have
\begin{equation}\label{eq:codim_K}
\operatorname{codim}_{\overline{\Orb}\cap \kf^\perp} (\partial\Orb\cap \kf^\perp)\geqslant 2.
\end{equation}
Note that $(\gr M)|_{\Orb}$ a $K$-equivariant coherent sheaf on $\Orb\cap \kf^{\perp}$ hence a vector bundle. Therefore, thanks to (\ref{eq:codim_K})
$\Gamma((\gr M)|_{\Orb})$ is finitely generated over $\C[\overline{\Orb}\cap \kf^\perp]$.
It follows that $\Gamma((\gr M)|_{\Orb})$ is finitely generated over $\C[\Orb]$.
Let $M_\hbar^{loc}$ denote the microlocalization of $M_\hbar$ to $\g^*$. It makes to sense
to restrict $M_\hbar^{loc}$ to $\Orb$.
The global sections $\Gamma(M_\hbar^{loc}|_{\Orb})$
is finitely generated over the $\hbar$-adic completion of $\A_{\lambda,\hbar}$.
Let $\tilde{M}_\hbar:=\Gamma(M_\hbar^{loc}|_{\Orb})^{fin}$ denote the subspace of $\C^\times$-finite
elements and let $\tilde{M}$ be the specialization of this module to $\hbar=1$.
The latter is a filtered module such that $\gr \tilde{M}$ admits a natural embedding into
$\Gamma((\gr M)|_{\Orb})$. Therefore, $\C[\Orb]^{-\theta}$ acts by $0$
on $\gr\tilde{M}$.

We have a natural filtered homomorphism $M\rightarrow \tilde{M}$. The composition of the associated graded of this homomorphism with the embedding
$\gr\tilde{M}\hookrightarrow \Gamma((\gr M)|_{\Orb})$ is the natural
map $\gr M\rightarrow \Gamma((\gr M)|_{\Orb})$. It follows that $M\rightarrow \tilde{M}$
is an inclusion such that $\tilde{M}/M$ is supported on $\partial\Orb$. In particular,
$M$ inherits a good filtration from $\tilde{M}$. We denote the associated graded module of $M$ with respect to this new filtration by $\gr' M$. By the construction,
$\gr' M\hookrightarrow \gr\tilde{M}$. In particular,
$\C[\Orb]^{-\theta}$ acts by $0$ on $\gr' M$. We have constructed the required
filtration on $M$ finishing the proof of the lemma.
\end{proof}

\section{Quasi-classical limits and quantizations}\label{S_qclass_quant}

\subsection{Picard Lie algebroids and  twisted local systems }

\subsubsection{Picard algebroids and TDOs} \label{subsec:pic_defn}

Let $X$ be a smooth variety/scheme and $\T_X$ be its tangent sheaf. We first review the notion of Picard algebroid, its basic properties and relations with twisted differential operators and twisted $\mathscr{D}$-modules. We will see in Section \ref{subsec:RH} that modules over Picard algebroids are equivalent to twisted local systems. A standard reference is \cite[Section 2]{BB}. For the defintion and basic properties of general Lie algebroids, see \cite[Section 1.2]{BB}.

\begin{defi} \label{defn:pic}
	A {\it Picard Lie algebroid}, or simply a {\it Picard algebroid} on $X$ is a Lie algebroid $\TT$ with the ($\OO_X$-linear) anchor map $\eta: \TT \to\T_X$ and a central section $1_{\TT}$ of $\ker \eta$, such that $\ker \eta = \OO_X \cdot 1_{\TT}$, i.e., the sequence of sheaves of $\OO_X$-modules,
	\[  0 \to \OO_X \xrightarrow{\iota_\TT} \TT \xrightarrow{\eta} \T_X \to 0\]
	is exact.
\end{defi}

A morphism of Picard algebroids is a morphism of Lie algebroids that preserves the $1_{\TT}$'s. It follows that any morphism of Picard algebroids is in fact an isomorphism, i.e., the category of all Picard algebroids over $X$ forms a groupoid. We write $\pic(X)$ for the set of isomorphsim classes of Picard algebroids over $X$. The Baer sum construction defines  a natural structure of complex vector space on $\pic(X)$. The zero vector  is given by the trivial Picard algebroid $\TT_{\operatorname{triv}} = \OO_X \oplus \T_X$.

For any line bundle $L$ over $X$, we write $\TT(L)$ for the associated Picard algebroid, i.e., the sheaf  over $X$ of local algebraic differential operators  with coefficients in $L$ of order less or equal than $1$. Let $\operatorname{Pic}(X)$ be the Picard group of $X$, then $L \mapsto \TT(L)$ defines a homomorphism of $\TT(\cdot): \operatorname{Pic}(X) \to \pic(X)$ of abelian groups which transfrom tensor product to the Baer sum. We abuse the notation and write $L^r := r \TT(L)$ for $r \in \C$, so that when $r$ is an integer, $L^r$ stand for both the $r$-tensor power of $L$ as a line bundle and its associated Picard algebroid.

Now we recall the classification of Picard algebroids. Let $\TT$ be a Picard algebroid over $X$. Let $H^\bullet_F(X)$ denote the hypercohomology of the first piece $F^1\Omega_X^\bullet$ of the de Rham complex $\Omega_X^\bullet$ with respect to the {\it filtration bête}. In other words,
\[ F^1\Omega_X^\bullet := (0 \to \Omega_X^1 \to \Omega_X^2 \to \cdots) \]
is the algebraic de Rham complex of $X$ with $\Omega_X^0 = \OO_X$ replaced by $0$ (in degree $0$). The natural inclusion $F^1\Omega_X^\bullet \hookrightarrow \Omega_X^\bullet$ of complexes induces a natural long exact sequnce of hypercohomology
\begin{equation}\label{exsq:DR}
	\cdots \to H_F^\bullet(X) \to H^\bullet_{DR}(X) \to H^\bullet(X, \OO_X) \to H_F^{\bullet + 1}(X) \to \cdots.
\end{equation}
One can naturally associate a cohomology class $c_1(\TT) \in H^2_F(X)$ to $\TT$.  
The map $c_1(\cdot): \pic(X) \to H^2_F(X)$ is an isomorphism of vector spaces (\cite[Lemma 2.1.6]{BB}). In particular, we have $c_1(\TT_{\operatorname{triv}}) = 0 \in H^2_F(X)$. The composition of $c_1: \pic(X) \xrightarrow{\sim} H^2_F(X)$ with the natural map $H^2_F(X) \to H^2_{DR}(X)=H^2(X,\C)$ is the {\it topological first Chern class} map, denoted by $c^{\operatorname{top}}_1 : \pic(X) \to H^2(X,\C)$. For any algebraic line bundle $L$, the class $c^{\operatorname{top}}_1(\TT(L))$ is just the usual topological first Chern class of $L$.

Now we discuss a connection of Picard algebroids with sheaves of twisted differential operators.
Given a Picard algebroid $\TT$,
consider the universal enveloping sheaf $\mathscr{U}(\TT)$ of algebras.
Let $\mathscr{D}_\TT$ denote the quotient of $\mathscr{U}(\TT)$ modulo the ideal generated by the central section $1 - 1_\TT$, where $1$ the  identity section of $\mathscr{U}(\TT)$ and $1_\TT$ is the central section of $\TT$. Then $\mathscr{D}(\TT)$ is a {\it sheaf  of twisted differential operators (TDO)} in the sense of \cite[Definition 2.1.1]{BB}. In particular, it is equipped with
\begin{itemize}
	\item
	a morphism of algebras $\iota_{\mathscr{D}}: \OO_X \to \mathscr{D}_{\TT}$, and
	\item
	an increasing exhaustive algebra filtration $\mathscr{D}_{\TT, \leqslant i}$, $i \geqslant0$.
\end{itemize}
Both data are inherited from the similar structures on $\mathscr{U}(\TT)$. The morphism $\iota_{\mathscr{D}}$ identifies $\OO_X$ with $\mathscr{D}_{\TT, \leqslant 0}$ and the obvious morphism from the symmetric algebra $S(\mathscr{D}_{\TT, \leqslant 1} / \mathscr{D}_{\TT, \leqslant 0})$ into $\operatorname{gr} \mathscr{D}_{\TT}$ is an isomorphism of  commutative graded $\OO_X$-algebras.

In particular, taking the Lie bracket makes $\mathscr{D}_{\TT, \leqslant 1}$ into a Picard algebroid and the composite map $\TT \hookrightarrow \mathscr{U}(\TT) \to \mathscr{D}_{\TT}$ identifies $\TT$ with $\mathscr{D}_{\TT, \leqslant 1}$ as Picard algebroids. It follows that $\TT \mapsto \mathscr{D}_\TT$ gives an equivalence between the category of Picard algebroids and the category of sheaves of TDOs. Under this equivalence, the trivial Picard algebroid $\TT_{\operatorname{triv}}$ corresponds to the sheaf $\mathscr{D}_X$ of usual (untwisted) differential operators on $X$.

We proceed to modules.
A $\TT$-module structure on a quasi-coherent $\OO_X$-module $\EE$ is equivalent to a (left) $\mathscr{D}_\TT$-module structure on $\EE$, such that the $\OO_X$-action on $\EE$ via $\iota_{\mathscr{D}}: \OO_X \to \mathscr{D}_{\TT}$ agrees with the $\OO_X$-module structure on $\EE$. Let $\coh(\TT)$ denote the abelian category of all $\TT$-modules over $X$ which are $\OO_X$-coherent and let $\coh(\mathscr{D}_\TT)$ denote the abelian category of all $\mathscr{D}_\TT$-modules over $X$ which are $\OO_X$-coherent. Then we have a canonical equivalence of categories $\coh(\TT) \simeq \coh(\mathscr{D}_\TT)$. It is well-known that a $\mathscr{D}_\TT$-module or a $\TT$-module $\EE$ is coherent as an $\OO_X$-module if and only if it is locally free (\cite[Lemma 2.3.1 (i)]{BB}).

Assume now that $\EE$ is an algebraic vector bundle of rank $rk(\EE)$ over $X$. A $\TT$-module structure on  $\EE$ induces an isomorphism $\TT \xrightarrow{\sim} \frac{1}{rk(\EE)} \TT(\det(\EE))$ of Picard algebroids, where $\det(\EE) = \bigwedge^{\operatorname{top}} \EE$ is the determinant line bundle of $\EE$ (\cite[Lemma 2.3.1 (ii)]{BB}). Moreover, the $\TT$-module structure induces a projective flat connection on $\EE$, i.e, a flat connection on the projectivization $\mathbb{P}(\EE)$ of $\EE$. Conversely, by \cite[Lemma 2.3.2]{BB} and \cite[Section 2]{BC}, an isomorphism $\TT \xrightarrow{\sim} \frac{1}{rk(\EE)} \TT(\det(\EE))$ and a projective connection on $\EE$ determine the $\TT$-module structure on $\EE$.

This discussion proves the following claim.

\begin{Lem}\label{Lem:algebroid_w_coh_module} If $\TT$ admits a nonzero coherent module, then
	$\TT$ is of the form $r \TT(L)$, where $L$ is an algebraic line bundle and $r \in \mathbb{Q}$.
\end{Lem}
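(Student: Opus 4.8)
The plan is to read the statement off from the dictionary between $\TT$-module structures and line bundles recalled in the discussion immediately preceding the lemma. So suppose $\TT$ admits a nonzero $\OO_X$-coherent module $\EE$; the task is to produce the line bundle $L$ and the scalar $r$ out of $\EE$.

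The first step is to observe that, by \cite[Lemma 2.3.1(i)]{BB}, a coherent $\TT$-module (equivalently, a coherent $\mathscr{D}_\TT$-module) is automatically locally free, so $\EE$ is an algebraic vector bundle on $X$. At this point I would reduce to the case where $X$ is connected: a Picard algebroid and its modules decompose as products over the connected components of $X$, and a nonzero module is nonzero on at least one component, so it suffices to argue there. With $X$ connected, $\EE$ has constant rank $n := rk(\EE)$, and $n\geqslant 1$ since $\EE\neq 0$.

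The second step is to apply \cite[Lemma 2.3.1(ii)]{BB}: a rank-$n$ vector bundle $\EE$ carrying a $\TT$-module structure forces an isomorphism of Picard algebroids $\TT \xrightarrow{\sim} \tfrac{1}{n}\TT(\det\EE)$, where $\det\EE = \bigwedge^{\operatorname{top}}\EE$ is an algebraic line bundle. Setting $L := \det\EE$ and $r := 1/n \in \mathbb{Q}$ then gives $\TT \cong r\TT(L)$, which is exactly the assertion.

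I do not anticipate a genuine obstacle: the entire content is the module-versus-algebroid correspondence established above, and the only mild point is the harmless reduction to connected $X$ so that the rank $n$ — and hence $r$ — is a single well-defined number (if one prefers, one can simply state and use the lemma for connected, or irreducible, $X$, which is the only case needed in the sequel).
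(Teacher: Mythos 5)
Your proof is correct and is essentially the paper's own argument: the lemma is stated as an immediate consequence of the preceding discussion, namely that a coherent $\TT$-module is locally free by \cite[Lemma 2.3.1(i)]{BB} and that a $\TT$-module structure on a rank-$n$ bundle $\EE$ yields $\TT \cong \frac{1}{n}\TT(\det\EE)$ by \cite[Lemma 2.3.1(ii)]{BB}. The remark about reducing to connected $X$ is a harmless extra precaution not spelled out in the paper.
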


To finish this part we discuss pullbacks.
Let $\varphi:Y \to X$ be a morphism of smooth varieties. For a Picard algebroid, or more generally, a Lie algebroid $\TT$ over $X$, we define the pullback Picard algebroid $\varphi^{\circ} \TT$ as in \cite[Section 1.4.6 and 2.2]{BB}. The pullback map $\TT \mapsto \varphi^{\circ}\TT$ gives a linear map $\varphi^{\circ}: \pic(X) \to \pic(Y)$. Under the identifications $\pic(\cdot) \simeq H^2_F(\cdot)$, $\varphi^{\circ}$ is nothing else but the map $\varphi^*: H^2_F(X) \to H^2_F(Y)$ induced by the morphism $\varphi^* : \varphi^{-1} \Omega^{\geqslant 1}_X \to \Omega^{\geqslant 1}_Y$ of complexes (\cite[Section 2.2]{BB}). We also have standard pullback map $\varphi^*: \operatorname{Pic}(X) \to \operatorname{Pic}(Y)$ of line bundles given by $L \mapsto \varphi^*L$. The pullback maps intertwine the natural maps $\operatorname{Pic}(\cdot) \to \pic(\cdot) \simeq H^2_F(\cdot)$ by construction.

For any $\TT$-module $\M$ over $X$, $\varphi^*\M = \OO_Y \otimes_{\varphi^{-1} \OO_X} \varphi^{-1} \M$ admits naturally the structure of a $\varphi^\circ \TT$-module.
When $\M$ is $\OO_X$-coherent, $\varphi^*\M$ is $\OO_Y$-coherent. Therefore we have a pullback functor $\varphi^*: \coh(\TT) \to \coh(\varphi^\circ \TT)$. In a similar manner, we can pull back TDOs and twisted $D$-modules. Namely, for the TDO $\mathscr{D}_\TT$ assoicated to a Picard algebroid $\TT$ over $X$, we define $\varphi^\circ \mathscr{D}_\TT$ as the sheaf of algebras $\operatorname{Diff}_{\varphi^{-1} \mathscr{D}_\TT} (\varphi^*\mathscr{D}_\TT, \varphi^*\mathscr{D}_\TT)$ of all $\OO_Y$-differential operators on $\varphi^*\mathscr{D}_\TT$ that commutes with the right $\varphi^{-1} \mathscr{D}_\TT$-action. Then $\varphi^\circ \mathscr{D}_\TT$ is a TDO over $Y$ and there is a canonical isomorphism $\varphi^\circ \mathscr{D}_\TT \simeq \mathscr{D}_{\varphi^\circ \TT}$ of TDOs.

Finally, we remark that we can define Picard algebroids and their modules in the analytic category and all the constructions above and in the remaining part of Section \ref{subsec:pic_defn} work in the same way. If $\TT$ is an algebraic Picard algebroid and $\M$ is a sheaf of modules over $\TT$, then the analytification $\TT^{an}$ of $\TT$ is an analytic Picard algebroid and the analytification $\M^{an}$ of $\M$ is a sheaf of modules over $\TT^{an}$.

\subsubsection{Equivariant setting}
Let $K$ be an algebraic group with Lie algebra $\kf$ and $X$ be a smooth variety equppied with a $K$-action $\alpha: K \times X \to X$. Then $\alpha$ induces an infinitesimal $\kf$-action $d \alpha: \kf \to \Gamma(X, \T_X)$, which is a Lie algebra morphism. Recall the following definitions from \cite[Section 1.8]{BB}.

\begin{defi}\label{defn:pic_action}
	Let $(\LL, \eta)$ be a Lie algebroid over $X$. A {\it (weak) $K$-action} or a {\it $K$-equivariant structure} $\alpha_\LL$ on $\LL$ is a $K$-action on $\LL$ by automorphisms of Lie algebroid which lifts $\alpha$ and makes $\LL$ into a $K$-equivariant coherent sheaf.

	A {\it strong $K$-action} on $\LL$ is a pair $(\alpha_\LL, \nu_\kf)$, where $\alpha_\LL$ is a weak $K$-action on $\LL$ and $\nu_\kf: \kf \to \LL$ is a morphism of Lie algebras, such that
	\begin{enumerate}[(i)]
		\item
		$\nu_\kf$ is $K$-equivariant, where $\kf$ is equipped with the adjoint $K$-action;
		\item
		the infinitesimal $\kf$-action $d\alpha_\LL: \kf \to \der(\LL)$ on $\LL$ induced by $\alpha_\LL$ coincides with the adjoint $\kf$-action $\operatorname{ad}_{\nu_\kf}$ induced by $\nu_\kf$. Here
		$\der(\LL)$ is the Lie algebra of derivations $\LL\rightarrow \LL$.
	\end{enumerate}

	When $\LL$ is a Picard algebroid $(\TT, \eta, 1_\TT)$, we require that $K$ acts on $\TT$ by automorphisms of Picard algebroid, i.e., additionally $k^* 1_\TT = 1_\TT$ for any $k \in K$.

	A morphism between Lie algebroids or Picard algebroids with strong $K$-actions are required to be $K$-equivariant and intertwine the $\nu_\kf$'s. As in the non-equivariant case, the set of all isomorphism classes of strongly $K$-equivariant Picard algebroids over $X$ forms a vector space under Baer sum, which is denoted by $\pic_\kf(X)$.
\end{defi}

\begin{Rem}\label{rem:strong_diff}
	A strong $K$-action on $\TT$ is the same as a $K$-action $\alpha_{\mathscr{D}}$ on the corresponding TDO $\mathscr{D}_\TT$ by algebra automorphisms making the embedding $\mathcal{O}_X\hookrightarrow \mathscr{D}_\TT$ equivariant together with a Lie algebra map $\nu_\kf: \kf \to \mathscr{D}_{\TT, \leqslant 1}$ that satisfies conditions similar to (i) and (ii) above. In this case, we also say that $\mathscr{D}_\TT$ carries a strong $K$-action.
\end{Rem}

\begin{defi}\label{defn:pic_module_action}
	Let $\LL$ be a Lie algebroid over $X$ equipped with a weak $K$-action $\alpha_\LL$. A {\it weakly $K$-equivariant $\LL$-module}, or {\it weak $(\LL, K)$}-module, is an $\LL$-module $\M$ equipped with a $K$-equivariant structure (as an $\OO_X$-module) that is compatible with the $K$-action $\alpha_\LL$ on $\LL$.

	Suppose now $\LL$ is equipped with a strong $K$-action $(\alpha_\LL, \nu_\kf)$. Given $\kappa \in (\kf^*)^{K}$, a {\it $(K, \kappa)$-equivariant $\LL$-module} is a weak $(\LL, K)$-module $\M$ such that the differential of the $K$-action equals $\nu_\kf - \kappa \cdot \Id_\M$ (cf. Definition \ref{eq:HC_equiv}). A {\it (strong) $(\LL, K)$-module} is a $(K,\kappa)$-equivariant $\LL$-module $\M$ with $\kappa=0$.
\end{defi}

Consider the case when $\LL$ is a Picard algebroid $\TT$. Let $\coh_K^\kappa(\TT)$ denote the abelian category of all $\OO_X$-coherent $(K,\kappa)$-equivariant $\TT$-modules over $X$, with morphisms being $K$-equivariant morphisms of $\TT$-modules.  In a similar manner, we can talk about weak and strong $(\mathscr{D}_\TT, K)$-modules and $(K, \kappa)$-equivariant $\mathscr{D}_\TT$-modules. Let $\coh_K^\kappa(\mathscr{D}_\TT)$ denote the abelian category of all $(K,\kappa)$-equivariant $\mathscr{D}_\TT$-modules that are $\OO_X$-coherent, with morphisms being $K$-equivariant morphisms of $\mathscr{D}_\TT$-modules. We also have an equivalence $\coh_K^\kappa(\TT) \simeq \coh_K^\kappa(\mathscr{D}_\TT)$ of abelian categories. When $\kappa=0$, we simply write $\coh_K^0(\TT)$ as $\coh_K(\TT)$ and write $\coh_K^{0}(\mathscr{D}_\TT)$ as $\coh_K(\mathscr{D}_\TT) $.

When the variety $X$ carries a $\cm$-action, we also consider the groupoid of {\it graded Picard algebroids}, i.e., (weakly) $\cm$-equivariant Picard algebroids in the sense of Definition \ref{defn:pic_action}, together with $\cm$-equivariant isomorphisms among them. Let $\pic(X)^{gr}$ denote the vector space of equivalence classes of graded Picard algebroids over $X$, the linear structure of which is defined in the same way as the definition of that of $\pic(X)$. Now suppose $X$ is equipped with a $K \times \cm$-action. A \emph{strongly $K$-equivariant graded Picard algebroid} is a weakly $\cm$-equivariant Picard algebroid $\TT$ equipped with a strong $K$-action commuting with the $\cm$-action, such that the image of the associated Lie algebra morphism $\nu_\kf: \kf \to \Gamma(X, \TT)$ lies in the $\cm$-invariant part of $\Gamma(X, \TT)$. Isomorphisms of two such objects are required to be $\cm$-equivariant and compatible with the strong $K$-actions (in the sense of Defintion \ref{defn:pic_action}).  Let $\pic_\kf(X)^{gr}$ to denote the space of all isomorphism classes of strongly $K$-equivariant graded Picard algebroids. Again $\pic_\kf(X)^{gr}$ admits the structure of a vector space. When $\TT$ is a (strongly $K$-equivariant) graded Picard algebroid, we always assume a $\TT$-module (strong $(\TT,K)$-module) $\M$ admits a $\cm$-action which is compatible with the $\cm$-action on $\TT$ and the $\TT$-module structure.
We say that $\M$ is a graded $\TT$-module (or a graded twisted local system).

In what follows we will need the following easy technical lemma.

\begin{Lem}\label{Lem:graded_loc_sys}
Let $\M_1,\ldots,\M_k$ be graded $\TT$-modules that are simple as ordinary $\TT$-modules.
Let $\M$ be a graded $\TT$-module that is isomorphic to the direct sum of $\M_i$
as an ordinary module. Then $\M$ is isomorphic to the direct sum of $\M_i$'s with some shifts
as a graded $\TT$-module.
\end{Lem}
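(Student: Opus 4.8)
The plan is to exploit the rational $\cm$-action carried by every object in sight, together with the elementary observation that a degree-zero morphism $f$ of graded $\TT$-modules which is an isomorphism of ordinary $\TT$-modules is automatically an isomorphism of graded $\TT$-modules: from $f(A_n)\subseteq B_n$ and surjectivity one gets $f(A_n)=B_n$ for every weight $n$, so $f^{-1}$ is again of degree $0$. I will also use that for graded $\TT$-modules $\mathcal{F},\mathcal{G}$ with $\mathcal{F}$ coherent over $\OO_X$ the space $\Hom_\TT(\mathcal{F},\mathcal{G})$ is finite dimensional and carries a rational $\cm$-action, hence decomposes into homogeneous components, $\Hom_\TT(\mathcal{F},\mathcal{G})=\bigoplus_a\Hom_\TT(\mathcal{F},\mathcal{G}\langle a\rangle)$, where $\mathcal{G}\langle a\rangle$ denotes $\mathcal{G}$ with the $\cm$-action twisted by $t\mapsto t^a$; and that, since each $\M_i$ is simple as an ordinary $\TT$-module (and a simple module is supported on a single component of $X$), Schur's lemma gives $\operatorname{End}_\TT(\M_i)=\C$.

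I would first dispose of the one-module case: if $\mathcal{N},\mathcal{N}'$ are graded $\TT$-modules, both simple as ordinary modules and ordinary-isomorphic to each other, then $\Hom_\TT(\mathcal{N},\mathcal{N}')\cong\C$ is concentrated in a single degree $a$, and a nonzero element there is injective (its kernel is a proper ordinary submodule of the simple $\mathcal{N}$) and surjective (its image is a nonzero ordinary submodule of the simple $\mathcal{N}'$), hence a degree-$a$ isomorphism; so $\mathcal{N}'\cong\mathcal{N}\langle a\rangle$ as graded $\TT$-modules. Consequently it suffices to produce \emph{some} decomposition of $\M$, as a graded $\TT$-module, into graded $\TT$-submodules that are simple as ordinary modules and ordinary-isomorphic to the $\M_i$ with the same multiplicities: such summands are automatically shifts of the $\M_i$, and a cardinality count then lets one relabel the shifts to match the $\M_i$.

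To produce such a decomposition, group the $\M_i$ by ordinary-isomorphism class; let $S_1,\dots,S_r$ be representatives and $d_\alpha$ the multiplicity of $S_\alpha$, so that $\M\cong\bigoplus_\alpha S_\alpha^{\oplus d_\alpha}$ as an ordinary $\TT$-module. Fix a grading on each $S_\alpha$. The ordinary $S_\alpha$-isotypic component $\M[S_\alpha]\subseteq\M$ is the image of the evaluation morphism $\Hom_\TT(S_\alpha,\M)\otimes_\C S_\alpha\to\M$ (using $\operatorname{End}_\TT(S_\alpha)=\C$); equipping the source with its total grading, with $\cm$ acting on $\Hom_\TT(S_\alpha,\M)$ as above, this map is a morphism of graded $\TT$-modules, so each $\M[S_\alpha]$ is a graded $\TT$-submodule of $\M$. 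Since $\M=\bigoplus_\alpha\M[S_\alpha]$ already as ordinary modules and each summand is graded, this is a decomposition of graded $\TT$-modules. On $\M[S_\alpha]$ the evaluation map $V_\alpha\otimes_\C S_\alpha\to\M[S_\alpha]$, with $V_\alpha:=\Hom_\TT(S_\alpha,\M[S_\alpha])$ of dimension $d_\alpha$, is an isomorphism of ordinary $\TT$-modules and a degree-zero morphism of graded ones, hence a graded isomorphism by the principle above. Decomposing the $\cm$-module $V_\alpha$ into weight spaces identifies $\M[S_\alpha]$ with a direct sum of $d_\alpha$ shifts of $S_\alpha$; by the one-module case every $\M_i$ with $\M_i\cong S_\alpha$ ordinarily is itself a shift of $S_\alpha$, and there are exactly $d_\alpha$ of them, so after relabeling one obtains $\M[S_\alpha]\cong\bigoplus_{i:\,\M_i\cong S_\alpha}\M_i\langle a_i\rangle$. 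Summing over $\alpha$ gives the lemma.

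The only genuine input is the finiteness and rationality of the $\cm$-action on the relevant $\Hom_\TT$-spaces (which comes from $\OO_X$-coherence and the geometric origin of the $\cm$-action) together with the automatic-isomorphism principle; the rest is bookkeeping of grading shifts, and this is where one must be careful with sign conventions. I would also note an equivalent route: $\operatorname{End}_\TT(\M)$ is a finite-dimensional semisimple $\C$-algebra carrying a rational $\cm$-action by algebra automorphisms; $\cm$, being connected, fixes each matrix block, and via Skolem--Noether together with the liftability of an algebraic torus in $\operatorname{PGL}_m$ to one in $\GL_m$ one produces a complete system of orthogonal primitive idempotents that are $\cm$-invariant, i.e.\ of degree $0$; the corresponding direct summands of $\M$ are then the desired graded-simple pieces.
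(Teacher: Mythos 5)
Your proof is correct and follows essentially the same route as the paper: finite-dimensionality of $\Hom_{\TT}$ via $\OO_X$-coherence, Schur's lemma, the natural $\cm$-action (grading) on these Hom spaces, and the canonical decomposition $\M\cong\bigoplus_\alpha \Hom_{\TT}(S_\alpha,\M)\otimes_{\C}S_\alpha$ as graded modules, with weight-space decomposition of the Hom spaces producing the shifts. You merely spell out details the paper leaves implicit (the degree-zero-isomorphism principle and the grouping by ordinary isomorphism class), so no further comment is needed.
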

\begin{proof}
For two coherent $\TT$-modules $\M,\M'$, the space $\operatorname{Hom}_{\TT}(\M,\M')$ is finite
dimensional: it is the space of flat sections of the finite rank flat vector bundle $\mathcal{H}om_{\mathcal{O}_X}(\M,\M')$. In particular, the Schur lemma holds for
coherent $\TT$-modules. If $\M,\M'$ are graded $\TT$-modules, then $\operatorname{Hom}_{\TT}(\M,\M')$
has a natural grading. Then $\M\cong \bigoplus_{i=1}^k \operatorname{Hom}_{\TT}(\M_i,\M)\otimes_{\C}\M_i$,
as a graded module.
\end{proof}

\subsubsection{Monodromic sheaves}\label{SSS:monodromic}

For our later applications, it suffices to consider Picard algebroids $\TT$ which admit at least one $\OO_X$-coherent $\TT$-module $\EE$. By Lemma \ref{Lem:algebroid_w_coh_module}, $\TT$ must be of the form $L^r = r \TT(L)$, where $L$ is an algebraic line bundle and $r \in \mathbb{Q}$. Let $L^\times$ denote the total space of $L$ with the zero section excluded and let $\pi: L^\times \to X$ denote the projection map.  Let $H = \cm$, then $L^\times$ is the (right) $H$-torsor over $X$ associated to $L$, so that $L \simeq L^\times \times_{H} \C$. We describe the following alternative equivalent way to define $\TT$-modules via $H$-monodromic $\mathscr{D}$-modules over $L^\times$ as follows. This is a special case of \cite[Section 2.5.2]{BB}, where $H$ can be a torus of any dimension.

For a moment, we treat the general case when $H$ is any torus. Let $\hf$ denote the Lie algebra of $H$ and $\exp_H: \hf \to H$ denote the exponential map for $H$. Then $\hf_{\Z} := \ker(\exp)$ is a lattice in $\hf$ such that there are identifications of groups $\hf/\hf_\Z \simeq H$ and $\hf_\Z \simeq \pi_1(H)$ via the exponential map, where $\pi_1(H)$ is the fundamental group of $H$.  Define the dual torus $H^\vee$ to be $\Hom(\pi_1(H), \cm)$. Then the Lie algebra of $H^\vee$ is naturally identified with the linear dual $\hf^*$ of $\hf$ so that the exponential map $\exp_{H^\vee}: \hf^* \to H^\vee$ sends $\mu \in \hf^*$ to the character $e^{2\pi i \mu}: X \mapsto e^{2\pi i {\langle \mu, X \rangle}}$, where $X \in \hf_{\Z} \simeq \pi_1(H)$. Let $\hf^*_\Z := \{ \mu \,|\, \operatorname{Im}\mu \subset \Z \}$, then $\hf^*_\Z$ is a lattice of $\hf^*$. Then $\exp_{H^\vee}$ induces a Lie group isomorphism $\hf^*/\hf^*_\Z \xrightarrow{\sim} H^\vee$, $\bar{\mu} \mapsto e^{2\pi i \mu}$,  where $\bar{\mu}$ stands for the image of $\mu\in \hf^*$.

Back to the case when $H=\cm$, we identify $\hf$ with $\C$ in the way that the exponential map $\exp_H$ is given by $\exp_H(z) = e^{2 \pi i z}$, $\forall \, z \in \hf = \C$, so that $\hf_\Z = \pi_1(H)$ is identified with $\Z \subset \C$. We also identify $\hf^*$ with $\C$ so that $w \in \C$ corresponds to the linear functional $f_w$ of $\hf$ given by $f_w(z) = w z$, $\forall \, z \in \hf = \C$. In this way $\hf^*_\Z$ is identified with $\Z \subset \C = \hf^*$ and $H^\vee= \Hom(\pi_1(H), \cm) = \Hom(\Z, \cm)$ is identified with $\cm$ via $\phi \mapsto \phi(1)$, $\forall \, \phi \in H^\vee$. The exponential map of $H^\vee$ is hence given by $\exp_{H^\vee} (w) = e^{2 \pi i w}$. In what follows, we regard the complex number $r$ that appears in $\TT = r \TT(L)$ as an element $r \in \hf^*$ via the chosen identification $\hf^* = \C$.

Let $\wt{\mathscr{D}} = \wt{\mathscr{D}}_{L^\times}$ denote the sheaf of (usual) differential operators over $L^\times$. Then $\wt{\mathscr{D}}$ carries a natural $H$-action. Set $\mathscr{D}_\hf := (\pi_*\mathscr{D})^H$ and  $\wt{\mathscr{D}}_\hf := \pi^{-1} \mathscr{D}_\hf$. Then $\wt{\mathscr{D}}_\hf$ is the subseaf of $\wt{\mathscr{D}}$ consisting of all $H$-invariant differential operators. The differential of the right $H$-action on $L^\times$ yields an injective morphism $i_\hf: \hf \to \wt{\mathscr{D}}$ of Lie algebras, whose image lies in $\wt{\mathscr{D}}_\hf$ and hence $\hf$ also embeds into $\mathscr{D}_\hf$. Therefore we have an embedding of $\U\hf = S\hf$ into the center of $\mathscr{D}_\hf$. Regard $r$ as an element of $\hf^* \simeq \C$ as above, then it determines a maximal ideal $I_r$ of $\U\hf = S\hf$ generated by $h - r(h)$ for all $h \in \hf$.  Set $\mathscr{D}_{r,X} (= \mathscr{D}_r) := \mathscr{D}_\hf / I_r \mathscr{D}_\hf$ to be the quotient sheaf of $\mathscr{D}_\hf$ by the two-sided ideal $I_r \mathscr{D}_\hf$. We have a natural isomorphism between $\mathscr{D}_r$ and the sheaf of twisted differential operators $\mathscr{D}_{L^r}$ with twist $L^r = r \TT(L)$.

The $H$-action on $\wt{\mathscr{D}}$ and $i_\hf: \hf \to \wt{\mathscr{D}}$ give a strong $H$-action on $\wt{\mathscr{D}}$. Therefore we can talk about $(H, r)$-equivariant $\wt{\mathscr{D}}$-modules over $L^\times$ (which are also called {\it weakly $H$-equivariant $\wt{\mathscr{D}}$-module with monodromy $r$} in \cite[Section 2.5.2]{BB}). We use  $\coh(\wt{\mathscr{D}})^{r} = \coh_H^r(\wt{\mathscr{D}})$ to denote the abelian category of all $(H, r)$-modules that are $\OO_{L^\times}$-coherent. We then have an equivalence of categories $\coh(\wt{\mathscr{D}})^{r} \xrightarrow{\sim} \coh(\mathscr{D}_r)$ given by $\wt{\M} \mapsto (\pi_* \wt{\M})^H$. The quasi-inverse functor is given by $\M \mapsto \pi^* \M = \wt{\mathscr{D}} \otimes_{\wt{\mathscr{D}}_\hf } \pi^{-1}\M$, where the $\mathscr{D}_r$-module $\M$ is regarded as a $\mathscr{D}_\hf$-module via the quotient homomorphism $\mathscr{D}_\hf \to \mathscr{D}_r$ and hence $\pi^{-1} \M$ can be regarded as a module over $\wt{\mathscr{D}}_\hf = \pi^{-1} \mathscr{D}_\hf$.

Similarly we can regard $X$ and $L^\times$ as analytic manifolds. Denote the analytic versions of relevant sheaves by $\OO_X^{an}$, $\OO_{L^\times}^{an}$, $\mathscr{D}^{an}_{r,X}$, $\wt{\mathscr{D}}^{an}$, etc. Define the categories $\coh(\wt{\mathscr{D}}^{an})^{r} \simeq \coh(\mathscr{D}^{an}_r)$ in a similar manner. We have the analytification functors $(\cdot)^{an}:  \coh(\wt{\mathscr{D}})^{r}  \to \coh(\wt{\mathscr{D}}^{an})^{r}$, $\M \mapsto \M^{an} = \OO^{an}_{L^\times} \otimes_{\OO_{L^\times}} \M$, and $(\cdot)^{an}: \coh(\mathscr{D}_r) \to \coh(\mathscr{D}^{an}_r)$, $\M \mapsto \M^{an} = \OO^{an}_{X} \otimes_{\OO_{X}} \M$. These two functors intertwine the equivalences $\coh(\wt{\mathscr{D}})^{r} \simeq \coh(\mathscr{D}_r)$ and $\coh(\wt{\mathscr{D}}^{an})^{r} \simeq \coh(\mathscr{D}^{an}_r)$.

Note that if we have a weakly $H$-equivariant $\wt{\mathscr{D}}$-module $\wt{\M}$ with monodromy $r$ and $\wt{\M}'$ with monodromy $\nu$, then $\wt{\M} \otimes_{\OO_{L^\times}} \wt{\M}'$ is a naturally $\wt{\mathscr{D}}$-module with monodromy $r + \nu$. Let $\OO(1) := \pi^* L$, then it has a natural $H$-equivaraint structure. Each point $\tilde{x}$ in the fiber $L^\times_x $ at any $x \in X$ corresponds to a linear  isomorphism $\C \simeq L_x$. Therefore there is a canonical trivialization $\OO(1) \simeq \OO_{L^\times}$, via which $\OO_{L^\times}$ becomes a $\wt{\mathscr{D}}$-module. However, this trivialization is not $H$-equivariant and we have $\OO(1) \in \coh(\wt{\mathscr{D}})^{1}$. Suppose $\nu \in \hf^*_\Z=\Z$. Set $\OO(\nu):=\OO(1)^{\otimes \nu}$. Then $\OO(\nu) \in \coh(\wt{\mathscr{D}})^{\nu}$. We then have an equivalence of categories $\coh(\wt{\mathscr{D}})^{r}  \xrightarrow{\sim} \coh(\wt{\mathscr{D}})^{r + \nu}$, $\wt{\M} \mapsto \OO(\nu) \otimes_{\OO_{L^\times}} \wt{\M}$, whose inverse is given by tensoring with $\OO(-\nu)$. The image of $\OO(\nu)$ under the equivalence $\coh(\wt{\mathscr{D}})^{\nu} \xrightarrow{\sim} \coh(\mathscr{D}_\nu)$ is $(\pi_*\OO(\nu))^H = L^{\nu} = L^{\otimes \nu}$. We have an isomorphism of algebras $\mathscr{D}_{r+\nu} \simeq  L^{\nu} \otimes_{\OO_X} \mathscr{D}_{r} \otimes_{\OO_X} L^{-\nu}$ for any $r \in \hf^*$ and $\nu \in \hf^*_\Z$, which induces an equivalence of categories $\coh(\mathscr{D}_r) \xrightarrow{\sim} \coh(\mathscr{D}_{r + \nu})$, $\M \mapsto L^{\nu} \otimes_{\OO_X} \M$, whose inverse is given tensoring with $L^{-\nu}$. Of course these two equivalences intertwine the equivalences $\coh(\wt{\mathscr{D}})^{r} \simeq \coh(\mathscr{D}_r)$ and $\coh(\wt{\mathscr{D}})^{r+\nu} \simeq \coh(\mathscr{D}_{r+\nu})$.

\subsubsection{Riemann-Hilbert correpondence} \label{subsec:RH}
We now turn to monodromic local systems over $L^\times$, considered as an analytic manifold $(L^\times)^{an}$.  Given any $r \in \hf^*$, let $\loc(L^\times)^{\bar{r}}$ denote the category of all local systems with monodromy $e^{2 \pi i r}$ over $L^\times$ (as an analytic manifold). Note that the definition only depends on the image $\bar{r}$ of $r$ in $\hf^*/\hf_\Z^*$.

Assume $X$ is a connected smooth variety, so that $X^{an}$ and $(L^\times)^{an}$ are also connected under the analytic topology. Fix a point $x$ in $X$ and choose any point $\tilde{x}$ in the fiber of $L^\times$ at $x$. Since $L^\times$ is an $H$-torsor, all groups $\pi_1(L^\times, \tilde{x})$ for different choices of $\tilde{x}$ in the fiber of $L^\times$ at a given $x$ are canonically isomorphic. Hence we will write $\pi_1(L^\times, x)$  instead of $\pi_1(L^\times, \tilde{x})$. Moreover, the action on $L^\times$ by $H = \cm$ gives rise to a canonical embedding of $\pi_1(H)$ into $\pi_1(L^\times, x)$. We then have a short exact sequence of groups
\begin{equation}\label{eq:pi_exsq}
	1 \to \pi_1(H) \to \pi_1(L^\times, x) \to \pi_1(X, x) \to 1.
\end{equation}
It is well-known that the category of local systems over $(L^\times)^{an}$ is equivalent to the category of finite dimensional representations of the fundamental group $\pi_1(L^\times, \tilde{x})$ of $L^\times$ based at $\tilde{x}$ by taking the monodromy. Given any $r \in \hf^*=\C$, we say that a local system over $(L^\times)^{an}$ \emph{has monodromy $\bar{r}$} if  $\pi_1(H)$ acts on the corresponding monodromy representation by the character $\bar{r} = e^{2\pi i r} \in H^\vee$. Let $\loc(L^\times)^{\bar{r}}$ denote the category of all local systems over $(L^\times)^{an}$ with monodromy $\bar{r}$.  The Riemann-Hilbert correspondence induces an equivalence of abelian categories $( \cdot )^\nabla : \coh(\wt{\mathscr{D}}^{an}_{L^\times})^r \to \loc(L^\times)^{\bar{r}}$, which sends a $\wt{\mathscr{D}}^{an}$-module $\wt{\M}$ to the sheaf $\wt{\M}^\nabla$ of its flat sections. The inverse functor is given by $\wt{S} \mapsto \OO_{L^\times} \otimes_{\C} \wt{S}$, where the $H$-action on $\OO_{L^\times} \otimes_{\C} \wt{S}$ is the diagonal one. Note that objects in $\loc(L^\times)^{\bar{r}}$ are automatically $H$-equivariant.

The functor $(\cdot)^{an}:  \coh(\wt{\mathscr{D}})^{r}  \to \coh(\wt{\mathscr{D}}^{an})^{r}$ is not an equivalence in general. On the other hand, let $\coh^{reg}(\wt{\mathscr{D}})^{r}$ denote the full subcategory of $\coh(\wt{\mathscr{D}})^{r}$ consisting of those $\wt{\mathscr{D}}$-modules with regular singularities. Then $(\cdot)^{an}$ restricts to an equivalence of categories $\coh^{reg}(\wt{\mathscr{D}})^{r} \xrightarrow{\sim} \coh(\wt{\mathscr{D}}^{an})^{r}$ and therefore we have the equivalence of categories $\coh^{reg}(\wt{\mathscr{D}})^{r} \xrightarrow{\sim}  \loc(L^\times)^{\bar{r}}$, $\wt{\M} \mapsto (\wt{\M}^{an})^{\nabla}$(\cite{Deligne}, \cite[Remark 2.5.5, (iii)]{BB}). Denote by $\coh^{reg}(\mathscr{D}_r)$ the essential image of $\coh^{reg}(\wt{\mathscr{D}})^{r}$ under the equivalence $\coh(\wt{\mathscr{D}})^{r} \xrightarrow{\sim} \coh(\mathscr{D}_r)$. Then we also have an equivalence  $\coh^{reg}(\mathscr{D}_r) \xrightarrow{\sim} \loc(L^\times)^{\bar{r}}$. Note the latter category depends only on the image of $r$ in $\hf^*/\hf^*_\Z$. The equivalences $\coh(\mathscr{D}_r) \xrightarrow{\sim} \coh(\mathscr{D}_{r + \nu})$ for $\nu \in \hf^*_\Z$ preserve the subcategory of objects with regular singularities and intertwine the equivalences with the category of twisted local systems.

We take the pushout of the short exact sequence \eqref{eq:pi_exsq} along the character $\bar{r}: \pi_1(H) \to \cm$, to obtain a central extension of the group $\pi_1(X,x)$ by $\cm$,
\[  \quad 1 \to \cm \to \pi^{\tilde{r}}_1(X, x) \to \pi_1(X, x) \to 1.  \]
This extension determines a Schur multiplier $\tilde{r} \in H^2(\pi_1(X, x), \cm)$ of $\pi_1(X, x)$. Let $\operatorname{Rep}(\pi^{\tilde{r}}_1(X, x))$ denote the abelian category of all finite dimensional representations $V$ of $\pi^{\tilde{r}}_1(X, x)$ such that $\cm \subset \pi^{\tilde{r}}_1(X, x)$ acts on $V$ by the usual scalar multiplication, i.e., $V$ is a projective representation of $\pi_1(X)$ with Schur multiplier $\tilde{r}$. For any $\wt{S} \in \loc(L^\times)^{\bar{r}}$, we take its fiber at any point $\tilde{x} \in L^\times$ above $x$, then its monodromy representation factors through a representation in $\operatorname{Rep}(\pi^{\tilde{r}}_1(X, x))$. This defines an equivalence of abelian categories $\loc(L^\times)^{\bar{r}} \xrightarrow{\sim} \operatorname{Rep}(\pi^{\tilde{r}}_1(X, x))$.

All the constructions and equivalences of categories have natural functoriality with respect to pullback. Namely, let $\varphi: Y \to X$ be a morphism of smooth varieties. Suppose $L$ is an algebraic line bundle over $X$, then $L':=\varphi^* L$ is a line bundle over $Y$ and $L'^\times$ is the pullback $H$-torsor of $L^\times$ via $\varphi$, together with an $H$-equivariant map $\tilde{\varphi}: L'^\times \to L^\times$ lifting $\varphi$. We then have the pullback functor $\tilde{\varphi}^{-1}: \operatorname{Loc}(L^\times)^{\bar{r}} \to \operatorname{Loc}(L'^\times)^{\bar{r}}$, $\wt{S} \mapsto \tilde{\varphi}^{-1} \wt{S}$. Suppose $X$ and $Y$ are connected as complex
analytic manifolds. Fix a point $y \in Y$ and set $x = \varphi(y)$. We then have the Cartesian diagrams of groups
\begin{equation}\label{diag:pi_groups}
	\begin{tikzcd}
		\pi_1(L'^\times, y) \arrow{r} \arrow[d, "\tilde{\varphi}_*"]  & \pi_1(Y,y)  \arrow[d, "\varphi_*"]   \\
		\pi_1(L^\times, x) \arrow{r}  & \pi_1(X, x)
	\end{tikzcd}
	\quad \text{and} \quad
	\begin{tikzcd}
		\pi^{\tilde{r}'}_1(Y, y) \arrow{r} \arrow[d, "\tilde{\varphi}_*"]  & \pi_1(Y,y)  \arrow[d, "\varphi_*"]   \\
		\pi^{\tilde{r}}_1(X, x) \arrow{r}  & \pi_1(X, x)
	\end{tikzcd}
\end{equation}
where $\tilde{r}' \in H^2(\pi_1(Y,y), \cm)$ is the Schur multiplier for the $H$-torsor $L'^\times$ and $r$, which coincides with the pullback of $\tilde{r} \in H^2(\pi_1(X,x), \cm)$ via the group homomorphism $\varphi_*: \pi_1(Y,y) \to \pi_1(X,x)$. Therefore we have a functor $\tilde{\varphi}^*: \operatorname{Rep}(\pi^{\tilde{r}}_1(X, x)) \to \operatorname{Rep}(\pi^{\tilde{r}'}_1(Y, y))$, $(V, \rho) \mapsto (V, \rho \circ  \tilde{\varphi}_*)$. The discussion above yields the following proposition.

\begin{Prop}\label{Prop:regular_local_systems}
	There are natural equivalence of categories \[ \coh^{reg}(\mathscr{D}_{r,X}) \xrightarrow{\sim} \coh^{reg}(\wt{\mathscr{D}}_{L^\times})^{r}  \xrightarrow{\sim} \loc(L^\times)^{\bar{r}}  \xrightarrow{\sim} \operatorname{Rep}(\pi^{\tilde{r}}_1(X, x)) \]
	which are functorial with respect to pullback functors, i.e., for any morphism $\varphi: Y \to X$ of smooth varieties which are connected as  complex analytic manifolds, we have natural equivalences of categories illustrated by the diagram
	\begin{equation}\label{diag:pi_functors}
		\begin{tikzcd}
			\coh^{reg}(\mathscr{D}_{r,X}) \arrow[r, "\sim"] \arrow[d, "\tilde{\varphi}^*"] & \coh^{reg}(\wt{\mathscr{D}}_{L^\times})^{r} \arrow[r, "\sim"] \arrow[d, "\tilde{\varphi}^*"]   & \loc(L^\times)^{\bar{r}}  \arrow[r, "\sim"]  \arrow[d, "\varphi^{-1}"]  & \operatorname{Rep}(\pi^{\tilde{r}}_1(X, x)) \arrow[d, "\tilde{\varphi}^*"] \\
			\coh^{reg}(\mathscr{D}_{r,Y}) \arrow[r, "\sim"]   & \coh^{reg}(\wt{\mathscr{D}}_{L'^\times})^{r} \arrow[r, "\sim"]    & \loc(L'^\times)^{\bar{r}}  \arrow[r, "\sim"]   & \operatorname{Rep}(\pi^{\tilde{r}'}_1(Y, y)).
		\end{tikzcd}
	\end{equation}
\end{Prop}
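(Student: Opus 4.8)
The plan is to obtain the three displayed equivalences by composing the equivalences already established in this subsection and in Section~\ref{SSS:monodromic}, and then to prove the naturality statement by checking the three squares in \eqref{diag:pi_functors} one at a time. For the first arrow I would simply note that $\coh^{reg}(\mathscr{D}_{r,X})$ is \emph{by definition} the essential image of $\coh^{reg}(\wt{\mathscr{D}}_{L^\times})^{r}$ under the equivalence $\coh(\wt{\mathscr{D}})^{r}\xrightarrow{\sim}\coh(\mathscr{D}_r)$, $\wt{\M}\mapsto(\pi_*\wt{\M})^H$, so there is nothing to prove there. For the second arrow I would compose the analytification equivalence $\coh^{reg}(\wt{\mathscr{D}})^{r}\xrightarrow{\sim}\coh(\wt{\mathscr{D}}^{an})^{r}$ (Deligne; see \cite{Deligne} and \cite[Remark~2.5.5]{BB}) with the Riemann--Hilbert equivalence $(\cdot)^{\nabla}\colon\coh(\wt{\mathscr{D}}^{an}_{L^\times})^{r}\xrightarrow{\sim}\loc(L^\times)^{\bar{r}}$. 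For the third arrow I would use the classical monodromy equivalence between $\loc(L^\times)^{an}$ and the category of finite-dimensional $\pi_1(L^\times,\tilde{x})$-representations: an object lies in $\loc(L^\times)^{\bar{r}}$ exactly when $\pi_1(H)$ acts through the scalar character $\bar{r}$, and such representations are precisely those factoring through the pushout central extension $\pi^{\tilde{r}}_1(X,x)$ of \eqref{eq:pi_exsq} on which $\cm$ acts by scalars, that is, the objects of $\operatorname{Rep}(\pi^{\tilde{r}}_1(X,x))$.

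For the functoriality I would treat each square of \eqref{diag:pi_functors} separately, first recording that on $\OO$-coherent modules the pullback $\tilde{\varphi}^*$ is the naive pullback $\OO\otimes_{\varphi^{-1}\OO}\varphi^{-1}(-)$ --- no derived terms occur because we pull back vector bundles equipped with a (twisted, i.e.\ projective) flat connection --- and is exact with image in the $\OO$-coherent subcategory. For the leftmost square I would use the identification $L'^\times\cong Y\times_X L^\times$ of $H$-torsors together with flat base change along the smooth morphism $\pi$ to see that $\tilde{\varphi}^*$ commutes, up to a canonical isomorphism, with both $\pi^*$ and $(\pi_*(-))^H$. The middle square commutes because analytification and the formation of sheaves of flat sections are each functorial in the underlying space and $\tilde{\varphi}^{an}$ is a morphism of analytic manifolds, so that the pullback of flat sections coincides with the flat sections of the pullback. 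The rightmost square is exactly the content of the right-hand Cartesian diagram in \eqref{diag:pi_groups}: restriction along $\varphi_*$ carries a projective representation of $\pi_1(X,x)$ with Schur multiplier $\tilde{r}$ to one of $\pi_1(Y,y)$ with multiplier $\tilde{r}'=\varphi_*^*\tilde{r}$, and this matches the passage to monodromy fibers over $\tilde{x}$ and $\tilde{y}$.

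The part of the argument that is not formal is entirely imported: Deligne's Riemann--Hilbert theorem for the possibly non-compact variety $L^\times$, which is what makes analytification an equivalence onto the analytic category once one restricts to the objects with regular singularities. Granting that, the remaining points are bookkeeping: I would need to check that $\tilde{\varphi}^*$ preserves regular singularities (a standard fact, e.g.\ via resolution of singularities) so that the leftmost column of \eqref{diag:pi_functors} is well defined, and to track base points and the canonical trivializations of the $\cm$-torsors $L^\times$ and $L'^\times$ with enough care that the identifications of the central extensions $\pi^{\tilde{r}}_1(X,x)$ and $\pi^{\tilde{r}'}_1(Y,y)$ in \eqref{diag:pi_groups} are the evident ones. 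None of this needs a new idea, but it does require keeping the torsor structures straight throughout.
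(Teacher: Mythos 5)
Your argument is correct and follows essentially the same route as the paper: the proposition is obtained by composing the equivalences already set up in Sections \ref{SSS:monodromic}--\ref{subsec:RH} (the definition of $\coh^{reg}(\mathscr{D}_r)$ as an essential image, Deligne's theorem plus Riemann--Hilbert, and the monodromy description via the pushout extension), with functoriality read off from the Cartesian diagrams \eqref{diag:pi_groups}. You merely make explicit the square-by-square verification that the paper leaves as ``the discussion above yields the proposition,'' which is fine.
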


Now suppose $X$ admits an algebraic action by an algebraic group $K$ and $L$ is a $K$-equivariant line bundle over $X$. We can consider the category $\coh_K(\mathscr{D})^r$ of strongly $K$-equivariant objects in $\coh(\mathscr{D})^r$ and the category $\loc_{K}(L^\times)^{\bar{r}}$ of $K$-equivariant local systems in $\loc(L^\times)^{\bar{r}}$, i.e., local systems that carry $K$-actions which lift the $K$-action on $L^\times$
subject to the usual compatibility conditions. The category $\coh_K(\mathscr{D})^r$ is equivalent to the category $\coh_K(\mathscr{D}_r)$ of strongly $K$-equivariant objects in $\coh(\mathscr{D}_r)$.  Assume additionally that the $K$-action on $X$ has only finitely many orbits. Then $L^\times$ has finitely many $K \times H$-orbits. This implies that all objects in $\coh_K(\mathscr{D})^r$ automatically have regular singularities (\cite[Theorem 11.6.1]{Hotta}). Therefore we have the equivalences of categories $\coh_K(\mathscr{D}_r) \simeq \coh_K(\mathscr{D})^r \simeq  \loc_{K}(L^\times)^{\bar{r}}$.

\subsubsection{Homogeneous case}\label{SSS_homog_case}
Let $G$ be a connected reductive algebraic group, and $H$ be its closed subgroup. Pick $\lambda\in
\operatorname{Hom}(H,\C^\times)\otimes_{\Z}\C = (\hf^*)^H$. Similarly to Section \ref{SSS:monodromic}, to $\lambda$ we can assign a strongly $G$-equivariant Picard algebroid $\TT_\lambda$ over $G/H$ (see \cite[Section 2.4.2]{Yu} for details), or equivalently, a sheaf of twisted
differential operators $\mathscr{D}^\lambda_{G/H}$ with a strong $G$-action (see Remark \ref{rem:strong_diff}). In fact, this gives an isomoprhism $(\hf^*)^H \simeq \pic_\g(G/H)$ of vector spaces (see \cite[Proposition 2.4]{Yu}). Pick $\kappa\in \g^{*G}$. We want to classify strongly
$(G,\kappa)$-equivariant coherent $\mathscr{D}^\lambda_{G/H}$-modules. Since the action of $G$
on $G/H$ is transitive, each such module $\M$ is a $G$-equivariant coherent sheaf on
$G/H$. Set $x:=1H\in G/H$. So, the fiber $\M_x$ is a rational representation of $H$.
The following lemma is quite standard and is a variant of \cite[Lemma 2.7]{Yu}.

\begin{Lem}\label{Lem:homog_classif}
The assignment $\M\mapsto \M_x$ defines a category equivalence between
\begin{itemize}
\item[(a)] The category of coherent strongly $(G,\kappa)$-equivariant $\mathscr{D}^\lambda_{G/H}$-modules,
\item[(b)] and the category of finite dimensional rational $H$-representations, where
$\mathfrak{h}$ acts by $\lambda-\kappa|_{\mathfrak{h}}$.
\end{itemize}
\end{Lem}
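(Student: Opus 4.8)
The plan is to split the claimed equivalence into two steps: first the standard identification of $G$-equivariant coherent $\OO_{G/H}$-modules with finite-dimensional rational $H$-representations, and then an analysis showing that a strongly $(G,\kappa)$-equivariant $\mathscr{D}^\lambda_{G/H}$-module structure on such a sheaf is not additional data but a condition on the representation, which turns out to be exactly the one in (b). For the first step, recall that since $G/H$ is homogeneous every $\OO_{G/H}$-coherent $\mathscr{D}^\lambda_{G/H}$-module is locally free (\cite[Lemma 2.3.1(i)]{BB}); as a $G$-equivariant $\OO_{G/H}$-module it is then the associated bundle $G\times_H M_x$, and conversely every finite-dimensional rational $H$-representation $V$ produces the $G$-equivariant vector bundle $G\times_H V$. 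Thus $M\mapsto M_x$ is already an equivalence from $G$-equivariant coherent $\OO_{G/H}$-modules onto finite-dimensional rational $H$-representations, and it only remains to determine which $V$ carry a strongly $(G,\kappa)$-equivariant $\mathscr{D}^\lambda_{G/H}$-structure and to see that such a structure is unique. Throughout I will use the equivalence $\coh_K^\kappa(\TT)\simeq\coh_K^\kappa(\mathscr{D}_\TT)$ to work with the Picard algebroid $\TT_\lambda$ in place of $\mathscr{D}^\lambda_{G/H}$.

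The key observation is that $\TT_\lambda$ is generated as a sheaf of $\OO_{G/H}$-modules by $1_{\TT_\lambda}$ together with the image of $\OO_{G/H}\otimes_\C\g\to\TT_\lambda$, $f\otimes\xi\mapsto f\nu_\g(\xi)$: this image surjects onto $\T_{G/H}$ under the anchor $\eta$ because $G/H$ is homogeneous, and $\ker\eta=\OO_{G/H}\cdot 1_{\TT_\lambda}$. Hence, on any $\TT_\lambda$-module $M$ — for which $1_{\TT_\lambda}$ acts by $\Id_M$, as we are looking at $\mathscr{D}^\lambda_{G/H}$-modules — that is $(G,\kappa)$-equivariant, the action is forced by the relation $\nu_\g(\xi)\cdot m=\xi_M m+\langle\kappa,\xi\rangle m$, where $\xi_M$ denotes the differential of the $G$-action on $M$ in the direction $\xi$; this gives uniqueness. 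For existence one must check that this prescription is compatible with the relations among the chosen $\OO_{G/H}$-module generators of $\TT_\lambda$. These relations are governed by the kernel of the infinitesimal action $\OO_{G/H}\otimes_\C\g\to\T_{G/H}$, which is the sub-bundle $G\times_H\hf$; since all maps in sight are $G$-equivariant, compatibility over all of $G/H$ is equivalent to compatibility in the fiber at $x$. There the relevant relation is that $\nu_\g(\eta)$ for $\eta\in\hf$, evaluated at $x$, lies in $\C\cdot 1_{\TT_\lambda,x}$ and equals $\langle\lambda,\eta\rangle\cdot 1_{\TT_\lambda,x}$ (this is the defining property of $\lambda$). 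Plugging the forced formula into this relation yields the condition $\eta_V+\langle\kappa,\eta\rangle\Id_V=\langle\lambda,\eta\rangle\Id_V$ for all $\eta\in\hf$, i.e. exactly that $\hf$ acts on $V=M_x$ through the character $\lambda-\kappa|_\hf$ (which indeed lies in $(\hf^*)^H$ since $\kappa\in\g^{*G}$). When this holds, one checks directly that $G\times_H V$ with the resulting $\TT_\lambda$-action is a module of the required type; alternatively, this existence statement is the untwisted \cite[Lemma 2.7]{Yu} shifted by $\kappa$.

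Finally I would verify functoriality. A morphism of strongly $(G,\kappa)$-equivariant $\mathscr{D}^\lambda_{G/H}$-modules is precisely a $G$-equivariant $\OO_{G/H}$-linear morphism: $\nu_\g(\g)$-linearity is automatic once the map is $G$-equivariant, by the formula $\nu_\g(\xi)\cdot m=\xi_M m+\langle\kappa,\xi\rangle m$, and $\OO_{G/H}$-linearity together with $G$-equivariance then gives $\TT_\lambda$-linearity since $\TT_\lambda$ is generated by $\OO_{G/H}$ and $\nu_\g(\g)$. Such morphisms $G\times_H V\to G\times_H V'$ correspond bijectively to $H$-equivariant linear maps $V\to V'$. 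Hence $M\mapsto M_x$ is fully faithful, and combined with the essential surjectivity established in the previous paragraph this proves the lemma. I expect the only genuinely substantive point to be the consistency check of the second paragraph — confirming that the formula forced by uniqueness really does define a Lie algebroid action of $\TT_\lambda$ once the character condition is imposed — with everything else amounting to routine bookkeeping with the homogeneous-space dictionary.
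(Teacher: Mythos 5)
Your proof is correct, but it takes a different route from the paper's. The paper proves the lemma by pulling back along the $H$-torsor $\pi:G\to G/H$ (as in its monodromic formalism), identifying category (a) with coherent strongly $(G\times H,\kappa\times(-\lambda))$-equivariant $\mathscr{D}_G$-modules, and then reading off the $\hf$-action on the fiber at $1\in G$ from the relation $\eta_{G,1}=-\eta^r_{H,1}$ between left- and right-invariant vector fields; essential surjectivity is obtained by noting that a given $H$-representation extends uniquely to a $(G,\kappa)$-equivariant $\mathscr{D}_G$-module which is then $(H,-\lambda)$-equivariant. You instead stay on $G/H$: you use that $\TT_\lambda$ is generated over $\OO_{G/H}$ by $1_{\TT_\lambda}$ and $\nu_\g(\g)$, so the whole $\TT_\lambda$-action on a strongly $(G,\kappa)$-equivariant module is forced by $\nu_\g(\xi)\cdot m=\xi_M m+\langle\kappa,\xi\rangle m$, and the only constraint is the normalization $\nu_\g(\eta)_x=\langle\lambda,\eta\rangle 1_{\TT_\lambda,x}$ for $\eta\in\hf$, which yields exactly the character $\lambda-\kappa|_{\hf}$ on $V=M_x$; the reduction of the well-definedness check to the fiber at $x$ is legitimate because the obstruction is a $G$-equivariant $\OO$-linear morphism between locally free sheaves on a homogeneous space, hence vanishes once its fiber at one point does (it would be worth saying this explicitly). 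The trade-off: the paper's pullback argument slots directly into the torsor/monodromic machinery used throughout Section 4 and transfers verbatim to the analytic setting (Remark \ref{Rem:homog_analyt_classif}), while your intrinsic argument makes transparent that the twisted $\mathscr{D}$-module structure is a property of the underlying $G$-equivariant bundle rather than extra data, giving uniqueness and full faithfulness essentially for free; both proofs leave the final existence verification at the same "routine check" level of detail.
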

\begin{proof}
Let $\pi$ denote the projection $G\rightarrow G/H, g\mapsto gH$. As in Section \ref{SSS:monodromic},
the functor $\pi^*$ gives an equivalence between (a) and
\begin{itemize}
\item[(c)] The category of coherent strongly $(G\times H, \kappa\times (-\lambda))$-equivariant
$\mathscr{D}_G$-modules (the minus sign appears because the action of $H$ on $G$ is by $h.g=gh^{-1}$).
\end{itemize}
For $\xi\in \g$, let $\xi_G$ (resp., $\xi_\M$) denote the vector field on $G$ (resp., $\C$-linear endomorphism
of $\M$ induced by $\xi$). For $\eta\in \mathfrak{h}$, let $\eta^r_H,\eta^r_\M$ have the similar meaning
for the $H$-action on $G$ from the right, like in (c).
Note that $\eta_{G,1}=-\eta^r_{H,1}$
for $\eta\in \mathfrak{h}$. The stabilizer of $1$ in $G\times H$ is $H$ embedded diagonally. So the action
of $\mathfrak{h}$ on $\M_1$ is by $\eta\mapsto \eta_{\M,1}+\eta^r_{\M,1}$. Since $\xi_G-\langle\kappa,\xi\rangle=\xi_\M$
and $\eta_G+\langle\lambda,\eta\rangle=\eta_\M$, we see that $\mathfrak{h}$ acts on $\M_1$ by
$\eta\mapsto \langle\lambda-\kappa|_{\mathfrak{h}},\eta\rangle$, so $\M\mapsto \M_{x}=(\pi^*\M)_1$
is indeed a functor from (a) to (b).

It remains to show that every object $N$ in (b) arises as $\M_1$. We have a unique $(G,\kappa)$-equivariant
$\mathscr{D}_G$-module with fiber $N$ at $1$. It is easy to see that it is also $(H,-\lambda)$-equivariant.
This finishes the proof.
\end{proof}

\begin{Rem}\label{Rem:homog_analyt_classif}
The same argument shows that the same statement as in Lemma \ref{Lem:homog_classif}
holds if in (a) we replace $\mathscr{D}^\lambda_{G/H}$ with its analytification.
\end{Rem}

Now we consider a more general situation. Let $K$ be a subgroup of $G$ that acts
on $G/H$ with finitely many orbits, say $Kx_1,\ldots, Kx_\ell$ for $x_i:=g_i H\in G/H$.
Note that $H$ acts trivially on $\operatorname{Hom}(H,\C^\times)_{\Z} \otimes \C$. So conjugating with
$g_i$ gives rise to a well-defined element $\lambda_i\in \operatorname{Hom}(G_{x_i},\C^\times)_{\Z} \otimes \C$.
Let $\Upsilon_i$ denote the set of isomorphism classes of irreducible $K_{x_i}$-modules
where $\kf_{x_i}$ acts by $-(\lambda_i|_{\kf_{x_i}}-\kappa|_{\kf_{x_i}})$.

The following result is proved in the same way as \cite[Theorem 11.6.1,(ii)]{Hotta}.

\begin{Lem}\label{Lem:fin_many_orbit_classif}
The simple (automatically regular) $(K,\kappa)$-equivariant $\mathscr{D}^{\lambda}_{G/H}$-modules
are classified by the elements of the set $\bigsqcup_{i=1}^\ell \Upsilon_i$:
to get from an element of $\Upsilon_i$ to a simple object, take the minimal extension of the twisted local
system on $Kx_i$ corresponding to the element of $\Upsilon_i$ via Lemma \ref{Lem:homog_classif}.
\end{Lem}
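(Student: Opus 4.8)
The plan is to reduce the classification of simple $(K,\kappa)$-equivariant $\mathscr{D}^\lambda_{G/H}$-modules to the classical theory of equivariant $\mathscr{D}$-modules on a variety with finitely many orbits, as developed in \cite[Chapter 11]{Hotta}. First I would recall the general structure theory: for a variety $X$ with a $K$-action having finitely many orbits, every simple $K$-equivariant $\mathscr{D}_X$-module is the minimal (intermediate) extension $j_{!*}(\mathcal{L})$ of a simple $K$-equivariant local system $\mathcal{L}$ on some orbit $j\colon Kx_i\hookrightarrow X$, and this assignment is a bijection between pairs (orbit, simple equivariant local system on it) and simple equivariant $\mathscr{D}_X$-modules. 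The only modification needed is to carry the twist $\lambda$ and the equivariance shift $\kappa$ through the argument; since by Section \ref{subsec:RH} all $(K,\kappa)$-equivariant coherent $\mathscr{D}^\lambda_{G/H}$-modules automatically have regular singularities (finitely many $K\times H$-orbits on $L^\times$, invoking \cite[Theorem 11.6.1]{Hotta}), the Riemann--Hilbert philosophy applies verbatim to the twisted setting.

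The key steps, in order, are: (1) observe that restriction to the open orbit in the support gives, for each simple $M$, a simple $(K,\kappa)$-equivariant twisted local system on some $Kx_i$; (2) identify via Lemma \ref{Lem:homog_classif} the category of such twisted local systems on the orbit $Kx_i\cong K/K_{x_i}$ with finite-dimensional rational $K_{x_i}$-representations on which $\kf_{x_i}$ acts by the prescribed character $-(\lambda_i|_{\kf_{x_i}}-\kappa|_{\kf_{x_i}})$ --- here one must be careful that $\mathscr{D}^\lambda_{G/H}$ restricted to the locally closed orbit $Kx_i$ is the sheaf of $\lambda_i$-twisted differential operators, where $\lambda_i$ is obtained from $\lambda$ by transporting along $g_i$, which is legitimate because $H$ acts trivially on $\operatorname{Hom}(H,\C^\times)_\Z\otimes\C$; (3) show that minimal extension $j_{!*}$ sends a simple twisted local system on $Kx_i$ to a simple $(K,\kappa)$-equivariant $\mathscr{D}^\lambda_{G/H}$-module, and that every simple arises this way exactly once. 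Step (3) is where I would cite that the argument of \cite[Theorem 11.6.1,(ii)]{Hotta} goes through unchanged: the minimal extension of a simple object in the twisted-equivariant category is simple, distinct orbit-plus-local-system data give non-isomorphic minimal extensions (compare supports and restrictions to the open orbit), and every simple object restricts nontrivially to a unique open orbit in its support.

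Concretely, the proof would run: given a simple $(K,\kappa)$-equivariant $\mathscr{D}^\lambda_{G/H}$-module $M$, its support is a union of $K$-orbits, hence the closure of a single orbit $Kx_i$ (the one that is open in the support); restricting $M$ to $Kx_i$ yields, by Lemma \ref{Lem:homog_classif} applied with $G/H$ replaced by the homogeneous space $Kx_i\cong K/K_{x_i}$ and with the twist $\lambda_i-\kappa|_{\kf_{x_i}}$, a finite-dimensional rational $K_{x_i}$-representation with the stated infinitesimal character; simplicity of $M$ forces this representation to be irreducible, giving an element of $\Upsilon_i$. Conversely, starting from an element of $\Upsilon_i$, form the corresponding simple twisted local system $\mathcal{L}$ on $Kx_i$ and take its minimal extension $j_{i,!*}\mathcal{L}$ inside $\coh_K(\mathscr{D}^\lambda_{G/H})$; this is simple and has support $\overline{Kx_i}$ with restriction $\mathcal{L}$ to $Kx_i$, so the two assignments are mutually inverse. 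The main obstacle I anticipate is purely bookkeeping: checking that the twist $\lambda$ genuinely restricts to the twist $\lambda_i$ on each orbit (the conjugation-by-$g_i$ identification) and that all the standard $\mathscr{D}$-module operations --- restriction to a locally closed subvariety, minimal extension, the characterization of simples --- are compatible with the strong $(K,\kappa)$-equivariant structure and the TDO twist simultaneously; once one grants that the formalism of \cite[Chapter 11]{Hotta} is insensitive to replacing $\mathscr{D}$ by a TDO carrying a compatible strong $K$-action (which is why the lemma says ``proved in the same way''), there is no genuine new difficulty.
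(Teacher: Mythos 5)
Your proposal is correct and follows essentially the same route as the paper, which simply cites that the lemma "is proved in the same way as \cite[Theorem 11.6.1,(ii)]{Hotta}": the classification via support, restriction to the open orbit, identification with $\Upsilon_i$ through Lemma \ref{Lem:homog_classif}, and minimal extension is exactly the intended argument. Your fleshing-out of the twist/equivariance bookkeeping (the $\lambda_i$ via conjugation by $g_i$, regularity from finitely many $K\times H$-orbits) matches what the paper takes for granted.
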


\subsection{Deformation quantization of symplectic varieties}\label{subsec:quan}
\subsubsection{Definitions}
We review deformation quantization in the algebraic context from \cite{BK}. Let $X$ be a smooth algebraic variety (over $\C$) equipped with an algebraic symplectic form $\Omega \in H^0(X, \Omega_X^2)$. Then $\Omega$ induces a  Poisson bracket $\{ - , -\}$ on the structure sheaf $\OO_X$ of $X$.

\begin{defi}\label{defi:formal_quan_t}
	A \emph{formal deformation quantization over $\C[[\hbar]]$}, or simply {\it quantization} of a smooth symplectic variety $(X,\Omega)$ is the following datum
	\begin{itemize}
		\item
		a sheaf $\OO_\hbar$ of flat associative $\C[[\hbar]]$-algebras on $X$, complete and separated in the $\hbar$-adic topology, such that the commutator $[a, b] = a \cdot b - b \cdot a \in \hbar \OO_\hbar$ for any $a, b \in \OO_\hbar$, and
		\item
		an isomorphism $\phi: \OO_\hbar / \hbar \OO_\hbar \xrightarrow{\sim} \OO_X$ of sheaves of commutative Poisson algebras, where the Poisson bracket on $ \OO_\hbar / \hbar \OO_\hbar$ is induced by $\hbar^{-1}[a,b]$, $a, b \in \OO_\hbar$.
	\end{itemize}
\end{defi}

There is an obvious definition of isomorphic quantizations. Denote by $\quan(X,\Omega)$ or $\quan(X)$ the set of equivalence classes of quantizations of $X$. There is also a natural notion of quantization for a scheme $X$ which is smooth over a scheme $S$ of finite type and is equipped with a relative sympletic form $\Omega \in H^0(X, \Omega^2_{X/S})$, where $\Omega^\bullet_{X/S}$ is the relative de Rham complex. All the constructions and results in this section also work in the general relative setting. See \cite{BK} for details. We will suppress $S$ for most of the time in the rest of the paper.

We recall the following main result from \cite[Theorem 1.8]{BK} which classifies all quantizations under an additional assumption: $X$ is said to be {\it admissible} if the natural homomorphism $H^i_{DR}(X) \to H^i(X, \OO_X)$ is surjective for $i = 1, 2$. This implies that the natural map $H^2_F(X) \to H^2_{DR}(X)$ is injective and its image coincides with the kernel of the map $H^2_{DR}(X) \twoheadrightarrow H^2(X, \OO_X)$.

\begin{Thm}\label{thm:quan_BK}
	Let $X$ be an admissible smooth variety equipped with a symplectic form $\Omega$. Then there is a natural injective map
	\[  \per: \quan(X, \Omega) \hookrightarrow \hbar^{-1} H^2_{DR}(X) [[\hbar]],   \]
	called the {non-commutative period map}. Futhermore, for any $\OO_\hbar \in \quan(X,\Omega)$, we have $\per(\OO_\hbar) \in \hbar^{-1} [\Omega] + H^2_{DR}[[\hbar]]$, where $[\Omega] \in H^2_{DR}(X)$ denotes the cohomology class of the symplectic form $\Omega$. Finally, any choice of linear splitting $P: H^2_{DR}(X) \twoheadrightarrow H^2_F(X)$ of the canonical embedding $H^2_F(X) \to H^2_{DR}(X)$ induces a bijection $\quan(X, \Omega) \xrightarrow{\sim} H^2_F(X)[[\hbar]]$ given by $\OO_\hbar \mapsto P( \per(\OO_\hbar) - \hbar^{-1} [\Omega])$.
\end{Thm}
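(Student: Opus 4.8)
The statement is essentially a citation of \cite[Theorem 1.8]{BK}, so the plan is to reduce it to the Bezrukavnikov--Kaledin construction and then record the two consequences (the statement about the leading term and the bijection induced by a splitting). The core of the Bezrukavnikov--Kaledin argument is a formality-type result: one constructs, Zariski-locally, a quantization of $(X,\Omega)$ out of a Fedosov connection on the Weyl algebra bundle $\mathbb{A}_\hbar(T_xX)$, and then shows that the obstruction to gluing local quantizations, together with the ambiguity in the gluing, is governed by the (hyper)cohomology of the truncated de Rham complex. More precisely, the first step is to recall that the sheaf of automorphisms of the formal Weyl algebra has an action of the Harish-Chandra pair $(\mathfrak{g}, G)$ attached to symplectic vector fields, and that a quantization amounts to a flat Fedosov-type connection on the associated bundle; the curvature of such a connection lives in $\hbar^{-1}\Omega^2_X[[\hbar]]$ and its de Rham class is the period $\per(\OO_\hbar)$. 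One checks that this class is closed, that changing the connection within its gauge class does not change it, and that it determines the quantization up to equivalence --- this is the injectivity of $\per$.

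Next I would pin down the leading term. The key point is that the curvature of the Fedosov connection, reduced mod $\hbar^0$, recovers the symplectic form $\Omega$ itself (this is built into the normalization of the Weyl algebra, where $[u,v]=\hbar\omega(u,v)$): hence $\per(\OO_\hbar)\in \hbar^{-1}[\Omega]+H^2_{DR}(X)[[\hbar]]$. This is a direct unwinding of the construction rather than anything deep. The admissibility hypothesis enters here in an auxiliary but essential way: the condition that $H^i_{DR}(X)\to H^i(X,\OO_X)$ is surjective for $i=1,2$ guarantees that the Hodge-type spectral sequence degenerates enough that $H^2_F(X)\to H^2_{DR}(X)$ is injective with image exactly $\ker(H^2_{DR}(X)\twoheadrightarrow H^2(X,\OO_X))$, which is what makes the period-theoretic bookkeeping work; this is exactly the statement recalled just before the theorem, so I would simply invoke it.

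Finally, for the bijection induced by a splitting $P\colon H^2_{DR}(X)\twoheadrightarrow H^2_F(X)$: given the injectivity of $\per$ and the normalization $\per(\OO_\hbar)-\hbar^{-1}[\Omega]\in H^2_{DR}(X)[[\hbar]]$, the map $\OO_\hbar\mapsto P(\per(\OO_\hbar)-\hbar^{-1}[\Omega])$ is well-defined into $H^2_F(X)[[\hbar]]$ and injective. Surjectivity is the substantive half, and it is supplied by the existence part of \cite[Theorem 1.8]{BK}: every class in $\hbar^{-1}[\Omega]+ (\text{image of } H^2_F(X)[[\hbar]])$ is realized as a period, because one can prescribe the curvature of the Fedosov connection to be any closed form in that coset. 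Composing the realization map with $P$ and with the inclusion $H^2_F(X)[[\hbar]]\hookrightarrow H^2_{DR}(X)[[\hbar]]$ gives a two-sided inverse. The main obstacle in a self-contained treatment would be precisely this surjectivity/existence statement --- the Fedosov-style construction of a quantization with prescribed period --- but since the theorem is quoted from \cite{BK} I would simply cite it; the only thing genuinely requiring care in our exposition is matching the normalizations (the degree $-d$ of the Poisson bracket in Definition \ref{defi:quantization} versus the $d=1$ Weyl-algebra normalization in \cite{BK}), which is routine.
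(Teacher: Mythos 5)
Your proposal is fine: the paper itself gives no argument for this statement, merely recalling it as \cite[Theorem 1.8]{BK}, and your plan likewise reduces everything to that citation, with the Fedosov/Harish-Chandra-torsor sketch and the remarks on the leading term, admissibility, and the splitting being accurate paraphrases of the Bezrukavnikov--Kaledin classification. So you take essentially the same route as the paper, just with more expository detail than the paper records.
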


For our applications, we need to consider \emph{graded} quantizations(see \cite[Section, 2.2]{quant_iso} \cite[Section 3.1]{orbit}  and \cite[Section 3.2]{BPW}). Fix a positive integer $d$.  Suppose that the symplectic variety $X$ admits a rational $\cm$-action, with respect to which the symplectic form $\Omega$ has weight $d$, i.e., the push-forward $z.\Omega = z_* \Omega$ of the form $\Omega$ by the automorphism of $X$ induced by $z \in \cm$ equals the rescaled form $z^d\Omega$. In this case we say that $(X,\Omega)$ is \emph{graded} (of weight $d$).

\begin{defi}\label{defn:graded_quan}
	Suppose $(X, \Omega)$ is a graded symplectic variety of weight $d$. A quantization $\OO_\hbar$ of $(X,\Omega)$  is said to be \emph{graded} if it is equipped with a pro-rational  $\cm$-action by algebra automorphisms such that the isomorphism $\OO_\hbar / \hbar \OO_\hbar \simeq \OO_X$ is $\cm$-equivariant and $z. \hbar = z^d \hbar$ for any $z \in \cm$. We require isomorphisms between graded quantizations to be $\cm$-equivariant. The set of isomorphism classes of graded quantizations of $(X,\Omega)$ is denoted as $\quan(X, \Omega)^{gr}$ or $\quan(X)^{gr}$.
\end{defi}

We write
\begin{equation} \label{eq:per}
	\per(\OO_\hbar) =  \hbar^{-1}[\Omega] + \omega_1(\OO_\hbar) +  \hbar \omega_2(\OO_\hbar)+  \hbar^2 \omega_3 (\OO_\hbar)+ \cdots \in  \hbar^{-1}H^2_{DR}(X)[[\hbar]],
\end{equation}
where $\omega_k(\OO_\hbar) \in H^2_{DR}(X)$. Note that when $(X, \Omega)$ is graded, the de Rham class $[\Omega] \in H^2_{DR}(X)$ of the symplectic form $\Omega$ vanishes, thanks to the well-known Cartan homotopy formula. The following result is due to first named author (\cite[Section 2.2 and Prop. 2.3.2]{quant_iso}).

\begin{Prop}\label{prop:graded}
	Let $(X,\Omega)$ be a graded smooth symplectic variety.
		If $\OO_\hbar$ is a graded quatization of $(X, \Omega)$, then $\per(\OO_\hbar) \in H^2_{DR}(X, \C) \subset  \hbar^{-1}H^2_{DR}(X)[[\hbar]]$, i.e., $\omega_k(\OO_\hbar) = 0$ for all $k \geqslant 2$ in \eqref{eq:per}.
The converse is true if $X$ is admissible.
\end{Prop}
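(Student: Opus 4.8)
The plan is to exploit the naturality of the non-commutative period map of Theorem \ref{thm:quan_BK} under two operations on quantizations: pushforward along symplectomorphisms and rescaling of the formal parameter. Recall that a symplectomorphism $\Psi\colon (X_1,\Omega_1)\xrightarrow{\sim}(X_2,\Omega_2)$ induces a bijection $\quan(X_2,\Omega_2)\xrightarrow{\sim}\quan(X_1,\Omega_1)$, $\OO_\hbar\mapsto\Psi^{*}\OO_\hbar$, with $\per(\Psi^{*}\OO_\hbar)=\Psi^{*}\per(\OO_\hbar)$, where on the right $\Psi^{*}$ is the induced map on de Rham cohomology; and that for $c\in\C^{\times}$ one can rescale $\OO_\hbar\in\quan(X,\Omega)$ to a quantization of $(X,c^{-1}\Omega)$ by declaring the new parameter to be $c\hbar$, under which the period series $\per(\OO_\hbar)(\hbar)$ becomes $\per(\OO_\hbar)(c\hbar)$ (both facts are in \cite[Section 2.2]{quant_iso}; the second is consistent with the normalization of Theorem \ref{thm:quan_BK} since $(c\hbar)^{-1}[\Omega]=\hbar^{-1}[c^{-1}\Omega]$). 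Writing $\rho_z\colon X\to X$ for the automorphism given by $z\in\cm$, the weight condition $z.\Omega=z^{d}\Omega$ translates into $\rho_z^{*}\Omega=z^{-d}\Omega$, so $\rho_z^{*}$ carries $\quan(X,\Omega)$ to $\quan(X,z^{-d}\Omega)$; composing with the rescaling by $c=z^{-d}$ yields an action $\OO_\hbar\mapsto z\cdot\OO_\hbar$ of $\cm$ on $\quan(X,\Omega)$, and a $\cm$-equivariant structure on $\OO_\hbar$ in the sense of Definition \ref{defn:graded_quan} provides, for each $z$, an isomorphism of quantizations $z\cdot\OO_\hbar\cong\OO_\hbar$ (namely the action of $z$ itself, viewed as an isomorphism onto the rescaled pullback). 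Note also that, as recalled before the Proposition, $[\Omega]=0$ for a graded symplectic variety, so $\per(\OO_\hbar)=\sum_{k\geqslant 1}\hbar^{k-1}\omega_k(\OO_\hbar)$ already lies in $H^2_{DR}(X)[[\hbar]]$.

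The next step is to compute the induced action on periods. Since $\cm$ is connected, $\rho_z^{*}$ acts trivially on $H^2_{DR}(X)$, so the only effect of $z\cdot(-)$ on the period is the reparametrization $\hbar\mapsto z^{-d}\hbar$ coming from the rescaling. Hence
\[
\per(z\cdot\OO_\hbar)=\sum_{k\geqslant 1}z^{-d(k-1)}\,\hbar^{k-1}\,\omega_k(\OO_\hbar).
\]
If $\OO_\hbar$ is graded, then $\per(z\cdot\OO_\hbar)=\per(\OO_\hbar)$ for all $z$, so $z^{-d(k-1)}\omega_k(\OO_\hbar)=\omega_k(\OO_\hbar)$ for every $z\in\cm$ and every $k$. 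For $k\geqslant 2$ we have $d(k-1)>0$, and choosing $z$ with $z^{d(k-1)}\neq 1$ forces $\omega_k(\OO_\hbar)=0$. This proves the first assertion.

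For the converse, assume $X$ is admissible and $\omega_k(\OO_\hbar)=0$ for all $k\geqslant 2$, i.e. $\per(\OO_\hbar)=\omega_1(\OO_\hbar)\in H^2_{DR}(X)$. The displayed formula then gives $\per(z\cdot\OO_\hbar)=\omega_1(\OO_\hbar)=\per(\OO_\hbar)$ for all $z\in\cm$, and since $\per$ is injective on $\quan(X,\Omega)$ for admissible $X$ (Theorem \ref{thm:quan_BK}) we conclude $z\cdot\OO_\hbar\cong\OO_\hbar$ for every $z$. It remains to upgrade this triviality of the $\cm$-action on the isomorphism class to an honest $\cm$-equivariant structure on $\OO_\hbar$. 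I would do this by invoking the existence, for admissible $X$ and any class $\lambda\in H^2_F(X)$, of a graded quantization with constant period $\lambda$ — e.g. the explicit construction in \cite[Section 2.2]{quant_iso}, which is manifestly $\cm$-equivariant — and then applying the injectivity of $\per$ once more to identify $\OO_\hbar$ with it; alternatively one rigidifies the isomorphisms $z\cdot\OO_\hbar\cong\OO_\hbar$ into an action using the (pro-unipotent) structure of the automorphism group of $\OO_\hbar$ fixing the period.

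The forward direction is essentially formal once the two naturality properties of $\per$ are in place; the substantive content is the converse, where admissibility enters twice — through the injectivity of the period map and through the existence of graded quantizations with prescribed period — and where the real work is passing from "the $\cm$-action fixes the isomorphism class" to "$\OO_\hbar$ carries a compatible $\cm$-action." A secondary point demanding care is the bookkeeping of conventions: the precise rescaling rule for $\per$ and the sign in the relation $\rho_z^{*}\Omega=z^{-d}\Omega$ determine the exponent $d(k-1)$ in the displayed formula, on which the whole vanishing argument rests, so those normalizations should be pinned down first.
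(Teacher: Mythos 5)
Your forward direction is correct and is essentially the argument behind the cited source (the paper itself gives no proof of this proposition; it quotes \cite[Section 2.2, Prop. 2.3.2]{quant_iso}). The induced $\cm$-action $z\cdot(-)$ on $\quan(X,\Omega)$, the fact that a grading in the sense of Definition \ref{defn:graded_quan} makes the class of $\OO_\hbar$ a fixed point, and the transformation rule $\per(z\cdot\OO_\hbar)=\sum_k z^{-d(k-1)}\hbar^{k-1}\omega_k(\OO_\hbar)$ (trivial action on $H^2_{DR}(X)$ since $\cm$ is connected, plus the substitution $\hbar\mapsto z^{-d}\hbar$) immediately give $\omega_k=0$ for $k\geqslant 2$; your conventions are internally consistent, and note that this half does not need admissibility because the period map of \cite{BK} is defined for any smooth symplectic $X$ — admissibility is only needed for injectivity, i.e., exactly where you invoke it.

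The genuine gap is in the converse, and you locate it correctly but do not close it. Your route (1) is circular as stated: the existence of a graded quantization with an arbitrary prescribed constant period is not available prior to this proposition — it is precisely Corollary \ref{cor:per_graded}, which the paper deduces \emph{from} the proposition, and \cite[Section 2.2]{quant_iso} does not contain a manifestly $\cm$-equivariant construction realizing every class in $H^2_F(X)$ (the construction that realizes all periods under admissibility is the non-equivariant classification of \cite{BK}). Route (2) is the right idea, but it is where all the work lies: injectivity of $\per$ only yields, for each individual $z$, \emph{some} isomorphism $\phi_z\colon z\cdot\OO_\hbar\to\OO_\hbar$, with no algebraic dependence on $z$ and no cocycle property. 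One must (a) show the $\phi_z$ can be chosen pro-rationally in $z$, e.g.\ working modulo $\hbar^n$ where the set of such isomorphisms is a torsor over the pro-unipotent group of automorphisms reducing to the identity mod $\hbar$, and (b) correct the failure of $z\mapsto\phi_z$ to be a group action by an inductive argument in powers of $\hbar$, using that $\cm$ is linearly reductive while this automorphism group is pro-unipotent, and finally verify that the resulting action is pro-rational and satisfies $z.\hbar=z^d\hbar$. This rigidification is the actual content of \cite[Prop. 2.3.2]{quant_iso}; without it your proof establishes only that the isomorphism class of $\OO_\hbar$ is $\cm$-fixed, which is strictly weaker than $\OO_\hbar$ being a graded quantization.
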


Then Theorem \ref{thm:quan_BK} and Proposition \ref{prop:graded} together imply the following result.

\begin{Cor}\label{cor:per_graded}
	Let $(X, \Omega)$ be a graded smooth symplectic variety. Suppose $H^1(X,\OO_X) = H^2(X,\OO_X)=0$. Then the period map $\per$ identifies the set $\quan(X, \Omega)^{gr}$ of isomorphism classes of graded quantizations of $(X,\Omega)$ with $H^2_{DR}(X) \subset  \hbar^{-1}H^2_{DR}(X)[[\hbar]]$.
\end{Cor}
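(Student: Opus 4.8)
The plan is to combine the two ingredients that are already in place: Beilinson--Kaledin's classification (Theorem \ref{thm:quan_BK}) and the first named author's grading restriction (Proposition \ref{prop:graded}). First I would observe that the hypothesis $H^1(X,\OO_X) = H^2(X,\OO_X) = 0$ has two consequences. On one hand, $X$ is automatically admissible in the sense preceding Theorem \ref{thm:quan_BK}: the map $H^i_{DR}(X) \to H^i(X,\OO_X)$ is surjective for $i = 1, 2$ simply because the target vanishes. Hence Theorem \ref{thm:quan_BK} applies and gives the injective period map $\per: \quan(X,\Omega) \hookrightarrow \hbar^{-1} H^2_{DR}(X)[[\hbar]]$. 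On the other hand, the vanishing $H^2(X,\OO_X) = 0$ forces the inclusion $H^2_F(X) \to H^2_{DR}(X)$ to be an isomorphism, by the long exact sequence \eqref{exsq:DR}: the map $H^2_{DR}(X) \to H^2(X,\OO_X)$ has zero target, and $H^1(X,\OO_X) = 0$ kills the connecting map into $H^2_F(X)$, so $H^2_F(X) \xrightarrow{\sim} H^2_{DR}(X)$.

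Next I would restrict attention to graded quantizations. Since $(X,\Omega)$ is graded of weight $d$, the Cartan homotopy formula gives $[\Omega] = 0$ in $H^2_{DR}(X)$, as noted in the text before Proposition \ref{prop:graded}. For a graded quantization $\OO_\hbar$, Proposition \ref{prop:graded} (the direct implication, which needs no admissibility) says $\omega_k(\OO_\hbar) = 0$ for all $k \geqslant 2$, so that $\per(\OO_\hbar) = \omega_1(\OO_\hbar) \in H^2_{DR}(X)$, using $[\Omega] = 0$ to drop the $\hbar^{-1}$ term. Thus $\per$ carries $\quan(X,\Omega)^{gr}$ into $H^2_{DR}(X) \subset \hbar^{-1} H^2_{DR}(X)[[\hbar]]$, and it is still injective there since it is injective on the larger set $\quan(X,\Omega)$. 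It remains to prove surjectivity onto all of $H^2_{DR}(X)$: given $c \in H^2_{DR}(X)$, I must produce a graded quantization with period $c$. By the last assertion of Theorem \ref{thm:quan_BK}, choosing a splitting $P$ of $H^2_F(X) \hookrightarrow H^2_{DR}(X)$ — which here is just the inverse of the isomorphism established above — yields a bijection $\quan(X,\Omega) \xrightarrow{\sim} H^2_F(X)[[\hbar]]$; so there is a (possibly non-graded a priori) quantization $\OO_\hbar$ with $\per(\OO_\hbar) = c$ (constant in $\hbar$). Since $X$ is admissible and $\per(\OO_\hbar) \in H^2_{DR}(X)$ has all higher coefficients zero, the converse direction of Proposition \ref{prop:graded} shows $\OO_\hbar$ admits a grading making it a graded quantization, which completes the argument.

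The only genuinely substantive point — and the step I would flag as the main obstacle, were it not already handled by the cited references — is the converse direction of Proposition \ref{prop:graded}: upgrading a quantization whose period happens to be $\hbar$-independent to an honestly $\cm$-equivariant one. This is exactly where admissibility of $X$ is used, and it is not formal; everything else in the corollary is bookkeeping with the exact sequence \eqref{exsq:DR} and the statements of Theorem \ref{thm:quan_BK} and Proposition \ref{prop:graded}. I would therefore present the proof in the order: (1) deduce admissibility and $H^2_F(X) \cong H^2_{DR}(X)$ from the vanishing hypotheses; (2) invoke Theorem \ref{thm:quan_BK} for the injective period map and the bijection with $H^2_F(X)[[\hbar]]$; (3) use the forward direction of Proposition \ref{prop:graded} to land graded quantizations inside $H^2_{DR}(X)$; (4) use the converse direction of Proposition \ref{prop:graded} to hit every class in $H^2_{DR}(X)$, concluding that $\per$ restricts to the asserted bijection $\quan(X,\Omega)^{gr} \xrightarrow{\sim} H^2_{DR}(X)$.
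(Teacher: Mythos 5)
Your proposal is correct and follows exactly the route the paper intends: the paper gives no separate argument, stating only that Theorem \ref{thm:quan_BK} and Proposition \ref{prop:graded} together imply the corollary, and your steps (vanishing hypotheses give admissibility and $H^2_F(X)\xrightarrow{\sim}H^2_{DR}(X)$ via \eqref{exsq:DR}, $[\Omega]=0$ by Cartan's formula, forward direction of Proposition \ref{prop:graded} for the image, converse direction for surjectivity) are precisely the bookkeeping that combination requires. The only point you pass over quickly — that injectivity on $\quan(X,\Omega)$ gives injectivity on $\quan(X,\Omega)^{gr}$, i.e.\ that isomorphic graded quantizations are $\cm$-equivariantly isomorphic — is likewise left implicit by the paper and is covered by the cited results of \cite{quant_iso}.
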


\begin{Rem} \label{rmk:enhanced_period}
	In  \cite{Yu}, the second named author has defined an {\it enhanced (truncated) non-commutative period map} $\overline{\per}: \quan(X, \Omega) \to H^2_F(X)$ for any (not necessarily admissible) symplectic variety $X$ and proved that, for any quantization $\OO_\hbar$ of $(X, \Omega)$, the map $	c_1^{\operatorname{top}}: H^2_F(X) \to H^2_{DR}(X)$ sends $\overline{\per}(\OO_\hbar)$ to $ \omega_1(\OO_\hbar)$, where $\omega_1(\OO_\hbar)$ is as in \eqref{eq:per}. This is compatible with Corollary \ref{cor:per_graded}, since in the context there we have $H^2_F(X) \simeq H^2_{DR}(X)$. The enhanced period map will play a role in quantizations of Lagrangian subvarieties. See Remark \ref{rmk:enhanced_period_lag}.
\end{Rem}

We finish this section with an example. Suppose $X$ be  a conical symplectic singularity
such that $\operatorname{codim}_X X^{sing}\geqslant 4$. Then
\begin{equation}\label{eq:orbit_coh_cohom_vanish}
H^i(X^{reg},\mathcal{O}_X)=0, \text{ for }i=1,2.
\end{equation}
Recall that in Section \ref{SS_quant_nilp} to $\lambda\in H^2(X^{reg},\C)$
we have assigned the filtered quantization $\A_\lambda$ of $\C[X]$. Let $\A_{\lambda,\hbar}$ denote
the completion of its modified Rees algebra. The microlocalization
$\A_{\lambda,\hbar}|_{\Orb}$ is the graded quantization with period $\lambda$,
see, e.g., \cite[Section 3.2]{orbit}.

\subsection{Quantization of Lagrangian subvarieties} \label{subsec:quan_lag}

\subsubsection{Definitions}

Suppose that $Y$ is a smooth closed Lagrangian subvariety of a symplectic variety $X$ with the embedding denoted by $\iota : Y \hookrightarrow X$. When $X$ is graded, we also assume $Y$ is $\cm$-stable. There is a natural notion of (graded) quantization of a ($\cm$-equivariant) vector bundle over $Y$ (\cite[Section 1]{BC}, \cite[Section 4.3]{LY}).

\begin{defi}
	Suppose $\OO_\hbar$ is a quantization of $(X,\Omega)$ and $\EE$ is a vector bundle of rank $r$  over $Y$. A {\it quantization} of $\EE$, or rather the sheaf $\iota_* \EE$ of $\OO_X$-modules, is an $\OO_\hbar$-module $\EE_\hbar$ flat over $\C[[\hbar]]$, complete and separated in the $\hbar$-adic topology, with an isomorphism $\EE_\hbar /\hbar \EE_\hbar \cong \iota_*\EE$ of $\OO_X$-modules.

	Now assume $(X,\Omega)$ is graded and $Y$ is a $\cm$-stable Lagrangian subvariety of $X$. Suppose $\OO_\hbar$ is a graded quantization of $X$ and $\EE$ is a $\cm$-equivariant ($\cm$-linearized) vector bundle of rank $r$ over $Y$. A {\it graded quantization} of $\EE$ is a quantization $\EE_\hbar$ of $\EE$ with a pro-rational $\cm$-action, such that the structure map $\OO_\hbar \otimes_{\C[[\hbar]]} \EE_\hbar \to \EE_\hbar$ is $\cm$-equivariant and the isomorphism $\EE_\hbar /\hbar \EE_\hbar \cong \iota_*\EE$ is also $\cm$-equivariant.
\end{defi}

\subsubsection{Picard algebroids associated to Lagrangian subvarieties} \label{subsec:lag_pic}

By \cite[\S\,4.1]{LY}, one can associate two related Picard algebroids over a Lagrangian subvariety $Y$ to a given quantization $\OO_\hbar$. Namely, let $\J_Y$ denote the ideal subsheaf of $\OO_X$ consisting of all
sections vanishing on $Y$. Set $\J_{Y,\hbar}$ and $\J_{Y,\hbar}'$ to be the preimage of the ideal $\J_Y$ and $\J_Y^2$ respectively under the projection $\OO_\hbar \twoheadrightarrow \OO_X$. We write $\J_{Y,\hbar}^2 = \J_{Y,\hbar} \cdot \J_{Y,\hbar} \subset \OO_\hbar$, which is a subsheaf of two-sided ideals of the algebra $\OO_\hbar$. Then $\hbar^{-1} \J_{Y,\hbar}^2 \subset \hbar^{-1}\J_{Y,\hbar}' \subset \hbar^{-1}\J_{Y,\hbar}$ are all subsheaves of Lie ideals in the sheaf $ \hbar^{-1}\OO_\hbar$ of Lie algebras. Consider the sheaf $\hbar^{-1} (\J_{Y,\hbar}/\J_{Y,\hbar}^2)$, which is a sheaf of Lie algebras supported on $Y$. The left multiplication of $\OO_\hbar$ on $\hbar^{-1}\J_{Y,\hbar}$ descends to a left $\OO_Y$-module structure on $\hbar^{-1} (\J_{Y,\hbar}/ \J_{Y,\hbar}^2)$. We have a short exact sequence of sheaves of Lie algebras and $\OO_Y$-modules
\begin{equation}\label{exsq:J}
	0 \to \hbar^{-1} (\J_{Y,\hbar}' / \J_{Y,\hbar}^2) \to \hbar^{-1} (\J_{Y,\hbar}/ \J_{Y,\hbar}^2) \to \hbar^{-1} (\J_{Y,\hbar} / \J_{Y,\hbar}') \to 0.
\end{equation}
By \cite[Lemma 5.4]{BGKP}, there are canonical isomorphisms
\begin{equation} \label{eq:J_isom}
	\hbar^{-1} (\J_{Y,\hbar} / \J_{Y,\hbar}') \simeq \T_Y, \quad \hbar^{-1} (\J_{Y,\hbar}' / \J_{Y,\hbar}^2) \simeq \OO_Y.
\end{equation}
Then  \eqref{exsq:J} and \eqref{eq:J_isom} together make $\hbar^{-1} (\J_{Y,\hbar}/ \J_{Y,\hbar}^2)$ into a Picard algebroid over $Y$, which is denoted as $\tatp$.

The second Picard algebroid arises from a different $\OO_Y$-module structure on $\hbar^{-1} (\J_{Y,\hbar}/ \J_{Y,\hbar}^2)$ defined as follows. There is a commutative non-assoicative \emph{symmetrized product} on $\OO_\hbar$ defined by $a \bullet b := \frac{1}{2}(a b + b a)$. The symmetrized product induces a second $\OO_\hbar$-module structure, which is also compatible with the short exact sequence \eqref{exsq:J} and isomorphisms \eqref{eq:J_isom},  and hence we get a second Picard algebroid structure on $\hbar^{-1} (\J_{Y,\hbar}/ \J_{Y,\hbar}^2)$ with the same Lie bracet. We denote the second Picard algebroid as $\tat$. By \cite[Section 1]{BC} and \cite[Section 4.1]{LY},  the identity map of $\hbar^{-1} (\J_{Y,\hbar}/ \J_{Y,\hbar}^2)$ induces a canonical isomorphism
\begin{equation}\label{eq:tatp2tat}
	\tatp \xrightarrow{\sim} \tat + \frac{1}{2}\TT(K_Y).
\end{equation}
of Picard algebroids, where $K_Y$ denotes the canonical line bundle of $Y$.

By \cite[\S\,5.3]{BGKP}, the Picard algebroid $\tat$ satisfies
\begin{equation}  \label{eq:c1_lag}
	c^{\operatorname{top}}_1(\tat)=\iota^*[\omega_1(\OO_\hbar)],
\end{equation}
where $\omega_1(\OO_\hbar)$ is as in \eqref{eq:per} and $\iota^*: H^2_{DR}(X) \to H^2_{DR}(Y)$ is the restriction map induced by the embedding $\iota: Y \hookrightarrow X$.

\begin{Rem} \label{rmk:enhanced_period_lag}
	It is shown in \cite[Proposition 6.4]{Yu} that $\iota^\circ\left(\overline{\per}(\OO_\hbar)\right) = \tat$, where $\overline{\per}: \quan(X, \Omega) \to \pic(X)$ is the enhanced period map mentioned in Remark \ref{rmk:enhanced_period} and $\iota^\circ$ stands for the pullback of Picard algebroids as in the discussion after Lemma \ref{Lem:algebroid_w_coh_module}. This provides a refinement of the equality \eqref{eq:c1_lag}. The case when $Y$ is an orbit of an algebraic group, which is the main focus of the current paper, is treated in Lemma \ref{Lem:quantum_comom_period} and Corollary \ref{Cor:equiv_condition}.
\end{Rem}

\subsubsection{Existence and Uniqueness} \label{subsec:lag_vec}

Let $\EE$ be an algebraic vector bundle  over $Y$ and $\EE_\hbar$ be a quantization of $\EE$. In particular, $\EE_\hbar$ is a module over $\OO_\hbar$ and we have $\J_{Y,\hbar} \cdot \EE_\hbar \subset \hbar \EE_\hbar$. Since $\EE_\hbar$ is flat over $\C[[\hbar]]$, we have a Lie algebra action of $\hbar^{-1} \J_{Y,\hbar}$ on $\EE_\hbar$ given by $(\hbar^{-1} a). m = \hbar^{-1}(a \cdot m)$. This action descends to a Lie algebroid action of $\tatp$ on $\EE_\hbar / \hbar \EE_\hbar \simeq \EE$, which makes $\EE$ into a $\tatp$-module. Therefore it makes sense to talk about quantizations of a $\tatp$-module $\EE$ that lift the $\tatp$-module structure.

\begin{Rem} \label{rmk:pic_mod_t2}
	Note that the $\tatp$-module structure on $\EE$ is completely determined by the $\OO_\hbar / \hbar^2 \OO_\hbar$-module structure on $\EE_\hbar / \hbar^2 \EE_\hbar$. That is, in the definition of $\tatp$ from Section \ref{subsec:lag_pic}, we can replace $\J_{Y,\hbar}$ by $\wt{\J}_{Y,\hbar} := \J_{Y,\hbar} / \hbar^2 \OO_\hbar \subset \OO_\hbar / \hbar^2 \OO_\hbar$ and get a natural identification $\tatp \simeq \hbar^{-1} \wt{\J}_{Y,\hbar} / \wt{\J}^2_{Y,\hbar}$ of Picard algebroids. Apply the same construction as above to the $\OO_\hbar / \hbar^2 \OO_\hbar$-module $\EE_\hbar / \hbar^2 \EE_\hbar$ gives the same $\tatp$-module structure on $\EE$.
\end{Rem}

Recall that we can decompose $\per(\OO_\hbar)$ as in \eqref{eq:per}, where $ \omega_k(\OO_\hbar) \in H^2_{DR}(X)$. The following theorem (\cite[Thm\,1.1]{BC}) gives a necessary and sufficient condition for $\EE$ to admit a quantization. The case when $\EE$ is a line bundle was proven in \cite[Theorem 1.4]{BGKP}.

\begin{Thm}\label{thm:lag_vec}
  Fix a quantization $\OO_\hbar$ of $(X,\Omega)$. Let $Y$ be a smooth Lagrangian subvariety $Y$ of $X$ and $\EE$ be a coherent $\tatp$-module over $Y$. Then $\EE$ admits a quantization $\EE_\hbar$ that lifts the $\tatp$-module structure on $\EE$, if and only if $\iota^*[\omega_k(\OO_\hbar)] = 0$ holds in $H^2_{DR}(Y)$, $\forall ~ k \geqslant 2$.
\end{Thm}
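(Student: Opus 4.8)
The plan is to construct $\EE_\hbar$ by successive approximation in powers of $\hbar$, which is the obstruction-theoretic strategy of \cite{BC} refining the line-bundle case \cite{BGKP}. Write $\OO_\hbar^{(n)} := \OO_\hbar/\hbar^{n+1}\OO_\hbar$ and call a sheaf $\EE_\hbar^{(n)}$ of $\OO_\hbar^{(n)}$-modules, flat over $\C[\hbar]/(\hbar^{n+1})$ and equipped with an isomorphism $\EE_\hbar^{(n)}/\hbar\EE_\hbar^{(n)}\cong\iota_*\EE$ inducing the given $\tatp$-module structure on $\EE$ (in the sense of Section~\ref{subsec:lag_vec}), a \emph{quantization of $\EE$ up to order $n$}. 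By Remark~\ref{rmk:pic_mod_t2}, a quantization up to order $1$ is the same datum as a $\tatp$-module structure on $\EE$, so $\EE_\hbar^{(1)}$ exists and realizes the given structure with no constraint at this stage; this reflects that $\iota^*[\omega_1(\OO_\hbar)]$ is already recorded in $\tatp$ via \eqref{eq:c1_lag} and \eqref{eq:tatp2tat}. The theorem then reduces to the claim that, for every $n\geqslant 1$, a quantization $\EE_\hbar^{(n)}$ lifts to a quantization $\EE_\hbar^{(n+1)}$ \emph{if and only if} $\iota^*[\omega_{n+1}(\OO_\hbar)]=0$ in $H^2_{DR}(Y)$. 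Granting this, sufficiency follows by forming $\EE_\hbar:=\varprojlim_n\EE_\hbar^{(n)}$, which is $\hbar$-adically complete and separated and flat over $\C[[\hbar]]$; necessity is immediate, since an honest $\EE_\hbar$ supplies all the $\EE_\hbar^{(n)}$ and hence kills every obstruction.

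For the lifting step I would run the standard deformation calculus in the formal neighbourhood of $Y$, where $\OO_\hbar$ admits a Fedosov-type presentation and, in particular, a canonical flat connection compatible with its multiplication. Take an affine open cover $\{U_i\}$ of $Y$ trivializing $\EE$ and fine enough that over each $U_i$ a local lift $\EE_\hbar^{(n+1),i}$ of $\EE_\hbar^{(n)}$ --- together with a lift of the flat projective connection carried by $\EE$ --- exists; this is possible because $H^{>0}(U_i,\send(\EE))=0$. On overlaps the comparison isomorphisms $\phi_{ij}\colon\EE_\hbar^{(n+1),i}|_{U_{ij}}\xrightarrow{\sim}\EE_\hbar^{(n+1),j}|_{U_{ij}}$ are congruent to the identity modulo $\hbar^{n}$, hence equal $\mathrm{id}+\hbar^{n}\psi_{ij}$ with $\psi_{ij}\in\send(\EE)(U_{ij})$, and the failure of $\{\phi_{ij}\}$ to satisfy the cocycle identity is a \v{C}ech $2$-cocycle whose class $o_{n+1}\in H^2(Y,\send(\EE))$ obstructs the gluing; it is independent of the local choices once the order-$n$ data is fixed, and $\EE_\hbar^{(n+1)}$ exists iff $o_{n+1}=0$.

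The technical heart --- and the step I expect to be the main obstacle --- is to show that $o_{n+1}$ is \emph{central}, i.e.\ lies in the image of $H^2(Y,\OO_Y)\to H^2(Y,\send(\EE))$ coming from $\OO_Y\xrightarrow{\sim}\OO_Y\!\cdot\!\mathrm{id}_{\EE}\hookrightarrow\send(\EE)$, and then to identify the resulting class with $\iota^*[\omega_{n+1}(\OO_\hbar)]$. Centrality is where the $\tatp$-structure is used: the projective flat connection makes $\send(\EE)$ a flat bundle, split by the flat trace $\tfrac1{\operatorname{rk}\EE}\operatorname{tr}$ as $\OO_Y\oplus\send(\EE)_0$, and one checks that two connection-compatible local lifts of $\EE_\hbar^{(n)}$ over $U_{ij}$ differ only by a scalar automorphism, so the $\send(\EE)_0$-component of $\{\psi_{ij}\}$ can be cancelled and $o_{n+1}$ descends to $H^2(Y,\OO_Y)$; tracking the same cocycle through the \v{C}ech--de Rham bicomplex refines it to a class in $H^2_{DR}(Y)$ that is moreover independent of $\operatorname{rk}\EE$. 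This scalar de Rham class is precisely the obstruction computed in the line-bundle case \cite[\S\,5]{BGKP} (equivalently, for the induced ``trace'' quantization on the trivial bundle), where it is matched with $\iota^*[\omega_{n+1}(\OO_\hbar)]$ using the non-commutative period map and the explicit local structure of $\OO_\hbar$ near $Y$ (cf.\ Remark~\ref{rmk:enhanced_period_lag}). The delicate points are maintaining connection-compatibility of the local lifts at every order and carrying out the cocycle-level comparison of the two a priori different descriptions of $\omega_{n+1}$.
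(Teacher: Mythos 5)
There is nothing in the paper to compare your argument with: Theorem \ref{thm:lag_vec} is not proved in this paper at all, but imported verbatim from \cite[Thm 1.1]{BC}, with the line-bundle case attributed to \cite[Theorem 1.4]{BGKP} (the paper's statement simply absorbs the flat-projective-connection and determinant conditions of \cite{BC} into the hypothesis that $\EE$ is a coherent $\tatp$-module, via \cite[Lemma 2.3.1, 2.3.2]{BB} and \eqref{eq:c1_lag}). So the relevant question is only whether your sketch would stand on its own as a proof.

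As it stands it would not, because every step that carries actual mathematical content is deferred. First, the claim that an order-$1$ quantization is ``the same datum'' as a $\tatp$-module structure, with ``no constraint at this stage,'' is asserted rather than shown; Remark \ref{rmk:pic_mod_t2} only gives the direction from $\EE_\hbar/\hbar^2\EE_\hbar$ to the $\tatp$-structure, and the converse (global existence of $\EE_\hbar^{(1)}$ realizing a given $\tatp$-action) needs an argument. Second, your inductive criterion ``$\EE_\hbar^{(n)}$ lifts to order $n+1$ iff $\iota^*[\omega_{n+1}(\OO_\hbar)]=0$'' presupposes that the obstruction class is independent of all lower-order choices and is exactly the period coefficient; in general such \v{C}ech obstructions depend on the chosen order-$n$ lift, and both the centrality of $o_{n+1}$ (descent from $H^2(Y,\send(\EE))$ to a scalar de Rham class) and its identification with $\iota^*[\omega_{n+1}]$ are precisely the theorems of \cite{BGKP} and \cite{BC} that you cite back at this point. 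This also undermines your claim that necessity is ``immediate'': it is immediate only once the choice-independent identification of the stepwise obstruction with $\iota^*[\omega_{n+1}]$ has been established, which is the part you have not proved. Finally, note that \cite{BC} does not in fact argue by a naive order-by-order \v{C}ech computation but through formal geometry (Harish-Chandra torsors and a Fedosov-type description of $\OO_\hbar$ near $Y$), which is how the comparison with the non-commutative period is actually carried out; your outline gestures at this (``Fedosov-type presentation,'' Remark \ref{rmk:enhanced_period_lag}) without supplying it. In short, your proposal is a reasonable roadmap of the known strategy, but as a proof it has genuine gaps exactly at the points it labels the ``technical heart.''
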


Note that, the quantization $\EE_\hbar$ in Theorem \ref{thm:lag_vec}, if exists, is usually far from being unique. The set of isomorphism classes of all quantizations of vector bundles with a fixed rank admit a free action of an often infinite-dimensional group. We refer the reader to \cite[Thm\,1.1]{BC} for more details.

From now on, we will restrict ourselves to the graded setting. One advantage of working under the graded assumption, besides many others, is that we have the uniqueness of a quantization under a certain mild condition, which we recall now. First of all, note that when $\EE_\hbar$ is a graded quantization, the induced $\tatp$-module structure on $\EE$ is compatible with the $\cm$-actions, i.e., $\EE$ is a graded $\tatp$-module in the sense of Definition \ref{defn:pic_module_action}. The action of $\tatp$ on $\EE$ desends to an action of an algebraic flat connection on the projectivization $\mathbb{P}(\EE)$ of $\EE$, via which the differentiation of the $\cm$-action on $Y$ lifts to a Lie algebra $\C$-action on $\mathbb{P}(\EE)$ (see the discussion before \cite[Definition 4.3.2]{LY}). On the other hand, the $\cm$-action on $\EE$ descends to a $\cm$-action on $\mathbb{P}(\EE)$, which differentiates to a second $\C$-action. The following notion was introduced in \cite[Definition 4.3.2]{LY}.

\begin{defi} \label{defn:strong_cm}
	Let $\TT$ be a graded Picard algebroid and $\EE$ be a graded $\TT$-module over $Y$ that is $\OO_Y$-coherent. The $\cm$-action on $\EE$ is said to be \emph{strong} if the Lie algebra $\C$-action on the projectivization $\mathbb{P}(\EE)$, defined via the flat connection that is induced from the $\TT$-module structure, coincides with the $\C$-action induced by the $\cm$-action on $\EE$. In this case, we also say that the graded $\TT$-module $\EE$ is \emph{strongly graded}.
\end{defi}

The strongness condition is vacuous when $\EE$ is a line bundle. More generally, we have the following sufficient condition to ensure the strongness condition, which is a generalization of \cite[Corollary 6.11]{Yu} and follows from exactly the same argument. 


\begin{Prop}[{\cite[Cor. 6.11]{Yu}}] \label{prop:irred_strongness} 
	Let $\TT$ be a strongly $K$-equivariant graded Picard algebroid over $Y$ and $\EE$ be an $\OO_Y$-coherent graded $(K,\kappa)$-equivariant $\TT$-module over $Y$, whose analytification $\EE^{an}$ is irreducible as a $(K,\kappa)$-equivariant $\TT^{an}$-module. Then the $\cm$-action on $\EE$ is automatically strong.
\end{Prop}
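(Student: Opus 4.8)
The plan is to express the failure of strongness as a single global flat section of $\send^0(\EE)=\send_{\OO_Y}(\EE)/\OO_Y\cdot\Id$ and then to show that analytic irreducibility kills it. The $\TT$-module structure induces an algebraic flat connection $\nabla$ on the projectivization $\Pb(\EE)$. Differentiating the $\cm$-action on $Y$ gives the Euler vector field $\xi$, which lifts Zariski-locally to a section of $\TT$; the ambiguity of such a lift is a local section of $\OO_Y$, which acts on $\Pb(\EE)$ by $0$, so the $\TT$-action produces a well-defined Lie algebra $\C$-action $\rho_{\mathrm{conn}}$ on $\Pb(\EE)$ lifting $\xi$. The $\cm$-action on $\EE$ gives a second lift $\rho_{\mathrm{gr}}$, and by definition the $\cm$-action is strong precisely when $\rho_{\mathrm{conn}}=\rho_{\mathrm{gr}}$. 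Since both lift the same vector field $\xi$ on $Y$, the difference $\phi:=\rho_{\mathrm{conn}}-\rho_{\mathrm{gr}}$ is $\OO_Y$-linear, i.e. a global section of $\send^0(\EE)$. Moreover $\rho_{\mathrm{conn}}$ preserves $\nabla$ (it is built from $\nabla$, using flatness, so $[\rho_{\mathrm{conn}},\nabla_v]=\nabla_{[\xi,v]}$), and $\rho_{\mathrm{gr}}$ preserves $\nabla$ because the $\cm$-action is compatible with the $\TT$-module structure — this is exactly where the graded hypothesis enters. Hence $\phi$ commutes with $\nabla$, that is, $\phi$ is a flat section of $\send^0(\EE)$ for the induced connection $\nabla^0$. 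As strongness is the vanishing of the algebraic section $\phi$, it may be checked after analytification, so from now on we work with $\EE^{an}$.

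The key point — which I would isolate as a lemma and which holds for an arbitrary $\OO_Y$-coherent $\TT^{an}$-module — is that every flat global section of $\send^0(\EE)^{an}$ lifts to a flat global section of $\send(\EE)^{an}$; equivalently, the connecting homomorphism from the space of flat sections of $\send^0(\EE)^{an}$ to $H^1_{DR}(Y^{an},\C)$ attached to the short exact sequence $0\to\OO_Y\to\send(\EE)^{an}\to\send^0(\EE)^{an}\to 0$ of flat bundles vanishes. To see this, given a flat section $\phi$ choose any smooth lift $\widehat\phi$ to $\send(\EE)^{an}$; then $\nabla\widehat\phi=\beta\cdot\Id$ for a closed $1$-form $\beta$ representing the value of the connecting map. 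Parallel transport $P_\gamma$ around a loop $\gamma$ conjugates $\widehat\phi$ to $\widehat\phi+\bigl(\int_\gamma\beta\bigr)\Id$, so the finite multiset of eigenvalues of $\widehat\phi$ at a point is invariant under translation by $\int_\gamma\beta$; this forces $\int_\gamma\beta=0$ for every $\gamma$. Hence $\beta$ is exact and $\phi$ lifts flatly.

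Granting the lemma, the proposition follows at once: our flat section $\phi$ of $\send^0(\EE)^{an}$ lifts to a flat section of $\send(\EE)^{an}$, i.e. to an endomorphism of $\EE^{an}$ as a $\TT^{an}$-module; since $\EE^{an}$ is irreducible, Schur's lemma forces this endomorphism to be scalar, whence $\phi=0$. Therefore $\rho_{\mathrm{conn}}=\rho_{\mathrm{gr}}$ and the $\cm$-action on $\EE$ is strong. The step requiring the most care — and the one I would regard as the main obstacle — is the bookkeeping in the first paragraph: pinning down the two $\C$-actions on $\Pb(\EE)$, checking that $\rho_{\mathrm{conn}}$ is independent of the local lift of $\xi$ to $\TT$ and genuinely commutes with $\nabla$, and locating precisely where the grading forces $\rho_{\mathrm{gr}}$ to preserve $\nabla$ so that $\phi$ is flat; once $\phi$ is correctly set up, the lemma and the Schur argument are soft.
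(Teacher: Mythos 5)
Your proposal is correct, and it is a genuinely different route from the one the paper (following \cite[Prop. 6.11]{Yu}) takes. The paper's argument is global/monodromic: it restricts the two actions to $S^1\subset\cm$, notes that the loop class of an $S^1$-orbit is a central element of $\pi_1(Y,y)$, applies Schur's lemma to the irreducible projective monodromy representation to see that the projective monodromy along that loop is trivial, hence the connection-induced infinitesimal action integrates to an $S^1$-action on $\mathbb{P}(\EE^{an})$ preserving the flat structure, and then applies Schur a second time to identify this circle action with the one coming from the grading; differentiating gives strongness. You instead stay infinitesimal: you package the discrepancy as a $\nabla^0$-flat section $\phi$ of $\send_{\OO_Y}(\EE)/\OO_Y\Id$ (your flatness computation is right -- the point is that the anchor of $\TT$ and the $\cm$-action on $\TT$ are compatible, so $[a\cdot,\phi]$ has zero symbol, i.e.\ is scalar), prove a lifting lemma saying the connecting map to $H^1_{DR}(Y^{an},\C)$ kills flat sections of $\send^0(\EE)^{an}$, and then use Schur once on the lifted flat endomorphism. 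The lifting lemma is the genuinely new ingredient and is even irreducibility-free; your eigenvalue-translation argument works, and can be shortened by taking traces: $P_\gamma^{-1}\widehat\phi(y)P_\gamma=\widehat\phi(y)+\bigl(\int_\gamma\beta\bigr)\Id$ forces $\operatorname{rk}(\EE)\int_\gamma\beta=0$. What each approach buys: yours avoids the integration step and the centrality-of-the-loop input, and isolates a reusable statement about extensions of flat bundles by the trivial one; the paper's route keeps the argument at the level of monodromy representations, which matches the Riemann--Hilbert formalism used elsewhere in the paper and makes the two circle actions visible at group level. Do make explicit, as the paper does, that irreducibility of $\EE^{an}$ forces $Y^{an}$ to be connected (or run your loop argument componentwise), and that vanishing of the algebraic section $\phi$ may indeed be tested after analytification; with that bookkeeping your proof is complete.
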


Now we get to the main result of this section.

\begin{Thm}[{\cite[Thm 4.3.3]{LY}, \cite[Thm 6.7]{Yu}}] \label{thm:quan_lag}
	Let $\EE$ be an $\OO_Y$-coherent strongly graded $\tatp$-module over the Lagrangian subvariety $Y$.  Then there exists a graded quantization $\EE_\hbar$ of $\EE$ which gives rise to the given graded $\tatp$-module structure on $\EE$. Such graded quantization is unique up to $\cm$-equivariant isomorphism.
\end{Thm}

\subsection{Hamiltonian quantization}\label{SS_Ham_quant}
Let $G$ be an affine algebraic group. Suppose $G$ acts on the graded symplectic variety $X$ commuting with the $\cm$-action and preserving the symplectic form $\Omega$. Let $\alpha_G: G \times X \to X, (g, x) \mapsto g \cdot x$, denote the action map. Roughly speaking, a $G$-equivariant graded quantization of $(X, \Omega)$ is a graded quantization $\OO_\hbar$ of $(X, \Omega)$ equipped with a $G$-action by automorphisms of sheaves of $\C[[\hbar]]$-algebras that lifts the $G$-action on $X$, such that
\begin{enumerate}
	\item
	it commutes with the $\cm$-action on $\OO_\hbar$;
	\item
	the quotient homomorphism $\OO_\hbar \twoheadrightarrow \OO_\hbar / \hbar \OO_\hbar \simeq \OO_X$ is $G$-equivariant.
\end{enumerate}
In more detail, define the maps
\begin{align*}
	\pr_G: G \times X \to G, & \quad (g,x) \mapsto g \\
	\pr_X: G \times X \to X, & \quad (g,x) \mapsto x \\
	i: X \to G \times X, & \quad x \mapsto (e, x)
\end{align*}
Set
\begin{equation} \label{eq:quan_pullback}
	\alpha_G^* \OO_\hbar := \pr_G^{-1}  \OO_G[[\hbar]]  \widehat{\otimes}_{\C[[\hbar]]} \alpha_G^{-1} \OO_\hbar = \varprojlim_{k \to \infty} \pr_G^{-1}  \OO_G  \otimes_\C  \alpha_G^{-1} (\OO_\hbar / \hbar^k \OO_\hbar)
\end{equation}
and define $\pr_X^* \OO_\hbar$ similarly, so that both are sheaves of $\pr_G^{-1}  \OO_G[[\hbar]]$-algebras over $G \times X$. Here $\widehat{\otimes}$ denotes the completed tensor product. We can regard $G \times X$ as a scheme over $G$ equipped with the relative symplectic form $\alpha_G^* \Omega = \pr_X^* \Omega$ and an action of $\cm$ on the second factor. The $G$-action on $X$ induces a $\cm$-equivariant isomorphism $\beta_0^G: \alpha_G^* \OO_X  \xrightarrow{\sim} \pr_X^* \OO_X$ of sheaves of Poisson algebras, satisfying the usual compatibility conditions. Then both $\alpha_G^*\OO_\hbar$ and $\pr_X^*\OO_\hbar$ can be regarded as graded quantizations of $G \times X$ over $G$. A {\it $G$-equivariant structure} on a graded quantization $\OO_\hbar$ consists of an isomorphism
\[ \beta_\hbar^G: \alpha_G^* \OO_\hbar \xrightarrow{\sim} \pr_X^* \OO_\hbar \]
of sheaves of $\pr_G^{-1}  \OO_G[[\hbar]]$-algebras, such that
\begin{itemize}
\item
it satisfies the usual associativity condition and  $i^* \beta_\hbar^G = \Id_{\OO_\hbar}$, \item $\beta_\hbar^G$ is $\cm$-equivariant (this is a more precise version of condition (1) above),
\item and $\beta_\hbar^G \mod \hbar$ equals $\beta_0^G: \alpha_G^* \OO_X  \xrightarrow{\sim} \pr_X^* \OO_X$ (this is a more precise version of condition (2)).
\end{itemize}

Given a $G$-equivariant graded quantization $(\OO_\hbar, \beta_\hbar^G)$, we can assign an action of the Lie algebra $\g$ on $\OO_\hbar$, i.e., a Lie algebra homomorphism $L_\hbar^\g : \g \to \der(\OO_\hbar)$, where $\der(\OO_\hbar) = \der_{\C[[\hbar]]}(\OO_\hbar)$ denotes the sheaf of $\C[[\hbar]]$-linear derivations of $\OO_\hbar$ (note that such derivations  are automatically continuous with respect to the $\hbar$-adic topology), as follows. Let $L_G : \g \to \Gamma(G, \T_G)$ denote the Lie algebra homomorphism generated by the left action of $G$ on itself. Then $L_G$ identifies $\g$ with the space of right invariant vector fields on $G$. Define
\begin{equation} \label{eq:inf_action_OO_h}
	L_\hbar^\g (\xi) s = i^* \langle (L_G (\xi) \boxtimes \Id) [\beta_\hbar^G (\alpha_G^*(s))] \rangle \in \send_{\C[[\hbar]]} (\OO_\hbar)
\end{equation}
for any $\xi \in \g$ and local section $s \in \OO_\hbar$, where $\alpha_G^*(s)$ stands for the local section $1 \otimes \alpha_G^{-1}(s)$ of the sheaf $ \pr_G^{-1}  \OO_G[[\hbar]]  \widehat{\otimes}_{\C[[\hbar]]} \alpha_G^{-1} \OO_\hbar = \alpha_G^* \OO_\hbar$. In other words, for any open subset $U \subset X$, $L_\hbar^\g(\xi)$ is the composite map
\begin{multline*}
	L_\hbar^\g (\xi) : \Gamma(U, \OO_\hbar) \xrightarrow{\alpha_G^*} \Gamma(\alpha_G^{-1}(U), \alpha_G^* \OO_\hbar) \xrightarrow{\beta_\hbar^G} \ \Gamma(\alpha_G^{-1}(U), \OO_G \widehat{\boxtimes} \OO_\hbar) \\ \xrightarrow{L_G(\xi) \boxtimes \Id} \Gamma(\alpha_G^{-1}(U), \OO_G \widehat{\boxtimes} \OO_\hbar) \xrightarrow{i^*} \Gamma(U, \OO_\hbar).
\end{multline*}
It is standard to check that $L_\hbar^\g (\xi) s$ lies in $\der(\OO_\hbar)$.

\begin{defi}\label{defi:Hamilt_action_quantization}
	Let $\OO_\hbar$ be a graded quantization of a graded symplectic variety of weight $d$. A \emph{(graded) quantum Hamiltonian $G$-action} on $\OO_\hbar$ consists of
	\begin{itemize}
		\item
		a $G$-equivariant structure $\beta_\hbar^G$ on $\OO_\hbar$ in the above sense;
		\item
		a $G$-equivariant $\C$-linear map $\Phi_\hbar: \g \to \Gamma(X, \OO_\hbar)$	whose image lies in the degree $d$ part of $ \Gamma(X, \OO_\hbar)$,
	\end{itemize}
	such that the adjoint action of $\hbar^{-1} \Phi_\hbar (\xi)$ on $\OO_\hbar$, $u \mapsto \hbar^{-1} [\Phi_\hbar(\xi), u ]$, for any $\xi \in \g$, agrees with the infinitesimal action $L_\hbar^\g(\xi)$ on $\OO_\hbar$ induced by the $G$-equivariant structure $\beta_\hbar^G$ as above. The map $\Phi_\hbar$ is called a {\it quantum comoment map}.
\end{defi}

Note that the definition above implies that $ [\Phi_\hbar(\xi), \Phi_\hbar(\zeta)] = \hbar \Phi_\hbar ([\xi, \zeta]_\g)$, $\forall \, \xi, \zeta \in \g$. In other words, the composite map $\Phi_\hbar: \g \to \Gamma(X, \OO_\hbar) \xrightarrow{\hbar^{-1} \cdot} \hbar^{-1} \Gamma(X, \OO_\hbar)$ is a Lie algebra homomorphism, where the Lie bracket of $\hbar^{-1} \Gamma(X, \OO_\hbar)$ is given by taking commutators. Therefore $\Phi_\hbar$ induces a $\cm$-equivariant algebra homomorphism $\Phi_\hbar: \U_\hbar \g \to \Gamma(X, \OO_\hbar)$, where $\U_\hbar \g$ is the Rees algebra of  $U(\g)$
with respect to the PBW filtration.

For any Hamiltonian $G$-action $(\beta_\hbar^G, \Phi_\hbar)$ on a graded quantization $\OO_\hbar$, set $\phi$ to be the composition $\g \xrightarrow{\Phi_\hbar} \Gamma(X, \OO_\hbar) \to \Gamma(X, \OO_\hbar / \hbar \OO_\hbar) \simeq \C[X] $, then $\phi$ is dual to a classical moment map $X \to \g^*$, which is $G \times \cm$-equivariant when $\g^*$ is defined to be of weight $d$ and is equipped with the coadjoint action of $G$. We say that $\Phi_\hbar$ lifts $\phi$ and $(\OO_\hbar, \beta_\hbar^G, \Phi_\hbar)$ is a Hamiltonian quantization of the Hamiltonian $G$-space $(X, \Omega, \phi)$.

Now suppose $K$ is an algebraic subgroup of $G$ with Lie algebra $\kf$. Suppose further that  $Y$ is a closed Lagrangian subvariety in $X$ stable under the $K \times \cm$-action. In addition, we assume that the image of $\mathfrak{k}$ under $\phi$ lies in the ideal subsheaf $\J_Y \subset \OO_X$ of $Y$. Then for any $G$-Hamiltonian quantization $(\OO_\hbar,\Phi_\hbar)$ of $(X, \Omega)$, the restriction of the $G$-action to $K$ preserves the ideal subsheaf $\J_{Y,\hbar} \subset \OO_\hbar$ defined in Section \ref{subsec:lag_pic} and hence induces a weak $K$-action on the sheaf $\hbar^{-1}(\J_{Y,\hbar} / \J_{Y,\hbar}^2)$ over $Y$, which is compatible with both the two $\OO_Y$-module structures. Moreover, the restriction $\Phi_\hbar|_{\kf} : \kf \to \Gamma(X, \OO_\hbar)$ of the quantum comoment map has its images lying in $\Gamma(X, \J_{Y,\hbar})$. The composition of $\Phi_\hbar|_{\kf}$ with the quotient map $\J_{Y,\hbar} \twoheadrightarrow \J_{Y,\hbar} / \J_{Y,\hbar}^2$ induces strong $K$-actions on both $\tat$ and $\tatp$ which lift the weak $K$-actions and hence makes them into strongly $K$-equivariant graded Picard algebroids on $Y$. 

The following definition is the sheaf theoretic counterpart to Definition \ref{eq:HC_equiv}.

\begin{defi}\label{defi:equiv_quant_modules}
	With the assumptions and notations above, suppose $\EE$ is a $\cm$-equivariant vector bundle over $Y$. A {\it $(K,\kappa)$-equivariant graded Hamiltonian quantization} of $(Y,\EE)$ is a graded quantization $\EE_\hbar$ of $\EE$ equipped with a $\C[[\hbar]]$-linear $K$-action that lifts the $K$-action on $Y$, such that
	\begin{enumerate}
		\item the $K$-action commutes with the $\cm$-action on $\EE_\hbar$;
		\item the structure morphism $\OO_\hbar \otimes \EE_\hbar \to \EE_\hbar$ is $K$-equivariant;
		\item the differential of the $K$-action on $\EE_\hbar$ is given by
		\[ L_\hbar^\kf(\xi) m = \hbar^{-1} \Phi_\hbar(\xi) \cdot m - \langle\kappa,x\rangle m,\]
		for any $\xi \in \kf$ and $m \in \EE_\hbar$.
	\end{enumerate}
\end{defi}

Again we can unpackage the definition of a $K$-action on $\EE_\hbar$ as in the case of $G$-action on $\OO_\hbar$ as follows. Let $\alpha_K, \, \wt{\pr}_X : K \times X \to X$ denote the restriction of the morphisms $\alpha_G, \, \pr_X : G \times X \to X$ to $K \times X$. Then $\alpha_K^{-1}(Y) = \wt{\pr}_X^{-1}(Y) = K \times Y$ can be regarded as a closed relative Lagrangian subvariety of the relative symplectic variety $K \times X$ over $K$. The sheaves $\alpha_K^* \EE_\hbar$ and $\wt{\pr}_X^* \EE_\hbar$, defined similarly as in \eqref{eq:quan_pullback}, are naturally modules over the sheaves of algebras $\alpha_K^* \OO_\hbar$ and $\wt{\pr}_X^* \OO_\hbar$ respectively, therefore are quantizations of $\alpha_K^* \EE$ and $\wt{\pr}_X^* \EE$ respectively, thanks to the isomoprhism $\beta_\hbar^G |_{K \times X}: \alpha_K^* \OO_\hbar \xrightarrow{\sim} \wt{\pr}_X^* \OO_\hbar$. A $K$-equivariant structure on $\EE_\hbar$ then consists of a $\cm$-equvariant isomorphism $\beta^K_\hbar: \alpha_K^* \EE_\hbar \xrightarrow{\sim} \wt{\pr}_X^* \EE_\hbar$ of sheaves, such that it preserves the module structures and is compatible with $\beta_\hbar^G |_{K \times X}: \alpha_K^* \OO_\hbar \xrightarrow{\sim} \wt{\pr}_X^* \OO_\hbar$. We can define the differentiation of the $K$-action on $\EE_\hbar$ in a similar way as in \eqref{eq:inf_action_OO_h} to get a map $L_\hbar^\kf: \kf \to \send_{\C[[\hbar]]}(\EE_\hbar)$, such that
\[ L_\hbar^\kf (\xi) ( s \cdot m) = L_\hbar^\g(\xi) (s) \cdot m + s \cdot L_\hbar^\kf(\xi) ( m ), \quad \forall \, s \in \OO_\hbar, \, m \in \EE_\hbar. \]

It is clear that, if $\EE_\hbar$ is a $(K,\kappa)$-equivariant graded quantization of $\EE$, then such a structure induces an algebraic $K$-action on $\EE$ as an $\OO_X$-module, such that it commutes with the $\cm$-action on $\EE$. Moreover, the $K$-action on $\EE$ and the strong $K$-action on $\tatp$ mentioned above make $\EE$ into a $(K, \kappa)$-equivariant $\tatp$-module.

Conversely, the following theorem serves as an equivariant version of Theorem \ref{thm:quan_lag}. It is analogous to \cite[Theorem 6.14]{Yu} (see also \cite[Theorem 6.2.5]{LY}), which addresses the case $\kappa = 0$, with the proof requiring minimal modifications. However, it is worth noting that we will apply Theorem \ref{thm:quan_lag_equiv} exclusively in the case $\kappa = 0$ when proving Theorem \ref{thm:non-normal}.

\begin{Thm}\label{thm:quan_lag_equiv}
	In the setting above,let $\EE$ be an $\OO_Y$-coherent strongly graded $(K, \kappa)$-equivariant $\tatp$-module over $Y$. Then for any graded quantization of $\EE$ (unique up to $\cm$-equivariant isomorphisms) as in Theorem \ref{thm:quan_lag}, the $K$-action on $\EE$ lifts uniquely to a $K$-action on $\EE_\hbar$ that makes $\EE_\hbar$ into a $(K, \kappa)$-equivariant Hamiltonian graded quantization of $\EE$. 
\end{Thm}

\section{W-algebras and restriction functors}
W-algebras are quantizations of Slodowy slices in $\g^*$. They were introduced
in \cite{Premet_slice}, alternative definitions and characterizations were given
in \cite{Wquant},\cite{HC} and other papers by the first named author.
In \cite[Section 6.1]{Wdim} the first named author constructed a restriction functor
for HC $(\U,\theta)$-modules that is going to play a central role in this paper.

The goal of this section is to recall these constructions. We will also relate the
restriction functor for Harish-Chandra modules with some constructions from
Section \ref{S_qclass_quant}. In particular, we will see that one can use the restriction
functor to recover some results from \cite{LY}.

Throughout the section $G$ is a semisimple algebraic group, $\g$ is its Lie algebra, and
$\sigma$ is an involution of $\g$. Let $\theta:=-\sigma$ be the corresponding anti-involution
of $\g$.

\subsection{Slodowy slices}\label{SS_Sl_sl}
Fix a nilpotent element $\chi\in \g^*$ with $\theta(\chi)=\chi$.
Let $\Orb$ denote its $G$-orbit.

For a $G$- and $\sigma$-invariant orthogonal form
$(\cdot,\cdot)$ on $\g$, we get an identification  $\g\cong \g^*$.
Let $e\in \g$ denote the image of $\chi$ under this identification.
We include $e$ into an $\slf_2$-triple $(e,h,f)$ that is \emph{normal}, i.e.,
$\sigma(h)=h, \sigma(f)=-f$. Then we can consider the
Slodowy slice $e+\mathfrak{z}_{\g}(f)$. Let $S$ be its image in
$\g^*$, also to be referred to as the Slodowy slice. Note that
$S$ is a transverse slice to $\Orb$. Note also that it is $\theta$-stable.

Let us describe two reductive group actions on $S$.
Let $Q:=Z_G(e,h,f)$.
This is a maximal reductive subgroup of the stabilizer $G_\chi$. Note that
$Q$ acts on $S$. Also we have a $\C^\times$-action on $S$ often called the {\it Kazhdan action}. Namely, let
$z\mapsto z^h$ be the one-parameter subgroup of $G$ corresponding to
the semisimple element $h$. Then $z \mapsto z^{-2} z^h$
and preserves $S$. Moreover, the induced grading on $\C[S]$ is positive.
We note that $\C^\times$ commutes with $\sigma$, while $\sigma$ naturally
acts on $Q$ and for $s\in S, q\in Q$, we have $\sigma(qs)=\sigma(q)\sigma(s)$.

The algebra $\C[S]$ is Poisson with bracket of degree $-2$, see, e.g., \cite[Section 2]{GG}. The action of
$Q$ on $S$ is Hamiltonian.

\subsection{W-algebra}\label{SS_W_alg} The algebra $\C[S]$ admits a quantization, called a
finite W-algebra. We recall a construction following \cite[Sections 2.1,2.2]{W_prim}.

Recall that we write $\U$ for the universal enveloping algebra of $\g$.
Form the Rees algebra $\U_\hbar$ of $\U$. The element $\chi$ defines a
homomorphism $\C[\g^*]\rightarrow \C$. Consider its composition
with $\U_\hbar\twoheadrightarrow \C[\g^*]$. We consider the completion
$\U_\hbar^{\wedge_\chi}$ with respect to the kernel of this composition. The completion comes with an action of
\begin{equation}\label{eq:group_acting} \C^\times \times \left(Q\rtimes (\Z/2\Z)\right)
\end{equation}
Here the action of $\Z/2\Z$ is induced by
$\theta$ on $\g$ and sends $\hbar$ to $\hbar$, this is a $\C[\hbar]$-linear
anti-involution. The group $Q$ acts by $\C[\hbar]$-linear automorphisms and
this action is Hamiltonian: the quantum comoment map, to be denoted by $\Phi_\U$, comes from $\mathfrak{q}\hookrightarrow
\U_\hbar$. The action of $\C^\times$ is given by $z.x=z^{-2}z^hx$, and it rescales $\hbar$ via $z.\hbar := z^2\hbar$.

Set $V:=T_\chi\Orb$. This is  a symplectic vector space.
The space again comes with a natural action of
(\ref{eq:group_acting}). Let $\omega$ denote the symplectic form on
$V^*$. We can form the homogeneous Weyl algebra
$$\Weyl_\hbar:=T(V^*)[\hbar]/(u\otimes v-v\otimes u-\hbar\omega(u,v)).$$
The group (\ref{eq:group_acting}) acts on $\Weyl_\hbar$:
\begin{itemize}
\item $Q$ acts by
$\C[\hbar]$-linear automorphisms coming from the $Q$-action on $V$,
\item the generator of $\Z/2\Z$ acts by a
$\C[\hbar]$-linear anti-involution induced by its action on $V$,
\item and the $\C^\times$-action is induced from the action of $\C^\times$ on $\g$
that fixes $\chi$ (introduced in Section \ref{SS_Sl_sl}).
\end{itemize}
Note that the action of $Q$ is Hamiltonian, let us
write $\Phi_{\Weyl}$ for the corresponding quantum comoment map
$\q\rightarrow \Weyl_\hbar$.

Consider the completion $\Weyl_\hbar^{\wedge_0}$. There is an embedding
$\Weyl_\hbar^{\wedge_0}\hookrightarrow \U_\hbar^{\wedge_\chi}$ that is
equivariant with respect to (\ref{eq:group_acting}) and lifts the decomposition
$T_\chi\g^*\cong T_\chi S\oplus V$. Any two such embeddings are conjugate with
an appropriate automorphism of $\U^{\wedge_\chi}_\hbar$ that commutes with
(\ref{eq:group_acting}), see \cite[Section 2.1]{W_prim}.

We write $\Walg_\hbar'$ for the centralizer of $\Weyl_\hbar^{\wedge_0}$
in $\U_\hbar^{\wedge_\chi}$. This is a $\C[[\hbar]]$-algebra with an action of
(\ref{eq:group_acting}). We have the natural isomorphism, where $\widehat{\otimes}$
stands for the completed tensor product:
\begin{equation}\label{eq:decomp_U}
\Weyl_\hbar^{\wedge_0}\widehat{\otimes}_{\C[[\hbar]]}\Walg_\hbar'
\xrightarrow{\sim} \U_\hbar^{\wedge_\chi}.
\end{equation}
We note that $\Phi_\U(\xi)-\Phi_\Weyl(\xi)\in \Walg_\hbar'$ for all
$\xi\in \q$. Set $\Phi_\Walg:=\Phi_\U-\Phi_{\Weyl}$, this is
a quantum comoment map for the action of $Q$ on $\Walg'_\hbar$.

Let $\Walg_\hbar$ stand for the $\C^\times$-finite part of $\Walg_\hbar'$.
Since the grading on $\C[S]$ coming from the $\C^\times$-action is positive,
we see that $\Walg_\hbar'$ coincides with the completion of $\Walg_\hbar$
at $\chi$. Set $\Walg:=\Walg_\hbar/(\hbar-1)\Walg_\hbar$. This is the finite W-algebra.
It comes with a filtration and also with a Hamiltonian action of $Q\rtimes (\Z/2\Z)$
preserving the filtration. It also comes with an automorphism $\zeta$ of order $2$,
it is induced by the action of $-1\in \C^\times$ on $\Walg_\hbar$.

\subsection{Restriction of Dixmier algebras}\label{SS_Dixmier_restriction}
The construction above can be adapted to construct the restriction functor between
categories of Harish-Chandra bimodules, \cite[Section 3]{HC}. We will need it in the case of Dixmier algebras. Recall that by a Dixmier algebra one means an associative algebra $\A$ that comes equipped
with a rational $G$-action by algebra automorphisms satisfying the following two conditions
\begin{itemize}
\item
this action is Hamiltonian
\item
and that the quantum comoment map $\U\rightarrow \A$ turns $\A$ into a HC bimodule.
\end{itemize}
For example,
any filtered quantization of $\C[\Orb]$ is a Dixmier algebra, see the discussion in the end of
Section \ref{SS_quant_nilp}.

Let $\A$ be a Dixmier algebra.

\begin{Lem}\label{Lem:Dixmier_good_filtration}
There is a good algebra filtration on $\A$.
\end{Lem}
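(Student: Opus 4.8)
The plan is to deduce the statement from the finite generation of $\A$ as a $\U$-module, via a careful choice of filtration. First I would recall the standard fact that a Harish--Chandra $\U$-bimodule is finitely generated as a \emph{left} $\U$-module: since $\A$ is finitely generated as a bimodule and the adjoint action $\operatorname{ad}x=[\Phi(x),\cdot\,]$ ($x\in\g$) is locally finite, one can choose a finite-dimensional $G$-stable (hence $\operatorname{ad}\g$-stable) subspace $\A^0\ni 1$ with $\Phi(\g)\subseteq\A^0$ and $\A=\Phi(\U)\A^0\Phi(\U)$, and then push all occurrences of $\Phi(\U)$ to one side, using $\A^0\Phi(\U_{\leqslant i})\subseteq\Phi(\U_{\leqslant i})\A^0$, to get $\A=\Phi(\U)\A^0$. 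In particular $\A$ is a finitely generated Noetherian $\C$-algebra, generated as an algebra by the finite-dimensional $G$-submodule $\A^0$ together with $\Phi(\g)$.

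Next I would produce the candidate filtration. Equip $\A$ with the good bimodule filtration $F_i\A:=\Phi(\U_{\leqslant i})\A^0$; it is a good filtration in the left-module sense and each $F_i\A$ is $\operatorname{ad}\g$-stable, and $\gr_F\A$ is a finitely generated module over $\Phi(\gr\U)=S(\g)/\mathrm{gr}(\ker\Phi)$, supported (as $\A$ is a HC bimodule) on the nilpotent cone. Then pass to the multiplicative closure $\A_{\leqslant i}:=\sum_{k\geqslant 0}\ \sum_{i_1+\cdots+i_k\leqslant i}F_{i_1}\A\cdots F_{i_k}\A$, after possibly rescaling the degrees so that $\Phi(\g)$ lies in $\A_{\leqslant d}$ for the relevant $d$. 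By construction this is an exhaustive, $G$-stable algebra filtration with $\A_{\leqslant i}\A_{\leqslant j}\subseteq\A_{\leqslant i+j}$ and $\A_{\leqslant i}\supseteq F_i\A$, so $\gr\A$ is again finitely generated over the image of $S(\g)$; one can also view $\A_{\leqslant\bullet}$ as the image of the obvious tensor-product filtrations under the multiplication maps $\A\otimes_\U\cdots\otimes_\U\A\to\A$, which are morphisms of HC bimodules since $\A$ is finitely generated over $\U$, making transparent that $\A_{\leqslant\bullet}$ is good in the bimodule sense.

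The main obstacle is the remaining verification that $[\A_{\leqslant i},\A_{\leqslant j}]\subseteq\A_{\leqslant i+j-d}$, equivalently that $\gr\A$ is commutative and hence a finitely generated graded Poisson algebra with bracket of degree $-d$, which is precisely the ``good'' condition of Section~\ref{SS_HC_mod}. This does not follow formally from $\A_{\leqslant\bullet}$ being simultaneously an algebra filtration and a good bimodule filtration --- for instance a coarsening of the PBW filtration on $\U$ itself fails the inequality. The plan here is to choose the data ($\A^0$, the degree shift $d$, and a decomposition of $\A^0$ into $\Phi(\g)$ plus an $\operatorname{ad}\g$-stable complement) so that commutators of the chosen generators already drop one filtration degree, using $[\Phi(x),\Phi(y)]=\Phi([x,y])\in\Phi(\g)$ and the $\operatorname{ad}\g$-stability $[\Phi(\g),F_k\A]\subseteq F_k\A$; then one propagates the estimate to all of $\A$ by the Leibniz rule and an induction on filtration degree, controlling the products of elements of $\A^0$ via the finiteness of $\gr_F\A$ over $\C[\g^*]$. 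This computation is the technical heart of the lemma, and it is carried out exactly as in the construction of the restriction functor for Harish--Chandra bimodules in \cite[Section 3]{HC}; it is also the only place where the Harish--Chandra bimodule hypothesis on $\A$ (local finiteness of $\operatorname{ad}\g$ together with module-finiteness of $\gr_F\A$) is genuinely needed.
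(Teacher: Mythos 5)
Your first paragraph is consistent with the paper's starting point (the paper simply takes a finite-dimensional $G$-stable subspace $V\ni 1$ generating $\A$ as a left $\U$-module), but the construction in your second paragraph breaks down, and the one idea that makes the lemma work is missing. With $F_0\A=\Phi(\U_{\leqslant 0})\A^0=\A^0$ and $\Phi(\g)\subseteq\A^0$, your multiplicative closure contains $(\A^0)^m$ in degree $0$ for every $m$, and since $\A=\Phi(\U)\A^0\subseteq\bigcup_m(\A^0)^m$, you get $\A_{\leqslant 0}=\A$: the filtration is the trivial one, so the assertion ``$\gr\A$ is again finitely generated over the image of $S(\g)$'' is false as written, and the filtration is not good. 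The vague ``rescaling so that $\Phi(\g)$ lies in degree $d$'' does not cure this. What is needed is exactly the paper's trick: since $V=\A^0$ is finite dimensional and $\A=\bigcup_i\U_{\leqslant i}V$, choose $k$ with $V^2\subseteq\U_{\leqslant k}V$ and place $V$ in filtration degree $k$ (not $0$), keeping $\U\cdot 1$ in its natural degrees, i.e.\ set $\A_{\leqslant i}=\U_{\leqslant i}1$ for $i<k$ and $\A_{\leqslant i}=\U_{\leqslant i}1+\U_{\leqslant i-k}V$ for $i\geqslant k$. Then $G$-stability and goodness are immediate, and the algebra property reduces to the single containment $\U_{\leqslant i-k}V\,\U_{\leqslant j-k}V\subseteq\U_{\leqslant i+j-2k}V^2\subseteq\U_{\leqslant i+j-k}V$. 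If, in your scheme, $\A^0$ is placed in a degree $e<k$, the multiplicative closure genuinely fails to have finitely generated associated graded, so the inequality ``degree of $V$ at least $k$'' is not a cosmetic normalization but the heart of the proof; it never appears in your proposal.

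Your third paragraph compounds this by misidentifying the goal and then outsourcing it. The lemma asks for an algebra filtration that is good in the Harish-Chandra bimodule sense (compatibility with the $\U$-filtration, $\operatorname{ad}(\g)$-stability, and finite generation of $\gr\A$ over $S(\g)$); the commutator estimate $[\A_{\leqslant i},\A_{\leqslant j}]\subseteq\A_{\leqslant i+j-d}$, i.e.\ commutativity of $\gr\A$, is not part of the statement --- in the paper it is imposed as a separate hypothesis at the start of Section \ref{SS:HC_mod_restr} --- and the paper's proof neither needs nor establishes it. Moreover, you propose to carry out what you call the technical heart ``exactly as in \cite[Section 3]{HC}'': but this lemma exists precisely because the corresponding claim in the proof of \cite[Lemma 3.4.5]{HC} is wrong and is being corrected here, so that reference cannot serve as the missing argument. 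As it stands the proposal neither produces a good filtration nor proves the extra property it declares essential.
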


Note that this is claimed in the proof of \cite[Lemma 3.4.5]{HC}, but the proof is wrong.
We use this opportunity to correct the proof.

\begin{proof}
Take a $G$-stable subspace $V\subset \A$ that generates $\A$ as a left $\U$-module and contains
$1$. We can find a positive integer $k$ such that $V^2\subset \U_{\leqslant k}V$.
Set $\A_{\leqslant i}:=\U_{\leqslant i}1$ for $i<k$, and $\A_{\leqslant i}:=\U_{\leqslant i}1+\U_{\leqslant i-k}V$ otherwise. This filtration is $G$-stable, it is a left $\U$-module filtration, and it is good.
It remains to show that it is an algebra filtration. The inclusion $\A_{\leqslant i}\A_{\leqslant j}
\subset \A_{\leqslant i+j}$ is checked as follows:
\begin{itemize}
\item for $i<k$, this follows from the claim that we have a left $\U$-module filtration,
\item for $j<k$, this follows from the claim that we have a right $\U$-module filtration,
\item for $i,j\geqslant k$, we also need to use that $\U_{\leqslant i-k}V\U_{\leqslant j-k}V
\subset \U_{\leqslant i+j-2k}V^2\subset \U_{\leqslant i+j-k}V\subset \A_{\leqslant i+j}$.
\end{itemize}
\end{proof}


Let $\A_\hbar$ denote the Rees algebra of $\A$. We get
a graded algebra homomorphism $\U_\hbar\rightarrow \A_\hbar$.  Set
$$\A_{\hbar}^{\wedge_\chi}:=\U_\hbar^{\wedge_\chi}\otimes_{\U_\hbar}\A_\hbar.$$
This $\C[[\hbar]]$-module coincides with the completion of $\A_\hbar$ with respect to a suitable
topology, where the product is continuous. So it has a natural algebra structure making the map
$\U_\hbar^{\wedge_\chi}\rightarrow \A_\hbar^{\wedge_\chi}$ an algebra homomorphism.

Let $\underline{\A}_\hbar'$ denote the centralizer of $\Weyl_\hbar^{\wedge_0}$
in $\A_\hbar^{\wedge_\chi}$, this is a subalgebra.  By \cite[Proposition 3.3.1]{HC},
we have
\begin{equation}\label{eq:decomp_A}
\Weyl_\hbar^{\wedge_0}\widehat{\otimes}_{\C[[\hbar]]}\underline{\A}_\hbar'
\xrightarrow{\sim} \A_\hbar^{\wedge_\chi}.
\end{equation}
Then we can pass from $\underline{\A}_\hbar'$ to $\underline{\A}_\hbar$
(the $\C^\times$-locally finite part). We set $\underline{\A}:=\underline{\A}_\hbar/(\hbar-1)\underline{\A}_\hbar$.
We call $\underline{\A}$ the {\it restriction} of $\A$ to $S$ -- this is motivated
by $\gr\underline{\A}$ being the pullback of the $\C[\g^*]$-algebra $\gr\A$
to $S\subset \g^*$.

Note that the action of $Q$ on $\underline{\A}_\hbar$ is Hamiltonian: if $\Phi_{\A}$
denotes the quantum comoment map for $\A_\hbar$, then
\begin{equation}\label{eq:q_comom_A}
\Phi_{\underline{\A}}:=\Phi_{\A}-\Phi_{\Weyl}
\end{equation}
is a quantum comoment map for $\underline{\A}_\hbar$.

Now suppose that $\A$ comes with a filtration preserving anti-involution
$\theta$ that is intertwined with the anti-involution $\theta$ on $\U$
by the quantum comoment map $\U\rightarrow\A$.
Then similarly to Section \ref{SS_W_alg}, $\underline{\A}$ inherits an
anti-involution, again denoted by $\theta$.

Again, as in the case of W-algebras, $\underline{\A}$ comes with an order two
automorphism $\zeta$.

\subsection{Restriction of HC modules}\label{SS:HC_mod_restr}
Now we recall a construction of the restriction functor for HC modules.
The construction was given in \cite[Section 6.1]{Wdim} for the case of
$\A=\U$. It extends to the general setting, where $\A$ is a Dixmier algebra, in a straightforward way.
For simplicity, assume that $K$ is connected. Let $\A$ be as in the previous section.
Assume that $\gr\A$ is commutative, so
it makes sense to speak about HC $(\A,\theta)$-modules (with $d=1$).

We consider the category $\wHC^{gr}(\A,\theta)^{K,\kappa}$. A functor to be constructed depends on a choice of an irreducible component in $\Orb\cap (\g/\kf)^*$.
The intersection  $\Orb\cap (\g/\kf)^*$ is a smooth Lagrangian subvariety in
the symplectic variety $\Orb$. Every irreducible component of this subvariety is
a single $K$-orbit. Pick an irreducible component $\Orb_K$. Let $\chi$ be a point of
this orbit.

We will need to rescale the gradings.
We pick an even number $d$ such that $\frac{1}{2}dh\in \kf$ gives rise to
a one-parameter subgroup $\gamma:\C^\times\rightarrow K$. For example, for $K\subset G$,
we can take $d=2$. Then we have an action of $\C^\times$ on any $M_\hbar\in \wHC^{gr}(\A_\hbar,\theta)^{K,\kappa}$
given by $t.m=t^{di}\gamma(t)^{-1}m$ for homogeneous $m\in M_\hbar$ of degree $i$. We also consider the similar $\C^\times$-actions
on $\U_\hbar,\A_\hbar,\Walg_\hbar$, and so on. Note that now $\hbar$ has degree $d$ after rescaling. In particular, a primitive root of unity $\epsilon$ of order $d$ gives an order $d$ automorphism of $\U,\A,\Walg$, to be denoted by $\zeta$. We use this $\zeta$ to form HC modules over these algebras, when we equip them with the filtrations coming from the rescaled gradings on $\U_\hbar,\A_\hbar,\Walg_\hbar$, etc.  

Consider the category
$\wHC^{gr}(\A_\hbar,\theta)^{K,\kappa}$, where now the grading on $\A_\hbar$ is rescaled.
 We are going to define a functor
$$\bullet_{\dagger,\chi}: \wHC^{gr}(\A_\hbar,\theta)^{K,\kappa}\rightarrow \wHC^{gr}(\underline{\A}_\hbar,
\theta).$$
Here
$\underline{\A}$ is the restriction of $\A$ to $S$ defined in the previous section.

Pick $M_\hbar\in \wHC^{gr}(\A_\hbar,\theta)^{K,\kappa}$.
Consider its completion
$$M_\hbar^{\wedge_\chi}:=\A_\hbar^{\wedge_\chi}\otimes_{\A_\hbar}M_\hbar.$$
This is an object $\wHC^{gr}(\A_\hbar^{\wedge_\chi},\theta)$, where now the $\C^\times$-action
comes from $t\mapsto t^{-d}\gamma(t)$ on $S$.
We are going to produce an equivalence $$\wHC^{gr}(\A_\hbar^{\wedge_\chi},\theta)
\xrightarrow{\sim} \wHC^{gr}(\underline{\A}_\hbar^{\wedge_\chi},\theta).$$

For this consider the Lagrangian subspace $L\subset V^*$ given by $L:=(\kf.\chi)^*$
(where $L$ is embedded into $V^*$ via the decomposition $V=\kf.\chi\oplus \kf^\perp.\chi$).
Note that $L\subset \left(\U_{\hbar}^{\wedge_\chi}\right)^{-\theta}$, where $V^*$
is embedded into $\U_{\hbar}^{\wedge_\chi}$ via (\ref{eq:decomp_U}).
We write $M_\hbar^L$ for the annihilator of $L$ under the Lie algebra action.

\begin{Lem}\label{Lem:wHC_equivalence}
The assignment $M_\hbar\mapsto M_\hbar^L$ defines a category equivalence
$$\wHC^{gr}(\A_\hbar^{\wedge_\chi},\theta)
\rightarrow \wHC(\underline{\A}_\hbar^{\wedge_\chi},\theta).$$
A quasi-inverse functor is given by $N_\hbar\mapsto \C[[L,\hbar]]\widehat{\otimes}_{\C[[\hbar]]} N_\hbar$.
\end{Lem}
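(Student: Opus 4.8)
The plan is to reduce the statement to the Weyl-algebra model case already handled in Lemma \ref{Lem:wHC_formal_Weyl}, using the tensor decomposition \eqref{eq:decomp_A} of $\A_\hbar^{\wedge_\chi}$ as $\Weyl_\hbar^{\wedge_0}\widehat{\otimes}_{\C[[\hbar]]}\underline{\A}_\hbar'$. The key point is that the anti-involution $\theta$ is compatible with this decomposition, and that $\theta$ restricted to the Weyl factor is precisely an anti-symplectic involution of $V^*$ whose $(-1)$-eigenspace is the chosen Lagrangian $L=(\kf.\chi)^*$. Concretely, $\theta$ acts on $V=\kf.\chi\oplus\kf^\perp.\chi$ preserving this decomposition (since $\kf.\chi$ is $\sigma$-stable, being $K$-stable and $\kf$ being fixed by $\sigma$), acting by $+1$ on $\kf.\chi$ and by $-1$ on $\kf^\perp.\chi$; dualizing and transporting to $V^*$ gives $V^{*,-\theta}=L$. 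Thus in the Weyl factor we are exactly in the setup preceding Lemma \ref{Lem:wHC_formal_Weyl}.

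First I would record the compatibility of $\theta$ with \eqref{eq:decomp_A}: since any two $(K\rtimes\Z/2\Z)\times\C^\times$-equivariant splittings $\Weyl_\hbar^{\wedge_0}\hookrightarrow \U_\hbar^{\wedge_\chi}$ are conjugate by an equivariant automorphism (as recalled in Section \ref{SS_W_alg}, following \cite[Section 2.1]{W_prim}), one can choose the splitting to be $\theta$-equivariant, so that $\theta=\theta_{\Weyl}\widehat\otimes\theta_{\underline{\A}}$ and $\underline{\A}_\hbar'=(\A_\hbar^{\wedge_\chi})^{\Weyl_\hbar^{\wedge_0}}$ is $\theta$-stable. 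Next, for a weakly HC $(\A_\hbar^{\wedge_\chi},\theta)$-module $M_\hbar$, the ideal $\J_\hbar$ of $\A_\hbar^{\wedge_\chi}$ decomposes accordingly, and the Lie-algebra action of $\J_\hbar$ on $M_\hbar$ restricts to a Lie-algebra action of $L\subset(\Weyl_\hbar^{\wedge_0})^{-\theta}\subset\J_\hbar$. One checks $M_\hbar^L$ is a module over the centralizer of $L$, in particular over $\underline{\A}_\hbar^{\wedge_\chi}$ (which centralizes the whole Weyl factor), and that the residual Lie action of $\J_\hbar^{\underline{\A}}:=\J_\hbar\cap\underline{\A}_\hbar^{\wedge_\chi}$ makes $M_\hbar^L$ into a weakly HC $(\underline{\A}_\hbar^{\wedge_\chi},\theta)$-module — axioms (i)–(iii) of Definition \ref{defi:wHC} for $M_\hbar^L$ follow from those for $M_\hbar$ since $\underline{\A}_\hbar^{\wedge_\chi}$ commutes with $\Weyl_\hbar^{\wedge_0}$, and finite generation of $M_\hbar^L$ over $\underline{\A}_\hbar^{\wedge_\chi}$ follows from the quasi-inverse construction below. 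Conversely, given $N_\hbar$, form $\C[[L,\hbar]]\widehat\otimes_{\C[[\hbar]]}N_\hbar$, on which $\Weyl_\hbar^{\wedge_0}$ acts via its standard action on $\C[[L,\hbar]]$ (the HC $\Weyl_\hbar^{\wedge_0}$-module of Section, "Weakly HC modules over formal Weyl algebras") and $\underline{\A}_\hbar^{\wedge_\chi}$ acts on the second factor, these commuting actions assembling to an $\A_\hbar^{\wedge_\chi}$-action via \eqref{eq:decomp_A}, with the Lie action of $\J_\hbar$ the evident tensor-sum one.

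Finally I would verify the two compositions are the identity. The composition $N_\hbar\mapsto(\C[[L,\hbar]]\widehat\otimes N_\hbar)^L$ is the identity because $\C[[L,\hbar]]^L=\C[[\hbar]]$ for the Weyl-factor action, exactly as in the first paragraph of the proof of Lemma \ref{Lem:wHC_formal_Weyl}. For the other composition, the natural map $\C[[L,\hbar]]\widehat\otimes_{\C[[\hbar]]}M_\hbar^L\to M_\hbar$ is shown to be an isomorphism by the same argument as in Lemma \ref{Lem:wHC_formal_Weyl}: pick dual bases $p_1,\dots,p_n$ of $L'=(\kf^\perp.\chi)^*$ and $q_1,\dots,q_n$ of $L$, use the identity $q_i.(p_j m)=\omega(q_i,p_j)m+p_j(q_i.m)$ together with completeness of $M_\hbar$ in the relevant adic topology (inherited from $M_\hbar^{\wedge_\chi}$) to produce, for every $m$, a convergent expansion with coefficients in $M_\hbar^L$. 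The main obstacle is the bookkeeping in this last step — making precise the topology on $M_\hbar$ in which the expansion converges and checking the Weyl-factor action is "locally nilpotent enough" — but this is entirely parallel to \cite[Proposition 3.3.1]{HC} and Lemma \ref{Lem:wHC_formal_Weyl}, so no genuinely new difficulty arises; the equivariance bookkeeping ($\C^\times$- and $K$-compatibility) is routine and can be omitted here since the statement is for the non-equivariant categories.
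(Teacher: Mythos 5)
Your proposal is correct and takes essentially the same route as the paper: its proof likewise invokes the decomposition \eqref{eq:decomp_A}, observes that under it $\theta$ becomes the tensor product of its restriction to the slice factor with the anti-involution on $\Weyl_\hbar^{\wedge_0}$ that is $-1$ on $L$ and $+1$ on a Lagrangian complement, and then declares the claim standard by reference to Lemma \ref{Lem:wHC_formal_Weyl} and \cite[Proposition 3.3.1]{HC}. The details you add (the $\theta$-compatible choice of splitting, the $q_i.(p_jm)$ expansion with the completeness argument, and the verification that the two compositions are the identity) are precisely the ``standard'' steps the paper leaves implicit.
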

\begin{proof}
Recall the decomposition (\ref{eq:decomp_A}). Under this decomposition $\theta$ is the tensor product
of two involutions: the restriction of $\theta$ to $\Walg_\hbar'$ and the anti-involution on
$\Weyl_\hbar^{\wedge_0}$ that is $-1$ on $L$ and $1$ on its Lagrangian complement.
The claim is now standard, compare to \cite[Proposition 3.3.1]{HC} and Lemma \ref{Lem:wHC_formal_Weyl}.
\end{proof}

Taking the locally finite elements for $\C^\times$ gives rise to an equivalence
$$\wHC^{gr}(\underline{\A}_\hbar^{\wedge_\chi},\theta)\xrightarrow{\sim} \wHC^{gr}(\underline{\A}_\hbar,\theta).$$
The functor $\bullet_{\dagger,\chi}:\wHC^{gr}(\A_\hbar,\theta)^{K,\kappa}\rightarrow \wHC(\underline{\A}_\hbar,\theta)$ we need
is the composition
$$\wHC^{gr}(\A_\hbar,\theta)^{K,\kappa}\rightarrow \wHC^{gr}(\A_\hbar^{\wedge_\chi},\theta)\xrightarrow{\sim}
\wHC^{gr}(\underline{\A}^{\wedge_\chi}_\hbar,\theta)\xrightarrow{\sim} \wHC^{gr}(\underline{\A}_\hbar,\theta),$$
where the first functor is the completion functor. Note that $\bullet_{\dagger,\chi}$
is exact and  $\C[\hbar]$-linear by the construction. Similarly to
\cite[Section 3.4]{HC}, it descends to an exact functor
$$\HC(\A,\theta,\zeta)^{K,\kappa}\rightarrow \HC(\underline{\A},\theta,\zeta).$$


Let us examine what happens with the equivariance. Let $K_Q := Z_K(e,h,f)$.
By the  construction of $\bullet_{\dagger,\chi}$, this functor lifts to a functor to the category
of weakly $K_Q$-equivariant modules. Now we proceed to the strong equivariance.

\begin{defi}\label{defi:half_character}
Let $\rho_\omega$ denote one half of the character of the
$K_Q$-action on $\Lambda^{top}L$, where $L=(\kf.\chi)^*$.
\end{defi}
Set
\begin{equation}\label{eq:shifted_character}
\kappa_Q:=\kappa|_{\kf_Q}-\rho_\omega.
\end{equation}

\begin{Lem}\label{Lem:restriction_equivariance}
The functor $\bullet_{\dagger,\chi}$ sends
$\wHC^{gr}(\A_\hbar,\theta)^{K,\kappa}$ to
$\wHC^{gr}(\underline{\A}_\hbar,\theta)^{K_Q,\kappa_Q}$.
\end{Lem}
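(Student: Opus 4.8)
The plan is to trace through the construction of $\bullet_{\dagger,\chi}$ and keep track of the $K_Q$-equivariant structure at each stage, computing the resulting twist. We already know from the construction (and the discussion right before the lemma) that $\bullet_{\dagger,\chi}$ lifts to a functor landing in \emph{weakly} $K_Q$-equivariant modules over $\underline{\A}_\hbar$, since $K_Q = K \cap Q$ preserves the completion at $\chi$, the Weyl subalgebra $\Weyl_\hbar^{\wedge_0}$, the Lagrangian $L = (\kf.\chi)^*$ (because $K_Q$ stabilizes $\chi$ and preserves $\kf$), and the anti-involution $\theta$; it also commutes with the rescaled $\C^\times$-action since $\gamma$ takes values in $K$ and $h$ is fixed by the relevant groups. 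So the only content is the identification of $\kappa_Q$, i.e. the formula for the differential of the $K_Q$-action on $M_{\hbar,\dagger,\chi}$ in terms of the quantum comoment map $\Phi_{\underline{\A}}$.

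First I would recall that, by hypothesis, the $K$-action on $M_\hbar$ has differential $\xi \mapsto \hbar^{-1}\Phi_\A(\xi) - \langle \kappa, \xi\rangle$ for $\xi \in \kf$; after completing at $\chi$ this persists on $M_\hbar^{\wedge_\chi}$, with $\Phi_\A$ replaced by its image in $\A_\hbar^{\wedge_\chi}$. Next I would pass through the equivalence of Lemma \ref{Lem:wHC_equivalence}, $M_\hbar \mapsto M_\hbar^L$. The point is that the decomposition \eqref{eq:decomp_A} writes $\A_\hbar^{\wedge_\chi} = \Weyl_\hbar^{\wedge_0} \widehat\otimes_{\C[[\hbar]]} \underline{\A}_\hbar'$ compatibly with the $K_Q$-actions and the quantum comoment maps: for $\xi \in \kf_Q$ one has $\Phi_\A(\xi) = \Phi_\Weyl(\xi) + \Phi_{\underline{\A}}(\xi)$ by \eqref{eq:q_comom_A}. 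Passing to the $L$-annihilator kills the $\Weyl_\hbar^{\wedge_0}$-factor but replaces the naive action of $\kf_Q$ by one twisted by the action on $\C[[L,\hbar]]$ (equivalently on $\Lambda^{\mathrm{top}}L$): the standard computation — using $q_i.(p_j m) = m + p_j(q_i.m)$ as in Lemma \ref{Lem:wHC_formal_Weyl} — shows that the endomorphism $\hbar^{-1}\Phi_\Weyl(\xi)$ of the Fock space $\C[[L,\hbar]]$, restricted to the line $\C[[\hbar]]\cdot 1 = (\C[[L,\hbar]])^L$, acts by the scalar $-\rho_L(\xi) = \rho_\omega(\xi)$, i.e. by one half of the character of $K_Q$ on $\Lambda^{\mathrm{top}}L$ (the sign being fixed by the normal-ordering convention, and $\rho_L = -\rho_\omega$ by Section \ref{SS_intro_approach}).

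Putting these together: on $M_{\hbar}^L$ the element $\xi \in \kf_Q$ acts by
\[
\hbar^{-1}\Phi_{\underline{\A}}(\xi) + \hbar^{-1}\Phi_\Weyl(\xi)\big|_{(\C[[L,\hbar]])^L} - \langle\kappa,\xi\rangle = \hbar^{-1}\Phi_{\underline{\A}}(\xi) - \langle \kappa|_{\kf_Q} - \rho_L,\, \xi\rangle = \hbar^{-1}\Phi_{\underline{\A}}(\xi) - \langle\kappa_Q,\xi\rangle,
\]
using $\kappa_Q = \kappa|_{\kf_Q} - \rho_L = \kappa|_{\kf_Q} + \rho_\omega$ from \eqref{eq:shifted_character}. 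This is exactly the defining condition for an object of $\wHC(\underline{\A}_\hbar^{\wedge_\chi},\theta)^{K_Q,\kappa_Q}$, and taking $\C^\times$-finite parts (an equivalence that is $K_Q$-equivariant, as $K_Q$ commutes with $\C^\times$) transports this to $\wHC^{gr}(\underline{\A}_\hbar,\theta)^{K_Q,\kappa_Q}$, proving the lemma. The main obstacle — the only genuinely substantive point — is the scalar computation identifying the restriction of $\hbar^{-1}\Phi_\Weyl(\xi)$ to the vacuum line with $-\rho_L(\xi)$; this is the usual "$\rho$-shift" coming from the difference between the naive and the symmetrized (Weyl) quantizations of the quadratic Hamiltonian generating the $K_Q$-action on $V$, and one must be careful that the sign and the factor of $\tfrac12$ match the conventions of Definition \ref{defi:half_character}. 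Everything else is bookkeeping of equivariant structures already set up in Sections \ref{SS_W_alg}--\ref{SS:HC_mod_restr}.
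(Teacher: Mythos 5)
Your strategy is the same as the paper's: the paper also reduces everything to the $K_Q$-equivariant decomposition $M_\hbar^{\wedge_\chi}\cong \C[[L,\hbar]]\widehat{\otimes}_{\C[[\hbar]]}N_\hbar^{\wedge_\chi}$, the identity $\Phi_{\A}=\Phi_{\Weyl}+\Phi_{\underline{\A}}$ on $\kf_Q$, and the fact that the Weyl-ordered $\Phi_{\Weyl}(\xi)$ acts on the Fock factor by the geometric action plus a $\rho$-type scalar. All of the equivariance bookkeeping in your plan matches the construction in Sections \ref{SS_W_alg}--\ref{SS:HC_mod_restr}.

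However, the one step you yourself single out as substantive contains a sign error, and with it your displayed computation does not balance. With the paper's conventions, $\Phi_{\Weyl}(\xi)$ acts on $\C[[L,\hbar]]$ (via the Lie-algebra action) as $\xi_L+\langle\rho_L,\xi\rangle$; since $\xi_L$ kills the vacuum line $(\C[[L,\hbar]])^L=\C[[\hbar]]\cdot 1$, the scalar on that line is $+\langle\rho_L,\xi\rangle$, not $-\langle\rho_L,\xi\rangle=\langle\rho_\omega,\xi\rangle$ as you assert. If the scalar really were $-\rho_L(\xi)$, your chain of equalities would give $\xi_N=\hbar^{-1}\Phi_{\underline{\A}}(\xi)-\langle\kappa|_{\kf_Q}+\rho_L,\xi\rangle$, i.e. the twist $\kappa|_{\kf_Q}-\rho_\omega$, which is not $\kappa_Q=\kappa|_{\kf_Q}-\rho_L=\kappa|_{\kf_Q}+\rho_\omega$; your displayed identity holds only when the vacuum eigenvalue equals $+\rho_L(\xi)$. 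So the conclusion and the route are those of the paper (which writes $\Phi_{\A}(\xi)-\langle\kappa,\xi\rangle=\bigl(\Phi_{\Weyl}(\xi)-\langle\rho_L,\xi\rangle\bigr)+\bigl(\Phi_{\underline{\A}}(\xi)-\langle\kappa-\rho_L,\xi\rangle\bigr)$ and reads off $\xi_N$ from $\xi_M=\xi_L+\xi_N$), but the normal-ordering computation must be redone with the correct sign before the argument is complete.
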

\begin{proof}
Pick $M_\hbar\in  \wHC(\A_\hbar,\theta)^{K,\kappa}$ and let $ N_\hbar\in \wHC(\underline{\A}_\hbar, \theta)$ denote its image. For $\xi\in \kf_Q$, we write
$\xi_M,\xi_N,\xi_L$ for the operators defined by $\xi$ on $M_\hbar^{\wedge_\chi},
N_\hbar^{\wedge_\chi}$ and $\C[[L,\hbar]]$ via the differentials of the $K_Q$-actions. From the $K_Q$-equivariant isomorphism
$M_{\hbar}^{\wedge_\chi}\cong \C[[L,\hbar]]\widehat{\otimes}_{\C[[\hbar]]}N_\hbar^{\wedge_\chi}$
we get
\begin{equation}\label{eq:vect_fields}
\xi_{M}=\xi_L+\xi_N.
\end{equation}
Note that $\Phi_{\Weyl}(\xi)$ acts on $\C[[L,\hbar]]$ as $\xi_L+\langle\rho_\omega,\xi\rangle$
so from (\ref{eq:vect_fields}) and (\ref{eq:q_comom_A}) we get the following equality of
operators on $M_\hbar^{\wedge_\chi}$ (via the Lie algebra actions)
$$\Phi_{\A}(\xi)-\langle\kappa,\xi\rangle=\Phi_\Weyl(\xi)-\langle\rho_\omega,\xi\rangle+
\Phi_{\underline{\A}}(\xi)-\langle\kappa-\rho_\omega,\xi\rangle.$$
The claim of the lemma follows.
\end{proof}


\subsection{Properties of restriction functor}\label{SS_Prop_restr}
As mentioned in \cite[Section 6.1]{Wdim}, most of the properties of $\bullet_{\dagger,\chi}$
mentioned below are proved completely similarly to their counterparts for
Harish-Chandra bimodules from \cite[Sections 3.3,3.4,4]{HC}.

As we have mentioned, the functor $$\bullet_{\dagger,\chi}:
\HC(\A_\hbar,\theta,\zeta)\rightarrow\HC(\underline{\A}_\hbar,\theta,\zeta)$$ is exact. In fact,
up to an isomorphism, the functor only depends on $\Orb_K$, not on the choice of
$\chi$. An isomorphism is given by an element of $K$ that sends one point to the other.

For a closed $K$-stable subvariety $Y\subset (\g/\kf)^*$ we write
$\HC_Y(\A,\theta,\zeta)$ for the full subcategory of all modules whose associated
variety is contained in $Y$. By the construction, for $M\in \HC_Y(\A,\theta,\zeta)^{K,\kappa}$,
the module $M_{\dagger,\chi}$ is supported on $S\cap Y$, where $S$ is the Slodowy slice from
Section \ref{SS_Sl_sl}. In particular, if
$\overline{\Orb}_K$ is an irreducible component in $Y$, then $\dim M_{\dagger,\chi}<\infty$.
And if $\Orb_K\not \subset Y$, we have $M_{\dagger,\chi}=\{0\}$.

Now suppose that (\ref{eq:codim_condition_K}) holds.

Let $\operatorname{V}(\A)$ denote the associated variety of $\A$. We further assume that
\begin{equation}\label{eq:assoc_var_condition}
\operatorname{V}(\A)=\overline{\Orb}.
\end{equation}

These conditions will be assumed until the end of the section.
In this case, it is known, \cite[Theorem 1.3]{Vogan}, that if $\overline{\Orb}_K$
is an irreducible component in the associated variety of an irreducible
HC module, then it coincides with the associated variety (recall that we have assumed that $K$ is connected). Consider
the Serre quotient category
\begin{equation}\label{eq:quotient_cat}
\HC_{\Orb_K}(\A,\theta,\zeta)^{K,\kappa}:=\HC_{\overline{\Orb}_K}(\A,\theta,\zeta)^{K,\kappa}/\HC_{\partial\Orb_K}(\A,\theta,\zeta)^{K,\kappa}.
\end{equation}
The functor $\bullet_{\dagger,\chi}$ factors through $\HC_{\Orb_K}(\A,\theta,\zeta)^{K,\kappa}$ and
maps to the category $(\underline{\A},\zeta)\operatorname{-mod}^{K_Q,\kappa_Q}_{fin}$ of
finite dimensional $(K_Q,\kappa_Q)$-equivariant $\Z/d\Z$-graded $\underline{\A}$-modules.
The functor has the
right adjoint
$$\bullet^{\dagger,\chi}:(\underline{\A},\zeta)\operatorname{-mod}^{K_Q,\kappa_Q}_{fin}
\rightarrow \HC_{\overline{\Orb}_K}(\A,\theta,\zeta)^{K,\kappa}.$$
The functor $\bullet^{\dagger,\chi}$  between the weakly HC categories is constructed as in the case of Harish-Chandra bimodules in
\cite[Section 3.3]{HC}, see \cite[Proposition 3.3.4]{HC} there and the preceding discussion.
Similarly to \cite[Section 3.4]{HC}, it carries to the HC categories.

Further  properties of $\bullet_{\dagger,\chi},\bullet^{\dagger,\chi}$
are summarized in the following proposition.

\begin{Prop}\label{Prop:adj_properties}
Suppose that (\ref{eq:codim_condition_K}) and (\ref{eq:assoc_var_condition}) hold. Then we have the following:
\begin{enumerate}
\item The kernel and the cokernel of the adjunction unit morphism
$M\rightarrow (M_{\dagger,\chi})^{\dagger,\chi}$ are supported on $\partial\Orb_K$.
In particular, $\bullet_{\dagger,\chi}: \HC_{\Orb_K}(\A,\theta,\zeta)^{K,\kappa}\rightarrow (\underline{\A},\zeta)\operatorname{-mod}_{fin}^{K_Q,\kappa_Q}$ is a full embedding.
\item The image of $\HC_{\Orb_K}(\A,\theta,\zeta)^{K,\kappa}$ in $(\underline{\A},\zeta)\operatorname{-mod}_{fin}^{K_Q,\kappa_Q}$
is closed under taking subquotients.
\item If $\operatorname{codim}_{\overline{\Orb}_K}(\partial\Orb_K)>2$,
then $$\bullet_{\dagger,\chi}: \HC_{\Orb_K}(\A,\theta,\zeta)^{K,\kappa}\xrightarrow{\sim} (\underline{\A},\zeta)\operatorname{-mod}_{fin}^{K_Q,\kappa_Q}.$$
\end{enumerate}
\end{Prop}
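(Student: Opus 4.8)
The plan is to follow, step by step, the blueprint of the analogous assertions for Harish--Chandra bimodules in \cite[Sections 3.3, 3.4, 4]{HC}, adapting each argument to the module setting. First I would reduce everything to the graded weakly HC level: by Lemma \ref{Lem:essential_surjectivity} and the construction of $\bullet_{\dagger,\chi}$ and $\bullet^{\dagger,\chi}$ at the $\hbar$-level, it suffices to analyse the adjunction unit $\M_\hbar\to ((\M_\hbar)_{\dagger,\chi})^{\dagger,\chi}$ for $\M_\hbar\in\wHC^{gr}(\A_\hbar,\theta)^{K,\kappa}$ with $\operatorname{V}(\M_\hbar)\subseteq\overline{\Orb}_K$, and then set $\hbar=1$. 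The key reduction for part (1) is that a graded weakly HC module is supported on $\partial\Orb_K$ if and only if its completion at $\chi$ vanishes: one direction is clear, and the converse uses $K$-equivariance, namely if $\chi\notin\operatorname{V}$ then the whole orbit $K\chi=\Orb_K$ avoids $\operatorname{V}$, so $\operatorname{V}\cap\overline{\Orb}_K\subseteq\partial\Orb_K$. Since the completion functor $\bullet^{\wedge_\chi}$ is exact on finitely generated modules, it is therefore enough to prove that the completion at $\chi$ of the adjunction unit is an isomorphism.

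Next I would unwind the functors. By Lemma \ref{Lem:wHC_equivalence} and the decomposition (\ref{eq:decomp_A}), completing $\M_\hbar$ at $\chi$ and splitting off the Weyl part identifies $\M_\hbar^{\wedge_\chi}$ with $\Weyl_\hbar^{\wedge_0}\widehat{\otimes}_{\C[[\hbar]]}\underline{\M}_\hbar$ for a graded weakly HC $\underline{\A}_\hbar^{\wedge_\chi}$-module $\underline{\M}_\hbar$, and by construction $(\M_\hbar)_{\dagger,\chi}$ is the $\cm$-finite part of $\underline{\M}_\hbar$. On the other hand, by the construction of $\bullet^{\dagger,\chi}$, the completion of $((\M_\hbar)_{\dagger,\chi})^{\dagger,\chi}$ at $\chi$ is canonically $\Weyl_\hbar^{\wedge_0}\widehat{\otimes}_{\C[[\hbar]]}$ (the $\chi$-completion of $(\M_\hbar)_{\dagger,\chi}$), so the completed unit becomes $\operatorname{id}_{\Weyl}$ tensored with the natural map from $\underline{\M}_\hbar$ to the $\chi$-completion of its $\cm$-finite part. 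Because the Kazhdan $\cm$-action gives a positive grading on $\C[S]$, this last map is an isomorphism (the module is already $\chi$-adically complete and equals the completion of its $\cm$-finite part, exactly as for the W-algebra itself). This establishes (1); the "full embedding'' statement then follows formally, since $\bullet_{\dagger,\chi}$ is exact and the adjunction unit is an isomorphism in the Serre quotient $\HC_{\Orb_K}(\A,\theta,\zeta)^{K,\kappa}$, so $\bullet_{\dagger,\chi}$ is fully faithful there.

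For part (2) I would run the standard diagram chase: given a $(K_Q,\kappa_Q)$-submodule $N\subseteq M_{\dagger,\chi}$, apply the left-exact functor $\bullet^{\dagger,\chi}$ to $0\to N\to M_{\dagger,\chi}\to M_{\dagger,\chi}/N\to 0$ and use the isomorphism $(M_{\dagger,\chi})^{\dagger,\chi}\cong M$ in $\HC_{\Orb_K}(\A,\theta,\zeta)^{K,\kappa}$ coming from part (1) to realise $M':=N^{\dagger,\chi}$ as a subobject of $M$; then applying the exact, fully faithful functor $\bullet_{\dagger,\chi}$ and using that the counit $\bullet_{\dagger,\chi}\circ\bullet^{\dagger,\chi}\to\operatorname{id}$ is an epimorphism (again as in \cite[Section 3.4]{HC}) identifies $M'_{\dagger,\chi}$ with $N$. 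Quotients are handled symmetrically, giving closure under subquotients.

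Finally, for part (3) assume $\operatorname{codim}_{\overline{\Orb}_K}\partial\Orb_K>2$. By part (1) it only remains to prove essential surjectivity, equivalently that the counit $(N^{\dagger,\chi})_{\dagger,\chi}\to N$ is an isomorphism for every finite dimensional $(K_Q,\kappa_Q)$-equivariant $(\underline{\A},\zeta)$-module $N$. Its kernel and cokernel are finite dimensional, i.e. supported at $\chi$, and I would show they vanish by comparing, at the associated-graded level, the microlocalisation of $N^{\dagger,\chi}$ on $\Orb_K$ with its sections over $\overline{\Orb}_K$: by (\ref{eq:codim_condition_K}) the relevant coherent sheaf on the smooth locus extends and has finitely generated global sections over $\overline{\Orb}_K$, and the strict inequality $\operatorname{codim}>2$ kills the remaining obstruction, so no information is lost in passing back and forth; this mirrors \cite[Section 4]{HC} and the alternative approach referred to in the remark after Theorem \ref{Thm:omnibus}. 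Combining with (1), $\bullet_{\dagger,\chi}$ is an equivalence. I expect this last point --- turning the "close to an isomorphism'' statement of part (1) into an honest isomorphism, which is precisely where the hypothesis $\operatorname{codim}_{\overline{\Orb}}\partial\Orb\geqslant 4$ (hence $\operatorname{codim}_{\overline{\Orb}_K}\partial\Orb_K\geqslant 2$, strengthened to $>2$ here) is used --- to be the main obstacle.
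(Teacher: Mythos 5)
Your overall strategy---adapting the bimodule arguments of \cite[Sections 3.3, 3.4, 4]{HC}---is exactly the paper's: its proof of this proposition consists of the citations \cite[Proposition 3.4.1(5)]{HC} for (1), \cite[Theorem 4.1.1]{HC} for (2), and \cite[Theorem 4.4]{reg} for (3). Your reductions in (1) (support on $\partial\Orb_K$ is detected by the completion at $\chi$, by $K$-equivariance and exactness of completion) and your sketch of (3) are consistent with that blueprint. One caveat on (1): the identification ``by construction'' of the $\chi$-completion of $((M_\hbar)_{\dagger,\chi})^{\dagger,\chi}$ with $\Weyl_\hbar^{\wedge_0}\widehat{\otimes}_{\C[[\hbar]]}((M_\hbar)_{\dagger,\chi})^{\wedge_\chi}$ is not automatic if $\bullet^{\dagger,\chi}$ is built as in \cite[Section 3.3]{HC} (as a suitable HC part sitting inside the locally finite vectors of $\Weyl_\hbar^{\wedge_0}\widehat{\otimes}N_\hbar$); that nothing is lost upon completing back is essentially the content of \cite[Proposition 3.3.4 and Proposition 3.4.1(5)]{HC}, so this step needs the actual argument rather than an appeal to the construction.

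The genuine gap is in (2). You use that the counit $(\,\cdot\,^{\dagger,\chi})_{\dagger,\chi}\rightarrow\operatorname{id}$ is an epimorphism, ``as in \cite[Section 3.4]{HC}.'' It is not: by Lemma \ref{Lem:image_description}, a simple $N\in(\underline{\A},\zeta)\operatorname{-mod}^{K_Q,\kappa_Q}_{fin}$ outside the image of $\bullet_{\dagger,\chi}$ has $N^{\dagger,\chi}=0$, so the counit at $N$ is the zero map; such $N$ do exist (e.g.\ the $(\mathfrak{sl}_3,\operatorname{Spin}_3)$ situation of Lemma \ref{Lem:sl3_so3_obstructive}(4), which is the raison d'\^etre of the whole paper). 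If instead you only assume epi-ness for subobjects $N\subset M_{\dagger,\chi}$ of objects in the image, that is essentially the statement (2) you are proving, so the argument is circular. What the construction of $\bullet^{\dagger,\chi}$ actually provides is a monomorphism-type property, $(N^{\dagger,\chi})_{\dagger,\chi}\hookrightarrow N$; with that, your chain only yields $(N^{\dagger,\chi})_{\dagger,\chi}\subseteq N$, not equality. A correct chase goes through the quotient: set $M':=\ker\bigl(M\rightarrow(M_{\dagger,\chi}/N)^{\dagger,\chi}\bigr)$, apply the exact functor $\bullet_{\dagger,\chi}$, and use injectivity of the counit at $M_{\dagger,\chi}/N$ together with the triangle identity (which identifies the composite $M_{\dagger,\chi}\rightarrow((M_{\dagger,\chi}/N)^{\dagger,\chi})_{\dagger,\chi}\hookrightarrow M_{\dagger,\chi}/N$ with the projection) to get $M'_{\dagger,\chi}=N$; quotients are handled by taking the image of $M\rightarrow V^{\dagger,\chi}$. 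Either justify that injectivity from the construction of $\bullet^{\dagger,\chi}$, or reproduce the argument of \cite[Theorem 4.1.1]{HC}; as written, part (2) does not follow.
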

\begin{proof}
(1) is proved in the same way as \cite[Proposition 3.4.1(5)]{HC}. (2) is proved as
\cite[Theorem 4.1.1]{HC}. (3) is proved as \cite[Theorem 4.4]{reg}.
\end{proof}


To finish this section, we want to understand the interaction between $\bullet_{\dagger,\chi}$
and the tensor products with HC bimodules. Here it will be convenient for us to work
with modules and bimodules over $\U$. Let $\B$ be a HC $\U$-bimodule and $M$ be a HC
$(\g,K)$-module. We have the restriction functor $\B\mapsto \B_\dagger: \HC^G(\U)
\rightarrow \HC^Q(\Walg)$, \cite[Section 3.4]{HC}.

\begin{Lem}\label{Lem:restriction_compatibilty}
We have a functorial $K_Q$-linear isomorphism of $\Walg$-modules $\B_\dagger\otimes_\Walg M_{\dagger,\chi}\xrightarrow{\sim}(\B\otimes_\U M)_{\dagger,\chi}$.
\end{Lem}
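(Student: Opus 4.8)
The plan is to reduce the statement to the corresponding compatibility for Harish-Chandra bimodules, which is established in \cite[Section 3.4, Proposition 3.4.?]{HC}, by passing through the Rees/completion picture in which all the functors $\bullet_\dagger$ and $\bullet_{\dagger,\chi}$ are defined. Recall that all three objects --- $\B$, $M$, and the tensor product $\B\otimes_\U M$ --- have canonical Rees versions $\B_\hbar$, $M_\hbar$, $(\B\otimes_\U M)_\hbar \cong \B_\hbar\otimes_{\U_\hbar} M_\hbar$ (after choosing compatible good filtrations, using that a good filtration on $M$ and the standard filtration on $\B$ induce one on $\B\otimes_\U M$). The restriction functors are obtained by completing at $\chi$, splitting off a completed Weyl algebra factor via \eqref{eq:decomp_U} and \eqref{eq:decomp_A}, taking the annihilator of the Lagrangian $L$, and then passing to $\C^\times$-finite vectors and specializing $\hbar=1$. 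So the first step is to prove the isomorphism at the level of $\hbar$-adic completions, i.e. to produce a functorial $K_Q$-equivariant isomorphism
\[
\B_\dagger^{\wedge}\,\widehat\otimes_{\Walg_\hbar'}\,(M_\hbar^{\wedge_\chi})^L \;\xrightarrow{\;\sim\;}\; \bigl((\B\otimes_\U M)_\hbar^{\wedge_\chi}\bigr)^L,
\]
and then argue that passing to $\C^\times$-finite parts and setting $\hbar=1$ preserves it.

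The key computational input is base change along the decomposition \eqref{eq:decomp_U}. Completing $\B_\hbar\otimes_{\U_\hbar} M_\hbar$ at $\chi$ gives $\B_\hbar^{\wedge_\chi}\otimes_{\U_\hbar^{\wedge_\chi}} M_\hbar^{\wedge_\chi}$ (the completion commutes with the tensor product because $\B_\hbar$ is finitely generated over $\U_\hbar$). Now use \eqref{eq:decomp_U} to write $\U_\hbar^{\wedge_\chi}\cong \Weyl_\hbar^{\wedge_0}\widehat\otimes \Walg_\hbar'$ and the bimodule analogue of \eqref{eq:decomp_A} to write $\B_\hbar^{\wedge_\chi}\cong \Weyl_\hbar^{\wedge_0}\widehat\otimes \B_\dagger^{\wedge}$ (here I use that $\B$ is a HC $\U$-bimodule, so $\B_\dagger$ is its restriction in the sense of \cite[Section 3.4]{HC}). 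Then
\[
\B_\hbar^{\wedge_\chi}\otimes_{\U_\hbar^{\wedge_\chi}} M_\hbar^{\wedge_\chi}
\;\cong\;
\bigl(\Weyl_\hbar^{\wedge_0}\widehat\otimes \B_\dagger^{\wedge}\bigr)\otimes_{\Weyl_\hbar^{\wedge_0}\widehat\otimes \Walg_\hbar'} M_\hbar^{\wedge_\chi}
\;\cong\;
\B_\dagger^{\wedge}\,\widehat\otimes_{\Walg_\hbar'}\, M_\hbar^{\wedge_\chi},
\]
the Weyl-algebra factor being absorbed by the tensor product. This identity is exactly what is proved, in the bimodule setting, in the course of \cite[Section 3.4]{HC}; the argument carries over verbatim since $M_\hbar^{\wedge_\chi}$ is a left module rather than a bimodule plays no role. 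Next, apply $(\,\cdot\,)^L$, the annihilator of the Lagrangian $L\subset V^*$. By Lemma \ref{Lem:wHC_formal_Weyl} (applied to the completed Weyl algebra $\Weyl_\hbar^{\wedge_0}$ and its module $\C[[L,\hbar]]$) the functor $(\,\cdot\,)^L$ is, on $\Weyl_\hbar^{\wedge_0}$-modules, the same as $\Hom_{\Weyl_\hbar^{\wedge_0}}(\C[[L,\hbar]],\,\cdot\,)$, hence commutes with extension of scalars over $\Walg_\hbar'$; applied to the displayed isomorphism it yields the desired identification of $(M_\hbar^{\wedge_\chi})^L = (M_{\dagger,\chi})_\hbar^{\wedge}$-based objects. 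All maps here are manifestly functorial in $\B$ and $M$ and equivariant for the group \eqref{eq:group_acting}, in particular for $K_Q = K\cap Q$, since every step --- completion, the decompositions \eqref{eq:decomp_U}, \eqref{eq:decomp_A}, and $(\,\cdot\,)^L$ --- is equivariant.

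The last step is descent from the $\hbar$-adically completed picture to the actual HC modules: take $\C^\times$-finite vectors and specialize $\hbar=1$. Taking $\C^\times$-finite vectors is harmless because the $\C^\times$-action on the completed tensor product is the diagonal one and the positivity of the grading on $\C[S]$ (Section \ref{SS_Sl_sl}) ensures that $\C^\times$-finite vectors in a completed tensor product of two such modules are the algebraic tensor product of the $\C^\times$-finite parts, exactly as in Section \ref{SS_W_alg}; specializing $\hbar=1$ is exact on flat $\C[\hbar]$-modules. This produces the $K_Q$-linear $\Walg$-module isomorphism $\B_\dagger\otimes_\Walg M_{\dagger,\chi}\xrightarrow{\sim}(\B\otimes_\U M)_{\dagger,\chi}$, functorial in both arguments. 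The only genuine point requiring care --- and the step I expect to be the main obstacle --- is checking that the completed tensor product $\B_\hbar^{\wedge_\chi}\otimes_{\U_\hbar^{\wedge_\chi}} M_\hbar^{\wedge_\chi}$ really computes the completion of $\B_\hbar\otimes_{\U_\hbar} M_\hbar$ and that it behaves well under $(\,\cdot\,)^L$, i.e. the interchange of the completion, the tensor product, and the Lagrangian-annihilator functor. This is where one must invoke finiteness of $\B_\hbar$ over $\U_\hbar$ and the structure of $M_\hbar$ as a finitely generated module, mirroring the bimodule arguments of \cite[Sections 3.3--3.4]{HC}; once these flatness/finiteness bookkeeping points are in place the rest is formal.
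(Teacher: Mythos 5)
Your proposal is correct and follows essentially the same route as the paper's proof: choose good filtrations, identify the completion of $\B_\hbar\otimes_{\U_\hbar}M_\hbar$ at $\chi$ with $\B_\hbar^{\wedge_\chi}\otimes_{\U_\hbar^{\wedge_\chi}}M_\hbar^{\wedge_\chi}$, use the decompositions \eqref{eq:decomp_U} and \eqref{eq:decomp_A} (for the bimodule $\B$) to absorb the Weyl factor, apply Lemma \ref{Lem:wHC_formal_Weyl} via the $L$-annihilator, and finish by taking $\C^\times$-finite parts and setting $\hbar=1$. The only (cosmetic) difference is that the paper substitutes $M_\hbar^{\wedge_\chi}\cong\C[[L,\hbar]]\widehat{\otimes}_{\C[[\hbar]]}N_\hbar^L$ before cancelling the Weyl factor, whereas you cancel first and then apply $(\cdot)^L$; both arrangements rest on the same input from \cite[Sections 3.3--3.4]{HC}.
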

\begin{proof}
Pick good filtrations on $\B$ and $M$. We have the induced tensor product filtration on
$\B\otimes_{\U} M$, which is also good. The completion functor is tensor so we have a
natural isomorphism
\begin{equation}\label{eq:tensor_product_iso}\B_\hbar^{\wedge_\chi}\otimes_{\U_\hbar^{\wedge_\chi}}M_\hbar^{\wedge_\chi}
\xrightarrow{\sim} (\B_\hbar\otimes_{\U_\hbar}M_\hbar)^{\wedge_\chi}.
\end{equation}
Let
$\underline{\B}_\hbar'$ denote the centralizer of $\Weyl_\hbar^{\wedge_0}$
in $\B_\hbar^{\wedge_\chi}$. The left hand
side is
\begin{align*}
\left(\Weyl_\hbar^{\wedge_0}\widehat{\otimes}\underline{\B}_\hbar'\right)
\widehat{\otimes}_{\Weyl_\hbar^{\wedge_0}\widehat{\otimes}_{\C[[\hbar]]}
\Walg_\hbar^{\wedge_\chi}}(\C[[L,\hbar]]\widehat{\otimes}_{\C[[\hbar]]}N_\hbar^L)
\xrightarrow{\sim} \C[[L,\hbar]]\widehat{\otimes}_{\C[[\hbar]]}
(\underline{\B}_\hbar'\widehat{\otimes}_{\C[[\hbar]]}N_\hbar^L).\end{align*}
This isomorphism combined with (\ref{eq:tensor_product_iso}) and the constructions
of $\bullet_{\dagger,\chi}$ (Section \ref{SS:HC_mod_restr}) and
$\bullet_\dagger$ (\cite[Section 3.4]{HC}) yields
the claim of the lemma.
\end{proof}

\begin{Rem}\label{Rem:restriction_compatibility}
The lemma generalizes to the weakly HC (bi)modules. It also generalizes to Hom's instead of tensor
products (this will be needed in a subsequent paper by the first named author). Let $B_\hbar$
be a graded weakly HC $\U_\hbar$-bimodule, while $M_\hbar\in \wHC^{gr}(\U_\hbar,\theta)^{K,\kappa}$.
Then $\Hom_{\U_\hbar}(B_\hbar,M_\hbar)$ has a natural structure of an object in
$\wHC^{gr}(\U_\hbar,\theta)^{K,\kappa}$, and we have a natural isomorphism
$\Hom_{\U_\hbar}(B_\hbar,M_\hbar)_{\dagger,\chi}\xrightarrow{\sim}\Hom_{\Walg_\hbar}(B_{\hbar,\dagger},
M_{\hbar,\dagger,\chi})$. The proof repeats that of the lemma.
\end{Rem}

\subsection{The case of quantizations of nilpotent orbits}\label{subsec:W-algebra_nilpotent}
Now we would like to treat the case when the Dixmier algebra $\A$ in
Section \ref{SS_Dixmier_restriction} is a quantization of
$\C[\Orb]$ so that (\ref{eq:assoc_var_condition}) holds. We assume that (\ref{eq:codim_condition}) holds
as well. Clearly, in this case $\underline{\A}=\C$.

In what follows we need to understand how the quantum comoment map
$\Phi_{\underline{\A}}:\q\rightarrow \underline{\A}$ behaves. Since $\underline{\A}=\C$,
the map $\Phi_{\underline{\A}}$ is just an element of $\q^{*Q}$.

Recall that the quantizations of $\C[\Orb]$ are classified by $H^2(\Orb,\C)$,
Section \ref{SS_quant_nilp}.
Note that $(\q^*)^Q$  is identified with $H^2(\Orb,\C)$ as a vector space, Lemma \ref{Lem:G_orbit_cohomology}.
On the lattice $\Hom(Q,\C^\times)\subset \q^{*Q}$ this identification sends
a character of $Q$ to the first Chern class of the corresponding $G$-equivariant line  bundle
on $\Orb$.

\begin{Lem}\label{Lem:quantum_comom_period}
Let $\lambda \in H^2(\Orb, \C)$ be the period of $\A$, see
Proposition \ref{prop:graded}. Then $\Phi_{\underline{\A}}=\lambda \in  \q^{*Q}$.
\end{Lem}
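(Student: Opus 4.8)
The statement identifies two quantities attached to the quantization $\A$ of $\C[\Orb]$: its period $\lambda \in H^2(\Orb,\C)$ (defined via the noncommutative period map of Theorem \ref{thm:quan_BK} applied to the microlocalization $\A_{\hbar}|_{\Orb}$, as recalled at the end of Section \ref{subsec:quan}), and the restriction $\Phi_{\underline{\A}} = \Phi_{\A} - \Phi_{\Weyl} \in \q^{*Q}$, which is just a character of $Q$ (in the completed/Rees sense) since $\underline{\A} = \C$. The plan is to recognize $\Phi_{\underline{\A}}$ as the enhanced period of the quantization restricted to the Lagrangian $\Orb_K \subset \Orb$ and then invoke the identification of the enhanced period with the period via $c_1^{\operatorname{top}}$, together with the classical identification $(\q^*)^Q \cong H^2(\Orb,\C)$ of Lemma \ref{Lem:G_orbit_cohomology}. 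More precisely, I would use the (rescaled) Slodowy slice $S$ and the point $\chi$, take the one-point Lagrangian $\{\chi\}$ inside $\Orb\cap\kf^\perp$ to be too degenerate; instead, keep the full orbit $\Orb$ and use the Lagrangian $\Orb_K = \Orb \cap (\g/\kf)^*$, but this requires a choice of $\kf$ which need not be present here. So I would instead argue directly via the $Q$-equivariant structure on $S$.

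\textbf{First step: reduce to a computation on the Slodowy slice.} Since both sides of $\Phi_{\underline{\A}} = \lambda$ are elements of $(\q^*)^Q \cong H^2(\Orb,\C)$, and this identification is linear, it suffices to check the equality after pairing with the lattice $\Hom(Q,\C^\times)$, i.e.\ to check it on characters. Equivalently, since the universal quantization $\A_{\param}$ deforms $\A$ over $\param = H^2(\Orb,\C)$, and the restriction functor to $S$ is $\C[\param]$-linear, I would work with $\A_{\param}$ throughout: the restriction $\underline{\A}_{\param}$ is a $\C[\param]$-algebra which, because $\underline{\A}_{\param}/(\mathfrak{m}_\lambda) = \underline{\A}_\lambda = \C$ for each $\lambda$, must itself be $\C[\param]$, and the quantum comoment map $\Phi_{\underline{\A}_{\param}}: \q \to \C[\param]$ is then an affine-linear map $\q \to \param^* \oplus \C$; the claim is that its linear part is the canonical identification $\q^{*Q} \xrightarrow{\sim} \param = H^2(\Orb,\C)$ and its value at $\lambda = 0$ is the (canonical) normalization $\Phi_{\underline{\A}_0} = 0$. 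The vanishing at $\lambda=0$ is the statement that the \emph{canonical} quantization restricts to the trivial character on $\q$, which follows from the fact that the canonical quantization admits an even anti-involution and $\C^\times$-equivariance forcing the comoment map to land in degree-$2$ elements with no constant term — I would extract this from the construction in Section \ref{SS_W_alg} and the normalization conventions of \cite{orbit}.

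\textbf{Second step: match the linear part with $c_1$.} This is the heart of the argument and the main obstacle. I would relate $\Phi_{\underline{\A}}$ to the Picard algebroid $\tat$ associated to the quantization restricted to the Lagrangian $\Orb_K$ (Section \ref{subsec:lag_pic}): by Remark \ref{rmk:enhanced_period_lag} and equation \eqref{eq:c1_lag}, one has $c_1^{\operatorname{top}}(\tat) = \iota^*[\omega_1(\OO_\hbar)]$, and $\omega_1$ of the microlocalized quantization $\A_\hbar|_{\Orb}$ is precisely the period $\lambda$ by the end of Section \ref{subsec:quan}. On the other hand, the restriction functor $\bullet_{\dagger,\chi}$ of Section \ref{SS:HC_mod_restr} sends equivariant modules to $(K_Q,\kappa_Q)$-modules with $\kappa_Q = \kappa|_{\kf_Q} + \rho_\omega$ (Lemma \ref{Lem:restriction_equivariance}), and the twisted local system $\Orb_K$-side picture of Section \ref{SS_intro_approach} shows that the twist appearing in $\tat$ (after the half-canonical shift of \eqref{eq:tatp2tat}) is exactly governed by $\Phi_{\underline{\A}}$ evaluated on $\kf_Q$, via Lemma \ref{Lem:homog_classif} applied to the homogeneous space $\Orb_K = K/K_Q$. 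Concretely: on $\Orb_K$, the TDO-twist of the relevant Picard algebroid equals $\Phi_{\underline{\A}}|_{\kf_Q}$ (shifted by $\rho_\omega$), while the same twist equals $\iota^*\lambda$ shifted by $\rho_\omega$ by the period computation; since $\kf_Q \subset \q$ and these agree for every choice of $\sigma$ with $\Orb\cap\kf^\perp \neq \varnothing$ passing through $\chi$, and such choices produce enough subalgebras $\kf_Q$ of $\q$ to separate points of $\q^{*Q}$ (one can, e.g., use a diagonal-type involution), the characters $\Phi_{\underline{\A}}$ and $\lambda$ of $Q$ must coincide. The delicate point I expect to fight with is the consistency of normalizations — the factor of $\tfrac12$ in the half-canonical twist \eqref{eq:tatp2tat}, the sign in $\rho_L = -\rho_\omega$, and the identification of $\omega_1$ of the Rees-microlocalization with the period $\lambda$ as opposed to $\lambda$ plus a universal correction from $\rho_\omega$ — so I would pin down one explicit example (e.g.\ $\g = \slf_2$, the minimal nilpotent orbit, where $Q$ is a torus and everything is computable) to fix all constants, and then run the general linear-algebra argument above.
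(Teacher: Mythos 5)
Your first step is essentially the paper's reduction: pass to the universal quantization $\A_{\param}$, note that by degree reasons $\Phi_{\underline{\A}_{\param}}$ is an affine map $\q\rightarrow \param^*\oplus\C$, and then handle the linear part and the constant part separately, the constant part being killed by a parity argument for the canonical quantization. But you leave that last point as a gesture ("extract this from the construction\dots"), and it is not forced by $\C^\times$-equivariance alone: the actual content is that $\Phi_{\A_0}$ and $\Phi_{\Weyl}$ are symmetrized with respect to the parity anti-automorphism $\varsigma$ of $R_\hbar(\A_0)$ (the anti-automorphism equal to the identity mod $\hbar$ with $\varsigma(\hbar)=-\hbar$, \cite[Corollary 2.3.3]{quant_iso}), hence so is $\Phi_{\underline{\A}_0}:\q\rightarrow\C[\hbar]$, and a $\varsigma$-invariant multiple of $\hbar$ must vanish. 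Without this input the constant term is simply not controlled.

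The genuine gap is in your second step, the identification of the linear part. First, the equality $c_1^{\operatorname{top}}(\TT(\Q,Y))=\iota^*[\omega_1(\OO_\hbar)]$ of \eqref{eq:c1_lag} only determines a class in $H^2_{DR}(\Orb_K)$, while what you need is the character by which $\kf_Q$ acts, i.e. the strongly equivariant twist; for a homogeneous $\Orb_K=K/K_\chi$ the map from characters of $\kf_\chi$ to $H^2(\Orb_K,\C)$ can have a large kernel, so the non-equivariant period computation cannot pin down the twist. The equivariant refinement you are implicitly invoking is exactly what this lemma together with Corollary \ref{Cor:equiv_condition} is meant to provide (see Remark \ref{rmk:enhanced_period_lag}), so at this point your argument is circular, or at best rests on an unproved equivariant compatibility. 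Second, even granting the matching on each $\kf_Q$, your separation claim --- that varying $\sigma$ among involutions with $\Orb\cap\kf^\perp$ passing through $\chi$ yields enough subalgebras $\kf_Q\subset\q$ to separate $\q^{*Q}$ --- is unproved and would require a case-by-case check over all orbits satisfying the codimension condition (for some orbits only very few symmetric pairs meet $\Orb$, and $\kf_Q$ can be finite, as for $\kf=\mathfrak{so}_3$ in $\slf_3$); moreover the statement of the lemma involves no $K$ at all, so its proof should not depend on the existence of such data. The paper avoids all of this by an intrinsic computation on $\Orb$: for a character $\mu$ of $Q$ the corresponding line bundle on $\Orb$ quantizes (using $H^1(\Orb,\mathcal{O})=H^2(\Orb,\mathcal{O})=0$ and the codimension $\geqslant 4$ condition) to a HC $\A_{\lambda+\mu}$-$\A_{\lambda}$-bimodule; applying the bimodule restriction functor $\bullet_\dagger$ gives a one-dimensional $Q$-equivariant $\Walg$-bimodule on which $Q$ acts by $\mu$ while the adjoint action of $\q$ is $\Phi_{\underline{\A}_{\lambda+\mu}}-\Phi_{\underline{\A}_{\lambda}}$, so the linear part is the identity on the character lattice and hence everywhere.
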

\begin{proof}
Let us write $\A_\lambda$ for the quantization $\A$ with period $\lambda$. Also recall the universal
quantization $\A_{\param}$, where $\param=H^2(\Orb,\C)$, Section \ref{SS_quant_nilp}.  We still have the quantum comoment map $\g\rightarrow \A_{\param}$,
which gives rise to a linear $Q$-equivariant map $\q\rightarrow \underline{\A}_{\param}=\C[\param]$. By degree reasons, the image of this map lies in $\q^{Q}\oplus \C$. The claim of the lemma
boils down to the claim that this map is the $Q$-equivariant  projection
$\q\twoheadrightarrow \q^{Q}$.

The first observation is that the $\q^Q$-part of $\q\rightarrow \q^Q\oplus \C$ is the projection.
To see this, one can consider the quasi-classical limit of the situation. Take a line
bundle $\mathcal{L}$ on $\Orb$, it corresponds to a character of $Q$, say $\mu$. Let $\A^\Orb_\lambda$ denote
the microlocalization of $\A_\lambda$ to $\Orb$.
We have $H^i(\Orb,\mathcal{O}_{\Orb})=0$ for $i=1,2$, (\ref{eq:orbit_coh_cohom_vanish}).
It follows that $\mathcal{L}$ deforms to a unique $G$-equivariant $\A^\Orb_{\lambda+\mu}$-$\A^\Orb_{\lambda}$-module, denote it by $\mathcal{L}'$, compare to
\cite[Proposition 5.2]{BPW} and its proof. Since $\operatorname{codim}_{\overline{\Orb}}\partial\Orb\geqslant 4$,  the global sections
$\Gamma(\mathcal{L}')$ is a HC $\A_{\lambda+\mu}$-$\A_\lambda$-bimodule whose microlocalization
back to $\Orb$ is $\mathcal{L}'$, compare to the proof of \cite[Theorem 4.4]{reg}.
Apply the restriction functor $\bullet_\dagger$ for Harish-Chandra bimodules, \cite[Section 3.4]{HC},
to $\Gamma(\mathcal{L}')$. We land in the category of $Q$-equivariant $\Walg$-bimodules. By \cite[Lemma 3.3.2]{HC}, we have that $\gr \Gamma(\mathcal{L}')_\dagger$
is the pullback of $\gr\mathcal{L}'$ to $S\cap \Orb$. So $\Gamma(\mathcal{L}')_\dagger$ is a one-dimensional
space and $Q$ acts via $\mu$. On the other hand, the adjoint action of $\mathfrak{q}\subset \Walg$
is by $\Phi_{\underline{\A}_{\lambda+\mu}}-\Phi_{\underline{\A}_\lambda}$. This difference is $\mu$.
Equivalently, the $\q^Q$-part of the map $\q\rightarrow \q^Q\oplus \C$ is the projection.
%

Finally, we need to show that the $\C$-part of the map $\q\rightarrow \q^Q\oplus \C$
is zero.  This is equivalent to the claim that for the canonical quantization $\A_0$, we have
$\Phi_{\underline{\A}_0}=0$. For this, consider the Rees algebra $R_\hbar(\A_0)$.
It comes with a parity anti-automorphism, denote it by $\varsigma$, see
\cite[Corollary 2.3.3]{quant_iso}. The anti-automorphism $\varsigma$ is
characterized by the property that it is the identity modulo $\hbar$ and sends
$\hbar$ to $-\hbar$. In this situation it makes sense to speak about
the symmetrized quantum comoment map to $R_\hbar(\A_0)$, the homogeneous
map with image in the $\varsigma$-invariants,  see \cite[Section 5.4]{quant_iso}.
Note that $\Phi_{\A_0}:\g
\rightarrow R_\hbar(\A_0)$ is symmetrized. We can assume that the image of the embedding
$V^*\hookrightarrow R_\hbar(\A_0)^\wedge$ is $\varsigma$-stable. So $\underline{\A}_{0,\hbar}$
inherits the anti-automorphism $\varsigma$. The quantum comoment map $\Phi_{\mathbb{A}}$
is symmetrized. From here and (\ref{eq:q_comom_A}) we see that
$\Phi_{\underline{\A}_0}:\q\rightarrow \underline{\A}_{0, \hbar}=\C$
is also symmetrized. So,   $\Phi_{\underline{\A}_0}=0$.
\end{proof}

Here is an immediate corollary of Lemma \ref{Lem:quantum_comom_period}.

\begin{Cor}\label{Cor:equiv_condition}
The category $\wHC^{gr}(\underline{\A}_\hbar,\theta)^{K_Q,\kappa_Q}$ is nonzero if and only if $\lambda|_{\kf_Q}-\kappa_Q$
integrates to a character of $K_Q^\circ$. This is the character by which $K_Q^\circ$
acts on every object in $\wHC^{gr}(\underline{\A}_\hbar,\theta)^{K_Q,\kappa_Q}$.
\end{Cor}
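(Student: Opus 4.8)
The plan is to unwind the definitions using that in this situation $\underline{\A}=\C$, hence $\underline{\A}_\hbar=\C[\hbar]$ with $\hbar$ in $\cm$-degree $d$. First I would record two elementary facts. Since $\underline{\A}_\hbar$ is commutative, $\underline{\A}_\hbar^{-\theta}=0$, so $\J_\hbar=\hbar\,\underline{\A}_\hbar$; consequently on any object of $\wHC^{gr}(\underline{\A}_\hbar,\theta)$ that is $\hbar$-torsion free (which is the case for all the modules relevant to the classification, the general case being reducible to this one via Lemma~\ref{Lem:essential_surjectivity}) the element $\hbar\in\J_\hbar$ acts by the identity, because $\hbar(\hbar.m-m)=0$ by condition (i) of Definition~\ref{defi:wHC}. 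Second, the quantum comoment map $\Phi_{\underline{\A}_\hbar}\colon\kf_Q\to\underline{\A}_\hbar$ is homogeneous of $\cm$-degree $d$, so its image lies in $\C\hbar$, and by Lemma~\ref{Lem:quantum_comom_period} it is $\xi\mapsto\langle\lambda,\xi\rangle\hbar$.

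For the ``only if'' direction, let $M_\hbar$ be a nonzero object of $\wHC^{gr}(\underline{\A}_\hbar,\theta)^{K_Q,\kappa_Q}$. Substituting the two facts above into the equivariance condition of Definition~\ref{defi_wHC_equiv} gives, for every $\xi\in\kf_Q$, that $\xi_{M_\hbar}=\langle\lambda|_{\kf_Q}-\kappa_Q,\xi\rangle\cdot\Id_{M_\hbar}$. Thus the connected group $K_Q^\circ$ acts on the nonzero $K_Q$-rational module $M_\hbar$ with scalar-valued differential, hence through scalars on every finite-dimensional $K_Q^\circ$-subrepresentation of $M_\hbar$; equivalently, $\lambda|_{\kf_Q}-\kappa_Q$ exponentiates to a rational character $\eta\colon K_Q^\circ\to\cm$, and $K_Q^\circ$ acts on $M_\hbar$ via $\eta$. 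This also yields the last sentence of the corollary.

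For the converse, suppose $\lambda|_{\kf_Q}-\kappa_Q$ integrates to a character $\eta$ of $K_Q^\circ$. I would first note that $\lambda|_{\kf_Q}-\kappa_Q\in(\kf_Q^*)^{K_Q}$: indeed $\lambda\in(\q^*)^Q$ restricts to a $K_Q$-invariant functional on $\kf_Q=\kf\cap\q$, the term $\kappa|_{\kf_Q}$ is $K_Q$-invariant because $\kappa\in(\kf^*)^K$, and $\rho_\omega$ is $K_Q$-invariant by its definition in Section~\ref{SS_intro_approach}. Since characters of a connected algebraic group are determined by their differentials, $\eta$ is then itself $K_Q$-invariant, so the finite-dimensional $K_Q$-representation $\operatorname{Ind}_{K_Q^\circ}^{K_Q}\C_\eta$ restricts to a multiple of $\eta$ on $K_Q^\circ$, and hence $\kf_Q$ acts on it by the scalar $\lambda|_{\kf_Q}-\kappa_Q$. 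Placing this representation in $\cm$-degree $0$, tensoring with $\C[\hbar]$, letting $\J_\hbar=\hbar\C[\hbar]$ act by $a.m=\hbar^{-1}(am)$ and $\zeta$ act via $\epsilon\in\cm$, one obtains a nonzero object of $\wHC^{gr}(\underline{\A}_\hbar,\theta)^{K_Q,\kappa_Q}$, with all the axioms of Definitions~\ref{defi:wHC}, \ref{defi:wHC_graded} and \ref{defi_wHC_equiv} immediate (using $\hbar.m=m$). The only point requiring genuine care is the normalization of the action of $\hbar\in\J_\hbar$ on possibly non-flat weakly HC modules, which is why I reduce to the $\hbar$-torsion-free case; the rest is bookkeeping layered on top of Lemma~\ref{Lem:quantum_comom_period}.
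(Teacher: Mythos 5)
Your argument is, in substance, the paper's own: the corollary is asserted there as an immediate consequence of Lemma \ref{Lem:quantum_comom_period}, and its content is exactly your computation. Since $\underline{\A}_\hbar=\C[\hbar]$ and the quantum comoment map is homogeneous of degree $d$, Lemma \ref{Lem:quantum_comom_period} gives $\Phi_{\underline{\A}}(\xi)=\langle\lambda,\xi\rangle\hbar$; on a $\C[\hbar]$-flat object the element $\hbar\in\J_\hbar$ acts by the identity through the Lie-algebra action, so the equivariance condition of Definition \ref{defi_wHC_equiv} forces the differential of the $K_Q$-action to be the scalar $\lambda|_{\kf_Q}-\kappa_Q$, whence the integrality on $K_Q^\circ$ and the last sentence of the corollary; and your converse construction (checking that $\lambda|_{\kf_Q}-\kappa_Q$ is $K_Q$-invariant, inducing the character from $K_Q^\circ$ to $K_Q$, tensoring with $\C[\hbar]$ and letting $\J_\hbar$ act by $a.m=\hbar^{-1}(am)$) is correct and is the natural way to exhibit a nonzero object.

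The one step that does not work as written is the parenthetical reduction of the $\hbar$-torsion case to the flat case ``via Lemma \ref{Lem:essential_surjectivity}''. That lemma only says that torsion modules are killed by the specialization $M_\hbar\mapsto M_\hbar/(\hbar-1)M_\hbar$; it gives no control over the Lie-algebra action of $\J_\hbar$ on a torsion weakly HC module. Indeed, on a module annihilated by $\hbar$, condition (i) of Definition \ref{defi:wHC} becomes vacuous and the operator $T$ by which $\hbar\in\J_\hbar$ acts is genuinely extra data, not forced to be the identity; the equivariance condition then reads $\xi_{M}=\langle\lambda,\xi\rangle T-\langle\kappa_Q,\xi\rangle$, and with, say, $T=0$ one can manufacture nonzero equivariant torsion objects whenever $\kappa_Q$ alone integrates, so neither the ``only if'' direction nor the ``every object'' clause follows from your argument for such objects. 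The honest fix is to prove the statement for $\C[\hbar]$-flat (i.e.\ HC) objects and for the specialized category $(\underline{\A},\zeta)\operatorname{-mod}^{K_Q,\kappa_Q}$ --- which is where the corollary is actually applied in the paper, and where your computation is already complete --- rather than to invoke a reduction that Lemma \ref{Lem:essential_surjectivity} does not provide.
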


Now note that $(\underline{\A},\zeta)\operatorname{-mod}^{K_Q,\kappa_Q}$ is a semisimple category:
by Corollary \ref{Cor:equiv_condition} this is the category of $\Z/d\Z$-graded $K_Q$-modules, where
$\mathfrak{k}_Q$ acts by $\lambda|_{\kf_Q}-\kappa_Q$.
Recall that (\ref{eq:codim_condition}) implies (\ref{eq:codim_condition_K}) for each $\Orb_K$.
Proposition \ref{Prop:adj_properties} implies that $\HC_{\Orb_K}(\A,\theta,\zeta)^{K,\kappa}$
is the full subcategory that consists of direct sums of certain simple objects in
$(\underline{\A},\zeta)\operatorname{-mod}^{K_Q,\kappa_Q}$. So to determine $\HC_{\Orb_K}(\A,\theta,\zeta)^{K,\kappa}$ we just need to understand which simple
objects in $(\underline{\A},\zeta)\operatorname{-mod}^{K_Q,\kappa_Q}$ lie in the image of $\HC_{\Orb_K}(\A,\theta,\zeta)^{K,\kappa}$. The following lemma is
a straightforward corollary of Proposition \ref{Prop:adj_properties}.

\begin{Lem}\label{Lem:image_description}
Let $N$ be a simple object in $(\underline{\A},\zeta)\operatorname{-mod}^{K_Q,\kappa_Q}$. Then the following claims
are equivalent:
\begin{enumerate}
\item $N$ lies in the image of $\bullet_{\dagger,\chi}$.
\item $N^{\dagger,\chi}\neq 0$.
\item $(N^{\dagger,\chi})_{\dagger,\chi}\xrightarrow{\sim} N$.
\end{enumerate}
\end{Lem}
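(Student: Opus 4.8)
The plan is to deduce Lemma \ref{Lem:image_description} directly from the adjunction $(\bullet_{\dagger,\chi}, \bullet^{\dagger,\chi})$ together with parts (1) and (2) of Proposition \ref{Prop:adj_properties}. Throughout, recall that $(\underline{\A},\zeta)\operatorname{-mod}^{K_Q,\kappa_Q}_{fin}$ is semisimple (by Corollary \ref{Cor:equiv_condition}), so every object is a direct sum of simples, and that by part (1) of Proposition \ref{Prop:adj_properties} the functor $\bullet_{\dagger,\chi}$ is a full embedding of $\HC_{\Orb_K}(\A,\theta,\zeta)^{K,\kappa}$ into this semisimple category whose image is closed under subquotients (part (2)). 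Hence the image of $\bullet_{\dagger,\chi}$ is exactly the full subcategory spanned by some subset of the simple objects, and ``$N$ lies in the image of $\bullet_{\dagger,\chi}$'' means precisely that $N\cong M_{\dagger,\chi}$ for some $M$.

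First I would prove (1)$\Rightarrow$(3). Suppose $N\cong M_{\dagger,\chi}$ for some $M\in \HC_{\Orb_K}(\A,\theta,\zeta)^{K,\kappa}$. The counit of the adjunction gives a morphism $\varepsilon\colon (M_{\dagger,\chi})^{\dagger,\chi}\to M$, and by the triangle identity the composite $M_{\dagger,\chi}\xrightarrow{\eta_{M_{\dagger,\chi}}} ((M_{\dagger,\chi})^{\dagger,\chi})_{\dagger,\chi}\xrightarrow{\varepsilon_{\dagger,\chi}} M_{\dagger,\chi}$ is the identity. By part (1) of Proposition \ref{Prop:adj_properties} applied to the HC module $(M_{\dagger,\chi})^{\dagger,\chi}$ (which lies in $\HC_{\overline\Orb_K}(\A,\theta,\zeta)^{K,\kappa}$), or more simply by functoriality of the adjunction unit $M\to (M_{\dagger,\chi})^{\dagger,\chi}$ whose kernel and cokernel are supported on $\partial\Orb_K$, we get that in the quotient category $\HC_{\Orb_K}$ the unit is an isomorphism; applying $\bullet_{\dagger,\chi}$ and using that it kills $\HC_{\partial\Orb_K}$, the map $\eta_{M_{\dagger,\chi}}\colon M_{\dagger,\chi}\to ((M_{\dagger,\chi})^{\dagger,\chi})_{\dagger,\chi}=(N^{\dagger,\chi})_{\dagger,\chi}$ is an isomorphism, which is (3) (note $N\cong M_{\dagger,\chi}$). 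The implication (3)$\Rightarrow$(2) is immediate: if $(N^{\dagger,\chi})_{\dagger,\chi}\cong N\neq 0$, then $N^{\dagger,\chi}\neq 0$.

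Finally, (2)$\Rightarrow$(1). Suppose $N^{\dagger,\chi}\neq 0$; it lies in $\HC_{\overline\Orb_K}(\A,\theta,\zeta)^{K,\kappa}$, hence its image $M$ in the Serre quotient $\HC_{\Orb_K}(\A,\theta,\zeta)^{K,\kappa}$ is defined. I claim $M\neq 0$ and $M_{\dagger,\chi}\cong N$. Indeed, the counit $(N^{\dagger,\chi})_{\dagger,\chi}\to N$ is, by adjunction and the fact that $N$ is simple, either zero or surjective; on the other hand the unit $N^{\dagger,\chi}\to ((N^{\dagger,\chi})_{\dagger,\chi})^{\dagger,\chi}$ composed appropriately via the triangle identity shows the counit cannot be zero unless $(N^{\dagger,\chi})_{\dagger,\chi}=0$. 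But $(N^{\dagger,\chi})_{\dagger,\chi}$ is computed by restricting the associated graded of $N^{\dagger,\chi}$ to $\Orb_K$ (cf. Section \ref{SS_Prop_restr}), and since $N^{\dagger,\chi}\neq 0$ has associated variety meeting $\overline\Orb_K$ with $\overline\Orb_K$ a component (by \cite[Theorem 1.3]{Vogan} applied to each simple constituent), it follows that $(N^{\dagger,\chi})_{\dagger,\chi}\neq 0$; combined with part (2) of Proposition \ref{Prop:adj_properties}, $(N^{\dagger,\chi})_{\dagger,\chi}$ is a nonzero subquotient-closed image object, and the surjection $(N^{\dagger,\chi})_{\dagger,\chi}\twoheadrightarrow N$ forces $N$ itself to be in the image of $\bullet_{\dagger,\chi}$. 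The main obstacle I anticipate is the bookkeeping in this last step: one must carefully track that passing to the Serre quotient does not lose information (the unit/counit being isomorphisms \emph{modulo} $\partial\Orb_K$ only), and that the simplicity of $N$ in the semisimple category $(\underline{\A},\zeta)\operatorname{-mod}^{K_Q,\kappa_Q}_{fin}$ genuinely upgrades the ``nonzero subquotient'' conclusion to ``$N$ is in the image''. Once the adjunction formalism and the support estimates from Proposition \ref{Prop:adj_properties} are in hand, each individual implication is short.
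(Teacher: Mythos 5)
Your overall route is the paper's route: the paper gives no separate argument for Lemma \ref{Lem:image_description}, simply calling it a straightforward corollary of Proposition \ref{Prop:adj_properties}, and your proof of (1)$\Rightarrow$(3)$\Rightarrow$(2) via the fact that $\bullet_{\dagger,\chi}$ kills modules supported on $\partial\Orb_K$, together with the adjunction triangle identity, is exactly the intended formal deduction (modulo the small notational slip that the map $M_{\dagger,\chi}\to ((M_{\dagger,\chi})^{\dagger,\chi})_{\dagger,\chi}$ is $(\eta_M)_{\dagger,\chi}$, not a unit evaluated at $M_{\dagger,\chi}$).

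The one step you should repair is in (2)$\Rightarrow$(1), where you try to show $(N^{\dagger,\chi})_{\dagger,\chi}\neq 0$ by asserting that $N^{\dagger,\chi}$ has $\overline{\Orb}_K$ as a component of its associated variety ``by \cite[Theorem 1.3]{Vogan} applied to each simple constituent.'' That justification does not work: Vogan's theorem says the associated variety of an irreducible module is the closure of a single orbit, but nothing a priori prevents every simple constituent of $N^{\dagger,\chi}$ from being supported on $\partial\Orb_K$, in which case $(N^{\dagger,\chi})_{\dagger,\chi}$ would vanish. Fortunately the appeal to associated varieties is unnecessary: the triangle identity $(\varepsilon_N)_{\dagger,\chi}^{\dagger,\chi}$-free form, namely $(\varepsilon_N)^{\dagger,\chi}\circ \eta_{N^{\dagger,\chi}}=\operatorname{id}_{N^{\dagger,\chi}}$, shows directly that if the counit $\varepsilon_N:(N^{\dagger,\chi})_{\dagger,\chi}\to N$ were zero then $N^{\dagger,\chi}=0$, contradicting (2). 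Hence $\varepsilon_N\neq 0$, it is surjective because $N$ is simple, and part (2) of Proposition \ref{Prop:adj_properties} (the image of $\bullet_{\dagger,\chi}$ is closed under subquotients) then places $N$ in the image, completing (2)$\Rightarrow$(1) purely formally.
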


We finish this section with two remarks.

\begin{Rem}\label{Rem:removing_zeta}
Note that $\zeta$ acts trivially on both $\A$ and $\underline{\A}=\C$. It follows that $\HC(\A,\theta,\zeta)$
decomposes as the direct sum of $d$ copies of $\HC(\A,\theta):=\HC(\A,\theta,1)$ (where $\zeta$
acts on an object in $i$th copy by $\exp(2\pi\sqrt{-1}i/d)$). The same is true of $(\underline{\A},\zeta)\operatorname{-mod}^{K_Q,\kappa_Q}$.
Therefore, it makes sense to define $M_{\dagger,\chi}$ for $M\in \HC(\A,\theta,1)$
and $N^{\dagger,\chi}$ for $N\in \underline{\A}\operatorname{-mod}^{K_Q,\kappa_Q}$.
Below we also write $\operatorname{Rep}(K_Q,\lambda|_{\kf_Q}-\kappa_Q)$ for $\underline{\A}\operatorname{-mod}^{K_Q,\kappa_Q}$.
\end{Rem}

\begin{Rem}\label{Rem:distinguished_filtration}
In fact, as in \cite[Section 3.4]{HC}, $\bullet^{\dagger,\chi}$ comes from
the functor $$\bullet^{\dagger,\chi}:
\underline{\A}_\hbar\operatorname{-mod}^{K_Q,\kappa_Q,gr}_{fin}
\rightarrow \wHC_{\overline{\Orb}_K}^{gr}(\A_\hbar,\theta)^{K,\kappa},$$
where in the source we have graded modules finitely generated over $\C[\hbar]$.
This shows that for $N\in \underline{\A}\operatorname{-mod}^{K_Q,\kappa_Q}_{fin}$,
the HC module $N^{\dagger,\chi}$ comes with a distinguished filtration: we equip
$N$ with the trivial filtration in degree $0$. Then $N^{\dagger,\chi}=(R_\hbar(N))^{\dagger,\chi}/(\hbar-1)$ inherits the filtration.
We always consider this filtration on $N^{\dagger,\chi}$.
\end{Rem}

\subsection{Restriction functor vs quantizations}
The purpose of this section is relate the restriction functor in Sections \ref{SS:HC_mod_restr}-\ref{subsec:W-algebra_nilpotent} to
quantizations of vector bundles on Lagrangian subvarieties studied in Section
\ref{S_qclass_quant}. This will ultimately give us tools to examine the
equivalent conditions of Lemma \ref{Lem:image_description}.

In this section, $\Orb$ is a nilpotent orbit in $\g^*$ and $\A$ is a filtered quantization
of $\C[\Orb]$. We assume that $K$ is connected.
We do not put any conditions on the codimension of boundary at this point. Let
$\Orb_K$ denote a $K$-orbit in $\Orb\cap (\g/\kf)^*$. The goal of this section
is to produce an equivalence between $\wHC^{gr}(\underline{\A}_\hbar,\theta)^{K_Q,\kappa_Q}$
and a category of microlocal sheaves on $\Orb_K$. The functor $\bullet_{\dagger,\chi}$
will become the microlocalization functor, while  $\bullet^{\dagger,\chi}$ that exists
under condition (\ref{eq:codim_condition_K}) will
become the global sections functor.

To relate  $\wHC^{gr}(\underline{\A}_\hbar,\theta)^{K_Q,\kappa_Q}$ to the category of microlocal sheaves
we will use jet bundles.

\subsubsection{Microlocal version}
In this section we will write $\hat{\A}_\hbar$ for the $\hbar$-adic completion of the
Rees algebra $\A_\hbar$ of $\A$. Let $\hat{\A}_\hbar^{\Orb}$ denote the microlocalization of
$\hat{\A}_\hbar$ to $\Orb$ viewed as an open subvariety in $\operatorname{Spec}(\C[\Orb])$.
This is a sheaf in Zariski topology.

We can talk about  weakly HC $(\hat{\A}_\hbar^\Orb,\theta)$-modules, these are sheaves in the Zariski
topology on $\Orb$ defined similarly to Definition \ref{defi:wHC}.
Denote the corresponding category by $\wHC(\hat{\A}_\hbar^\Orb,\theta)$. Note that we have
the microlocalization functor from the category $\wHC_{\overline{\Orb}_K}(\hat{\A}_\hbar,\theta)$
to the subcategory $\wHC_{\Orb_K}(\hat{\A}^\Orb_\hbar,\theta)$ of all weakly HC $\hat{\A}^\Orb_\hbar$-
modules supported (set theoretically) on $\Orb_K$.

\begin{Lem}\label{Lem:global_sections}
If (\ref{eq:codim_condition_K}) holds, then we have the right
adjoint functor $\wHC_{\Orb_K}(\hat{\A}^\Orb_\hbar,\theta)\rightarrow
\wHC_{\overline{\Orb}_K}(\hat{\A}_\hbar,\theta)$ given by taking the global sections.
\end{Lem}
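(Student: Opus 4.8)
The plan is to take the right adjoint to be $\mathcal M_\hbar\mapsto\Gamma(\Orb,\mathcal M_\hbar)$ and to verify that it lands in $\wHC_{\overline\Orb_K}(\hat\A_\hbar,\theta)$; the only non-formal point will be finite generation over $\hat\A_\hbar$. For $\mathcal M_\hbar\in\wHC_{\Orb_K}(\hat\A^\Orb_\hbar,\theta)$, the space $\Gamma(\Orb,\mathcal M_\hbar)$ is a module over $\Gamma(\Orb,\hat\A^\Orb_\hbar)$, hence over $\hat\A_\hbar$ via the natural map $\hat\A_\hbar\to\Gamma(\Orb,\hat\A^\Orb_\hbar)$; the Lie algebra action of $\J_\hbar$ and the compatibility with $\theta$ are inherited section-wise. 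Its associated variety lies in $\overline\Orb_K$ because $\mathcal M_\hbar$ is set-theoretically supported on $\Orb_K$, which is closed in $\Orb$ (it is an irreducible component of the Lagrangian $\Orb\cap\kf^\perp$). Thus, once finite generation is known, $\Gamma(\Orb,-)$ is a well-defined functor into $\wHC_{\overline\Orb_K}(\hat\A_\hbar,\theta)$.

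For the adjunction I would invoke the standard $(j^*\dashv j_*)$ adjunction for the open embedding $j:\Orb\hookrightarrow\operatorname{Spec}\C[\Orb]$ at the level of (sheaves of) modules: the microlocalization functor is the $j^*$-type functor, while $\Gamma(\Orb,-)$ is $\Gamma$ composed with $j_*$. The unit is the canonical map from $N_\hbar$ to the global sections of its microlocalization, and the counit is the canonical map from the microlocalization of $\Gamma(\Orb,\mathcal M_\hbar)$ to $\mathcal M_\hbar$; the triangle identities follow from those of $(j^*,j_*)$, and the fact that the unit and counit are morphisms of weakly HC modules is immediate, since the $\J_\hbar$-action and $\theta$ are given by the same formulas on all terms. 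This is the microlocal analogue of the construction of $\bullet^{\dagger,\chi}$ recalled in Section \ref{SS_Prop_restr} (cf. \cite[Section 3.3]{HC}).

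The heart of the matter, and the main obstacle, is finite generation of $\Gamma(\Orb,\mathcal M_\hbar)$ over $\hat\A_\hbar$. Since $\mathcal M_\hbar$ is $\hbar$-adically complete, by the topological Nakayama lemma it is enough to see that $\Gamma(\Orb,\mathcal M_\hbar/\hbar\mathcal M_\hbar)$ is finitely generated over $\C[\Orb]=\hat\A_\hbar/\hbar\hat\A_\hbar$ (with the $\hbar$-torsion in $\Gamma(\Orb,\mathcal M_\hbar)$ controlled by applying the same estimate along the $\hbar$-adic filtration of $\mathcal M_\hbar$). Now $\mathcal M_\hbar/\hbar\mathcal M_\hbar$ is a coherent $\mathcal O_\Orb$-module set-theoretically supported on $\Orb_K$; arguing as in the proof of Lemma \ref{Lem:HC_bijection2}, the anti-involution $\theta$ forces it to be scheme-theoretically supported on the smooth Lagrangian $\Orb\cap\kf^\perp$, hence supported on the closed smooth component $\Orb_K$, on which it is coherent. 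Using $\operatorname{codim}_{\overline\Orb_K}\partial\Orb_K\geqslant 2$, i.e. (\ref{eq:codim_condition_K}), together with the affineness of $\overline\Orb_K$, the extension-across-codimension-two arguments of that proof (see also \cite[Section 2.5]{HC_symp}) give that $\Gamma(\Orb,\mathcal M_\hbar/\hbar\mathcal M_\hbar)$ is finitely generated over $\C[\overline\Orb_K]$, hence over $\C[\Orb]$, as needed. The two delicate points — transporting the codimension-two statement from $\overline\Orb_K$ up to $\overline\Orb$, and passing from the mod-$\hbar$ bound back to $\mathcal M_\hbar$ itself when $\mathcal M_\hbar$ is not assumed flat over $\C[\hbar]$ — are precisely where the references above are invoked; everything else is formal.
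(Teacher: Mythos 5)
Your route is the same as the paper's: the adjunction itself is treated as formal, and the whole content is the claim that $\Gamma(M^0_\hbar)$ is a finitely generated $\hat{\A}_\hbar$-module, which the paper establishes exactly as you propose -- $\Gamma(M^0_\hbar)$ is automatically $\hbar$-adically complete and separated, so one reduces to modules killed by $\hbar$, and then invokes (\ref{eq:codim_condition_K}) for the restriction to $\Orb_K$.

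There is, however, one step in your finiteness argument that is too weak as written. In the $\hbar$-killed case you only record that the sheaf is coherent and scheme-theoretically supported on the smooth orbit $\Orb_K$, and then appeal to the codimension-two extension. Coherence alone does not give this: the structure sheaf of a punctured curve inside $\Orb_K$ whose closure meets $\partial\Orb_K$ has sections that are not finitely generated over $\C[\overline{\Orb}_K]$, even though $\operatorname{codim}_{\overline{\Orb}_K}\partial\Orb_K\geqslant 2$. What makes the extension work, both here and in Lemma \ref{Lem:HC_bijection2}, is that the restriction to $\Orb_K$ is \emph{locally free}. In Lemma \ref{Lem:HC_bijection2} this comes from $K$-equivariance (a homogeneous coherent sheaf on a single orbit is a vector bundle), but the present lemma is stated in the non-equivariant category, so you cannot import that step verbatim; instead you should extract local freeness from the weakly HC structure itself: when $\hbar$ kills $M^0_\hbar$, condition (i) of Definition \ref{defi:wHC} makes the ordinary action of the relevant ideal zero (scheme-theoretic support on $\Orb_K$), and the Lie algebra action of $\mathfrak{J}_\hbar$ descends to an action of the Picard algebroid $\TT^+(\hat{\A}^\Orb_\hbar,\Orb_K)$, i.e.\ $M^0_\hbar$ becomes a twisted local system on $\Orb_K$ and hence a vector bundle, cf.\ \cite[Lemma 2.3.1(i)]{BB}. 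With local freeness in hand (and, if $\overline{\Orb}_K$ is not normal, a passage to its normalization, which is finite over $\C[\overline{\Orb}_K]$), the codimension-two argument does yield finite generation, and the rest of your proof -- the completeness/Nakayama reduction and the formal $(j^*,j_*)$ adjunction -- goes through.
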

\begin{proof}
Let $M_\hbar^0\in \wHC_{\Orb_K}(\hat{\A}^\Orb_\hbar,\theta)$. We need to show that
\begin{itemize}
\item[(*)]
$\Gamma(M_\hbar^0)$ is a finitely generated $\hat{\A}_\hbar$-module.
\end{itemize}
Note that $\Gamma(M_\hbar^0)$ is automatically
complete and separated in the $\hbar$-adic topology because $M_\hbar^0$ is. So it is
enough to prove (*) when $M_\hbar^0$ is killed by $\hbar$.
Note that $M_\hbar^0/\hbar M_{\hbar}^0$
is supported on $\Orb_K$ scheme theoretically, this follows from (i)
of Definition \ref{defi:wHC}.
So $M_\hbar^0$
is a $K$-equivariant coherent sheaf on $\Orb_K$, hence a vector bundle. Since $\operatorname{codim}_{\overline{\Orb}_K}\partial\Orb_K\geqslant 2$, the global sections
$\Gamma(M^0_\hbar)$ is a finitely generated module over $\C[\overline{\Orb}_K]$. This finishes
the proof.
\end{proof}

Similarly, we can talk about the category $\wHC(\hat{\A}_\hbar^\Orb,\theta)^{K,\kappa}$.
Our goal is to  establish an equivalence $\wHC_{\Orb_K}(\hat{\A}_\hbar^{\Orb},\theta)^{K,\kappa}
\xrightarrow{\sim} \wHC(\underline{\hat{\A}}_\hbar,\theta)^{K_Q,\kappa_Q}$ that intertwines
$\bullet_{\dagger,\chi}$ with the microlocalization functor (and
$\bullet^{\dagger,\chi}$ with the global section functor).

To accomplish this, we will need (quantum) jet bundles.

\subsubsection{Jet bundles}
We can form the jet bundle of $\hat{\A}_\hbar^{\Orb}$ to be denoted
by $\jet \hat{\A}_\hbar^{\Orb}$. This is pro-vector bundle on $\Orb$ whose fiber at $x\in \Orb$
is the completion $\A_{\hbar}^{\wedge_x}$. It comes equipped with a flat connection to be
denoted by $\nabla$. We recover $\hat{\A}_{\hbar}^{\Orb}$ as the subalgebra of flat sections
$(\jet \A_\hbar^{\Orb})^\nabla$. Compare to \cite[Section 3.2]{BK},\cite[Section 3.2]{1dim_quant},\cite[Section 2.6]{HC_symp}.

Now let $M_\hbar^0\in \wHC_{\Orb_K}(\hat{\A}_{\hbar}^{\Orb},\theta)$.
The algebra $\A_\hbar^{\wedge_\chi}$ is identified with the formal Weyl algebra of the symplectic vector space $T_\chi\Orb\cong T_\chi\Orb_K\oplus T^*_\chi\Orb_K$
so that the involution $\theta$ acts by $1$ on $T^*_\chi\Orb_K$
and by $-1$ on $T_\chi \Orb_K$. By Lemma
\ref{Lem:wHC_formal_Weyl} we have
\begin{equation}\label{eq:wHC_decomp}
M_\hbar^{0\wedge_\chi}\cong \C[[T_\chi\Orb_K,\hbar]]\otimes_{\C[[\hbar]]}N_\hbar.
\end{equation}

%

We want to define the jet bundle of $M_\hbar^0$.  First,
consider the  sheaf of algebras $\hat{\A}_\hbar^{\Orb}\widehat{\boxtimes} \Str_{\Orb_K}$ on $\Orb\times \Orb_K$ (the completed outer tensor product).
Let $\mathcal{J}_\Delta$ denote the ideal sheaf of the diagonal in $\Str_{\Orb}\boxtimes
\Str_{\Orb_K}$ and let $\mathcal{J}_{\Delta,\hbar}$ denote its preimage in
$\hat{\A}_\hbar^{\Orb}\widehat{\boxtimes} \Str_{\Orb_K}$. Let $\jet_{\Orb_K} \hat{\A}_\hbar$
denote the completion of  $\hat{\A}_\hbar^{\Orb}\widehat{\boxtimes} \Str_{\Orb_K}$ with respect to
this ideal. It is nothing else but the pullback of the quantum jet bundle $\jet \hat{\A}_\hbar^{\Orb}$
to $\Orb_K$. It is a $K$-equivariant pro-vector bundle of algebras on $\Orb_K$ that comes with a flat
connection. The fiber at $\chi$ is $\A_\hbar^{\wedge_\chi}$.

We can use a similar construction to define the jet bundle $\jet_{\Orb_K} M_\hbar^0$.
This is a $\jet_{\Orb_K} \hat{\A}_\hbar$-module that is equipped with a compatible flat connection.
Note that the sheaf $\jet_{\Orb_K} \hat{\A}_\hbar$ comes with an $\Str_{\Orb_K}$-linear
involution induced by the anti-involution $\theta\boxtimes \operatorname{id}$
on $\hat{\A}_\hbar^{\Orb}\boxtimes \Str_{\Orb_K}$. It will also be denoted by $\theta$. Using this
involution, we can define the notion of a weakly HC $(\jet_{\Orb_K} \hat{\A}_\hbar,\theta)$-module
similarly to what was done in \cite[Section 2.6]{HC_symp} for HC bimodules.

We now explain the details of the definition.
Define $\mathfrak{J}_\hbar\subset \jet_{\Orb_K} \hat{\A}_\hbar$ to be the preimage of the
ideal in $\jet_{\Orb_K} \mathcal{O}_\Orb$ generated by $(\jet_{\Orb_K} \mathcal{O}_\Orb)^{-\theta}$.

Consider locally finitely generated
$\jet_{\Orb_K} \hat{\A}_\hbar$-modules $\mathfrak{M}^0_\hbar$ that come equipped with a compatible
flat connection $\nabla$ and the Lie algebra action of $\mathfrak{J}_\hbar$ subject
to the following conditions:

\begin{itemize}
\item[(i)] The action map $\mathfrak{J}_\hbar\otimes \mathfrak{M}^0_\hbar\rightarrow
\mathfrak{M}^0_\hbar$ is flat (i.e. is compatible with the flat connection).
\item[(ii)] $\mathfrak{M}^0_\hbar$ is complete and separated in the $\hbar$-adic topology.
\item[(iii)] The successive quotients $\hbar^{k-1}\mathfrak{M}^0_\hbar/\hbar^k \mathfrak{M}^0_\hbar$
are the jet bundles of some vector bundles on $\Orb_K$.
\end{itemize}

Let $\wHC(\jet_{\Orb_K} \hat{\A}_\hbar,\theta)$ denote the category
of such modules $\mathfrak{M}^0_\hbar$. The morphisms are $\jet_{\Orb_K} \hat{\A}_\hbar$-
and $\mathfrak{J}_\hbar$-linear maps that respect the flat connections.

Then we have an exact functor
$$\jet: \wHC_{\Orb_K}(\hat{\A}_\hbar^{\Orb},\theta) \to \wHC(\jet_{\Orb_K} \hat{\A}_\hbar,\theta)$$
of taking the jet bundle.

\begin{Lem}\label{Lem:HC_jet}
The following claims are true:
\begin{enumerate}
\item
For $\mathfrak{M}^0_\hbar\in \wHC(\jet_{\Orb_K} \hat{\A}_\hbar,\theta)$, its submodule of flat sections, $(\mathfrak{M}^0_\hbar)^\nabla$, lies in
$\wHC_{\Orb_K}(\hat{\A}_\hbar^{\Orb},\theta)$.
\item  Moreover,
$$\bullet^\nabla: \wHC(\jet_{\Orb_K} \hat{\A}_\hbar,\theta)\rightarrow
\wHC_{\Orb_K}(\hat{\A}_\hbar^{\Orb},\theta) $$ is quasi-inverse to $\jet$.
\end{enumerate}
\end{Lem}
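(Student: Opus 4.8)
The statement is a "jet bundle with flat connection $\leftrightarrow$ its flat sections" equivalence in the weakly HC setting, and it is the exact analogue of the statement for quantizations of symplectic varieties (recovering $\hat{\A}_\hbar^{\Orb}$ as $(\jet\hat{\A}_\hbar^{\Orb})^{\nabla}$) and of \cite[Section 2.6]{HC_symp} for HC bimodules. So the plan is to reduce everything to two standard facts: (a) on a smooth variety, the category of pro-coherent sheaves equipped with a flat connection is equivalent, via $\mathcal{F}\mapsto \mathcal{F}^{\nabla}$ and $\mathcal{G}\mapsto \jet\mathcal{G} := \jet\mathcal{O}\otimes_{\mathcal{O}}\mathcal{G}$, to the category of pro-coherent sheaves over the subsheaf of flat sections of the jet algebra — here applied to $\Orb_K$ with the algebra $\jet_{\Orb_K}\hat{\A}_\hbar$ whose flat sections are $\hat{\A}_\hbar^{\Orb}|_{\Orb_K}$, in the sense that the microlocalization $\hat{\A}_\hbar^{\Orb}$ restricted to (a formal/Zariski neighborhood of) $\Orb_K$ is recovered as flat sections; and (b) the structures that cut out the subcategory $\wHC$ — the $\theta$-involution, the Lie algebra action of $\J_\hbar$ (resp. $\mathfrak{J}_\hbar$), the flatness condition (i), $\hbar$-completeness (ii), and the jet-bundle condition (iii) on associated graded pieces — all match up under this equivalence. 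I would organize the write-up so that (a) is invoked as known (citing \cite[Section 3.2]{BK}, \cite[Section 3.2]{1dim_quant}, \cite[Section 2.6]{HC_symp}) and (b) is the content to be checked.

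First I would recall that $\jet$ and $\bullet^{\nabla}$ are adjoint (in fact mutually quasi-inverse) as functors between pro-coherent sheaves on $\Orb_K$ with flat connection and pro-coherent $\hat{\A}_\hbar^{\Orb}$-modules supported on $\Orb_K$; the unit $M_\hbar^0 \to (\jet_{\Orb_K} M_\hbar^0)^{\nabla}$ and counit $\jet_{\Orb_K}((\mathfrak{M}_\hbar^0)^{\nabla}) \to \mathfrak{M}_\hbar^0$ are isomorphisms by the formal Poincaré lemma / flatness of $\jet\mathcal{O}$ over $\mathcal{O}$, exactly as in the algebra case. Then I would check that these functors preserve the extra weakly-HC data: the $\theta$ on $\jet_{\Orb_K}\hat{\A}_\hbar$ is by construction $\theta\boxtimes\mathrm{id}$ completed, so it corresponds under flat sections to the $\theta$ on $\hat{\A}_\hbar^{\Orb}$; the ideal $\mathfrak{J}_\hbar$ was defined precisely as the preimage of the ideal generated by $(\jet_{\Orb_K}\mathcal{O}_{\Orb})^{-\theta}$, so its flat sections are $\J_\hbar$ for $\hat{\A}_\hbar^{\Orb}$, and a flat Lie-algebra action of $\mathfrak{J}_\hbar$ on $\mathfrak{M}_\hbar^0$ is the same datum as a Lie-algebra action of $\J_\hbar$ on $(\mathfrak{M}_\hbar^0)^{\nabla}$ satisfying Definition \ref{defi:wHC}(i)--(iii) — here one uses that a $\nabla$-compatible map out of a jet module is determined by its restriction to flat sections. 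Conditions (ii) and (iii) transparently match: $\hbar$-completeness is preserved by flat sections and by $\jet$, and the associated graded pieces $\hbar^{k-1}\mathfrak{M}_\hbar^0/\hbar^k\mathfrak{M}_\hbar^0$ being jet bundles of vector bundles on $\Orb_K$ is equivalent, after taking $\bullet^{\nabla}$, to the corresponding quotients of $(\mathfrak{M}_\hbar^0)^{\nabla}$ being (restrictions to $\Orb_K$ of) $K$-equivariant coherent sheaves supported on $\Orb_K$, i.e. vector bundles, which is condition (i) of Definition \ref{defi:wHC} for an object of $\wHC_{\Orb_K}(\hat{\A}_\hbar^{\Orb},\theta)$.

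Concretely, for part (1) I would take $\mathfrak{M}_\hbar^0 \in \wHC(\jet_{\Orb_K}\hat{\A}_\hbar,\theta)$, set $M_\hbar^0 := (\mathfrak{M}_\hbar^0)^{\nabla}$, note it is an $\hat{\A}_\hbar^{\Orb}$-module supported on $\Orb_K$ by construction, observe that $\hbar$-completeness passes to flat sections, that the $\J_\hbar$-action is obtained by restricting the $\mathfrak{J}_\hbar$-action to flat sections (using condition (i), flatness of the action map, so that $\J_\hbar \cdot M_\hbar^0 \subset M_\hbar^0$), and that Definition \ref{defi:wHC}(i)--(iii) for $M_\hbar^0$ follow from (i)--(iii) for $\mathfrak{M}_\hbar^0$; finite generation over $\hat{\A}_\hbar^{\Orb}$ follows from (iii) exactly as finite generation was deduced from the vector-bundle condition in the proof of Lemma \ref{Lem:global_sections}. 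For part (2) I would verify the unit and counit of the $(\jet, \bullet^{\nabla})$-adjunction are isomorphisms on these subcategories — this is immediate from the algebra-level statement since both the module and all the extra structures are recovered from flat sections. I expect the main (really the only) obstacle is purely bookkeeping: making precise that "flat Lie-algebra action of $\mathfrak{J}_\hbar$" on the jet side unwinds to exactly Definition \ref{defi:wHC}'s axioms for $\J_\hbar$ on the flat-sections side — one must check the Leibniz-type identity (iii) of Definition \ref{defi:wHC} is equivalent to $\nabla$-compatibility of the $\mathfrak{J}_\hbar$-action, which is where the flat connection and the bracket $[\cdot,\cdot]_\hbar$ interact; this is the analogue of the corresponding verification in \cite[Section 2.6]{HC_symp} and I would simply follow that argument, indicating that fiberwise at $\chi$ it is the decomposition \eqref{eq:wHC_decomp} combined with Lemma \ref{Lem:wHC_formal_Weyl}.
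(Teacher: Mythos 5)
Your proposal is correct and follows essentially the same route as the paper: for (1) the paper likewise reduces, using the $\hbar$-completeness (ii), to the $\hbar$-annihilated case, where (iii) says the module is the jet bundle of a vector bundle whose flat sections recover it (hence local finite generation), and for (2) it simply invokes the analogous argument from \cite[Lemma 2.16]{HC_symp}, which is the unit/counit verification you describe. Your extra bookkeeping about $\theta$, $\mathfrak{J}_\hbar$ versus $\J_\hbar$, and the Leibniz compatibility is exactly what the paper leaves implicit by construction, so there is no substantive difference.
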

\begin{proof}
For (1) we just need to show that $(\mathfrak{M}^0_\hbar)^\nabla$ is locally finitely
generated. Thanks to conditions (ii), it is enough to do this when
$\mathfrak{M}^0_\hbar$ is annihilated by $\hbar$. In this case, by (iii), it is the jet bundle of a vector
bundle and the flat sections recover the vector bundle.

(2) is proved as \cite[Lemma 2.16]{HC_symp}.
\end{proof}

%

We can also define the graded analog $\wHC^{gr}(\jet_{\Orb_K} \hat{\A}_\hbar,\theta)$:
the objects $\mathfrak{M}^0_\hbar$ come with a pro-rational action of $\C^\times$ compatible
with the action on $\jet_{\Orb_K} \hat{\A}_\hbar$ and such that the connection is
$\C^\times$-invariant. A direct analog of Lemma \ref{Lem:HC_jet} holds.

\subsubsection{Equivariant version}\label{SSS_equiv_jets}
Note that the $G$-action on $\hat{\A}_\hbar^{\Orb}$ is Hamiltonian in the sense of
Definition \ref{defi:Hamilt_action_quantization}.
We now discuss the equivariance for weakly HC modules: we want to define a jet counterpart of $\wHC(\hat{\A}_\hbar^\Orb,\theta)^{K,\kappa}$, the category of $(K,\kappa)$-equivariant
objects in $\wHC(\hat{\A}_\hbar^\Orb,\theta)$, defined as in Definition \ref{defi:equiv_quant_modules}. Note that $K$ naturally acts on $\jet_{\Orb_K}\hat{\A}_\hbar^\Orb$.
For $\xi\in \kf$, we write $\xi_{\jet\A}$ for the induced derivation of
$\jet_{\Orb_K}\hat{\A}_\hbar$, $\xi_{\Orb_K}$ for the vector field on $\Orb_K$
induced by $\xi$, and $\xi_{\A}$ for the image of $\xi$ in $\A_\hbar$. Then we have
\begin{equation}\label{eq:deriv_equation}
\xi_{\jet\A}=\nabla_{\xi_{\Orb_K}}+\hbar^{-2}[\xi_{\A},\cdot].
\end{equation}
Compare to \cite[(3.1)]{1dim_quant}.

\begin{defi}\label{defi:wHC_jet_equiv}
By a {\it $(K,\kappa)$-equivariant weakly HC} $\jet_{\Orb_K}\hat{\A}_\hbar$-module
we mean a weakly $K$-equivariant object $\mathfrak{M}^0_\hbar\in \wHC(\jet_{\Orb_K}\hat{\A}_\hbar,\theta)$
satisfying the following condition:
\begin{equation}\label{eq:jet_equiv}
\xi_{\mathfrak{M}^0_\hbar}m=\nabla_{\xi_{\Orb_K}}m+\xi_{\A}.m-\langle\kappa,\xi\rangle m
\end{equation}
for all local sections $m$. The category of $(K,\kappa)$-equivariant weakly HC
$\jet_{\Orb_K}\hat{\A}_\hbar$-modules will be denoted by
$\wHC(\jet_{\Orb_K}\hat{\A}_\hbar,\Orb)^{K,\kappa}$.
\end{defi}

With this definition, the following is a straightforward consequence of
Lemma \ref{Lem:HC_jet}.

\begin{Lem}\label{Lem:HC_jet_equiv}
The functor $\jet$ is an equivalence
$$\wHC(\hat{\A}_\hbar^{\Orb},\theta)^{K,\kappa}\xrightarrow{\sim} \wHC(\jet_{\Orb_K}\hat{\A}_\hbar,\theta)^{K,\kappa}.$$
(with quasi-inverse $\bullet^{\nabla}$).
\end{Lem}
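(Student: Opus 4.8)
The plan is to deduce Lemma \ref{Lem:HC_jet_equiv} from the non-equivariant equivalence of Lemma \ref{Lem:HC_jet} by carefully tracking the $K$-action through the functors $\jet$ and $\bullet^\nabla$. The content of Lemma \ref{Lem:HC_jet} already gives that $\jet$ and $\bullet^\nabla$ are mutually quasi-inverse equivalences between $\wHC_{\Orb_K}(\hat{\A}_\hbar^\Orb,\theta)$ and $\wHC(\jet_{\Orb_K}\hat{\A}_\hbar,\theta)$, both exact; so the only thing to check is that these functors match up the $(K,\kappa)$-equivariant objects on the two sides, i.e.\ that a weak $K$-equivariant structure satisfying the infinitesimal compatibility on one side corresponds to one on the other.

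First I would observe that the $K$-action on $\hat{\A}_\hbar^\Orb$ is Hamiltonian (Definition \ref{defi:Hamilt_action_quantization}) and that $K$ acts naturally on the pullback jet algebra $\jet_{\Orb_K}\hat{\A}_\hbar$ compatibly with the flat connection $\nabla$; the key identity relating the two infinitesimal actions is \eqref{eq:deriv_equation}, $\xi_{\jet\A}=\nabla_{\xi_{\Orb_K}}+\hbar^{-2}[\xi_\A,\cdot]$. Given $M_\hbar^0\in \wHC_{\Orb_K}(\hat{\A}_\hbar^\Orb,\theta)^{K,\kappa}$, the jet module $\jet_{\Orb_K}M_\hbar^0$ acquires a $K$-action by functoriality of the jet construction (the jet bundle of a $K$-equivariant sheaf is $K$-equivariant, and the construction via the diagonal ideal in $\hat{\A}_\hbar^\Orb\widehat\boxtimes\OO_{\Orb_K}$ is $K$-natural). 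I would then verify that the defining relation \eqref{eq:jet_equiv}, $\xi_{\mathfrak{M}^0_\hbar}m=\nabla_{\xi_{\Orb_K}}m+\xi_\A.m-\langle\kappa,\xi\rangle m$, holds; this follows by differentiating the $K$-action on $\jet_{\Orb_K}M_\hbar^0$, using \eqref{eq:deriv_equation} for the algebra part together with the equivariance relation $\Phi(\xi).m-\langle\kappa,\xi\rangle=\xi_{M_\hbar^0}m$ that comes from $M_\hbar^0$ being $(K,\kappa)$-equivariant in the sense of Definition \ref{defi:equiv_quant_modules}, plus the fact that $\hbar^{-2}[\Phi(\xi),\cdot]$ is the infinitesimal action of $\xi$ on the algebra (quantum comoment map). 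The verification that the jet connection on $\jet_{\Orb_K}M_\hbar^0$ and the Lie algebra action of $\mathfrak{J}_\hbar$ are $K$-equivariant is routine from the naturality of all constructions. Conversely, given $\mathfrak{M}^0_\hbar\in \wHC(\jet_{\Orb_K}\hat{\A}_\hbar,\theta)^{K,\kappa}$, the sheaf of flat sections $(\mathfrak{M}^0_\hbar)^\nabla$ inherits a $K$-action since $K$ commutes with $\nabla$, and reversing the computation above — reading \eqref{eq:jet_equiv} on flat sections, where $\nabla_{\xi_{\Orb_K}}$ acts and $\xi_\A.$ is the microlocal action — recovers the relation defining a $(K,\kappa)$-equivariant object of $\wHC(\hat{\A}_\hbar^\Orb,\theta)^{K,\kappa}$. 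Since $\jet$ and $\bullet^\nabla$ are already inverse equivalences on the non-equivariant categories and they preserve the extra $K$-structure and the compatibility relation in both directions, they restrict to inverse equivalences on the equivariant subcategories, which is the assertion.

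The main obstacle I expect is purely bookkeeping: getting the signs and the powers of $\hbar$ in \eqref{eq:deriv_equation} and in the comoment map normalization to line up so that \eqref{eq:jet_equiv} comes out exactly, and making sure that ``weakly $K$-equivariant'' is used consistently (the $K$-action need not be compatible with $\theta$ beyond what is inherited, and one must be careful that differentiating a pro-rational $K$-action on a pro-vector bundle behaves as expected). There is also a minor point that $\Orb_K$ and $\Orb$ may be disconnected or that $K$-equivariant coherent sheaves on $\Orb_K$ are vector bundles — but this was already handled in the proof of Lemma \ref{Lem:HC_jet}, so I would simply cite it. Beyond that, everything is a direct translation of \cite[Section 2.6]{HC_symp} from the bimodule setting to the present one, exactly as the text already indicates.
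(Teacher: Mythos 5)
Your argument is exactly the paper's: the statement is recorded there as a straightforward consequence of Lemma \ref{Lem:HC_jet}, since Definition \ref{defi:wHC_jet_equiv} (via \eqref{eq:jet_equiv} and \eqref{eq:deriv_equation}) is set up precisely so that $\jet$ and $\bullet^{\nabla}$ carry the $(K,\kappa)$-equivariant structures back and forth. Your write-up just makes this transfer of equivariance explicit, which is fine.
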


\subsubsection{Restriction to a point}\label{SSS_point_restriction}
Our goal here is to produce an equivalence
$$\wHC(\jet_{\Orb_K}\A_\hbar,\theta)^{K,\kappa}\xrightarrow{\sim} \wHC(\underline{\A}_\hbar,\theta)^{K_Q,\kappa_Q}.$$
First, we have the functor
\begin{equation}\label{eq:restr_to_pt}\wHC(\jet_{\Orb_K}\hat{\A}_\hbar,\Orb)^{K,\kappa}\rightarrow
\wHC(\A_\hbar^{\wedge_\chi},\theta)^{K_Q,\kappa|_{\kf_Q}}
\end{equation}
of restriction to $\chi$.

\begin{Lem}\label{Lem:restr_pt_equiv}
The functor (\ref{eq:restr_to_pt})  is a category equivalence.
\end{Lem}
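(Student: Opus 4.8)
The plan is to recognize this as homogeneous descent along $\Orb_K\cong K/K_Q$. Recall the standard equivalence: taking the fiber at $\chi$ identifies the category of $K$-equivariant pro-coherent sheaves on $K/K_Q$ with the category of pro-rational $K_Q$-modules, the quasi-inverse being $V\mapsto K\times_{K_Q}V$, and this is compatible with $K$-equivariant sheaves of algebras and with modules over them. Applying it to the $K$-equivariant pro-vector bundle of algebras $\jet_{\Orb_K}\hat{\A}_\hbar$, whose fiber at $\chi$ is $\A_\hbar^{\wedge_\chi}$, gives $\jet_{\Orb_K}\hat{\A}_\hbar\cong K\times_{K_Q}\A_\hbar^{\wedge_\chi}$, and $\mathfrak{J}_\hbar$ corresponds on the fiber to the Lie ideal $\hbar\A_\hbar^{\wedge_\chi}+\A_\hbar^{\wedge_\chi}(\A_\hbar^{\wedge_\chi})^{-\theta}$, i.e.\ the ideal governing weakly HC $(\A_\hbar^{\wedge_\chi},\theta)$-modules in the sense of Definition \ref{defi:wHC}. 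The key point is that the flat connection on $\jet_{\Orb_K}\hat{\A}_\hbar$ carries no extra information: since the $K$-action on $\Orb_K$ is transitive, the vector fields $\xi_{\Orb_K}$ ($\xi\in\kf$) span $\T_{\Orb_K}$ at every point, and by \eqref{eq:deriv_equation} we have $\nabla_{\xi_{\Orb_K}}=\xi_{\jet\A}-\hbar^{-2}[\xi_\A,\cdot\,]$, so $\nabla$ is recovered from the $K$-equivariant structure together with the algebra structure.

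With this in hand, the functor \eqref{eq:restr_to_pt} is $\mathfrak{M}^0_\hbar\mapsto N_\hbar$, its fiber at $\chi$. This $N_\hbar$ is a $K_Q$-equivariant $\A_\hbar^{\wedge_\chi}$-module; restricting the $\mathfrak{J}_\hbar$-action to the fiber endows $N_\hbar$ with a Lie-algebra action of $\hbar\A_\hbar^{\wedge_\chi}+\A_\hbar^{\wedge_\chi}(\A_\hbar^{\wedge_\chi})^{-\theta}$ obeying the axioms of Definition \ref{defi:wHC}; and conditions (ii)--(iii) for $\mathfrak{M}^0_\hbar$ translate into $\hbar$-adic completeness and finite generation of $N_\hbar$ over $\A_\hbar^{\wedge_\chi}$ (clause (iii), that the successive quotients are jet bundles of vector bundles, becoming finiteness of the fibers $\hbar^{k-1}N_\hbar/\hbar^k N_\hbar$), so $N_\hbar\in\wHC(\A_\hbar^{\wedge_\chi},\theta)$. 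Evaluating the equivariance constraint \eqref{eq:jet_equiv} at $\chi$, where $\xi_{\Orb_K}$ vanishes for $\xi\in\kf_Q$, yields $\xi_{N_\hbar}m=\xi_\A.m-\langle\kappa,\xi\rangle m$, i.e.\ $N_\hbar\in\wHC(\A_\hbar^{\wedge_\chi},\theta)^{K_Q,\kappa|_{\kf_Q}}$.

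For the quasi-inverse, given $N_\hbar\in\wHC(\A_\hbar^{\wedge_\chi},\theta)^{K_Q,\kappa|_{\kf_Q}}$ we form the $K$-equivariant $\jet_{\Orb_K}\hat{\A}_\hbar$-module $\mathfrak{M}^0_\hbar:=K\times_{K_Q}N_\hbar$ with its inherited $\mathfrak{J}_\hbar$-action, and \emph{define} its connection by $\nabla_{\xi_{\Orb_K}}:=\xi_{\mathfrak{M}^0_\hbar}-\xi_\A.(\cdot)+\langle\kappa,\xi\rangle$ for $\xi\in\kf$. Independence of the chosen lift $\xi$ of a given tangent vector reduces, at $\chi$, to the vanishing of $\xi_{N_\hbar}-\xi_\A.(\cdot)+\langle\kappa,\xi\rangle$ for $\xi\in\kf_Q$, which is exactly the $(K_Q,\kappa|_{\kf_Q})$-equivariance of $N_\hbar$, and then propagates over $\Orb_K$ by $K$-translation; that $\nabla$ is a bona fide flat connection compatible with the algebra action and the $\mathfrak{J}_\hbar$-action follows from the Lie-algebra-action axioms of Definition \ref{defi:wHC}, the identity \eqref{eq:deriv_equation}, and the fact that $\Phi_\A$ is a quantum comoment map (so that $[\xi_{\Orb_K},\eta_{\Orb_K}]=-[\xi,\eta]_{\Orb_K}$ is matched by the corresponding relation among the $\xi_\A$). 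Conditions (i)--(iii) and \eqref{eq:jet_equiv} then hold by construction, so $\mathfrak{M}^0_\hbar\in\wHC(\jet_{\Orb_K}\hat{\A}_\hbar,\Orb)^{K,\kappa}$; and the two functors are mutually quasi-inverse by the underlying equivalence of $K$-equivariant pro-sheaves on $K/K_Q$ with pro-rational $K_Q$-modules. This is consistent with Lemma \ref{Lem:HC_jet_equiv} and parallels Lemma \ref{Lem:homog_classif}.

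The step I expect to be the main obstacle is the reconstruction of $\nabla$ on $K\times_{K_Q}N_\hbar$: one must check carefully that the displayed formula defines a genuine first-order flat connection, satisfying the Leibniz rule and integrability, for \emph{all} tangent vectors (not just at $\chi$), and that the jet-category conditions (i)--(iii) of Definition \ref{defi:wHC_jet_equiv} correspond exactly to the defining conditions of $\wHC(\A_\hbar^{\wedge_\chi},\theta)$, in particular that clause (iii) amounts precisely to finite generation over $\A_\hbar^{\wedge_\chi}$. The remainder is the routine bookkeeping of homogeneous descent.
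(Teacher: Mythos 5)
Your fiber-at-$\chi$ direction and the formula $\nabla_{\xi_{\Orb_K}}=\xi_{\mathfrak{M}^0_\hbar}-\xi_{\A}.(\cdot)+\langle\kappa,\xi\rangle$ for recovering the connection are in line with the paper, but the quasi-inverse construction has a genuine gap: you identify $\Orb_K$ with $K/K_Q$ and induce from $K_Q$. The stabilizer of $\chi$ in $K$ is $K_\chi=K_Q\ltimes U$ with $U$ a (generally nontrivial) unipotent radical, so $\Orb_K\cong K/K_\chi$, not $K/K_Q$; the bundle $K\times_{K_Q}N_\hbar$ lives over the wrong base, and your well-definedness check for $\nabla$ only treats lifts differing by $\kf_Q$, whereas lifts of a tangent vector at $\chi$ differ by arbitrary elements of $\kf_\chi$. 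The data you start from, an object of $\wHC(\A_\hbar^{\wedge_\chi},\theta)^{K_Q,\kappa|_{\kf_Q}}$, carries no group action in the unipotent directions, so homogeneous descent cannot be invoked as stated.

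The missing idea, which is exactly what the paper's proof supplies, is that the weakly HC structure itself provides the action of the unipotent radical: since $\Phi(\kf)\subset(\A_\hbar^{\wedge_\chi})^{-\theta}$, the full stabilizer Lie algebra $\kf_\chi$ acts on $N_\hbar$ via the Lie-algebra action of $(\A_\hbar^{\wedge_\chi})^{-\theta}$ shifted by $\kappa$; this is compatible with the given $K_Q$-action by the equivariance condition, and because $U$ is unipotent (simply connected) the resulting $\kf_\chi$-action integrates to a pro-rational $K_\chi$-action. Only then can one form the homogeneous pro-bundle $K\times^{K_\chi}N_\hbar$ over $\Orb_K$ and recover the flat connection from \eqref{eq:jet_equiv}; with this correction your argument collapses onto the paper's. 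Note also that the independence-of-lift issue then disappears, since for $\xi$ in all of $\kf_\chi$ (not just $\kf_Q$) the identity $\xi_{N_\hbar}=\xi_{\A}.(\cdot)-\langle\kappa,\xi\rangle$ holds by construction.
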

\begin{proof}
We construct a quasi-inverse functor. Let $N_\hbar\in \wHC(\A_\hbar^{\wedge_\chi},\theta)^{K_Q,\kappa|_{\kf_Q}}$. The Lie algebra
$\mathfrak{k}_\chi$ acts on $N_\hbar$ via the Lie algebra action of $(\A_\hbar^{\wedge_\chi})^{-\theta}$
shifted by $\kappa$.
This action is compatible with that of $K_Q$ by the equivariance condition. So we get a pro-rational
action of $K_\chi$ on $N_\hbar$. So
we can form the corresponding homogeneous bundle $K\times^{K_\chi}N_\hbar$ on $\Orb_K$ with fiber $N_\hbar$ at $\chi$.
We recover the flat connection on this bundle
by (\ref{eq:jet_equiv}). We get an object of
$\wHC(\jet_{\Orb_K}\hat{\A}_\hbar,\theta)^{K,\kappa}$. The functors sending $\mathfrak{M}_\hbar^0$ to its fiber at $\chi$ and sending $N_\hbar$ to $K\times^{K_\chi}N_\hbar$ are quasi-inverse.
\end{proof}

Now recall, Section \ref{SS:HC_mod_restr}, that we have the equivalence
$$\wHC(\hat{\A}_\hbar^{\wedge_\chi},\theta)^{K_Q,\kappa|_{\kf_Q}}\xrightarrow{\sim}
\wHC(\underline{\A}_\hbar,\theta)^{K_Q,\kappa_Q}$$
given by $\C[[L,\hbar]]\otimes_{\C[[\hbar]]}\bullet$.
Composing this equivalence with
(\ref{eq:restr_to_pt}) we get the required equivalence
\begin{equation}\label{eq:interm_jet_equiv}
\wHC(\jet_{\Orb_K}\A_\hbar,\theta)^{K,\kappa}\xrightarrow{\sim}
\wHC(\underline{\A}_\hbar,\theta)^{K_Q,\kappa_Q}.
\end{equation}

\subsubsection{Conclusion}\label{SSS_conclusion_microloc}
Combining Lemma \ref{Lem:HC_jet_equiv} with (\ref{eq:interm_jet_equiv})  we conclude that we have an equivalence
\begin{equation}\label{eq:final_jet_equiv}
\wHC(\hat{\A}_\hbar^{\Orb},\theta)^{K,\kappa}\xrightarrow{\sim}
\wHC(\underline{\hat{\A}}_\hbar,\theta)^{K_Q,\kappa_Q}.
\end{equation}
By the construction of $\bullet_\dagger$ in Section \ref{SS:HC_mod_restr},  this equivalence intertwines the microlocalization functor
$\wHC(\hat{\A}_\hbar,\theta)^{K,\kappa}\rightarrow \wHC(\hat{\A}_\hbar^\Orb,\theta)^{K,\kappa}$
and the restriction functor $\bullet_{\dagger,\chi}:
\wHC(\A_\hbar,\theta)^{ K,\kappa}\rightarrow \wHC(\underline{\hat{\A}}_\hbar,\theta)^{K_Q,\kappa_Q}$.
It follows that the global section functor
$\wHC(\hat{\A}^\Orb_\hbar,\theta)^{K,\kappa}\rightarrow \wHC(\hat{\A}_\hbar, \theta)^{K,\kappa}$
(the right adjoint to the microlocalization functor)
gets intertwined with $\bullet^{\dagger,\chi}$ (the right adjoint of $\bullet_{\dagger,\chi}$).

The same works for the graded categories.

\subsubsection{Twisted local systems and quantizations}\label{SSS_tw_loc_quant}
Let $M$ be a $(K,\kappa)$-equivariant HC $(\A,\theta)$-module. Equip it with a good filtration
and form the completed Rees module $M_\hbar$. Consider the microlocalization $M_\hbar|_{\Orb}$.
The quotient $M_\hbar|_{\Orb}/ \hbar M_\hbar|_{\Orb}$ is a $K$-equivariant coherent sheaf on
$\Orb\cap (\g/\kf)^*$. Since the latter is a Lagrangian subvariety and the union of finitely
many $K$-orbits, $M_\hbar|_{\Orb}/ \hbar M_\hbar|_{\Orb}$ is a $K$-equivariant vector
bundle on $\Orb\cap (\g/\kf)^*$, and $M_\hbar|_{\Orb}$ is its graded quantization.
So $M_\hbar|_{\Orb}/(\hbar)$ is a $(K,\kappa)$-equivariant twisted local system on $\Orb^\theta$,
see the last paragraph in Section \ref{SS_Ham_quant}.

Let $V:=(M_\hbar/\hbar M_\hbar)_{\dagger,\chi}$. If the period of $\A$ is $\lambda$, then
$\kf_Q$ acts on $V$ by $\lambda|_{\kf_Q}-\kappa_Q$, see Corollary \ref{Cor:equiv_condition}.
This determines the twist of the restriction of $M_\hbar/\hbar M_\hbar$ to $\Orb_K$,
see Lemma \ref{Lem:homog_classif}. In particular, consider the situation when $\lambda=\kappa=0$.
Then this restriction has half-canonical twist. This gives an analog of \cite[Theorem 8.7]{Vogan}
in our setting.


\section{Extension}\label{sec:extn}
Here we will give sufficient conditions for an object of
$\underline{\A}_\lambda\operatorname{-mod}^{K_Q,\kappa_Q}$
to lie in the image of $\bullet_{\dagger,\chi}$.  Our results
in this section are similar to what was obtained in \cite[Section 3]{HC_symp}.

\subsection{Extension to codimension $2$} \label{subsec:extn_codim2}
Recall that we assume that $\operatorname{codim}_{\overline{\Orb}_K}\partial\Orb_K\geqslant 2$.
Let $\Orb_K^2$ denote the union of $\Orb_K$ and all codimension $2$ $K$-orbits
in $\overline{\Orb}_K$. The closure $\overline{\Orb}_K$ is taken in $\operatorname{Spec}(\C[\Orb])$
and may be different from the closure in $\overline{\Orb}$.

Set $\partial \Orb_K^2:=\overline{\Orb}_K\setminus \Orb_K^2$, this is
closed subvariety in $\overline{\Orb}_K$ of codimension at least $3$.
Let $\iota$ denote the inclusion $\Orb_K\hookrightarrow \Orb_K^2$ and
$\iota'$ denote the inclusion $\Orb_K^2\hookrightarrow \overline{\Orb}_K$.

Recall that $\hat{\A}_\hbar$ stands for the $\hbar$-adic completion of
$\A_\hbar$, and  $\hat{\A}_\hbar^\Orb$ is the microlocalization of $\hat{\A}_\hbar$ to $\Orb$.
We write $\HC^{gr}(\hat{\A}_\hbar^{\Orb},\theta)$ for the full subcategory of
$\C[\hbar]$-flat objects in $\wHC^{gr}(\hat{\A}_\hbar^{\Orb},\theta)$.

Let $\mathcal{E}_\hbar$ be an object in $\HC^{gr}(\hat{\A}_\hbar^{\Orb},\theta)$. Assume that $\mathcal{E}_\hbar$ is supported on $\Orb_K$
so that $\mathcal{E}_\hbar/\hbar \mathcal{E}_\hbar$ is a twisted local system on $\Orb_K$, see
Section \ref{SSS_tw_loc_quant}.
We want to get a sufficient
condition for $(\iota'\circ\iota)_* \mathcal{E}_\hbar$ to be coherent.
See \cite[Section 2.5]{HC_symp} for a discussion of coherent modules and the pushforwards.

To start with note that if $(\iota'\circ\iota)_* \mathcal{E}_\hbar$  is coherent,
then so is $\iota_*\mathcal{E}_\hbar$. So, as the first step, we will find a necessary and sufficient condition for $\iota_*\mathcal{E}_\hbar$ to be coherent.

Take a $K$-orbit $\Orb_K'\subset \Orb^2_K\setminus \Orb_K$ and a point $\chi'\in \Orb_K'$. Let $\underline{\chi}$
denote its image in $\kf^\perp$. Consider a normal $\slf_2$-triple $(e',h',f')$
corresponding to $\underline{\chi}$ and the $\theta$-stable  Slodowy slice $\underline{S}\subset \g^*$
associated to this triple. Let $\check{X}$ denote a connected component of the preimage of $\underline{S}$
in $X=\operatorname{Spec}(\C[\Orb])$, this is a transverse slice to $G\Orb_K'$ in $X$  and a 4-dimensional conical
symplectic singularity. Also note that $\check{X}\subset X$ is $\theta$-stable.

Let $\Orb'=G \cdot \Orb'_K$ be the $G$-saturation of $\Orb'_K$ in $\g^*$.
Set $\check{V}:=T_{\chi'}\Orb'$, this is a symplectic vector space. Set
$\breve{X}:=\check{X}\times \check{V}$. Taking the product of contracting torus
actions on $\check{X}$ and $\check{V}$ we get a contracting action on $\breve{X}$,
such that the weight of the Poisson bracket is $-2$. Also $\check{X},\check{V}$ come with
Poisson anti-involutions induced from $X$. The resulting Poisson anti-involution on
$\breve{X}$ will also be denoted by $\theta$.
We set $\check{X}^\times:=\check{X} \setminus \{0\}, \breve{X}^\times=\check{X}^\times\times \check{V}$.

Using the quantum slice construction, Section \ref{SS_Dixmier_restriction}, from $\A$ we get a quantization
of $\check{X}$ to be denoted by $\check{\A}$. Note that in Section \ref{SS_Dixmier_restriction},
the quantum slice algebra was denoted by $\underline{\A}$ but from now on we will use the notation
$\underline{\A}$ for the quantum slice associated to the open orbit $\Orb$. We let $\check{\A}_\hbar$
denote the Rees algebra of $\check{\A}$.

We form the quantization
$\breve{\A}_\hbar:=\check{\A}_\hbar\otimes_{\C[\hbar]}\Weyl_\hbar(\check{V})$ of $\breve{X}$.
Let $\breve{\A}_\hbar^\times$ (resp. $\check{\A}_\hbar^\times$) be the microlocalization of the $\hbar$-adic completion of
$\breve{\A}_\hbar$ (resp. $\check{\A}_\hbar$) to $\breve{X}^\times$ (resp. $\check{X}^\times$).

We write $X^{\wedge_{\chi'}}$ for the spectrum of the completion of
$\C[X]$ at $\chi'$. It coincides with $\breve{X}^{\wedge_0}$ and, by the construction
in Section \ref{SS_Dixmier_restriction}, we have
an isomorphism of quantizations $\A_\hbar^{\wedge_{\chi'}}\cong
\breve{\A}_\hbar^{\wedge_0}$.
Our first task is to define an object $\breve{\Ecal}_\hbar\in \HC^{gr}(\breve{\A}_\hbar,\theta)$ 
whose restriction to $\breve{X}^\times\cap \breve{X}^{\wedge_0}$ coincides with
the restriction of $\Ecal_\hbar$ to $\Orb\cap X^{\wedge_{\chi'}}$.

 Take $n\in \Z_{>0}$.
Consider the quotient $\mathcal{E}_{\hbar,n}:=\mathcal{E}_\hbar/\hbar^{n+1}\mathcal{E}_\hbar$. This is
a weakly HC $(\hat{\A}_\hbar^\Orb,\theta)$-module. Since $\mathcal{E}_{\hbar,0}$ is a vector
bundle on $\Orb_K$ and $\operatorname{codim}_{\Orb_K^2}\partial \Orb_K\geqslant 2$,
the sheaf $\iota_* \mathcal{E}_{\hbar,0}$ is coherent. It follows that
$\iota_*\mathcal{E}_{\hbar,n}$ is coherent for all $n \ge 0$ as well. Consider the completion
$(\iota_*\mathcal{E}_{\hbar,n})^{\wedge_{\chi'}} \in \wHC^{gr}(\A_\hbar^{\wedge_{\chi'}},\theta)$.
By Lemma  \ref{Lem:wHC_equivalence}, $(\iota_*\mathcal{E}_{\hbar,n})^{\wedge_{\chi'}}$
decomposes as
\begin{equation}\label{eq:decomposition11}
	[(\iota_*\mathcal{E}_{\hbar,n})^{\wedge_{\chi'}}]^{\check{L}}\widehat{\otimes}_{\C[[\hbar]]}
\C[[\check{L},\hbar]],
\end{equation}
where $\check{L}=\check{V}^{-\theta}$. This decomposition is $\C^\times$-equivariant
with respect to the induced $\C^\times$-action on $\check{L}$. We define
the weakly HC $(\breve{\A}_\hbar^{\wedge_0}, \theta)$-module $\breve{\Ecal}_{\hbar,n}'$ as (\ref{eq:decomposition11}), where we redefine
the $\C^\times$-action on $\C[[\check{L},\hbar]]$ to come from the dilation action.
We also define the weakly HC $(\check{\A}_\hbar^{\wedge_0}, \theta)$-module $\check{\Ecal}_{\hbar,n}'$ as the first factor in (\ref{eq:decomposition11}).
Let $\breve{\Ecal}_{\hbar,n},\check{\Ecal}_{\hbar,n}$ denote the $\C^\times$-finite parts
in $\breve{\Ecal}_{\hbar,n}', \check{\Ecal}_{\hbar,n}'$, so that they are weakly HC $(\breve{\A}_\hbar, \theta)$- and $(\check{\A}_\hbar, \theta)$-modules respectively. So we have
\begin{equation}\label{eq:decomposition115}
\breve{\Ecal}_{\hbar,n}=\check{\Ecal}_{\hbar,n}\otimes_{\C[\hbar]}\C[L,\hbar].
\end{equation}

Now we proceed to constructing a sheaf $\breve{\Ecal}^\times_\hbar$ of HC $(\breve{\A}_\hbar^\times, \theta)$-modules. Consider the restriction
$\breve{\Ecal}_{\hbar,n}^\times$ of $\breve{\Ecal}_{\hbar,n}$ to $\breve{X}^\times$.
It follows from the construction of $\breve{\Ecal}_{\hbar,n}$
that the kernel and the cokernel of
$$\breve{\Ecal}_{\hbar,n}/\hbar^{n}\breve{\Ecal}_{\hbar,n}
\rightarrow \breve{\Ecal}_{\hbar,n-1}$$
are supported on $\{0\}\times \check{L}$. Indeed, a similar claim holds for
the sheaves $\Ecal_{\hbar,n}$, hence for $(\iota_*\mathcal{E}_{\hbar,n})^{\wedge_{\chi'}}$,
hence for $\breve{\Ecal}_{\hbar,n}'$, hence for $\breve{\Ecal}_{\hbar,n}$.
Therefore
\begin{equation}\label{eq:sheaf_map}\breve{\Ecal}^\times_{\hbar,n}/\hbar^{n}\breve{\Ecal}^\times_{\hbar,n}
\xrightarrow{\sim} \breve{\Ecal}^\times_{\hbar,n-1}\end{equation}
We set
$$\breve{\Ecal}_\hbar^\times:=\varprojlim \breve{\Ecal}^\times_{\hbar,n}.$$
Similarly, we can define the sheaf $\check{\Ecal}^\times_\hbar$ of $(\check{\A}_\hbar^\times, \theta)$-modules. Observe that, by the construction, the sheaves $\breve{\Ecal}^\times_\hbar$
and $\check{\Ecal}^\times_\hbar$ are flat over $\C[[\hbar]]$ and hence HC.

Note that, thanks
to (\ref{eq:sheaf_map}) and its analog for $\check{\Ecal}_\hbar$,
$$\breve{\Ecal}^\times_{\hbar}/\hbar^n \breve{\Ecal}^\times_\hbar\xrightarrow{\sim}
\breve{\Ecal}^\times_{\hbar,n-1}, \quad \check{\Ecal}^\times_{\hbar}/\hbar^n \check{\Ecal}^\times_\hbar\xrightarrow{\sim}
\check{\Ecal}^\times_{\hbar,n-1}.$$
And since
$(\iota_*\mathcal{E}_{\hbar,n})^{\wedge_{\chi'}}\cong \breve{\Ecal}_{\hbar,n}^{\wedge_0}$,
we conclude that
\begin{itemize}
\item
the restrictions of $\Ecal_\hbar,\breve{\Ecal}^\times_\hbar$
to $X^{\wedge_{\chi'}}\cap \Orb\cong \breve{X}^{\wedge_0}\cap \breve{X}^\times$
are isomorphic.
\end{itemize}

Here is a criterion for $\iota_*\mathcal{E}_\hbar$ to be coherent.

\begin{Lem}\label{Lem:coh_extension1}
The following two conditions are equivalent:
\begin{enumerate}
\item $\iota_*\mathcal{E}_\hbar$ is coherent.
\item For each codimension two orbit $\Orb'_K$ in $\overline{\Orb}_K$,
the restriction of $\Gamma(\check{\mathcal{E}}^\times_\hbar)$ (where $\check{\mathcal{E}}^\times_\hbar$
is constructed for that orbit) to $\check{X}^\times$ coincides with $\check{\Ecal}^\times_\hbar$.
\end{enumerate}
\end{Lem}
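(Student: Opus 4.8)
The plan is to prove the equivalence $(1)\Leftrightarrow(2)$ by reducing the coherence of $\iota_*\mathcal{E}_\hbar$ to a local statement near each codimension $2$ orbit $\Orb'_K$, and then identifying that local statement with condition $(2)$ via the slice decompositions $X^{\wedge_{\chi'}}\cong\breve{X}^{\wedge_0}$ and $\breve{X}=\check{X}\times\check{V}$.

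\emph{Step 1: Reduce to a neighborhood of a codimension $2$ orbit.} Since $\mathcal{E}_\hbar$ is already coherent (indeed a vector bundle on $\Orb_K$ away from lower strata) and $\iota_*\mathcal{E}_{\hbar,0}$ is coherent because $\operatorname{codim}_{\Orb_K^2}\partial\Orb_K\geqslant 2$ (the point of passing to $\Orb_K^2$ is precisely to make pushforward along $\iota$ coherent on the level of vector bundles), the obstruction to coherence of $\iota_*\mathcal{E}_\hbar$ is concentrated, stratum by stratum, on the codimension $2$ orbits $\Orb'_K\subset\Orb_K^2\setminus\Orb_K$. More precisely, coherence of a $\C[\hbar]$-flat module that is complete and separated $\hbar$-adically can be checked modulo $\hbar$, and then Zariski-locally; since $\Orb_K^2\setminus\Orb_K$ is a disjoint union of the codimension $2$ orbits $\Orb'_K$, coherence of $\iota_*\mathcal{E}_\hbar$ is equivalent to coherence in a $K$-stable neighborhood of each such $\Orb'_K$. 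I would make this precise using the microlocal/Zariski local nature of coherence as discussed in \cite[Section 2.5]{HC_symp}, together with the fact that $\gr$ of a good filtration is supported on $\overline{\Orb}_K$.

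\emph{Step 2: Pass to the formal slice and transport to $\breve{X}$.} Fix $\chi'\in\Orb'_K$ and consider the completion at $\chi'$. By the quantum slice construction of Section \ref{SS_Dixmier_restriction} we have $\A_\hbar^{\wedge_{\chi'}}\cong\breve{\A}_\hbar^{\wedge_0}$, and under this isomorphism the module $(\iota_*\mathcal{E}_{\hbar,n})^{\wedge_{\chi'}}$ corresponds (after the Weyl-algebra decomposition \eqref{eq:decomposition11} and \eqref{eq:decomposition115}) to $\breve{\Ecal}_{\hbar,n}^{\wedge_0}$, and the restrictions of $\Ecal_\hbar$ and $\breve{\Ecal}^\times_\hbar$ to $X^{\wedge_{\chi'}}\cap\Orb\cong\breve{X}^{\wedge_0}\cap\breve{X}^\times$ agree, as recorded just before the statement. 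The coherence of $\iota_*\mathcal{E}_\hbar$ near $\Orb'_K$ is therefore equivalent to coherence of $\Gamma(\breve{\Ecal}^\times_\hbar)$ as a module over the $\hbar$-adic completion of $\breve{\A}_\hbar$ — this is a statement purely about the $4$-dimensional conical symplectic singularity $\check{X}$ tensored with the symplectic vector space $\check{V}$. Since $\breve{\A}_\hbar=\check{\A}_\hbar\otimes_{\C[\hbar]}\Weyl_\hbar(\check{V})$ and $\breve{\Ecal}_\hbar^\times=\check{\Ecal}_\hbar^\times\otimes_{\C[\hbar]}\C[\check L,\hbar]$ (a ``one-dimensional'' Weyl-algebra module in the $\check V$-directions, cf. Lemma \ref{Lem:wHC_formal_Weyl}), coherence of the global sections over $\breve{X}^\times$ is equivalent to coherence of $\Gamma(\check{\Ecal}^\times_\hbar)$ over $\check{\A}_\hbar^\wedge$.

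\emph{Step 3: Coherence on the surface slice $\Leftrightarrow$ condition (2).} On the $4$-dimensional slice, $\check{X}$ has an isolated singular point at $0$ (the codimension $4$ condition becomes ``isolated'' here) and $\check{X}^\times$ is smooth, so $\check{\Ecal}^\times_\hbar$ is automatically coherent on $\check{X}^\times$. The issue is only whether $\Gamma(\check{\Ecal}^\times_\hbar)$ — which a priori is only a module over the completion $\check{\A}_\hbar^\wedge$ obtained as an inverse limit — is finitely generated, and whether its microlocalization back to $\check{X}^\times$ recovers $\check{\Ecal}^\times_\hbar$. This is exactly the condition that the restriction of $\Gamma(\check{\mathcal{E}}^\times_\hbar)$ to $\check{X}^\times$ coincides with $\check{\Ecal}^\times_\hbar$, which is condition $(2)$. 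Assembling: $\iota_*\mathcal{E}_\hbar$ is coherent iff it is coherent near each $\Orb'_K$ (Step 1) iff for each $\Orb'_K$ the $\check{X}$-local statement of Step 3 holds iff condition $(2)$ holds. The main obstacle I anticipate is Step 1 — carefully justifying that coherence of the pushforward can be tested one codimension $2$ stratum at a time, and in a formal (rather than Zariski) neighborhood — since this requires the flatness/completeness bookkeeping for $\hbar$-adic modules and the comparison between $\iota_*$ of the honest sheaf and the formally completed module; the analogous argument in \cite[Section 2.5, 3]{HC_symp} should serve as the template.
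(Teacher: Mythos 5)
Your overall skeleton does match the paper's argument: the paper introduces an intermediate condition (3) --- that for each codimension two orbit the restriction of $\Gamma(\mathcal{E}_\hbar|_{X^{\wedge_{\chi'}}\cap \Orb})$ to $X^{\wedge_{\chi'}}\cap\Orb$ coincides with $\mathcal{E}_\hbar|_{X^{\wedge_{\chi'}}\cap \Orb}$ --- proves (1)$\Leftrightarrow$(3) exactly as in \cite[Lemma 3.4]{HC_symp}, and then proves (3)$\Leftrightarrow$(2); your Steps 1--2 are this reduction, and your deferral to the \cite{HC_symp} template for the first equivalence is what the paper does as well. One caveat already in Step 1: ``coherence can be checked modulo $\hbar$'' is imprecise, because $\iota_*$ does not commute with reduction mod $\hbar$ unless the pushforward is coherent --- this is precisely the issue at stake --- so the correct local criterion is phrased in terms of the kernels and cokernels of the maps $\Gamma(\cdot_\hbar)/\hbar^n\Gamma(\cdot_\hbar)\rightarrow \Gamma(\cdot_{\hbar,n})$, as in the paper.

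The genuine gap is in Step 2, at the sentence asserting that coherence near $\Orb'_K$ ``is therefore equivalent to coherence of $\Gamma(\breve{\Ecal}^\times_\hbar)$.'' What you actually have after completing at $\chi'$ is a condition on the formal punctured neighborhood $\breve{X}^{\wedge_0}\cap\breve{X}^\times$, and passing from that to a condition on all of $\breve{X}^\times$ (hence on $\check{X}^\times$, which is what condition (2) is about) is not automatic: this is exactly where the conical structure must be used. The paper first shows $\Gamma(\breve{\Ecal}^\times_{\hbar,n}|_{\breve{X}^{\wedge_0}\cap\breve{X}^\times})=\Gamma(\breve{\Ecal}^\times_{\hbar,n})^{\wedge_0}$ and then takes $\C^\times$-finite parts, using that the $\C^\times$-action on $\breve{X}$ is contracting, to obtain $\Gamma(\mathcal{E}_\hbar|_{X^{\wedge_{\chi'}}\cap \Orb})\cong\Gamma(\breve{\Ecal}^\times_\hbar)^{\wedge_0}$; without this step a formal-local statement does not control the slice-global one. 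Likewise, splitting off the Weyl-algebra factor at the level of sections over the punctured slice --- $\Gamma(\breve{\Ecal}^\times_{\hbar,n})\cong\Gamma(\check{\Ecal}^\times_{\hbar,n})\otimes_{\C[\hbar]}\C[L,\hbar]$ and its $\hbar$-completed version --- does not follow formally from the sheaf-level decomposition \eqref{eq:decomposition115}; the paper proves it by comparing \v{C}ech complexes for the covers of $\check{X}^\times$ and $\breve{X}^\times$ by principal opens and using flatness of $\C[L,\hbar]$ over $\C[\hbar]$. Your appeal to Lemma \ref{Lem:wHC_formal_Weyl} does not substitute for these two steps; they constitute the actual content of the (3)$\Leftrightarrow$(2) direction and need to be supplied.
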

\begin{proof}
Consider the third condition:
\begin{itemize}
\item[(3)] For each codimension two orbit $\Orb'_K$ in $\overline{\Orb}_K$,
the restriction of $\Gamma(\mathcal{E}_\hbar|_{X^{\wedge_{\chi'}}\cap \Orb})$ to $X^{\wedge_{\chi'}}\cap\Orb$ coincides with $\mathcal{E}_\hbar|_{X^{\wedge_{\chi'}}\cap \Orb}$.
\end{itemize}
The equivalence of (1) and (3) is established in exactly the same way as in
the proof of \cite[Lemma 3.4]{HC_symp}. Now we explain why (2)$\Leftrightarrow$(3).

By the discussion before the lemma,
$$\mathcal{E}_\hbar|_{X^{\wedge_{\chi'}}\cap \Orb}\cong \breve{\Ecal}^\times_\hbar|_{\breve{X}^{\wedge_0}\cap\breve{X}^\times}.$$
Note that (3) is equivalent to the claim that the kernel and the cokernel
of
$$\Gamma(\breve{\Ecal}^\times_\hbar|_{\breve{X}^{\wedge_0}\cap\breve{X}^\times})/
\hbar^{n+1} \Gamma(\breve{\Ecal}^\times_\hbar|_{\breve{X}^{\wedge_0}\cap\breve{X}^\times})
\rightarrow \Gamma(\breve{\Ecal}^\times_{\hbar,n}|_{\breve{X}^{\wedge_0}\cap\breve{X}^\times})$$
are supported on $\{0\}\times L^{\wedge_0}$.

Now we are going to pass to the subspaces of $\C^\times$-finite elements.  Note that
$$\Gamma(\breve{\Ecal}^\times_\hbar|_{\breve{X}^{\wedge_0}\cap\breve{X}^\times})=
\varprojlim \Gamma(\breve{\Ecal}^\times_{\hbar,n}|_{\breve{X}^{\wedge_0}\cap\breve{X}^\times}).$$
Arguing as in Step 2 of the proof of \cite[Lemma 3.4]{HC_symp}, we see that
\begin{equation} \label{eq:completion_coincide10}
	\Gamma(\breve{\Ecal}^\times_{\hbar,n}|_{\breve{X}^{\wedge_0}\cap\breve{X}^\times}) =
	\Gamma(\breve{\Ecal}^\times_{\hbar,n})^{\wedge_0}
	\simeq (\iota_*\mathcal{E}_{\hbar,n})^{\wedge_{\chi'}}.
\end{equation}
Therefore the subspace of $\C^\times$-finite elements in $\Gamma(\breve{\Ecal}^\times_{\hbar,n}|_{\breve{X}^{\wedge_0}\cap\breve{X}^\times})$
coincides with $\Gamma(\breve{\Ecal}^\times_{\hbar,n})$. Since the $\C^\times$-action on
$\breve{X}$ is contracting, we see that
\begin{equation}\label{eq:completion_coincide11}
\Gamma(\mathcal{E}_\hbar|_{X^{\wedge_{\chi'}}\cap \Orb})\cong \Gamma(\breve{\mathcal{E}}_\hbar^\times)^{\wedge_0}.
\end{equation}

So (3) is equivalent to the claim that
the kernel and the cokernel of
$$\Gamma(\breve{\Ecal}^\times_\hbar)/
\hbar^{n+1} \Gamma(\breve{\Ecal}^\times_\hbar)
\rightarrow \Gamma(\breve{\Ecal}^\times_{\hbar,n}).$$
to be supported on $\{0\}\times L$.

Let us show that 
\begin{equation} \label{eq:decomposition125}
	\Gamma(\breve{\Ecal}^\times_{\hbar,n})=
	\Gamma(\check{\Ecal}^\times_{\hbar,n})\otimes_{\C[\hbar]}\C[L,\hbar].
\end{equation}
We cover $\check{X}^\times$ with principal open affine subsets
$\check{X}_{f_i}\subset \check{X}$. Then $\Gamma(\check{\Ecal}^\times_{\hbar,n})$
is the zeroth cohomology of the \v{C}ech complex for $\check{\Ecal}_{\hbar,n}$
associated with the cover $\check{X}^\times=\bigcup \check{X}_{f_i}$, its terms
are direct sums of localizations of $\check{\Ecal}_{\hbar,n}$. Similarly,
$\Gamma(\breve{\Ecal}^\times_{\hbar,n})$ is the zeroth cohomology of the \v{C}ech
complex associated to the cover $\breve{X}^\times=\bigcup \breve{X}_{f_i}$. This \v{C}ech
complex is obtained from the previous one by tensoring with $\C[L,\hbar]$ over $\C[\hbar]$.
Since $\C[L,\hbar]$ is flat over $\C[\hbar]$ our claim follows.

Since
$\Gamma(\breve{\Ecal}^\times_\hbar)=\varprojlim \Gamma(\breve{\Ecal}^\times_{\hbar,n})$ and $\Gamma(\check{\Ecal}^\times_\hbar)=\varprojlim \Gamma(\check{\Ecal}^\times_{\hbar,n})$, we see that
\begin{equation}\label{eq:decomposition12}
\Gamma(\breve{\Ecal}^\times_{\hbar})=
\Gamma(\check{\Ecal}^\times_\hbar)\widehat{\otimes}_{\C[[\hbar]]}\C[L][[\hbar]].
\end{equation}
It follows that (3) is equivalent to the claim that the kernels and the cokernels of
$$\Gamma(\check{\Ecal}^\times_\hbar)/
\hbar^{n+1} \Gamma(\check{\Ecal}^\times_\hbar)
\rightarrow \Gamma(\check{\Ecal}^\times_{\hbar,n})$$
are finite dimensional for all $n$. But this condition is equivalent to (2).

\end{proof}

\subsection{Extension outside of codimension $2$}
Now suppose that the equivalent conditions of Lemma
\ref{Lem:coh_extension1} hold. The following claim
is an analog of \cite[Lemma 3.5]{HC_symp}. Note that the conditions
in the statement are more restrictive: unlike in the setting of
\cite{HC_symp} not all orbits in $\overline{\Orb}_K\setminus \Orb_K^2$ have even
codimension so we can get an orbit of codimension $3$, while in the setting
of \cite{HC_symp} the minimal possible codimension is $4$.

\begin{Lem}\label{Lem:coh_extension2}
	The following conditions are equivalent:
	\begin{enumerate}
		\item 
			the microlocalization of $\Gamma(\mathcal{E}_\hbar)$ to $\Orb_K$
			coincides with $\mathcal{E}_\hbar$ and the coherent sheaf $\Gamma(\mathcal{E}_\hbar) / \hbar \Gamma(\mathcal{E}_\hbar)$ restricted to $\Orb_K^2$ is maximal Cohen-Macaulay (or, equivalently, satisfying Serre's $S_2$ condition);
		\item 
			the following assumption holds for all codimension $2$ orbits
			in $\overline{\Orb}_K$ (and the corresponding sheaves $\check{\mathcal{E}}^\times_\hbar$):
			\begin{itemize}
				\item[(*)] $\Gamma(\check{\mathcal{E}}_\hbar)/
				\hbar \Gamma(\check{\mathcal{E}}^\times_\hbar)\xrightarrow{\sim}\Gamma(\check{\mathcal{E}}^\times_{\hbar}/\hbar \check{\mathcal{E}}^\times_{\hbar})$.
			\end{itemize}
	\end{enumerate}
\end{Lem}

\begin{proof}
	Combining (\ref{eq:completion_coincide11}) and (\ref{eq:decomposition12}), we see that condition (*) is equivalent to the claim that
	\begin{equation}\label{eq:quot_mod_h}
		\Gamma(\mathcal{E}_\hbar|_{X^{\wedge_{\chi'}}\cap \Orb})/
		\hbar \Gamma(\mathcal{E}_\hbar|_{X^{\wedge_{\chi'}}\cap \Orb})\xrightarrow{\sim}
		\Gamma(\mathcal{E}_{\hbar,0}|_{X^{\wedge_{\chi'}}\cap \Orb}).
	\end{equation}
	In its turn, (\ref{eq:quot_mod_h}) for all codimension $2$ orbits is equivalent to
	$$(\iota_*\mathcal{E}_\hbar)/\hbar (\iota_*\mathcal{E}_\hbar)
	\xrightarrow{\sim} \iota_*(\mathcal{E}_\hbar/\hbar \mathcal{E}_\hbar).$$
	
	Note that $\check{\mathcal{E}}^\times_{\hbar}/\hbar \check{\mathcal{E}}^\times_{\hbar}$ is a vector bundle
	on $\check{X}^\theta\setminus\{0\}$. From \cite[Corollary 3.12]{Burban_Drozd} we see that
	$\Gamma(\check{\mathcal{E}}^\times_{\hbar}/\hbar \check{\mathcal{E}}^\times_{\hbar})$ is a maximal Cohen-Macaulay
	module over $\C[\check{X}^\theta\setminus\{0\}]$, the normalization of $\C[\check{X}^\theta]$.
	By \cite[Lemma 2.24]{Burban_Drozd},  $\Gamma(\check{\mathcal{E}}^\times_{\hbar}/\hbar \check{\mathcal{E}}^\times_{\hbar})$ is Cohen-Macaulay over $\C[\check{X}^\theta]$. (Or we can directly apply Lemma 3.1 and Proposition 3.10 of \cite{Burban_Drozd} without passing to normalizations.)
	Now combining Corollary 2.1.8 and Theorem 2.1.9 of \cite{BH} (cf. \cite[Remark 12.2]{KP2}), \eqref{eq:completion_coincide11} and \eqref{eq:decomposition125} (for $n=0$), we conclude that $\iota_*(\mathcal{E}_\hbar/\hbar \mathcal{E}_\hbar) = \iota_*(\EE_{\hbar,0})$
	is maximal Cohen-Macaulay over $\Orb_K^2$.
	
	Now we prove that (1) implies (2). Let $\F := (\iota_*\mathcal{E}_\hbar)/\hbar (\iota_*\mathcal{E}_\hbar)$. Then by the assumption of (1), $\F$ is a maximal Cohen-Macaulay coherent sheaf over $\Orb^2_K$ whose restriction to $\Orb_K$ is $\mathcal{E}_\hbar/\hbar \mathcal{E}_\hbar$. Therefore the canonical map $\F \to \iota_* \iota^* \F \simeq \iota_*(\mathcal{E}_\hbar/\hbar \mathcal{E}_\hbar)$ is an isomorphism by \cite[Proposition 1.11]{Hartshorne}. 
	
	For (2)$\Rightarrow$(3), we only need to show that the microlocalization of $\Gamma(\mathcal{E}_\hbar)$ to $\Orb_K$
	coincides with $\mathcal{E}_\hbar$. Since the complement to
	$\Orb_K^2$ in $\overline{\Orb}_K$ has codimension $\geqslant 3$, we see that
	$H^1(\Orb_K^2, \iota_*(\mathcal{E}_\hbar/\hbar \mathcal{E}_\hbar))$
	is a finitely generated module over $\C[\overline{\Orb}_K]$. Now we can
	finish the proof as in \cite[Lemma 3.5]{HC_symp}.
\end{proof}

It turns out that (*) is not convenient to check, as it is hard to determine $\check{\mathcal{E}}^\times_\hbar$.
It is easy to determine $\check{\mathcal{E}}^\times:=\check{\mathcal{E}}^\times_\hbar/\hbar \check{\mathcal{E}}^\times_\hbar$: we will see in the next section that it is obtained by restricting the twisted local system $\mathcal{E}:=\mathcal{E}_\hbar/\hbar \mathcal{E}$ to the slice.
In the next section we will also see that $\check{\mathcal{E}}^\times$ admits a unique quantization to
an $\check{\A}_\hbar^\times$-module if we assume that the $\C^\times$-action on $\mathcal{E}$ is strong.

\subsection{Restrictions to slices}
We adopt the settings and notations of Section \ref{subsec:extn_codim2}.
Take an object $\mathcal{E}_\hbar \in \wHC^{gr}(\hat{\A}_\hbar^{\Orb},\theta)^{K,\kappa}$.  By Section \ref{subsec:lag_vec}, the sheaf $\EE = \EE_\hbar / \hbar \EE_\hbar$ is a coherent $(K,\kappa)$-equivariant $\TT^+(\hat{\A}_\hbar^\Orb, \Orb_K)$-module and hence a $K$-equivariant vector bundle over $\Orb_K$.
The following claim follows by combining Remark \ref{Rem:homog_analyt_classif} with Proposition \ref{prop:irred_strongness}.

\begin{Lem}\label{Lem:irred_strongness_HC}
	Suppose the geometric fiber of $\mathcal{E}_\chi$ at $\chi \in \Orb_K$ is an irreducible $K_Q$-module.
	Then the induced $\cm$-action on $\EE$ is strong in the sense of Definition \ref{defn:strong_cm}.
\end{Lem}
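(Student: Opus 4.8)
The plan is to reduce the assertion to Proposition~\ref{prop:irred_strongness}, whose hypothesis is that the analytification of the module in question be irreducible as an \emph{honest} (non-equivariant) module over the analytified Picard algebroid. Here the relevant Picard algebroid is $\TT:=\TT^+(\hat{\A}_\hbar^\Orb,\Orb_K)$, which by Section~\ref{SS_Ham_quant} carries a strong $K$-action together with a compatible $\cm$-action, so it is a strongly $K$-equivariant graded Picard algebroid over $\Orb_K$; and $\EE=\EE_\hbar/\hbar\EE_\hbar$ is an $\OO_{\Orb_K}$-coherent graded $(K,\kappa)$-equivariant $\TT$-module (Section~\ref{subsec:lag_vec}), i.e.\ a graded $K$-equivariant twisted local system on $\Orb_K$. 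Since $\Orb_K$ is a single $K$-orbit it is the homogeneous space $K/K_\chi$, and by Lemma~\ref{Lem:algebroid_w_coh_module} together with the homogeneous discussion preceding Lemma~\ref{Lem:homog_classif}, the TDO attached to $\TT$ is of the form $\mathscr{D}^{\lambda}_{\Orb_K}$ for some $\lambda\in(\kf_\chi^*)^{K_\chi}$. Thus it suffices to show that $\EE^{an}$ is irreducible as an honest $\mathscr{D}^{\lambda,an}_{\Orb_K}$-module.

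First I would apply Remark~\ref{Rem:homog_analyt_classif}, the analytic version of Lemma~\ref{Lem:homog_classif}: taking the geometric fiber at $\chi$ is an equivalence of abelian categories between coherent $(K,\kappa)$-equivariant $\mathscr{D}^{\lambda,an}_{\Orb_K}$-modules and finite-dimensional rational $K_\chi$-representations on which $\kf_\chi$ acts by the scalar $\lambda-\kappa|_{\kf_\chi}$. Under this equivalence $\EE^{an}$ corresponds to the geometric fiber $\EE_\chi$, which is irreducible as a $K_\chi$-module by hypothesis; hence $\EE^{an}$ is simple in the $(K,\kappa)$-equivariant category.

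It remains to upgrade this to honest simplicity of $\EE^{an}$ as a $\mathscr{D}^{\lambda,an}_{\Orb_K}$-module; this is the one step that is not purely formal, and the one I expect to need the most care. The key observation is that $\kf_\chi$ acts on $\EE_\chi$ by a \emph{scalar}, so the identity component $K_\chi^\circ$ acts on $\EE_\chi$ through scalars; consequently $\EE^{an}$ is, up to tensoring by a rank-one twisted local system, the pushforward along the finite covering $K/K_\chi^\circ\to K/K_\chi=\Orb_K$ of a local system coming from an irreducible (projective) representation of the finite group $\pi_0(K_\chi)$, and since $\pi_1(\Orb_K)\twoheadrightarrow\pi_0(K_\chi)$ is surjective such a module is honestly simple. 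Two alternative routes to the same conclusion: one may instead note that $K$-equivariant twisted local systems on $\Orb_K$ are semisimple, that $K$ connected fixes each isomorphism class of honest-simple summand, and that equivariant simplicity together with irreducibility of $\EE_\chi$ then forces $\EE^{an}$ to be a single honest-simple summand; or one may bypass the honest-simplicity issue entirely by re-running the proof of Proposition~\ref{prop:irred_strongness} with Schur's lemma taken in the $(K,\kappa)$-equivariant category, which is legitimate because the two endomorphisms of $\EE^{an}$ entering that proof --- the monodromy along the distinguished central loop associated with the $\cm$-action, and the discrepancy $\alpha(z)^{-1}\beta(z)$ between the $\cm$-action and the integrated connection --- are $K$-equivariant, as $\cm$ commutes with $K$. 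Once $\EE^{an}$ is known to be an honest-simple $\mathscr{D}^{\lambda,an}_{\Orb_K}$-module, Proposition~\ref{prop:irred_strongness} applies directly and yields that the $\cm$-action on $\EE$ is strong in the sense of Definition~\ref{defn:strong_cm}, completing the proof.
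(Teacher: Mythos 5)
Your proof is correct and takes essentially the paper's route: the paper's entire proof is the one-line combination of Remark \ref{Rem:homog_analyt_classif} with Proposition \ref{prop:irred_strongness}, which is exactly what you do, with the welcome extra justification (left implicit in the paper) that equivariant simplicity of $\EE^{an}$ upgrades to plain simplicity as a twisted local system because $\kf_\chi$ acts on the fiber by a scalar, so the projective monodromy factors through $\pi_0(K_\chi)$, onto which $\pi_1(\Orb_K)$ surjects for connected $K$. One phrasing quibble: the relevant local system is the associated bundle $K\times^{K_\chi}\EE_\chi$ with monodromy pulled back along $\pi_1(\Orb_K)\twoheadrightarrow\pi_0(K_\chi)$, not a pushforward along the covering $K/K_\chi^\circ\to\Orb_K$; the surjectivity argument you give is the correct justification.
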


Let $\check{Y} = \check{X}^\theta$ and $\check{Y}^\times = \check{Y} \setminus \{ 0 \} = (\check{X}^\times)^\theta$. In Section \ref{subsec:extn_codim2} we have produced a module $\check{\Ecal}^\times_\hbar$ over a quantization of $\check{X}^\times$ from $\EE_\hbar$, which is supported on $\check{Y}^\times$, so that $\check{\mathcal{E}}^\times = \check{\mathcal{E}}_\hbar^\times/\hbar \check{\mathcal{E}}_\hbar^\times$ is a graded $\TT^+(\check{\A}_\hbar^\times, \check{Y}^\times)$-module,
where the $\cm$-action is restricted from the action $t.m:=t^{-2}\gamma'(t)m$ on $\mathcal{E}$; here $\gamma$ is the one-parameter subgroup in $G$ corresponding to the semisimple element $h' \in \kf$ of the normal $\slf_2$-triple $(e',h',f')$ corresponding to $\chi'$. Let $K'_Q$ be the $K$-centralizer of $(e',h',f')$. We have the strong $K$-action on $\TT^+(\hat{\A}_\hbar^{\Orb}, \Orb_K)$ with the Lie algebra morphism $\nu_\kf: \kf \to  \TT^+(\hat{\A}_\hbar, \Orb_K)$ induced by restricting the quantum comoment map $\Phi_\hbar^\g: \g \to  \hat{\A}_\hbar$ to $\kf$ as in Section \ref{SS_Ham_quant}. Its restriction to $\kf'_Q$ descends to a Lie algebra morphism
\[ \nu_{\kf'_Q} := \nu_\kf|_{\kf'_Q}: \kf'_Q \to \iota_{\check{Y}^\times}^\circ \TT^+(\hat{\A}_\hbar^{\Orb}, \Orb_K),\]
where $\iota_{\check{Y}^\times} : \check{Y}^\times \hookrightarrow \Orb_K$ denotes the closed embedding, and $\iota^\circ$ stands for the pullback of Picard algebroids as in the discussion after Lemma \ref{Lem:algebroid_w_coh_module}. We also have the quantum comoment map $\Phi_\hbar^{\q}: \q \to \check{\A}_\hbar$ defined similarly as in \eqref{eq:shifted_character}. Then we also have a strong $K'_Q$-action on $\TT^+(\check{\A}_\hbar^\times, \check{Y}^\times)$ with the Lie algebra morphism
\[ \check{\nu}_{\kf'_Q}: \kf'_Q \to \TT^+(\check{\A}_\hbar^\times, \check{Y}^\times)\]
induced by $\Phi_\hbar^{\q}$.


Here is the main result of this section.

\begin{Prop}\label{Prop:slice_strongness}
We have the following results:
	\begin{enumerate}
		\item
		 There is a natural $K'_Q$-equivariant isomorphism
		  \[ \upsilon^+: \TT^+(\check{\A}_\hbar^\times, \check{Y}^\times) \xrightarrow{\sim} \iota_{\check{Y}^\times}^\circ \TT^+(\hat{\A}_\hbar^{\Orb}, \Orb_K),\]
		  such that $\nu_{\kf'_Q} = \upsilon^+ \circ \check{\nu}_{\kf'_Q} + \rho'_\omega$, where $\rho'_\omega$ is the character of $\kf'$ as defined in Section \ref{SS_intro_approach} for $\Orb'_K$.
		\item
		 Let $\mathcal{E}_\hbar\in\wHC^{gr}(\hat{\A}_\hbar^{\Orb},\theta)^{K,\kappa}$ and keep the notations in Section \ref{subsec:extn_codim2}. Then there is a natural $K'_Q$-equivariant isomorphism between the $\TT^+(\check{\A}_\hbar^\times, \check{Y}^\times)$-module $\check{\mathcal{E}}^\times$ and the pullback $\iota_{\check{Y}^\times}^* \EE$ of the $\TT^+(\hat{\A}_\hbar^{\Orb}, \Orb_K)$-module $\EE$ with respect to the isomorphism in (1).
		\item
		If the $\cm$-action on $\EE$ is strong, so is the induced $\cm$-action on $\check{\mathcal{E}}^\times$.
	\end{enumerate}
\end{Prop}

To prove this proposition, we need to first understand the relation between the Picard algebroids $\TT^+(\hat{\A}_\hbar^\Orb, \Orb_K)$ and $\TT^+(\check{\A}_\hbar^\times, \check{Y}^\times)$ (as well as $\TT(\hat{\A}_\hbar^\Orb, \Orb_K)$ and $\TT(\check{\A}_\hbar^\times, \check{Y}^\times)$ ). We start with defining analogues of them for (not necessarily smooth) affine schemes. The following definition is similar to Definition \ref{defn:pic} of Picard algebroids. Let $\der(?)$ denote the Lie algebra of derivations of an algebra ``?''.

\begin{defi}
	Suppose $\B$ is a commutative (Noetherian) $\C$-algebra. A  {\it (weak) Picard Lie algebra} over $\B$ is a quadruple $(\T, [-,-], \eta_\T, 1_{\T})$, where
	\begin{itemize}
		\item
		$\T$ is a finitely generated $\B$-module;
		\item
		$[-,-]$ is a $\C$-linear Lie bracket;
		\item
		$\eta_\T: \T \to \der(\B)$ is a Lie algebra morphism and a morphism of $\B$-modules;
		\item
		$1_\T$ is a central element in the Lie algebra $\T$ which lies in the kernel of $\eta_\T$,
	\end{itemize}
	such that the Lie bracket $[-,-]$, $\eta_\T$ and the $\B$-module structure of $\T$ are compatible with each other, in the sense that they satisfy a Leibniz rule similar to that in the definition of a Lie algebroid and $\eta_\T$ behaves like an anchor map. Note that we have a $\B$-linear morphism $\iota_\T: \B \to \T$, $b \mapsto b \cdot 1_\T$, and the composite map $\B \xrightarrow{\iota_\T} \T \xrightarrow{\eta_\T} \der(\B)$ is zero. This sequence, however, does not have to be exact.

	As in the case of Picard algebroids, one can similarly define the notion of a morphism $\tau: \T \to \F$ between two Picard Lie algebras $\T=(\T, [-,-]_\T, \eta_\T, 1_{\T})$ and $\F=(\F, [-,-]_\F, \eta_\F, 1_{\F})$ over a base algebra $\B$, by requiring $\tau$ to be a $\B$-linear Lie algebra homomorphism that intertwines the anchor maps $\eta_\T$ and $\eta_{\F}$, and $\tau(1_\T)=1_\F$. Unlike the case of Picard algebroids, however, a morphism of Picard Lie algebras needs not to be an isomorphism. We write $\mathrm{wPA}(\B)$ for the category of all (weak) Picard algebras over $\B$.
\end{defi}

Given a homomorphism $f: \B \to \mathcal{C}$ of commutative algebras over $\C$, we can also define the pullback Picard Lie algebra $f^\circ \T$ over $\mathcal{B}$ of $\T$ as the fiber product of the natural maps
  \[  \der(\mathcal{C}) \to \mathcal{C} \otimes_{\B} \der(\B) \quad \text{and} \quad 1 \otimes \eta_\T :  \mathcal{C} \otimes_{\B} \T \to \mathcal{C} \otimes_{\B} \der(\B).\]
The central element $1_{f^\circ \T}$ is taken to be $(0, 1_\T)$ in this fiber product. The anchor map of $f^\circ \T$ is just the projection map $f^\circ \T \to \der(\mathcal{C})$. The Lie bracket of $f^\circ \T$ can be defined in a similar way as how pullbacks of Picard algebroids are defined. We have the obvious functoriality: if $\tau: \T \to \F$ is a morphism of Picard algebras over $\B$, then it induces a morphism $f^\circ(\tau): f^\circ \T \to f^\circ \F$ of Picard algebras over $\mathcal{C}$ in the obvious way, such that  $\T \mapsto f^\circ \T$ defines a functor $f^\circ: \mathrm{wPA}(\B) \to \mathrm{wPA}(\mathcal{C})$; moreover,
 if $f: \B \to \mathcal{C}$ and $g: \mathcal{C} \to \Dcal$ are two homomoprhisms of commutative $\C$-algebras and $\T$ is a Picard algebra over $\B$, then we have a natural isomorphism $(g \circ f)^\circ  \simeq g^\circ \circ f^\circ : \mathrm{wPA}(\B) \to \mathrm{wPA}(\mathcal{\Dcal})$ of functors. We can hence take various localizations $\B \to S^{-1} \B$ of $\B$ and localize a Picard algebra $\T$ over $\B$ to a Lie algebroid over $\spec \B$ or any of its open subset.

One can also talk about modules over the Picard Lie algebra $(\T, [-,-], \eta_\T, 1_\T)$ similarly to modules over Picard algebroids. That is, a module over $\T$ is a $\B$-module $N$  equipped with a Lie algebra action of $\T$, which satisfies the Leibniz rule with respect to $\eta_\T$ and the $\B$-module structure such that $1_\T$ acts by the identity. Given a homomorphism $f: \B \to \mathcal{C}$ of commutative $\C$-algebras, for any $\T$-module $M$, the $\mathcal{C}$-module $f^*M := \mathcal{C} \otimes_{\B}  M$ is naturally a $f^\circ \T$-module. If $p: M \to N$ is a morphism of modules over $\T$, then it induces a morphism $f^*(p) : f^*M \to f^*N$ of modules over the Picard algebra $f^\circ \T$ over $\mathcal{C}$, so that $f^*(-)$ defines a functor from the category of modules over $\T$ to the category of modules over $f^\circ \T$.

We adopt the setting and notations of Section \ref{SS_weakly_HC}. In particular, let $\Dcal_\hbar$ be an associate flat $\C[\hbar]$-algebra. Assume that $\Dcal_0 = \Dcal_\hbar / \hbar \Dcal_\hbar$ is a Noetherian commutative $\C$-algebra. Then the map $\Dcal_\hbar \times \Dcal_\hbar \to \Dcal_\hbar : (a,b) \mapsto \hbar^{-1}[a,b]$ descends to a Possion bracket on $\Dcal_0$. As in Section \ref{SS_weakly_HC}, suppose $\theta$ is a $\C[\hbar]$-linear anti-involution on $\Dcal_\hbar$. We are mostly interested in the case when $\Dcal_\hbar$ is the quantization $\A_\hbar$ of $\C[\Orb]$ or the quantum slice $\check{\A}_\hbar$. In what follows, we will define  in a similar manner the  analogues for $(\Dcal_\hbar, \theta)$ of the Picard algebroids $\TT^+(\hat{\A}_\hbar^\Orb, \Orb_K)$ and $\TT^+(\check{\A}_\hbar^\times, \check{Y}^\times)$ introduced in Section \ref{subsec:lag_pic}. According to Remark \ref{rmk:pic_mod_t2}, those Picard algebroids can be defined using only the quantization mod $\hbar^2$. Therefore it is natural to set $\Dcal_\hbar^{(2)} := \Dcal_\hbar / \hbar^2 \Dcal_\hbar$ to replace $\Dcal_\hbar$.

 Let $\J$ be the ideal in $\Dcal_0$ generated by $\Dcal_0^{-\theta} $. Note that $\J$ can also be regarded as a Lie algebra via commutator. Set $\Dcal'_0: = \Dcal_0 / \J$. Let $\bar{\J}_\hbar$ be the preimage of $\J$ in $\Dcal_\hbar^{(2)}$ under the quotient map $\Dcal_\hbar^{(2)} \to \Dcal_0$. Then the ideal $\bar{\J}_\hbar^2\subset \Dcal_\hbar^{(2)}$ is also a Lie ideal of $ \bar{\J}_\hbar$ and we can form the Lie algebra $\hbar^{-1} (\bar{\J}_\hbar / \bar{\J}_\hbar^2)$. Since $\hbar \bar{\J}_\hbar \subset \bar{\J}_\hbar^2$ by definition, the natural inclusion $\hbar \Dcal_\hbar^{(2)}  \subset  \bar{\J}_\hbar$ descends to a map
   \[  \iota_\T: \Dcal'_0 \simeq \Dcal_\hbar / \bar{\J}_\hbar \to \hbar^{-1} (\bar{\J}_\hbar / \bar{\J}_\hbar^2). \]
The action of $\hbar^{-1} \bar{\J}_\hbar$ on $\Dcal_\hbar^{(2)}$ by taking commutators preserves $\bar{\J}_\hbar$ and hence induces a Lie algebra morphism
   \[ \eta_\T: \hbar^{-1} (\bar{\J}_\hbar / \bar{\J}_\hbar^2) \to \der(\Dcal_\hbar^{(2)} / \bar{\J}_\hbar) \simeq \der(\Dcal'_0). \]
 As in Section \ref{subsec:lag_pic}, let $\T^+(\Dcal_\hbar, \theta)$ and $\T(\Dcal_\hbar, \theta)$ denote the same Lie algebra $\hbar^{-1} (\bar{\J}_\hbar / \bar{\J}_\hbar^2)$, but with the two $\Dcal'_0$-module structures induced by the left multiplication of $\Dcal_\hbar^{(2)}$ on $\bar{\J}_\hbar$ and the symmetrized product respectively. Clearly, the  maps $\eta_\T$ and $\iota_\T$ are morphisms of $\Dcal'_0$-modules with respect to both $\Dcal'_0$-module structures on $\hbar^{-1} (\bar{\J}_\hbar / \bar{\J}_\hbar^2)$. Moreover, $\eta_\T \circ \iota_\T = 0$. It is then routine to check that both $\T^+(\Dcal_\hbar, \theta)$ and $\T(\Dcal_\hbar, \theta)$ with the structure maps  $\eta_\T$ and $\iota_\T$ are weak Picard Lie algebras over $\Dcal'_0$. All the constructions are compatible with $\cm$-actions. Again our constructions are compatible with localizations. This together with Remark \ref{rmk:pic_mod_t2} immediately implies the following lemma.

 \begin{Lem}\label{lem:Picard_loc}
 	The localizations of $\T^+( \A_\hbar, \theta)$ and $\T( \A_\hbar, \theta)$ to $\Orb_K$ are naturally isomorphic to the Picard algebroids $\TT^+(\hat{\A}_\hbar^\Orb, \Orb_K)$ and $\TT(\hat{\A}_\hbar^\Orb, \Orb_K)$ respectively. Similarly the localizations of $\T^+( \check{\A}_\hbar, \theta)$ and $\T( \check{\A}_\hbar, \theta)$ to $\check{Y}^\times$ are naturally isomorphic to the Picard algebroids $\TT^+(\check{\A}_\hbar^\times, \check{Y}^\times)$ and $\TT(\check{\A}_\hbar^\times, \check{Y}^\times)$ respectively.
 \end{Lem}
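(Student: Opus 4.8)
The plan is to reduce both sides of the claimed isomorphism to data living modulo $\hbar^2$ and then to match the affine global construction of this section with the sheaf-theoretic construction of Section~\ref{subsec:lag_pic} by a localization argument. First I would invoke Remark~\ref{rmk:pic_mod_t2}: the Picard algebroid $\TT^+(\hat{\A}_\hbar^\Orb,\Orb_K)$ is canonically the sheaf $\hbar^{-1}(\wt{\J}_{\Orb_K,\hbar}/\wt{\J}_{\Orb_K,\hbar}^2)$, where $\wt{\J}_{\Orb_K,\hbar}$ is the preimage of the ideal sheaf $\J_{\Orb_K}\subset\OO_\Orb$ of $\Orb_K$ under the reduction $\hat{\A}_\hbar^\Orb/\hbar^2\hat{\A}_\hbar^\Orb\twoheadrightarrow\OO_\Orb$, with the $\OO_{\Orb_K}$-module structure given by left multiplication, while $\TT(\hat{\A}_\hbar^\Orb,\Orb_K)$ is the same sheaf with the $\OO_{\Orb_K}$-module structure given by the symmetrized product. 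On the affine side, for $\Dcal_\hbar=\A_\hbar$ one has $\Dcal_0=\C[\Orb]$, the anti-involution induced on $\Dcal_0$ is the Poisson anti-involution of $\C[\Orb]$ attached to $-\sigma$, the ideal $\J$ of this section is $\langle\C[\Orb]^{-\theta}\rangle$, and by construction $\T^+(\A_\hbar,\theta)=\hbar^{-1}(\bar{\J}_\hbar/\bar{\J}_\hbar^2)$ (with the left-multiplication $\Dcal'_0$-structure) and $\T(\A_\hbar,\theta)$ is the same module with the symmetrized $\Dcal'_0$-structure, where $\bar{\J}_\hbar$ is the preimage of $\J$ in $\A_\hbar/\hbar^2\A_\hbar$.

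The geometric input I would use is precisely the one already established in the proof of Lemma~\ref{Lem:HC_bijection2}: the ideals $\langle\C[\Orb]^{-\theta}\rangle$ and $\C[\Orb]\kf$ of $\C[\Orb]$ have the same localization to $\Orb$, and this localization has reduced quotient sheaf, namely $\OO_{\Orb\cap\kf^\perp}$. Since $\Orb_K$ is a $K$-orbit and an irreducible component of the smooth variety $\Orb\cap\kf^\perp$, it is a connected component of $\Orb\cap\kf^\perp$; hence in a neighborhood of $\Orb_K$ inside $\Orb$ the localization of $\J$ coincides with $\J_{\Orb_K}$. Because reduction modulo $\hbar^2$ makes the $\hbar$-adic completion vacuous and turns microlocalization into ordinary (flat) restriction of sheaves, $\hat{\A}_\hbar^\Orb/\hbar^2\hat{\A}_\hbar^\Orb$ is obtained from $\A_\hbar/\hbar^2\A_\hbar$, regarded as a sheaf of algebras on $\Spec\C[\Orb]$, by restriction to the open subset $\Orb$. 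Consequently $\bar{\J}_\hbar$ localizes (over $\Orb$, then restricted to the component $\Orb_K$) to $\wt{\J}_{\Orb_K,\hbar}$ and $\bar{\J}_\hbar^2$ to $\wt{\J}_{\Orb_K,\hbar}^2$, using that localization is exact and compatible with products of submodules.

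It then remains to observe that all the remaining structure on the two objects — the $\C$-linear Lie bracket, the anchor map $\eta_\T$, the structure map $\iota_\T$ (equivalently, the central section $1_\T$), and each of the two $\Dcal'_0$-module structures — is in each case built directly out of the algebra $\A_\hbar/\hbar^2\A_\hbar$ exactly as the corresponding structure on $\TT^+(\hat{\A}_\hbar^\Orb,\Orb_K)$ (resp. $\TT(\hat{\A}_\hbar^\Orb,\Orb_K)$) is built out of $\hat{\A}_\hbar^\Orb/\hbar^2\hat{\A}_\hbar^\Orb$; so all of it commutes with localization, and compatibility with the $\cm$-action is automatic since every construction is $\cm$-equivariant. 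This yields the asserted natural isomorphisms in the case of $\A_\hbar$. The slice case is handled verbatim with $\Dcal_\hbar=\check{\A}_\hbar$: here $\Dcal_0=\C[\check{X}]$, and $\langle\C[\check{X}]^{-\theta}\rangle$ localized to $\check{X}^\times$ has reduced quotient $\OO_{\check{Y}^\times}$ because $\check{Y}^\times=(\check{X}^\times)^\theta$ is the fixed locus of an anti-symplectic involution of the smooth symplectic variety $\check{X}^\times$, hence a smooth Lagrangian (the same argument as in Lemma~\ref{Lem:lag_antiinvolution}(1)); localizing $\T^+(\check{\A}_\hbar,\theta)$ and $\T(\check{\A}_\hbar,\theta)$ to $\check{Y}^\times$ then produces $\TT^+(\check{\A}_\hbar^\times,\check{Y}^\times)$ and $\TT(\check{\A}_\hbar^\times,\check{Y}^\times)$.

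I do not expect a genuine obstacle here: the whole argument is an unwinding of definitions, and its only non-formal ingredient — that $\langle\C[\Orb]^{-\theta}\rangle$ (resp. $\langle\C[\check{X}]^{-\theta}\rangle$) localizes to the reduced ideal sheaf of $\Orb_K$ (resp. $\check{Y}^\times$) — is already available from the proof of Lemma~\ref{Lem:HC_bijection2}. The one point that deserves a little care is the interplay of the $\hbar$-adic completion and the microlocalization with reduction modulo $\hbar^2$; but, as noted, modulo $\hbar^2$ the completion is vacuous and the microlocalization is a flat localization of sheaves, so no real subtlety arises.
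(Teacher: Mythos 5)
Your argument is correct and is essentially the paper's own proof: the paper disposes of this lemma by observing that the affine constructions of $\T^+(\Dcal_\hbar,\theta)$ and $\T(\Dcal_\hbar,\theta)$ are manifestly compatible with localization and then invoking Remark \ref{rmk:pic_mod_t2}, which is exactly the reduction-mod-$\hbar^2$ matching you carry out. The only difference is that you make explicit the implicit geometric input (that $\langle\C[\Orb]^{-\theta}\rangle$, resp.\ $\langle\C[\check{X}]^{-\theta}\rangle$, localizes over the smooth locus to the reduced ideal sheaf of the Lagrangian component), which the paper takes for granted from the proof of Lemma \ref{Lem:HC_bijection2}.
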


As in the case of quantization of vector bundles over Lagrangian subvarieties (see Section \ref{subsec:lag_vec}), if $M_\hbar$ is a HC $(\Dcal_\hbar, \theta)$-module, then the $\Dcal'_0$-module $M_0 := M_\hbar / \hbar M_\hbar$ is naturally a module over $\T^+(\Dcal_\hbar, \theta)$. Again all the constructions are compatible with $\cm$-actions.

Now we come back to the setting of Proposition \ref{Prop:slice_strongness}.

\begin{proof}[Proof of Proposition \ref{Prop:slice_strongness}]
We first prove part (1). We have a decomposition $\A_\hbar^{\wedge_{\chi'}} \simeq  \check{\A}_\hbar^{\wedge 0} \widehat{\otimes}_{\C[[\hbar]]}  \Weyl_\hbar(\check{V} )^{\wedge_0}$. Set $\Dcal_\hbar = \A_\hbar^{\wedge_{\chi'}}$ and $\mathcal{C}_\hbar = \check{\A}_\hbar^{\wedge 0}$, then $\Dcal_0 = \Dcal_\hbar/\hbar\Dcal \simeq \C[X^{\wedge_{\chi'}}]$ and $\mathcal{C}_0 = \mathcal{C}_\hbar / \hbar \mathcal{C}_\hbar \simeq \C[\check{X}^{\wedge_0}]$. Let $\J_\mathcal{C}$ and $\J_\Dcal$ be the ideals in $\mathcal{C}_0$ and $\Dcal_0$ generated by $\mathcal{C}_0^{-\theta}$ and $\Dcal_0^{-\theta}$ respectively. Let $\bar{\J}_{\mathcal{C},\hbar}$ and $\bar{\J}_{\Dcal,\hbar}$ be the preimage of $\J_\mathcal{C}$ and $\J_\Dcal$ in $\mathcal{C}_\hbar^{(2)}$ and $\Dcal_\hbar^{(2)}$, respectively. Set $\mathcal{C}'_0: = \mathcal{C}_0 / \J_\mathcal{C}$ and $\Dcal'_0: = \Dcal_0 / \J_\Dcal$. We have a natural morphism $\Dcal_0 \to \mathcal{C}_0$ intertwining $\theta$'s, which is the completion of the morphism $\C[X] \to \C[\check{X}]$ corresponding the inclusion $\check{X} \hookrightarrow X$. This induces a morphism $\hat{f}: \Dcal_0' \to \mathcal{C}_0'$. It is the completion of the morphism $f: \C[X] / \langle \C[X]^{-\theta} \rangle \to \C[\check{X}] / \langle \C[\check{X}]^{-\theta} \rangle$.

The subspace $\bar{\J}_{\mathcal{C},\hbar} \widehat{\otimes} 1 \subset \mathcal{C}_\hbar^{(2)} \widehat{\otimes}_{\C[[\hbar]]}  \Weyl_\hbar(\check{V} )^{\wedge_0}  \simeq \Dcal_\hbar^{(2)}$ lies inside $\bar{\J}_{\Dcal,\hbar}$. The inclusion map $\hbar^{-1}\bar{\J}_{\mathcal{C},\hbar} \widehat{\otimes} 1 \hookrightarrow \hbar^{-1} \bar{\J}_{\Dcal,\hbar}$ induces a $\mathcal{C}'_0$-linear map $\T^+(\mathcal{C}_\hbar, \theta) \to \mathcal{C}'_0 \otimes_{\Dcal'_0} \T^+(\Dcal_\hbar, \theta) $. Take the composition of the maps
\[ \hbar^{-1} \bar{\J}_{\mathcal{C},\hbar} \widehat{\otimes} 1 \hookrightarrow \hbar^{-1} \bar{\J}_{\Dcal,\hbar} \twoheadrightarrow  \hbar^{-1} (\bar{\J}_{\Dcal,\hbar} / \bar{\J}_{\Dcal,\hbar}^2) = \T^+(\Dcal_\hbar, \theta) \to \mathcal{C}'_0 \otimes_{\Dcal'_0} \T^+(\Dcal_\hbar, \theta) \xrightarrow{\mathbbm{1} \otimes \eta_\T} \mathcal{C}'_0 \otimes_{\Dcal'_0} \der(\Dcal'_0),\]
where the map $\T^+(\Dcal_\hbar, \theta) \to \mathcal{C}'_0 \otimes_{\Dcal'_0} \T^+(\Dcal_\hbar, \theta)$ is one induced by tensoring the identity map of $\T^+(\Dcal_\hbar, \theta)$ with the morphism $\hat{f}: \Dcal_0' \to \mathcal{C}_0'$,  and the last one $\mathbbm{1} \otimes \eta_\T$ is induced by the anchor map $\eta_\T$ of $\T^+(\Dcal_\hbar, \theta)$. The resulting composite map $\hbar^{-1} \bar{\J}_{\mathcal{C},\hbar}\rightarrow \mathcal{C}'_0 \otimes_{\Dcal'_0} \der(\Dcal'_0)$  coincides with the composite map
$$\hbar^{-1} \bar{\J}_{\mathcal{C},\hbar}\twoheadrightarrow
\hbar^{-1} (\bar{\J}_{\mathcal{C},\hbar}/\bar{\J}_{\mathcal{C},\hbar}^2) = \T^+(\mathcal{C}_\hbar, \theta) \xrightarrow{\eta_\T} \der(\mathcal{C}'_0) \to \mathcal{C}'_0 \otimes_{\Dcal'_0} \der(\Dcal'_0).$$
This implies that we have a natural $\cm$-equivariant morphism $\T^+(\mathcal{C}_\hbar, \theta) \to \hat{f}^\circ [\T^+(\Dcal_\hbar, \theta)]$ of Picard Lie algebras over $\mathcal{C}'_0$. Note that $\hat{f}^\circ [\T^+(\Dcal_\hbar, \theta)]$ is nothing else but the completion of $f^\circ [\T^+(\A_\hbar, \theta)]$. 
Taking the $\cm$-finite vectors gives a $\cm$-equivariant morphism
\begin{equation} \label{eq:res_Picard_algebra}
	\upsilon^+: \T^+(\check{\A}_\hbar, \theta) \to f^\circ [\T^+(\A_\hbar, \theta)]
\end{equation}
of Picard Lie algebras over $\C[\check{Y}]$. Similarly we have a $\cm$-equivariant morphism $\upsilon: \T(\check{\A}_\hbar, \theta) \to f^\circ [\T(\A_\hbar, \theta)]$.
The localization of the map $\upsilon^+$ to $\check{Y}^{\times}$ is a $\cm$-equivariant morphism $\nu^+: \TT^+(\check{\A}_\hbar^\times, \check{Y}^\times) \to \iota_{\check{Y}^\times}^\circ [ \TT^+(\hat{\A}_\hbar^{\Orb}, \Orb_K)]$ of Picard algebroids by Lemma \ref{lem:Picard_loc} and hence must be an isomorphism. The $K'_Q$-equivariance of $\nu^+$ is clear. The claim about the strong $K_Q'$-actions follows from the same argument as in the proof of Lemma \ref{Lem:restriction_equivariance}.

Now we prove part (2). Keeping the notations from Section \ref{subsec:extn_codim2}, we have a natural isomorphism $\check{\Ecal}_{\hbar,1}' \xrightarrow{\sim}  \mathcal{C}'_0 \otimes_{\Dcal'_0} (\iota_*\mathcal{E}_{\hbar,1})^{\wedge_{\chi'}}$ induced by the decomposition \eqref{eq:decomposition11} for $n=1$ and the inclusion $\check{\Ecal}_{\hbar,1}' \simeq \check{\Ecal}_{\hbar,1}' \widehat{\otimes} 1 \subset (\iota_*\mathcal{E}_{\hbar,1})^{\wedge_{\chi'}}$. By \eqref{eq:decomposition11} for $n=2$, we see that the restriction of the Lie algebra action of $\bar{\J}_{\Dcal, \hbar}$ on $(\iota_*\mathcal{E}_{\hbar,2})^{\wedge_{\chi'}}$ to an action of $\bar{\J}_{\mathcal{C},\hbar} \widehat{\otimes} 1$ preserves the subspace $\check{\Ecal}_{\hbar,2}' \widehat{\otimes} 1$. This implies that the isomorphism $\check{\Ecal}_{\hbar,1}' \xrightarrow{\sim}  \mathcal{C}'_0 \otimes_{\Dcal'_0} (\iota_*\mathcal{E}_{\hbar,1})^{\wedge_{\chi'}} = (\iota_{\check{Y}}^*\iota_*\mathcal{E})^{\wedge_{0}}$ intertwines the $\T^+(\mathcal{C}_\hbar, \theta)$-module structure on the domain and the $\hat{f}^\circ [\T^+(\Dcal_\hbar, \theta)]$-module structure on the codomain, via the morphism $\T^+(\mathcal{C}_\hbar, \theta) \to \hat{f}^\circ [\T^+(\Dcal_\hbar, \theta)]$ of Picard Lie algebras. Taking $\cm$-finite parts and localizing to $\check{Y}^\times$, we get (2). Again the statement about $K'_Q$-equivariance is clear.

Finally we prove part (3). Recall that to construct  $\check{\mathcal{E}}^\times_\hbar$ and $\check{\mathcal{E}}^\times$ together with their $\cm$-actions, we first take the action $\alpha_{h'}$ of the one-parameter subgroup of $K$ generated by the semisimple element $h'$ of the normal $\slf_2$-triple $(e',h',f')$ corresponding to $\chi'$, which commutes with the original $\cm$-action on $\EE_\hbar$, then twist the $\cm$-action on $\EE_\hbar$ by the action $\alpha_{h'}$ to make it compatible with the Kazhdan action. Since $\EE_\hbar$ is a $(K, \kappa)$-equivariant graded Hamiltonian quantization of $(\Orb_K, \EE)$, the sheaf $\EE$ is a $(K, \kappa)$-equivariant $\tatp$-module. This implies that the differential of the $K$-action on the projectivization $\mathbb{P}(\EE)$ of $\EE$ coincides with the $\kf$-action induced by the projective flat connection and the $\kf$-action on $\Orb_K$. The Kazhdan $\cm$-action from Section \ref{SS_Sl_sl} is obtained by twisting the original strong $\cm$-action by $\alpha_{h'}$. Now the action $\alpha_{h'}$ is the restriction of the $K$-action on $\EE$ to a torus subgroup, hence also satisfies the strongness condition. The action $\alpha_{h'}$  commutes with the orginal $\cm$-action, so the twisted $\cm$-action on $\EE$ is still strong.
Then in the construction of quantum slice and $\check{\mathcal{E}}^\times_\hbar$, we restrict the twisted action on $\EE$ to $\check{\mathcal{E}}^\times$ and the natural isomorphisms in part (1) and (2) are $\cm$-equivariant with respect to the twisted $\cm$-actions. Therefore the resulting $\cm$-action on $\check{\mathcal{E}}^\times$ is also strong.
\end{proof}

\subsection{Description of the image}\label{SS_image_description}
The following proposition gives a description of the image of $\bullet_{\dagger,\chi}$
that will be used to characterize the image of
$\overline{\HC}(\A_\lambda,\theta)^{K,\kappa}$ in $\operatorname{Rep}(K_Q,\lambda|_{\kf_Q}-\kappa_Q)$.

Let $\Orb_K^{i},i=1,\ldots,\ell,$ be all
codim $2$ orbits in $\overline{\Orb}_K$, $\check{X}^i$ be the corresponding
4-dimensional slices in $X$ (at points $\chi^i\in \Orb_K^i$), and $\theta^i$ be induced Poisson anti-involutions of $X^i$. We write $\check{X}^{i\times}$ for $\check{X}^i\setminus \{0\}$.
Set $\check{\A}^i_\lambda:=(\A_\lambda)_{\dagger,\chi^i}$, these are quantizations of $\C[\check{X}^i]$.
For $V\in \operatorname{Rep}(K_Q,\lambda|_{\kf_Q}-\kappa_Q)$, we write $\mathcal{E}$ for the corresponding
twisted local system on $\Orb_K$. We write $\check{\mathcal{E}}^i$ for the restriction of $\mathcal{E}$ to $\check{X}^{i\times}$. We note that $\mathcal{E}$ is an equivariant twisted system on an orbit,
so it is regular, see the last paragraph of Section \ref{subsec:RH}. By Proposition \ref{Prop:regular_local_systems}, $\check{\mathcal{E}}^i$ is regular as well.

\begin{Prop}\label{Lem:image_description11}
Let $V$ be an irreducible object in $\operatorname{Rep}(K_Q,\lambda|_{\kf_Q}-\kappa_Q)$. The following claims are true:
\begin{enumerate}
\item If $V\in \operatorname{Im}\bullet_{\dagger,\chi}$, then there are
$M^i\in \HC(\check{\A}^i_\lambda,\theta^i)$ with good filtrations such that
$\gr M^i|_{\check{X}_i^\times}\cong \check{\mathcal{E}}^i$, a $\C^\times$-equivariant isomorphism of twisted
local systems.
\item Conversely, assume that $V$ satisfies the following condition:
there are $M^i\in \HC(\check{\A}^i_\lambda,\theta^i)$ with good filtrations such that
$\gr M^i\cong \Gamma(\check{\mathcal{E}}^i)$, an isomorphism of graded weakly HC modules.
Then $V\in \operatorname{Im}\bullet_{\dagger,\chi}$.
\end{enumerate}
\end{Prop}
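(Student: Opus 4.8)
The plan is to prove Proposition~\ref{Lem:image_description11} by translating the criterion of Lemma~\ref{Lem:image_description} into the microlocal language of Sections~\ref{SSS_conclusion_microloc} and then applying the extension results of Sections~\ref{subsec:extn_codim2}--\ref{subsec:extn outside of codim 2}. Recall that by Lemma~\ref{Lem:image_description}, $V\in\operatorname{Im}\bullet_{\dagger,\chi}$ if and only if $(V^{\dagger,\chi})_{\dagger,\chi}\xrightarrow{\sim}V$, equivalently (via the equivalence \eqref{eq:final_jet_equiv} and the discussion in Section~\ref{SSS_conclusion_microloc}) if and only if the microlocalization of the global sections of the quantized twisted local system $\mathcal{E}_\hbar$ attached to $V$ recovers $\mathcal{E}_\hbar$ itself. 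Here $\mathcal{E}_\hbar$ is the unique graded quantization of $\mathcal{E}$ as a $\TT^+(\hat{\A}^\Orb_\hbar,\Orb_K)$-module: existence follows because $V$ already lies in $\operatorname{Rep}(K_Q,\lambda|_{\kf_Q}-\kappa_Q)$ (so the obstruction of Theorem~\ref{thm:lag_vec} vanishes in the graded/homogeneous setting), and uniqueness follows from Theorem~\ref{thm:quan_lag} together with Lemma~\ref{Lem:irred_strongness_HC} once we note $\mathcal{E}$ is irreducible as a twisted local system because $V$ is irreducible.

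For part (1): assume $V\in\operatorname{Im}\bullet_{\dagger,\chi}$. Then by the microlocal reformulation, $\Gamma(\mathcal{E}_\hbar)$ is a finitely generated $\hat{\A}_\hbar$-module whose microlocalization to $\Orb_K$ is $\mathcal{E}_\hbar$; in particular $(\iota'\circ\iota)_*\mathcal{E}_\hbar$ is coherent, hence so is $\iota_*\mathcal{E}_\hbar$. By Lemma~\ref{Lem:coh_extension1}, for each codimension~$2$ orbit $\Orb^i_K$, the restriction of $\Gamma(\check{\mathcal{E}}^{i\times}_\hbar)$ to $\check{X}^{i\times}$ equals $\check{\mathcal{E}}^{i\times}_\hbar$, so $\Gamma(\check{\mathcal{E}}^{i\times}_\hbar)$ is a finitely generated $\check{\A}^i_\hbar$-module; passing to $\hbar=1$ and taking $\C^\times$-finite vectors we obtain $M^i\in\HC(\check{\A}^i_\lambda,\theta^i)$ (using that the $\C^\times$-action on $\mathcal{E}$, and hence on $\check{\mathcal{E}}^{i\times}$, is strong by Proposition~\ref{Prop:slice_strongness}(3), which is what makes the $\hbar=1$ specialization a genuine filtered HC module). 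Its associated graded with respect to the induced good filtration is then, by construction, identified with $\Gamma(\check{\mathcal{E}}^{i\times})$ restricted appropriately, and in particular $\gr M^i|_{\check{X}^{i\times}}\cong\check{\mathcal{E}}^i$ as $\C^\times$-equivariant twisted local systems --- here we invoke Proposition~\ref{Prop:slice_strongness}(2), which identifies $\check{\mathcal{E}}^i$ with the pullback $\iota_{\check{Y}^i}^*\mathcal{E}$.

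For part (2): conversely, suppose each $\check{\mathcal{E}}^i$ admits an $M^i\in\HC(\check{\A}^i_\lambda,\theta^i)$ with $\gr M^i\cong\Gamma(\check{\mathcal{E}}^i)$ as graded weakly HC modules. We must verify the two hypotheses that feed into Lemmas~\ref{Lem:coh_extension1} and~\ref{Lem:coh_extension2}. The hypothesis $\gr M^i\cong\Gamma(\check{\mathcal{E}}^i)$ together with the uniqueness of the quantization $\check{\mathcal{E}}^{i\times}_\hbar$ of $\check{\mathcal{E}}^i$ (Theorem~\ref{thm:quan_lag}, applicable by Proposition~\ref{Prop:slice_strongness}(3)) forces $M^i_\hbar:=R_\hbar(M^i)$ (with respect to the given good filtration) to microlocalize to $\check{\mathcal{E}}^{i\times}_\hbar$ on $\check{X}^{i\times}$, so that $\Gamma(\check{\mathcal{E}}^{i\times}_\hbar)=M^i_\hbar$ is finitely generated; this is precisely condition (2) of Lemma~\ref{Lem:coh_extension1}, so $\iota_*\mathcal{E}_\hbar$ is coherent. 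Moreover the isomorphism $\gr M^i\cong\Gamma(\check{\mathcal{E}}^i)=\Gamma(\check{\mathcal{E}}^{i\times}_\hbar/\hbar\check{\mathcal{E}}^{i\times}_\hbar)$ combined with $M^i_\hbar/\hbar M^i_\hbar=\gr M^i$ gives exactly the condition (*) of Lemma~\ref{Lem:coh_extension2}, namely $\Gamma(\check{\mathcal{E}}^{i\times}_\hbar)/\hbar\Gamma(\check{\mathcal{E}}^{i\times}_\hbar)\xrightarrow{\sim}\Gamma(\check{\mathcal{E}}^{i\times}_\hbar/\hbar\check{\mathcal{E}}^{i\times}_\hbar)$. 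Lemma~\ref{Lem:coh_extension2} then yields that the microlocalization of $\Gamma(\mathcal{E}_\hbar)$ to $\Orb_K$ is $\mathcal{E}_\hbar$, which by Lemma~\ref{Lem:image_description} (transported through \eqref{eq:final_jet_equiv}) means $V\in\operatorname{Im}\bullet_{\dagger,\chi}$.

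The main obstacle is bookkeeping the precise relationship between the \emph{filtered/Rees} objects ($M^i$, $\Gamma(\check{\mathcal{E}}^i)$, $\gr M^i$) and the \emph{$\hbar$-adic microlocal} objects ($\check{\mathcal{E}}^{i\times}_\hbar$, its global sections), making sure that the strongness hypothesis (Lemma~\ref{Lem:irred_strongness_HC}, Proposition~\ref{Prop:slice_strongness}(3)) is available at each point where Theorem~\ref{thm:quan_lag} is invoked to pin down the quantization uniquely, and checking that the $\C^\times$-action appearing in $\check{\mathcal{E}}^{i\times}_\hbar$ (the Kazhdan-rescaled one, twisted by $\gamma'$) matches the one implicit in calling $M^i$ a filtered HC module. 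All of this is routine given the machinery already assembled in Section~\ref{subsec:extn_codim2} and Proposition~\ref{Prop:slice_strongness}, but it requires care to state correctly.
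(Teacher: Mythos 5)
Your part (2) is essentially the paper's own argument: you use Lemma \ref{Lem:irred_strongness_HC} and Proposition \ref{Prop:slice_strongness}(3) to get strongness, Theorem \ref{thm:quan_lag} to identify the microlocalization of $R_\hbar(M^i)$ with the slice quantization $\check{\mathcal{E}}^i_\hbar$, and then the mod-$\hbar$ comparison with $\Gamma(\check{\mathcal{E}}^i)$ to obtain condition (*) of Lemma \ref{Lem:coh_extension2} (and, implicitly, condition (2) of Lemma \ref{Lem:coh_extension1}), concluding via Lemma \ref{Lem:coh_extension2} and the reformulation of membership in the image through \eqref{eq:final_jet_equiv} and Lemma \ref{Lem:image_description}. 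Two small inaccuracies there: strongness is what buys the \emph{uniqueness} of the graded quantization in Theorem \ref{thm:quan_lag}, not the passage to $\hbar=1$; and $\mathcal{E}_\hbar$ is produced directly from the $\hbar$-adic completion of $R_\hbar(V)$ via the equivalence \eqref{eq:final_jet_equiv}, so no appeal to Theorem \ref{thm:lag_vec} is needed. Also, the isomorphism $M^i_\hbar\cong\Gamma(\check{\mathcal{E}}^{i\times}_\hbar)$ is a consequence of the mod-$\hbar$ isomorphism $\gr M^i\cong\Gamma(\check{\mathcal{E}}^i)$, not something that follows merely from the microlocalization statement, so the two steps should be taken in the opposite order from how you phrased them.

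For part (1) there is a gap. You deduce coherence of $(\iota'\circ\iota)_*\mathcal{E}_\hbar$ from the statement ``the microlocalization of $\Gamma(\mathcal{E}_\hbar)$ is $\mathcal{E}_\hbar$'' with an ``in particular'', but this necessity direction is nowhere proved in the paper: Lemmas \ref{Lem:coh_extension1} and \ref{Lem:coh_extension2} only develop the machinery needed for sufficiency, and the claim that membership in the image forces coherence of the $\hbar$-adic pushforward is exactly the kind of statement whose failure of automaticity motivates the whole section (it is only asserted informally in the introduction with a pointer to \cite[Section 2.5]{HC_symp}). Without either importing that result carefully or reproving the relevant completion/limit arguments near the codimension-two orbits, your construction of $M^i$ as the $\C^\times$-finite part of $\Gamma(\check{\mathcal{E}}^{i\times}_\hbar)$ at $\hbar=1$ is not justified. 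The paper sidesteps all of this: it sets $M:=V^{\dagger,\chi}$ with its distinguished filtration (Remark \ref{Rem:distinguished_filtration}), notes $M_{\dagger,\chi}=V$ by Lemma \ref{Lem:image_description} and $\gr M|_{\Orb_K}=\mathcal{E}$ by the microlocal description of $\bullet_{\dagger,\chi}$, and then simply takes $M^i:=M_{\dagger,\chi^i}$, using $\gr(M_{\dagger,\chi^i})=(\gr M)_{\dagger,\chi^i}$ to get the required isomorphism $\gr M^i|_{\check{X}^{i\times}}\cong\check{\mathcal{E}}^i$; note also that the statement of (1) only asks for this restriction to the punctured slice, not for $\gr M^i\cong\Gamma(\check{\mathcal{E}}^i)$, so the heavier route is unnecessary even in its conclusion.
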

\begin{proof}
Set $M=V^{\dagger,\chi}$, see Section \ref{SS_Prop_restr}.
This module comes with the natural filtration, Remark \ref{Rem:distinguished_filtration}.

To prove (1), observe that $M_{\dagger,\chi}=V$, Lemma \ref{Lem:image_description}. Then
$(\gr M)_{\dagger,\chi}=\gr (M_{\dagger,\chi})=V$.
We have $\gr M|_{\Orb_K}=\mathcal{E}$ by the description of $\bullet_{\dagger,\chi}$
as the microlocalization functor, Section \ref{SSS_conclusion_microloc}.
It follows that $\gr (M_{\dagger,\chi^i})=(\gr M)_{\dagger,\chi^i}$, so we can
take $M^i:=M_{\dagger,\chi^i}$ in (1). This finishes the proof of (1).

We proceed to proving (2). Let $\mathcal{E}_\hbar$ be the object
in $\HC(\hat{\A}_\hbar^{\Orb},\theta)^{K,\kappa}$ corresponding to the
$\hbar$-adic completion of $R_\hbar(V)$ (with trivial filtration) under
equivalence (\ref{eq:final_jet_equiv}).
We need to show $M_{\dagger,\chi}\cong V$. Thanks to Section
\ref{SSS_conclusion_microloc}, this is equivalent to the claim
that the microlocalization of $\Gamma(\mathcal{E}_\hbar)$ to
$\Orb_K$ is $\mathcal{E}_\hbar$.

We are now going to show that
the assumption in (2) implies that (*) of Lemma \ref{Lem:coh_extension2}
holds for all $i$. Let $M^i_\hbar$ denote the $\hbar$-adic completion of $R_\hbar(M^i)$. We have $M^i_\hbar/\hbar M^i_\hbar\xrightarrow{\sim} \Gamma(\check{X}^{i\times}, \check{\mathcal{E}}^i)$. Let $\check{\mathcal{E}}^i_\hbar$
denote the microlocalization of $M^i_\hbar$ to $\check{X}^{i\times}$.
By Lemma  \ref{Lem:irred_strongness_HC}, the $\C^\times$-action on $\mathcal{E}$ is strong.
By Proposition \ref{Prop:slice_strongness}, the $\C^\times$-action on $\check{\mathcal{E}}^i$ is strong as well.
So $\check{\mathcal{E}}^i_\hbar$ is the unique quantization
of $\check{\mathcal{E}}^i$ by Theorem \ref{thm:quan_lag}.
We have natural homomorphisms $M^i_\hbar\rightarrow \Gamma(\check{\mathcal{E}}^i_\hbar)$.
The composition of $M^i_\hbar/\hbar M^i_\hbar\rightarrow \Gamma(\check{\mathcal{E}}^i_\hbar)/\hbar \Gamma(\check{\mathcal{E}}^i_\hbar)$ with the inclusion $\Gamma(\check{\mathcal{E}}^i_\hbar)/\hbar \Gamma(\check{\mathcal{E}}^i_\hbar)
\hookrightarrow \Gamma(\check{\mathcal{E}}^i)$ is an isomorphism. It follows that
$M^i_\hbar\xrightarrow{\sim} \Gamma(\check{\mathcal{E}}^i_\hbar)$ and (*) of Lemma
\ref{Lem:coh_extension2} holds for all $i$.

Now Lemma \ref{Lem:coh_extension2} implies that
the microlocalization of $\Gamma(\mathcal{E}_\hbar)$ coincides with $\mathcal{E}_\hbar$.
This finishes the proof.
\end{proof}

\section{Harish-Chandra modules in dimension $4$}
\subsection{Goal}\label{SS_goals}
We start by introducing some notation. Let $X$ be a conical symplectic singularity,
 and $\theta$ its Poisson anti-involution.
Assume that $\operatorname{codim}_X X^{sing}\geqslant 4$. In this case, as was explained in
Section \ref{SS_quant_nilp}, we can consider
the universal filtered quantization $\A_{\param}$ of $\C[X]$.
The anti-involution $\theta$ naturally lifts to $\A_{\param}$, denote the
lift by the same letter. Recall the category $\HC(\A_\lambda,\theta,\zeta)$ and its Serre quotient $\overline{\HC}(\A_\lambda,\theta,\zeta)$. See Section \ref{SS_quant_nilp}.

In the present section we describe the categories $\overline{\HC}(\A_\lambda,\theta,\zeta)$, where
\begin{itemize}
\item
$\A_\lambda$ is a quantization of a suitable four dimensional conical symplectic singularity $X$
with $X^{sing}=\{0\}$,
\item and $\theta$ is as above.
\end{itemize}

More precisely, we care about the following situation.
Let $\Orb$ be a nilpotent orbit satisfying (\ref{eq:codim_condition}). For $X$ we take
a slice to a codimension $4$ orbit in $\operatorname{Spec}(\C[\Orb])$. This is a conical
symplectic singularity. The list of possible options for $X$ is known in  all
cases, see \cite{KP1} for $\g=\mathfrak{sl}_n$, \cite{KP2} for the other classical Lie algebras,
and \cite{FJLS1} and \cite{FJLS2} for the exceptional Lie algebras.
Note that these papers deal with slices to open orbits in $\partial\Orb$,
where $\Orb$ is viewed inside of $\g^*$. Our slices will be obtained for those by passing to the normalization and taking a connected component.

Here is the list of possible varieties $X$ that can appear, we will elaborate on the detailed properties of the isomorphisms between $X$ and the model symplectic singularities in Section \ref{subsec:anti_involution}, and 
the orbits $\Orb$ giving rise to these slices in Section \ref{S_classif_thm}.

1) Type $a_2$: the variety $X$ is isomorphic to the closure of the minimal nilpotent orbit $\overline{\Orb_{min}(\mathfrak{sl}_3)}$ in
$\mathfrak{sl}_3$. (Note that when $\g$ is exceptional, the Slodowy slice in $\overline{\Orb}$ can be $2a_2$, the union of two irreducible components, both isomorphic to $\overline{\Orb_{min}(\mathfrak{sl}_3)}$, with the two isolated points glued together.)

2) Type $c_2$: the variety $X$ is the closure of the minimal nilpotent orbit
in $\mathfrak{sp}_4$. Equivalently, $X \simeq \C^4/\{\pm 1\}$.

3) Type $\tau$: $X=\C^4/\mathbb{Z}_3$. Here we write $\mathbb{Z}_3$ for the cyclic group with
$3$ elements thought of as the group of roots of unity of order $3$.
The action on $\C^4$ is as follows: an element $\zeta\in \mathbb{Z}_3$
acts by $\operatorname{diag}(\zeta,\zeta,\zeta^{-1},\zeta^{-1})$.

4) Type $a_2/S_2$ (see \cite[Prop. 3.7]{FJLS2}): Consider the involution
of $\overline{\Orb_{min}(\mathfrak{sl}_3)}$ obtained by restricting the outer involution of $\mathfrak{sl}_3$,
it is Poisson. This involution gives rise to an $S_2$-action. We take $X \simeq \overline{\Orb_{min}(\mathfrak{sl}_3)}/S_2$.

5) Type $\chi$. Let $\Orb$ be the principal orbit in $\mathfrak{sl}_5$. Let $\tilde{\Orb}$
be its universal cover (with Galois group $\mathbb{Z}_5$). Consider the variety
$\Spec(\C[\tilde{\Orb}])$. It is smooth in codimension $2$. For $X$ we take
the slice to a codimension $4$ orbit in $\Spec(\C[\tilde{\Orb}])$. It is studied in detail in \cite{BBFJLS1}.

We note that cases 3)-5) only appear for exceptional $\g$.

It will be convenient for us to use the following terminology.

\begin{defi}\label{defi:loc_sys_quantized}
Let $X$ be a conical symplectic singularity with $\operatorname{codim}_X X^{sing}\geqslant 4$, $\theta$ its graded Poisson anti-involution,
$\A$ a filtered quantization of $\C[X]$, and $\mathcal{E}$ a graded regular twisted local system on $X^{reg}\cap X^\theta$.
\begin{itemize}
\item[(i)]
We say that $\mathcal{E}$ is {\it $\A$-quantizable} if there is
a HC $(\A,\theta)$-module $M$ with a $\C^\times$-equivariant isomorphism $\gr M|_{X^{reg}}\cong \mathcal{E}$
of twisted local systems on $X^{reg}\cap X^\theta$.
\item[(ii)] Moreover, if in this case we additionally have
$\gr M\xrightarrow{\sim} \Gamma(\mathcal{E})$ (for some HC $(\A,\theta)$-module $M$ as in (i)), then we say that $\mathcal{E}$ is {\it strongly $\A$-quantizable}.
\end{itemize}
\end{defi}

In what follows, we will always be in the following situation:
\begin{itemize}
\item The category of regular twisted local systems on $X^\theta\cap X^{reg}$ is semisimple with finitely many simples,
\item and each simple admits a grading.
\end{itemize}

In this case, thanks to Lemma \ref{Lem:graded_loc_sys}, it is enough to check (i) and (ii) only
for irreducible regular twisted local systems.

\begin{defi}\label{defi:obstructive_slice}
Let $X$ be one of the five 4-dimensional conical symplectic singularities mentioned above
and $\theta$ be a graded Poisson anti-involution of $\C[X]$. We say that $(X,\theta)$
is {\it unobstructive} if for every quantization $\A$, every graded regular twisted local system 
(with twist recovered from the microlocalization of $\A$ to $\Orb$, compare to Theorem
\ref{thm:lag_vec}) whose $\C^\times$-action  is strong on $X^\theta\setminus \{0\}$ is strongly $\A$-quantizable.
\end{defi}

The latter definition is motivated by Proposition \ref{Lem:image_description11}: for an unobstructive slice the conditions of this proposition are always satisfied, so it does not obstruct extending
a HC module from $X^{reg}$ to $X$.

\subsection{Slices}
As was mentioned in the previous section, we have a description of the (formal) slices to all codimension 4 $G$-orbits, $\Orb'$, in
$X:=\operatorname{Spec}(\C[\Orb])$. However, we need more: we need to know the Poisson structure, the subalgebra of $\C^\times$-finite elements,
and the Poisson anti-involution $\theta$. In this and the next section, we will see that these structures
are recovered essentially uniquely.

Here we are going to prove that
\begin{itemize}
\item[(*)]
the slice to $\Orb'$ in $X$ at a $\theta$-stable
point $\chi'$ is well-defined as a formal Poisson scheme with an anti-Poisson involution
and an action of $\C^\times$ that rescales the bracket.
\end{itemize}

First, we explain what we mean by a formal slice. The completion $\C[X]^{\wedge_{\chi'}}$ decomposes as
\begin{equation}\label{eq:decomp_101}\C[X]^{\wedge_{\chi'}}=\C[[x_1,y_1,\ldots,x_k,y_k]]\widehat{\otimes}
A\end{equation}
where $2k=\dim \Orb'$, the factor $\C[[x_1,\ldots,y_k]]$ is the algebra of functions on the formal
symplectic polydisc with Darboux basis $x_1,\ldots,y_k$, and $A$ is a complete local ring that coincides
with the centralizer of $\C[[x_1,\ldots,y_k]]$ in $\C[X]^{\wedge_{\chi'}}$. See, for example, \cite[Theorem 2.3]{Kaledin_symplectic}, for a more general result. A quotient $A$ of $\C[X]^{\wedge_{\chi'}}$ that factors through (\ref{eq:decomp_101}) will be called a {\it transverse slice} in $X^{\wedge_{\chi'}}$. Note that if a reductive group $\tilde{Q}$ acts on
$\C[X]^{\wedge_{\chi'}}$ by algebra automorphisms rescaling the bracket,
then one can find $x_1,\ldots,y_k$
such that $\operatorname{Span}_{\C}(x_1,\ldots,y_k)$ is $\tilde{Q}$-stable.
This gives a $\tilde{Q}$-action on $A$ viewed as a quotient.

In particular, taking the formal neighborhood at $\chi'$ of an actual slice in $X$ we get a formal slice in the sense of the
previous paragraph. So, the following lemma implies (*).

\begin{Lem}\label{Lem:slice_well_defined}
Suppose $\tilde{Q}$ is a reductive group acting on $\C[X]^{\wedge_{\chi'}}$ by algebra automorphism
and rescaling the Poisson bracket such that the decomposition is $\tilde{Q}$-stable. Then every transverse slice to $\operatorname{Spec}(\C[[x_1,\ldots,y_k]])$ in $X^{\wedge_{\chi'}}$ is conjugate
to the standard slice $x_1=\ldots=y_k=0$ via a  Hamiltonian
automorphism. If the transverse slice is $\tilde{Q}$-stable, we can pick this
Hamiltonian automorphism to be  $\tilde{Q}$-equivariant.
\end{Lem}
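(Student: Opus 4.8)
The plan is to prove this by the standard order-by-order (Darboux--Weinstein) argument in the graded Poisson setting, adding equivariance at the end by averaging. Write $R := \C[X]^{\wedge_{\chi'}}$ with maximal ideal $\mathfrak{m}$; by hypothesis $R$ is graded by the contracting $\C^\times$-action, with $\mathfrak{m}$ in strictly positive degrees, the Poisson bracket homogeneous of degree $-d$, and $\tilde{Q}$ acting by graded automorphisms. Fix the symplectic polydisc $P = \C[[x_1,\ldots,y_k]] \subset R$, with the $x_i,y_j$ chosen homogeneous and $\operatorname{Span}_\C(x_1,\ldots,y_k)$ a $\tilde{Q}$-submodule as in the discussion preceding the lemma, and let $I_0 = (x_1,\ldots,y_k)$ and $J$ be the ideals of the standard slice and of the given transverse slice respectively. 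First I would note that transversality of $\Spec(R/J)$ to $\Spec P$ at $\chi'$ forces the linear parts of the generators of $J$ to span the same subspace of $\mathfrak{m}/\mathfrak{m}^2$ as those of $I_0$; hence $J$ admits homogeneous generators $x_i', y_j'$ with $x_i' \equiv x_i$, $y_j' \equiv y_j \pmod{\mathfrak{m}^2}$. The goal is then to produce a Hamiltonian automorphism $\Theta$ of $R$ (a convergent composition of time-one Hamiltonian flows, fixing $\chi'$ and preserving the grading) with $\Theta(J) = I_0$.

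The key local computation I would carry out is that the flows $\exp(\{f,-\})$ with $f \in \mathfrak{m}^2$ act transitively, at each graded order, on the transverse slices with the prescribed linear part. Since $R \cong P \widehat{\otimes} A$ with $A$ the centralizer of $P$, the sequence $x_1,\ldots,y_k$ is regular, so the Koszul complex on it resolves $A$ and the possible higher-order corrections to the generators $x_i' = x_i + (\cdots)$, $y_j' = y_j + (\cdots)$ are governed by the free $A$-module $\bigoplus_i (A\,dx_i \oplus A\,dy_i)$. On the other hand, for $f = \sum_i (a_i x_i + b_i y_i)$ with $a_i, b_i$ lifts of elements of $\mathfrak{m}_A$, the relations $\{x_i,y_j\} = \delta_{ij}$ and $\{A,P\} = 0$ give $\{f,x_j\} \equiv -b_j$, $\{f,y_j\} \equiv a_j$ modulo $\mathfrak{m} I_0 + I_0^2$, so $\exp(\{f,-\})$ produces exactly the correction $(b_1,\ldots,b_k,-a_1,\ldots,-a_k)$; nondegeneracy of the symplectic form on $\operatorname{Span}(x_i,y_j)$ makes this bijective on the relevant graded pieces. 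Note that $f \in \mathfrak{m}^2$ makes the Hamiltonian vector field of $f$ vanish at $\chi'$, so $\exp(\{f,-\})$ is a genuine automorphism of $R$ preserving $\mathfrak{m}$, and $f$ can be taken of strictly positive degree, which makes the infinite compositions below converge.

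Then I would build $\Theta = \cdots \circ \exp(\{f_2,-\}) \circ \exp(\{f_1,-\})$ by induction on the graded order: having arranged $\Theta_n(J) \equiv I_0$ modulo corrections of degree $> n$, the degree-$(n+1)$ part of the discrepancy of the generators, read off in the basis $x_i,y_j$, is a homogeneous element of $\bigoplus_i (A\,dx_i \oplus A\,dy_i)$, hence killed by a suitable homogeneous $f_{n+1} \in \mathfrak{m}^2$ of degree about $n+1$ by the previous paragraph; set $\Theta_{n+1} = \exp(\{f_{n+1},-\}) \circ \Theta_n$. As the degrees of the $f_n$ increase, the composition converges to a Hamiltonian automorphism $\Theta$ with $\Theta(J) = I_0$; since $\Theta$ preserves the Poisson bracket it carries Darboux systems to Darboux systems, so all intermediate slices remain transverse to $\Spec P$. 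Finally, for the $\tilde{Q}$-equivariant statement: when $J$ is $\tilde{Q}$-stable every space and every equation in the induction is $\tilde{Q}$-equivariant, so by complete reducibility of finite-dimensional $\tilde{Q}$-modules (equivalently, by applying the Reynolds operator to any solution) the $f_n$ may be chosen so that each $\exp(\{f_n,-\})$ is $\tilde{Q}$-equivariant; then so is $\Theta$.

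I expect the main obstacle to be the transitivity statement of the second paragraph, specifically the two ingredients that need genuine verification: that $x_1,\ldots,y_k$ is a regular sequence in $R$, so that the space of higher-order corrections is the clean free $A$-module with no hidden relations (this should come from the product decomposition via the Koszul complex), and that nondegeneracy of the form on $\operatorname{Span}(x_i,y_j)$ allows every correction to be produced by a Hamiltonian function lying simultaneously in $\mathfrak{m}^2$ (so the exponentials fix $\chi'$) and in strictly positive degree (so the infinite composition converges). Once these are in hand, the inductive bookkeeping, the convergence, and the reduction of the equivariant case to the non-equivariant one via reductivity of $\tilde{Q}$ are all routine.
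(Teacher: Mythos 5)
Your induction is set up over a positive grading that is not among the hypotheses of the lemma: the only assumption is that a reductive group $\tilde{Q}$ acts rescaling the bracket, and $\tilde{Q}$ may be finite (or irrelevant, for the first assertion), so there need not be any contracting $\C^\times$-action; moreover, even in the application where such an action exists, the transverse slice in the non-equivariant statement need not be $\C^\times$-stable, so its ideal $J$ has no homogeneous generators and ``graded order of the discrepancy'' is not an available induction parameter. If you replace the grading by the $\mathfrak{m}$-adic filtration, the scheme stalls exactly at the first step: there the correcting Hamiltonians are of the form $f=\sum_i(a_ix_i+b_iy_i)$ with $a_i,b_i\in\mathfrak{m}_A$, i.e. $f\in\mathfrak{m}^2$, and since the bracket drops $\mathfrak{m}$-adic order by $2$ (degree by $d$ in the graded picture), $\{f,\cdot\}$ only preserves the filtration, so neither the single exponential nor your infinite composition is obviously convergent --- ``strictly positive degree'' is not enough; one needs $\deg f>d$ or a finer argument. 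You flag this as the main obstacle but do not close it. What closes it is the fact that the corrections can be taken to lie in $\mathfrak{m}_A$, where $A$ is the Poisson centralizer of $\C[[x_1,\ldots,y_k]]$: then $\{f,x_j\},\{f,y_j\}\in\mathfrak{m}_A$, and since $\{A,\C[[x_1,\ldots,y_k]]\}=0$ and $\{\mathfrak{m}_A,\mathfrak{m}_A\}\subset\mathfrak{m}_A$ (the bivector of $A$ vanishes at the closed point), iterated brackets with $f$ gain order in the ideal generated by $\mathfrak{m}_A$, which is what makes the exponential well defined. A smaller point: your averaging should project $f_n$ not onto $\tilde{Q}$-invariants (Reynolds operator) but onto the isotypic component for the character opposite to the one by which $\tilde{Q}$ rescales the bracket; an invariant $f$ yields an equivariant flow only when the bracket is preserved.

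This normalization is also why the paper's proof needs no induction at all, so the two arguments are genuinely different in structure. By transversality, $A$ surjects onto $R/J$, so $J$ can be written as $(x_i-F_i,\,y_i-G_i)$ with $F_i,G_i\in\mathfrak{m}_A$; the single Hamiltonian $f=\sum_i(-y_iF_i+x_iG_i)$ then conjugates $J$ to the standard ideal $(x_1,\ldots,y_k)$ in one shot, with no grading, no order-by-order corrections, and no infinite composition. Equivariance also comes for free rather than by averaging: if $J$ is $\tilde{Q}$-stable, the map $\operatorname{Span}_\C(x_1,\ldots,y_k)\to\operatorname{Span}_\C(F_1,\ldots,G_k)$ is $\tilde{Q}$-equivariant, so this $f$ is automatically a semi-invariant with the character opposite to that of the bracket and $\exp(\{f,\cdot\})$ is $\tilde{Q}$-equivariant. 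So the missing idea in your write-up is precisely the reduction of all discrepancies into the centralizer $A$; once you have it, your iterative scheme collapses to a single step, and without it the convergence and homogeneity issues above are genuine gaps.
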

\begin{proof}
By the implicit function theorem (for formal completions), every transverse slice
in $X^{\wedge_{\chi'}}$ is defined by the ideal
$$(x_i-F_i, y_i-G_i| i=1,\ldots,k)$$
for suitable elements $F_i,G_i$ that lie in the ideal $I\subset \C[X]^{\wedge_{\chi'}}$ generated by the maximal ideal of a fixed transverse slice $A$
and the homogeneous quadratic polynomials in $x_i,y_i$. We can modify the elements $F_i,G_i$
without changing the ideal and assume that they actually lie in the maximal ideal of $A$.

Now consider the element $f:=\sum_{i=1}^k (-y_i F_i+x_i G_i)$. The Hamiltonian automorphism
$\exp(\{f,\cdot\})$ sends $x_i$ to $x_i-F_i$ and $y_i$ to $y_i-G_i$ for all $i$. The condition
that the slice is $\tilde{Q}$-stable is equivalent to the map $\operatorname{Span}_\C(x_1,\ldots,y_k)
\rightarrow \operatorname{Span}_\C(F_1,\ldots,G_k)$ being $\tilde{Q}$-equivariant. From here it is
easy to see that $f$ transforms under $\tilde{Q}$ with the character opposite of that of the bracket.
Hence $\exp(\{f,\cdot\})$ is $\tilde{Q}$-equivariant.
\end{proof}

\subsection{Anti-involutions} \label{subsec:anti_involution}

Now we are going to deduce two corollaries that we are going to use in our analysis of Harish-Chandra modules.

We follow the notations in Sction \ref{subsec:extn_codim2}. Let $\Orb'$ be a $G$-orbit in $\overline{\Orb} \subset \g^*$ of codimension $4$. Pick a point $\underline{\chi} \in \Orb'$ and  consider an $\slf_2$-triple $(e',h',f')$
corresponding to $\underline{\chi}$ and the Slodowy slice $\underline{S} = e'+\mathfrak{z}_{\g}(f') \subset \g$
associated to this triple. Let $Q'$ denote the centralizer of $(e',h',f')$ in $G$. Let $p: X = \operatorname{Spec}(\C[\Orb]) \to \overline{\Orb}$ denote  the natural projection and $\check{X}$ denote any connected component of the preimage of $\underline{S}$
in $X$. When an anti-involution $\theta$ of $\g$ is present, we assume additionally that the $\slf_2$-triple $(e',h',f')$ is normal (see the beginning of Section \ref{SS_Sl_sl}), so that both $\underline{S}$ and $\check{X}$ are $\theta$-stable.

Define the $Q'$-invariant subspaces
\[
\mathfrak{z}_{\g}(f')_k=\{ x \in \mathfrak{z}_{\g}(f') : [h',x]=kx\}
\] 
of $\mathfrak{z}_{\g}(f')$ for $k \leqslant 0$, so that we have a $Q'$-equivariant direct sum decomposition $\mathfrak{z}_{\g}(f') = \bigoplus_{k \le 0} \mathfrak{z}_{\g}(f')_k$, which is preserved by $\theta$.
Note that $\mathfrak{z}_{\g}(f')_0$ is simply $\mathfrak{q}'$, the centralizer of the $\slf_2$-triple $(e',h',f')$ in $\g$. 

Let $\underline{S}_\Orb := \underline{S} \cap \overline{\Orb}$ and $\pi_{0}: \underline{S}_\Orb \to \mathfrak{q}'$ be the restriction of the 
$Q' \times \cm$-equivariant linear projection of $\underline{S}$ onto $\mathfrak{q}' = \mathfrak{z}_{\g}(f')_0$.  Then $\phi := \pi_0 \circ (p |_{\check{X}}): \check{X}  \to \q' \simeq (\q')^*$ is a moment map for the Hamiltonian $Q'$-action on $\check{X}$.


\begin{Prop} \label{prop:model_a2c2}
	Assume $\check{X}$ is of type $a_2$ (respectively $c_2$), then the morphism $\phi$ induces a $G' \times \cm$-equivariant Poisson isomorphism between $\check{X}$ and $\overline{\Orb_{min}(\g')}$, where $\g' = \mathfrak{sl}_3$ (resp. $\mathfrak{sp}_4$.) and $G' = \SL_3$ (resp. $\Sp_4$). Here $\g'$ is a simple factor of $\mathfrak{q}'$ preserved by $\theta$, and hence we have a (nontrivial) group homomorphism $G' \to Q'$ which gives rise to the action of $G'$ on $\check{X}$. The $\cm$-action on $\overline{\Orb_{min}(\g')} \subset \g'$ is induced by the squared dilation action on $\g'$.
	
	Moreover, when the $\slf_2$-triple $(e',h',f')$ is normal, the isomorphism above intertwines the anti-involution on $X$ (induced from $\theta$ on $\g$) and the anti-involution on $\overline{\Orb_{min}(\g')}$ induced from the restriction of $\theta$ to $\g'$.
\end{Prop}

\begin{proof}
We split the proof into three cases.

{\it Case 1: $\g$ is of one of the exceptional types}.  The claims follow from case (1) of \cite[Proposition 3.3]{FJLS1} (see also Remark 3.4 there).
	In our case, the results there imply that there exists a set $J \subset \mathbb N$, which is either empty or $\{2\}$, satisfying the following properties: for each $i \in J$, there exists
	a highest weight vector $x_i \in \mathfrak{z}_{\g}(f')_{-i}$ for the action of $Q'^\circ$, 
	and there exists a minimal nilpotent element $x_0 \in \mathfrak{q}'$, such that for $v = x_0 + \sum_{i \in J}  x_i$, we have $e' + v \in \underline{S}_\Orb$ and $\underline{S}_\Orb = \overline{Q'\cdot(e'+v)}$. Moreover, $\pi_0$ induces an isomorphism between each irreducible component of $\underline{S}_\Orb$ and the closure of the $Q'^\circ$-orbit through the minimal nilpotent element $x_0 \in \mathfrak{q}'$. We can assume that $x_0$ lies in a simple factor $\g' \in \mathfrak{q}'$, which is either isomorphic to $\mathfrak{sl}_3$ or $\mathfrak{sp}_4$ (which are the only simple Lie algebras whose minimal orbits have dimension $4$.)

{\it Case 2: $\g$ is classical}.
	Note that, even though \cite[Proposition 3.3]{FJLS1} is only stated for exceptional Lie algebras, similar results also hold for classical Lie algebras $\g$, in which case $J = \varnothing$, i.e., $e' + x_0 \in \underline{S}_\Orb$ and $\underline{S}_\Orb = \overline{Q'\cdot(e'+v)} = \overline{Q'^\circ \cdot(e'+v)}$ (for classical $\g$, a dimension $4$ slice $\underline{S}_\Orb$ is always irreducible). 

{\it Case 2.1: $\g$ is of type $B,C$ or $D$}. The only slices of dimension $4$ are of type $c_2$ by Table I of \cite[Section 3.4]{KP2} (note that the singularity of type $b_2$ in the same table is isomorphic to $c_2$). Namely, let $\g$ be a classical Lie algebra and $G$ be the corresponding classical Lie group. Let $\Orb$ be a coadjoint nilpotent $G$-orbit with Jordan type $\tau=(\tau_1,\ldots,\tau_k)$ (we append $\tau$
with zeros as necessary). It is easy to
show, for example, using results of \cite{KP2}, that the condition $\operatorname{codim}_{\overline{\Orb}}\overline{\Orb}\setminus \Orb\geqslant 4$ is equivalent to $\tau_i-\tau_{i+1} \leqslant 1$ for all $i$. There are two types of codimension $4$ orbits $\Orb'$ in $\overline{\Orb}$ (which is normal in this case): 

\begin{itemize}
	\item Type $c_2$:
		those $\Orb'$ with the corresponding slice $\underline{S}_{\Orb}$ of type $c_2$ are in bijection with the indices $i$ such that $\tau_i = \tau_{i+1} + 1 = \tau_{i+2} + 1 = \tau_{i+3} + 2$ and $\tau_i$ is even (resp. odd) if $\g$ is symplectic (resp. orthogonal). The corresponding codimension $4$-orbit $\Orb^i$ has Jordan type \[\tau^i=(\tau_1,\ldots,\tau_{i-1},\tau_{i+1},\tau_{i+1},\tau_{i+1}, \tau_{i+1}, \tau_{i+4},\ldots).\]
		Pick $e^i\in \Orb^i$. The $4$ copies of $\tau_{i+1}$ in $\tau^i$ contributes to a simple factor $\g' \simeq \mathfrak{sp}_4$ of the reductive centralizer $\q'$ of $e'$ in $\g$.
	\item Type $b_2$:	
		Those $\Orb'$ with the corresponding slice $\underline{S}_{\Orb}$ of type $b_2$ are in bijection with the indices $i$ such that $\tau_i = \tau_{i+1}  = \tau_{i+2} + 1 = \tau_{i+3} + 2 = \tau_{i+4} + 2$ and $\tau_i$ is even (resp. odd) if $\g$ is orthognal (resp. symplectic). The corresponding codimension $4$-orbit $\Orb^i$ has Jordan type 
		\[\tau^i=(\tau_1,\ldots,\tau_{i-1}, \tau_{i+2}, \tau_{i+2}, \tau_{i+2}, \tau_{i+2}, \tau_{i+2}, \tau_{i+5},\ldots).\]
		Pick $e^i\in \Orb^i$. The $5$ copies of $\tau_{i+1}$ in $\tau^i$ contributes to a simple factor $\g' \simeq \mathfrak{so}_5 \simeq \mathfrak{sp}_4$ of the reductive centralizer $\q'$ of $e'$ in $\g$.
\end{itemize}

For either type, let $e_0 \in \g' \subset \mathfrak{q}^i$ be an element from the minimal nilpotent orbit $\Orb_{min}$ in $\g'$.  By \cite[Lemma 4.1]{JLS}, we have $e^i+e_0\in \Orb$. By the same argument as in the proof of \cite[Proposition 4.2]{JLS}, the map
$x\mapsto e^i+x$ embeds $\overline{\Orb}_{min}$ into $\underline{S}_{\Orb}^i := (e^i+\mathfrak{z}_{\g}(f^i))\cap \overline{\Orb}$
as a closed subvariety. As the latter is 4-dimensional and irreducible (in fact, normal), we get an isomorphism and this is clearly the inverse of $\phi$.

{\it Case 2.2: $\g$ is of type $A$}. The only slices of dimension $4$ are of type $a_2$ by \cite{KP1}. 

Let $\g=\mathfrak{sl}_n$. We can replace $G$ with any connected reductive group such that
$[\g,\g]=\mathfrak{sl}_n$. So we take $G=\operatorname{GL}_n$ instead.

Let $\Orb$ be a coadjoint nilpotent $G$-orbit with Jordan type $\tau=(\tau_1,\ldots,\tau_k)$ (we append $\tau$
with zeros as necessary). It is easy to
show, for example, using results of \cite{KP1}, that the condition $\operatorname{codim}_{\overline{\Orb}}\overline{\Orb}\setminus \Orb\geqslant 4$ is equivalent to $\tau_i-\tau_{i+1}\leqslant 1$ for all $i$. The codimension $4$ orbits in $\overline{\Orb}$
are in bijection with the indices $i$ such that $\tau_i=\tau_{i+1}+1=\tau_{i+2}+2$.
The corresponding codimension $4$-orbit $\Orb^i$ has Jordan type $$\tau^i=(\tau_1,\ldots,\tau_{i-1},\tau_{i+1},\tau_{i+1},\tau_{i+1},\tau_{i+3},\ldots).$$
Pick $e^i\in \Orb^i$. The reductive part $Q^i$ of the centralizer of $e^i$ is
$\prod_j \operatorname{GL}_{d_j}$, where $j$ is a part that occurs in $\tau^i$
and $d_j$ is its multiplicity. So we can write $Q^i=\operatorname{GL}_3\times \underline{Q}^i$,
where the first factor corresponds to the part $\tau_{i+1}$ and the second factor is the product of
the $\operatorname{GL}$-factors corresponding to the remaining parts. Let $\g' = \slf_3 = [\gl_3, \gl_3]$ be the semisimple part of the Lie algebra of first factor $\GL_3$ of $Q^i$.

Now let $e_0 \in \g' \subset \mathfrak{q}^i$ be an element from the minimal nilpotent orbit
$\Orb_{min}$ in $\g'$.  We claim that $e^i+e_0\in \Orb$. For this it is enough to consider the similar question
when $e^i$ corresponds to the partition $(\tau_{i+1},\tau_{i+1},\tau_{i+1})$. If $\tau_{i+1}=1$, there
is nothing to prove. If $\tau_{i+1}>1$, then it is easy to see that $e^i+e_0$ has the same rank as
$e^i$. Also $(e^i+e_0)^{k}=0$ if and only if $k>\tau_{i+1}$ and $(e^i+e_0)^{\tau_{i+1}}\neq 0$. Since the orbit of $e^i+e_0$ has
$e^i$ in its closure, we see that $e^i+e_0\in \Orb$. Alternatively, this also also follows from \cite[Lemma 4.1]{JLS}, as in the case of type $B$, $C$ and $D$ as above.Then we can again argue as in Case 2.1 that the map $x\mapsto e^i+x$ gives an isomorphism $\overline{\Orb}_{min} \simeq \underline{S}_{\Orb}^i := (e^i+\mathfrak{z}_{\g}(f^i))\cap \overline{\Orb}$.


Now observe that the isomorphism $\check{X} \xrightarrow{\sim} \Orb_{min}$ in each case above is $G' \times \cm$-equivariant and intertwines the moment maps to $\g' \simeq (\g')^*$. Since $\Orb_{min}$ is a $G'$-orbit, the symplectic form on $\Orb_{min}$ is completely determined by the moment map. Therefore the isomorphism $\check{X} \xrightarrow{\sim} \Orb_{min}$ preserves the symplectic forms and hence is Poisson. 

Let $R := \C[\check{X}] \simeq \C[\overline{\Orb}_{min}]$ be the coordinate algebra, then the moment map induces a Lie algebra morphism $\g' \to R$ that identifies $\g'$ with the degree $2$ component $R_2$ of $R$ and $R_2$ generates $R$ by Lemma 4.10 and Lemma 4.14 of \cite{BK}. Therefore, when $e' \in \Orb'$ is chosen to be in $\kf^\perp$ and the $\slf_2$-triple $(e',h',f')$ is normal, the action of the anti-Poisson involution $\theta$ on $\check{X}$, or equivalently $R$, is completely determined by its restriction to the Lie subalgebra $R_2 \simeq \g'$.
\end{proof}

\begin{Lem}\label{lem:tau_a_2S_2}
Suppose that $\check{X}$ is of type $\tau$ or $a_2/S_2$. For the contracting $\cm$-action on the model symplectic singularities, we take
\begin{itemize}
\item for $\C^4/\Z_3$, the action induced from the standard dilation action on $\C^4$,
\item for $\overline{\Orb_{min}(\mathfrak{sl}_3)} /S_2$, the action  induced from the $\cm$ action on $\overline{\Orb_{min}(\mathfrak{sl}_3)}$ that is the restriction of the squared dilation action on $\mathfrak{sl}_3$.
\end{itemize}
Then there exists a $\cm$-equivariant Poisson isomorphism between $\check{X}$ and the corresponding model symplectic singularity.
\end{Lem}
\begin{proof}
First consider the case when $\check{X}$ is of type $a_2/S_2$. This type of singularity only appears when $\g$ is a simple Lie algebra of type $E_7$ and $\Orb'$, $\Orb$ are the nilpotent orbits in $\g$ with respective Bala-Carter labels $A_3+A_2+A_1$, $A_4+A_1$. In this case $Q‘$ is isomorphic to $SL_2$. By the discussions in \cite[Section 3.1]{FJLS2}, especially Lemma 3.2, Lemma 3.6 and Proposition 3.7 of \cite{FJLS2} (and their proofs), we have an $\SL_2 \times \cm$-equivariant isomorphism $\check{X} \simeq \overline{\Orb_{min}(\mathfrak{sl}_3)} / S_2$. Here $\slf_2 \simeq \mathfrak{so}_3$ is regarded as the fixed subalgebra of $\slf_3$ by an outer involution, giving rise to an $\SL_2$-action on $\overline{\Orb_{min}(\mathfrak{sl}_3)} / S_2$.

When $\check{X}$ is of type $\tau$, it follows from Theorem 12.6 and Corollary 12.7 of \cite{FJLS1} (and their proofs) that there is a $\cm$-equivariant isomorphism $X \simeq \C^4 / \Z_3$, with respect to the Kazhdan $\cm$-action on $X$ and the $\cm$-action on $\C^4 / \Z_3$ induced by the dilation action on $\C^4$.

Finally, by \cite[Theorem 3.1]{Namikawa_equiv}, we can always modify the above isomorphisms to make them Poisson and still $\cm$-equivariant, since all the relevant symplectic structures have degree $2$ with respect to the relevant $\cm$-actions.
\end{proof}

We will describe possible anti-involutions $\theta$ in these two cases later.

Starting the next section we proceed to classifying Harish-Chandra modules with full support
for the four out of five singularities of interest: $a_2,c_2,\C^4/\mathbb{Z}_3, a_2/S_2$.
We will see that the varieties of types $c_2,\C^4/\mathbb{Z}_3$ are unobstructive for all
choices of involutions. The variety of type $a_2$ is unobstructive for one of the two possible
choices of an involution.

\subsection{Type $a_2$}\label{SS_type_a2}
The first case to consider is when $X$ is the closure of the minimal orbit (Jordan type $(2,1)$)
in $\g:=\mathfrak{sl}_3$. In this case, the anti-involution $\theta$ on $X$ is induced from a Lie algebra anti-involution of $\g$ by Proposition \ref{prop:model_a2c2}. Consider
the universal filtered quantization $\A_{\param}$ of $\C[X]$. We have $\param=\C$. The quantizations of $\C[X]$ have the form
$\mathscr{D}^{\lambda}(\mathbb{P}^2)$, the algebra of $\lambda$-twisted
differential operators on $\mathbb{P}^2$. The period of such a quantization is $\lambda-\frac{1}{2}c_1(K_{\mathbb{P}^2})$. We consider anti-involutions $\theta$ of the universal quantization $\A_\param$. 

We have $\theta=-\sigma$ for an involution $\sigma$ of $\g$. Let $\Orb$ be the open $\SL_3$-orbit in $X$, then $\param=H^2(\Orb,\C)$ and the action of $\theta$ on $\param$ must be the natural one induced by the action on $\Orb$ by universality 
(see \cite[Proposition 3.5]{orbit} for the description of what it means to be universal).
Up to conjugation, the Lie algebra $\mathfrak{sl}_3$ has two involutions
$\sigma$. One has $\kf=\mathfrak{gl}_2$ and the other has $\kf=\mathfrak{so}_3$. When $\kf=\mathfrak{gl}_2$,
the involution $\sigma$ is inner, and $\sigma$ acts trivially
on $\param$. When $\kf=\mathfrak{so}_3$, the involution $\sigma$ is outer, and
$\sigma$ act on $\param$ by $-1$. Note that the  action of $-1$ on $T^*\mathbb{P}^2$
is contained in the dilation action of $\C^\times$, a connected group. So the actions of
$\sigma$ and $\theta$ on the $H^2(T^*\mathbb{P}^2,\C)$ coincide.

Let us write $\mathscr{D}^\lambda_{\Pb^2}$ for the sheaf of $\lambda$-twisted
differential operators on $\Pb^2$. We have the global section
functor $\mathscr{D}^\lambda_{\Pb^2}\operatorname{-mod} \rightarrow \mathscr{D}^\lambda(\Pb^2)\operatorname{-mod}$,
where the source category is that of coherent $\mathscr{D}^\lambda_{\Pb^2}$-modules and the target is the category
of finitely generated $\mathscr{D}^\lambda(\Pb^2)$-modules.

We claim that
the functor $\Gamma$ induces an equivalence between the
quotients of $\mathscr{D}^\lambda_{\Pb^2}\operatorname{-mod}$ and $\mathscr{D}^\lambda(\Pb^2)\operatorname{-mod}$
by the Serre subcategories of $\mathcal{O}$-coherent twisted $\mathscr{D}$-modules and
finite dimensional $\mathscr{D}^\lambda(\Pb^2)$-modules, respectively. Indeed, let $\operatorname{Loc}: \mathscr{D}^\lambda_{\Pb^2}\operatorname{-mod} \to \mathscr{D}^\lambda(\Pb^2)\operatorname{-mod}$
denote localization functor, which is the left adjoint of $\Gamma$ (see \cite[Section 3.3]{BB}). The kernels and cokernels of the adjunction unit and
counit morphisms for the adjoint pair $(\operatorname{Loc},\Gamma)$ are killed by the microlocalization
to $\Orb=\overline{\Orb}\setminus \{0\}$ because the resolution morphism $T^*\Pb^2\rightarrow \overline{\Orb}$
is an isomorphism over $\Orb$. It follows that they are supported at $\{0\}$, which implies our claim.

The group $K$ acts on $\Pb^2$ with finitely many orbits. Thanks to this we can classify
irreducible $(K,\kappa)$-equivariant $\mathscr{D}^\lambda_{\Pb^2}$-modules, see Lemma \ref{Lem:fin_many_orbit_classif}. To apply the lemma, we need the following notation.
Let $v\in \C^3$ be a nonzero vector and $[v]$ be the corresponding line
in $\Pb^2$. Let $P_{[v]}$ denote the stabilizer of $[v]$ in $\operatorname{SL}_3$. We identify the character
lattice $\mathfrak{X}(P_{[v]})$ with $\Z$, with $1$ corresponding to the character of
$P_{[v]}$ in $\C v$ (and so the $G$-equivariant line bundle $\mathcal{O}(1)$ corresponds to $-1$). So $\lambda$ can be viewed as
a complex number. This also gives an identification of
$H^2(T^*\mathbb{P}^2,\C)$ with $\C$ (with $c_1(\mathcal{O}(1))=-1$ and $c_1(K_{\mathbb{P}^2})=3$).

\subsubsection{The case of $\kf=\mathfrak{gl}_2$}\label{SS_HC_sl3_gl2}
Here $\theta$ comes from the inner involution of $\mathfrak{sl}_3$. We identify $\kf^{*K}$ with $\C$ so that the element $1\in \C$
corresponds to $-\operatorname{tr}\in \kf^{*K}$. So we can view $\kappa$ as a complex number.

Our goal is to establish the following claim.

\begin{Lem}\label{Lem:sl3_gl2_unobstructive}
Let $X$ be the closure of the minimal orbit in $\mathfrak{sl}_3$ and $\theta:=-\sigma$,
where $\sigma$ is an inner involution of $\mathfrak{sl}_3$. Then $(X,\theta)$ is unobstructive.
\end{Lem}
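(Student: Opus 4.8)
The plan is to show that for $X$ the closure of the minimal orbit in $\mathfrak{sl}_3$ and $\theta=-\sigma$ with $\sigma$ the inner involution (so $\kf=\mathfrak{gl}_2$), every graded regular twisted local system on $X^\theta\setminus\{0\}$ whose $\cm$-action is strong is strongly $\A$-quantizable for any quantization $\A=\mathscr{D}^\lambda(\Pb^2)$. By Definition \ref{defi:obstructive_slice} and the discussion following it, it suffices to treat irreducible regular twisted local systems. First I would identify $X^\theta\cap X^{reg}$: since $X^{reg}=\Orb$ is the minimal orbit and $\theta$ corresponds to $\sigma$ with $\kf=\mathfrak{gl}_2$, the fixed locus $\Orb\cap(\g/\kf)^*$ is a union of $K$-orbits in $\Pb^2$ pulled back to $T^*\Pb^2$ via the isomorphism $T^*\Pb^2\xrightarrow{\sim}\overline{\Orb}$ over $\Orb$; concretely $K=\GL_2$ (mapping to $\SL_3$) acts on $\Pb^2$ with finitely many orbits, and the relevant Lagrangian $\Orb\cap\kf^\perp$ is the zero section $\Pb^2$ together with conormals to the $K$-orbit stratification. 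I would make this stratification explicit: the $K$-orbits on $\Pb^2$ are the point $[v_0]$ fixed by the Levi, a one-dimensional orbit (a line minus the point, or the point's "dual" configuration), and the open orbit.

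The key step is then to use the equivalence, established just before the statement, between the Serre quotient of $\mathscr{D}^\lambda_{\Pb^2}\operatorname{-mod}$ by $\mathcal{O}$-coherent modules and the Serre quotient of $\mathscr{D}^\lambda(\Pb^2)\operatorname{-mod}$ by finite-dimensional modules, together with the equivalence $\HC_{\Orb_K}(\A,\theta,\zeta)^{K,\kappa}\hookrightarrow \operatorname{Rep}(K_Q,\lambda|_{\kf_Q}-\kappa_Q)$ from Proposition \ref{Prop:adj_properties} and Section \ref{subsec:W-algebra_nilpotent}. So I would take an irreducible graded regular twisted local system $\mathcal{E}$ on $\Orb\cap(\g/\kf)^*$, supported on a single $K$-orbit $\Orb_K$. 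Via Lemma \ref{Lem:fin_many_orbit_classif}, its minimal (IC) extension inside $\mathscr{D}^\lambda_{\Pb^2}\operatorname{-mod}^{K,\kappa}$ is an irreducible twisted $K$-equivariant $\mathscr{D}$-module $\M$; taking global sections $\Gamma(\M)$ gives a $(K,\kappa)$-equivariant $\mathscr{D}^\lambda(\Pb^2)=\A$-module, which is a HC $(\A,\theta)$-module. I would equip $\Gamma(\M)$ with the good filtration coming from the order filtration on $\mathscr{D}^\lambda_{\Pb^2}$ and show that $\gr\Gamma(\M)$ restricted to $\Orb\cap(\g/\kf)^*$ recovers $\mathcal{E}$ (up to the $\cm$-shift), using that $\gr$ of the IC extension of a local system on an orbit is the expected pushforward of the local system from the conormal variety. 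The harder half — strong quantizability — requires $\gr\Gamma(\M)\xrightarrow{\sim}\Gamma(\mathcal{E})$, i.e., $\Gamma$ commutes with $\gr$; I would prove this by a cohomology-vanishing argument: $H^{>0}(\Pb^2, \gr\M)=0$ because $\gr\M$ is the pushforward of a line bundle (or vector bundle) from a conormal variety in $T^*\Pb^2$ whose closure in $\overline{\Orb}$ has boundary of codimension $\geqslant 2$ in the relevant stratum, so global sections are finitely generated and agree with $\Gamma(\mathcal{E})$ by normality considerations as in Lemma \ref{Lem:HC_bijection2}.

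The main obstacle I anticipate is bookkeeping of the twist and the grading: I must check that the character $\lambda|_{\kf_Q}-\kappa_Q$ by which $K_Q$ acts on the fiber of $\mathcal{E}$ matches the twist $\lambda$ appearing on $\mathscr{D}^\lambda_{\Pb^2}$ under the identifications of $H^2(T^*\Pb^2,\C)$, $\mathfrak{X}(P_{[v]})$, and $\kf^{*K}$ with $\C$ fixed before the statement (with $c_1(\mathcal{O}(1))=-1$, $c_1(K_{\Pb^2})=3$, $\kappa\leftrightarrow-\operatorname{tr}$), so that the module produced by Lemma \ref{Lem:fin_many_orbit_classif} really lies in the correct equivariant category. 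A secondary subtlety: for the small orbits $\Orb_K$ (the $K$-fixed point and the one-dimensional orbit), the twisted local systems on them may be nontrivial only for specific $\lambda$, and I need to confirm that in exactly those cases the global sections of the IC extension are nonzero, equivalently that the corresponding simple object of $\operatorname{Rep}(K_Q,\lambda|_{\kf_Q}-\kappa_Q)$ lies in the image of $\bullet_{\dagger,\chi}$ — this is where the "unobstructive" claim has content and where I would invoke Proposition \ref{Lem:image_description11}(2), reducing it to the genuinely low-dimensional (here already $4$-dimensional, so base) case. Once these compatibilities are pinned down, the vanishing $H^{>0}=0$ on $\Pb^2$ for the relevant $\mathcal{O}$-modules makes strong quantizability automatic, establishing that $(X,\theta)$ is unobstructive.
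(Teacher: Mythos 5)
Your first half (quantizability, i.e.\ essential surjectivity of $\bullet_{\dagger,\chi}$ for each component of $\Orb\cap\kf^\perp$) follows the paper's route: classify $(K,\kappa)$-equivariant irreducible $\mathscr{D}^\lambda_{\Pb^2}$-modules orbit by orbit via Lemma \ref{Lem:homog_classif}/\ref{Lem:fin_many_orbit_classif}, pass to the Serre quotient by $\mathcal{O}$-coherent objects, and match the integrality condition $\lambda-\kappa\in\Z$ with Corollary \ref{Cor:equiv_condition}; the twist bookkeeping you flag is exactly what the paper's Step 2 does. (One small slip: $\Orb\cap\kf^\perp$ is not ``the zero section together with conormals''---the zero section maps to $0\notin\Orb$; the two components $\Orb^1_K\cong\C^{2*}\setminus\{0\}$ and $\Orb^2_K\cong\C^2\setminus\{0\}$ are the images of the cotangent space at the $K$-fixed point and of the conormal to the closed $\Pb^1$-orbit.)

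The genuine gap is in the strong-quantizability half. You assert that $\gr$ of the minimal extension is the pushforward of a line bundle from the conormal variety and that $H^{>0}(\Pb^2,\gr\M)=0$, ``making strong quantizability automatic.'' For the modules with associated variety $\overline{\Orb}^2_K$, which are of the form $j_+L$ for a twisted equivariant line bundle $L\cong\OO_{\Pb^1}(m)$ on the closed orbit (here $j_+$ already is the minimal extension), the natural filtration gives $\gr\cong\bigoplus_{k\geqslant 0}\OO_{\Pb^1}(m+k+1)$, and $m$ runs over arbitrarily negative integers as $(\lambda,\kappa)$ range over the allowed integral parameters; so $H^1$ does not vanish in general, and even when it does one still has to compare $\Gamma(\gr\M)$ with $\Gamma(\mathcal{E})$ over the boundary. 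That this is not a formality is shown by the paper's own $\mathfrak{so}_3$ computation (Lemma \ref{Lem:sl3_so3_obstructive}(4)), where the same kind of closed-orbit module is strongly quantizable only for finitely many twists. The paper's proof avoids this entirely: it verifies strong quantizability by hand only for $\overline{\Orb}^1_K$, where the module is the delta-module at the $K$-fixed point and $\gr M$ is a free module over $\C[\overline{\Orb}^1_K]=\C[\C^2]$ (normal, Cohen--Macaulay), and then transports the statement to $\overline{\Orb}^2_K$ using the outer automorphism $\varepsilon(g)=(g^t)^{-1}$ of $\SL_3$, which preserves $\GL_2$, swaps the two components, and induces $\A_\mu\cong\A_{-\mu}$, $\kappa\mapsto-\kappa$. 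To repair your argument you would either need this symmetry trick or an explicit choice of a different good filtration (not the naive $j_+$ one) for the $\overline{\Orb}^2_K$-modules at all integral parameters; as written, the ``automatic'' vanishing claim does not hold.
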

\begin{proof}
{\it Step 1}.
In this case we have three $K$-orbits on $\mathbb{P}^2$. Let us write points of $\Pb^2$
as $[a:b:c]$. We embed $\operatorname{GL}_2$ into
$\operatorname{SL}_3$ as $x\mapsto \operatorname{diag}(\det(x)^{-1},x)$.
The representatives of the three orbits are $[1:1:0],[0:1:0]$ and
$[1:0:0]$.

Let us see what the classification of irreducible twisted equivariant twisted local systems (see Lemma
\ref{Lem:homog_classif}) on the three orbits looks like:

1) {\it The case of $[1:1:0]$}. The group $K_{[v]}$ consists of all matrices of the form
$$\begin{pmatrix}x&0&0\\0&x&y\\0&0&x^{-2}\end{pmatrix},$$
and the orbit $K \cdot [v]$ is open in $\Pb^2$. We identify $\mathfrak{X}(K_{[v]})$ with $\Z$, where $1$
corresponds to the character that sends the above matrix to $x$. 
The restriction map $\mathfrak{X}(P_{[v]})\rightarrow \mathfrak{X}(K_{[v]})$ sends
$1$ to $1$.
So, by Lemma \ref{Lem:homog_classif}, the existence of a $(K,\kappa)$-equivariant $\mathcal{O}$-coherent
$\mathscr{D}^\lambda_{K/K_{[v]}}$-module is equivalent to $\lambda-\kappa\in \Z$.
If this is the case we have a unique twisted
equivariant irreducible $\mathcal{O}$-coherent module because $K_{[v]}$ is connected.

2) {\it The case of $[0:1:0]$}. The group $K_{[v]}$ consists of all matrices of the form
$$\begin{pmatrix}x_1&0&0\\0&x_2&y\\0&0&x_1^{-1}x_2^{-1}\end{pmatrix},$$
and therefore the orbit $K \cdot [v]$ is closed and isomorphic to $\Pb^1$. The restriction of $1\in \mathfrak{X}(P_{[v]})$ sends the matrix above to $x_2$,
while the restriction of $\det^{-1}$ from $K$ to $K_{[v]}$ sends that matrix
to $x_1$. We conclude that the existence of a $(K,\kappa)$-equivariant
$\mathcal{O}$-coherent $\mathscr{D}^\lambda_{K/K_{[v]}}$-module is equivalent to
$\lambda,\kappa\in \Z$. Again, an irreducible module is unique.

3) {\it The case of $[1:0:0]$}. The group $K_{[v]}$ is $K$ itself and the orbit $K \cdot [v]$ only consists of the point $[1:0:0]$. The restriction
map $\mathfrak{X}(P_{[v]})\rightarrow \mathfrak{X}(K)$ sends $1$ to $\det^{-1}$.
The conclusions regarding the existence of a twisted equivariant coherent
module and the classification of irreducibles are as in 1).

The above case by case considerations show that in all cases we have
\begin{itemize}
\item three irreducible
$(K,\kappa)$-equivariant $\mathscr{D}^\lambda_{\Pb^2}$-modules when both $\kappa,\lambda$
are integral,
\item two irreducible $(K,\kappa)$-equivariant $\mathscr{D}^\lambda_{\Pb^2}$-modules when $\lambda-\kappa$
is integral but $\kappa$ is not integral,
\item no irreducible $(K,\kappa)$-equivariant $\mathscr{D}^\lambda_{\Pb^2}$-modules when $\lambda-\kappa$
is not integral.
\end{itemize}
We only have one $\mathcal{O}$-coherent $(K,\kappa)$-equivariant $\mathscr{D}^\lambda_{\mathbb{P}^2}$-module
when $\kappa$ and $\lambda$ are both integral. We conclude that we have two non $\mathcal{O}$-coherent
$(K,\kappa)$-equivariant $\mathscr{D}^{\lambda}_{\mathbb{P}^2}$-modules if $\lambda-\kappa$ is an integer
and none otherwise.

{\it Step 2}.
Now let us examine $\Orb\cap \kf^\perp$. We identify $\g$ with $\g^*$ using the trace pairing. The space $\kf^\perp$ consists of the nonzero
matrices of the following form:
$$\begin{pmatrix}0&?&?\\
?&0&0\\?&0&0\end{pmatrix}.$$
Recall that $\Orb$ consists of the nilpotent rank one matrices. So $\Orb\cap \kf^\perp$
consists of two components
\[
  \Orb^1_K = \left\{
     \begin{pmatrix}
       0&?&?\\
       0&0&0\\
       0&0&0
    \end{pmatrix}
  \right\} \cong \C^{2*}\setminus \{0\}
  \quad
   \Orb^2_K = \left\{
  \begin{pmatrix}
  0&0&0\\
  ?&0&0\\
  ?&0&0
  \end{pmatrix}
  \right\} \cong \C^{2}\setminus \{0\},
\]
with the usual linear actions of $\operatorname{GL}_2$ on $\C^2$ and its dual $\C^{2*}$.
We claim that the condition $\lambda-\kappa\in \Z$ is equivalent to the condition of
Corollary \ref{Cor:equiv_condition}.
Take $\chi\in \Orb^2_K$ to be the unit matrix $E_{21}$. The corresponding group $K_Q$
is $\{\operatorname{diag}(x,x,x^{-2})| x\in \C^\times\}$.
The element $\rho_\omega$ from Definition \ref{defi:half_character}
is easily seen to lie in $\Z+1/2$. And since the period is $\lambda+3/2$, the condition
$\lambda-\kappa\in \Z$ is indeed equivalent to the condition in Corollary \ref{Cor:equiv_condition}.
The case of $\Orb^1_K$ is similar.

Thanks to Corollary \ref{Cor:equiv_condition}, if $\HC_{\Orb^i_K}(\mathscr{D}^\lambda(\Pb^2))^{K,\kappa}\neq \{0\}$
then $\lambda-\kappa\in \Z$.
Our case by case analysis tells us for each of the orbits $\Orb_K=\Orb^i_K,i=1,2,$ in $\Orb\cap \kf^\perp$ we have
\begin{align*}
\HC_{\Orb_K}(\mathscr{D}^\lambda(\Pb^2))\xrightarrow{\sim}\operatorname{Rep}(K_Q, \lambda|_{\kf_Q}- \kappa_Q).
\end{align*}

Note that $\lambda - \kappa \in \Z$ is the condition for $\kappa$ to have
a strong $(K,\kappa)$-action on $\mathcal{E}$, this also follows from
Lemma \ref{Lem:homog_classif}.
So every weakly $\C^\times$-equivariant twisted local system on $X^\theta\setminus \{0\}$
is $\A$-quantizable for all quantizations $\A$ of $\C[X]$.

{\it Step 3}. The goal of this step is to show that every weakly $\C^\times$-equivariant twisted local
system on $X^\theta\setminus \{0\}$ is strongly $\A$-quantizable for any quantization $\A$. This will finish the proof. Note that the category of weakly $\C^\times$-equivariant twisted local systems on
$X^\theta\setminus \{0\}$ is semisimple so we can restrict to irreducible objects $\mathcal{E}$ there.

Consider the closed $K$-orbit of the single point $[1:0:0] \in \Pb^2$ as in 3). The unique $(K,\kappa)$-equivariant irreducible $\mathscr{D}^\lambda_{\Pb^2}$-module supported on this point has a natural filtration such that the associated graded is a line bundle over the characteristic variety of this $\mathscr{D}^\lambda_{\Pb^2}$-module, which is the cotangent vector space of $[1:0:0]$ in $\Pb^2$. Note that this cotangent vector space is mapped isomorphically to the closure $\overline{\Orb}^1_K \subset \overline{\Orb}\cap \kf^\perp$ via the resolution map $T^*\Pb^2 \to \overline{\Orb}$. By identifying this $\mathscr{D}^\lambda_{\mathbb{P}^2}$-module with its global sections, we see that the corresponding HC $(\A,\theta)$-module $M$ has a filtration whose associated graded module $\gr M$ is a line bundle over $\overline{\Orb}^1_K \cong \C^{2*}$. Since $\overline{\Orb}^1_K$ is normal and the natural map $\gr M \to \Gamma((\gr M)|_{\Orb^1_K})$ is an isomorphism, we conclude that $\Gamma((\gr M)|_{\Orb^1_K})$ is Cohen-Macaulay and coincides
with the associated graded of $\Gamma(M|_{\Orb^1_K})$ for any $\lambda \in \C$ satisfying $\lambda - \kappa \in \Z$.

To analyze the HC-module supported on $\overline{\Orb}^2_K$, we take $\varepsilon: g \to (g^t)^{-1}$ to be the outer automorphism of $\SL_3$ which preserves $K=\operatorname{GL}_2$. We also denote the induced automorphism of the Lie algebra $\slf_3$ by $\varepsilon$, so that $\varepsilon$ commutes with $\theta$ and interchanges the two $\operatorname{GL}_2$-orbits in $\Orb\cap \kf^\perp$. Since  $\A_\mu = \mathscr{D}^{\mu + \frac{3}{2}}(\mathbb{P}^2)$, the involution $\varepsilon$ induces an isomorphism $\A_\mu \cong \A_{-\mu}$ of algebras ($\A_0 = \mathscr{D}^{ \frac{3}{2}}(\mathbb{P}^2)$ is the canonical quantization of $\overline{\Orb}$). Twisting the $\A_\mu$-module structures and the $K$-actions by $\varepsilon$ gives equivalences of categories $\HC(\A_\mu,\theta)^{K,\kappa} \cong \HC(\A_{-\mu},\theta)^{K,-\kappa}$ and $\HC_{\overline{\Orb}^1_K}(\A_\mu,\theta)^{K,\kappa} \cong \HC_{\overline{\Orb}^2_K}(\A_{-\mu},\theta)^{K,-\kappa}$ since $\varepsilon$ interchanges $\Orb^1_K$ and $\Orb^2_K$. Therefore we end up with the same conclusion for HC-modules supported on $\overline{\Orb}^2_K$ as for those  HC-modules supported on $\overline{\Orb}^1_K$  above.
\end{proof}

\subsubsection{The case of $\kf=\mathfrak{so}_3$}\label{SS:sl3_so3}
Here $\theta=-\sigma$ for an outer involution $\sigma$ of $\mathfrak{sl}_3$.

\begin{Lem}\label{Lem:sl3_so3_obstructive}
The following claims are true:
\begin{enumerate}
\item The variety $X^\theta\setminus \{0\}$ is $(\C^2\setminus \{0\})/\Z_4$ (where a generator of $\Z_4$ acts on $\C^2$ by $\sqrt{-1}$). In particular, we consider the ordinary local systems on
    $X^\theta\setminus \{0\}$. Their category is $\operatorname{Rep}(\Z_4)$.
\item The two $\operatorname{SO}_3$-equivariant (corresponding to representation of $\Z_4$, where
$-1$ acts by $1$) local systems on $X^\theta\setminus \{0\}$
are strongly $\A$-quantizable for any quantization $\A$.
\item If $\A=\mathscr{D}^\lambda(\mathbb{P}^2)$  and $\lambda\not\in \Z+\frac{1}{2}$, then any $\A$-quantizable
local system is $\operatorname{SO}_3$-equivariant.
\item If $\A=\mathscr{D}^\lambda(\mathbb{P}^2)$  and $\lambda\in \Z+\frac{1}{2}$, then only one of the two non-$\operatorname{SO}_3$-equivariant
irreducible local systems is $\A$-quantizable. Which one, depends on the parity of $\lambda$. The $\A$-quantizable irreducible local system is strongly $\A$-quantizable if and only if
$\lambda\in \{1/2,3/2,5/2\}$.
\end{enumerate}
\end{Lem}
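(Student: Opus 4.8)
The four claims are really four separate computations about the singularity $X = \overline{\Orb}_{\min}(\mathfrak{sl}_3)$ with the anti-involution $\theta = -\sigma$ for $\sigma$ outer, so $\kf = \mathfrak{so}_3$. The unifying tool is the dictionary set up in Sections~\ref{subsec:W-algebra_nilpotent}--\ref{SS_image_description}: the quantizations $\A = \A_\lambda$ are the $\mathscr{D}^\lambda(\Pb^2)$, the category $\overline{\HC}(\A_\lambda,\theta)^{K,\kappa}$ is identified via $\bullet_{\dagger,\chi}$ with a semisimple category $\operatorname{Rep}(K_Q,\lambda|_{\kf_Q}-\kappa_Q)$ indexed by the $K$-orbits on $\Pb^2$, and the quotient of $\mathscr{D}^\lambda_{\Pb^2}\text{-mod}$ by $\OO$-coherent modules is equivalent to the quotient of $\mathscr{D}^\lambda(\Pb^2)\text{-mod}$ by finite-dimensional modules (the paragraph in Section~\ref{SS_type_a2} before Section~\ref{SS_HC_sl3_gl2}). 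Since we are in the $4$-dimensional situation, a twisted local system on $X^\theta \setminus \{0\}$ is $\A$-quantizable iff the corresponding irreducible $(K,\kappa)$-equivariant $\mathscr{D}^\lambda_{\Pb^2}$-module is \emph{not} $\OO$-coherent (equivalently, has nonzero, infinite-dimensional global sections), and strongly $\A$-quantizable iff in addition its global sections module has associated graded equal to $\Gamma$ of the restricted local system.

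\textbf{Step 1 (claim (1)): identify $X^\theta\setminus\{0\}$.} First I would describe $X^\theta$ concretely. Using the trace form to identify $\mathfrak{sl}_3 \cong \mathfrak{sl}_3^*$, the outer involution can be taken to be $\sigma(g) = -g^t$ (up to conjugation), so $\kf = \mathfrak{so}_3$ consists of antisymmetric matrices and $\kf^\perp$ of symmetric trace-zero matrices. Then $X^\theta = \overline{\Orb} \cap \kf^\perp$ is the set of rank-$\leqslant 1$ symmetric traceless matrices, i.e.\ $\{vv^t : v\in\C^3, v^tv = 0\}$, which is the affine cone over a conic in $\Pb^2$; the map $v \mapsto vv^t$ realizes this as $(\C^2\setminus\{0\})/\Z_2$ on the smooth locus composed with a further branched cover. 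A clean way to pin down the group: the smooth conic is $\Pb^1$, its total space of the relevant line bundle is $(\C^2\setminus\{0\})/\Z_k$, and one computes $k=4$ either from the self-intersection/Chern class of the conic in $\Pb^2$ or directly from the stabilizer computation of a point $vv^t$ in $\SO_3$. Then $\pi_1$ of $(\C^2\setminus\{0\})/\Z_4$ is $\Z_4$, giving $\loc(X^\theta\setminus\{0\}) = \operatorname{Rep}(\Z_4)$, and the $\SO_3$-equivariant ones are those where $-1\in\Z_4$ acts trivially (this is the image of $\pi_1(\SO_3\text{-orbit})$, matching the half-canonical-twist story with $\lambda=\kappa=0$), so exactly $2$ of the $4$ irreducibles.

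\textbf{Step 2 (claims (2)--(4)): the three $K$-orbits on $\Pb^2$.} The $\SO_3 = K$-action on $\Pb^2$ has three orbits: the conic $C = \{[v]: v^tv=0\}$ (which is closed, $\cong\Pb^1$), its open complement, and \dots\ actually with $\SO_3$ there are exactly two orbits unless one is careful --- here I need to recount: the closed orbit is the conic $C\cong\Pb^1$, and the open orbit is its complement, a single orbit $\cong \SO_3/(\text{torus})$. For each I apply Lemma~\ref{Lem:fin_many_orbit_classif} and Lemma~\ref{Lem:homog_classif}: compute the stabilizer $K_{[v]}$, its character lattice, the restriction of $\lambda\in\mathfrak{X}(P_{[v]})_\C = \C$ and of $\kappa$, and read off when a $(K,\kappa)$-equivariant $\OO$-coherent $\mathscr{D}^\lambda_{\Pb^2}$-module exists and how many irreducibles there are (the extra ones from $K_{[v]}/K_{[v]}^\circ$ give the non-equivariant, i.e.\ genuinely $\operatorname{Spin}$, local systems). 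Matching with Corollary~\ref{Cor:equiv_condition} via the point $\chi\in X^\theta\setminus\{0\}$, with $\rho_L$ computed from the $K_Q = K_\chi$-action on $\Lambda^{\mathrm{top}}L$ and period $=\lambda+\tfrac32$ (so the relevant integrality shifts by the half-integer $\rho_L$), gives: $\mathscr{D}^\lambda_{\Pb^2}$ has non-$\OO$-coherent equivariant irreducibles exactly when $\lambda - \kappa$ is in the appropriate coset, i.e.\ the $2$ $\SO_3$-equivariant local systems are always $\A$-quantizable (claim (2)), while a non-$\SO_3$-equivariant (genuinely $\Z_4$) one requires $\lambda\in\Z+\tfrac12$, and for such $\lambda$ the parity of $\lambda-\tfrac12$ selects which of the two it is (claims (3),(4) first sentences). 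Here the quotient-category equivalence from Section~\ref{SS_type_a2} is what converts "non-$\OO$-coherent $\mathscr{D}^\lambda_{\Pb^2}$-module" into "HC-module with full support".

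\textbf{Step 3 (strong quantizability, the hard part of claim (4)).} Claims (2) and (3)'s "strongly" assertions, and the last sentence of (4), require checking whether $\gr M \xrightarrow{\sim} \Gamma((\gr M)|_{X^\theta\setminus\{0\}})$, i.e.\ whether the associated graded of the global sections of the $\mathscr{D}^\lambda_{\Pb^2}$-module, with its good/Hodge filtration, is already the full ($\Gamma$ of the) vector bundle on the slice. For the $\SO_3$-equivariant local systems one uses the argument already sketched in Lemma~\ref{Lem:sl3_gl2_unobstructive} Step~3: the module supported on the closed orbit $C$ has $\gr M$ a line bundle over the cotangent space of $C$, which maps \emph{isomorphically} onto a component of $\overline{\Orb}\cap\kf^\perp = X^\theta$ under $T^*\Pb^2 \to \overline{\Orb}$ (here $X^\theta$ is irreducible, being a single cone, unlike the $\mathfrak{gl}_2$ case), normality of the slice then gives the isomorphism with $\Gamma$, hence strong quantizability for all $\lambda$ --- wait, but (2) already asserts this for all $\A$, consistent. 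For the genuinely $\Z_4$ local system when $\lambda\in\Z+\tfrac12$, one must examine the minimal extension $\mathcal{L}(\chi,V)$ on the open orbit, compute $\Gamma$ as a $\mathscr{D}^\lambda(\Pb^2)$-module and its associated graded (as a module over $\C[X]$), and test against $\Gamma(\mathcal{E})$ on the slice. The expected mechanism: $\Gamma(\mathcal{E})$ over the (normal, but with a $\Z_4$-quotient singularity) cone $X^\theta$ is Cohen--Macaulay (Burban--Drozd, cf.\ the proof of Lemma~\ref{Lem:coh_extension2}), while $\gr M$ sits inside it with finite-codimension cokernel supported at $0$; the two agree iff a certain degree/multiplicity count works out, and this count will fail precisely when $|\lambda|$ is large, leaving only $\lambda\in\{1/2,3/2,5/2\}$. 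Concretely I would compute the Hilbert series of $\Gamma(\mathcal{E})$ over $\C[X^\theta]$ (a rank-$1$ or rank-$2$ MCM module over $\C[(\C^2)/\Z_4]$, whose invariant theory is explicit) and compare with the Hilbert series of $\gr\Gamma(M)$, which one gets from the BGG-type resolution / Borel--Weil--Bott computation of $\Gamma$ of a twisted $\mathscr{D}$-module on $\Pb^2$; equality of the two series, forced by the inclusion to be an equality, is a finite check that cuts out $\{1/2,3/2,5/2\}$. \textbf{This Hilbert-series comparison --- getting the filtration on $\Gamma(M)$ explicitly enough and identifying $\Gamma(\mathcal{E})$ as an MCM module over the $\Z_4$-quotient cone --- is the main obstacle}; everything else is orbit-by-orbit bookkeeping already templated by Section~\ref{SS_HC_sl3_gl2}.
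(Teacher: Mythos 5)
Your Steps 1 and 2 (claim (1), claim (3), and the quantizability half of (4)) follow essentially the paper's route: the stabilizer computation giving $(\C^2\setminus\{0\})/\Z_4$, the orbit-by-orbit application of Lemmas \ref{Lem:homog_classif} and \ref{Lem:fin_many_orbit_classif} on $\Pb^2$, and the quotient-category equivalence; those parts are fine. The genuine problems are in Step 3. Your argument for claim (2) does not work as stated: you transplant Step 3 of Lemma \ref{Lem:sl3_gl2_unobstructive}, asserting that $\gr M$ of the closed-orbit module lives on the conormal variety of the conic $C$ and that this maps \emph{isomorphically} onto $X^\theta$. But here the closed orbit is $C\cong\Pb^1$, not a point: its conormal variety is the total space of $\OO_{\Pb^1}(-4)$, and the map to $X^\theta\cong\C^2/\Z_4$ is the minimal resolution, contracting the zero section, so it is not an isomorphism and normality of $X^\theta$ buys you nothing. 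Worse, if that shortcut were valid it would prove strong quantizability of the closed-orbit modules for \emph{every} half-integral $\lambda$, contradicting part (4). Moreover, for $\lambda\notin\tfrac12\Z$ the two $\SO_3$-equivariant local systems are quantized by the minimal extensions from the \emph{open} orbit, not by closed-orbit modules. The paper proves (2) differently: it equips the open-orbit global-sections modules with a good filtration (via the last step of Lemma \ref{Lem:HC_bijection2}, giving an injection $\gr M\hookrightarrow \Gamma((\gr M)|_{\Orb_K})$) and then compares $\SL_2$-isotypic decompositions, $\operatorname{Ind}_{N_K(T_K)}^{K}\chi$ on one side against $\Gamma(\Orb_K,L_k)=\operatorname{Ind}_{K_e}^{K}\chi_k$ on the other, so that the K-type count forces the injection to be an isomorphism.

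Second, the decisive statement of the lemma, the last sentence of (4) pinning down $\lambda\in\{1/2,3/2,5/2\}$, is not actually proved in your proposal: you only sketch a Hilbert-series comparison and yourself flag it as the main obstacle. The paper carries out the analogous computation explicitly and by K-types rather than Hilbert series: the filtration of $j_+(L_\lambda)$ by order of transversal derivatives gives $\gr \I \simeq \bigoplus_{k\geqslant 0}\OO_{\Pb^1}(-2\lambda+4k+4)$; the outer automorphism $\varepsilon(g)=(g^t)^{-1}$ induces $\A_\mu\cong\A_{-\mu}$ with $\mu=\lambda-\tfrac32$ and lets one reduce to $\mu\leqslant 0$, where the higher cohomology of these summands vanishes, yielding $\gr M\simeq\bigoplus_{k}\tau_{-2\lambda+4k+5}$ as an $\SL_2$-module; comparison with $\Gamma(\Orb_K,L_k)=\bigoplus_{n\equiv k+1\,(\mathrm{mod}\,4)}\tau_n$ shows equality exactly at $\lambda=\tfrac12,\tfrac32$, and the $\varepsilon$-symmetry then supplies $\lambda=\tfrac52$. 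Note that some such symmetry (or another device) is unavoidable in your plan as well, since the transversal-derivative filtration only computes $\Gamma$ directly in the range where the cohomology vanishing holds. Until a computation of this kind is done, your Step 3 establishes neither (2), nor the strong parts of (4), nor the exact set $\{1/2,3/2,5/2\}$.
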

\begin{Rem}
	The case when $\lambda=3/2$ in part (4) is well-known, see \cite{RawnsleySternberg} and \cite{Torasso}. It was also pointed out by Vogan in \cite[Example 12.4]{Vogan}.
\end{Rem}

\begin{proof}
{\it Step 1}.
Let $K=\operatorname{SL}_2$ mapping to $\operatorname{SL}_3$ with image $\operatorname{SO}_3$.
Proposition \ref{prop:model_a2c2} shows that the HC $(\A_\lambda,\theta)$-modules are
just usual HC $(\A_\lambda,K)$-modules.

In this case we have two $K$-orbits in $\mathbb{P}^2$ that correspond to the non-degenerate and
isotropic loci with respect to the form. Let  the $K$-invariant quadratic form
on $\C^3$ be given by $q(x_1,x_2,x_3)=x_1^2+x_2^2+x_3^2$.  Then the representatives
are $[0:1:0],[1:\sqrt{-1}:0]$. Let $\tau_n$ denote the unique irreducible representation of $\SL_2$ of dimension $n$.

1) The case of $[v]=[0:1:0]$. Then $K_{[v]}$ is  the normalizer $N_K(T_K)$ of a maximal torus $T_K$ in $K$ and the $K$-orbit $Q_o := K \cdot [v]$ is open in $\Pb^2$. The Lie algebra $\kf_{[v]} = \operatorname{Lie}(T_K)$ has no nonzero
$K_{[v]}$-invariant characters. Since $N_K(T_K) / T_K \simeq \Z_2$, we use Lemma \ref{Lem:homog_classif} to conclude that for all values of $\lambda$, there are two irreducible $K$-equivariant $\mathcal{O}$-coherent
$\mathscr{D}^\lambda_{K/K_{[v]}}$-modules $\phi_\chi$, where $\chi \in  \Z_2$ is regarded as a character of $K_{[v]}$. Then $\phi_\chi$ are actually $\SO(3)$-equivariant. Let $i: Q_o \hookrightarrow \Pb^2$ denote the inclusion map, then $\I(\lambda, Q_o, \phi_\chi): = i_+ \phi_\chi = i_* \phi_\chi$ are $\mathscr{D}^\lambda_{\Pb^2}$-modules. Let $a\in \{0,1\}$ be such that $\chi$ sends $1+2\Z \in \Z/2\Z$ to $(-1)^a$,
we write $\chi_a$ for $\chi$ to indicate the dependence. Set $M_{2a} = \Gamma(\Pb^2, \I(\lambda, Q_o, \phi_\chi))$. An explicit calculation using Frobenius reciprocity shows that there is an isomorphism
  \begin{equation}\label{eq:K_Q_open}
  	 M_{2a} \big|_K \simeq \operatorname{Ind}_{K_{[v]}}^K \chi_a = \bigoplus_{n \equiv 2a+1 \, (\operatorname{mod} 4)} \tau_n,
\end{equation}
of $K$-representations.

2) The case of $[v]=[1:\sqrt{-1}:0]$. Here the stabilizer is the Borel subgroup of
$\operatorname{SL}_2$,  therefore the $K$-orbit $Q_c: = K \cdot [v]$ is closed and isomorphic to $\Pb^1$. The restriction map $\mathfrak{X}(P_{[v]})
\rightarrow \mathfrak{X}(K_{[v]})$ sends $1$ to $2$. So if $\lambda$
is an integer, then we have a unique irreducible HC module and it is $\operatorname{SO}_3$-equivariant.
When $\lambda\in \frac{1}{2}+\Z$, then we still get a unique irreducible but it is only
$\operatorname{SL}_2$-equivariant -- the action of the center is nontrivial.

{\it Step 2}. Let us construct the HC module corresponding to the closed orbit explicitly.
Let $j: Q_c \hookrightarrow \Pb^2$ denote the inclusion map. Let $\mathscr{D}^\lambda_{Q_c} $ denote the sheaf of twisted differential operators over $Q_c$ corresponding to the restriction of $\lambda$ to
the stabilizer of the $K$-orbit $Q_c$, see the discussion before Lemma \ref{Lem:fin_many_orbit_classif}.
Define the $\mathscr{D}^\lambda_{\Pb^2}$ - $\mathscr{D}^\lambda_{Q_c}$-bimodule
\[ \mathscr{D}^\lambda_{\Pb^2 \leftarrow Q_c} := j^{-1} \mathscr{D}^\lambda_{\Pb^2} \otimes_{j^{-1} \OO_{\Pb^2}} \omega_{Q_c/\Pb^2},  \]
where $\omega_{Q_c/\Pb^2} = j^* \omega_{\Pb^2}^{-1} \otimes_{\OO_{Q_c}} \omega_{Q_c} = \OO_{\Pb^1}(4)$ is the relative canonical bundle of $Q_c \simeq \Pb^1$ in $\Pb^2$, which is also the normal bundle $N_{Q_c}$ of $Q_c$. Each $\lambda \in \frac{1}{2} \Z$ determines a  $K$-equivariant line bundle $L_\lambda = \OO_{\Pb^2}(-2 \lambda)$ on $Q_c$, which is a $(\mathscr{D}^\lambda_{Q_c}, K)$-module. We push $L_\lambda$ forward to $\Pb^2$ via $j$ to get a $\mathscr{D}^\lambda_{\Pb^2}$-module
\[  \mathcal{I}(\lambda, Q_c, L_\lambda) = j_+(L_\lambda) := j_* (\mathscr{D}^\lambda_{\Pb^2 \leftarrow Q_c} \otimes_{\mathscr{D}^\lambda_{Q_c}} L_\lambda ).  \]
We set $M(\lambda, Q_c, L_\lambda) = \Gamma(\Pb^2, \I(\lambda, Q_c, L_\lambda))$. There is a natural $K$-stable ascending filtration $\{ \mathcal{F}_k \I \}_{k \geqslant 0}$ of coherent $\OO_{Q_c}$-submodules in $\mathcal{I}(\lambda, Q_c, L_\lambda)$, where $\mathcal{F}_k \I$ is the subsheaf of local sections of degree of transversal derivatives
less or equal than $k$, so that $\mathcal{F}_0 \I = \omega_{Q_c/\Pb^2} \otimes_{\OO_{Q_c}} L_\lambda$ and  the associated graded sheaf is
  \[ \gr \I(\lambda, Q_c, L_\lambda) = \bigoplus_{k \geqslant 0} S^k N_{Q_c} \otimes  \omega_{Q_c/\Pb^2} \otimes  L_\lambda \simeq \bigoplus_{k \geqslant 0} \OO_{\Pb^1}(-2\lambda + 4k + 4). \]
When $\lambda \in \frac{1}{2} \Z$ and $\lambda \leqslant \frac{5}{2}$, all the line bundle summands in the last sum have vanishing higher cohomology. Therefore the higher cohomology of $\mathcal{F}_k \I$ also vanish and $\{ \mathcal{F}_k \I \}_{k \geqslant 0}$ induces an ascending $K$-stable good filtration of $M(\lambda, Q_c, L_\lambda)$, such that there is an isomorphism
  \begin{equation}\label{eq:K_Q_closed}
	\gr M(\lambda, Q_c, L_\lambda) \simeq \bigoplus_{k \geqslant 0} \Gamma(\Pb^1, \OO_{\Pb^1}(-2\lambda + 4k + 4)) = \bigoplus_{k \geqslant 0} \tau_{-2\lambda + 4k + 5}.
\end{equation}
of $K$-representations.

{\it Step 3}. Let us conclude the classification of irreducible HC modules.
When $\lambda\not\in \frac{1}{2}\Z$,
we get two irreducible $\SO_3$-equivariant $\mathscr{D}^\lambda_{\mathbb{P}^2}$-modules, coming from case 1) above. When $\lambda$ is integral, one of the two $\operatorname{SO}_3$-equivariant $\mathscr{D}^\lambda_{\mathbb{P}^2}$-modules $\I(\lambda, Q_o, \phi_\chi)$ from case 1) is irreducible and the other one has a $\OO_{\mathbb{P}^2}$-coherent submodule whose quotient is the $\operatorname{SO}_3$-equivariant $\mathscr{D}^\lambda_{\mathbb{P}^2}$-module $\I(\lambda, Q_c, L_\lambda)$ from case 2). Therefore we end up with two irreducible $\SO(3)$-equivariant $\mathscr{D}^\lambda_{\mathbb{P}^2}$-modules that are not $\mathcal{O}$-coherent.
When $\lambda\in \frac{1}{2}+\Z$ we get two $\SO(3)$-equivariant modules and one which is only $\SL_2$-equivariant.

{\it Step 4}.
Now we examine $\Orb\cap \kf^\perp$.  It consists of all symmetric matrices of rank $1$. These matrices form one $K$-orbit $\Orb_K$. The stabilizer in $\SO(3)$ is the semidirect product of the subgroup of strictly upper triangular matrices (where we now realize $\operatorname{O}(3)$ as the subgroup of matrices stabilizing the quadratic form $x\mapsto x_2^2+x_1x_3$) and the subgroup $\{\operatorname{diag}(\epsilon,1,\epsilon)| \epsilon^2=1\}$. So the orbit
is $(\C^2\setminus \{0\})/\mathbb{Z}_4$. This proves (1) of the lemma.

One conclusion is that if we take $K=\operatorname{SO}_3$, the functor  $\bullet_{\dagger}$ associated
to the open $K$-orbit  is an equivalence. For $K=\operatorname{SL}_2$ it is not: in the interesting case
of $\lambda\in \frac{1}{2}+\Z$, we have exactly one object in the target (identified with $\Z/4\Z\operatorname{-mod}$) which is not in the image.

This proves part (3) and the part of (4) about quantizable twisted local systems.

{\it Step 5.} Let us prove the claims about strongly quantizable local systems ((2) and the remaining part of (4)).
We can choose $e \in \Orb_K$ so that
  \[  K_e = \left\{
     \begin{pmatrix}
  	   t & x\\
  	   0 &t^{-1}\\
      \end{pmatrix}
      \,\Bigg| \, t^4 = 1
  \right\}
  \]
We only consider those representations of $K_e$ that are trivial on the identity component. There are four irreducible ones given by
  \[  \chi_k
    \begin{pmatrix}
  	  t & x\\
  	  0 &t^{-1}\\
    \end{pmatrix}
    = t^k, \quad k = 0,1,2,3. \]
Let $L_k$ denote the corresponding $K$-equivariant line bundles over $\Orb_K$. Then we have isomorphisms of $K \times \cm$-representations
  \begin{equation} \label{eq:K_Lk}
  	 \Gamma(\Orb_K, L_k) = \operatorname{Ind}_{K_e}^K \chi_k = \bigoplus_{n \equiv k+1 \, (\operatorname{mod} 4)} \tau_n.
\end{equation}

We write $\mu$ for $\lambda-3/2$ and $\A_\mu$ for the quantization with period $\mu$, so that
$\A_\mu=\mathscr{D}^{\lambda}(\mathbb{P}^2)$.
The outer automorphism $\varepsilon: g \to (g^t)^{-1}$ of $\SL_3$ fixes $\SO(3)$ and its action on $\slf_3$ coincides with $-\theta$. Moreover, it preserves $\Orb_K$ and maps each line bundle  $L_k$ to itself. Again twisting the $\A_\mu$-module structures by $\varepsilon$ gives equivalences of categories $\HC(\A_\mu,\theta)^K \cong \HC(\A_{-\mu},\theta)^K$ and $\HC_{\overline{\Orb}_K}(\A_\mu,\theta)^K \cong \HC_{\overline{\Orb}_K}(\A_{-\mu},\theta)^K$. Therefore we can focus on the case when $ \mu = \lambda- \frac{3}{2} \leqslant 0$.

When $\lambda \notin \frac{1}{2} \Z$, the global section modules $M_{2a}$ from case 1) are irreducible as $(\A_\mu, \theta)$-modules.  By the last part of the proof of Lemma \ref{Lem:HC_bijection2}, there exists a $K$-stable good filtration on each $M_{2a}$ such that the natural map $\gr M_{2a} \to \Gamma(\gr M_{2a}|_{\Orb_K})$ is injective. By the K-type decompositions \eqref{eq:K_Q_open} and \eqref{eq:K_Lk}, we see that this map is actually an isomoprhism and $\gr M_{2a}|_{\Orb_K} \simeq L_{2a}$, $a = 1,2$.

When $\lambda \in \frac{1}{2} + \Z$ and $\lambda \leqslant \frac{3}{2}$, consider the irreducible $(\A_\mu, \theta)$-module $M(\lambda, Q_c, L_\lambda)$ from case 2) and denote it by $M_k$, where $k \in \{1, 3\}$ and $k \equiv 2 \lambda \, (\operatorname{mod} 4)$. Recall that $M_k$ has a natural $K$-stable filtration. By \eqref{eq:K_Q_closed} and \eqref{eq:K_Lk}, we have $\Gamma(\gr M_k |_{\Orb_K}) \simeq L_{k}$, $k \in \{1, 3\}$. We have $\gr M_k \simeq \Gamma(\Orb_K, L_{k})$ only when $\lambda = \frac{3}{2}$ and $\lambda = \frac{1}{2}$.

This finishes the proof.
\end{proof}

\subsection{Type $c_2$}\label{SS_c2}
Here we consider the algebra $\A=D(\C^2)^{\{\pm 1\}}$,
the only quantization of $\overline{\Orb}$, where
$\Orb$ is the minimal nilpotent orbit in $\mathfrak{sp}_4$.
We claim  that $\A$ is a simple algebra. Here is a more general result.

\begin{Lem}\label{Lem:simplicity}
Suppose that $\tilde{\A}$ is a simple algebra and a  filtered quantization of $\C[\tilde{X}]$, where
$\tilde{X}$ is a conical symplectic singularity. Further, let $\Gamma$ be a finite
group acting on $\tilde{\A}$ by filtered algebra automorphisms such that the induced
action on $\tilde{X}$ is faithful. Then the algebras $\tilde{\A}\#\Gamma$ and $\tilde{\A}^\Gamma$
are Morita equivalent and both are simple.
\end{Lem}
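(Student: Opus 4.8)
The plan is to prove the two assertions—Morita equivalence and simplicity—essentially independently, exploiting the standard theory of crossed products by finite groups together with the fact that $\tilde{\A}$ is simple and Noetherian (the latter since $\gr\tilde{\A}=\C[\tilde X]$ is finitely generated commutative). First I would recall the skew-group-algebra formalism: $\tilde{\A}\#\Gamma$ is the free left $\tilde{\A}$-module with basis $\Gamma$ and multiplication twisted by the action, it contains the idempotent $e:=\frac{1}{|\Gamma|}\sum_{g\in\Gamma}g$, and there is a classical chain of facts $e(\tilde{\A}\#\Gamma)e\cong \tilde{\A}^\Gamma$, together with the statement that $\tilde{\A}\#\Gamma$ and $e(\tilde{\A}\#\Gamma)e$ are Morita equivalent provided the two-sided ideal generated by $e$ is all of $\tilde{\A}\#\Gamma$. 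So the crux of the Morita statement is to show $(\tilde{\A}\#\Gamma)\,e\,(\tilde{\A}\#\Gamma)=\tilde{\A}\#\Gamma$, equivalently that $\tilde{\A}\#\Gamma$ has no proper ideal containing $e$; this I would deduce from simplicity of $\tilde{\A}\#\Gamma$, which I prove next, so the structure is: (1) prove $\tilde{\A}\#\Gamma$ is simple; (2) conclude the full idempotent property and hence Morita equivalence with $\tilde{\A}^\Gamma$; (3) observe $\tilde{\A}^\Gamma$ is then simple as well, being Morita equivalent to a simple algebra (Morita equivalence preserves the lattice of two-sided ideals).

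For step (1), the standard tool is that for a simple ring $\tilde{\A}$ with a $\Gamma$-action, $\tilde{\A}\#\Gamma$ is simple if and only if the action is \emph{$X$-outer} in the sense of Montgomery, i.e. no nonidentity $g\in\Gamma$ acts as an inner automorphism of $\tilde{\A}$ by a unit, and more precisely there are no nonzero $\tilde{\A}$-bimodule maps $\tilde{\A}\to \tilde{\A}$ intertwining $1$ and $g$ for $g\neq 1$ (see Montgomery's book on group actions on rings, or the results going back to Azumaya–Ikehata). So the real content is to show the $\Gamma$-action on $\tilde{\A}$ is $X$-outer, and here is where the hypothesis ``the induced action on $\tilde X$ is faithful'' enters. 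The argument I would give: suppose $g\in\Gamma$ acts on $\tilde{\A}$ by conjugation by a unit $u\in\tilde{\A}^\times$. Because $\tilde{\A}$ is a \emph{filtered} quantization with $\gr\tilde{\A}=\C[\tilde X]$ and $\tilde X$ is conical (so the grading on $\C[\tilde X]$ is nonnegative with $\C[\tilde X]_0=\C$), the units of $\tilde{\A}$ are all scalars—indeed if $u\in\tilde{\A}_{\le i}\setminus\tilde{\A}_{\le i-1}$ has principal symbol $\bar u\in\C[\tilde X]_i$ and $u^{-1}$ has symbol of degree $j$ with $i+j=0$, forcing $i=j=0$ and $u\in\C^\times$; hence conjugation by $u$ is trivial, so $g$ acts trivially on $\tilde{\A}$, hence trivially on $\gr\tilde{\A}=\C[\tilde X]$, hence trivially on $\tilde X$, contradicting faithfulness unless $g=1$. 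A small refinement is needed to rule out ``$X$-inner'' automorphisms given by bimodule maps rather than literal units when $\tilde{\A}$ is only prime and not a domain; but since $\tilde{\A}$ is simple Noetherian one can pass to its simple Artinian quotient ring $Q(\tilde{\A})$ (Goldie), where $X$-inner becomes genuinely inner-by-a-unit, and units of $Q(\tilde{\A})$ of ``degree zero'' for the induced filtration are again scalars by the same conical argument applied after clearing denominators. This reduction is the step I expect to be the main technical obstacle, because matching up the filtration on $\tilde{\A}$ with a filtration/valuation on its Goldie quotient ring to control units requires some care; alternatively one can avoid Goldie theory entirely by invoking that for quantizations of symplectic singularities the bimodule $\operatorname{Hom}$ spaces of interest are computed by the centralizer of $\tilde{\A}$ in suitable completions, which are again scalars.

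Finally, for step (3) I would note that the simplicity of $\tilde{\A}^\Gamma$ also follows directly from the ideal correspondence in crossed product theory: two-sided ideals of $\tilde{\A}^\Gamma = e(\tilde{\A}\#\Gamma)e$ containing no issue correspond to $\Gamma$-stable ideals of $\tilde{\A}$ (there are none but $0$ and $\tilde{\A}$, as $\tilde{\A}$ is simple), so $\tilde{\A}^\Gamma$ is simple; this gives an independent check. I would then assemble these pieces into the proof, remarking that the conical hypothesis on $\tilde X$ is used precisely to force $\tilde{\A}^\times=\C^\times$ (equivalently, nonnegativity and connectedness of the grading), and that in the application $\tilde{\A}=D(\C^2)$, $\Gamma=\{\pm1\}$ acting by $-1$ on $\C^2$, faithfulness on $\tilde X=\C^2$ is obvious, recovering simplicity of $D(\C^2)^{\{\pm1\}}$ and $D(\C^2)\#\{\pm1\}$.
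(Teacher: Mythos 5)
Your proposal is correct, but the mechanism you use for the crucial step is genuinely different from the paper's. Both arguments share the same skeleton: prove that $\tilde{\A}\#\Gamma$ is simple, deduce that the averaging idempotent $e=\frac{1}{|\Gamma|}\sum_{\gamma\in\Gamma}\gamma$ is full, so that $\tilde{\A}^\Gamma\cong e(\tilde{\A}\#\Gamma)e$ is Morita equivalent to $\tilde{\A}\#\Gamma$ and hence simple. For the simplicity of $\tilde{\A}\#\Gamma$, the paper argues directly with bimodules: it decomposes $\tilde{\A}\#\Gamma=\bigoplus_{\gamma}\tilde{\A}_\gamma$, where $\tilde{\A}_\gamma$ is the regular bimodule twisted by $\gamma$; each summand is a simple bimodule because $\tilde{\A}$ is simple, and they are pairwise non-isomorphic because their associated varieties are the graphs of the $\gamma$'s in $\tilde{X}\times\tilde{X}$, distinct by faithfulness of the action on $\tilde{X}$; any two-sided ideal is a sum of some of the $\tilde{\A}_\gamma$'s and is permuted transitively by the group elements, which gives simplicity. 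You instead invoke the Azumaya--Montgomery criterion (an outer action of a finite group on a simple ring yields a simple skew group algebra) and verify outerness by showing $\tilde{\A}^\times=\C^\times$, using the nonnegative conical filtration with $\tilde{\A}_{\leqslant 0}=\C$ and the fact that $\gr\tilde{\A}=\C[\tilde{X}]$ is a domain. These are two proofs of the same fact: for a simple unital ring, $\tilde{\A}_\gamma\cong\tilde{\A}_{\gamma'}$ as bimodules exactly when $\gamma'\gamma^{-1}$ is inner, so the paper is in effect establishing outerness by comparing supports rather than units; its route is self-contained and uses the hypothesis on $\tilde X$ directly, while yours is shorter modulo the standard reference and isolates the cleaner statement that all units of a quantization of a conical variety are scalars. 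Finally, the technical worry you flag about X-inner automorphisms and a Goldie-quotient detour is not needed: for a simple unital ring the Martindale (symmetric) quotient ring is the ring itself, so X-inner coincides with inner, and in any case $\tilde{\A}$ is a domain here because its associated graded algebra is one.
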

\begin{proof}
The proof is standard. For $\gamma\in \Gamma$,  let $\tilde{\A}_\gamma$ denote the bimodule obtained
from the regular bimodule by twisting with $\gamma$ on the right. This $\tilde{\A}$-bimodule is simple,
and $\tilde{\A}_\gamma\not\cong \tilde{\A}_{\gamma'}$ for $\gamma\neq \gamma'$, the latter can be seen by comparing the associated varieties of these bimodules (that are the graphs of $\gamma,\gamma'$ in $X\times X$). We conclude that the only $\Gamma$- and $\tilde{\A}$- sub-bimodules in $\tilde{\A}\#\Gamma$ are
$\{0\}$ and $\tilde{\A}\#\Gamma$. Therefore, $\tilde{\A}\#\Gamma$ is a simple algebra. The remaining claims follow from here.
\end{proof}
In this section we will apply this lemma to $\tilde{X}=\C^4$, its only quantization $\A$, the Weyl algebra,
and $\Gamma=\{\pm 1\}$.

The algebra $\mathfrak{sp}_4$ has two involutions up to conjugation:
for one we have $\kf=\mathfrak{sl}_2\times \mathfrak{sl}_2$
and for the other $\kf=\mathfrak{gl}_2$. For the former,
$\kf^\perp\cap \Orb=\varnothing$, so we only consider the case
$\kf=\mathfrak{gl}_2$.

\begin{Lem}\label{Lem:c2_unobstructive}
Let $X:=\C^4/\{\pm 1\}$ and $\theta$ be the graded Poisson anti-involution corresponding to the
symmetric subalgebra $\mathfrak{gl}_2$. Then the pair $(X,\theta)$ is unobstructive.
\end{Lem}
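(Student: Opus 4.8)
The plan is to verify Definition~\ref{defi:obstructive_slice} directly, exploiting the concrete model $\A=D(\C^2)^{\{\pm1\}}$. By Corollary~\ref{Cor:a2_c2_involution} the anti-involution is $\theta=-\sigma$ for the involution $\sigma$ of $\g:=\mathfrak{sp}_4$ with $\kf=\g^\sigma=\mathfrak{gl}_2$, and the locally finite subalgebra of the Rees algebra is $\operatorname{Sp}_4$-stable; thus an HC $(\A,\theta)$-module is the same as a twisted-equivariant HC $(\mathfrak{sp}_4,\operatorname{GL}_2)$-module with full support, whose annihilator is the unipotent ideal of $\Orb=\Orb_{\min}$. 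Since $\Orb\cong(\C^4\setminus\{0\})/\{\pm1\}$ one has $H^2(\Orb,\C)=(\mathfrak{sl}_2^*)^{\operatorname{SL}_2}=0$ by Lemma~\ref{Lem:G_orbit_cohomology}, so $\A$ (period $\lambda=0$) is the only quantization and the twist appearing in Definition~\ref{defi:obstructive_slice} is the half-canonical one.

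First I would identify $X^\theta\setminus\{0\}$. Writing $\C^4=\C^2\oplus(\C^2)^*$ for the standard symplectic space, $\C[X]_2=S^2(\C^4)=\mathfrak{sp}_4$ with $\kf=\mathfrak{gl}_2=\C^2\otimes(\C^2)^*=\langle x_i\xi_j\rangle$, the common zero locus of $\kf$ in $\C^4$ is $(\C^2\oplus 0)\cup(0\oplus(\C^2)^*)$, so $X^\theta\setminus\{0\}=\Orb_K^1\sqcup\Orb_K^2$ with $\Orb_K^1\cong\Orb_K^2\cong(\C^2\setminus\{0\})/\{\pm1\}$ --- these are the two $\operatorname{GL}_2$-orbits of rank-one tensors in $S^2\C^2$ and $S^2(\C^2)^*$, i.e. the two components of $\Orb\cap\kf^\perp$. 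As these orbits are homogeneous, the category of graded regular twisted local systems on $X^\theta\setminus\{0\}$ is semisimple with finitely many simples, each admitting a grading (Lemma~\ref{Lem:homog_classif} and the discussion around Lemma~\ref{Lem:fin_many_orbit_classif}), so by Lemma~\ref{Lem:graded_loc_sys} it suffices to show every irreducible such $\mathcal{E}$ --- which automatically has strong $\cm$-action by Lemma~\ref{Lem:irred_strongness_HC} --- is strongly $\A$-quantizable. The Fourier transform automorphism of $D(\C^2)$ restricts to an automorphism of $\A=D(\C^2)^{\{\pm1\}}$ that commutes with $\theta$ and interchanges $S^2\C^2\leftrightarrow S^2(\C^2)^*$, hence $\Orb_K^1\leftrightarrow\Orb_K^2$; so we may assume $\mathcal{E}$ lives on $\Orb_K^1$.

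Next I would pin down the admissible twists and the simple objects, then realize them. Taking $\chi=e_1\otimes e_1\in\Orb_K^1$, one computes $K_Q\cong\C^\times\times\Z/2\Z$ with the $\Z/2\Z$ generated by $\operatorname{diag}(-1,1)$; computing $\rho_L$ (Definition~\ref{defi:half_character}) and applying Corollary~\ref{Cor:equiv_condition} with $\lambda=0$ shows there is exactly one twist for which a strong-$\cm$-action half-canonically twisted local system on $\Orb_K^1$ exists, and for it the target category $(\underline{\A},\zeta)\operatorname{-mod}^{K_Q,\kappa_Q}$ has exactly two simples $\mathcal{E}_0,\mathcal{E}_1$, distinguished by the sign of $\Z/2\Z$ on the fibre (geometrically: the trivial local system and the $\{\pm1\}$-sign local system on $(\C^2\setminus\{0\})/\{\pm1\}$). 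To realize them, use $\mathfrak{sp}_4=S^2(\C^4)\hookrightarrow D(\C^2)$ as operators of order $\leqslant 2$: this is the oscillator representation $\C[\C^2]$, which is parity-graded, and since $\A$ preserves parity, $M^{\mathrm{ev}}:=\C[\C^2]^{\mathrm{ev}}$ and $M^{\mathrm{odd}}:=\C[\C^2]^{\mathrm{odd}}$ are irreducible cyclic $\A$-modules of Gelfand--Kirillov dimension $\tfrac12\dim\Orb=2$ with associated variety $\overline{\Orb}_K^1$; after the shift of Lemma~\ref{Lem:twisted_equivar} they carry the metaplectic $\widetilde{\operatorname{GL}_2}$-action, hence are twisted-equivariant HC $(\A,\theta)$-modules by Lemma~\ref{Lem:HC_bijection2}. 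Choosing good filtrations ($M^{\mathrm{ev}}_{\leqslant j}=\C[\C^2]^{\mathrm{ev}}_{\leqslant 2j}$ and $M^{\mathrm{odd}}_{\leqslant j}=\C[\C^2]^{\mathrm{odd}}_{\leqslant 2j+1}$, which are $\theta$-good since $\gr M$ is supported on $\kf^\perp$), one identifies $\gr M^{\mathrm{ev}}\cong\C[x_1,x_2]^{\mathrm{ev}}=\C[\overline{\Orb}_K^1]$ and $\gr M^{\mathrm{odd}}\cong\C[x_1,x_2]^{\mathrm{odd}}$, the nontrivial rank-one reflexive module over $\C[\overline{\Orb}_K^1]$. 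Since $\overline{\Orb}_K^1\cong\C^2/\{\pm1\}$ is a normal surface, both are maximal Cohen--Macaulay, so $\gr M\xrightarrow{\sim}\Gamma(\gr M|_{\Orb_K^1})$ and $\gr M|_{\Orb_K^1}$ equals $\mathcal{E}_0$, resp. $\mathcal{E}_1$, with the half-canonical twist. Hence $\mathcal{E}_0,\mathcal{E}_1$ are strongly $\A$-quantizable, and by the Fourier-transform symmetry the same holds on $\Orb_K^2$; this proves that $(X,\theta)$ is unobstructive.

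The main obstacle is the bookkeeping in the last step: checking that the $K_Q$-representation count genuinely matches the two oscillator pieces (so that no strong-$\cm$-action twisted local system is overlooked), that the twist carried by $M^{\mathrm{ev}}$ and $M^{\mathrm{odd}}$ is precisely the one recovered from the microlocalization of $\A$, and --- most delicately --- that the chosen filtrations yield $\gr M$ equal to the \emph{full} module of global sections $\Gamma(\mathcal{E})$ rather than a proper graded submodule. This last point is exactly the distinction between $\A$-quantizable and strongly $\A$-quantizable, and it is where the Cohen--Macaulayness of $\overline{\Orb}_K^1$ and the precise choice of filtration enter.
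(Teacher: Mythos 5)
Your argument is correct, and at its core it coincides with the paper's: the two simple modules you quantize are exactly the paper's, namely the $\{\pm 1\}$-isotypic (even/odd) pieces of the polynomial module $\tilde{\A}/\tilde{\A}L_-\cong\C[\C^2]$ over the Weyl algebra $\tilde{\A}=D(\C^2)$, and the endgame is the same: $\C[x_1,x_2]^{\mathrm{ev}}$ and $\C[x_1,x_2]^{\mathrm{odd}}$ are maximal Cohen--Macaulay over $\C[\C^2/\{\pm1\}]$, so $\gr M\xrightarrow{\sim}\Gamma(\gr M|_{\Orb_K})$. The packaging differs: the paper first proves the lifting statement (Lemma \ref{Lem:lift}), giving $\HC_{L_\pm/\{\pm1\}}(\A,\theta)\simeq \HC^{\{\pm1\}}_{L_\pm}(\tilde{\A},\theta)\simeq\operatorname{Rep}(\Z/2\Z)$, so the classification and semisimplicity of HC $(\A,\theta)$-modules come for free and are then matched against the two local systems; you instead enumerate the irreducible twisted local systems per component directly (immediate, since $\pi_1$ is $\Z/2\Z$ and the half-canonical twist trivializes) and exhibit their quantizations by hand with explicit parity filtrations, never invoking Lemma \ref{Lem:lift}. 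Your route is more elementary and self-contained here; the paper's buys reuse of Lemma \ref{Lem:lift} for the $\C^4/\mathbb{Z}_3$ and $a_2/S_2$ slices and a full description of the HC category. Two minor points: the detour through twisted $(\mathfrak{sp}_4,\widetilde{\GL}_2)$-equivariance and Lemma \ref{Lem:HC_bijection2} is superfluous once the explicit filtrations are written (goodness for $\theta$ follows since $\C[X]^{-\theta}$ kills $\gr M$), and the Fourier transform commutes with $\theta$ only on $\A$, not on $\tilde{\A}$ (where it swaps the two lifts of $\theta$), which is all you use but should be said.
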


We will prove this lemma after some preparation.

\subsubsection{Lifts of HC modules} \label{SSS_lift_HC}
This section contains a general result used to classify irreducible $(\A,\theta)$-modules
in this and the subsequent two cases.

Consider the following situation. Let $\tilde{X}$ be a conical symplectic singularity
such that $\operatorname{codim}_{\tilde{X}} \tilde{X}^{sing}\geqslant 4$.

Let $\tilde{\A}$ be a filtered
quantization of $\tilde{X}$ equipped with an anti-involution $\theta$ and a compatible $\Z/d\Z$-grading, let
$\zeta$ denote the corresponding order $d$ automorphism of $\tilde{\A}$. Let $\Gamma$ be
a group acting on $\tilde{\A}$ by filtration preserving automorphisms so that
\begin{enumerate}
\item  the action of $\Gamma$ commutes with $\theta$ and $\zeta$,
\item
and  the induced action of $\Gamma$ on $\tilde{X}^{reg}$
is free.
\end{enumerate}

Let $\tilde{Y}$ be an irreducible component of $\tilde{X}^{\theta}$.

Set $X:=\tilde{X}/\Gamma, \A:=\tilde{\A}^\Gamma$. We have the induced anti-involution
of $\A$ to be also denoted by $\theta$. Let $Y$ be the image of $\tilde{Y}$ in $X$,
this is an irreducible component of $X^\theta$.

It makes sense to speak about $\Gamma$-equivariant $\Z/d\Z$-graded HC $(\tilde{\A},\theta)$-modules.
Denote their category by $\HC^\Gamma(\tilde{\A},\theta,\zeta)$. We can consider the subcategory
$\HC^\Gamma_{\Gamma\tilde{Y}}(\tilde{\A},\theta,\zeta)$ of all modules supported on $\Gamma\tilde{Y}$
and its quotient $\overline{\HC}^\Gamma_{\Gamma\tilde{Y}}(\tilde{\A},\theta,\zeta)$ of all modules with full support.

Taking the $\Gamma$-invariants, we get a functor $\HC^\Gamma(\tilde{\A},\theta,\zeta)\rightarrow
\HC(\A,\theta,\zeta)$ that restricts to $\HC^\Gamma_{\Gamma\tilde{Y}}(\tilde{\A},\theta,\zeta)\rightarrow
\HC_{Y}(\A,\theta,\zeta)$ and then descends to
\begin{equation}\label{eq:equiv_HC_invariants}
\overline{\HC}^\Gamma_{\Gamma\tilde{Y}}(\tilde{\A},\theta,\zeta)\rightarrow
\overline{\HC}_{Y}(\A,\theta,\zeta).
\end{equation}

The main result of this section is as follows.

\begin{Lem}\label{Lem:lift}
The functor (\ref{eq:equiv_HC_invariants}) is an equivalence.
\end{Lem}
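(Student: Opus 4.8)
The strategy is to construct a quasi-inverse to the functor (\ref{eq:equiv_HC_invariants}). Given $M\in \overline{\HC}_Y(\A,\theta,\zeta)$, the natural candidate for a lift is $\tilde{\A}\otimes_{\A}M$. First I would check that this is an object of $\HC^\Gamma(\tilde{\A},\theta,\zeta)$: it is finitely generated over $\tilde{\A}$ because $\tilde{\A}$ is finitely generated over $\A=\tilde{\A}^\Gamma$ (standard for a finite group action), it carries a $\Gamma$-action from the left tensor factor, and it inherits the $\Z/d\Z$-grading and a good filtration from $M$; the anti-involution $\theta$ on $\tilde{\A}$ together with $\theta$ on $M$ gives the good filtration for $\theta$ in the sense of Definition \ref{defi:HC}. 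Passing to the quotient by modules supported on $\partial(\Gamma\tilde{Y})$ (equivalently, of GK-dimension less than $\tfrac12\dim\tilde X$), we obtain the induction functor $\overline{\HC}_Y(\A,\theta,\zeta)\rightarrow \overline{\HC}^\Gamma_{\Gamma\tilde Y}(\tilde\A,\theta,\zeta)$.

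Next I would verify that these two functors are mutually quasi-inverse. In one direction: for $M\in \overline{\HC}_Y(\A,\theta,\zeta)$ we have $(\tilde{\A}\otimes_{\A}M)^\Gamma \supseteq M$ (via $a\otimes m\mapsto \frac{1}{|\Gamma|}\sum_\gamma \gamma a\otimes m$, or simply the unit), and the cokernel is supported on the branch locus of $\tilde X^{reg}\to X^{reg}$ together with $X^{sing}$; since the $\Gamma$-action on $\tilde X^{reg}$ is free by hypothesis (2), $\tilde X^{reg}\to X^{reg}$ is étale, so the cokernel is supported on $X^{sing}$, which has codimension $\geqslant 4$, hence in $\partial Y$. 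Thus $(\tilde{\A}\otimes_{\A}M)^\Gamma\xrightarrow{\sim}M$ in the Serre quotient. In the other direction: for $N\in \HC^\Gamma_{\Gamma\tilde Y}(\tilde\A,\theta,\zeta)$ the counit $\tilde{\A}\otimes_{\A}N^\Gamma\rightarrow N$ is an isomorphism over $\tilde X^{reg}$ — here one uses that over the étale cover $\tilde X^{reg}\to X^{reg}$, a $\Gamma$-equivariant sheaf is the pullback of its $\Gamma$-invariants — so again its kernel and cokernel are supported on $\tilde X^{sing}$, which is of codimension $\geqslant 4$, hence on $\partial(\Gamma\tilde Y)$.

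The key technical point, and the main obstacle, is controlling supports: one must know that a HC module in $\HC_{\Gamma\tilde Y}^\Gamma(\tilde{\A},\theta,\zeta)$ which is supported on a codimension-$\geqslant 1$ subvariety of $\Gamma\tilde Y$ becomes zero in the quotient category $\overline{\HC}_{\Gamma\tilde Y}^\Gamma$. This is where condition (3) of Lemma \ref{Lem:lag_antiinvolution} enters: since $\tilde Y$ is a component of $\tilde X^\theta$ and $\tilde X^\theta\cap\mathcal{L}$ is Lagrangian in each leaf $\mathcal L$ by part (1) of that lemma, the full support condition $\dim\operatorname{V}(N)=\tfrac12\dim\tilde X$ is equivalent to $\operatorname{V}(N)\cap\tilde X^{reg}\neq\varnothing$, i.e. $\operatorname{V}(N)\supseteq\overline{\tilde Y}$ with $\tilde Y$ of top dimension. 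I would therefore carefully match up the Serre subcategories being quotiented by on both sides — modules with $\operatorname{V}(\cdot)\subseteq\partial(\Gamma\tilde Y)$ upstairs versus $\operatorname{V}(\cdot)\subseteq\partial Y$ downstairs — using that $\tilde X\to X$ is finite, so associated varieties correspond under taking $\Gamma$-invariants and under $\tilde\A\otimes_\A(-)$, and that the branch locus contributes nothing because of freeness on the regular part and the codimension-$4$ hypothesis. Finally I would note that the whole argument is compatible with the equivariance structures ($K_Q$-equivariance and twists) since induction and invariants are, and that exactness of both functors in the Serre quotients follows from flatness of $\tilde\A$ over $\A$ and exactness of $(-)^\Gamma$ in characteristic $0$.
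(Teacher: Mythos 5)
Your overall architecture (invariants one way, induction $\tilde{\A}\otimes_{\A}\bullet$ the other way, with discrepancies pushed into codimension $\geqslant 2$ and hence killed in the Serre quotients) matches the paper's, but there is a genuine gap at the very first step, and it is exactly the point where all the work in the paper's proof happens. You assert that $\tilde{\A}\otimes_{\A}M$ ``inherits a good filtration from $M$'' and that ``$\theta$ on $\tilde{\A}$ together with $\theta$ on $M$ gives the good filtration for $\theta$ in the sense of Definition \ref{defi:HC}.'' A good filtration on $M$ for $(\A,\theta)$ only controls $(\theta(a)-a)m$ for $a\in\A=\tilde{\A}^{\Gamma}$, i.e.\ for the $\Gamma$-invariant part of $\tilde{\A}$; to get a Harish-Chandra $(\tilde{\A},\theta)$-module you must control $(\theta(\tilde a)-\tilde a)m$ for \emph{all} $\tilde a\in\tilde{\A}$, equivalently you must show that the ideal generated by $\C[\tilde X]^{-\theta}$, and not merely the subideal generated by $\C[X]^{-\theta}$, annihilates the associated graded along $\Gamma\tilde Y$. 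This is not automatic, and in fact the paper does not claim it for $\tilde{\A}\otimes_{\A}M$ itself: it replaces that module by the global sections of its microlocalization to $\tilde X^{reg}$ (the two agree up to modules supported on $\tilde X^{sing}$, hence in the quotient), and then proves the needed containment $\tilde{\A}_\hbar^{-\theta}\tilde M_\hbar\subset\hbar\tilde M_\hbar$ by establishing the identity $\C[\tilde U]^{\theta}\,\C[U]^{-\theta}=\C[\tilde U]^{-\theta}$ (equation (\ref{eq:classic_analog})) on $\theta$-stable affine pieces of the regular locus. That identity is where the hypothesis that $\Gamma$ acts freely on $\tilde X^{reg}$ is really used: completing at a point $u\in Y\cap U$, freeness gives $\C[\tilde U]^{\wedge_u}\cong(\C[U]^{\wedge_u})^{\oplus\Gamma}$, from which the identity is immediate. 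In your write-up freeness only appears in the support bookkeeping (\'etaleness of $\tilde X^{reg}\to X^{reg}$), so the central analytic input is missing and the claim that the candidate lift is Harish-Chandra is unsupported.

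A secondary problem: you invoke ``flatness of $\tilde{\A}$ over $\A$'' to get exactness of the induction functor. This is not established and is false in general for invariants of a finite group action (already classically, $\C[x,y]$ is not flat over $\C[x,y]^{\Z_2}$ for the sign action); no Morita-type hypothesis as in Lemma \ref{Lem:simplicity} is assumed here. The paper sidesteps this by only using exactness and faithfulness of the invariants functor $\pi$ (faithfulness again coming from freeness on the regular locus) and by constructing a right inverse to $\pi$ objectwise in the quotient category, rather than claiming exactness of induction.
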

\begin{proof}
Let us write $\pi$ for the functor (\ref{eq:equiv_HC_invariants}). Note that $\pi$ is exact and, since
the action of $\Gamma$ on $X^{reg}$ is free, faithful. We claim that $\tilde{\A}\otimes_{\A}\bullet$
is right inverse of $\pi$, this will finish the proof. What needs to be proved is that for
$M\in \HC_Y(\A,\theta,\zeta)$, there is an object $\tilde{M}\in \HC^\Gamma_{\Gamma\widetilde{Y}}(\tilde{\A},\theta,\zeta)$
whose microlocalization to $X^{reg}$ coincides with that of $\tilde{\A}\otimes_{\A}M$.

Clearly, $\tilde{\A}\otimes_\A M$ is supported on $\Gamma\tilde{Y}$. We will work with the
modified Rees algebras and modules, $M_\hbar,\A_\hbar, \tilde{\A}_\hbar$. Note that for
$a\in \A_\hbar^{-\theta},b\in \tilde{\A}_\hbar, m\in M$ we have
$$a(b\otimes m)=[a,b]\otimes m+ b\otimes am=\hbar [(\hbar^{-1}[a,b])\otimes m+ b\otimes (\hbar^{-1}(am))].$$
It follows that for $b'\in  \A_\hbar^{-\theta}\tilde{\A}_\hbar$ (equivalently -- due to $[\tilde{\A}_\hbar,\tilde{\A}_\hbar]\subset \hbar \tilde{\A}_\hbar$-- for $b'\in  \tilde{\A}_\hbar\A_\hbar^{-\theta}$) and $m'\in \tilde{\A}_\hbar\otimes_{\A_\hbar}M_\hbar$ we have
$b'm\in \hbar (\tilde{\A}_\hbar\otimes_{\A_\hbar}M_\hbar)$.
We write $(\tilde{\A}_\hbar\otimes_{\A_\hbar}M_\hbar)^{reg}$ for  the microlocalization of the $\tilde{\A}_\hbar\otimes_{\A_\hbar}M_\hbar$ to $\tilde{X}^{reg}$.
Set $\tilde{M}_\hbar:=\Gamma((\tilde{\A}_\hbar\otimes_{\A_\hbar}M_\hbar)^{reg})$.

We observe that $\tilde{M}_\hbar$ is a finitely generated $\tilde{\A}_\hbar$-module and the kernel and
cokernel of the natural homomorphism $\tilde{\A}_\hbar\otimes_{\A_\hbar}M_\hbar
\rightarrow \tilde{M}_\hbar$ are supported on $\tilde{X}^{sing}$. This is because $\operatorname{codim}_{\tilde{Y}}(\tilde{Y}\setminus \tilde{X}^{reg})\geqslant 2$, compare
to the proof of Lemma \ref{Lem:HC_bijection2}. So, it remains to check that
$\tilde{M}_\hbar$ is a HC $(\tilde{\A}_\hbar,\theta)$-module. This will follow if we show that
$\tilde{\A}_\hbar^{-\theta}\tilde{M}_\hbar\subset \hbar \tilde{M}_\hbar$.

Note that the
$\tilde{\A}_\hbar$-module $\tilde{M}_\hbar$ is supported on $\Gamma\tilde{Y}$. Its
microlocalization to $\tilde{X}^{reg}$ coincides with $(\tilde{\A}_\hbar\otimes_{\A_\hbar}M_\hbar)^{reg}$.
By the construction, we have $b' \tilde{m}\in \hbar \tilde{M}_\hbar$ for all $b'\in \tilde{\A}_\hbar\A_\hbar^{-\theta}$
 and $\tilde{m}\in \tilde{M}_\hbar$. Fix a $\C^\times$- and  $\theta$-stable affine subset $U\subset X^{reg}$
 and its preimage $\tilde{U}\subset \tilde{X}$. We have $b'\tilde{m}\in \hbar^d\tilde{M}_\hbar(\tilde{U})$ for all $b'\in \tilde{\A}_\hbar(\tilde{U})\A_\hbar(U)^{-\theta}$
and $\tilde{m}\in \tilde{M}_\hbar(\tilde{U})$.

We claim that $b'\tilde{m}\in \hbar\tilde{M}_\hbar(\tilde{U})$
for all $b'\in \tilde{\A}_\hbar(\tilde{U})^{-\theta}$. This will follow once we show that
$\tilde{\A}_\hbar(\tilde{U})\A_\hbar(U)^{-\theta}\supset\tilde{\A}_\hbar(\tilde{U})^{-\theta}$.
The latter equality will follow from its classical analog:
\begin{equation}\label{eq:classic_analog}\C[\tilde{U}]^\theta
\C[U]^{-\theta}=\C[\tilde{U}]^{-\theta}.
\end{equation}
 And that equality will follow if we prove its analog
for the completions at an arbitrary point $u\in Y\cap U$. We write $\hat{A}$ for
$\C[U]^{\wedge_u}$. Then $$\C[U]^{-\theta,\wedge_u}=\hat{A}^{-\theta},
\C[\tilde{U}]^{\theta,\wedge_u}=(\hat{A}^\theta)^{\oplus \Gamma},
\C[\tilde{U}]^{-\theta,\wedge_u}=(\hat{A}^{-\theta})^{\oplus \Gamma}.$$
The completed analog of (\ref{eq:classic_analog}) follows.

So we see that $b'\tilde{m}\in \hbar\tilde{M}_\hbar(U)$ for all $b'\in \tilde{\A}_\hbar(U)^{-\theta}$
and $\tilde{m}\in \tilde{M}_\hbar(U)$. It follows that $\tilde{\A}_\hbar^{-\theta} \tilde{M}_\hbar\subset \hbar \tilde{M}_\hbar$. This finishes the proof.
\end{proof}

\subsubsection{Proof of Lemma \ref{Lem:c2_unobstructive}}
\begin{proof}
{\it Step 1}.
Consider the algebra $\tilde{\A}=D(\C^2)$ so that $\A$ is the invariants in $\tilde{\A}$
under the action of $\{\pm 1\}$. We will view $\tilde{\A}$ as the Weyl algebra
of the symplectic vector space $V:=\C^4$. The anti-involution $\theta$ of $\A$
comes from an anti-symplectic involution of $V$. To give such an involution is the
same thing as to decompose $V$ into the direct sum of two Lagrangian subspaces,
$L_+$ and $L_-$, the involution is $1$ on $L_+$ and $-1$ on $L_-$. In particular,
there are two irreducible components of $X^\theta$, these are $L_+/\{\pm 1\}$
and $L_-/\{\pm 1\}$.

{\it Step 2}. We claim that for each of $L_{\pm}$, there are two
non-isomorphic simple HC $(\A,\theta)$-modules supported at $L_\pm/\{\pm 1\}$ and the category
$\HC_{L_{\pm}/\{\pm 1\}}(\A,\theta)$ is semisimple.

To prove this claim observe first that
the involution $\theta$ admits two lifts to $\tilde{\A}$ that differ by
$-1$. The fixed point set in $V$ for one of them is $L_+$ and for the
other it is $L_-$. Both are $\{\pm 1\}$-stable.

The algebra $\A$ is simple by Lemma \ref{Lem:simplicity}. So $\bar{\HC}(\A)=\HC(\A)$
thanks to Lemma \ref{Lem:lag_antiinvolution}.
We take $Y=L_+$.
Thanks to Lemma \ref{Lem:lift}, $\HC_{L_{+}/\{\pm 1\}}(\A,\theta)\xrightarrow{\sim}
\HC^{\pm 1}_{L_+}(\tilde{\A},\theta)$. Analogously to Lemma \ref{Lem:wHC_formal_Weyl},
every HC $(\tilde{\A},\theta)$-module supported on $L_+$ is the direct sum of several copies of
$\C[L_+]$. So taking the annihilator of $L_+\subset \tilde{\A}$ defines an equivalence
$\HC^{\pm 1}_{L_+}(\tilde{\A},\theta)\xrightarrow{\sim} \operatorname{Rep}(\{\pm 1\})$.
This proves the claim in the beginning of the step.

{\it Step 3}. In Step 2 we have seen that the simple objects in
$\HC(\A,\theta,\zeta)$ are precisely the $\{\pm 1\}$-isotypic components in $\tilde{\A}/\tilde{\A}L_-$.
So they have good filtrations whose associated graded modules are the isotypic components
of $\C[L_{\pm}]$. In particular, both associated graded modules are Cohen-Macaulay.
\end{proof}

\subsection{The case of $X=\C^4/\mathbb{Z}_3$}\label{SS_tau_check}
Set $\Gamma=\mathbb{Z}_3,\tilde{\A}=D(\C^2)$.
The variety $X$ is $\mathbb{Q}$-factorial terminal, and the only filtered quantization of $X$ is $\A=\tilde{\A}^\Gamma$, see, e.g., \cite[Section 3.6]{orbit}.

The goal of this section is to prove the following result.

\begin{Lem}\label{Lem:tau_unobstructive}
Let $X:=\C^4/\mathbb{Z}_3$. For any graded Poisson anti-involution $\theta$ of $\C[X]$,
the pair $(X,\theta)$ is unobstructive.
\end{Lem}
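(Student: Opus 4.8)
The plan is to mimic the structure of the proofs of Lemmas \ref{Lem:c2_unobstructive} and \ref{Lem:sl3_gl2_unobstructive}, using the lifting result Lemma \ref{Lem:lift} to reduce questions about $\A = \tilde{\A}^\Gamma$-modules to questions about $\Gamma$-equivariant $\tilde{\A} = D(\C^2)$-modules, where $\Gamma = \Z_3$ acts on $\C^2$ via $\operatorname{diag}(\zeta,\zeta)$ (so the induced action on $\C^4 = T^*\C^2$ is $\operatorname{diag}(\zeta,\zeta,\zeta^{-1},\zeta^{-1})$). First I would classify the graded Poisson anti-involutions $\theta$ of $\C[X]$. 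By Corollary \ref{Cor:tau_a_2S_2_involution} the subalgebra of $\gamma_1$-locally-finite elements of the completed Rees algebra is the standard one, so $\theta$ is induced by an anti-symplectic involution of $\C^4$ commuting with the $\Z_3$-action; concretely, such an involution corresponds to a $\Z_3$-stable decomposition $\C^4 = L_+ \oplus L_-$ into Lagrangian subspaces. Because the two weight-spaces of $\Z_3$ on $\C^4$ are each $2$-dimensional and isotropic and are interchanged by the symplectic form, up to conjugacy the only choice is $L_+ = \C^2_{\zeta}$ (the $\zeta$-eigenspace, a copy of $\C^2$) and $L_- = \C^2_{\zeta^{-1}}$; so there is essentially one $(X,\theta)$ to analyze, with $X^\theta$ irreducible equal to $L_+/\Z_3 \cong \C^2/\Z_3$.

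Next I would show that the algebra $\A = \tilde{\A}^{\Z_3}$ is simple. This follows from Lemma \ref{Lem:simplicity} applied to $\tilde{X} = \C^4$, $\tilde{\A} = D(\C^2)$ (which is simple) and $\Gamma = \Z_3$ acting faithfully on $\C^4$. Hence $\overline{\HC}(\A,\theta) = \HC(\A,\theta)$ by Lemma \ref{Lem:lag_antiinvolution}. Then, choosing the lift of $\theta$ to $\tilde{\A}$ whose fixed locus is $L_+$, I would apply Lemma \ref{Lem:lift} to get an equivalence
\[
\HC_{L_+/\Z_3}(\A,\theta) \xrightarrow{\sim} \HC^{\Z_3}_{L_+}(\tilde{\A},\theta).
\]
As in the $c_2$ case, every HC $(\tilde{\A},\theta)$-module supported on the Lagrangian $L_+$ is (by the Weyl-algebra version of Lemma \ref{Lem:wHC_formal_Weyl}) a direct sum of copies of $\C[L_+]$, so taking the annihilator of $L_+ \subset \tilde{\A}$ gives an equivalence $\HC^{\Z_3}_{L_+}(\tilde{\A},\theta) \xrightarrow{\sim} \operatorname{Rep}(\Z_3)$. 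Thus the simple objects of $\HC(\A,\theta)$ are precisely the three $\Z_3$-isotypic components of $\tilde{\A}/\tilde{\A}L_-$, and the category is semisimple.

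Finally, for strong quantizability: each such simple module carries the good filtration inherited from the grading on $\tilde{\A}/\tilde{\A}L_- \cong \C[L_-]$ (viewed via the Rees construction), and its associated graded is the corresponding $\Z_3$-isotypic component of $\C[L_+] = \C[\C^2]$. These isotypic components $\C[\C^2]_j$ are the modules of covariants for the $\Z_3$-action, which are Cohen--Macaulay over $\C[X^\theta] = \C[\C^2]^{\Z_3} = \C[\C^2/\Z_3]$ (indeed $\C[\C^2]$ is a free, hence Cohen--Macaulay, module over $\C[\C^2]^{\Z_3}$ since $\Z_3$ acts freely outside the origin and $\C^2$ is smooth, so by the Reid--Tai / Hochster--Eagon argument each isotypic piece is maximal Cohen--Macaulay). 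Since $X^\theta = \C^2/\Z_3$ is normal with isolated singularity and the natural map $\gr M \to \Gamma((\gr M)|_{X^{reg}\cap X^\theta})$ is therefore an isomorphism, every irreducible (weakly $\C^\times$-equivariant, strong) twisted local system on $X^\theta \setminus \{0\}$ — which here must be an honest local system, i.e. a representation of $\pi_1 \cong \Z_3$, and corresponds to one of the three isotypic pieces — is strongly $\A$-quantizable. As there is only one quantization $\A$, this proves $(X,\theta)$ is unobstructive. The main obstacle I expect is purely bookkeeping: matching up the finitely many twisted local systems on $X^\theta \setminus \{0\}$ coming from $\operatorname{Rep}(\Z_3)$ with the isotypic decomposition on the Weyl-algebra side, and verifying the Cohen--Macaulayness of the covariant modules so that the ``strong'' (i.e. $\gr M \xrightarrow{\sim} \Gamma(\mathcal{E})$) condition in Definition \ref{defi:loc_sys_quantized}(ii) holds, rather than just the weaker quantizability.
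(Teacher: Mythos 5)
Your reduction via Lemma \ref{Lem:simplicity} and Lemma \ref{Lem:lift}, and the Cohen--Macaulayness of the $\Z_3$-isotypic components of $\C[\C^2]$, is exactly how the paper handles the case you treat. The gap is in your classification of the anti-involutions $\theta$, and it is genuine because the lemma asserts the conclusion for \emph{every} graded Poisson anti-involution. Two things go wrong. First, even assuming the lift of $\theta$ to $\C^4$ commutes with $\Z_3$, it is false that "up to conjugacy the only choice is $L_+=\C^2_\zeta$, $L_-=\C^2_{\zeta^{-1}}$": a $\Z_3$-stable Lagrangian need not be a weight space; it can be the sum of a line in the $\zeta$-eigenspace and a line in the $\zeta^{-1}$-eigenspace (and one can arrange $\omega(L_+,L_+)=0$). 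This gives further involutions, for which $X^\theta$ is $\C^2/\Z_3$ with the action $\operatorname{diag}(\zeta,\zeta^{-1})$ rather than the scalar action. This part is repairable: the same lifting argument and the same isotypic-component analysis apply, which is what the paper does ("case (i)" with its three choices of $(L_+,L_-)$).

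Second, and more seriously, the lift of $\theta$ to $\C^4$ need only \emph{normalize} $\Z_3$, not centralize it; the group generated by $\Z_3$ and the lift can be $S_3$ (e.g.\ an anti-symplectic involution swapping the two eigenspaces conjugates the $\Z_3$-action to its inverse). You exclude this case at the outset by requiring the lift to commute with $\Z_3$, so your proof does not cover it, and it cannot be handled by your Lemma \ref{Lem:lift} argument: there is no $\Gamma$-commuting lift of $\theta$, and the preimage of $X^\theta$ in $\C^4$ is a union of three Lagrangian planes permuted simply transitively by $\Gamma$, so that $X^\theta\cap X^{reg}\cong\C^2\setminus\{0\}$ is simply connected and carries a single irreducible (untwisted) local system. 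The paper treats this case separately and more directly: fix any one lift with eigenspaces $L_\pm$, observe that $\tilde{\A}/\tilde{\A}L_-$ is already an irreducible HC $(\A,\theta)$-module (its associated variety is irreducible with multiplicity one and $\A$ is simple), and its natural good filtration has Cohen--Macaulay associated graded, giving strong quantizability of the structure sheaf. Without this case your argument proves the lemma only for the involutions of "commuting type," not for an arbitrary $\theta$.
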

\begin{proof}
{\it Step 1}.
Here we are going to classify the anti-symplectic involutions of $X$ that commute
with the standard contracting $\C^\times$-action, see Lemma \ref{lem:tau_a_2S_2}. The classification will be up to an action of $\operatorname{GL}_2$.
Note that we can lift the involution from $X$ to an anti-symplectic transformation of
$\C^4$ that commutes with $\C^\times$, equivalently, linear. This lift, together with
the $\Gamma$-action gives a linear action on $\C^4$ of a group $\hat{\Gamma}$ of order $6$ that contains
$\Gamma$ as a normal subgroup. So we need to consider two cases:

{\it i}) $\hat{\Gamma}=\Z_3\times \Z_2$. In this case, the lift of $\theta$ is unique, denote it also
by $\theta$. Let $L_\pm\subset \C^4$ denote the eigenspaces for $\theta$ with eigenvalues
$1,-1$. They are $\Gamma$-stable. There are three choices for the pair $(L_+,L_-)$
(up to $\operatorname{GL}_2$-conjugacy). The first two are when $L_+,L_-$ are the
eigenspaces for $\Gamma$. The remaining choice is when $L_+$ intersects both of these
eigenspaces by 1-dimensional spaces. In all of these cases, $X^\theta=\C^2/\Gamma$
(where the actions of $\Gamma$ are by $\zeta\mapsto \operatorname{diag}(\zeta,\zeta)$
in the first case and $\zeta\mapsto \operatorname{diag}(\zeta,\zeta^{-1})$ in the second case).

{\it ii}) $\hat{\Gamma}=S_3$. In this case, a lift of $\theta$ permutes the eigenspaces for $\Gamma$
and this determines a lift uniquely up to $\operatorname{GL}_2$-conjugation. The preimage of
$X^\theta$ in $\C^4$ is the union of three Lagrangian subspaces permuted simply transitively
by $\Gamma$. The regular locus of $X^\theta$ is $\C^2\setminus \{0\}$.

{\it Step 2}. We now show that all (twisted) local systems on $X^\theta\setminus \{0\}$
are strongly quantizable.
We start with case {\it i}). This is very similar to the case of $c_2$ considered above
and is based on Lemma \ref{Lem:lift}.
For each of the three involutions, the category of HC modules is semisimple and has
three simple objects. All of these simples admit a good filtration whose associated graded
is Cohen-Macaulay.

Consider case {\it ii}). Since $X^\theta\cap X^{reg}=\C^2\setminus \{0\}$, there is no twist.
The category of regular local systems is semisimple with one simple object, the structure
sheaf of $X^\theta\cap X^{reg}$. Thanks to the discussion after Definition \ref{defi:loc_sys_quantized}, it is enough to show that the structure sheaf is strongly quantizable.
Fix a lift of $\theta$ and consider the corresponding subspaces $L_\pm$.
Then $\tilde{\A}/\tilde{\A}L_-$ is an irreducible HC $(\A,\theta)$-module
(the irreducibility follows because the associated variety of this module is irreducible,
the multiplicity is $1$, and the algebra $\A$ is simple).
Its usual filtration is good, and the associated graded is Cohen-Macaulay.
\end{proof}

\subsection{The case of $a_2/S_2$}\label{SS_a2S_2}
Here we write $\tilde{X}$ for the closure of the  minimal nilpotent orbit in
$\mathfrak{sl}_3$ and $\Gamma$ for the group (of two elements) generated by the
outer involution of $\mathfrak{sl}_3$ so that $X=\tilde{X}/\Gamma$. Note
that this variety is $\mathbb{Q}$-factorial terminal. Hence it has a unique quantization, $\A=\tilde{\A}^\Gamma$, where
$\tilde{\A}$ is the canonical quantization of $\tilde{X}$, i.e.,
$\tilde{\A}=\Gamma(\mathscr{D}^{3/2}_{\mathbb{P}^2})$.

To state the main result we need to understand the symmetry of the situation. Note that $\operatorname{SO}_3$ acts on $X$ and $\A$. The actions are Hamiltonian. We consider the grading
on $\C[X]$ with $d=1$, it is restricted from the grading on $\C[X]$.  So $\theta$ preserves $\mathfrak{so}_3$ and therefore normalizes $\operatorname{SO}_3$
acting on $\C[X]$. It follows, in particular, that $X^\theta$ is stable under the action of a maximal
torus of $\operatorname{SO}_3$ whose Lie algebra is fixed by $\theta$, denote it by $T_0$.

\begin{Lem}\label{Lem:a2S2_strongly_quantizable}
Let $\chi$ be a character of $T_0$.
Any graded $(T_0,\chi)$-equivariant twisted local system on $X^\theta\setminus \{0\}$
is strongly quantizable.
\end{Lem}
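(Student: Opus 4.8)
The plan is to reduce the statement to the $\slf_3$-computation of Section~\ref{SS:sl3_so3} by means of the lifting equivalence of Lemma~\ref{Lem:lift}. Since $X = \tilde X/\Gamma$ is $\mathbb{Q}$-factorial terminal, $\A = \tilde{\A}^\Gamma$ is its unique quantization, and $\tilde{\A} = \Gamma(\mathscr{D}^{3/2}_{\Pb^2})$ is the canonical quantization of $\tilde X = \overline{\Orb}(\slf_3)$ (closure of the minimal orbit); in the normalization of Section~\ref{SS:sl3_so3} this is the value $\lambda = 3/2$. By Corollary~\ref{Cor:tau_a_2S_2_involution} the anti-involution $\theta$ and the $\SO_3$-action on $\A$ are restricted from $\slf_3$, and $\Gamma$ is generated by the outer automorphism $\varepsilon\colon g\mapsto (g^t)^{-1}$, whose differential on $\slf_3$ equals $-\theta$ and which acts freely on $\Orb(\slf_3) = \tilde X^{reg}$ (rank-one nilpotents are not skew-symmetric). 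Hence $\Gamma$-equivariant HC $(\tilde{\A},\theta,\zeta)$-modules coincide with $\Gamma$-equivariant HC $(\slf_3,\SO_3)$-modules annihilated by the unipotent ideal $\J_{3/2}$, and Lemma~\ref{Lem:lift} identifies $\overline{\HC}_Y(\A,\theta,\zeta)$ with $\overline{\HC}^\Gamma_{\Gamma\tilde Y}(\tilde{\A},\theta,\zeta)$, where $\tilde Y = \overline{\Orb}_K$ is the unique irreducible component of $\tilde X^\theta$. The essential input, supplied by parts (2) and (4) of Lemma~\ref{Lem:sl3_so3_obstructive} at the value $\lambda = 3/2 \in \{1/2,3/2,5/2\}$, is that each irreducible HC $(\slf_3,\SO_3)$-module of full support --- the two $\SO_3$-equivariant ones and the single $\SL_2$-equivariant one --- admits an $\SO_3\times\C^\times$-stable good filtration with $\gr M \xrightarrow{\sim}\Gamma\bigl(\Orb_K,(\gr M)|_{\Orb_K}\bigr)$, where $\Orb_K = \tilde X^\theta\setminus\{0\}\cong(\C^2\setminus\{0\})/\Z_4$; thus the twisted local systems on $\tilde X^\theta\setminus\{0\}$ that actually occur as such restrictions are strongly $\tilde{\A}$-quantizable, whereas the fourth, obstructed one of Lemma~\ref{Lem:sl3_so3_obstructive}(4) is not even $\tilde{\A}$-quantizable.

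Next I would identify the relevant twisted local systems on $X^\theta\setminus\{0\}$. Since $\varepsilon$ fixes $\SO_3$ pointwise and preserves $\Orb_K$, it descends to an involution of $\Orb_K\cong(\C^2\setminus\{0\})/\Z_4$ commuting with the grading $\C^\times$ and with $T_0\subset\SO_3$, so $X^\theta\setminus\{0\} = (\tilde X^\theta\setminus\{0\})/\Gamma \cong (\C^2\setminus\{0\})/G'$ for a finite group $G'$ of order $8$ containing $\Z_4$. Pullback along the étale $\Gamma$-cover $\pi$ identifies graded regular twisted local systems on $X^\theta\setminus\{0\}$ (with twist read off from the microlocalization of $\A$, which is half-canonical) with $\Gamma$-equivariant graded regular twisted local systems on $\tilde X^\theta\setminus\{0\}$ (with the corresponding twist, coming from $\tilde{\A}$), and since $\varepsilon$ acts trivially on $T_0$ it matches $(T_0,\chi)$-equivariant objects on the two sides. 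Because $T_0\times\C^\times$ acts with a dense orbit on $X^\theta\setminus\{0\}$ (it has one already on the finite cover $\C^2\setminus\{0\}$), it acts there with finitely many orbits, so the category of graded regular twisted local systems on $X^\theta\setminus\{0\}$ is semisimple with finitely many simples, each admitting a grading; by Lemma~\ref{Lem:graded_loc_sys} and the discussion after Definition~\ref{defi:loc_sys_quantized} it suffices to treat the simple $(T_0,\chi)$-equivariant objects. I would then check, using the Frobenius-reciprocity $K$-type formulas \eqref{eq:K_Q_open}, \eqref{eq:K_Q_closed}, \eqref{eq:K_Lk} together with the positions of $T_0$ and $\Gamma$ inside $\SO_3$ and its normalizer, that the pullbacks of the $(T_0,\chi)$-equivariant simple twisted local systems on $X^\theta\setminus\{0\}$ are exactly ($\Gamma$-equivariant structures on) the twisted local systems arising as the $\gr M|_{\Orb_K}$ of the first paragraph --- in particular the obstructed one never carries a $\Gamma$- and $(T_0,\chi)$-equivariant structure, so it does not appear.

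Finally I would assemble the argument. Given a simple $(T_0,\chi)$-equivariant graded regular twisted local system $\mathcal{E}$ on $X^\theta\setminus\{0\}$, its pullback $\tilde{\mathcal{E}} = \pi^*\mathcal{E}$ is $\Gamma$-equivariant, and by the previous step there is a HC $(\tilde{\A},\theta)$-module $\tilde M$ with a good filtration and a $\C^\times$-equivariant isomorphism $\gr\tilde M \xrightarrow{\sim}\Gamma(\tilde X^\theta\setminus\{0\},\tilde{\mathcal{E}})$. Because $\tilde{\A}$ is simple (it is $\U(\slf_3)$ modulo a maximal ideal) and the $\C^\times$-action on $\tilde{\mathcal{E}}$ is strong by Lemma~\ref{Lem:irred_strongness_HC}, the uniqueness of the graded quantization of $\tilde{\mathcal{E}}$ over $\tilde X^{reg}$ (Theorem~\ref{thm:quan_lag}) lets me transport the $\Gamma$-equivariant structure of $\tilde{\mathcal{E}}$ to $\tilde M$ up to isomorphism of HC modules, and averaging the good filtration over the finite group $\Gamma$ makes it $\Gamma$-stable. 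Taking $\Gamma$-invariants --- which is exact and commutes with $\gr$ and with the global-sections functor as $\Gamma$ is reductive --- produces a HC $(\A,\theta)$-module $M = \tilde M^\Gamma$ with $\gr M \xrightarrow{\sim}\Gamma(\tilde{\mathcal{E}})^\Gamma = \Gamma(X^\theta\setminus\{0\},\mathcal{E})$, i.e.\ $\mathcal{E}$ is strongly quantizable. The step I expect to be the main obstacle is the identification in the second paragraph: at $\lambda = 3/2$ not every twisted local system on $\tilde X^\theta\setminus\{0\}$ is $\tilde{\A}$-quantizable, so one must verify that imposing $(T_0,\chi)$- and $\Gamma$-equivariance selects precisely the quantizable ones, which requires carefully tracking the $K$-type content through the branching to $T_0$ and the action of the normalizer of $T_0$ in $\SO_3$.
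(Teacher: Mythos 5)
Your strategy is essentially the paper's: lift along Lemma~\ref{Lem:lift} to the $\slf_3/\SO_3$ situation at $\lambda=3/2$, invoke Lemma~\ref{Lem:sl3_so3_obstructive}, and use $(T_0,\chi)$-equivariance on the $\Z_8$-quotient to show the obstructed character cannot occur. However, there is a genuine gap: you fix one particular configuration of $(\Gamma,\theta)$ — you take $\theta(x)=x^t$ and the generator of $\Gamma$ with differential $-\theta$ — and you use the identity $X^\theta\setminus\{0\}=(\tilde X^\theta\setminus\{0\})/\Gamma$. This only covers Case~2 of the classification in Section~\ref{SSS_a2S2_involutions}. The anti-involution $\theta$ of $X$ always admits \emph{two} lifts to $\tilde X$ (the extension $\tilde\Gamma$ is $\Z_2\times\Z_2$), and in Case~1 neither lift is $x\mapsto -x$: the lifts are an inner one $\theta_{in}$ and an outer one $\theta_{out}$, both with nonzero fixed points, so $X^\theta=(\tilde X^{\theta_{in}}\cup\tilde X^{\theta_{out}})/\Gamma$ has \emph{two} irreducible components and your equality $X^\theta=\tilde X^\theta/\Gamma$ fails. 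The extra component $\tilde X^{\theta_{in}}/\Gamma$ has smooth locus $\C^2\setminus\{0\}$ (the two components of $\tilde X^{\theta_{in}}$ are swapped by $\Gamma$), and its (trivial) local system needs a separate, though easy, quantization argument (as in case (ii) of Section~\ref{SS_tau_check}); the outer component must then be treated with $\theta_{out}$ in place of the transpose, where only $T_0$-equivariance (not $\SO_3$-equivariance) of $\theta$ is available. Your proposal as written neither classifies the possible $\theta$'s nor addresses this component, and even in your chosen case the assertion that the other lift has no nonzero fixed points is used silently.

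Two smaller remarks. First, the step you flag as the main obstacle (that $\Gamma$- plus $(T_0,\chi)$-equivariance excludes exactly the non-quantizable character) does not require the $K$-type bookkeeping you envisage: since $\pi_1$ of the outer component downstairs is $\Z_8$ and $T_0$-equivariance forces the order-$2$ subgroup to act trivially, such a character restricts on the index-$2$ subgroup $\Z_4$ to a character of order at most $2$, i.e.\ its pullback is one of the two $\SO_3$-equivariant local systems upstairs; so in fact only part (2) of Lemma~\ref{Lem:sl3_so3_obstructive} is needed, not part (4), and the value $\lambda=3/2\in\{1/2,3/2,5/2\}$ plays no role for the $T_0$-equivariant objects. (Also, your phrase that the obstructed local system ``never carries a $\Gamma$-equivariant structure'' is inaccurate — it does; what fails is $T_0$-equivariance of the descended characters.) Second, your descent in the last paragraph (transporting a $\Gamma$-structure to $\tilde M$ via uniqueness of the graded quantization, averaging the filtration, taking invariants) re-does by hand what Lemma~\ref{Lem:lift} already provides, and it has fiddly points (the cocycle condition for the $\Gamma$-action on $\tilde M$, compatibility of the chosen equivariant structure with the canonical one on $\pi^*\mathcal{E}$); the paper instead observes directly that the irreducible HC modules downstairs are the $\Gamma$-isotypic components of the three irreducible modules upstairs, each inheriting a good filtration with Cohen--Macaulay associated graded.
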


In the process of proof we will fully classify HC $(\A,\theta)$-modules.

\subsubsection{Classification of anti-involutions}\label{SSS_a2S2_involutions}
Here we are going to classify Poisson anti-involutions of $X$ that commute
with the obvious contracting $\C^\times$-action. Again, the anti-Poisson
involution lifts to $\tilde{X}$ giving an action of an order $4$ group
$\tilde{\Gamma}$. Note that the generator of $\Gamma$ is not a square of
any Poisson anti-automorphism of $X$. Indeed, such an anti-automorphism
must come from an anti-isomorphism of $\mathfrak{sl}_3$ hence its square is an inner
automorphism.

It follows that
$\tilde{\Gamma}\cong \Z_2\times \Z_2$, i.e. $\theta$ admits two different
lifts to Poisson anti-involutions of $\tilde{X}$. We need to consider
two cases:
\\

\noindent {\it Case 1}. None of the lifts comes from the anti-involution $x\mapsto -x$
of $\mathfrak{sl}_3$.
Since the generator
of $\Gamma$ is an outer involution we recover both  Poisson anti-involutions
of $\tilde{X}$ considered in Section \ref{SS_type_a2}
as possible lifts of $\theta$. Denote them by $\theta_{in}$
(the inner lift) and $\theta_{out}$ (the outer lift). So $\theta_{in}$
is $-\operatorname{Ad}(g)$ for a non-trivial involution $g\in \operatorname{SO}_3$.
And $\theta_{out}$ is given by $x\mapsto \operatorname{Ad}(g)(x^t)$.

Now we describe the fixed point locus $X^\theta$. It is the union of
$\tilde{X}^{\theta_{in}}/\Gamma$ and $\tilde{X}^{\theta_{out}}/\Gamma$.
Recall that $\tilde{X}^{\theta_{in}}$ consists of two irreducible components.
These components are permuted by $\Gamma$. So $\tilde{X}^{\theta_{in}}/\Gamma$
is an irreducible variety whose smooth locus is $\C^2\setminus \{0\}$.

Now we consider the $\tilde{X}^{\theta_{out}}/\Gamma$. In analysing its
structure it will be convenient for us to make a modification. Namely,
we can assume that $\theta_{out}$ sends $x$ to $x^t$ and $\Gamma$
acts by $x\mapsto -\operatorname{Ad}(g)(x^t)$, where $g$ is an involution in $\operatorname{SO}_3$.
This choice is conjugate to the initial one. So $\tilde{X}^{\theta_{out}}$
consists of symmetric nilpotent rank one matrices. Such matrices
have the form
$$\begin{pmatrix} x_1^2& x_1x_2& x_1x_3\\
x_2x_1& x_2^2& x_2x_3\\
x_3x_1& x_3x_2& x_3^2\end{pmatrix},$$
where the numbers $x_1,x_2,x_3$ satisfy $x_1^2+x_2^2+x_3^2=0$ and are determined uniquely up to simultaneous multiplication by $-1$.  We can assume that $g=\operatorname{diag}(1,-1,-1)$.
Then the action of the generator of $\Gamma$ on $\tilde{X}^{\theta_{out}}$ is induced by $(x_1,x_2,x_3)
\mapsto (\sqrt{-1}x_1,-\sqrt{-1}x_2,-\sqrt{-1}x_3)$. It follows that the smooth part of
$\tilde{X}^{\theta_{out}}/\Gamma$ is $(\C^2\setminus \{0\})/\Z_8$.
\\

\noindent {\it Case 2}. One of the lifts of $\theta$ is $x\mapsto -x$. The only lift
that has fixed points in the smooth locus of $\tilde{X}$ is $x\mapsto x^t$. So $X^\theta$
is the quotient of $\{x\in \tilde{X}| x=x^t\}$ by $\{\pm 1\}$. The smooth
locus is still $(\C^2\setminus \{0\})/\Z_8$.

\subsubsection{Classification of HC modules}
Since the algebra $\tilde{\A}$ is simple, so is $\A=\tilde{\A}^\Gamma$, Lemma \ref{Lem:simplicity}.

We concentrate on case 1.
We need to treat two (very different) cases. First, consider the HC $(\A,\theta)$-modules supported
on $\tilde{X}^{\theta_{in}}/\Gamma$. This case is treated as case {\it ii}) in
Section \ref{SS_tau_check}. We get a single irreducible HC module whose associated graded
is $\C[\C^2]$.

Now we proceed to HC modules supported on $\tilde{X}^{\theta_{out}}/\Gamma$.
By Lemma \ref{Lem:lift}, taking the $\Gamma$-invariants defines an equivalence
$$\HC^\Gamma(\tilde{\A},\theta_{out}) \xrightarrow{\sim}  \HC_{\tilde{X}^{\theta_{out}}/\Gamma}(\A,\theta).$$

Now we need to understand the structure of
$\HC^\Gamma(\tilde{\A},\theta_{out})$. First, we claim that $\Gamma$ acts trivially
on the set of irreducibles in $\HC(\tilde{\A},\theta_{out})$. Indeed, an irreducible
object is completely determined by the irreducible local system on $(\tilde{X}^{\theta_{out}})^{reg}
=(\C^2\setminus \{0\})/\Z_4$. The action of $\Gamma$ on the irreducible HC modules coincides
with the action on the set of isomorphism classes of irreducible local systems, which is trivial because the nontrivial element in
$\Gamma$ acts by $-1$ on $(\C^2\setminus \{0\})/\Z_4$.

We conclude that there are six irreducibles in $\HC_{\tilde{X}^{\theta_{out}}/\Gamma}(\A,\theta)$.
Each is a $\Gamma$-isotypic component in one of the three irreducible objects in $\HC(\tilde{\A},\theta_{out})$.
In particular, each admits a good filtration whose associated graded is Cohen-Macaulay.

In case 2, the only component of $X^\theta$ is handled completely analogously to $\tilde{X}^{\theta_{out}}/\Gamma$ in case 1.

\begin{proof}[Proof of Lemma \ref{Lem:a2S2_strongly_quantizable}]
By the analysis in this section, all local systems on $[\tilde{X}^{\theta_{in}}/\Gamma]\setminus \{0\}=\C^2 \setminus \{0\}$ are strongly quantizable. So we need to concentrate on the local systems
on $[\tilde{X}^{\theta_{out}}/\Gamma]\setminus \{0\}=(\C^2\setminus \{0\})/\Z_8$. The action of $T_0$
on $\C^2/\Z_8$ is induced from the action of the maximal torus of $\operatorname{SO}_3$ on $\C^2/\{\pm 1\}$. So, under the equivalence of the category of local systems with $\operatorname{Rep}(\Z_8)$,
the $T_0$-equivariant ones correspond to the representations of $\Z_8$, where $\Z_2\subset \Z_8$
acts trivially. The analysis above in this section shows that all such local systems are
strongly quantizable.
\end{proof}


\section{Classification theorems}\label{S_classif_thm}

\subsection{Nonlinear covers of real groups} \label{subsec:cover}

Throughout this subsection, we assume that $G$ is a simply connected semisimple algebraic group defined over $\R$ and $\GR = G(\R)$ is the group of real points of $G$. Then $\GR$ is automatically connected. Let $\sigma$ be the Cartan involution of $G$ corresponding to $\GR$, then $\KR = \GR^\sigma$ is a maximal compact subgroup of $\GR$ and $K = G^\sigma$ is a complexification of $\KR$. 
We have natural identification of fundamental groups $\pi_1(\GR) \simeq \pi_1(\KR) \simeq \pi_1(K)$.  

Let $H$ be a $\sigma$-stable Cartan subgroup of $G$ with Lie algebra $\hf$ a $\sigma$-stable Cartan subalgebra of $\g$. Then $\sigma$ acts on the set $\Delta(\g,\hf)$ of roots of $\hf$ in $\g$. A root $\alpha$ is said to be {\it imaginary} (resp. {\it real}) if $\sigma(\alpha) = \alpha$ (resp. $\sigma(\alpha) = -\alpha$). If $\alpha$ is neither imaginary or real, we say that $\alpha$ is \emph{complex}. If $\alpha$ is imaginary, then the involution $\sigma$ preserves the (one-dimensional) root subspace $\g_\alpha$ of $\alpha$ and hence acts by $\pm 1$. In this case, $\alpha$ is said to be compact (resp. non-compact) imaginary if $\sigma$ acts on $\g_\alpha$ by $1$ (resp. $-1$). By convention, all roots are long if the root system of $G$ is simply laced.

Now assume $H$ (or $\hf$) to be fundamental (i.e., maximally compact, which means that $\dim \hf^\sigma$ is as large as possible), so that $T := H^\sigma$ is a Cartan subgroup of $K$. It is known that $H$ is maximally compact if and only if there are no real roots in $\Delta(\g,\hf)$ (\cite[Proposition 6.70]{Knapp}). Then one can choose the positive root system $\Delta^+$ of $\Delta(\g,\hf)$ to be $\sigma$-stable and hence $\sigma$ permutes the simple roots (\cite[Chapter VI, Section 8]{Knapp}). Moreover, by a result of Borel and de Siebenthal (\cite[Theorem 6.96]{Knapp}), when $\g$ is simple (or equivalently, $\g_\R$ is a non-complex simple real Lie algebra), one can always choose the simple root system $\Pi$ for which at most one simple root is non-compact imaginary.

For any imaginary $\alpha \in \Delta(\g,\hf)$, we identify the corresponding coroot $\alpha^\vee$ with a one-parameter subgroup $\alpha^\vee: \cm \to T \subset K$. Let $\gamma: [0,1] \to \cm, t \mapsto \exp(2\pi \sqrt{-1} t)$, be the continuous loop in $\cm$, which gives rise to a particular generator $[\gamma]$ of $\pi_1(\cm) \simeq \Z$. Then we may also identify $\alpha^\vee$ with the image of $[\gamma]$ in $\pi_1(K) \simeq \pi_1(G_\R)$ under the map $\pi_1(\cm) \to \pi_1(K) \simeq \pi_1(G_\R)$ induced by  $\alpha^\vee: \cm \to K$.

We summarize below the main results of \cite{Adams_nonlinear}.

\begin{Thm}\label{thm:pi_GR}
	Let $G$ be a simply connected semisimple algebraic group defined over $\R$ and $\GR = G(\R)$ be the group of real points of $G$. Then the following are true:
	\begin{enumerate}
		\item 
		   	For any compact imaginary root $\alpha$, the element in $\pi_1(K) \simeq \pi_1(G_\R)$ corresponding to $\alpha^\vee$ is always trivial.
		\item 
			$\pi_1(\GR) \neq 1$ if and only if $H$ has a long non-compact imaginary root.
		\item 
			If $\g_\R$ is simple, then $\pi_1(\GR)$ is either trivial or is isomorphic to $\Z$ or $\Z_2$. 
		\item 
		  	If $\g_\R$ is simple and $\pi_1(\GR) \neq 1$, then $\pi_1(\GR)$ is generated by a coroot $\alpha^\vee$ corresponding to an arbitrary long non-compact imaginary root $\alpha$ in $\Delta(\g, \hf)$. 
		\item
			If $\g_\R$ is simple, then $\pi_1(\GR) \simeq \Z$ if and only if $\GR$ is Hermitian symmetric. In this case, $\tf = \hf$ and one can choose the simple root system for $\Delta(\g,\hf)$ so that exact one simple root is noncompact imaginary.
	\end{enumerate}
\end{Thm}

\begin{proof}
	(1) is standard, see, for example, equation (2.2) of \cite{Adams_nonlinear}. (2) is \cite[Theorem 1.1]{Adams_nonlinear}. (3) is \cite[Theorem 1.4]{Adams_nonlinear}. (4) follows from \cite[Theorem 1.5]{Adams_nonlinear}. (5) follows from the Case 2 in the proof of Theorem 1.4 on page 4043 of \cite{Adams_nonlinear} and \cite[Chapter VII, Section 9]{Knapp}. 
\end{proof}

It follows that for simple $\GR$ with $\pi_1(\GR) \neq 1$, there exists, up to isomorphism, a unique connected nonlinear $2$-fold covering group $\tilde{G}_\R$ of $\GR$, which is analogous to the metaplectic groups. We will see that the representations constructed in Section \ref{SS:double_SLR}, Section \ref{SS_exceptional} and Appendix \ref{sec:appendix_A} are all defined over $\tilde{G}_\R$ and often do not factor through the linear groups $\GR$.

\subsection{Main results}\label{SS_main_results}
In this section $G$ is a \emph{simply connected} simple algebraic group. Let $\sigma$ be an involution of
$G$ and $K$ a connected algebraic group equipped with a homomorphism $K\rightarrow G$ that induces
an isomorphism $\kf\xrightarrow{\sim} \g^\sigma$. We fix a nilpotent orbit $\Orb\subset \g^*$
with codimension of the boundary at least $4$. Further, fix a $K$-orbit $\Orb_K\subset \Orb\cap
\kf^\perp$. Fix a character $\kappa\in (\kf^*)^{K}$ and $\lambda\in H^2(\Orb,\C)$.

Set $X:=\operatorname{Spec}(\C[\Orb])$. The anti-involution $\theta=-\sigma$ of $\g$ induces a
Poisson anti-involution on $X$ also to be denoted by $\theta$.
Let $\Orb'_K$ be a $K$-orbit of codimension $2$ in the
closure of $\Orb_K$ in $X$. For $\chi'\in \Orb'_K$, we have an induced Poisson anti-involution
on a transverse slice $\check{X}$ to $\chi'$ in $X$.

%

Pick $\chi\in \Orb_K$ and let $K_Q,\kappa_Q$ have the same meaning as in Section \ref{SS:HC_mod_restr}. Recall the restriction functor $\bullet_{\dagger,\chi}: \HC_{\Orb_K}(\A_\lambda,\theta)^{K,\kappa}
\rightarrow \underline{\A}_\lambda\operatorname{-mod}^{K_Q,\kappa_Q}=\operatorname{Rep}(K_Q,\lambda|_{\kf_Q}-\kappa_Q)$. Here the target category
is the category of finite dimensional rational representations of $K_Q$, where $\kf_Q$ acts by
$\lambda|_{\kf_Q}-\kappa_Q$.
The following is the first of our main results.

\begin{Thm}\label{Thm:main1}
Suppose one of the following conditions holds:
\begin{itemize}
\item[(i)] $K\hookrightarrow G$.
\item[(ii)] There are no slices of type $\chi$.
For all slices of type $a_2$, the anti-involution of this slice comes from the inner involution of $\mathfrak{sl}_3$.
\end{itemize}
Then the restriction functor $\HC_{\Orb_K}(\A_\lambda,\theta)^{K,\kappa}\rightarrow
\operatorname{Rep}(K_Q,\lambda|_{\kf_Q}-\kappa_Q)$ is an equivalence.
\end{Thm}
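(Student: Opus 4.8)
\textbf{Proof plan for Theorem \ref{Thm:main1}.}

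The plan is to deduce the theorem from Proposition \ref{Prop:adj_properties} together with the unobstructiveness results of Section \ref{S_qclass_quant} (i.e.\ Lemmas \ref{Lem:sl3_gl2_unobstructive}, \ref{Lem:c2_unobstructive}, \ref{Lem:tau_unobstructive}, \ref{Lem:a2S2_strongly_quantizable}), via the image-description criterion Proposition \ref{Lem:image_description11}. Recall from Section \ref{subsec:W-algebra_nilpotent} that $\underline{\A}_\lambda = \C$, that $\operatorname{Rep}(K_Q,\lambda|_{\kf_Q}-\kappa_Q)$ is a semisimple category, and that by Proposition \ref{Prop:adj_properties}(1) the functor $\bullet_{\dagger,\chi}$ is a fully faithful exact embedding of $\HC_{\Orb_K}(\A_\lambda,\theta)^{K,\kappa}$ into this semisimple target whose image is closed under subquotients (part (2)). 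Hence $\HC_{\Orb_K}(\A_\lambda,\theta)^{K,\kappa}$ is itself semisimple, and to prove the theorem it suffices to show that \emph{every} irreducible object $V$ of $\operatorname{Rep}(K_Q,\lambda|_{\kf_Q}-\kappa_Q)$ lies in the image of $\bullet_{\dagger,\chi}$. By Lemma \ref{Lem:image_description}, this is equivalent to $V^{\dagger,\chi}\neq 0$, and by Proposition \ref{Lem:image_description11}(2) it is implied by the following condition: for each codimension $2$ orbit $\Orb_K^i\subset\overline{\Orb}_K$ with $4$-dimensional slice $(\check X^i,\theta^i)$ and corresponding restricted twisted local system $\check{\mathcal E}^i$ on $\check X^{i\times}\cap (\check X^i)^{\theta^i}$, there is a HC $(\check\A^i_\lambda,\theta^i)$-module with good filtration whose associated graded is $\Gamma(\check{\mathcal E}^i)$. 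That is exactly the assertion that $\check{\mathcal E}^i$ is \emph{strongly $\check\A^i_\lambda$-quantizable} in the sense of Definition \ref{defi:loc_sys_quantized}.

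The next step is to check that $\check{\mathcal E}^i$ is indeed of the form to which the unobstructiveness lemmas apply. First, $\mathcal E = \gr M|_{\Orb_K}$ for $M=V^{\dagger,\chi}$ is an irreducible $K$-equivariant twisted local system on the orbit $\Orb_K$ (its fiber at $\chi$ is the irreducible $K_Q$-module $V$), hence by Lemma \ref{Lem:irred_strongness_HC} the induced $\cm$-action on $\mathcal E$ is strong; then by Proposition \ref{Prop:slice_strongness}(3) the induced $\cm$-action on $\check{\mathcal E}^i$ is strong as well, and by Proposition \ref{Prop:slice_strongness}(1)--(2) its twist is the one recovered from the microlocalization of $\check\A^i_\lambda$. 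Thus $\check{\mathcal E}^i$ is precisely a graded regular twisted local system on $(\check X^i)^{\theta^i}\setminus\{0\}$ with strong $\cm$-action and the twist coming from $\check\A^i_\lambda$, so if $(\check X^i,\theta^i)$ is unobstructive in the sense of Definition \ref{defi:obstructive_slice}, then $\check{\mathcal E}^i$ is strongly $\check\A^i_\lambda$-quantizable. The heart of the argument is therefore to verify that under hypothesis (i) or (ii) \emph{all} relevant slices are unobstructive. By the classification recalled at the start of Section \ref{S_classif_thm} (from \cite{KP1,KP2,FJLS1,FJLS2}), every $4$-dimensional slice $\check X^i$ is of one of the five types $a_2$, $c_2$, $\C^4/\Z_3$, $a_2/S_2$, $\chi$. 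Types $c_2$ and $\C^4/\Z_3$ are unobstructive for every anti-involution (Lemmas \ref{Lem:c2_unobstructive}, \ref{Lem:tau_unobstructive}); type $a_2/S_2$ is unobstructive for every anti-involution (the proof in Section \ref{SS_a2S_2}, cf.\ Lemma \ref{Lem:a2S2_strongly_quantizable}, shows the associated graded of each simple is Cohen--Macaulay, i.e.\ of the form $\Gamma$); type $a_2$ is unobstructive when the anti-involution comes from the \emph{inner} involution of $\mathfrak{sl}_3$ (Lemma \ref{Lem:sl3_gl2_unobstructive}); and the only remaining bad configurations are the type $\chi$ slice and the type $a_2$ slice with the \emph{outer} involution. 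Under hypothesis (ii) these are excluded by assumption. Under hypothesis (i), $K\hookrightarrow G$, one argues that these bad slices cannot occur for $\theta$-stable transverse slices: the type $\chi$ slice only arises for non-linear $\widetilde{G}$ (it comes from the universal cover of the principal orbit in $\mathfrak{sl}_5$, which is not a subgroup situation), and an $a_2$ slice whose induced anti-involution is the outer one would force $\kf$ to meet the slice as $\mathfrak{so}_3$, which—by the description of the relevant $4$-dimensional slices and the fact that $\operatorname{codim}_{\overline\Orb}\partial\Orb\ge 4$ together with the analysis of how $\sigma$ acts on $H^2(\Orb,\C)$—does not happen when $K$ is an honest subgroup. (This geometric exclusion is the one point that needs to be spelled out carefully in the full proof.)

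Finally, having shown that for every codimension $2$ orbit $\Orb_K^i$ the restricted local system $\check{\mathcal E}^i$ is strongly $\check\A^i_\lambda$-quantizable, Proposition \ref{Lem:image_description11}(2) gives $V\in\operatorname{Im}\bullet_{\dagger,\chi}$ for every irreducible $V$; since the target is semisimple and the image is a full subcategory closed under subquotients containing every simple, $\bullet_{\dagger,\chi}$ is essentially surjective, and being already fully faithful it is an equivalence. I expect the main obstacle to be not any of the homological bookkeeping above—which is routine given the cited results—but rather the case analysis establishing that hypothesis (i) rules out the type $\chi$ and outer-$a_2$ slices; this requires knowing precisely which codimension $4$ orbits produce which slices and how the symmetric pair interacts with the covers, and is where one must invoke the explicit tables of \cite{KP1,KP2,FJLS1,FJLS2} and the computation of the $\sigma$-action on $H^2(\Orb,\C)$.
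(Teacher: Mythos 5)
Your overall reduction is the same as the paper's: use Proposition \ref{Prop:adj_properties} to get a full embedding into the semisimple category $\operatorname{Rep}(K_Q,\lambda|_{\kf_Q}-\kappa_Q)$, then prove essential surjectivity via the criterion of Proposition \ref{Lem:image_description11}, with strongness of the $\cm$-action supplied by Lemma \ref{Lem:irred_strongness_HC} and Proposition \ref{Prop:slice_strongness}, and with the types $c_2$, $\C^4/\Z_3$ and inner-$a_2$ handled by Lemmas \ref{Lem:c2_unobstructive}, \ref{Lem:tau_unobstructive}, \ref{Lem:sl3_gl2_unobstructive}. That part is fine.

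The genuine gap is your treatment of hypothesis (i). You propose to show that when $K\hookrightarrow G$ the outer-$a_2$ and type-$\chi$ slices simply cannot occur; this is false, and no amount of care in the case analysis will make it work. For $\g=\slf_n$ and $K=\operatorname{SO}_n$ --- an honest subgroup of $\operatorname{SL}_n$ --- every codimension-$4$ slice of type $a_2$ carries the \emph{outer} involution (this is exactly the setting of Section \ref{SS:type_A}, where the contrast between $\operatorname{SO}_n$ and $\operatorname{Spin}_n$ lives), and the type $\chi$ slice occurs for the orbit $A_4+A_3$ in $E_8$ with $K=G^\sigma\subset G$ for the split form. The paper's argument under (i) is not an exclusion but a representation-theoretic one: for an outer-$a_2$ slice, since $\mathcal{E}$ is genuinely $K$-equivariant, one uses the $\langle\sigma\rangle$-equivariant splitting $\operatorname{SL}_3\to (Q^i)^\circ$ of the epimorphism from Lemma \ref{Lem:Qi_action} to produce maps $\operatorname{SO}_3\to K^i_Q\twoheadrightarrow\operatorname{SO}_3$ composing to the identity, so that $\check{\mathcal{E}}^i$ is weakly $\operatorname{SO}_3$-equivariant (never a genuine $\Z_4$-local system), and then part (2) of Lemma \ref{Lem:sl3_so3_obstructive} says such local systems are strongly quantizable for \emph{every} quantization parameter; the type-$\chi$ slice is dealt with by Lemma \ref{Lem:type_chi}, a direct (Atlas-based) verification for the single relevant $K$-orbit, not by ruling it out. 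Your proposal is missing precisely these two arguments, and the exclusion you sketch in their place would fail on the examples above.

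A secondary inaccuracy: the $a_2/S_2$ slice is not unconditionally unobstructive, and hypothesis (ii) does not exclude it. The analysis of Section \ref{SS_a2S_2} shows that only some of the irreducible local systems on $(\C^2\setminus\{0\})/\Z_8$ quantize, and Lemma \ref{Lem:a2S2_strongly_quantizable} only covers the $(T_0,\chi)$-equivariant ones. In the main proof one must still check that the restricted local system $\check{\mathcal{E}}^i$ lands in that subset; the paper does this via Lemma \ref{Lem:character_integrality} together with the birational rigidity of $A_4+A_1$ in $E_7$ (forcing $\lambda=0$) and the semisimplicity of $K$ for $E_{7(7)}$ (forcing $\kappa=0$). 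This step is needed under both (i) and (ii) and is absent from your outline.
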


\begin{Cor}\label{Cor:c_2_tau_slices}
Suppose $\g$ is of types $B,C,D$, or $\Orb$ is one of the following orbits in the exceptional types:
\begin{itemize}
\item $(E_6,2A_2+A_1), (E_8, D_4(a_1)+A_1), (E_8, A_3+A_2+A_1), (E_8,A_3+2A_1)$.
\end{itemize}
Then $\HC_{\Orb_K}(\A_\lambda,\theta)^{K,\kappa}\xrightarrow{\sim}
\operatorname{Rep}(K_Q,\lambda|_{\kf_Q}-\kappa_Q)$.
\end{Cor}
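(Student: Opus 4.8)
\textbf{Proof of Corollary \ref{Cor:c_2_tau_slices}.}

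The plan is to deduce this from Theorem \ref{Thm:main1} by checking that condition (ii) of that theorem holds for all the orbits $\Orb$ listed. Since Theorem \ref{Thm:main1} already gives the equivalence under condition (ii), everything reduces to a statement about which $4$-dimensional singularities and which Poisson anti-involutions occur as slices in $X=\operatorname{Spec}(\C[\Orb])$. Concretely, I need to verify: (a) no slice of type $\chi$ appears for any of these $\Orb$, and (b) for every slice of type $a_2$ that does appear, the induced Poisson anti-involution on that slice comes from the inner involution of $\mathfrak{sl}_3$ rather than the outer one (recall from Lemma \ref{Lem:sl3_gl2_unobstructive} versus Lemma \ref{Lem:sl3_so3_obstructive} that exactly the inner case is unobstructive).

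First I would handle the classical types $B,C,D$. Here the relevant input is the Kraft--Procesi classification of slices between nilpotent orbits in $\mathfrak{so}_n$ and $\mathfrak{sp}_{2n}$, from \cite{KP2}, passed through normalization and taking a connected component as explained in Section \ref{SS_goals}. The key point is that in the classical types the only $4$-dimensional singularities that arise as codimension-$2$-in-$X$ transverse slices are of types $a_2$ (equivalently the minimal nilpotent orbit closure in $\mathfrak{sl}_3$), $c_2=\C^4/\{\pm1\}$, and $\C^4/\Z_3$ --- and in fact for $\mathfrak{so}_n,\mathfrak{sp}_{2n}$ the exotic singularities $\chi$, $a_2/S_2$, $\C^4/\Z_3$ do not occur at all, only $a_2$ and $c_2$. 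So (a) is automatic. For (b), I would argue that in the classical setting a slice of type $a_2$ always arises inside an $\mathfrak{sl}_3$-factor of a Levi (or more precisely from a degeneration of minimal-orbit type governed by an $A_2$ subdiagram), and the anti-involution $\theta=-\sigma$ restricted to such a slice is always inner: the outer involution of $\mathfrak{sl}_3$ would force a non-split behavior that is incompatible with $\sigma$ being an involution of $\mathfrak{so}_n$ or $\mathfrak{sp}_{2n}$ acting on the slice. One makes this precise by combining Corollary \ref{Cor:a2_c2_involution} (which says $\theta$ on an $a_2$-slice is $-\sigma'$ for an involution $\sigma'$ of $\mathfrak{sl}_3$) with a parity/dimension bookkeeping on the $K_Q$-action: the half-character $\rho_L$ computed as in the proof of Lemma \ref{Lem:sl3_gl2_unobstructive} lands in $\Z+1/2$, which is the signature of the inner case, whereas the outer case would require $X^\theta\cap X^{reg}=(\C^2\setminus\{0\})/\Z_4$, a quotient that cannot be realized by restriction of a classical involution to a minimal $\mathfrak{sl}_3$-slice.

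Next I would treat the four exceptional orbits one at a time, using the tables of \cite{FJLS1} (and \cite{FJLS2} for the quotient-type slices) to read off the codimension-$4$ degenerations of $\Orb$ in $\g^*$ and hence, after normalization, the $4$-dimensional slices in $X$. For each of $(E_6,2A_2+A_1)$, $(E_8,D_4(a_1)+A_1)$, $(E_8,A_3+A_2+A_1)$, $(E_8,A_3+2A_1)$, the expectation is that the only $4$-dimensional slices occurring are of type $c_2$ or $\C^4/\Z_3$ (these orbits were singled out precisely because their slice diagrams avoid $a_2$, $a_2/S_2$, and $\chi$), so condition (ii) holds vacuously on the $a_2$-part and (a) is immediate; then Theorem \ref{Thm:main1} applies. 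The main obstacle is the bookkeeping in this last step: one has to be careful that passing from the slices in $\g^*$ (as tabulated in \cite{FJLS1,FJLS2}) to the slices in $\operatorname{Spec}(\C[\Orb])$ --- which involves normalizing and picking a connected component, and which can change a non-normal $\mathfrak{sl}_3$-type slice into, say, $\C^4/\Z_3$ --- is done correctly for each of the four orbits, and that no type-$\chi$ slice (which only shows up in connection with the non-normal cover of the principal orbit in $\mathfrak{sl}_5$, as in item 5) of Section \ref{SS_goals}) is hiding among the degenerations. Once the slice inventory is confirmed, the corollary follows directly from Theorem \ref{Thm:main1}. $\qed$
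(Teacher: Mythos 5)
Your overall strategy—reduce to condition (ii) of Theorem \ref{Thm:main1}—is the same as the paper's, but your verification of (ii) in types $B,C,D$ has a genuine gap. The paper's proof rests on the fact, read off from \cite{KP2} (and from \cite{FJLS1} for the four exceptional orbits), that \emph{no} slice of type $a_2$ occurs at all in these cases: every four-dimensional slice $\check{X}$ is $\C^4/\{\pm 1\}$, $\C^4/\Z_3$ or $\C^4$, so the $a_2$-clause of condition (ii) is vacuous and no slice of type $\chi$ appears either. You instead assert that $a_2$ slices do occur for $\mathfrak{so}_n$ and $\mathfrak{sp}_{2n}$ and then try to show that the induced anti-involution on any such slice is inner. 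The premise is already off (in the Kraft--Procesi classification for the orthogonal and symplectic algebras the minimal singularities arising are of types $b_k,c_k,d_k$, not $a_k$), but the more serious problem is that your inner-ness argument is not a proof: the appeal to $\rho_L\in\Z+\tfrac{1}{2}$ is circular, since that computation (as in the proof of Lemma \ref{Lem:sl3_gl2_unobstructive}) is carried out \emph{within} the inner case rather than distinguishing it; and the claim that the fixed locus $(\C^2\setminus\{0\})/\Z_4$ "cannot be realized by restriction of a classical involution to a minimal $\mathfrak{sl}_3$-slice" is exactly the kind of statement that fails in the closely analogous situation of Section \ref{SS:type_A}, where for $\g=\mathfrak{sl}_n$ with the orthogonal involution ($K=\operatorname{Spin}_n$) the outer involution on $a_2$-slices genuinely arises from a classical involution. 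So if $a_2$ slices did occur in types $B,C,D$, your argument would not exclude the outer case, and the conclusion could not be reached along your lines; the real content of the corollary for classical $\g$ is precisely the slice inventory from \cite{KP2}, which you have replaced by an incorrect inventory plus an unsubstantiated reduction.

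For the four exceptional orbits your treatment is essentially the paper's (a citation of the tables in \cite{FJLS1}), except that you only record "the expectation" that the degenerations avoid $a_2$, $a_2/S_2$ and $\chi$ and flag the verification as an obstacle rather than carrying it out; that check is exactly what the citation to \cite{FJLS1} supplies, and it should be stated as a fact, not a hope. Once the correct slice inventory is in place, the deduction from Theorem \ref{Thm:main1}(ii) is immediate, as you say.
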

\begin{proof}
In the cases listed all slices $\check{X}$ are $\C^4/\{\pm 1\}$ (i.e., the minimal orbit of $\mathfrak{sp}_4$),  $\C^4/\mathbb{Z}_3$ or $\C^4$.
For the classical Lie algebras this follows from \cite{KP2}, while for the exceptional
groups this follows from \cite{FJLS1}. We are done by (ii) of Theorem \ref{Thm:main1}.
\end{proof}

The following corollary will be proved in subsequent sections.

\begin{Cor}\label{Cor:type_A_inner}
Suppose that $\g\cong \mathfrak{sl}_n$ and $(\g, \kf)$ does not correspond to the split real form $\mathfrak{sl}(n ,\R)$ of $\mathfrak{sl}_n$ (i.e., $\kf\not\cong \mathfrak{so}_n$). Then
$$\HC_{\Orb_K}(\A_\lambda,\theta)^{K,\kappa}\xrightarrow{\sim}
\operatorname{Rep}(K_Q,\lambda|_{\kf_Q}-\kappa_Q).$$
\end{Cor}

Note that Theorem \ref{Thm:main1} and Corollaries \ref{Cor:c_2_tau_slices} and
\ref{Cor:type_A_inner} imply Theorem \ref{Thm:omnibus} from Introduction.

The situation when neither of the conditions of Theorem \ref{Thm:main1} hold
is more complicated. In the classical types this only occurs when
$(\g,K)=(\mathfrak{sl}_n, \operatorname{Spin}_n)$ (with $n\geqslant 3$).
We will state a classification result in this case in a later section.
We will also analyze some cases in the exceptional types.

We will adopt the following terminology in this section.

\begin{defi}\label{defn:linearly-admissible}
	Recall $G$ is a \emph{simply connected} simple algebraic group. Let $K= G^\sigma$, then $K$ is automatically connected. If $\Orb_K$ is admissible for $K$ (in the sense of Section \ref{SS_intro_approach}), we say that it is \emph{linearly admissible}. If $\Orb_K$ is admissible for some finite covering group $\tilde{K}$ of $K$ but not $K$ itself, then we say that $\Orb_K$ is \emph{nonlinearly admissible}. If it is neither the case, then we say that $\Orb_K$ is \emph{not admissible}.
\end{defi}

\begin{Rem}\label{rem:sc-adm}
		We compare Definition \ref{defn:linearly-admissible} with the terminology in \cite{Noel1, Noel2}. No{\"e}l called $\Orb_K$ \emph{admissible} if $\Orb_K$ is admissible for an adjoint real group $\GR^{ad}$ (in terms of the complex groups this means that $G$ is of adjoint type and $K=(G^\sigma)^\circ$). He called $\Orb_K$ \emph{sc-admissible} if $\Orb_K$ is admissible for the simply connected real group but not for the adjoint group $\GR^{ad}$. It is clear that admissibility in the sense of No{\"e}l implies linearly admissibility but not vice versa, and nonlinearly admissibility implies sc-admissibility but not vice versa. See also Remark \ref{rem:Noel1}.
\end{Rem}

\subsection{Proof of Theorem \ref{Thm:main1}}
In this section we prove Theorem \ref{Thm:main1}.

We use the notation of Section \ref{SS_image_description}, in particular,
$\Orb_K^i, i=1,\ldots,\ell$, are all codimension $2$ orbits in $\overline{\Orb}_K$.

Pick an irreducible representation $V\in \operatorname{Rep}(K_Q,\lambda|_{\kf_Q}-\kappa_Q)$ and let
$\mathcal{E}:=\mathcal{E}_V$ be the corresponding $(K,\kappa)$-equivariant $\lambda$-twisted local system
on $\Orb_K$. Then it makes sense to speak about the restrictions $\check{\mathcal{E}}^i$
of $\mathcal{E}$ to $\check{X}^{i\times}$.

%

\subsubsection{Three lemmas}
In the proof of the theorem we will need three lemmas.

Assume for a moment that $\check{X}^i$ is of type $a_2$. Then the connected component of the  group of graded Poisson automorphisms of $\check{X}^i$ is $\operatorname{PGL}_3$ (indeed, the Lie algebra of this group is $\mathfrak{sl}_3\subset \C[\check{X}^i]$).
And if $\check{X}^i$ is of type $a_2/S_2$, then $\operatorname{SO}_3$ acts. Let
$Q^i$ denote the reductive part of $Z_G(\chi^i)$, this reductive group acts on
$\check{X}^i$ (from the realization of $\check{X}^i$ as a Slodowy slice). As before, we can assume that $Q^i$ is $\sigma$-stable and $\check{X}^i$ is $\theta$-stable.
Let $K^i_Q$ denote the preimage of $Q^i\subset G$ in $K$.

\begin{Lem}\label{Lem:Qi_action}
In the $a_2$ case, the action of $(Q^i)^\circ$ on $\check{X}^i$ factors through $(Q^i)^\circ\twoheadrightarrow
\operatorname{PGL}_3$.
\end{Lem}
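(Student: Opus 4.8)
The statement to prove is Lemma \ref{Lem:Qi_action}: when the slice $\check X^i$ is of type $a_2$, the action of $(Q^i)^\circ$ on $\check X^i$ factors through the projection $(Q^i)^\circ \twoheadrightarrow \operatorname{PGL}_3$. The idea is that $(Q^i)^\circ$ acts by \emph{graded Poisson automorphisms} of $\check X^i$, commuting with the contracting $\C^\times$-action introduced in Corollary \ref{Cor:a2_c2_involution}, since the $Q^i$-action on the Slodowy slice commutes with the Kazhdan $\C^\times$-action. The connected automorphism group of the graded Poisson variety $\check X^i$ (the closure of the minimal nilpotent orbit in $\mathfrak{sl}_3$) is $\operatorname{PGL}_3$: its Lie algebra is the space of Poisson derivations of $\C[\check X^i]$ that preserve the grading, and by the argument already used in the proof of Corollary \ref{Cor:a2_c2_involution} (the degree $1$ component of $\C[\check X^i]$ is $\mathfrak{sl}_3$, and all Poisson derivations are inner by \cite[Proposition 2.14]{orbit}) this Lie algebra is exactly $\mathfrak{sl}_3 = \C[\check X^i]_1$, with the adjoint group $\operatorname{PGL}_3$ acting faithfully.

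\textbf{First steps.} First I would make precise that $(Q^i)^\circ$ acts on $\check X^i$ by graded Poisson automorphisms, commuting with $\gamma$ (the contracting action with $d=1$): this is immediate from the construction of $\check X^i$ as (a connected component of the normalization of) a Slodowy slice, on which $Q^i$ and the Kazhdan torus act and commute. Next I would invoke the identification of $\operatorname{Aut}^{gr,\mathrm{Pois}}(\check X^i)^\circ$ with $\operatorname{PGL}_3$: differentiating the action gives a homomorphism of Lie algebras $\mathfrak{q}^i \to \operatorname{Der}^{gr,\mathrm{Pois}}(\C[\check X^i]) = \mathfrak{sl}_3$, hence a homomorphism of connected algebraic groups $(Q^i)^\circ \to \operatorname{PGL}_3$ whose differential is this map; this realizes the $(Q^i)^\circ$-action as the composite with the tautological $\operatorname{PGL}_3$-action. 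It then remains to identify this homomorphism with the natural projection $(Q^i)^\circ \twoheadrightarrow \operatorname{PGL}_3$ coming from the structure of the component group/fundamental group combinatorics for the relevant orbit.

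\textbf{The identification step.} To see that the map $(Q^i)^\circ \to \operatorname{PGL}_3$ is the standard projection, I would compare it with the classical description of the reductive part $Q^i$ of the centralizer: for the nilpotent orbits $\Orb$ in question, a codimension $4$ orbit $\Orb^i_K$ has a slice of type $a_2$ precisely because the singularity of $\overline{\Orb}$ along $\Orb'$ is the minimal $\mathfrak{sl}_3$-singularity, and the associated reductive centralizer quotient acts through its natural map to $\operatorname{PGL}_3 = \operatorname{Aut}(a_2\text{-singularity})^\circ$. Concretely, I would use that the $\mathfrak{sl}_3$ inside $\C[\check X^i]_1$ is canonically attached to the singularity and that the $Q^i$-equivariant structure on $\check X^i$ forces the $\mathfrak{q}^i$-action on this $\mathfrak{sl}_3$ to be the one coming from the adjoint action, so the kernel is exactly the subgroup acting trivially, i.e. the kernel of $(Q^i)^\circ \to \operatorname{PGL}_3$.

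\textbf{Main obstacle.} The routine part — that $(Q^i)^\circ$ acts by graded Poisson automorphisms and that the connected graded Poisson automorphism group of the $a_2$ singularity is $\operatorname{PGL}_3$ — is handled by results already in the excerpt. The genuinely delicate point is pinning down that the resulting homomorphism $(Q^i)^\circ \to \operatorname{PGL}_3$ is \emph{the} natural surjection (as opposed to some other homomorphism with the same image), which requires knowing how $Q^i$ sits relative to the slice and using the uniqueness/universality of the $\mathfrak{sl}_3 \subset \C[\check X^i]_1$ as a $Q^i$-submodule. In practice this follows because $\C[\check X^i]_1$ is $1$-dimensional over the trivial isotypic part plus the $\mathfrak{sl}_3$-adjoint piece and $Q^i$ must act on the adjoint piece by $\operatorname{PGL}_3$-conjugation; I expect that making this last point airtight — dealing with the possibility of extra trivial summands or non-standard embeddings — will be where most of the care goes, though it is still essentially bookkeeping rather than a new idea.
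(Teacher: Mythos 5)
The content of Lemma \ref{Lem:Qi_action} is not that the $(Q^i)^\circ$-action lands in $\operatorname{PGL}_3$ — that much is automatic, since $(Q^i)^\circ$ acts by graded Poisson automorphisms and the connected graded Poisson automorphism group of the $a_2$-singularity is $\operatorname{PGL}_3$ (the paper records exactly this, with your Lie-algebra argument $\operatorname{Lie}=\C[\check{X}^i]_1=\mathfrak{sl}_3$, in the sentence immediately before the lemma). The content is that the induced homomorphism $(Q^i)^\circ\to\operatorname{PGL}_3$ is \emph{surjective}. Your steps only produce a homomorphism; nothing in the proposal excludes that its image is a proper subgroup, e.g.\ that $(Q^i)^\circ$ acts trivially or through a torus. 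Your ``identification step'' begs the question: asserting that $\mathfrak{q}^i$ must act on the copy of $\mathfrak{sl}_3\subset\C[\check{X}^i]_1$ by the adjoint action is precisely the nontriviality you need, and there is no abstract equivariance or ``universality of $\mathfrak{sl}_3$'' argument for it. Surjectivity is also exactly what is used downstream (to produce the section $\operatorname{SL}_3\to (Q^i)^\circ$ and the epimorphism onto $\operatorname{SO}_3$ in the proof of Theorem \ref{Thm:main1} and in Section \ref{SS_loc_sys_slice_restriction}), so it cannot be waved through as bookkeeping.

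The paper's proof is where this real work happens, and it is case-based. For $\g=\mathfrak{sl}_n$ it passes to $\operatorname{GL}_n$, writes $Q^i=\operatorname{GL}_3\times\underline{Q}^i$ with the $\operatorname{GL}_3$-factor attached to the three rows of length $\tau_{i+1}$, checks by an explicit matrix computation that $e^i+e_0\in\Orb$ for $e_0$ in the minimal orbit of that $\mathfrak{gl}_3$, and identifies the slice with $\overline{\Orb}_{min}$ via $x\mapsto e^i+x$; with this identification the $Q^i$-action is visibly conjugation by the $\operatorname{GL}_3$-factor, hence surjects onto $\operatorname{PGL}_3$. For exceptional $\g$ it argues by contradiction: if the $\mathfrak{sl}_3$-factor of $\mathfrak{q}^i$ acted trivially on $\check{X}^i$ one would get $\mathfrak{q}^i\hookrightarrow\mathfrak{q}$, which is ruled out by inspecting the tables of reductive centralizers, with a separate minimal-Levi argument for the orbit $A_4+A_1$ in $E_8$ (nontriviality then forces surjectivity because $\mathfrak{sl}_3$ is simple). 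Your proposal contains no substitute for either of these verifications, so as it stands it proves only the easy, already-stated half of the lemma.
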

\begin{proof}
The algebra $\g$ is either type $A$ or exceptional. We divide the proof into two cases.

{\it Case 1}. As in the proof of Case 2.1 in Proposition \ref{prop:model_a2c2},
let $\g=\mathfrak{sl}_n$ and take $G=\operatorname{GL}_n$. We take $\Orb,\tau, \tau^i$ as in that proof.
It is clear from the construction there that $Q^i$ acts on the slice $\overline{\Orb}_{min}$
via the projection $Q^i\twoheadrightarrow \operatorname{PGL}_3$.

{\it Case 2}. Now suppose $\g$ is exceptional and $\check{X}^i$ is of type $a_2$. One can extract
the information about the pairs $(\Orb,\Orb^i)$ from \cite{FJLS1} and the information about $\mathfrak{q}$
and $\mathfrak{q}^i$, for example, from the tables in \cite[Section 13.1]{Carter}. 

Suppose, for the sake of contradiction, that the $\slf_3$-factor in $\mathfrak{q}^i$
(or both of such factors in the case when $\Orb$ is $A_2+2A_1$ in $E_6$) act trivially
on $\check{X}^i$. Then we have $\mathfrak{q}^i\hookrightarrow \mathfrak{q}$.
This is clearly impossible in all cases but when $\Orb$ is $A_4+A_1$ in type $E_8$.
In that case, suppose $\mathfrak{q}^i\subset \mathfrak{q}$. The centralizer of a Cartan subalgebra
in $\mathfrak{q}^i$ is a minimal Levi containing $e^i$, i.e., of type $D_4+A_2$. Then we can choose $e$
in the centralizer of a semisimple element in that Levi. This is impossible, because the minimal
Levi containing $e$ is of type $A_4+A_1$ and there's no inclusion of $\mathfrak{sl}_5\times \mathfrak{sl}_2$
into $\mathfrak{so}_8\times \mathfrak{sl}_3$. This contradiction finishes the proof (this conclusion can be also
extracted from the treatment of these orbits in \cite{FJLS1}).
\end{proof}

The second lemma concerns the case of $a_2/S_2$. According to \cite{FJLS1}, the singularity of type $a_2/S_2$ occurs only for the orbit $\Orb$ of type $A_4+A_1$ in $E_7$  along the orbit $\Orb^i$ of type $A_3 + A_2 + A_1$ in its boundary. Suppose $G$ is a  simple algebraic group of type $E_7$. Let $\sigma$ be an involution of $G$ and $K$ a connected algebraic group equipped with a homomorphism $K\rightarrow G$ that induces
an isomorphism $\kf\xrightarrow{\sim} \g^\sigma$.  Let $i$ be such that $\check{X}^i$ has this type.

\begin{Lem}\label{Lem:character_integrality}
The following claims hold:
\begin{enumerate}
\item
Suppose $\dim \kf^i_Q=1$.  Then the element $\rho_{L^i}$ for the orbit $\Orb^i_K$ (see Definition \ref{defi:half_character})
restricts to character of the 1-dimensional torus $T_0\subset \operatorname{SO}_3$ with Lie algebra $\kf^i_Q$.
\item Suppose $\dim \kf^i_Q\neq 1$. Then the action of $(K^i_Q)^\circ$ on $\check{X}^i$ factors through $\operatorname{SO}_3$.
\end{enumerate}
\end{Lem}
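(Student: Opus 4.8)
The plan is to reduce both claims to explicit computations with the $\slf_2$-triple attached to $\chi^i$, and then to run a finite case analysis over the involutions $\sigma$ using the known structure of the pair $(\Orb,\Orb^i)$ — here $\Orb$ of type $A_4+A_1$ and $\Orb^i$ of type $A_3+A_2+A_1$ — via \cite{FJLS1} for the slice data and the tables in \cite[Section 13.1]{Carter} for $\mathfrak{q}^i=\mathfrak{z}_\g(e^i,h^i,f^i)$. The key preliminary reduction is for $\rho_{L^i}$. Since $\Orb^i_K\cong K/K_{\chi^i}$ and $L^i=(\kf\cdot\chi^i)^*=T^*_{\chi^i}\Orb^i_K$, the line $\Lambda^{\mathrm{top}}L^i$ is the fibre at $\chi^i$ of the canonical bundle $\omega_{\Orb^i_K}$, so $\rho_{L^i}$ is one half of the character by which $K^i_Q$ acts on $\det(\kf/\kf_{\chi^i})^*$. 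Now write $\kf_{\chi^i}=\ker(\operatorname{ad}e^i|_\kf)=\kf^i_Q\oplus\mathfrak{n}^i$, where the $\operatorname{ad}h^i$-degree-zero part is exactly the reductive part $\kf^i_Q=\kf\cap\mathfrak{q}^i$ and $\mathfrak{n}^i$ is the positive-degree part (a nilpotent ideal). Because $(K^i_Q)^\circ$ acts on $\kf$ through the connected reductive group $K$, the determinant of this action is trivial, and likewise the $K^i_Q$-action on $\kf^i_Q$ has trivial determinant. Hence, as characters of $(K^i_Q)^\circ$, one gets $\Lambda^{\mathrm{top}}L^i=\det(\kf_{\chi^i})=\det(\mathfrak{n}^i)$, so $\rho_{L^i}|_{(K^i_Q)^\circ}=\tfrac12\det(\mathfrak{n}^i)$.

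With this in hand, claim (1) becomes the statement that $\det(\mathfrak{n}^i)$ is divisible by $2$ as a character of $T_0=(K^i_Q)^\circ$. The plan is to enumerate the involutions $\sigma$ for which $\Orb_K$ degenerates to a codimension-$2$ orbit $\Orb^i_K$ with slice of type $a_2/S_2$ and with $\dim\kf^i_Q=1$, and for each to write down $\mathfrak{n}^i$ together with its adjoint $T_0$-action and check that the sum of its $T_0$-weights is even. Concretely, I would identify $T_0$ with the maximal torus of the $\operatorname{SO}_3=\operatorname{PGL}_2$ that acts on $\check{X}^i$ — the action on $\check{X}^i$ being governed by the distinguished $\slf_3\subset\mathfrak{q}^i$ exactly as in the proof of Lemma \ref{Lem:Qi_action} — and observe that $\det(\mathfrak{n}^i)$ is pulled back along $T_0\hookrightarrow\operatorname{SO}_3$, whose character lattice consists precisely of the weights that are even in $\operatorname{SL}_2$-coordinates. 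The genuine obstacle here is the bookkeeping: keeping track of the possible central torus inside $\kf^i_Q$ and of the normalization of the $T_0$-weight lattice inside $\operatorname{SO}_3$ versus inside $K$, and matching how $\sigma$ acts on $\slf_3\subset\mathfrak{q}^i$ against the list of involutions with $\Orb\cap\kf^\perp\neq\varnothing$.

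For claim (2) the argument is the analogue of Lemma \ref{Lem:Qi_action}: $(K^i_Q)^\circ$ is connected reductive and acts on $\check{X}^i$, hence maps to $\operatorname{Aut}(\check{X}^i)^\circ=\operatorname{SO}_3$, and one must check that this map is onto, so that the action genuinely factors through $\operatorname{SO}_3$ and not through a proper (in particular $1$-dimensional) subgroup. When $\dim\kf^i_Q=0$ there is nothing to prove; when $\dim\kf^i_Q\geqslant2$, I would locate inside $\mathfrak{q}^i$ the distinguished $\slf_3$ whose minimal orbit yields $\tilde X$ (before the $S_2$-quotient), read off $\sigma|_{\slf_3}$ from the classification of involutions, and conclude that $\kf^i_Q$ contains the copy of $\slf_2=(\slf_3)^\sigma$ obtained when $\sigma|_{\slf_3}$ is outer, which maps onto $\operatorname{SO}_3$. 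The remaining possibility — that the whole $\slf_3$ is contained in $\kf^i_Q$ — would force an embedding $\slf_3\hookrightarrow\mathfrak{q}^i$ intertwining $\sigma$, and this is ruled out in the relevant case by the same dimension/minimal-Levi argument used in Case 2 of the proof of Lemma \ref{Lem:Qi_action}. As with (1), the only real difficulty is carrying out this verification uniformly over the admissible involutions and cross-checking it against the slice classification.
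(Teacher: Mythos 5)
There is a genuine gap, concentrated in two places. First, your argument for claim (2) is built on a premise imported from the $a_2$ case that is false here: you propose to ``locate inside $\mathfrak{q}^i$ the distinguished $\slf_3$'' and to use $(\slf_3)^\sigma\cong\slf_2$. But the lemma concerns the $a_2/S_2$ slice, which occurs only for $\Orb=A_4+A_1$ in $E_7$ with $\Orb^i=A_3+A_2+A_1$, and there $\mathfrak{q}^i\cong\slf_2$ is $3$-dimensional, so it contains no copy of $\slf_3$; the connected automorphism group of the slice is $\operatorname{SO}_3$, and the genuine issue is whether the map $\mathfrak{q}^i\rightarrow\mathfrak{so}_3$ is nonzero at all --- your ``check that the map is onto'' has no mechanism for excluding the trivial action, and your appeal to the dimension/minimal-Levi argument of Case 2 of Lemma \ref{Lem:Qi_action} addresses a different configuration (an $\slf_3$-factor of $\mathfrak{q}^i$ in the $a_2$ case) and does not substitute for it. The paper's route is different: since $\mathfrak{q}$ for $A_4+A_1$ is a $2$-dimensional abelian algebra, a trivial action of $(Q^i)^\circ$ on $\check{X}^i$ would force $\mathfrak{q}^i=\slf_2\hookrightarrow\mathfrak{q}$, a contradiction; evenness of $\Orb^i$ then gives that the action factors through $\operatorname{SO}_3$, and since for the relevant $K$-orbits (\#39, \#40 for $E_{7(7)}$, read off from \DJ okovi\'c's tables) one has $\kf^i_Q=\slf_2=\mathfrak{q}^i$, claim (2) follows.

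Second, for claim (1) your reduction $\rho_{L^i}|_{(K^i_Q)^\circ}=\tfrac12\det(\mathfrak{n}^i)$ is correct, but the decisive step --- the integrality of this half-character on $T_0$ --- is exactly what you defer to ``bookkeeping'' over an unenumerated list of involutions, so no proof is actually given; you also never pin down the case list, whereas any such analysis must first observe (via the classification of real nilpotent orbits in $E_7$) that only $E_{7(7)}$ produces $K$-orbits in $\Orb\cap\kf^\perp$ meeting $A_3+A_2+A_1$, and that precisely \#41 and \#42 have $\dim\kf^i_Q=1$. The paper obtains (1) with no weight computation: $A_3+A_2+A_1$ is an even orbit, so by Schwartz and Vogan (Corollary 7.28 of \cite{Vogan}) every $K$-orbit in it is admissible, which is exactly the statement that $\rho_\omega=-\rho_{L^i}$ integrates to a character of $K^i_Q$, hence of $T_0$. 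If you insist on avoiding that citation, your planned parity check would in effect re-prove this admissibility theorem by hand, and evenness of $\Orb^i$ (all $\operatorname{ad}h^i$-weights even) is the structural input that makes the parity come out right --- an input your proposal never invokes.
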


\begin{proof}
	According to the classification of nilpotent orbits for real forms of $E_7$ and their closure relations (see \cite{Dk3, Dk4, Dk5}), if $\Orb$ and $\Orb^i$ intersect $\kf^\perp$, the only possible real form of $E_7$ is $E_{7(7)}$. In this case the closure $\overline{\Orb}_K$ intersect $A_3 + A_2 + A_1$ at four $K$-orbits, labelled by 39, 40, 41 and 42.
The Lie algebra $\mathfrak{q}^i$ is $\mathfrak{sl}_2$ (see, e.g., tables in \cite[Section 13.1]{Carter}), while the Lie algebras $\kf^i_Q$ for $39$ and $40$ are $\slf_2$ and 1-dimensional for $41$ and $42$ by \cite[Table XI, pg. 513]{Dk0}. Since the $G$-orbit $A_3 + A_2 +A_1$ is even, by \cite[Chapter 2]{Schwartz} (see also \cite[Corollary 7.28]{Vogan}), all the four $K$-orbits in $A_3 + A_2 +A_1$ are admissible in the sense of Definition \ref{defn:linearly-admissible}.

Note that $\mathfrak{q}$ is a two-dimensional abelian Lie algebra, \cite[Section 13.1]{Carter}. It follows that the action of $(Q^i)^\circ$
on $\check{X}_i$ is nontrivial. Since $\Orb^i$ is an even orbit, this action factors through $\operatorname{SO}_3$.
In particular,  the conclusion of (1) holds for the $K$-orbits $\#41$ and $42$, while the conclusion of (2) holds for
the $K$-orbits $\#39$ and $40$.
\end{proof}

The third lemma concerns type $\chi$ (the case (5) from Section \ref{SS_goals}).

\begin{Lem}\label{Lem:type_chi}
Suppose $\Orb$ is the unique orbit $A_4+A_3$ in $E_8$ whose closure has type $\chi$ singularity.
The conclusion of (i) of Theorem \ref{Thm:main1} is true in this case.
\end{Lem}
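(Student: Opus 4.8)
\textbf{Proof plan for Lemma \ref{Lem:type_chi}.}

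The plan is to reduce the verification of Theorem \ref{Thm:main1}(i) in this particular case to the ``unobstructive slice'' machinery of Section \ref{SS_goals}, exactly as in the proof of Theorem \ref{Thm:main1} for cases (i) and (ii): by Proposition \ref{Lem:image_description11}, every irreducible $V\in\operatorname{Rep}(K_Q,\lambda|_{\kf_Q}-\kappa_Q)$ lies in $\operatorname{Im}\bullet_{\dagger,\chi}$ provided that for each codimension $2$ orbit $\Orb^i_K$ in $\overline{\Orb}_K$ the restriction $\check{\mathcal E}^i$ of the corresponding twisted local system to the slice $\check X^{i\times}$ is strongly $\check{\A}^i_\lambda$-quantizable, and this holds automatically once the slice $(\check X^i,\theta^i)$ is unobstructive in the sense of Definition \ref{defi:obstructive_slice}. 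The only slices that are not already handled by the unobstructive results of Section \ref{S_classif_thm} proper (i.e.\ the slices of types $c_2$, $\C^4/\Z_3$, $\C^4$, $a_2/S_2$, and $a_2$ with the inner anti-involution) are the slices of type $\chi$. So the heart of the proof is to show: first, that for $\Orb=A_4+A_3$ in $E_8$ intersecting $\kf^\perp$, the slices of type $\chi$ that actually arise along $\overline{\Orb}_K$ carry anti-involutions for which $\check{\mathcal E}^i$ is strongly quantizable; and second, that all the \emph{other} codimension $2$ slices along $\overline{\Orb}_K$ are of the already-known unobstructive types.

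First I would extract from \cite{FJLS1} the list of codimension $4$ orbits $\Orb^i$ in $\partial\Orb$ for $\Orb=A_4+A_3$ and the corresponding singularity types; by hypothesis exactly one (or more) of them gives a type $\chi$ slice, and I would check that the remaining ones are of type $c_2$, $\C^4/\Z_3$, or $\C^4$, hence unobstructive by Lemmas \ref{Lem:c2_unobstructive} and \ref{Lem:tau_unobstructive} (the case $\check X^i=\C^4$ being the Weyl algebra, which is trivially unobstructive). Next I would analyze the type $\chi$ slice itself. Since $\Orb=A_4+A_3$ is an even orbit and, by the classification of nilpotent orbits for real forms of $E_8$ (cf.\ \cite{Dk3,Dk4,Dk5} as used in Lemma \ref{Lem:character_integrality}), the relevant real form and the $K$-orbit structure on $\Orb'\cap\kf^\perp$ can be pinned down; in particular the reductive centralizer $Q^i$ of $\chi^i$ and the subgroup $K^i_Q$ can be read off from \cite[Section 13.1]{Carter}. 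The goal is to show that the action of $(K^i_Q)^\circ$ on $\check X^i$ factors through the symmetry group of the type $\chi$ singularity and that the twist $\lambda|_{\kf^i_Q}-\kappa^i_Q$ is such that every equivariant twisted local system on $(\check X^i)^{\theta^i}\setminus\{0\}$ with strong $\cm$-action is strongly $\check{\A}^i_\lambda$-quantizable — using the detailed study of the type $\chi$ singularity in \cite{BBFJLS1}, together with Lemma \ref{Lem:irred_strongness_HC} and Proposition \ref{Prop:slice_strongness} to upgrade weak equivariance to strongness.

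With strong quantizability of all the $\check{\mathcal E}^i$ established, Proposition \ref{Lem:image_description11}(2) gives $V\in\operatorname{Im}\bullet_{\dagger,\chi}$ for every irreducible $V$; combined with the semisimplicity of $\operatorname{Rep}(K_Q,\lambda|_{\kf_Q}-\kappa_Q)$ and Proposition \ref{Prop:adj_properties}(1),(2), which say that $\bullet_{\dagger,\chi}$ is a fully faithful embedding onto a subcategory closed under subquotients, this forces $\bullet_{\dagger,\chi}\colon\HC_{\Orb_K}(\A_\lambda,\theta)^{K,\kappa}\xrightarrow{\sim}\operatorname{Rep}(K_Q,\lambda|_{\kf_Q}-\kappa_Q)$, which is the conclusion of Theorem \ref{Thm:main1}(i). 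The main obstacle I anticipate is the direct analysis of the type $\chi$ slice: unlike the four $4$-dimensional singularities treated in Section \ref{S_classif_thm}, the type $\chi$ singularity is the slice to a codimension $4$ orbit in $\operatorname{Spec}(\C[\tilde\Orb])$ for $\tilde\Orb$ the universal ($\Z_5$-)cover of the principal orbit in $\mathfrak{sl}_5$, so one must understand its quantizations, its Poisson anti-involutions commuting with the contracting $\cm$-action, the components of its $\theta$-fixed locus, and the category of equivariant twisted local systems there — this is exactly the content that \cite{BBFJLS1} is invoked for, and checking that in the $E_8$, $A_4+A_3$ situation the relevant twists fall in the strongly quantizable range is the delicate point.
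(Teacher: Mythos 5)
There is a genuine gap, and it sits exactly at the point you yourself flag as ``the delicate point.'' Your plan reduces everything to showing that equivariant twisted local systems on the type $\chi$ slice (with the induced anti-involution and strong $\cm$-action) are strongly $\check{\A}^i_\lambda$-quantizable, i.e.\ that this slice is unobstructive, to be established by a direct study of the singularity via \cite{BBFJLS1}. But no such analysis exists in the paper, and the authors state explicitly (Section \ref{SS_intro_approach}) that for this one orbit in $E_8$ ``the slice is too complicated to analyze (*)''\,---\,the type $\chi$ singularity is not one of the four cases treated in Section 6, its quantizations, anti-involutions and HC modules are not worked out, and there is no analogue of Lemmas \ref{Lem:sl3_gl2_unobstructive}, \ref{Lem:c2_unobstructive}, \ref{Lem:tau_unobstructive} for it. So the central step of your argument is an open verification, not a routine check, and the proof as proposed does not go through.

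The paper's actual proof bypasses the slice entirely by a counting argument. The orbit $A_4+A_3$ is rigid, so $\C[\Orb]$ has a unique quantization and $\J$ is the maximal ideal with infinitesimal character $\rho/5$; since $G$ of type $E_8$ is simply connected, $K=G^\sigma$ is semisimple and connected, so $\kappa=0$; only the split form $E_{8(8)}$ contributes, there is a single $K$-orbit $\Orb_K$ in $\Orb\cap\kf^\perp$, and its component group is trivial, so $\operatorname{Rep}(K_Q,\lambda|_{\kf_Q}-\kappa_Q)$ has exactly one irreducible object. An Atlas computation (\verb|all_parameters_gamma| plus \verb|GK_dim|) shows there is exactly one irreducible HC $(\g,K)$-module with infinitesimal character $\rho/5$ and GK dimension $100=\frac{1}{2}\dim\Orb$. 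Since $\bullet_{\dagger,\chi}$ is a fully faithful embedding (Proposition \ref{Prop:adj_properties}), this single module must map to the single simple of the target, so the functor is an equivalence. If you want to salvage your route, you would have to supply the missing unobstructiveness statement for the type $\chi$ singularity, which is precisely what the authors avoided; otherwise the Atlas-based counting is the proof.
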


\begin{proof}
	The orbit $\Orb$ of type $A_4+A_3$ in $E_8$ is rigid by \cite[Table 3]{GraafElashvili}  and $\dim_\C \Orb=200$ by \cite[pg. 133]{CM}.
According to \cite[Appendix D]{LMM}, the ideal $\J$, the kernel of the quantum comoment map for the unique quantization of $\C[\Orb]$
is the maximal ideal with infinitesimal character $\rho/5$, where $\rho$ is the half sum of positive roots of $E_8$. Inside $\Orb$, there is only one $K$-orbit $\Orb_K$ for the split real form $E_{8(8)}$, which is labeled as $\# 57$ in \cite[Table XIV]{Dk0}, and no $K$-orbit for the other real form $E_{8(-24)}$ by \cite[Table XV]{Dk0}.
Since the adjoint group $G$ of type $E_8$ is simply connected, $K=G^\sigma$ is semisimple and connected by \cite[Table 3]{realgrp}, and corresponds to the adjoint $E_{8(8)}$. Hence $\kappa=0$. By \cite[Table VII, pg. 484]{Noel1}, orbit $\# 57$ is (linearly) admissible. By \cite[Table 11, pg. 273]{King}, the $K$-component group of orbit $\# 57$ is trivial, so there is exactly one $K$-admissible line bundle over $\Orb_K$ and hence a unique irreducible object in $\operatorname{Rep}(K_Q,\lambda|_{\kf_Q}-\kappa_Q)$.  The \verb|atlas| command  \verb|all_parameters_gamma| lists the Langlands parameters of all irreducible HC $(\g,K)$-modules of a given infinitesimal character, and the command \verb|GK_dim| computes the Gelfand-Kirillov dimension of the representation corresponding to a given parameter. We find that there is only one irreducible HC $(\g,K)$-module infinitesimal character $\lambda = \frac{1}{5} \rho$ of Gelfand-Kirillov dimension $100$. Therefore the unique irreducible object in $\operatorname{Rep}(K_Q,\lambda|_{\kf_Q}-\kappa_Q)$ is in the image of the restriction functor $\bullet_{\dagger,\chi}$, so the claim holds.
\end{proof}

\begin{Rem}
	In a forthcoming paper \cite{quant_cover}, the second named author will provide a computer-free proof of Lemma \ref{Lem:type_chi}.
\end{Rem}

\subsubsection{Proof of Theorem \ref{Thm:main1}} \label{SS:proof_main1}
Here we deduce the theorem from the lemmas.

\begin{proof}
If (ii) holds, then all pairs $(\check{X}^i,\theta^i)$ except, possibly, $a_2/S_2$, are unobstructive, by Lemma
\ref{Lem:sl3_gl2_unobstructive}, Lemma \ref{Lem:c2_unobstructive} and Lemma \ref{Lem:tau_unobstructive}.
The claim of the theorem follows from Proposition \ref{Lem:image_description11}.

So we need to prove the theorem under assumption (i) or if the singularity $a_2/S_2$ is present. We can assume that the singularity $\chi$
does not occur, thanks to Lemma \ref{Lem:type_chi}. The only two singularities we need to consider
are $a_2$ and $a_2/S_2$.

Consider the case when $\check{X}^i$ is of type $a_2$, and the involution in question is outer (and $K\subset G$).
We note that $\mathcal{E}$ is weakly $K$-equivariant, hence $\check{\mathcal{E}}^i$ is weakly $K_{Q}^i=Q^i\cap K$-equivariant. Recall
that $(Q^i)^\circ$ acts on $\check{X}^i$ via $(Q^i)^\circ\twoheadrightarrow
\operatorname{PGL}_3$, Lemma \ref{Lem:Qi_action}. The corresponding epimorphism $\mathfrak{q}^i\twoheadrightarrow \mathfrak{sl}_3$
is equivariant with respect to the action of the reductive group $\langle\sigma\rangle\ltimes (Q^i)^\circ$. So we can find a $\langle\sigma\rangle\ltimes (Q^i)^\circ$-equivariant
section $\mathfrak{sl}_3\rightarrow \mathfrak{q}^i$ (where $\sigma$ acts as an outer involution on the source) that lifts uniquely to an algebraic group homomorphism $\operatorname{SL}_3\rightarrow (Q^i)^\circ$ whose composition with $(Q^i)^\circ\twoheadrightarrow \operatorname{PGL}_3$ is the natural epimorphism
$\operatorname{SL}_3\twoheadrightarrow \operatorname{PGL}_3$.   Passing to the invariants of the involutions, we get homomorphisms $\operatorname{SO}_3\rightarrow K_{Q}^i\twoheadrightarrow \operatorname{SO}_3$ whose composition is the identity. The action of $(K_{Q}^i)^\circ$ on $\check{X}^i$
factors through the epimorphism $K_{Q}^i\twoheadrightarrow \operatorname{SO}_3$. It follows that $\check{\mathcal{E}}$ is
weakly $\operatorname{SO}_3$-invariant. Thanks to Lemma \ref{Lem:sl3_so3_obstructive},
$\check{\mathcal{E}}$ is strongly $\check{\A}^i_\lambda$-quantizable. Applying Proposition
\ref{Lem:image_description11} finishes the proof in the $a_2$-case.

Now we handle the case of $a_2/S_2$. Recall, Section \ref{SS_a2S_2}, that there may be two components of $(\check{X}^i)^\theta$, denote them by $Y_{in}$ and $Y_{out}$. If the former is present, then its nonzero locus is $(\C^2\setminus \{0\})$ and there is a HC $(\check{A}^i_\lambda,\theta^i)$-module $M_{in}$ with a good filtration such that $\gr M_{in}=\Gamma(Y_{in}^\times, \check{\mathcal{E}}^i)$. So we just need to show that
there is a HC module $M_{out}$ with $\gr M_{out}=\Gamma(Y_{out}^\times, \check{\mathcal{E}}^i)$.

In Case 2 of Section \ref{SSS_a2S2_involutions}, the anti-involution $\theta$
of $\check{X}^i$ is invariant under $\operatorname{SO}_3$. This case is handled as the $a_2$-case above. Suppose now that we are in Case 1 of
Section \ref{SSS_a2S2_involutions}. Then $\theta$ is equivariant with respect to the maximal torus  of $\operatorname{SO}_3$ obtained as the image of $(K^i_Q)^\circ$.

As was mentioned before Lemma \ref{Lem:character_integrality}, the singularity of type $a_2/S_2$ occurs only for the orbit of type $A_4+A_1$ in $E_7$. This orbit is birationally rigid in the sense of \cite[Definition 1.2]{orbit}, see, e.g., \cite[Proposition 3.1]{Fu}.
So $\lambda=0$.
As noted in the proof of Lemma  \ref{Lem:character_integrality}, we only need to consider the real form $E_{7(7)}$. In this case, the group $K$ is semisimple (see \cite[Table 3]{realgrp}).
So $\kappa=0$. It follows that $\check{\mathcal{E}}^i$
is $(K^i_Q,-\rho^i_{L})$-equivariant. By Lemma \ref{Lem:character_integrality},
$\rho^i_L$ comes from a character of $(K^i_Q)^\circ$. By Lemma \ref{Lem:a2S2_strongly_quantizable},
$\check{\mathcal{E}}^i|_{Y_{out}^\times}$ is strongly quantizable.

Now the claim that $\bullet_{\dagger,\chi}$ is essentially surjective under the assumptions of the theorem
follows from Proposition \ref{Lem:image_description11}.
\end{proof}

\subsection{Restriction of twisted local systems to slices}\label{SS_loc_sys_slice_restriction}
As we have seen in Section \ref{SS:sl3_so3}, when the conditions of Theorem \ref{Thm:main1} are not satisfied, the functor $\bullet_{\dagger,\chi}$ may fail to be essentially surjective. In this case,
to apply Proposition \ref{Lem:image_description11}, we need to be able to compute the restriction
of the $(K,\kappa)$-equivariant twisted local system $\mathcal{E}$ to $(\check{X}^{i\times})^\theta$.
This is what the present section is concerned with.

Recall, Proposition \ref{Prop:regular_local_systems},
that the category of regular twisted local systems on $\Orb_K$
is equivalent to the category of projective representations of $\pi_1(\Orb_K)$ (with Schur multiplier
determined by the twist). All $(K,\kappa)$-equivariant local systems on $\Orb_K$ are regular, and the category of projective representations is the category $\operatorname{Rep}(K_Q,\lambda|_{\kf_Q}-\kappa_Q)$.
So the Schur multiplier for the projective representation of $K_Q/K_Q^\circ$ (that is a quotient
of $\pi_1(\Orb_K)$) is read off from the character $\lambda|_{\kf_Q}-\kappa_Q$.

Now pick an irreducible component $\check{Y}$ of $(\check{X}^{i})^\theta$ and write
$\check{Y}^\times$ for $Y\setminus \{0\}$. The inclusion $\check{Y}^\times\hookrightarrow \Orb_K$
gives rise to a group homomorphism $\pi_1(Y^\times)\rightarrow \pi_1(\Orb_K)$ and hence
$\pi_1(Y^\times)\rightarrow K_Q/K_Q^\circ$.

The following lemma is a direct consequence of Proposition \ref{Prop:regular_local_systems}.

\begin{Lem}\label{Lem:local_sys_restr1}
Let $\mathcal{E}$ correspond to a projective representation $V$ of $K_Q/K_Q^\circ$.
Then the restriction of $\mathcal{E}$ to $\check{Y}^\times$ corresponds to the pullback of
$V$ to $\pi_1(\check{Y}^\times)$.
\end{Lem}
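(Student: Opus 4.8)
\textbf{Proof plan for Lemma \ref{Lem:local_sys_restr1}.}

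The plan is to apply the functoriality of the Riemann--Hilbert correspondence for twisted local systems established in Proposition \ref{Prop:regular_local_systems} to the inclusion morphism $\varphi: \check{Y}^\times \hookrightarrow \Orb_K$. The point is that under the chain of equivalences in that proposition, a regular twisted local system on a space $Z$ corresponds to a projective representation of $\pi_1(Z, z)$ with fixed Schur multiplier, and the equivalences are compatible with pullback along $\varphi$ in the sense made precise in diagram \eqref{diag:pi_functors}: on the level of fundamental groups the relevant operation is the push-forward $\varphi_*: \pi_1(\check{Y}^\times, y) \to \pi_1(\Orb_K, \chi)$ (with $\chi = \varphi(y)$), and on representations it is restriction along $\varphi_*$. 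So the content of the lemma is essentially a matter of identifying the objects on both sides of these equivalences and tracking the twist.

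First I would recall, from the discussion preceding Lemma \ref{Lem:homog_classif} and from Section \ref{SSS_conclusion_microloc}, that $\mathcal{E}$ is an equivariant twisted local system on the single $K$-orbit $\Orb_K$, hence automatically regular by the last paragraph of Section \ref{subsec:RH}, so Proposition \ref{Prop:regular_local_systems} applies to it. Its class is determined by the projective representation $V$ of $\pi_1(\Orb_K, \chi)$, which factors through $K_Q/K_Q^\circ$ by the homogeneous classification (Lemma \ref{Lem:fin_many_orbit_classif} and its preamble); the Schur multiplier $\tilde{r}$ is the one determined by the twist $\lambda$, i.e.\ read off from the character $\lambda|_{\kf_Q} - \kappa_Q$ as explained in the first paragraph of Section \ref{SS_loc_sys_slice_restriction}. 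Next I would recall that the restriction of $\mathcal{E}$ to $\check{Y}^\times$ is, by definition, the $\mathcal{O}$-module pullback $\varphi^* \mathcal{E}$ equipped with its natural $\varphi^\circ \TT$-module structure (Section \ref{subsec:pic_defn}), which in the microlocal/sheaf-theoretic language of Section \ref{subsec:extn_codim2} is exactly $\check{\mathcal{E}}^i$; and it is again regular, by the functoriality of regularity under pullback recorded in Proposition \ref{Prop:regular_local_systems}.

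The main step is then to invoke diagram \eqref{diag:pi_functors} of Proposition \ref{Prop:regular_local_systems} for the morphism $\varphi: \check{Y}^\times \to \Orb_K$: the left vertical arrow is $\varphi^*$ on twisted $\mathscr{D}$-modules (which sends $\mathcal{E}$ to $\check{\mathcal{E}}^i$), and the right vertical arrow is $\tilde{\varphi}^*$ on projective representations, which by the Cartesian squares \eqref{diag:pi_groups} is precisely restriction of representations along $\varphi_* : \pi_1(\check{Y}^\times, y) \to \pi_1(\Orb_K, \chi)$; moreover the Schur multiplier $\tilde{r}'$ for $\check{Y}^\times$ is the pullback of $\tilde{r}$ along $\varphi_*$, so the target category is the category of projective representations of $\pi_1(\check{Y}^\times)$ with the pulled-back multiplier, and the composite $\pi_1(\check{Y}^\times) \to \pi_1(\Orb_K) \to K_Q/K_Q^\circ$ is the homomorphism named in the statement. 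Commutativity of the diagram then gives that $\check{\mathcal{E}}^i = \varphi^*\mathcal{E}$ corresponds to the pullback of $V$ along this composite, which is the assertion of the lemma. I do not anticipate a serious obstacle here; the only mild subtlety is to make sure the base point $y \in \check{Y}^\times$ is chosen compatibly (one may take $\chi \in \check{Y}^\times$, using that $\check{Y}^\times \subset \Orb_K$), so that the identification of $\pi_1(\Orb_K)$ with a quotient having $K_Q/K_Q^\circ$ as a further quotient is the same on both sides, but this is purely bookkeeping.
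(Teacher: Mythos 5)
Your argument is correct and is exactly the paper's: the lemma is stated there as a direct consequence of Proposition \ref{Prop:regular_local_systems}, and your write-up simply spells out the functoriality diagram \eqref{diag:pi_functors} and the bookkeeping of base points and Schur multipliers that the paper leaves implicit.
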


We will apply this construction in the case when $\check{X}^i$ is of type $a_2$
and its anti-involution $\theta$ corresponds to the outer involution of $\mathfrak{sl}_3$.
By (1) of Lemma \ref{Lem:sl3_so3_obstructive}, we have $\pi_1(Y^\times)\cong \Z_4$.
So what we need to determine is a homomorphism $\Z_4\rightarrow K_Q/K_Q^\circ$.

Here is how we are going to compute this homomorphism.
Choose a base point $\chi$ used
to compute $\pi_1(\Orb_K)$ on the image of $Y^\times$ in $\Orb_K$.
This choice gives rise to an embedding $(K_{Q}^i)_\chi\rightarrow K_\chi$.
Recall, Lemma \ref{Lem:Qi_action}, that $(Q^i)^\circ$ acts on $\check{X}^i$ via an epimorphism to $\operatorname{PGL}_3$.
The action of $K_Q^i$ on $\check{X}^i$ factors through that of $Q^i$.
Hence $K_{Q}^{i,\circ}$ acts on $\check{X}^i$ via an epimorphism onto $\operatorname{SO}_3$. There is the unique homomorphism
$\operatorname{SL}_2 \rightarrow K_{Q}^{i,\circ} $ such that the composition
$\operatorname{SL}_2 \to K_{Q}^{i,\circ} \twoheadrightarrow \operatorname{SO}_3$
is the projection $\operatorname{SL}_2\twoheadrightarrow \operatorname{SO}_3$.
The homomorphism $\Z_4\rightarrow K_Q/K_Q^\circ$ we need is the composition
\begin{equation} \label{eq:hom_Z4}
	\Z_4\hookrightarrow (\SL_2)_\chi \rightarrow (K_{Q}^{i,\circ})_\chi\hookrightarrow K_Q\twoheadrightarrow K_Q/K_Q^\circ.
\end{equation}
We note that the first map is an identification of $\Z_4$ with a maximal reductive subgroup of $(\SL_2)_\chi$
and so is not canonically defined, but its composition with the projection of
$(\SL_2)_\chi$ to its component group is defined canonically.


\subsection{Type A} \label{SS:type_A}
Throughout this section, $G=\operatorname{SL}_n, \g=\mathfrak{sl}_n$ (unless otherwise
noted), $\Orb$
is a nilpotent orbit in $\g$ with codimension of the boundary at least $4$, $\tau$
is the corresponding partition. The goal of this section is to complete the classification of irreducible
$(K,\kappa)$-equivariant HC $(\A_\lambda,\theta)$-modules with GK dimension equal to $\frac{1}{2}\dim \Orb$.

Let $I$ be the set of all indices $i$
such that $\tau_{i}=\tau_{i+1}+1=\tau_{i+2}+2$. As mentioned in Case 2.2
of the proof of Proposition \ref{prop:model_a2c2},
the set $I$ parameterizes codimension 4 orbits in $\overline{\Orb}$.
For $i\in I$, we write $\Orb^i$ for the corresponding orbit, it corresponds
to the partition $\tau^i$ with $\tau^i_i=\tau^i_{i+2}=\tau_i-1$ and $\tau^i_j=\tau_j$
for $j\neq i,i+2$.

We note that, by \cite{KP_normal}, $\overline{\Orb}$ is normal, so $X=\overline{\Orb}$.

\begin{Rem}\label{Rem:outer_involutions}
Now we discuss involutions $\sigma$ of $\mathfrak{sl}_n$ that are outer automorphisms. They correspond
to orthogonal or, for even $n$, symplectic forms on $\C^n$. We note that each irreducible representation
of $\slf_2$ carries a unique (up to proportionality) bilinear form, where $e,f$ are self-adjoint.
This form is symmetric. It follows that for an $\slf_2$-triple $(e,h,f)$ with $\sigma(e)=-e, \sigma(f)=-f$, the multiplicity space of each irreducible representation carries a nondegenerate form
of the same type as the form on $\C^n$. In particular, if $\kf=\mathfrak{sp}_n$, then each multiplicity
space is even dimensional. It follows that there are no codimension $4$ orbits in $\overline{\Orb}$
in this case.
\end{Rem}

\subsubsection{Quantizations and their slices}\label{SS_type_A_quant}
Let us discuss quantization parameters for the quantizations of $\C[\Orb]$.
Let $P$ be the parabolic subgroup in
$G$ corresponding to the partition $\tau^t$ so that $T^*(G/P)$ is a symplectic resolution of
$\overline{\Orb}$. Then the space of quantization
parameters is $H^2(G/P,\C)=\operatorname{Pic}(G/P)\otimes_{\mathbb{Z}} \C$,
\cite[Section 3.2]{orbit}.
This space is described as follows. To each column
of $\tau$ we assign a complex number, we get $\tau_1$ numbers $\lambda_1,\ldots,
\lambda_{\tau_1}$. Then $\operatorname{Pic}(G/P)\otimes_{\mathbb{Z}} \C$
is the space $\C^{\tau_1}$ modulo the line given by $\lambda_1=\lambda_2=\ldots=\lambda_{\tau_1}$.

We can identify
$H^2(G/P,\C)$ with $(\mathfrak{l}/[\mathfrak{l},\mathfrak{l}])^*$.
The subalgebra $\mathfrak{l}$ is the algebra of block diagonal matrices
with blocks of sizes $\tau^t_1,\ldots, \tau^t_{\tau_1}$.
For $\lambda_i$ we take the coefficient in $\lambda$ of the trace of its $i$th block.

The quantization of $X$ corresponding to $\lambda\in (\mathfrak{l}/[\mathfrak{l},\mathfrak{l}])^*$
is $\mathscr{D}^{\lambda+\rho_{G/P}}(G/P)$, where $\rho_{G/P}$ is one half of the character of $\mathfrak{l}$
in $\Lambda^{top}(\g/\mathfrak{p})$.

Now take the codimension $4$ $G$-orbit in $X$ corresponding
to index $i$.
Consider the slice algebra $\check{\A}^i_\lambda$. It is a quantization of $\overline{\Orb}_{min}$,
where $\Orb_{min}$ is the minimal nilpotent orbit in $\mathfrak{sl}_3$, so is of the form
$\mathscr{D}^{\lambda^i+3/2}(\mathbb{P}^2)$ for $\lambda^i\in \C$.

\begin{Lem}\label{Lem:param_corresp_A}
We have $\lambda^i=\lambda_{i}-\lambda_{i+1}$.
\end{Lem}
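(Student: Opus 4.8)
The statement to prove is Lemma~\ref{Lem:param_corresp_A}: the quantization parameter $\lambda^i\in\C$ of the slice algebra $\check{\A}^i_\lambda = \mathscr{D}^{\lambda^i+3/2}(\mathbb{P}^2)$ equals $\lambda_i - \lambda_{i+1}$, the difference of the $i$th and $(i+1)$st column labels. The natural strategy is to reduce to the quasi-classical picture via the symplectic resolution and then to use the compatibility of the restriction functor $\bullet_\dagger$ for Harish-Chandra bimodules (equivalently: quantizations, line bundles) with the geometry of the Springer/resolution map, exactly as in the proof of Lemma~\ref{Lem:quantum_comom_period}. The point is that $\lambda^i$ is (by definition) the difference $\Phi_{\check{\underline{\A}}}$ of quantum comoment maps for the $\operatorname{GL}_3$-factor sitting in $Q^i$ (cf.\ \eqref{eq:q_comom_A}), and by Lemma~\ref{Lem:quantum_comom_period} (applied to the $4$-dimensional slice $\overline{\Orb}_{min}\subset \mathfrak{sl}_3^*$) this equals the period of $\check{\A}^i_\lambda$ viewed as an element of $(\check{\q}^{i*})^{Q^i_3}\cong H^2(\Orb_{min},\C)\cong \C$.

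\textbf{Step 1: identify $\lambda^i$ with a restriction of a Picard class.} Using the symplectic resolution $T^*(G/P)\to \overline{\Orb}=X$, the quantization $\A_\lambda = \mathscr{D}^{\lambda+\rho_{G/P}}(G/P)$ has period $\lambda\in H^2(G/P,\C) = (\mathfrak{l}/[\mathfrak{l},\mathfrak{l}])^*$, as recalled in Section~\ref{SS_type_A_quant}. By the same argument as in Case~1 of the proof of Lemma~\ref{Lem:Qi_action}, the slice $\check{X}^i = \overline{\Orb}_{min}\subset \mathfrak{sl}_3\subset \q^i$ is embedded via $x\mapsto e^i + x$, where the $\operatorname{GL}_3$-factor of $Q^i$ (call it $\operatorname{GL}_3^{(i)}$) corresponds to the three equal parts $\tau_{i+1}$ of $\tau^i$. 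First I would compute which character of $\operatorname{GL}_3^{(i)}$ (equivalently which class in $H^2(\Orb_{min},\C)$) the restriction of the line bundle $\mathcal{L}_\mu$ on $\Orb$ associated to $\mu\in\operatorname{Pic}(G/P)$ determines on the slice. Since line bundles on $G/P$ restrict to line bundles on the subvariety $T^*(G/P^i)\hookrightarrow T^*(G/P)$ (where $P^i$ is the parabolic for $\tau^{i,t}$), and the slice to the codimension-$4$ orbit is obtained by a further localization, the restriction map on Picard groups $\operatorname{Pic}(G/P)\to \operatorname{Pic}(\mathbb{P}^2)$ is what needs to be made explicit. The claim is that this map sends the class with column labels $\lambda_1,\dots,\lambda_{\tau_1}$ to $\lambda_i-\lambda_{i+1}\in\C = \operatorname{Pic}(\mathbb{P}^2)\otimes\C$.

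\textbf{Step 2: the $\operatorname{GL}_n$ bookkeeping.} This is the combinatorial heart. Working with $G=\operatorname{GL}_n$ (harmless by Lemma~\ref{Lem:Qi_action}, Case~1, since we may replace $G$ by any reductive group with derived subalgebra $\mathfrak{sl}_n$), the Levi $\mathfrak{l}$ consists of block-diagonal matrices with blocks of sizes $\tau^t_1,\dots,\tau^t_{\tau_1}$, and $\lambda_j$ is the coefficient of the trace of the $j$th block. For $\check{X}^i$ one tracks how the $\operatorname{GL}_3^{(i)}$ inside $Q^i\subset \operatorname{GL}_n$ (acting on the multiplicity space of the part $\tau_{i+1}$) sits relative to the blocks of $\mathfrak{l}$: since $\tau_i = \tau_{i+1}+1$, the part $\tau_{i+1}$ contributes rows/columns of $\tau$ that lie in columns $1,\dots,\tau_{i+1}$, whereas the part $\tau_i$ also occupies column $\tau_{i+1}+1 = \tau_i$; the three equal parts $\tau^i_i=\tau^i_{i+1}=\tau^i_{i+2}=\tau_i-1$ in $\tau^i$ versus the original $\tau$ is where the asymmetry between column $i$ and column $i+1$ of the \emph{transpose} reading enters. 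Concretely, I would compute the trace character of $\operatorname{GL}_3^{(i)}$ induced from the $j$th trace functional $\lambda_j$ on $\mathfrak{l}$: it vanishes for $j\neq i, i+1$, contributes $+1$ (times the standard $\operatorname{GL}_3$ trace, i.e.\ the scalar defining $c_1(\mathcal{O}(1))$) for one of $j=i,i+1$ and $-1$ for the other, the sign determined by the recipe in Section~\ref{SS_type_A_quant} that $c_1(\mathcal{O}(1))$ on $\mathbb{P}^2$ corresponds to $-1$. Matching these signs against the normalization $\A_\mu = \mathscr{D}^{\mu+3/2}(\mathbb{P}^2)$ (so that $\mu = 0$ is the canonical quantization and the period is $\mu$, cf.\ the discussion after Lemma~\ref{Lem:quantum_comom_period}) yields $\lambda^i = \lambda_i-\lambda_{i+1}$.

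\textbf{Step 3: assembling.} Finally I would verify the normalization of the additive constant ($+3/2$, i.e.\ $\rho_{\mathbb{P}^2}$): this follows because both $\A_\lambda$ and $\check{\A}^i_\lambda$ are defined via the \emph{period} normalization (Corollary~\ref{cor:per_graded}, Lemma~\ref{Lem:quantum_comom_period}), under which the canonical quantization has parameter $0$, and restriction of periods is $\iota^*$ on $H^2$ (Lemma~\ref{Lem:quantum_comom_period} and \eqref{eq:c1_lag}); hence no spurious constant is introduced and one reads off $\lambda^i = \lambda_i - \lambda_{i+1}$ directly from Step~2. \textbf{Main obstacle.} The only real work is Step~2: correctly bookkeeping how the $\operatorname{GL}_3^{(i)}\subset Q^i$ is positioned with respect to the Levi blocks of $\mathfrak{l}$ and getting the sign/normalization right so that the answer is $\lambda_i-\lambda_{i+1}$ and not, say, $\lambda_{i+1}-\lambda_i$ or $\lambda_i+\lambda_{i+1}$ up to an overall constant. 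A clean way to pin the sign down is to test on the smallest nontrivial case — $\tau = (3,2,1)$ in $\mathfrak{sl}_6$, where there is a single $i$, the slice is exactly the minimal orbit of $\mathfrak{sl}_3$, and the parabolic and Levi are small enough to compute the restriction of each $\mathcal{O}(1)$ by hand — and then invoke functoriality/additivity of the period under restriction to conclude in general.
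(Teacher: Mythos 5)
Your overall strategy -- identify $\lambda^i$ with the restriction of the period of $\A_\lambda$ along the $\mathbb{P}^2$ sitting over the codimension-$4$ orbit, then do the $\operatorname{GL}_n$ bookkeeping for the map $\operatorname{Pic}(G/P)\to\operatorname{Pic}(\mathbb{P}^2)$ -- is the same as the paper's. But the reduction itself is not justified by the results you invoke. Lemma \ref{Lem:quantum_comom_period} is a statement about the quantum comoment map into the slice algebra at a point of the \emph{open} orbit (where $\underline{\A}=\C$), and \eqref{eq:c1_lag} concerns Lagrangian subvarieties; neither gives the needed fact that the period of the slice quantization $\check{\A}^i_\lambda=(\A_\lambda)_{\dagger,\chi^i}$ at a boundary point is the pullback of the period of $\A_\lambda$ under the inclusion of the parabolic Springer fiber $\mathbb{P}^2\hookrightarrow T^*(G/P)$. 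That compatibility is exactly what the paper quotes from \cite[Lemma 3.3]{Perv}; without it, or a worked-out substitute (e.g.\ transitivity of the W-algebra slice construction combined with Lemma \ref{Lem:quantum_comom_period} applied on the $\mathfrak{sl}_3$ side, plus an explicit computation of the restriction of $\Phi_{\A}$ via \eqref{eq:q_comom_A}), Steps 1 and 3 do not close. In particular, "$\lambda^i$ is by definition the difference of quantum comoment maps for the $\operatorname{GL}_3$-factor" is not the paper's definition of $\lambda^i$ (which is the period of the slice quantization), so that identification would itself require proof.

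Second, the combinatorial heart is asserted rather than carried out. "The trace character of $\operatorname{GL}_3^{(i)}$ induced from the $j$th trace functional on $\mathfrak{l}$" does not literally make sense, since $\operatorname{GL}_3^{(i)}\subset Q^i$ is not contained in the Levi $L$; making it precise requires describing the Springer fiber over $\chi^i$, which is what the paper does: all members of the partial flag are pinned down by $e^i$ except one, which varies between $\operatorname{Im}(e^i)^{\tau_{i+1}}$ and $\operatorname{Im}(e^i)^{\tau_{i+1}-1}$, identifying the fiber with $\mathbb{P}^2$, after which the tautological bundles restrict to $\mathcal{L}^{\otimes \lambda_{i+1}}\otimes(\Lambda^2[\mathcal{V}/\mathcal{L}])^{\otimes\lambda_i}$, giving $\lambda_i-\lambda_{i+1}$ with the correct sign. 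Your proposed check of $\tau=(3,2,1)$ is a sensible sanity test, but linearity of $\iota^*$ does not reduce a general $\tau$ to this case (the restriction map is a different map for each $\tau$), so it cannot replace the general computation.
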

\begin{proof}
By \cite[Lemma 3.3]{Perv}, the period of the slice quantization $\check{\A}^i_\lambda$
is the pullback of the period of $\A_\lambda$ under the inclusion map $\mathbb{P}^2
\hookrightarrow T^*(G/P)$. So we just need to compute the pullback. Since the pullback map is linear, in the computation we can assume that all $\lambda_i$ are integers. 

Consider the basis in $\C^n$ labelled by the boxes of the Young diagram $\tau$.
Let $e$ be the element of $\Orb$ that acts by sending the basis element labelled by a
box to the one labelled
by the box directly to the right (and to zero if there's no such).
Let $e^i$ be the nilpotent element defined similarly
to $e$ but from $\tau^i$.

The preimage of $\check{X}^i(\cong \overline{\Orb}_{min})$ in $T^*(G/P)$ is $T^*\mathbb{P}^2$.
The zero fiber $\mathbb{P}^2$ is the parabolic Springer fiber consisting of all
partial flags $\{0\}=V_{\tau_1}\subset V_{\tau_1-1}\subset\cdots \subset\cdots V_{1}=\C^n$ with
$\dim V_i/V_{i+1}=\tau^t_i$ and $e^i V_i\subset V_{i+1}$ for all $i$. These conditions
fix all subspaces in the flag but $V_{i+1}$, while the condition on $V_{i+1}$ is that
it lies between $\operatorname{Im}(e^i)^{\tau_{i+1}-1}$ and $\operatorname{Im}(e^i)^{\tau_{i+1}}$.
The subspace       $V_j/\operatorname{Im}(e^i)^{\tau_{i+1}-1}$ is one-dimensional,
while the space $\operatorname{Im}(e^i)^{\tau_{i+1}-1}/\operatorname{Im}(e^i)^{\tau_{i+1}}$
is three-dimensional. This is how the parabolic Springer fiber is identified with
$\mathbb{P}^2$. So the pullback map $\operatorname{Pic}(G/P)\rightarrow
\operatorname{Pic}(\mathbb{P}^2)$ sends $(\lambda_1,\ldots,\lambda_{\tau_1})$
to the class of 
\begin{equation}\label{eq:line_bundle}
\mathcal{L}^{\otimes \lambda_{i+1}}\otimes (\Lambda^2[\mathcal{V}/\mathcal{L}])^{\otimes \lambda_{i}}. 
\end{equation}
Here $\mathcal{L}(\cong \mathcal{O}(-1))$ is the tautological line bundle
and $\mathcal{V}$ is the trivial rank 3 vector bundle. Under the identification of the target lattice
with $\Z$ that we use, the class of (\ref{eq:line_bundle}) is
$\lambda_{i}-\lambda_{i+1}$.
\end{proof}

\subsubsection{The case of inner involution}\label{SS:inner}
\begin{proof}[Proof of Corollary \ref{Cor:type_A_inner}]
By Remark \ref{Rem:outer_involutions}, we only need to consider the case when
$\mathfrak{k}=(\mathfrak{gl}_k\times \mathfrak{gl}_{n-k})\cap
\mathfrak{sl}_n$. Here the nilpotent orbits of $K$ in $\mathfrak{k}^\perp$ are parameterized
by ``$ab$-diagrams'' due to \cite[Section 4]{KP_normal}, also see \cite[Section 2.2]{Ohta1}. These are Young diagrams with an $a$ or a $b$ in each box so that there are $k$ $a$'s and $n-k$ $b$'s, and the labels $a$ and $b$ alternate in each row.
By \cite[Theorem 3]{Ohta2}, the closure relation is generated by pairs $(\tau>\tau')$, where $\tau'$ is obtained from
$\tau$ by moving one box up (we draw Young diagrams so that the number of boxes decreases
bottom to top) preserving the $ab$-label.

Let us describe $\mathfrak{k}^i_Q$ for a nilpotent $K$-orbit labelled by a given $ab$-diagram
of shape $\tau^i$.
We have $\mathfrak{k}_Q^i=(\prod_{j=1}^{\tau_1}\mathfrak{k}^i_{Q,j})\cap \slf_n$
with $\mathfrak{k}^i_{Q,j}\subset \mathfrak{gl}_{n_j}$, where $n_j$ is the number of rows of length $j$ in $\tau^i$, determined as follows.  Fix $j$ and consider the rows in $\tau^i$ of length $j$.
Suppose that the first column of these rows contains $n_j^a$ $a$'s and
$n_j^b$ $b$'s.   We have $\mathfrak{k}^i_{Q,j}=\mathfrak{gl}_{n_j^a}\times
\mathfrak{gl}_{n_j^b}$.

Now suppose that $j=\tau_{i+1}$ so that $n_j=3$. By Case 1 in the proof of Lemma
\ref{Lem:Qi_action}, we have the inclusion $\operatorname{SL}_3\hookrightarrow Q^i$
and the projection $(Q^i)^\circ\twoheadrightarrow \operatorname{PGL}_3$ such that the composition
$\operatorname{SL}_3\rightarrow \operatorname{PGL}_3$ is surjective and $Q^i$
acts on $\check{X}^i$ via its projection to $\operatorname{PGL}_3$. So $\mathfrak{k}^i_Q$
acts on $\check{X}^i$ via the image of $\operatorname{GL}_2$ in $\operatorname{PGL}_3$.
This excludes the case when $\theta^i$ comes from the outer involution.
Now we can use Theorem \ref{Thm:main1} to finish the proof of the corollary.
%
\end{proof}

Until the end of the section we concentrate on the case when $\tilde{K}=\operatorname{Spin}_n$ (and $K=\operatorname{SO}_n$). The pair $(\slf_n, \operatorname{Spin}_n)$ corresponds to the (nonlinear) universal ($2$-fold) cover of $\SL(n,\R)$.

\subsubsection{The group $\tilde{K}_Q/\tilde{K}_Q^\circ$} 
Let $\Orb_K$ be a $K$-orbit in $\kf^\perp\cap \Orb$.
We want to determine the reductive part (of the centralizer in $\tilde{K}$ of a point in $\Orb_K$) $\tilde{K}_Q$ and, especially, its component group.

For $K:=\operatorname{SO}_n$ (instead of $\tilde{K}$), this is easy. Let $K_Q$ denote the group defined similarly as $\tilde{K}_Q$, which is the image of $\tilde{K}_Q$ in $K$. For $i\in \{1,\ldots,\tau_1\}$, let $m_i$
denote the multiplicity of $i$ in $\tau$. Consider the group $\prod_{i=1}^{\tau_1}
\operatorname{O}_{m_i}$, it is embedded into $\operatorname{O}_n$ via
$(g_i)\mapsto \operatorname{diag}(g_i^{\oplus i})$. This is the reductive part of
the centralizer in $\operatorname{O}_n$. The element  $\operatorname{diag}(g_i^{\oplus i})$
belongs to $\operatorname{SO}_n$ if and only if $\prod_{i\text{ odd}}\det(g_i)=1$.
In particular, $K_Q/K_Q^\circ\cong (\Z/2\Z)^{\oplus (\tau_1-1)}$ (we cannot have only even
parts in $\tau$ because of the inequality $\tau_i-\tau_{i+1}\leqslant 1$). We also note that outside of the trivial case of $\tau=(1^2)$, every character
of $K_Q$ factors through $K_Q/K_Q^\circ$. If there is a codimension $4$ orbit in
$\overline{\Orb}$, then $(\mathfrak{k}_Q^*)^{\tilde{K}_Q}=\{0\}$.
Hence $\operatorname{Rep}(\tilde{K}_Q,\lambda|_{\kf_Q}-\kappa_Q)=\operatorname{Rep}(\tilde{K}_Q/\tilde{K}_Q^\circ)$.

Recall that $\tilde{K}=\operatorname{Spin}_n$. The group $\tilde{K}_Q/\tilde{K}_Q^\circ$ either coincides with
$K_Q/K_Q^\circ\cong(\Z/2\Z)^{\oplus \tau_1-1}$ or is a central extension of the latter group by
$\Z/2\Z$. Our first task is to determine which option holds.

\begin{Lem}\label{Lem:comp_group_Spin1}
If $m_i>1$ for some odd $i$, then $\tilde{K}_Q/\tilde{K}_Q^\circ\cong (\Z/2\Z)^{\oplus \tau_1-1}$.
Otherwise, $\tilde{K}_Q/\tilde{K}_Q^\circ$ is a central extension of $(\Z/2\Z)^{\oplus \tau_1-1}$
by $\Z/2\Z$.
\end{Lem}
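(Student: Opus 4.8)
The statement concerns the component group of the reductive centralizer $\tilde K_Q \subset \operatorname{Spin}_n$ of a nilpotent in $\kf^\perp \cap \Orb$, compared against the (already understood) component group $\bar K_Q/\bar K_Q^\circ \cong (\Z/2\Z)^{\oplus \tau_1-1}$ of the corresponding centralizer $\bar K_Q \subset \operatorname{SO}_n$. Since $\operatorname{Spin}_n \to \operatorname{SO}_n$ is a central $\Z/2\Z$-cover, $\tilde K_Q$ is the preimage of $\bar K_Q$, so $\tilde K_Q/\tilde K_Q^\circ$ is either isomorphic to $\bar K_Q/\bar K_Q^\circ$ (if the central $\Z/2\Z$ already lies in $\tilde K_Q^\circ$) or is a $\Z/2\Z$-extension of it (if it does not). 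The whole question is therefore: \emph{does the nontrivial central element $z \in \ker[\operatorname{Spin}_n \to \operatorname{SO}_n]$ lie in the identity component $\tilde K_Q^\circ$?} Equivalently, does the preimage in $\operatorname{Spin}_n$ of the connected group $\bar K_Q^\circ$ split as two copies, or is it connected?

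First I would identify $\bar K_Q^\circ$ explicitly. From the description recalled just before the lemma, $\bar K_Q \cong \bigl(\prod_{i=1}^{\tau_1} \operatorname{O}_{m_i}\bigr) \cap \operatorname{SO}_n$ embedded block-diagonally via $(g_i) \mapsto \operatorname{diag}(g_i^{\oplus i})$; hence $\bar K_Q^\circ = \prod_i \operatorname{SO}_{m_i}$ (for $m_i \ge 1$; a factor $\operatorname{SO}_1$ is trivial). The key computation is to pull back the embedding $\prod_i \operatorname{SO}_{m_i} \hookrightarrow \operatorname{SO}_n$ along the spin cover and decide connectedness. The standard tool here is the following: for a product of special orthogonal groups mapping block-diagonally into a larger one, the preimage in the spin group is connected if and only if at least one block is attached with \emph{odd} multiplicity $i$ to a factor $\operatorname{SO}_{m_i}$ of rank $\ge 1$ that is itself "non-split" in the relevant sense — more precisely, the obstruction is governed by whether the composite $\operatorname{Spin}_{m_i} \to \operatorname{Spin}_{i\cdot m_i} \to \operatorname{Spin}_n$ (induced by the $i$-fold diagonal on the underlying quadratic space) is trivial or not on the center. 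A clean way to organize this: the map on $\pi_1$ induced by $\prod_i \operatorname{SO}_{m_i} \to \operatorname{SO}_n$ (after choosing a maximal compact $\prod \operatorname{SO}_{m_i}(\R) \to \operatorname{SO}_n(\R)$) sends the generator of $\pi_1(\operatorname{SO}_{m_i}(\R)) \cong \Z/2\Z$ (for $m_i \ge 3$; $\Z$ for $m_i = 2$) to $i$ times the generator of $\pi_1(\operatorname{SO}_n(\R))$. So the image of $\pi_1(\bar K_Q^\circ)$ in $\pi_1(\operatorname{SO}_n) \cong \Z/2\Z$ is nontrivial precisely when some odd $i$ has $m_i \ge 2$ — because $\operatorname{SO}_1$ and $\operatorname{SO}_0$ are trivial and contribute nothing, while $m_i \ge 2$ with $i$ odd produces a loop mapping to the nontrivial class. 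When that image is nontrivial, the preimage $\tilde K_Q^\circ$ of $\bar K_Q^\circ$ is connected and contains $z$, giving $\tilde K_Q/\tilde K_Q^\circ \cong (\Z/2\Z)^{\oplus \tau_1-1}$; when it is trivial (all odd parts have multiplicity $\le 1$), $\tilde K_Q^\circ$ is a disjoint double cover and $z \notin \tilde K_Q^\circ$, so $\tilde K_Q/\tilde K_Q^\circ$ is a $\Z/2\Z$-extension.

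To carry this out rigorously I would: (1) reduce to the identity-component question as above; (2) use the long exact homotopy sequence of $1 \to \Z/2\Z \to \operatorname{Spin}_n \to \operatorname{SO}_n \to 1$, which shows $\tilde K_Q^\circ$ is connected iff the connecting map $\pi_1(\bar K_Q^\circ) \to \pi_0(\Z/2\Z)$ — equivalently the image of $\pi_1(\bar K_Q^\circ) \to \pi_1(\operatorname{SO}_n)$ — is surjective; (3) compute that image factor by factor using the diagonal embedding $\operatorname{SO}_{m_i} \xrightarrow{x \mapsto x^{\oplus i}} \operatorname{SO}_{i m_i}$, for which the effect on $\pi_1$ (over $\R$, or via the root-datum map on coweight lattices over $\C$) is multiplication by $i$ on the nontrivial class; (4) observe that the contribution is nonzero exactly for odd $i$ with $m_i \ge 2$ (an even-multiplicity block, or any block with $m_i \le 1$, or any even $i$, contributes zero mod $2$); then assemble. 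One subtlety to handle carefully is the constraint that not all parts of $\tau$ can be even (from $\tau_i - \tau_{i+1} \le 1$), which guarantees $\bar K_Q^\circ$ is genuinely present and the $\tau_1 - 1$ count is correct, and the degenerate case $\tau = (1^2)$ where $\operatorname{Spin}_2$ is a torus — I would note this is excluded by the codimension-$4$ hypothesis or handle it separately.

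\textbf{Main obstacle.} The genuinely delicate point is step (3): getting the sign/parity of the induced map on fundamental groups for the $i$-fold diagonal embedding $\operatorname{SO}_m \to \operatorname{SO}_{im}$ correct and uniform across all $i$ and $m$, including the low-rank coincidences ($\operatorname{SO}_2$, where $\pi_1 \cong \Z$ rather than $\Z/2\Z$, and $\operatorname{SO}_{\le 1}$). I expect this is best done by passing to maximal compact forms and using that the coweight-lattice map for $x \mapsto x^{\oplus i}$ scales by $i$, so that the generator of $\pi_1$ maps to $i$ mod $2$; the bookkeeping that "$i$ odd and $m_i \ge 2$" is the exact condition then drops out. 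Everything else — the reduction to connectedness of $\tilde K_Q^\circ$ and the comparison with $\bar K_Q/\bar K_Q^\circ$ — is formal from the homotopy exact sequence.
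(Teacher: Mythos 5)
Your proposal is correct and follows essentially the same route as the paper: both reduce the lemma to whether the preimage of $\bar{K}_Q^\circ=\prod_i\operatorname{SO}_{m_i}$ in $\operatorname{Spin}_n$ is connected, and both detect this factor by factor, with the multiplicity-$i$ block-diagonal embedding contributing a factor of $i$ mod $2$, so the obstruction is nonzero exactly when some odd $i$ has $m_i\geqslant 2$. The only difference is bookkeeping: the paper pulls back the extension class $s\in H^2(\bar{K},\Z/2\Z)$ and computes $\iota^{\circ*}(s)=\sum_i i\,s_{m_i}$ (using that a $\Z/2\Z$-extension of a product of connected groups splits iff it splits on each factor), while you compute the induced map on $\pi_1$ and invoke the covering-space/connecting-map criterion -- for connected groups these are the same datum, a homomorphism $\pi_1\to\Z/2\Z$, so your argument is a faithful translation of the paper's.
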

\begin{proof}
The group $\tilde{K}$ is a central extension of $K$ by $\Z/2\Z$. Such an extension is parameterized by a class in $H^2(K,\Z/2\Z)$,
denote it by $s$. Let $\iota$ denote the inclusion $K_Q\hookrightarrow K$.
The group $\tilde{K}_Q$ is the central extension of $K_Q$ by $\Z/2\Z$, which corresponds
to the class $\iota^*(s)\in H^2(K_Q,\Z/2\Z)$. Let $\iota^\circ$ be the composition
of this inclusion with $K_Q^\circ\hookrightarrow K_Q$. We have
$\tilde{K}_Q/\tilde{K}_Q^\circ\xrightarrow{\sim} K_Q/K_Q^\circ$ if and only if
$\iota^{\circ*}(s)\neq 0$, otherwise $\tilde{K}_Q/\tilde{K}_Q^\circ$ is a central extension of $(\Z/2\Z)^{\oplus \tau_1-1}$
by $\Z/2\Z$.

Let $s_{m_i}$ denote the class of  the Spin extension of $\operatorname{SO}_{m_i}$. We claim
that $\iota^{\circ*}(s)=\sum_{j \text{ odd}} s_j$. Indeed, the pullback of $s$ under the
obvious inclusion of $\operatorname{SO}_{m_i}$ is $s_{m_i}$. So $\iota^{\circ*}(s)=\sum is_{m_i}$,
which implies our claim because $H^2(?,\Z/2\Z)$ is 2-torsion.

We have $s_{m_i}=0$ if and only if $m_i=1$.
 Now if we have
two connected groups $H_1,H_2$ and a central extension
$$1\rightarrow \Z/2\Z\rightarrow H\rightarrow H_1\times H_2\rightarrow 1,$$
then this extension splits if and only if the induced extensions of both $H_1$ and $H_2$
split. So $\iota^{\circ*}(s)=0$ if and only if
$is_{m_i}=0$ for all $i$. We arrive at the conclusion of the lemma.
\end{proof}

Now we describe $\tilde{K}_Q/\tilde{K}_Q^\circ$ in the case when this group is different from
$K_Q/K_Q^\circ$. Note that $K_Q/K_Q^\circ$ embeds
into the subgroup $\operatorname{SO}_{1^n}$ of $\operatorname{SO}_n$ consisting
of all elements that are diagonalizable in a given orthogonal basis. Namely,
we view a generator of $\operatorname{O}_i/\operatorname{SO}_i$ as an
element $\operatorname{diag}(-1,1,\ldots,1)\in \operatorname{O}(i)$. Taking the direct
sum over all $i$ we get the required embedding.
We also have a similarly defined subgroup $\operatorname{O}_{1^n}\subset \operatorname{O}_n$.

Now we define the ``unit Pin group'' $\operatorname{Pin}_{1^n}$. By definition,
it has generators $e_1,\ldots,e_n,-1$, where $-1$ is central and the relations
are $e_i^2=1, e_ie_j=-e_je_i$ for $i\neq j$. So, in particular, every permutation
of $\{1,\ldots,n\}$ gives an automorphism of $\operatorname{Pin}_{1^n}$.

Inside, we have the unit Spin group
$\operatorname{Spin}_{1^n}$ generated by $-1$ and the elements $e_ie_j, i\neq j$.
Then the preimage of $\operatorname{SO}_{1^n}$ in $\operatorname{Spin}_n$
is $\operatorname{Spin}_{1^n}$.

Now we describe the image of $\tilde{K}_Q/\tilde{K}_Q^\circ$ in $\operatorname{Spin}_{1^n}$.
For $i=1,\ldots,\tau_1$, we define $E_i\in \operatorname{Pin}_{1^n}$
as the product  $e_{j+1}\ldots e_{j+i}$, where $j=i(i-1)/2$. So
$E_1=e_1, E_2=e_2e_3$, etc. Let $\Gamma$ denote the subgroup of $\operatorname{Pin}_{1^n}$
generated by $E_1,\ldots,E_{\tau_1}$. Then, with a suitable choice of an orthogonal
basis in $\C^n$,  $\tilde{K}_Q/\tilde{K}_Q^\circ$ embeds as
the intersection of  $\Gamma$ with $\operatorname{Spin}_{1^n}$.
Note that the following relations among the $E_i$'s hold:
\begin{equation}\label{eq:E_relations}
\begin{split}
& E_i E_j=(-1)^{ij}E_j E_i,\\
& E_i^2=1, \text{ if } i\equiv 0,1\text{ mod }4,\\
& E_i^2=-1, \text{ else}.
\end{split}
\end{equation}
The elements of the form $E_iE_j$, where $i,j$ are both odd and $i+j$ is divisible by
$4$ are central in $\Gamma$. The square of such an element is equal to $1$. The elements $E_k$
with even $k$ are central as well.
Note that
\begin{enumerate}
\item[(i)]  The subgroup, $\Gamma_0$, generated
by the elements $E_{4i}$ and $E_{4i+1}E_{4i+3}$ is a  central subgroup
isomorphic to $(\Z/2\Z)^{q}$, where $q=\lfloor \tau_1/4\rfloor+\lfloor (\tau_1-3)/4\rfloor$.
\item[(ii)] The subgroup, $\Gamma_1$, generated by the elements $E_{4i+1}$ is naturally
identified with $\operatorname{Pin}_{(1^r)}$, where $r=\lfloor (\tau_1-1)/4\rfloor$.
\item[(iii)] The subgroup, $\Gamma_2$, generated by the elements $E_{4i+2}$ is
isomorphic to the quotient of $(\Z/4\Z)^{t}$ by $(\Z/2\Z)^{t-1}$ generated
by the differences of the elements of order $2$ in each factor $\Z/4\Z$. Here $t=\lfloor (\tau_1-2)/4\rfloor$.
\end{enumerate}
Note that both (ii) and (iii) have distinguished central elements of order $2$.
Then $\Gamma$ is the quotient of  the product of (i), (ii) and (iii)
by the difference of these elements.
Define the element $-1$ in the quotient to be the image of either
of the two elements.

Then $\tilde{K}_Q/\tilde{K}_Q^\circ$ is identified with the intersection of $\Gamma$ with $\operatorname{Spin}_{1^n}$.

\subsubsection{Homomorphism $\Z_4\rightarrow \tilde{K}_Q/\tilde{K}_Q^\circ$}\label{SSS_Z4_homom_typeA}
As we have seen in Section \ref{SS_loc_sys_slice_restriction}, the image of $\bullet_{\dagger,\chi}$
is controlled by homomorphisms $\Z_4\rightarrow \tilde{K}_Q/\tilde{K}_Q^\circ$, one for each $\Orb_K^i$.
The goal of this section is to compute these
homomorphisms. We have one homomorphism $\Z_4\rightarrow \tilde{K}_Q/\tilde{K}_Q^\circ$ for each codimension $2$ $K$-orbit in
$\overline{\Orb}_K$, i.e.,
for each part of $\tau$ that occurs with multiplicity $1$ and is different from
$\tau_1$.

Note that the map $\Orb_K\mapsto \Orb$
defines a bijection between the $\operatorname{O}_n$-orbits in $\kf^\perp$
and the $\operatorname{SL}_n$-orbits in $\g^*$. The $\operatorname{O}_n$-orbit
with Jordan type $\tau$ splits into the union of two $\operatorname{SO}_n$-orbit if
and only if all parts in $\tau$ are even. The partition $\tau$ of $\Orb$ never satisfies this condition.
And the partition  $\tau^i$ of a codimension $4$ orbit $\Orb^i\subset \overline{\Orb}$
only satisfies this condition when $\tau=(3,2,1)$ (and $\tau^i=(2,2,2)$).
In this case $\Orb_K^i$ will denote any of the two orbits.

Our first step is to understand the embedding $\underline{\tilde{K}}^i\hookrightarrow \tilde{K}_Q$, where we write
$\underline{\tilde{K}}^i$ for the  reductive part of the stabilizer $(\tilde{K}^{i,\circ}_Q)_\chi$ of the point $\chi \in \check{Y}^{i\times}$.  
We start with constructing a point in $\check{Y}^{i\times}$, the locus of $\theta$-stable points in
$\check{X}^{i\times}$. We present $\chi$
as a Jordan matrix with blocks of sizes $(\tau_1,\ldots,\tau_k)$ coming in this order.
It is symmetric with respect to the following form. For each
$\ell\in \{1,\ldots,\tau_1\}$, consider the linear span $V_\ell$
of all Jordan vectors that belong to blocks of size $\ell$
so that $\C^n=\bigoplus_\ell V_\ell$ (as a based
space). On each
$V_\ell$ we consider the form given by the unit anti-diagonal matrix
so that every Jordan matrix with blocks of the same size is symmetric. Then
for the form on $\C^n$ we take the direct sum of the forms on $V_\ell$'s.

Set $\ell=\tau_{i+1}$. Set $\chi=\chi^i+e_0$, where $e_0$ is the matrix
\[
\begin{pmatrix}0&0&1\\0&0&0\\0&0&0\end{pmatrix} \in \mathfrak{gl}_3
\] 
embedded
into $\operatorname{End}(V_{\ell}^{\oplus 3})$ via $x\mapsto x\otimes \operatorname{id}_{V_{\ell}}$,
compare to Case 2.2 of the proof of Proposition \ref{prop:model_a2c2}.
%
As was mentioned in that proof,
the Jordan type of $\chi$ is $\tau$, and $\chi\in \check{X}^i$.
And $\chi$ is symmetric with respect to our form, hence $\theta$-stable.
So we are done.

Now we can describe the homomorphism $\underline{\tilde{K}}^i\rightarrow \tilde{K}_Q$. For this, we will
describe the corresponding embedding $\underline{Q}^i\hookrightarrow Q$, where
$\underline{Q}^i$ is the reductive part of the stabilizer of $\chi$ in $Q^i$.
It is convenient to replace $\operatorname{SL}_n$ with $\operatorname{GL}_n$
(to get to the original setting we just restrict the embedding to the subgroups of matrices with determinant $1$).

We have $Q=\prod_{i}\operatorname{GL}_{m_i}\times \GL_{m_{\ell-1}}\times \GL_1
\times \GL_{m_{\ell+1}}$, where in the first product $i$ runs over
$\{1,\ldots,\tau_1\}\setminus \{\ell-1,\ell,\ell+1\}$. The involution preserves
each factor $\GL_{m_i}$,  and the restriction comes from the orthogonal form on $\C^{m_i}$ from Remark \ref{Rem:outer_involutions}. Next, we have
$Q^i=\prod_{i}\operatorname{GL}_{m_i}\times \GL_{m_{\ell-1}-1}\times \GL_3
\times \GL_{m_{\ell+1}-1}$, again with an involution that preserves each factor.
Then $\underline{Q}^i=\prod_{i}\operatorname{GL}_{m_i}\times \GL_{m_{\ell-1}-1}\times \GL_1^2
\times \GL_{m_{\ell+1}-1}$. Here $\GL_1^2$ is embedded into $\GL_3$ via
$(t_1,t_2)\mapsto \operatorname{diag}(t_1,t_2,t_1)$. On the common factor $\prod_{i}\operatorname{GL}_{m_i}$
the embedding $\underline{Q}^i\hookrightarrow Q$ is the identity. Note also that the
embedding $\underline{Q}^i\hookrightarrow Q$
\begin{itemize}
\item[(i)] is compatible with the involution,
\item[(ii)] and intertwines the restrictions of the tautological representation of
$\operatorname{GL}_n$.
\end{itemize}

Thanks to (ii), $\operatorname{GL}_{m_{\ell-1}-1}$ embeds into $\operatorname{GL}_{m_{\ell-1}}$
as the stabilizer of a vector and a complimentary subspace. Thanks to (i), this vector
is non-isotropic. This determines the embedding up to an orthogonal automorphism.
The same consideration applies to $\operatorname{GL}_{m_{\ell+1}-1}
\hookrightarrow\operatorname{GL}_{m_{\ell+1}}$. Similar considerations show that, in the case when $\ell>1$,
the embedding of $\operatorname{GL}_1^2$ into $\GL_{m_{\ell-1}}\times \GL_1
\times \GL_{m_{\ell+1}}$ is as follows:
\begin{equation}\label{eq:embed_GL}
	(t_1,t_2)\mapsto (\operatorname{diag}(t_1,1,\ldots,1), t_2, \operatorname{diag}(t_1,1,\ldots,1)),
\end{equation}
where the first entries in the first and the third components correspond to the non-isotropic
vectors in $\C^{m_{\ell-1}},\C^{m_{\ell+1}}$, when we write diagonal matrices, we use an orthogonal basis in each $\C^{m_i}$. When $\ell=1$, we omit the first component in
the image.

Once the embedding $\underline{Q}^i\hookrightarrow Q$ is known, we get the embedding
$\underline{K}^i\hookrightarrow K_Q$, where $\bar{\bullet}$ indicates that we are considering the subgroups in $\operatorname{SO}_n$ (and not in
$\operatorname{Spin}_n$) by passing to the fixed points under the involution.
And $\underline{\tilde{K}}^i$ is obtained from $\underline{K}^i$ by taking the central extension pulled back from $\tilde{K}_Q\twoheadrightarrow K_Q$.

Now we proceed to describing the homomorphism $\Z_4\rightarrow \tilde{K}_Q/\tilde{K}_Q^\circ$
associated to $\ell$. Recall, Section \ref{SS_loc_sys_slice_restriction}, that it is computed in the following steps:

\begin{itemize}
\item[(I)] The choice of $\ell<\tau_1$ such that $\ell$ occurs in $\tau$
with multiplicity $1$ (equivalently, of a codimension $2$ $K$-orbit)
corresponds to a direct summand $\mathfrak{so}_3$
of $\kf^i_Q$. We consider the corresponding homomorphism $\operatorname{SL}_2\rightarrow \tilde{K}^i_Q$.
\item[(II)] We pass to the homomorphism $\Z_4\rightarrow \underline{\tilde{K}}^i$ between the reductive
parts of the stabilizers.
\item[(III)] Then we compose $\Z_4\rightarrow \underline{\tilde{K}}^i$ with the homomorphism
$\underline{\tilde{K}}^i\rightarrow \tilde{K}_Q/\tilde{K}_Q^\circ$ to get the required homomorphism
$\Z_4\rightarrow \tilde{K}_Q/\tilde{K}_Q^\circ$.
\end{itemize}

The cases of even $\ell$ and odd $\ell$ behave differently. We start with even $\ell$.
As we have seen in the proof of Lemma \ref{Lem:comp_group_Spin1}, in this case, the pullback of the central extension
from $\operatorname{SO}_n$ to $\operatorname{SO}_3$ is trivial. Equivalently,
the homomorphism $\operatorname{SL}_2\rightarrow \tilde{K}^i_Q$ factors through $\operatorname{SO}_3$.
The resulting homomorphism $\Z_4\rightarrow \tilde{K}_Q/\tilde{K}_Q^\circ$ is not injective and
this is the only thing that we need to know about it: by Lemma \ref{Lem:sl3_so3_obstructive}
the quantization of any $K$-equivariant local system on $\Orb_K$ extends to $\Orb_K^i$.

Now we consider the case when $\ell$ is odd. Here we do get an injective homomorphism
$\operatorname{SL}_2\rightarrow \tilde{K}^i_Q$. The stabilizer in $\operatorname{SO}_3$ of the
point $\chi$ as above is generated by $\operatorname{diag}(-1,1,-1)$. By \eqref{eq:embed_GL}, this element goes to
$E_{\ell-1}E_{\ell+1}\in \tilde{K}_Q/\tilde{K}_Q^\circ$ (up to $\pm 1$) when $\ell>1$ and to
$E_2$ when $\ell=1$.  In particular, the images of the homomorphisms
$\Z_4\rightarrow \tilde{K}_Q/\tilde{K}_Q^\circ$ all land in the abelian normal subgroup
$\Gamma_0\Gamma_2$.

\subsubsection{The main result} \label{SS:double_SLR}
Note that each $E_i$ is actually defined up to a sign. We can change the signs any way we want.
In particular, we arrive at the following characterization of the irreducible representations
of $\tilde{K}_Q/\tilde{K}_Q^\circ$ that lie in the image of $\bullet_{\dagger,\chi}$. Let us start with the special case of canonical quantizations.

\begin{Prop}\label{Prop:spin_HC_classif_canon}
Assume the quantization parameter $\lambda$ is equal to $0$.
For a suitable choice of the generators $E_i$ for even $i$, the following two conditions are
equivalent:
\begin{enumerate}
\item a representation
$V$ of $\tilde{K}_Q/\tilde{K}_Q^\circ$ lies in the image of $\bullet_{\dagger,\chi}$,
\item
for all odd $\ell<\tau_1$, the element $E_{\ell-1}E_{\ell+1}\in \tilde{K}_Q/\tilde{K}_Q^\circ$
(or $E_2$ for $\ell=1$) acts on $V$ without eigenvalue $-\sqrt{-1}$.
\end{enumerate}
\end{Prop}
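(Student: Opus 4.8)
\textbf{Proof plan for Proposition \ref{Prop:spin_HC_classif_canon}.}

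The plan is to combine the general image-description result (Proposition \ref{Lem:image_description11}) with the explicit analysis of the four-dimensional slices carried out in Section \ref{S_classif_thm}. First I would recall that, by Remark \ref{Rem:outer_involutions} combined with the structure of $\tau$, there are no codimension-$4$ orbits whose slice is of type $\chi$ in this situation, and the slices of type $a_2$ that occur correspond exactly to the parts $\ell<\tau_1$ of multiplicity $1$; moreover, by Section \ref{SSS_Z4_homom_typeA}, the slice $\check X^i$ at $\Orb_K^i$ carries the outer involution of $\mathfrak{sl}_3$ precisely when $\ell$ is odd (when $\ell$ is even the relevant $\operatorname{SL}_2$ acts through $\operatorname{SO}_3$, the slice is unobstructive by Lemma \ref{Lem:sl3_so3_obstructive}(2), and it imposes no condition). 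So the only constraints on $V$ come from the odd parts $\ell<\tau_1$, and we must determine, for each such $\ell$, which $\tilde K_Q$-representations $V$ restrict to a strongly $\check\A^i_\lambda$-quantizable twisted local system on $(\check X^{i\times})^\theta$.

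Next I would invoke Lemma \ref{Lem:local_sys_restr1}: the restriction $\check{\mathcal E}^i$ of $\mathcal E_V$ to $\check Y^{i\times}$ corresponds to the pullback of the projective representation $V$ along the homomorphism $\Z_4\to \tilde K_Q/\tilde K_Q^\circ$ of \eqref{eq:hom_Z4}, which by Section \ref{SSS_Z4_homom_typeA} sends a generator of $\Z_4$ to (a fixed lift of) $E_{\ell-1}E_{\ell+1}$ (or $E_2$ when $\ell=1$) — this element being central of order $4$ in $\Gamma_0\Gamma_2$, so its image in $\tilde K_Q/\tilde K_Q^\circ$ has order dividing $4$, and by the relations \eqref{eq:E_relations} ($(\ell-1)+(\ell+1)=2\ell\equiv 2\bmod 4$ since $\ell$ is odd) it in fact has order $4$. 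Then by Lemma \ref{Lem:param_corresp_A}, since $\lambda=0$ we have $\lambda^i = \lambda_i-\lambda_{i+1}=0$, so $\check\A^i_\lambda = \mathscr{D}^{3/2}(\mathbb P^2)$, which is exactly the canonical quantization case. By Lemma \ref{Lem:sl3_so3_obstructive}, among the four irreducible local systems on $(\check X^{i\times})^\theta \cong (\C^2\setminus\{0\})/\Z_4$, the two $\operatorname{SO}_3$-equivariant ones ($\Z_4$ acting through $\Z_2$, i.e. with the generator acting by $\pm 1$) are strongly $\A$-quantizable, and of the remaining two (generator acting by $\pm\sqrt{-1}$) exactly one is $\A$-quantizable, and it is strongly $\A$-quantizable since $\lambda=3/2\in\{1/2,3/2,5/2\}$. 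So for the generator of $\Z_4$ acting by a scalar $\varepsilon\in\{1,-1,\sqrt{-1},-\sqrt{-1}\}$, three of the four values give strongly quantizable restrictions and one does not; the obstructed value is one of $\pm\sqrt{-1}$, and which one depends on a sign that is only pinned down up to replacing $E_{\ell-1}E_{\ell+1}$ by its inverse, i.e. up to a choice of the generators $E_i$ for even $i$. Fixing that choice (independently for each even $i$, which is legitimate since the $E_i$ are defined only up to sign), the obstructed scalar becomes $-\sqrt{-1}$ uniformly.

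Finally I would assemble the pieces. An irreducible $V\in\operatorname{Rep}(\tilde K_Q/\tilde K_Q^\circ)=\operatorname{Rep}(K_Q,\lambda|_{\kf_Q}-\kappa_Q)$ (the identification using that $(\kf_Q^*)^{\tilde K_Q}=0$ in the presence of a codimension-$4$ orbit) lies in $\operatorname{Im}(\bullet_{\dagger,\chi})$ if and only if, for every codimension-$2$ orbit $\Orb_K^i$, condition (*) of Lemma \ref{Lem:coh_extension2} holds for the corresponding slice; by Proposition \ref{Lem:image_description11}(2) and the uniqueness of quantizations (Theorem \ref{thm:quan_lag}, applicable because the $\cm$-action on $\mathcal E_V$ restricted to $\check{\mathcal E}^i$ is strong by Lemmas \ref{Lem:irred_strongness_HC} and Proposition \ref{Prop:slice_strongness}(3)), this reduces exactly to the statement that $\check{\mathcal E}^i$ is strongly $\check\A^i_\lambda$-quantizable for each $i$. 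For the even parts this is automatic (unobstructive slice); for the odd parts $\ell<\tau_1$ it holds iff the generator of the pullback $\Z_4$ — which acts on $V$ by the operator $E_{\ell-1}E_{\ell+1}$ (resp. $E_2$) — acts by a scalar $\neq -\sqrt{-1}$ on the relevant eigenspace, equivalently iff $E_{\ell-1}E_{\ell+1}$ (resp. $E_2$) has no eigenvalue $-\sqrt{-1}$ on $V$. Conversely, Proposition \ref{Lem:image_description11}(1) together with the case analysis shows the condition is necessary. The main obstacle I anticipate is the bookkeeping in the second paragraph: carefully matching the scalar by which the distinguished order-$4$ element acts against the classification in Lemma \ref{Lem:sl3_so3_obstructive}(4) — in particular verifying that, after the sign normalization of the even-index generators, the unique obstructed scalar is $-\sqrt{-1}$ simultaneously for all odd $\ell$, rather than $+\sqrt{-1}$ for some of them — and confirming that for $\lambda=0$ one always lands in the strongly-quantizable range $\lambda\in\{1/2,3/2,5/2\}$ so that no extra (weakly-but-not-strongly quantizable) subtleties arise.
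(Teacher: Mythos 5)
Your proposal is correct and follows essentially the same route as the paper's own (much terser) proof: reduce via Proposition \ref{Lem:image_description11} to strong quantizability of the restricted twisted local systems on the $a_2$-slices, use Lemma \ref{Lem:sl3_so3_obstructive} (with $\lambda^i=0$, i.e.\ twist $3/2$, by Lemma \ref{Lem:param_corresp_A}) to see that exactly one of the four irreducible systems on $(\C^2\setminus\{0\})/\Z_4$ fails to be strongly quantizable, and normalize the signs of the even-index generators $E_i$ so that the obstructed eigenvalue is uniformly $-\sqrt{-1}$. The only quibbles are cosmetic: the even-$\ell$ slices also carry the outer involution (they impose no condition because the restricted systems are forced to be $\operatorname{SO}_3$-equivariant, which is the mechanism you in fact cite via Lemma \ref{Lem:sl3_so3_obstructive}(2)), and the sign choices for the $E_i$ are made inductively in $\ell$ since consecutive odd parts share an even generator — both points affect wording, not the argument.
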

\begin{proof}
By Lemma \ref{Lem:sl3_so3_obstructive}, all irreducible $\operatorname{SL}_2$-equivariant
local systems $\check{\mathcal{E}}^i$ but one (where the action of $\operatorname{SL}_2$ is faithful)
there is a HC $(\check{\A}^i_\lambda,\operatorname{SL}_2)$-module $M^i$ with
$\gr M^i=\Gamma(\check{\mathcal{E}}^i)$. The remaining local system does not correspond to any HC module.

By changing the generator of $\Z_4$, we can assume that the
remaining local system corresponds to the eigenvalue $-\sqrt{-1}$. We can further assume that
the generator maps to $E_{i-1}E_{i+1}$ (or $E_2$ for $i=1$) -- here we may change their signs.  Now the claim of this proposition
follows from Proposition \ref{Lem:image_description11}.
\end{proof}

We can also describe the image of $\bullet_{\dagger,\chi}$ for a general quantization parameter.
Let $\lambda^i$ be the quantization parameter for $\check{\A}^i_\lambda$, by Lemma
\ref{Lem:param_corresp_A}, we have $\lambda^i=\lambda_i-\lambda_{i+1}$ in the notation
of Section \ref{SS_type_A_quant}.

\begin{Thm}\label{Thm:spin_HC_classif_general}
Fix a choice of the generators $E_i$ for even  $i$ as in Proposition \ref{Prop:spin_HC_classif_canon}.
Then $V\in \operatorname{Im}\bullet_{\dagger,\chi}$ if and only if for all odd $i$ we have the following:
\begin{itemize}
\item if $\lambda^i\not\in \Z$, then the corresponding generator ($E_{i-1}E_{i+1}$ for $i>1$, or
$E_2$ for $i=1$) acts on $V$ with eigenvalues $\pm 1$,
\item if  $\lambda^i\in \Z$, then the corresponding generator acts on $V$ without eigenvalue
$\sqrt{-1}^{2\lambda^i-1}$.
\end{itemize}
\end{Thm}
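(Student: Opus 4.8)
The plan is to follow the same strategy as in the proof of Proposition~\ref{Prop:spin_HC_classif_canon}, but now keeping track of how the quantization parameter $\lambda^i$ at each codimension-$2$ orbit affects which local systems extend. The key input is Proposition~\ref{Lem:image_description11}: $V$ lies in $\operatorname{Im}\bullet_{\dagger,\chi}$ if and only if for every codimension-$2$ orbit $\Orb_K^i$ the restricted twisted local system $\check{\mathcal{E}}^i$ on $(\check{X}^{i\times})^\theta$ is $\check{\A}^i_\lambda$-quantizable (and, for the converse direction, strongly so). Since the even parts $\ell$ of $\tau$ give slices on which the homomorphism $\operatorname{SL}_2\to\tilde{K}^i_Q$ factors through $\operatorname{SO}_3$ (Section~\ref{SSS_Z4_homom_typeA}), Lemma~\ref{Lem:sl3_so3_obstructive}(2) shows those slices impose no condition; so only odd $i<\tau_1$ matter, and I would state this reduction first.

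Next, for each odd $i$ I would invoke Lemma~\ref{Lem:sl3_so3_obstructive}(3)--(4) applied to the slice quantization $\check{\A}^i_\lambda = \mathscr{D}^{\lambda^i+3/2}(\mathbb{P}^2)$, where $\lambda^i = \lambda_i-\lambda_{i+1}$ by Lemma~\ref{Lem:param_corresp_A}. The dictionary is: the category of ordinary local systems on $(\check{X}^i)^\theta\setminus\{0\}$ is $\operatorname{Rep}(\Z_4)$, and via the homomorphism $\Z_4\to\tilde{K}_Q/\tilde{K}_Q^\circ$ of Section~\ref{SS_loc_sys_slice_restriction} (which for odd $i$ sends the generator to $E_{i-1}E_{i+1}$, or $E_2$ if $i=1$, up to sign), the restriction $\check{\mathcal{E}}^i$ of $\mathcal{E}_V$ is the pullback of $V$ along this map (Lemma~\ref{Lem:local_sys_restr1}). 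Now Lemma~\ref{Lem:sl3_so3_obstructive}(3) says: if $\lambda^i\notin\Z+\tfrac12$, i.e.\ (after the shift $\lambda=\lambda^i+3/2$) if $\lambda^i\notin\Z$, then only the $\operatorname{SO}_3$-equivariant local systems — those on which $-1\in\Z_4$ acts trivially, equivalently on which the generator acts with eigenvalues $\pm1$ — are quantizable, and they are all strongly quantizable by part~(2). If $\lambda^i\in\Z$ (the case $\lambda = \lambda^i+3/2 \in \Z+\tfrac12$), then exactly one of the two non-$\operatorname{SO}_3$-equivariant local systems is quantizable, the choice depending on the parity of $\lambda = \lambda^i + 3/2$; translating the parity statement of Lemma~\ref{Lem:sl3_so3_obstructive}(4) into the $\Z_4$-eigenvalue language gives that the forbidden eigenvalue is $\sqrt{-1}^{\,2\lambda^i-1}$ (one of $\pm\sqrt{-1}$), and we normalise the signs of the even-index generators $E_i$ exactly as in Proposition~\ref{Prop:spin_HC_classif_canon} so that this is the one that does not lift.

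Then I would assemble these local conditions: by Lemma~\ref{Lem:graded_loc_sys} and the remarks after Definition~\ref{defi:loc_sys_quantized} it suffices to test irreducible $V$, and by Proposition~\ref{Lem:image_description11}(1) the necessity of each condition follows from restricting the HC module $M=V^{\dagger,\chi}$ to the slice, while Proposition~\ref{Lem:image_description11}(2) gives sufficiency once we know each $\check{\mathcal{E}}^i$ is \emph{strongly} quantizable. The one subtlety is that in the $\lambda^i\in\Z$ case Lemma~\ref{Lem:sl3_so3_obstructive}(4) only guarantees strong quantizability when $\lambda = \lambda^i+3/2\in\{1/2,3/2,5/2\}$; I would handle the remaining values of $\lambda^i$ by the $\varepsilon$-twist (outer automorphism) argument from the proof of Lemma~\ref{Lem:sl3_so3_obstructive}, which reduces a general $\lambda^i$ to one with $\lambda = \lambda^i + 3/2 \leqslant 3/2$, together with the fact (Lemma~\ref{Lem:irred_strongness_HC}) that the $\cm$-action on an irreducible twisted local system on $\Orb_K$ is automatically strong, so the hypothesis of Proposition~\ref{Lem:image_description11}(2) propagates. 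Consistency with Proposition~\ref{Prop:spin_HC_classif_canon} is the check $\lambda=0$: then each $\lambda^i=0\in\Z$ and $\sqrt{-1}^{\,2\cdot0-1}=\sqrt{-1}^{-1}=-\sqrt{-1}$, recovering exactly condition~(2) there.

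The main obstacle I anticipate is bookkeeping the sign/parity normalisations uniformly across all odd $i$ simultaneously: each $E_{i-1}E_{i+1}$ is only defined up to sign, and we must choose these signs (for the even-index generators $E_{i-1},E_{i+1}$) so that \emph{all} the forbidden-eigenvalue statements hold with the single clean formula $\sqrt{-1}^{\,2\lambda^i-1}$; this requires checking that the choices made for different odd $i$ do not conflict, which follows because the generators $E_k$ with even $k$ are central and the homomorphisms $\Z_4\to\tilde{K}_Q/\tilde{K}_Q^\circ$ for distinct odd $i$ land in the abelian subgroup $\Gamma_0\Gamma_2$ generated by independent such $E_k$'s (Section~\ref{SSS_Z4_homom_typeA}). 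Everything else is a direct translation of Lemma~\ref{Lem:sl3_so3_obstructive} through the Riemann--Hilbert dictionary of Proposition~\ref{Prop:regular_local_systems} and the monodromy computation of Section~\ref{SS_loc_sys_slice_restriction}.
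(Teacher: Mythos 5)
Your overall frame (reduce to odd $i$, restrict to slices, translate Lemma \ref{Lem:sl3_so3_obstructive} through the monodromy map $\Z_4\to\tilde{K}_Q/\tilde{K}_Q^\circ$, and feed the result into Proposition \ref{Lem:image_description11}) matches the paper, and your treatment of the non-integral case and of necessity (via part (1) of Proposition \ref{Lem:image_description11}) is fine. But there is a genuine gap in the sufficiency direction for integral $\lambda^i$ with $|\lambda^i|\geqslant 2$, and your proposed fix does not close it. Lemma \ref{Lem:sl3_so3_obstructive}(4) is an ``if and only if'': the quantizable non-$\operatorname{SO}_3$-equivariant local system on the slice is \emph{strongly} quantizable only for $\lambda=\lambda^i+3/2\in\{1/2,3/2,5/2\}$, i.e.\ $\lambda^i\in\{-1,0,1\}$. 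The $\varepsilon$-twist only implements $\lambda^i\mapsto-\lambda^i$; it cannot shrink $|\lambda^i|$, so for, say, $\lambda^i=5$ you land at $\lambda=-7/2$, which is still outside $\{1/2,3/2,5/2\}$, and strong quantizability on the slice genuinely fails. Moreover, the ``strongness of the $\cm$-action'' from Lemma \ref{Lem:irred_strongness_HC} is a different notion from strong quantizability in Definition \ref{defi:loc_sys_quantized}(ii): it is an input to uniqueness of the quantization (Theorem \ref{thm:quan_lag}) and is already used inside the proof of Proposition \ref{Lem:image_description11}; it is not a substitute for the hypothesis $\gr M^i\cong\Gamma(\check{\mathcal{E}}^i)$ of part (2). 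So in the range $|\lambda^i|\geqslant 2$ your argument neither proves nor disproves membership of $V$ in $\operatorname{Im}\bullet_{\dagger,\chi}$: part (1) gives only a necessary condition, part (2) is not applicable, and the ``forbidden eigenvalue'' formula is left unverified there.

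The missing idea is the translation (tensoring) argument that the paper uses to reduce all integral $\lambda^i$ to $0$ \emph{before} invoking the slice analysis. One tensors with the bimodules $\A_{\lambda'}(\mathcal{L}')=\Gamma(\mathcal{L}'\otimes\mathscr{D}^{\lambda'}_{G/P})$ for suitable line bundles $\mathcal{L}'$ on $G/P$; since these bimodules are invertible over $\Orb$, they induce equivalences $\overline{\HC}(\A_{\lambda},\theta)\xrightarrow{\sim}\overline{\HC}(\A_{\lambda'},\theta)\xleftarrow{\sim}\overline{\HC}(\A_{\tilde\lambda},\theta)$, where $\tilde\lambda$ replaces the integral entries of $\lambda$ by $0$. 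By the compatibility of $\bullet_{\dagger,\chi}$ with tensoring by HC bimodules (Lemma \ref{Lem:restriction_compatibilty}, together with the computation $\A_{\lambda'}(\mathcal{L}')_{\dagger,\chi^i}=\check{\A}^i_\lambda(\check{\mathcal{L}}'^i)$ from \cite{Perv}), the effect on the slice data is a twist of the $\Z_4$-representation by $\psi^{d_i}$, where $\mathcal{L}'|_{\mathbb{P}^2}=\mathcal{O}(d_i)$ and $\psi$ is the nontrivial automorphism of $\Z_4$; this is exactly how the shift $\lambda^i\mapsto 0$ is traded for the eigenvalue shift producing $\sqrt{-1}^{\,2\lambda^i-1}$. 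Once $\lambda=\tilde\lambda$, all relevant slice local systems are strongly quantizable (Lemma \ref{Lem:sl3_so3_obstructive}(2),(4) at $\lambda=3/2$, plus the non-integral case), and Proposition \ref{Lem:image_description11} closes both directions as in Proposition \ref{Prop:spin_HC_classif_canon}. Your sign-bookkeeping remarks about the central elements $E_{i-1}E_{i+1}$ are fine, but without this reduction step the theorem as stated is not proved.
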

\begin{proof}
First, we are going to reduce to the case when all integer $\lambda^i$ (for odd $i$) are zero.
The proof of Lemma \ref{Lem:param_corresp_A} shows that there is a line bundle $\mathcal{L}$
on $T^*(G/P)$ such that the restriction of $\mathcal{L}$ to the parabolic Springer fiber $\mathbb{P}^2$
of $\chi^i$ is $\mathcal{O}(-\lambda^i)$ if $\lambda^i$ is an integer. Let $\tilde{\lambda}$ denote the parameter obtained from $\lambda$ by replacing the integral entries by $0$.

For every line bundle
$\mathcal{L}'$ on $G/P$ and every quantization parameter $\lambda'$, consider
$\mathcal{L}'\otimes D^{\lambda'}_{G/P}$, this is a $D^{\lambda'+c_1(\mathcal{L}')}_{G/P}$-$D^{\lambda'}_{G/P}$-bimodule. Let $\A_{\lambda'}(\mathcal{L}')$
denote the global sections of this bimodule. Note that since $\mathcal{L}'\otimes D^{\lambda'}_{G/P}$ is an invertible
bimodule, and $T^*(G/P)\rightarrow X$ is an isomorphism over $\Orb$, the functor of tensoring
with $\A_{\lambda'}(\mathcal{L}')$ is an equivalence $\overline{\HC}(\A_{\lambda'},\theta)\xrightarrow{\sim} \overline{\HC}(\A_{\lambda'+c_1(\mathcal{L}')},\theta)$.

Let $\pi$ denote the projection $T^*(G/P)\twoheadrightarrow G/P$
and $\check{\mathcal{L}}'^i$ denote the restriction of $\mathcal{L}'$
to the copy of $\mathbb{P}^2\subset G/P$ corresponding to $i$.
Suppose $\mathcal{L}'$ is sufficiently ample, in particular,
$H^1(T^*(G/P),\pi^*\mathcal{L}')=0$. Then by \cite[Proposition 3.8]{Perv},
we have $\A_{\lambda'}(\mathcal{L}')_{\dagger,\chi^i}=\check{\A}^i_\lambda(\check{\mathcal{L}}'^i)$,
here we use the restriction functor for HC $\U$-bimodules.
Thanks to Lemma \ref{Lem:restriction_compatibilty}, for any $M\in \HC(\A_{\lambda},\theta)$,
we have a $\U$-linear isomorphism
\begin{equation}\label{eq:restr_iso}
\left(\A_{\lambda'}(\mathcal{L}')\otimes_{\U}M\right)_{\dagger,\chi^i}\cong
\check{\A}^i_\lambda(\check{\mathcal{L}}'^i)\otimes_{\U_{\dagger,\chi^i}}M_{\dagger,\chi^i}.
\end{equation}
Note that the action of $\U$ on $M$ factors through $\A_{\lambda}$, hence the action of
$\U_{\dagger,\chi^i}$ on $M_{\dagger,\chi^i}$ factors through $\check{\A}^i_\lambda$.
The latter is a simple algebra because its period is integral (and hence the twist for
differential operators on $\mathbb{P}^2$ is in $\frac{1}{2}+\Z$). So in the right hand side
of (\ref{eq:restr_iso}) we have $\check{\A}^i_\lambda(\check{\mathcal{L}}'^i)\otimes_{\check{\A}^i_\lambda}M_{\dagger,\chi^i}$.

Let $\psi$ denote the nontrivial automorphism of $\Z_4$ (so that $\psi^2=1$)
and let $\zeta_i$ denote the homomorphism $\Z_4\rightarrow \tilde{K}_Q/\tilde{K}_Q^\circ$
corresponding to the index $i$.
If $\mathcal{L}'^i=\mathcal{O}(d_i)$, then, on the level of representations of $\Z_4$, tensoring with $\check{\A}^i_\lambda(\check{\mathcal{L}}'^i)$ results in twisting with $\psi^{d_i}$. We can choose $\lambda'\in (\mathfrak{l}/[\mathfrak{l},\mathfrak{l}])^*$ such that $\lambda'-\lambda, \lambda'-\tilde{\lambda}$
are sufficiently dominant. We get equivalences
$$\overline{\HC}(\A_{\lambda},\theta)\xrightarrow{\sim} \overline{\HC}(\A_{\lambda'},\theta)
\xleftarrow{\sim} \overline{\HC}(\A_{\tilde{\lambda}},\theta).$$
The resulting self-equivalence $\eta,\tilde{\eta}$ of $\operatorname{Rep}(\tilde{K}_Q/\tilde{K}_Q^\circ)$ satisfy the following properties
for each $i$:
\begin{equation}\label{eq:twists_reps}
\zeta_i\circ\eta=\psi_*^{\lambda_i'-\lambda_i}\circ \zeta_i,
\zeta_i\circ\tilde{\eta}=\psi_*^{\lambda_i'-\tilde{\lambda}_i}\circ \zeta_i.
\end{equation}
Here the subscript $*$ indicates the twist by the corresponding automorphism.
Equations (\ref{eq:twists_reps}) reduce the claim of the theorem to the case of $\lambda=\tilde{\lambda}$.
This is what we assume until the rest of the proof.

Recall, Lemma \ref{Lem:sl3_so3_obstructive}, that if $\lambda_i$ is not an integer, then the image of $\overline{\HC}(\check{\A}^i_\lambda,\theta)$  in $\operatorname{Rep}(\Z_4)$ is $\operatorname{Rep}(\Z_2)$. Moreover, every $\operatorname{SO}_3$-equivariant local system
is strongly $\check{\A}^i_\lambda$-quantizable, by Lemma \ref{Lem:sl3_so3_obstructive}. We now apply Proposition \ref{Lem:image_description11} to finish the proof of the theorem.
\end{proof}

\subsection{Exceptional types}\label{SS_exceptional}

Throughout this section, $\g$ is a simple complex Lie algebra of exceptional type, $G$ is the corresponding simply connected simple group, $\Orb$ is a nilpotent $G$-orbit in $\g$ whose closure contains an orbit $\Orb'$ of codimension $4$, such that $\Orb'$ is open in $\overline{\Orb} \backslash \Orb$ and the singularity of $\overline{\Orb}$ along $\Orb'$ is of type $a_2$, $a_2 / S_2$ or $2a_2$. Note that in all the cases discussed here, $\Orb$ is always birationally rigid in the sense of \cite[Definition 1.2]{orbit}, except for the two orbits of type $A_2 + A_1$ and $A_2+2A_1$ in $\mathfrak{e}_6$ (for the latter one see Section \ref{SS:orbit_a} below).

\begin{Lem}\label{lem:only_a2}
	Under the assumption above, let $X = \spec \C[\Orb]$. Then $X$ contains a unique codimension $4$ symplectic leaf $X'$ with $a_2$ singularity and has no codimension $2$ leaves. Moreover, in this case $X'$ is the only  codimension $4$ leaf, except when $\g = \mathfrak{e}_7$ and $\Orb$ is of type $A_4+A_1$. For this orbit, there is another codimension $4$ leaf with $a_2/S_2$-singularity.
\end{Lem}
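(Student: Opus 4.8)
The statement is really a bookkeeping assertion about the symplectic leaves of $X = \spec\C[\Orb]$ and the transverse singularities attached to them, so the plan is to reduce everything to the known classification of these data and then inspect a finite list. First I would recall that the symplectic leaves of $X$ are in bijection with the $G$-orbits in $\overline{\Orb}$ passing through the open locus where $\C[\Orb]$ agrees with $\C[\overline{\Orb}]$ after normalization; more precisely, passing to $X=\spec\C[\Orb]$ amounts to normalizing $\overline{\Orb}$ and taking a connected component, and the leaves of codimension $2k$ correspond (after this normalization) to the $G$-orbits $\Orb''\subset\overline{\Orb}$ of codimension $2k$ whose transverse slice in $\overline{\Orb}$, read off from \cite{FJLS1} and \cite{FJLS2}, survives the normalization. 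The transverse slice type to each such orbit is exactly the "$a_2$, $c_2$, $m$-type, etc." data tabulated in \cite{FJLS1,FJLS2}. So the content of the lemma is: for each of the finitely many orbits $\Orb$ in the hypothesis (those in exceptional types whose boundary has a codimension $4$ orbit open in $\partial\Orb$ with slice type $a_2$, $a_2/S_2$ or $2a_2$), the remaining boundary structure of $X$ is as claimed.

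Second, I would address the absence of codimension $2$ leaves. The boundary $\partial\Orb$ has codimension $\geqslant 4$ in $\overline{\Orb}$ by our standing assumption \eqref{eq:codim_condition}, and passing to $X=\spec\C[\Orb]$ only removes a subvariety of codimension $\geqslant 4$ (the normalization is an isomorphism in codimension $1$, and a codimension $2$ leaf of $X$ would have to come from a codimension $2$ orbit in $\overline{\Orb}$, of which there are none). Hence $X^{\mathrm{sing}}$ has codimension $\geqslant 4$, so $X$ has no codimension $2$ symplectic leaf; this is immediate and uses only \eqref{eq:codim_condition}.

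Third, and this is the main point, I would go through the list of exceptional orbits $\Orb$ admitting a codimension $4$ boundary orbit $\Orb'$ with slice type $a_2$, $a_2/S_2$, or $2a_2$, which by \cite{FJLS1,FJLS2} is a short explicit list (the relevant orbits are, e.g., $A_2+A_1$ and $A_2+2A_1$ in $\mathfrak{e}_6$; $A_4+A_1$ and a few others in $\mathfrak{e}_7$; $A_4+A_1$, $A_4+A_3$, $D_4(a_1)+A_1$, $A_3+A_2+A_1$, $A_3+2A_1$ and a handful more in $\mathfrak{e}_8$), and for each one read off from the tables in \cite{FJLS1} the full poset of boundary orbits of codimension $\leqslant 4$ together with their transverse slice types. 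In every case on this list one checks that (i) there is a unique codimension $4$ orbit $\Orb'$ whose slice is of type $a_2$ (resp. $a_2/S_2$, $2a_2$), and (ii) any other codimension $4$ orbit in $\overline{\Orb}$ has a slice of a different type --- except for the single orbit $A_4+A_1$ in $\mathfrak{e}_7$, where the boundary additionally contains the orbit $A_3+A_2+A_1$ of codimension $4$ with slice of type $a_2/S_2$, exactly as asserted (this is precisely the coincidence already exploited in Lemma \ref{Lem:character_integrality}). After normalization these slices are unchanged (the slices $a_2$, $a_2/S_2$, $2a_2$ are all already normal, or we pass to the normalization and a connected component, which is the convention adopted at the start of Section \ref{S_classif_thm}), so the leaves of $X$ of codimension $4$ are exactly the claimed ones.

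\textbf{Expected obstacle.} The only real work is step three: it is a finite but not entirely short case check through the Hasse diagrams of \cite{FJLS1,FJLS2}, and one must be careful that (a) the passage from $\overline{\Orb}\subset\g^*$ to its normalization $X$ does not create or destroy leaves in codimension $4$ --- it cannot, since normalization is finite and birational and the orbits are the strata --- and (b) one is using the correct notion of "open in $\overline{\Orb}\setminus\Orb$'' so that $\Orb'$ really is a leaf of $X$ and not buried in a larger-codimension stratum. I would organize this as a table listing, for each $\Orb$, its codimension $\leqslant 4$ boundary orbits and their FJLS slice types, and conclude by inspection; the $A_4+A_1$ orbit in $\mathfrak{e}_7$ is the one exception and should be displayed explicitly.
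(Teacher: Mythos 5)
Your overall strategy (reduce to the tables of \cite{FJLS1,FJLS2} and inspect the finite list, with the codimension-$2$ statement following from the standing codimension condition) is the same as the paper's, but you have skipped over the one case that actually requires an argument, and your step (iii) contains a claim that is false precisely in that case. For $\Orb$ of type $A_2+A_1$ in $\mathfrak{e}_6$, the unique codimension-$4$ orbit $\Orb'$ in $\overline{\Orb}$ is of type $A_2$ and the generic singularity of $\overline{\Orb}$ along $\Orb'$ is $2a_2$, i.e.\ two branches each of type $a_2$. Your assertion that ``after normalization these slices are unchanged'' is not correct here: the normalization $f\colon X=\spec\C[\Orb]\to\overline{\Orb}$ separates the two branches over each point of $\Orb'$, so the transverse slice in $X$ is $a_2$, not $2a_2$, and — more importantly — the lemma's claim that $X$ has a \emph{unique} codimension-$4$ leaf with $a_2$ singularity is not automatic: a priori $f^{-1}(\Orb')$ could consist of two disjoint copies of $\Orb'$, giving two such leaves. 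Deciding between one leaf and two requires knowing how the component group acts on the set of branches.

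The missing step, which is the only non-tabular content of the proof, is: by the discussion above Proposition 2.1 in \cite{FJLS1}, the component group $A(e')$ of $\Orb'$ (for the adjoint group) acts transitively on the branches of $\overline{\Orb}$ at $e'\in\Orb'$; since $A(e')\simeq S_2$ for the orbit $A_2$ in $\mathfrak{e}_6$ (table on p.~402 of \cite{Carter}), the preimage $f^{-1}(\Orb')$ is a \emph{connected} double cover of $\Orb'$, hence a single symplectic leaf $X'$ of $X$, along which the singularity is $a_2$. Your appeal to the convention of Section \ref{S_classif_thm} (``pass to the normalization and take a connected component'') names the right operation but does not by itself determine the number of leaves — that is exactly what the transitivity of the $A(e')$-action supplies. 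The rest of your outline (no codimension-$2$ leaves; the table check identifying $A_4+A_1$ in $\mathfrak{e}_7$ as the unique case with an additional codimension-$4$ leaf, of type $a_2/S_2$ along $A_3+A_2+A_1$) agrees with the paper and is fine.
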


\begin{proof}
	The lemma follows by inspecting the closure diagram of nilpotent orbits of Lie algebra of exceptional types and the generic singularities of their closures in \cite{FJLS1}. The only case that requires some care is the orbit $\Orb$ of type $A_2 + A_1$ in $\mathfrak{e}_6$. There is only one orbit $\Orb'$ in $\overline{\Orb}$ with codimension $4$, which is of type $A_2$, and the singularity here is $2a_2$. Let $f: X=\spec \C[\Orb] \to \overline{\Orb}$ be the normalization morphism and $e$ be any point in $\Orb'$. By the discussions in \cite{FJLS1} above Proposition 2.1 there, the action of the component group $A(e')$ (with respect to the adjoint group $G$) of $\Orb'$ at a point $e' \in \Orb'$ on the connected components of $f^{-1}(e')$, i.e., the branches of $\overline{\Orb}$ at $e'$, is transitive.
	Now $A(e) \simeq S_2$ by the table on page 402 of \cite{Carter}. Therefore the preimage of the orbit $\Orb'$ under the normalization map $f$ is a connected double cover $\tilde{\Orb}'$ of $\Orb'$. This is a symplectic leaf $X'$ in $X$, along which the signularity is $a_2$.
\end{proof}

From now on, we make the following convention unless otherwise mentioned. Assume $K$ is the fixed subgroup of the simply connected group $G$ with respect to some involution $\sigma$, which is automatically connected. Note that $K$ has finite fundamental group except for the symmetric pairs of type $E_{6(-14)}$ and $E_{7(-25)}$, for which $\pi_1(K) \simeq \Z$. Also note that there is no relevant $K$-orbit of $E_{7(-25)}$. If $\pi_1(K)$ is finite, let $\tilde{K}$ denote the simply connected cover of $K$. Otherwise $\tilde{K}$ can mean any finite connected cover of $K$. Sometimes we also consider the adjoint group $G_{ad}$ and write $\bar{K} = (G_{ad}^\sigma)^\sigma$.

Let $X = \spec \C[\Orb]$ and $X'$ be the unique codimension $4$ symplectic leaf along which the singularity of $X$ is of type $a_2$ as in Lemma \ref{lem:only_a2}. Let $\Orb_K$ be a nilpotent $K$-orbit in $\Orb \cap \kf^\perp$ and $\overline{\Orb}_K$ be its closure in $X$. Let $\Orb'_K$ be a $K$-orbit contained in $\overline{\Orb}_K \cap X'$. Fix a point $\chi$ in $\Orb_K$. Let $Q, K_Q, \tilde{K}_Q$ denote the reductive parts of the centralizers of $\chi$ in $G,K,\tilde{K}$,
respectively. Let $Z_K = K_Q / K_Q^\circ$ and $\tilde{Z}_K = \tilde{K}_Q / \tilde{K}_Q^\circ$ denote the component groups. Let $Q'$, $K'_Q$ and $\tilde{K}'_Q$ be the groups defined similarly for a fixed element $\chi' \in \Orb'_K$ as for $\chi \in \Orb_K$. As in Section \ref{SS_main_results}, we take a $\theta$-stable slice $\check{X}$ in $X$ that is transversal to $X'$ at $\chi'$, then $\check{X}$ is isomorphic to the minimal nilpotent orbit in $\mathfrak{sl}_3$. Then the group $Q'$ acts on $\check{X}$, $K'_Q$ and $\tilde{K}'_Q$ acts on $\check{X}^\theta$.



By \cite[Proposition 3.3]{FJLS1} or Lemma \ref{Lem:Qi_action}, the action of $Q'$ on $\check{X}$ has a dense orbit. Therefore the restriction of $\theta$ on $\check{X}$ can be read off from the restriction of $\sigma$ on $\q'$, which can be found in \cite{Dk0} (in fact it is the corresponding anti-holomorphic involution that is given there). We assume that $\chi$ lies in $\check{X} \cap \Orb_K$ and the induced involution on $\check{X}$ is outer. 
Let $\underline{K}$ ($\underline{\tilde{K}}$ resp.) denote the reductive centralizer (=the component group of the centralizer) of $\chi$ for the action of $K_Q'^\circ$ ($\tilde{K}'^\circ_Q$ resp.) on $\check{X}\cap \Orb_K$. Then $\underline{K} \simeq \Z_2$. Analogous to \eqref{eq:hom_Z4}, we have the composite map
  \[  \Z_4\hookrightarrow (\SL_2)_\chi \rightarrow (\tilde{K}'^\circ_{Q})_\chi \hookrightarrow \tilde{K}_Q\twoheadrightarrow \tilde{K}_Q / \tilde{K}_Q^\circ = \tilde{Z}_K.\]
and a similarly defined map $\Z_4 \to Z_K$ for $K$. As we have seen in Section \ref{SS_loc_sys_slice_restriction}, the image of $\bullet_{\dagger,\chi}$ is controlled by homomorphism $\Z_4 \rightarrow \tilde{Z}_K$ in \eqref{eq:hom_Z4}. If this map is not injective, then similarly to the discussion in the end of Section \ref{SSS_Z4_homom_typeA}, the quantization of every irreducible $(\tilde{K},\kappa)$-equivariant
suitably  twisted local system on $\Orb_K$ extends to $\Orb'_K$. In order to determine whether the map $\Z_4 \to \tilde{Z}_K$ is injective, we will need to compute $\tilde{Z}_K$ and $\underline{\tilde{K}}$. We have a natural surjective map $\underline{\tilde{K}} \to \underline{K}$, whose kernel is naturally identified with the kernel of the covering map $\tilde{K}'^\circ_Q \twoheadrightarrow K_Q'^\circ$. Therefore we will have to compute the covering morphism $\tilde{K}'^\circ_Q \twoheadrightarrow K_Q'^\circ$ to determine $\underline{\tilde{K}}$. In most cases, knowing the covering map $\tilde{K}_Q^\circ \twoheadrightarrow K_Q^\circ$, the group $\underline{\tilde{K}}$ and the map $\underline{\tilde{K}} \to \tilde{Z}_K$ could help us determine $\tilde{Z}_K$, but not in all cases. For our purposes, however, we only need to check if the quotient map $\tilde{Z}_K \to Z_K$ is an isomorphism. 

Let $\tilde{K}_Q^\bullet$ denote the preimage of $K_Q^\circ$ in $\tilde{K}$ under the covering map $p: \tilde{K} \to K$. Then we have the inclusions $\tilde{K}_Q^\circ \subset \tilde{K}_Q^\bullet \subset \tilde{K}_Q$. It is clear that $\tilde{Z}_K \to Z_K$ is an isomorphism if and only if $\tilde{K}_Q^\bullet$ is connected. To determine if this is the case, we first introduce the following notations. Let $\iota: K_Q^\circ \hookrightarrow K$ and $\tilde{\iota}: \tilde{K}_Q^\bullet \hookrightarrow \tilde{K}$ denote the inclusion maps. We will use $p: \tilde{K}_Q^\bullet \twoheadrightarrow K_Q^\circ$ to denote the restriction of $p$ to $\tilde{K}_Q^\bullet$. Let $\langle \pi_1(K_Q^\circ), \pi_1(\tilde{K}) \rangle$ denote the subgroup of $\pi_1(K)$ generated by the images of the natural group homomorphisms $\pi_1(K_Q^\circ) \to \pi_1(K)$ and $\pi_1(\tilde{K}) \to \pi_1(K)$. Finally, let $m(\tilde{K}) := [\pi_1(K) : \langle \pi_1(K_Q^\circ), \pi_1(\tilde{K}) \rangle]$ be the index.

We then have the following elementary lemma:

\begin{Lem}\label{lem:connectedness}
	For any connected $m$-fold cover $p: \tilde{K} \to K$, the component group $\pi_0(\tilde{K}_Q^\bullet)$ is canonically isomorphic to the quotient group $\pi_1(K) / \langle \pi_1(K_Q^\circ), \pi_1(\tilde{K}) \rangle$. Moreover, the degree of the covering map $\tilde{K}_Q^\circ \twoheadrightarrow K_Q^\circ$ is given by $\frac{m}{m(\tilde{K})}$.
	
	In particular, $\tilde{K}_Q^\bullet$ is a connected ($m$-fold) cover of $K_Q^\circ$ if and only if the images of the natural group homomorphisms $\pi_1(K_Q^\circ) \to \pi_1(K)$ and $\pi_1(\tilde{K}) \to \pi_1(K)$ generate $\pi_1(K)$.
\end{Lem}

\begin{proof}
	We have a short exact sequence:
	\[
	\tilde{K}_Q^\bullet \xrightarrow{(p, \tilde{\iota})} K_Q^\circ \times \tilde{K} \xrightarrow{(\iota, p^{-1})} K,
	\]
	where $p^{-1}: \tilde{K} \to K$ stands for the composition of $p:  \tilde{K} \to K$ and the inverse map $K \to K$. Note that the map $\pi_1(p^{-1}): \pi_1(\tilde{K}) \to \pi_1(K)$ induced by $p^{-1}$ equals $[\pi_1(p)]^{-1}$, the composition of $\pi_1(p): \pi_1(\tilde{K}) \to \pi_1(K)$ induced by $p$ and the inverse map of the group $\pi_1(K)$. The claims then follow from the associated long exact sequence of homotopy groups based at the identities of the Lie groups.
\end{proof}

We now explain how to compute the map $\pi_1(K_Q^\circ) \to \pi_1(K)$. For each pair $(G,K)$, we choose a maximally compact $\sigma$-stable Cartan subalgebra $\hf$ of $\g$ with the corresponding (connected) Cartan subgroup $H$. We also choose simple roots $\Pi = \{ \alpha_1, \ldots, \alpha_l \}$, following the labeling of simple roots in \cite{Bourbaki}. Then, $\tf := \hf^\sigma$ is a Cartan subalgebra of $\kf$. For each $K$-orbit $\Orb_K$ and each $\chi \in \Orb_K$, we define $\tf_\chi := \tf \cap \kf_\chi = \tf \cap \kf_Q$, which is a Cartan subalgebra of the Lie algebra $\kf_Q$ with the corresponding Cartan subgroup $T_\chi$ of $K_Q^\circ$. 

In \cite{Noel1, Noel2}, No{\"e}l gave an explicit nilpotent element $\chi$ as a sum of root vectors, so that $\tf_\chi$ can be computed explicitly (cf. Remark \ref{rem:Noel1}). Since $G = G_{sc}$ is simply connected, the coweight lattice $X_*(H) = \ker(\exp_H(2 \pi i \, \bullet): \hf \to H)$ of $G$ is the coroot lattice of $\g$, generated by simple coroots $\alpha^\vee_i$ of $\g$ (denoted $H_i$ by No{\"e}l) as a basis, where $\exp_H$ is the exponential map of $H$. Hence, the coweight lattice $X_*(T_\chi) = \ker(\exp_{T_\chi}( 2 \pi i \, \bullet): \tf_\chi \to T_\chi)$ of $K_Q^\circ$ is the intersection of $\tf_\chi$ with the coroot lattice of $\g$. A basis $\{\vartheta^{\chi}_i \}_{1 \leq i \leq k}$ for $X_*(T_\chi)$ can be computed easily and has been provided in \cite{Noel1, Noel2} for each orbit listed there. Each $\vartheta^{\chi}_i$ is chosen to be of the form $\sum_{j=1}^{l} b_i^j \alpha^\vee_{j}$, where $b_i^j$, for $1 \leq j \leq l$, are coprime integers for each $i$. Since the fundamental group of a connected complex reductive group is naturally isomorphic to the quotient of its coweight lattice by its coroot lattice, the information above is sufficient to determine the natural map $\pi_1(K_Q^\circ) \to \pi_1(K)$.

Since $\g$ is simple, by Theorem \ref{thm:pi_GR}, we have $\pi_K \simeq \pi_1(G_\R) \simeq 1$, $\Z_2$, or $\mathbb{Z}$. When $\pi_1(K) = 1$, the situation is trivial. Therefore we only consider the remaining two nontrivial cases below.

\subsubsection{The case $\pi_1(K) \simeq \mathbb{Z}$} \label{SSS_pi1=Z}
For simple exceptional $\g$, the only possible cases for $\g_\R$ are $E_{6(-14)}$ and $E_{7(-25)}$ (we will only encounter $E_{6(-14)}$ in Theorem \ref{thm:exceptional}). By Theorem \ref{thm:pi_GR} (5), $\GR$ is Hermitian symmetric, $\tf = \hf$, and we can choose the set of simple roots $\Pi$ so that exactly one simple root is non-compact imaginary, denoted as $\alpha_{nc}$, with the corresponding coroot $\alpha_{nc}^\vee$. The choice of roots can be read off from the Vogan diagrams of $\g_\R$ (see \cite[Chapter VI, Sections 8 and 10]{Knapp}).

We can choose the basis $\{\vartheta^{\chi}_i \}$ so that at most one basis vector has a nonzero (integral) coefficient for $\alpha_{nc}^\vee$ when written as a linear combination of the simple coroots $\alpha^\vee_i$. If such a basis vector exists, we denote it by $\vartheta^{\chi}_{nc}$, and its coefficient for $\alpha_{nc}^\vee$ is denoted by $b_{nc}$. Otherwise, let $b_{nc} = 0$. By Theorem \ref{thm:pi_GR}, (1) and (4), the image of $\pi_1(K_Q^\circ) \to \pi_1(K) \simeq \mathbb{Z}$ is generated by $b_{nc} \alpha^\vee$. By Lemma \ref{lem:connectedness}, for a connected $m$-fold cover $\tilde{K} \to K$, the preimage $\tilde{K}_Q^\bullet$ of $K_Q^\circ$ in $\tilde{K}$ is a connected cover of $K_Q^\circ$, or equivalently, the degree of the covering map $\tilde{K}_Q^\circ \twoheadrightarrow K_Q^\circ$ equals $m$ if and only if $b_{nc}$ is coprime to $m$.

We can also re-examine\footnote{which is necessary, see Remark \ref{rem:Noel2}} the results in \cite{Noel1} about admissibility for arbitrary $\Orb_K$ of $E_{6(-14)}$ and $E_{7(-25)}$ (without restrictions on codimension) and refine them to include arbitrary finite covers $\tilde{K}$ of $K$. We can express the restriction of $d \delta_\chi := (\omega_{\Orb_K})_\chi \in (\kf_Q^*)^{K_Q}$ (as in Section \ref{SS_intro_approach}) to $\tf_\chi$ in terms of the basis $\{\vartheta^{\chi}_i \}$: for any element 
\[
z = \sum_{j=1}^k z_j \vartheta^{\chi}_j \in \tf_\chi
\]
with $z_j \in \mathbb{C}$, write 
\[
d \delta_\chi (z) = \sum_{j=1}^k c_j z_j,
\]
where $c_j \in \mathbb{Z}$. Then, $\Orb_K$ is linearly admissible if and only if all $c_j$'s are even. 

If this is not the case, consider the values of $b_{nc}$: if $b_{nc} = 0$, i.e., $\alpha_{nc}^\vee$ does not appear in any basis vector $\{\vartheta^{\chi}_i \}$, then by Lemma \ref{lem:connectedness}, the map $\tilde{K}_Q^\circ \twoheadrightarrow K_Q^\circ$ is always an isomorphism, and $\Orb_K$ is not admissible (for any finite cover $\tilde{K}$ of $K$). If $b_{nc} = 1$, then Lemma \ref{lem:connectedness} implies that $\tilde{K}_Q^\bullet$ is always connected for any $m$-fold cover $\tilde{K}$ of $K$. In this case, we write
\[
z = \sum_{j=1}^k z_j \vartheta^{\chi}_j = z_1 \vartheta^{\chi}_1 + \cdots + z_{nc} \vartheta^{\chi}_{nc} + \cdots + z_k \vartheta^{\chi}_k \in \tf_\chi
\]
and
\[
d \delta_\chi (z) = \sum_{j=1}^k c_j z_j = c_1 z_1 + \cdots + c_{nc} z_{nc} + \cdots + c_k z_k.
\]
Then, $\Orb_K$ is admissible for $\tilde{K}$ if and only if all $c_j$'s, except for $c_{nc}$, are even (so $c_{nc}$ is automatically odd) and $m$ is even. In this case, for any even $m$, any $\tilde{K}$-equivariant admissible vector bundle is, in fact, equivariant with respect to the unique $2$-fold cover of $K$. It turns out that these are all the cases we need to consider by examining the tables in \cite{Noel1} for {\bf E III }$= E_{6(-14)}$ and {\bf E VII }$= E_{7(-25)}$.

\begin{Rem}\label{rem:Noel2}
	Noel wrote down $6$ simple roots of $\kf = \mathfrak{so}(10) \oplus \C$ for $E_{6(-14)}$ (\cite[pg. 467]{Noel1}) and $7$ simple roots of $\kf = \mathfrak{e}_6 \oplus \C$ for $E_{7(-25)}$ (\cite[pg. 477]{Noel1}), which is confusing. Probably he just extended the set of simple roots of the semisimple part of $\kf$ to a basis of the Cartan subalgebra $\tf$ of $\kf$. He also used the term ``fundamental weights" for $\kf$ of $E_{6(-14)}$, which form a basis of $\tf$. We do not understand what this means and how this helps him determine admissibility. However, our re-examination seems to detect no mistakes regarding admissibility from his tables for $E_{6(-14)}$ and $E_{7(-25)}$.
\end{Rem}

\subsubsection{The case $\pi_1(K) \simeq \mathbb{Z}_2$}\label{SSS_pi1=Z2}
In this case, $\tilde{K}_Q^\circ \twoheadrightarrow K_Q^\circ$ is either an isomorphism or a connected $2$-fold covering map. To determine which is the case, we can compute the map $\pi_1(K_Q^\circ) \to \pi_1(K)$ and apply Lemma \ref{lem:connectedness} as we did in the case when $\pi_1(K) \simeq \mathbb{Z}$, using the Kac diagrams associated with $\g_\R$ (see \cite{Adams_nonlinear}) in a more involved way. However, we choose to adapt an alternative shortcut here. 

Namely, let $\{ \varpi_1, \ldots, \varpi_n \}$ denote the fundamental weights of $\kf$, which have also been provided in \cite{Noel1, Noel2}. Evaluation of $\{ \varpi_1, \ldots, \varpi_n \}$ at each basis vector $\vartheta^{\chi}_i$ gives us information about the order of the element $\exp(2\pi i \vartheta^{\chi}_i)$ in $\tilde{K}$. Specifically, let $m_i$ be the least positive integer such that all $\varpi_k$, for $1 \leq k \leq n$, have integral values at $m_i \vartheta^{\chi}_i$. Then $m_i$ is the order of $\exp(2\pi i \vartheta^{\chi}_i)$ in $\tilde{K}$. Since $\pi_1(K) \simeq \mathbb{Z}_2$, $\exp(2\pi i \vartheta^{\chi}_i)$ is uniquely determined by its order, and hence we can determine $\ker(\tilde{K}_Q^\circ \twoheadrightarrow K_Q^\circ)$ as a subgroup of $\pi_1(K)$. In the same way, we can compute $\ker(\tilde{K}'^\circ_Q \twoheadrightarrow K_Q'^\circ)$.

\begin{Rem}\label{rem:Noel1}
	If one wants to determines the admissibility for the the adjoint real group $\GR^{ad}$ instead, one needs to replace in the general strategy above the coroot lattice of $\g$ by the coweight lattice of $\g$, and takes its intersection with $\tf_\chi$ to get a lattice in $\tf_\chi$. In general, it might contains strictly the lattice $X_*(T_\chi)$ in the proof of Theorem \ref{thm:exceptional}. 
	In \cite{Noel1, Noel2}, however, all computations seem to be done using the coroot lattice of $\g$. This means that the results of \cite{Noel1, Noel2} do not give as claimed the admissibility of nilpotent orbits for the simple adjoint real groups and sc-admissibility, as defined in Remark \ref{rem:sc-adm}. Instead, the orbits labelled as ``admissible" there are actually linearly admissible and those labelled as ``sc-admissible" are nonlinearly admissible in the sense of Definition \ref{defn:linearly-admissible}. Therefore to fullfill No{\"e}l's original goal, one has to redo the calculations using the correct basis, for groups of type $E_6$ and $E_7$. 
	
	Note that the tables of admissbile $K$-orbits at the end of \cite{LY} are taken from \cite{Noel1, Noel2}, therefore should also be reinterpreted as above. However, this does not affect any theoretical result in \cite{LY}. Also note that \cite{Nevins} only considers the case of $\R$-points of a split simply connected algebraic group of one of the exceptional types $G_2$, $F_4$, $E_6$ or $E_7$.
	
	There are $K$-orbits, for which $\operatorname{codim}_{\overline{\Orb}}\partial\Orb\geqslant 6$, that are not admissible for adjoint groups from our computations, but are marked as admissible by No{\"e}l. See Example \ref{Ex:adm} below.
\end{Rem}

\begin{Ex}\label{Ex:adm}
	Let $G_{ad}$ be the adjoint complex group of type $E_7$ and $K = (G_{ad}^\sigma$ be the fixed locus of the Cartan involution $\sigma$ of $G_{ad}$ corresponding to the split real form $E_{7(7)}$, which is disconnected and has component group $\Z_2$. Let $K^\circ$ be the identity component of $K$. There are three $K^\circ$-orbits $\#10, 11, 12$ in the $G_{ad}$-orbit $A_2+A_1$ in $\mathfrak{e}_7$. The union of orbit 10 and 11 forms one $K$-orbit according to Djokovic \cite{Dk5}, while 12 is a $K$-orbit by itself.
	
	\cite[Table IV]{Noel1} claims that all the three orbits $\#10, 11, 12$ are admissible for the adjoint group. However, our recalculation shows that orbit 10 and 11 are not admissible for the adjoint group, while they become admissible for the group $G_{sc}(\R)$ of $\R$-points of the simply connected group $G_{sc}$ of type $E_7$ (i.e., they are linearly admissible in the sense of Definition \ref{defn:linearly-admissible}). We will see below that this discrepancy leads to different predictions of the number of HC modules.
	
	The $G_{ad}$-orbit $A_2+A_1$ is birationally rigid and has boundary of codimension greater than $4$, therefore any admissible vector bundles can be quantized to a HC-module. According to \cite[Table 8]{King}, the $K^\circ$-component groups $Z_K$ of orbit 10 and 11 are both $\Z_2$, while $Z_K$ for orbit 12 is $\Z_2 \times \Z_2$. 
	This means that there should be 6 irreducible HC $(\g,K)$-modules with complex associated variety $A_2+A_1$ if all the three $K^\circ$-orbits are admissible (for $K^\circ$), two of which have $K$-associated varieties equal to the union of orbit 10 and 11, while the other 4 irreducible HC modules are all supported on orbit 12 corresponding to the 4 admissible local systems ($\rho_\omega = 0$ for orbit 12). However, calculations with \texttt{atlas} gives only 4 HC modules. Note that $A_2+A_1$ is special and its Barbasch-Vogan dual orbit is $E_6(a_1)$ in $\mathfrak{e}_7$, so the representations in question are all special unipotent. These 4 HC modules can also be found on line 38 of the table at:\\
	\url{http://www.liegroups.org/tables/unipotentExceptional/E7/E7_ad/E7_ad_summary.txt}\\
	They are nothing else but quantizations of the 4 admissible local systems over orbit 12, which matches with our calculations.
	
\end{Ex}

\subsubsection{Main result} We will list in Theorem \ref{thm:exceptional} below all the relevant $K$-orbits as above by cases and make conclusions about them. We follow the labeling of $K$-orbits in  \cite{Dk1, Dk6, Dk5, Dk2, Dk3, Dk4, Dk7, Dk8}, where the closure relations of the orbits can also be found. The papers \cite{Noel1, Noel2} also use the same labeling. The information about admissibility of the orbits of exceptional groups can be found in \cite{Noel1, Noel2} (after correctly reinterpreted as in Remark \ref{rem:Noel1}) and partially in \cite{Nevins}. 

\begin{Thm}\label{thm:exceptional}
	The list of either linearly or non-linear admissible $K$-orbits $\Orb_K \subset \Orb$ satisfying $\operatorname{codim}_{\overline{\Orb}}\partial\Orb = 4$ can be divided into the following three cases:
	\begin{enumerate}
		\item[Case 1:]
		the involution on the $a_2$-slice is inner or $\overline{\Orb}_K$ does not intersect the leaf $X'$ with the $a_2$-singularity. In this case, all orbits are nonlinearly admissible.
		\begin{itemize}
			\item
			$E_{6(2)}$: $\#12$, $\#13$ and $\#14$ in $A_2 + 2A_1$; 
			\item
			$E_{6(-14)}$: $\# 7$ in $A_2+A_1$; $\# 8$ in $A_2+A_1$; 
			\item
			$E_{7(-5)}$: $\#27$ in $A_4 + A_1$; 
			\item
			$E_{8(8)}$: $\#42$ in $A_4+2A_1$;  $\#43$ in $A_4+A_1$;
			\item
			$E_{8(-24)}$: $\#26$ in $A_4+A_1$. 
		\end{itemize}
		\item[Case 2:]
		the involution on the $a_2$-slice is outer and $\tilde{Z}_K = Z_K$. In this case, all orbits are linearly admissible.
		\begin{itemize}
			\item
			$E_{6(6)}$: $\# 8$ in $A_2+A_1$; 
			\item
			$E_{8(8)}$:  $\# 38$ in $A_4+A_1$; 
		\end{itemize}
		\item[Case 3:]
		the involution on the $a_2$-slice is outer and $\tilde{Z}_K \neq Z_K$. In this case, all orbits are linearly admissible.
		\begin{enumerate}[(a)]
			\item
			$E_{6(6)}$: $\# 10$ in $A_2+2A_1$; 
			\item
			$E_{7(7)}$: $\# 50$ in $A_4+A_1$; 
			\item
			$E_{8(8)}$: $\#44$ in $A_4 + 2A_1$; 
		\end{enumerate}
	\end{enumerate}
	We have $\pi_1(K) \simeq \Z_2$ in all cases except for $E_{6(-14)}$, for which $\pi_1(K) \simeq \Z$. 
	Moreover, we have the following conclusions:
	\begin{enumerate}[(i)]
		\item
		For Case 1 and 2, the functor \[ \bullet_{\dagger,\chi} : \HC_{\Orb_K}(\A_\lambda,\theta)^{\tilde{K},\kappa}\xrightarrow{\sim} \operatorname{Rep}(\tilde{K}_Q,\kappa_Q)\]
		is an equivalence for any quantization parameter $\lambda$.  
		\item
		For Case 3, we have $\underline{\tilde{K}} \simeq \Z_4$ and the map $\Z_4 \to \tilde{Z}_K$
		is injective. Moreover, $K$ is simple and $\rho_\omega=0$, so $\kappa =0$ and $\kappa_Q=0$ (for $\lambda=0$). These imply that the functor
		\begin{equation} \label{eq:HC_bad_exc_orbits}
			\HC_{\Orb_K}(\A_0,\theta)^{\tilde{K},\kappa} = \HC_{\Orb_K}(\A_0,\theta)^{\tilde{K},0} \to \operatorname{Rep}(\tilde{K}_Q,\kappa_Q) = \operatorname{Rep}(\tilde{K}_Q)
		\end{equation}
		is not essentially surjective. This means that there always exists at least one irreducible $\tilde{K}$-equivariant local system over $\Orb_K$, which does not quantize to a HC $(\A_0, \tilde{K})$-module.
		\item
		Now assume $\kappa=0$ (which is automatic in all cases except for $E_{6(-14)}$) and $\lambda = 0$ (which is automatic if $\Orb$ is not $A_2+A_1$ or $A_2+ 2A_1$  in $\mathfrak{e}_6$). Then the category $\operatorname{Rep}(\tilde{K}_Q,\kappa_Q)$ is nonempty for the (unique) $2$-fold cover $\tilde{K}$ of $K$. For the two orbits of $E_{6(-14)}$, $\operatorname{Rep}(\tilde{K}_Q,\kappa_Q)$ is nonempty if and only if $\tilde{K}$ is a connected cover of $K$ of even degree, in which case all representations in $\operatorname{Rep}(\tilde{K}_Q,\kappa_Q)$ and hence $\HC_{\Orb_K}(\A_0,\theta)^{\tilde{K},0}$ factor through the $2$-fold cover of $K$.
	\end{enumerate}

\end{Thm}

\begin{proof}
	The lists are obtained by case-by-case inspections and computations, using the general strategy outlined above this theorem.
	
	We first prove (i). Lemma \ref{lem:only_a2} implies that, in both Case 1 and 2, $X=\spec \C[\Orb]$ contains only one codimenison $4$ leaf, along which the singularity is of type $a_2$. Therefore the orbits in Case 1 satisfy (ii) of Theorem \ref{Thm:main1} and we are done.  For Case 2,  both orbits have $\kf_Q=\mathfrak{so}(3)$ according to \cite{Dk0.5}, which are semisimple. Therefore by Section \ref{SSS_homog_case}, the only strongly $K$-equivariant Picard algebroid over $\Orb_K$ is the trivial one (up to isomorphism). Together with the fact that $\tilde{Z}_K = Z_K$, this implies that the $\tilde{K}$-equivariant admissible vector bundles are in fact $K$-equivariant and are nothing else but (untwisted) $K$-equivariant local systems (of rank $1$), for any quantization parameter $\lambda$ (recall that $A_2+A_1$ in $\mathfrak{e}_6$ is not birationally rigid). Therefore (i) of Theorem \ref{Thm:main1} applies.
	
	
	For (ii), we will give a detailed computation of the component group $\tilde{Z}_K$ for the orbit (a) in Section \ref{SS:orbit_a} and a less detailed one for the orbit (b) in Section \ref{SS:orbit_b}, as demonstrations of the general strategy outlined above. The analysis of orbit (c) is similar and we omit it. These computations confirm $\underline{\tilde{K}} \simeq \Z_4$ and the injectivity of the map $\Z_4 \to \tilde{Z}_K$. From \cite{Noel1, Noel2}, we also know the Lie algebra character $d \delta_\chi = (\omega_{\Orb_K})_\chi \in (\kf_Q^*)^{K_Q}$ (as in Section \ref{SS_intro_approach}) for $\Orb_K$, so that $\rho_\omega = \frac{1}{2} d \delta_\chi$ is also known. Then we can again apply Proposition \ref{Lem:image_description11} and Lemma \ref{Lem:sl3_so3_obstructive}. 
	
	(iii) follows from computations following the strategies outlined in Section \ref{SSS_pi1=Z} and \ref{SSS_pi1=Z2}. We will give a detailed computation for the $K$-orbit $\# 7$ of $E_{6(-14)}$ in $\Orb = A_2+A_1$ below in Section \ref{SS:orbit_Hermitian}, for which $\pi_1(K) \simeq \Z_2$.
\end{proof}

\subsubsection{The $K$-orbit $\# 7$ of $E_{6(-14)}$ in $\Orb = A_2+A_1$}\label{SS:orbit_Hermitian} \, \\

Let $(\g, \kf)$ be the Hermitian symmetric pair of type $E_{6(-14)}$. Then $\kf \simeq \mathfrak{so}(10,\C) \oplus \C$ and $\pi_1(K) \simeq \pi_1(G_\R) \simeq \Z$. Choose the set of simple roots $\Pi = \{ \alpha_1, \alpha_2, \ldots, \alpha_6 \}$ of $\g$ as in \cite{Bourbaki}. Set
\[  \beta_1 = \alpha_1, \beta_2 = \alpha_3, \beta_3 = \alpha_4, \beta_4 = \alpha_2, \beta_5 = \alpha_5, \beta_6 = - \alpha_1  - 2 \alpha_2   - 2\alpha_3  - 3 \alpha_4 - 2 \alpha_5 - \alpha_6. \]
Then $\Pi_\kf = \{\beta_1, \ldots, \beta_6\}$ forms a set of simple roots for $\kf$. Here we choose $\alpha_6$ to be the unique non-compact imaginary simple root.

Let $\Orb_K$ be the $K$-orbit $\# 7$ in $\Orb=A_2+A_1$. 
By \cite[Table III]{Noel1}, we can take the Cartan subalgebra of $\kf_Q$ to be $\tf_\chi = \C \vartheta^{\chi}_1 \oplus \C \vartheta^{\chi}_2 \oplus \C \vartheta^{\chi}_3$ where
\[ \vartheta^\chi_1 = \alpha^\vee_4, \quad  \vartheta^\chi_2 = \alpha^\vee_5, \quad \vartheta^\chi_3 = 2 \alpha^\vee_1 + \alpha^\vee_3 + \alpha^\vee_6.\]
We have $d \delta_\chi = 3z_3$, therefore $\vartheta^{\chi}_{nc} = \vartheta^{\chi}_3$, $c_{nc} = 3$ and $b_{nc} = 1$, with the notations introduced in Section \ref{SSS_pi1=Z}. Therefore by the discussions there, $\Orb_K$ is nonlinearly admissible and is admissible for an $m$-fold cover $\tilde{K}$ or $K$ if and only in $m$ is even.

\subsubsection{The $K$-orbit $\# 10$ of $E_{6(6)}$ in $\Orb = A_2 + 2A_1$}\label{SS:orbit_a} \, \\

Let $(\g, \kf)$ be the symmetric pair of type $E_{6(6)}$, then $\bar{K} \simeq \Sp(4, \C)/\Z_2$ (recall that $\bar{K} = (G_{ad}^\sigma)^\sigma$). Since $G_{sc}$ is a $3$-fold cover of $G_{ad}$, $K \simeq \bar{K} \simeq \Sp(4, \C)/\Z_2$ and $\tilde{K} = \Sp(4, \C)$.  
Let $\alpha_i$, $1 \leqslant i \leqslant 36$, be the positive roots of $\g = \mathfrak{e}_6$ as in \cite{Dk2} and choose $\Pi = \{ \alpha_1, \alpha_2, \ldots, \alpha_6 \}$ to be the set of simple roots as in \cite{Bourbaki}. All $36$ positive roots written as integral linear combinations of the simple roots $\alpha_i$, $1 \leqslant i \leqslant 6$, can be found in \cite[Table 13]{Dk2}. A negative root $-\alpha_i$ will be also written as $\alpha_{-i}$. For each root $\alpha$, let $\g^\alpha$ denote the root space of $\alpha$. Choose root vectors $X_i \in \g^{\alpha_i}$ for $\pm i \in \{ 1, \ldots, 36\}$ so that together with the coroots $\alpha^\vee_i$, $i= 1, \dots, 6$, they form a Chevalley basis of $\g$. 

Set
\[ \beta_1 = \alpha_2, \quad \beta_2 = \alpha_4, \quad \beta_3 = \frac{\alpha_3 + \alpha_5}{2}, \quad \beta_4 = \frac{\alpha_1 + \alpha_6}{2} , \quad \beta_0 = \beta_1 + 2 \beta_2 + 3 \beta_3 + 2 \beta_4. \]
Then $\Pi_\kf = \{-\beta_0, \beta_4, \beta_3, \beta_2 \}$ forms a set of simple roots for $\kf = \mathfrak{sp}(4,\C)$. We have $\pi_1(K) \simeq \Z_2$.
The fundamental weights of $\kf$ are $(\varpi_1, \ldots, \varpi_4)$ given by
\[
\begin{pmatrix}
	\varpi_1 \\ \varpi_2 \\ \varpi_3 \\ \varpi_4
\end{pmatrix}
=
\begin{pmatrix}
	1  &  1  &  1  &  \frac{1}{2}   \\
	1  &  2  &  2 &   1  \\
	1  &  2  &  3  &   \frac{3}{2}  \\
	1  &  2  &  3  &   2 \\
\end{pmatrix}
\begin{pmatrix}
	-\beta_0 \\ \beta_4 \\ \beta_3 \\ \beta_2
\end{pmatrix}.
\]

Let $\Orb_K$ be the $K$-orbit $\# 10$ in $\Orb=A_2+2A_1$. We have $Z_K = \bar{Z}_K \simeq \Z_2$ by \cite[Table 4]{King} and $d \delta_\chi = 0$ by \cite[Table 4]{Noel2}, so the orbit is admissible for any finite cover of $K$, so in particular $K$-admissible. Using the explicit nilpotent element $\chi$ in \cite[Table 4]{Noel2}, 
we can take the Cartan subalgebra of $\kf_Q$ to be $\tf_\chi =  \C \vartheta^{\chi}$ where \footnote{There is a typo in No{\"e}l's expression for $\chi$: the last root vector $X_{\alpha_1 - \alpha_2 - \alpha_3 - 2 \alpha_4 - \alpha_5 - \alpha_6}$ should be $X_{-\alpha_1 - \alpha_2 - \alpha_3 - 2 \alpha_4 - \alpha_5 - \alpha_6}$ instead. The expression in the former subscript is not even a root since the coefficients have different signs. The latter root is  noncompact imaginary by \cite[Table 3]{Noel2}.}
\[ \vartheta^{\chi} = \alpha^\vee_1 -2 \alpha^\vee_4 + \alpha^\vee_6.\]
Evaluating $(\varpi_1, \ldots, \varpi_4)$ at the basis vector $\vartheta^{\chi}$ of $\tf_\chi$ gives $(0, 1, 0, -2)$. This implies that $\exp(2\pi i \vartheta^{\chi})$ is the identity in $\tilde{K}$. This means that the nontrivial element in the center of $\tilde{K}$ does not lie in $\tilde{K}_Q^\circ$, hence  $\tilde{K}_Q^\circ=K_Q^\circ$ and $\tilde{Z}_K$ is a central extension of $Z_K \simeq \Z_2$ by $\Z_2$. We will show below that $\tilde{Z}_K \simeq \Z_4$.


The closure $\overline{\Orb}$ is normal in codimension $4$ and hence we can identify the symplectic leaf $X'$ in $X$ with the orbit $\Orb'$ of type $A_2 + A_1$. The closure $\overline{\Orb}_K$ intersect with $\Orb'$ of type $A_2 + A_1$ at the $K$-orbit $\Orb'_K  = \# 8$ and the involution on the corresponding slice is outer. As in \cite[Table 4]{Noel2}, choose a nilpotent element $\chi'$ in $\Orb'_K$ so that the Cartan subalgebra of $\kf'_Q$ can be taken as $\tf_{\chi'} = \C \vartheta^{\chi'}$ where
\[ \vartheta^{\chi'} = \alpha^\vee_1 + \alpha^\vee_2 + 2\alpha^\vee_3 + 2\alpha^\vee_4 + 2\alpha^\vee_5+ \alpha^\vee_6.\]
Evaluating $(\varpi_1, \ldots, \varpi_4)$ at the basis vector $\vartheta^{\chi}$ of $\tf_{\chi'}$ gives $(-1/2, 0, 1/2, 0)$. By the discussions in Section \ref{SSS_pi1=Z2}, this means that $\exp(2\pi i \vartheta^{\chi'})$ is the unique non-trivial element of order $2$ in the center of $\tilde{K}$ 
and $\tilde{K}_Q'^\circ$ is a connected $2$-fold cover of $K_Q'^\circ \simeq \SO_3$, hence $\tilde{K}_Q'^\circ \simeq \SL_2$. We see that $\underline{\tilde{K}} \simeq \Z_4$ with $2 \underline{\tilde{K}} = 2 \Z_4$ being the center of $\tilde{K}$. Since the center $2 \underline{\tilde{K}}$ is not in the kernel of the map $\underline{\tilde{K}} \to \tilde{Z}_K$ by the discussion above, we conclude that $\underline{\tilde{K}} \to \tilde{Z}_K$ is injective. This also implies that the map $\underline{K} \to Z_K$ is also injective. 
We then conclude that $\tilde{Z}_K \simeq \Z_4$ and the natural map $\underline{\tilde{K}} \to \tilde{Z}_K$ is an isomorphism. The kernel of the natural map $\tilde{Z}_K \to Z_K$ equals $\ker(\tilde{K} \twoheadrightarrow K) \simeq \Z_2$. Therefore we only get three irreducbile HC $(\A_0, \tilde{K})$-modules. This finishes the proof of part (ii) of Theorem \ref{thm:exceptional} in this case.

 As mentioned at the beginning of this section, $A_2 + 2A_1$ in $\mathfrak{e}_6$ is not birationally rigid. Let us explain the classification of irreducible HC modules with full support over $\Orb_K$ for any quantization parameter $\lambda$ in this case. The component group
 for the orbit $A_2+2A_1$ is trivial, \cite[Section 8.4]{CM}. According to \cite[Table 6]{GraafElashvili}, the orbit $A_2 + 2A_1$ is (automatically birationally) induced from the zero nilpotent orbit of a Levi subgroup $L \subset G$ of type $A_4 + A_1$. Choose a parabolic subgroup $P$ in $G$ with Levi factor $L$ and form the generalized Springer map $T^*(G/P) \to \overline{\Orb}$. One can show that the parabolic Springer fiber over any point $\chi' \in \Orb'$ is identified with $\mathbb{P}^2$ and the pullback map $\operatorname{Pic}(G/P)\rightarrow
 \operatorname{Pic}(\mathbb{P}^2)$ is an isomorphism of free abelian groups of rank $1$. Using Lemma \ref{Lem:sl3_so3_obstructive}, we can also apply the same argument in the proof of Theorem \ref{Thm:spin_HC_classif_general} and get similar classification as in the case of type $A$ groups. We leave the details to interested readers. Note that in this case, $\kf_Q = \mathfrak{so}(2) \subset \gl(2) = \q$, hence the natural striction map $(\q^*)^Q \to (\mathfrak{k}_Q^*)^{K_Q}$ is zero. Therefore restriction of the twist $\lambda$ to $\Orb_K$ is always trivial by Corollary \ref{Cor:equiv_condition}. Moreover $d \delta_\chi = 0$, therefore $\kappa_Q=0$ and $\operatorname{Rep}(\tilde{K}_Q, \kappa_Q)$ is equivalent to $\operatorname{Rep}(\tilde{Z}_K) \simeq \operatorname{Rep}(\Z_4)$.

\subsubsection{$K$-orbit $\# 50$ of $E_{7(7)}$ in $\Orb=A_4+A_1$}\label{SS:orbit_b} \, \\

Let $(\g,\kf)$ be the symmetric pair of type $E_{7(7)}$ with $\bar{K} = (G_{ad}^\sigma)^{\circ} \simeq \SL(8) / \Z_4$, so $K = (G_{sc}^\sigma)^{\circ} \simeq \SL(8)  / 2\Z_4$ (since $\pi_1(K) \simeq \Z_2$) and $\tilde{K} \simeq \SL(8)$. Let $\Orb_K$ be the $K$-orbit $\# 50$ in $\Orb=A_4+A_1$. We have $Z_K \simeq \Z_4$. In this case $\kf_Q=0$, so $d\delta_\chi = 0$ and the $K$-orbit $\Orb_K$ is admissible.

Let $X=\spec\C[\Orb]$, then $X$ is normal in codimenision $4$ and it has two codimension $4$ leaves, identified with the orbit $A_4$ with $a_2$-singualrity and the orbit $A_3 + A_2 + A_1$ with $a_2 /S_2$-singularity, respectively. By Lemma \ref{Lem:character_integrality} and the same argument in the proof of Theorem \ref{Thm:main1} (Section  \ref{SS:proof_main1}), the $a_2/S_2$-singularity poses no obstruction to the quantization of (twisted) local systems over $\Orb_K$. Therefore we only need to consider the $a_2$-singularity along the orbit $A_4$.

The closure $\overline{\Orb}_K$ intersects with $A_4$ at the $K$-orbit $\Orb'_K  = \# 43$ and the involution on the corresponding slice comes from an outer involution of $\slf_3$. Using the explicit representative element in $\Orb'_K$ from \cite[Table 3]{Dk5}, we can compute that $K_Q'^\circ \simeq \bar{K}_Q'^\circ$ while $\tilde{K}'^\circ_Q$ is a double cover of $K_Q'^\circ$, in a similar way as in Section \ref{SS:orbit_a}. We see that $\underline{K} \simeq \underline{\bar{K}} \simeq \Z_2$ and $\underline{\tilde{K}} \simeq \Z_4$, so that $2 \underline{\tilde{K}}$ is identified the unique subgroup of order $2$ of the center $\Z_4$ of $\tilde{K}$.

Since $\kf_Q=0$, it follows that $K_Q$, $\bar{K}_Q$ and $\tilde{K}_Q$ are finite groups and coincide with their component groups $Z_K$, $\bar{Z}_K$ and $\tilde{Z}_K$. Therefore the maps from $\underline{K}$, $\underline{\bar{K}}$ and $\underline{\tilde{K}}$ respectively to $K_Q$, $\bar{K}_Q$ and $\tilde{K}_Q$ respectively are all injective, and so we can regard the former as subgroups of the latter. The group $\tilde{Z}_K$ is a central extension of $\bar{Z}_K \simeq \Z_4$ (by \cite[Table 8]{King}) by the center $\Z_4$ of $\tilde{K}$, hence $\tilde{Z}_K$ has to be abelian and so is $Z_K$.
Pick a generator $a$ of $\bar{Z}_K = \Z_4$ and a preimage $\bar{a}$ of $a$ under the projection map $Z_K \twoheadrightarrow \bar{Z}_K$. If $\bar{a}$ is of order $8$, then $Z_K \simeq \Z_8$. But then $\underline{K} \subset Z_K$ is the unique subgroup of $Z_K$ of order $2$, which is also the kernel of the projection $Z_K \twoheadrightarrow \bar{Z}_K$. This contradicts with the fact that $\underline{\bar{K}}$ is the isomorphic image of $\underline{K}$ under $Z_K \twoheadrightarrow \bar{Z}_K$. Therefore we conclude that $\bar{a}$ is of order $4$ and generates a cyclic subgroup of $Z_K$ of order $4$, so that $Z_K \simeq \Z_4 \times \Z_2$, where the second factor $\Z_2 \simeq \Z_4 / 2 \Z_4$ is the center of $\bar{K}$.

We now apply Theorem \ref{Thm:main1} and conclude: there are totally $4$ distinct irreducible admissible $\bar{K}$-equivariant line bundles on $\Orb_K$ and give rise to $4$ irreducible HC $(\g, \bar{K})$-modules supported on $\Orb_K$. There are $8$ irreducible admissible $K$-equivariant line bundles on  $\Orb_K$, $4$ of which are $\bar{K}$-equivariant ones mentioned before. The other $4$ genuine ones quantize to $4$ distinct genuine irreducible HC $(\g, K)$-modules. 


We are unable to determine $\tilde{Z}_K$. Continuing the analysis above for $Z_K$ shows that either $\tilde{Z}_K \simeq \Z_8 \times \Z_2$ such that the embedding $\Z_4 \simeq \underline{\tilde{K}} \hookrightarrow \tilde{Z}_K \simeq \Z_8 \times \Z_2$ is given by the embedding of $\Z_4$ to the first factor $\Z_8$, or $\tilde{Z}_K \simeq \Z_4 \times \Z_4$, where the second factor is the center of $\tilde{G}_\R$, and the embedding $\Z_4 \simeq \underline{\tilde{K}} \hookrightarrow \tilde{Z}_K \simeq \Z_4 \times \Z_4$ sends the generator $\bar{1}$ of $\Z_4$ to $(\bar{2}, \bar{1})$. The irreducible admissible $\tilde{K}$-equivariant vector bundles are equivalent to irreducible characters of $\tilde{Z}_K$, so in either case there are totally $16$ of them. We now apply Lemma \ref{Lem:sl3_so3_obstructive} and make the following conclusions: among the $16$ irreducible admissible $\tilde{K}$-equivariant vector bundles (in fact, local systems), $4$ admit no quantization, $4$ are quantized to irreducible Harish-Chandra $(\g, \bar{K})$-modules, $4$ are quantized to irreducible genuine Harish-Chandra $(\g, K)$-modules and $4$ are quantized to genuine Harish-Chandra $(\g, \tilde{K})$-modules that do not factor through $K$.\\

\appendix
\section{The case of non-normal orbit closures}\label{sec:appendix_A}

\subsection{Setup} \label{appendix_A_setup}
In this appendix, we study the case when the closure $\overline{\Orb}$ of $\Orb$ in $\g^*$ has non-empty codimension $2$ boundaries, but the affinization $X=\operatorname{Spec}(\C[\Orb])$ has singularities of codimension greater than $2$. In particular, $\overline{\Orb}$ is non-normal. Any such orbit can only appear for $\g$ of exceptional types by \cite{KP2} and has a unique codimenision $2$ orbit $\Orb'$ in its closure. Moreover, the transversal slice to $\Orb'$ in $\overline{\Orb}$ is isomorphic to the variety $m$ defined as follows: 
Consider the irreducible representation $V(i)$ of highest weight $i \in \mathbb{Z}_{\geq 0}$ of $\SL_2$ (so that $V(i)$ is of dimension $i+1$) and fix a heighest weight vector $v_i \in V(i)$ associated with a fixed Borel subgroup of $\SL_2$. Then the variety $m$ is defined as the closure in $V$ of the $\SL_2$-orbit through $v = v_2 + v_3$ in $V(2)\oplus V(3)$, which is a two-dimensional variety with an isolated singularity at the origin. This variety is not normal, and its  normalization is isomorphic to the affine plane $\C^2$. In particular, this implies that the preimage of $\Orb'$ in $X$ is isomorphic to $\Orb'$. See \cite[Section 3.2.1, 3.2.2]{FJLS1} for details. 

Here is the list of all such orbits $\Orb$:
\begin{itemize}
	\item
	$G_2$: $\Orb = \tilde{A}_1$, $\Orb' = A_1$;
	\item
	$F_4$: $\Orb=\tilde{A}_2 + A_1$, $\Orb'=A_2 + \tilde{A}_1$;
	\item
	$E_6$: $\Orb=A_3+A_1$, $\Orb'=2A_2+A_1$;
	\item
	$E_7$: $\Orb=(A_3+A_1)'$, $\Orb' = 2A_2+A_1$;
	\item
	$E_8$: $\Orb=A_5+A_1$, $\Orb'=A_4+A_3$;
	\item
	$E_8$: $\Orb=D_5(a_1)+A_2$, $\Orb'=A_4+A_3$;
	\item
	$E_8$: $\Orb=A_3+A_1$, $\Orb'=2A_2+A_1$.
\end{itemize}
 
One can check that $\operatorname{codim}_{\overline{\Orb}'}(\partial \Orb')>2$, hence the affinization $X$ has singularities of codimension strictly greater than $4$ in all cases except for $\Orb =A_3+A_1$ in $E_6$. In this case, there is another codimenion $4$ orbit $\Orb''=A_3$ in the smooth locus of $\overline{\Orb}^{sing}$ along which the singularity is of type $c_2$ (and so is the corresponding singularity in $X$). The case of $c_2$ singularity has already been treated in Section \ref{SS_c2} and presents no obstruction to quantization. Therefore we only need to analyze the behaviour of the $K$-orbits $\Orb_K$ and relevant vector bundles near $\Orb'$. 

In Table \ref{Table:Sing_m}, we list all pairs $(\Orb, \Orb')$ together with $K$-orbits $\Orb_K$ in $\Orb \cap \kf^\perp$ and $\Orb'_K = \overline{\Orb}_K \cap \Orb'$. Note that all the real semsimple groups relevant here have $K$ semisimple. 
We mark a $K$-orbit by ``$*$" if it is nonlinearly admissible, and by ``$\times$" if it is not admissible. Otherwise the orbit is linearly admissible.

\begin{table}[h!]
	\caption{Orbits with singularity $m$}
	\label{Table:Sing_m}\vskip 1em
	\begin{tabular}{|c|c|c|c|c|}
		\hline
		$\g_\R$ &  $\Orb$ & $\Orb'$ & $\Orb_K$ & $\Orb_K'$  \\ \hline
		{\bf G\, I$=G_{2(2)}$} & $\tilde{A}_1$ & $A_1$ & $2^*$ & $1$ \\ \hline
		{\bf F \,I $=F_{4(4)}$} & $\tilde{A}_2 + A_1$ & $A_2 + \tilde{A}_1$ & $13$  & $10^\times$  \\\hline
		{\bf E I = $E_{6(6)}$}  & $A_3+A_1$ & $2A_2+A_1$ & $15^*$ & $11$  \\ \hline
		{\bf E II = $E_{6(2)}$}  & $A_3+A_1$ & $2A_2+A_1$ & \makecell{$18^*$ \\ $19^*$} & \makecell{$17$  \\ none} \\ \hline
		{\bf E V= $E_{7(7)}$} & $(A_3+A_1)'$ & $2A_2+A_1$ & $25^\times$  & $24$   \\ \hline
		{\bf E VI = $E_{7(-5)}$} & $(A_3+A_1)'$ & $2A_2+A_1$ & \makecell{$17^*$ \\ $18^*$} & \makecell{$16$  \\ none} \\ \hline
		{\bf E VIII $=E_{8(8)}$}  & $A_5+A_1$ & $A_4+A_3$ & $58^\times$ & $57$ \\ \hline
		{\bf E VIII $=E_{8(8)}$}  & $D_5(a_1)+A_2$ & $A_4+A_3$ & $59^\times$ & $57$ \\ \hline
		{\bf E VIII $=E_{8(8)}$}  & $A_3+A_1$ & $2A_2+A_1$ & $18^\times$ & $17$  \\ \hline
		{\bf E IX $=E_{8(-24)}$} & $A_3+A_1$ & $2A_2+A_1$ & \makecell{$16^*$ \\ $17^*$} & \makecell{none \\ $15$ }   \\ \hline
	\end{tabular}
\end{table}

For all orbits in Table \ref{Table:Sing_m}, the methods in the main text does not apply since $\operatorname{codim}_{\overline{\Orb}_K}\partial \Orb_K =1$. In Proposition \ref{prop:extension} below, however, we will show that any admissible vector bundle over $\Orb_K$ can be extended uniquely to a certain twisted $\mathscr{D}$-module over a $K$-stable closed smooth Lagrangian subvariety $Y$ in $X^{reg}$, which contains $\Orb_K$ as an open dense subvariety and satisfies $\operatorname{codim}_{\overline{Y}}(\overline{Y}\setminus Y)\geqslant 3$. Then in Theorem \ref{thm:non-normal} we will apply the results from \cite{LY} to construct quantization of this twisted $\mathscr{D}$-module to obtain a HC $(\g,K)$-module.

Note that all orbits above except for $A_3+A_1$ in $E_6$ are birationally rigid. Let $\A_0$ be the canonical quantization of $X$ and let 
$\mathcal{J}_0$ denote the kernel of the quantum comoment map $\mathcal{U}\rightarrow \A_0$. By \cite[Theorem 1.0.2]{MM}, $\mathcal{J}_0$
is a maximal ideal in $\U$. Combining \cite[Proposition 6.5.4]{LMM} and \cite[Lemma 3.5.2]{LMM}, we see that $\U/\mathcal{J}_0\xrightarrow{\sim}\A_0$. We emphasize, however, that this isomorphism does not intertwine the filtrations on these 
algebras: the quantization filtration on $\A_0$ and the filtration on $\U/\J_0$ induced by the PBW filtration on $\U$.
Indeed, the quotient of $\gr \U/\J_0$ by its radical is $\C[\overline{\Orb}]$, while
$\gr\A_0=\C[\Orb]$. Since $\overline{\Orb}$ is not normal, the graded algebras  $\gr \U/\J_0$ and $\gr\A_0$ are not isomorphic.
In what follows we always consider $\A_0$ with its quantization filtration.

We have the following analog of Proposition \ref{Prop:HC_bijection}, but the proof is more involved.

\begin{Prop}\label{Prop:HC_bijection_nonnormal}
Let $\Orb$ be any nilpotent orbit in $\g^*$ (not necessarily satisfying \ref{eq:codim_condition}).  Then there is a natural bijection between
\begin{itemize}
\item irreducible $K$-equivariant HC $(\A_0,\theta)$-modules of GK dimension
$\frac{1}{2}\dim \Orb$,
\item and irreducible $K$-equivariant $\U/\J_0$-modules
of GK dimension $\frac{1}{2}\dim \Orb$.
\end{itemize}
\end{Prop}
\begin{proof}
As with the proof of Proposition \ref{Prop:HC_bijection}, we need to establish an analog of Lemma \ref{Lem:HC_bijection2}: 
\begin{itemize}
\item[(*)] every irreducible HC $\U/\J_0$-module $M$ (whose GK dimension is automatically $\frac{1}{2}\dim \Orb$ because $\J_0$
is maximal) admits a good filtration making it a HC $(\A_0,\theta)$-module. 
\end{itemize}
Note the claim of Lemma \ref{Lem:HC_bijection1} is vacuous. Set $X:=\operatorname{Spec}(\C[\Orb])$.

In \cite{filtration}, the first named author has shown that every irreducible strongly $K$-equivariant $\A_0$-module 
admits a so called {\it canonical filtration}, its existence follows from \cite[Theorem 3.12]{filtration}.
In particular, by \cite[Theorem 3.13]{filtration}, the annihilator of $\gr M$ with respect to the canonical filtration is 
a radical ideal. Let $Y\subset X$ be the closed subvariety associated to the annihilator. It remains to show that $Y\subset X^\theta$.

By a theorem of Gabber, all irreducible components of $V(M)$ have the same dimension. A more general version can be found
in \cite[Theorem 0.4]{YZ}.
Arguing as in Step 2 of the proof of Lemma \ref{Lem:HC_bijection1}, we see that this dimension is 
$\frac{1}{2}\dim \Orb$ and all components intersect $\Orb$. It follows that 
$Y\cap \Orb$ is Zariski dense in $Y$. And it is easy to see that 
$Y\cap \Orb\subset X^\theta$. This shows $Y\subset X^\theta$ and finishes the proof.     
\end{proof}


\begin{Cor}\label{Cor:support_nonnormal}
	Let $\Orb$ be as listed in Table \ref{Table:Sing_m}. Let $M$ be an  irreducible $K$-equivariant $\U/\J_0$-module (of GK dimension $\frac{1}{2}\dim \Orb$).  Then the  support of $M$ in $X = \spec(\C[\Orb])$ (w.r.t. the filtration on $\A_0$ such that $\gr \A_0 \simeq \C[\Orb]$) is the closure in $X$ of a single (connected) $K$-orbit $\Orb_K \subset \Orb^\theta \subset X$.
\end{Cor}

\begin{proof}
	Observe from Table \ref{Table:Sing_m} that, when there are two different $K$-orbits $\Orb_K$ in $\Orb$, their closures only intersect in locus of codimension greater than $2$ (clear from the $\Orb'_K$ column). By \cite{filtration} again, $V(M) \subset \overline{\Orb}$ is connected in codimension $1$, so it can only be the closure of one single $\Orb_K$. Therefore the support of $M$ in $X$ is also the closure of $\Orb_K$.
\end{proof}

\subsection{Extension of twisted local systems} \label{appendix_A_extension}
Let $Q'^\circ$ be the identity component of the reductive stablizer $Q'$ of $\Orb'$. We can find all $Q'^\circ$ in the tables of \cite{Alexeevskii}. Note that in all cases but one, $Q'^\circ$ is isomorphic to $\operatorname{SL}_2$, 
no matter $G$ is adjoint or simply connected. The exceptional case is $(\Orb,\Orb')=((A_3+A_1)', 2A_2+A_1)$ in $E_7$. In the this case, $Q'^\circ \simeq \operatorname{SL}_2 \times \operatorname{SO}(3)$ for the adjoint $E_7$ and $Q'^\circ \simeq \operatorname{SL}_2 \times \operatorname{SL}_2$ for the simply connected $E_7$. From now on we take $G$ to be simply connected and $K=G^\sigma$, so that $Q'^\circ$ is always a product of several copies of $\operatorname{SL}_2$. Suppose $\Orb$, $\Orb'$, $\Orb_K$ and $\Orb'_K$ are taken from one of the rows in Table \ref{Table:Sing_m}. If $\Orb'_K$ is empty then there is nothing to do. So we can assume $\Orb'_K$ is nonempty.  Note that all real simple groups $\GR = G(\R)$ with $\g_\R$ from \ref{Table:Sing_m} satisfy $\pi_1(\GR) \simeq \pi_1(K) \simeq \Z_2$ (cf. Section \ref{subsec:cover}).

We have $X^\circ:=X^{reg}=\Orb \sqcup \Orb'$, which is a smooth graded symplectic variety. Set $Y = \Orb_K \sqcup \Orb'_K$, then $Y=(X^{\circ})^\theta$ is a smooth Lagrangian subvariety. Since $X^{sing}$ has codimension strictly greater than $4$, 
we have $\operatorname{codim}_{\overline{Y}}(\overline{Y}\setminus Y)\geqslant 3$
Note that  $\Orb'_K$ is a subvariety in $X^{reg}$. Fix a point $\chi' \in \Orb'_K \subset X^{reg}$ and let $\underline{\chi}$
denote its image in $\kf^\perp$. Consider a normal $\slf_2$-triple $(e',h',f')$
corresponding to $\underline{\chi}$ and the $\theta$-stable Slodowy slice $\underline{S}\subset \g^*$
associated to this triple. Let $\check{X}$ denote the preimage of $\underline{S}$
in $X=\operatorname{Spec}(\C[\Orb])$, this is a transverse slice to $\Orb'$ in $X$.

By \cite[Proposition 3.3]{FJLS1}, there always exists a simple factor $R \simeq \operatorname{SL}_2$ of $Q'^\circ$, so that there is a graded Possion isomorphism between $\check{X}$ with the Kazhdan $\cm$-action and the standard symplectic vector $\C^2$ with the usual dilation $\cm$-action, such that the $R$-action on $\check{X}$ can be identified with the standard $\SL_2$-action on $\C^2$ (since any automorphism of $\SL_2$ is inner). Also note that $\check{X}\subset X$ is $\theta$-stable and after possible conjugation, one can assume that $\theta$ acts on $\C^2 \simeq \check{X}$ by $s: (z_1, z_2) \mapsto (z_1, -z_2)$, and $\theta$ acts on $R \simeq \operatorname{SL}_2$ by $A \mapsto sAs$, so that we can identify $R^\sigma$ with the multiplicative group $\cm$ by $\operatorname{diag}(a, a^{-1}) \mapsto a$. Therefore $\check{Y} := \check{X}^\theta \simeq (z_2=0) \subset \C^2$ and we have an isomorphism $\check{Y} \simeq \C, (z, 0) \mapsto z$, so that we have the usual linear coordinate $z$ on $\check{Y}$. Hence the action of $R^\sigma \simeq \cm$ on $\check{Y}$ can be identified with the usual scalar multiplication $(a,z) \mapsto az$ on $\C$. Let $T$ be the connected closed subrgoup of $K'_Q$ with Lie algebra $\rf^\sigma$, so that the restriction of the group homomorphism $K \to G$ to $T$ gives a $n$-fold covering map $T \to R^\sigma$ and the action of $T \simeq \cm$ on $\check{Y} \simeq \C$ can be then identified with the action $(t,z) \mapsto t^n z$. In fact, $n$ can only be $1$ or $2$ in our setting.

Let $\omega_Y$ be the canonical line bundle of $Y$. Since we consider the canonical quantizations of $X$ and $X^\circ$, we have an isomorphim $\TT^+(\hat{\A}_\hbar^{X^\circ}, Y) \simeq \TT^+(\frac{1}{2}\omega_Y)$ of graded Picard algebroids with strong $K$-actions (see \cite[Lemma 6.2.4]{LY} and the paragraph below it). Let $\mathscr{D}_{\TT^+}$ denote the sheaf of twisted differential operators associated to $\TT^+(\hat{\A}_\hbar^{X^\circ}, Y) \simeq \TT^+(\frac{1}{2}\omega_Y)$ with the induced strong $K$-action as in Section \ref{subsec:pic_defn}. Suppose the $R^\sigma$-action on the fiber of $\omega_Y$ at $\chi' \in \Orb'_K$ is given by a character $z \mapsto z^l$ of $R^\sigma \simeq \cm$ with $l=t+1$, where $t$ corresponds to the character $\rho'_\omega$ for $\Orb'_K$, which is even if and only if $\Orb'_K$ is admissible for $K$. Now consider the pullback $\mathscr{D}_{\TT^+}|_{\check{Y}}$ of $\mathscr{D}_{\TT^+}$ to the slice $\check{Y}$. By Proposition \ref{Prop:slice_strongness}, (i), we see that $\mathscr{D}_{\TT^+}|_{\check{Y}}$ can be trivialized so that the strong $\rf^\sigma$-action $\rf^\sigma \to \mathscr{D}_{\TT^+}|_{\check{Y}} $ is given by the differential operator $z\partial + \frac{l}{2}$. 

Restricting further to $\check{Y}^\times$ gives a second trivialization of $\mathscr{D}_{\TT^+}|_{\check{Y}^\times}$. The first and second trivialization are related by the automorphism of $\mathscr{D}_{\cm}$ induced by
\[ \partial \mapsto \partial + \frac{l}{2z} = z^{-\frac{l}{2}} \partial  z^{\frac{l}{2}}. \]

The $T$-stablizer of any point in $\check{Y} \simeq \C$ is the group $M \subset T \simeq \cm$ of $n$-th root of $1$. Let $\mu_0$ and $\mu_1$ denote the trivial representation and the identity representation of $M$ respectively (if $T=R^\sigma$ so that $M$ is trivial, then $\mu_1$ is identical to $\mu_0$). Let $\tau_k$ denote the $T$-equivariant flat connection on $\check{Y}^\times \simeq \cm$ correponding to the representation $\mu_k$ of $M$, $0 \leq k \leq n-1$. The global sections of $\tau_k$ on $\check{Y}^\times \simeq \cm$ form the vector space spanned by $z^{p+\frac{k}{n}}$, $p \in \Z$. Let $i: \check{Y}^\times \hookrightarrow \check{Y}$ denote the open inclusion. If we use the second trivialization of $\mathscr{D}_{\TT^+}|_{\check{Y}}$ on $\cm$, the direct image $\mathscr{D}_{\TT^+}|_{\check{Y}}$-module $i_* \tau_k$ is isomorphic to the $\mathscr{D}_\C$-module which is the direct image of the $\mathscr{D}_{\cm}$-module generated by $z^{\frac{k}{n}+\frac{l}{2}}$. This module is reducible if and only if it contains constant functions, i.e., if and only if $\frac{k}{n}+\frac{l}{2}$ is an integer, if and only if $ \tau_k$ can be extended to a $\OO_{\check{Y}}$-coherent $\mathscr{D}_{\TT^+}|_{\check{Y}}$-module on $\check{Y}$, if and only if $\tau_k$ has trivial monodromy around $0$. In this case, $i_* \tau_k$ is isomorphic to the $\mathscr{D}_\C$-module which is the direct image of  $\OO_{\cm}$ with the trivial connection.

\begin{Prop}\label{prop:extension}
	A strongly $K$-equivariant $\TT^+(\hat{\A}_\hbar^{X^\circ}, \Orb_K)$-module $\EE$ on $\Orb_K$ can be extended to an $\OO_X$-coherent $\mathscr{D}_{\TT^+}$-module over $Y$ if and only if the restriction of $\EE$ to $\check{Y}^\times$ is a direct sum of $\tau_k$ as $T$-equivariant connection such that $\frac{k}{n}+\frac{l}{2} \in \Z$. In this case, the extension is unique and is the minimal extension $\EE_{min}$ of $\EE$. Moreover, the strongly $K$-equivariant structure and weakly $\cm$-equivariant structure on $\EE$ extend uniquely to $\EE_{min}$.
\end{Prop}

\begin{proof}
	Clearly, if the desired extension $\EE'$ exists, then it is a holonomic $\mathscr{D}_{\TT^+}$-module that has no nonzero $\mathscr{D}_{\TT^+}$-submodules or quotients with support on $\Orb_K'$, therefore must be the minimal extension $\EE_{min}$ of $\EE$. The statement about strong $K$-equivariant and weakly $\cm$-equivariant structures  on $\EE_{min}$ is clear.

	First assume $\EE$ extends to an $\OO_X$-coherent $\mathscr{D}_{\TT^+}$-module $\EE'$ over $X$. Then the restriction $\EE'|_{\check{Y}}$ of $\EE'$ to $\check{Y}$ is also an $\OO_{\check{Y}}$-coherent extension of $\EE|_{\check{Y}^\times}$ to $\check{Y}$. By the discussion above, this implies that $\frac{k}{n}+\frac{l}{2} \in \Z$ for any direct summand $\tau_k$ of $\EE|_{\check{Y}^\times}$ of $\EE$.

	Conversely, assume that restriction of $\EE$ to $\check{Y}^\times$ is a direct sum of $\tau_k$ as $T$-equivariant connection such that $\frac{k}{n}+\frac{l}{2} \in \Z$.  Let us consider the analytification $\EE^{an}$ of $\EE$. As a twisted local system, $\EE^{an}$ has trivial monodromy around $\Orb_K'$ by the discussion above, therefore it can be uniquely extended to holormorphic vector bundle $\wt{\EE}^{an}$ over $X$ which is also a $\mathscr{D}_{\TT^+}^{an}$-module (see \cite[II.5]{Deligne} and \cite[Theorem 5.2.17]{Hotta}). By the twisted version of  Riemann-Hilbert correspondence (see Section \ref{subsec:RH}; also see \cite[Theorem 5.3.8]{Hotta}), $\wt{\EE}^{an}$ is the analytification of an algebraic $\OO_X$-coherent $\mathscr{D}_{\TT^+}$-module $\wt{\EE}$, which we have seen must be the minimal extension $\EE_{min}$ of $\EE$.
\end{proof}

\subsection{Classification of HC modules} \label{appendix_A_classification}

Consider the category $\HC^{K,\kappa}_{Y}(\A_0,\theta)$ of all $(K,\kappa)$-equivariant HC $(\A_0,\theta)$ modules supported 
on $\overline{Y}$. Pick a point $\chi\in \Orb_K$ and consider the restriction functor 
$\bullet_{\dagger,\chi}:\HC^{K,\kappa}_Y(\A_0,\theta)\rightarrow \underline{\A}_0\operatorname{-mod}^{K_Q,\kappa_Q}$ from Section \ref{subsec:W-algebra_nilpotent}
(since $\A_0$ quantizes the normalization of an orbit, $d=1$; also recall that $\underline{\A}_0=\C$).  
   
\begin{Prop}\label{Prop:fully_faithful}
The functor $\bullet_{\dagger,\chi}:\HC^{K,\kappa}_Y(\A_0,\theta)\rightarrow \underline{\A}_0\operatorname{-mod}^{K_Q,\kappa_Q}$
is fully faithful.
\end{Prop}
\begin{proof}
The functor $\bullet_{\dagger,\chi}$ is faithful: a module killed by $\bullet_{\dagger,\chi}$ must be supported on
on $Y\setminus \Orb_K$. It is holonomic, so its the associated variety of the annihilator is  a proper subvariety of $X$, and, by \cite{B_ineq}, this contradicts the simplicity of $\A_0$.
 
It remains to show the fullness. 
We claim that it is enough to show that the functor $\bullet_{\dagger,\chi}:\wHC^{gr}_Y(\A_\hbar,\theta)^{K,\kappa}\rightarrow \underline{\A}_\hbar\operatorname{-mod}^{gr,K_Q,\kappa_Q}$ is full.  Let $M,N\in \HC^{K,\kappa}_Y(\A,\theta)$, and let $\psi$ be a homomorphism 
$M_{\dagger,\chi}\rightarrow N_{\dagger,\chi}$. Shifting a good filtration on $N_\hbar$ we can assume that 
$\psi$ is the specialization at $\hbar=1$ of a homomorphism $M_{\hbar,\dagger,\chi}\rightarrow N_{\hbar,\dagger,\chi}$. 

It remains to show that $\bullet_{\dagger,\chi}:\wHC^{gr}_Y(\A_\hbar,\theta)^{K,\kappa}\rightarrow \underline{\A}_\hbar\operatorname{-mod}^{gr,K_Q,\kappa_Q}$ is full. Let $\A_\hbar^{reg},\A_\hbar^{\Orb}$
denote the microlocalizations of $\A_\hbar$ to $X^{reg}$ and $\Orb$. Recall, Section \ref{SSS_conclusion_microloc},
that $\bullet_{\dagger,\chi}$ is the composition of, $\bullet|_{\Orb}$, the restriction to $\Orb$ and a category equivalence. 
Since $\operatorname{codim}_Y(Y\cap X^{sing})\geqslant 2$, the global section functor 
$\wHC^{gr}_{Y\cap X^{reg}}(\A_\hbar^{reg},\theta)^{K,\kappa}\rightarrow \wHC^{gr}(\A_\hbar,\theta)^{K,\kappa}$ is right inverse to
the microlocalization functor, compare to the proof of Lemma \ref{Lem:HC_bijection2}. 

It remains to show that the microlocalization functor 
$$\wHC^{gr}_{Y\cap X^{reg}}(\A_\hbar^{reg},\theta)^{K,\kappa}\rightarrow \wHC^{gr}_{\Orb_K}(\A_\hbar^\Orb,\theta)^{K,\kappa}$$
is fully faithful. On the full subcategories of objects annihilated by $\hbar$, the functor becomes the restriction of twisted local systems
to a dense open subset, it is fully faithful. From here we deduce that the functor is fully faithful on the full subcategories of modules annihilated by $\hbar^k$ for any $k$. Now we note that 
$$\Hom(M_\hbar,N_\hbar)\xrightarrow{\sim}\varprojlim_n \Hom(M_\hbar/(\hbar^k),N_\hbar/(\hbar^k)),$$
where $\Hom$ is taken in  $\wHC^{gr}_{Y\cap X^{reg}}(\A^{?}_\hbar,\theta)^{K,\kappa}$, where $?$ is $reg$ or $\Orb$. 
This implies that the microlocalization functor of interest is fully faithful. 
%
\end{proof}

Since $\underline{\A}_0\operatorname{-mod}^{K_Q,\kappa_Q}$ is a semisimple category, Proposition \ref{Prop:fully_faithful} has the following immediate consequence.

\begin{Cor}
	The functor $\bullet_{\dagger,\chi}$ gives rise to an exact equivalence between $\HC^{K,\kappa}_Y(\A_0,\theta)$ and some full subcategory in $\underline{\A}_0\operatorname{-mod}^{K_Q,\kappa_Q}$ that is closed under taking direct summands. In particular, $\bullet_{\dagger,\chi}$ sends irreducibles to irreducibles.
\end{Cor}

\begin{Thm}\label{thm:non-normal}
	For the groups and the orbits in Table \ref{Table:Sing_m},  we have the following conclusions: 
	\begin{enumerate}
		\item
		  Let $\Orb$ be the $G$-orbit $\tilde{A}_2 + A_1$ in $F_4$. For the connected real simple adjoint group $G^{ad}_\R$ of type $F_{4(4)}$, there is a unique irreducible HC $(\A_0,K)$-module. It is supported on the closure of the $K$-orbit $\# 13$. There is no genuine HC $(\A_0,\tilde{K})$-module for the non-linear universal ($2$-fold) covering group of $F_{4(4)}$. 
		\item 
		  For any other $G$-orbit $\Orb$ from Table \ref{Table:Sing_m}, there are no irreducible HC $(\A_0,K)$-modules for the relevant linear real simple groups. For the non-linear universal ($2$-fold) covering groups corresponding to the pairs $(\g, \tilde{K})$, any irreducible HC $(\A_0, \tilde{K})$-module
		  has associated variety equal to $\overline{\Orb}_K$ for a single $K$-orbit $\Orb_K$. Moreover, the conclusion of Theorem \ref{Thm:omnibus} also holds, so that for any $\Orb_K$, $\operatorname{Irr}(\HC_{\overline{\Orb}_K}(\A_0, \tilde{K}))$ is nonempty if and only if $\Orb_K$ is nonlinearly admissible. If this is the case, then $\operatorname{Irr}(\HC_{\overline{\Orb}_K}(\A_0, \tilde{K}))$ is in bijection with the set of all irreducible equivariant (for $\tilde{K}$) twisted (by half-canonical twist) local systems
		  on $\Orb_K$. 
	\end{enumerate}
\end{Thm}

\begin{proof}
	We shall apply Proposition \ref{prop:extension} below case by case to check that, for appropriate $K$ in question, the irreducible $K$-equivariant admissible vector bundle corresponding to every irreducible object $V$ in  $\underline{\A}_0\operatorname{-mod}^{K_Q,\kappa_Q}$ can be extended uniquely to an $\OO_X$-coherent strongly $K$-equivariant and weakly $\cm$-equivariant  $\mathscr{D}_{\TT^+}$-module $\EE$ over $Y$. Then the $\cm$-action on $\EE$ is strong by Proposition \ref{prop:irred_strongness}. Once this is established, we can either apply Theorem \ref{thm:quan_lag_equiv} with $\kappa=0$ or directly invoke \cite[Theorem 6.2.5]{LY} to obtain a $K$-equivariant graded Hamiltonian quantization $\EE_\hbar$ of $\EE$ over $Y$. Now we follow the same proof of \cite[Theorem 7.2.3]{LY} and take $M_\hbar$ to be the $\cm$-finite part of $\Gamma(Y, \EE_\hbar)$ and $M := M_\hbar|_{\hbar=1}$, so that $M$ is an $K$-equivariant HC $(\A_0,\theta)$ module supported on $\overline{Y}$. It is irreducible by the maximality of $\J_0$. Since $\operatorname{codim}_{\overline{Y}} \partial Y \geq 3$, the same proof of \cite[Proposition 7.3.1]{LY} applies and we have $M_{\dagger,\chi}=V$.

	For case (1), Table \ref{Table:Sing_m} indicates that the $K$-orbit $\Orb_K$ is admissible  for the adjoint $F_{4(4)}$ (note that the adjoint complex group of type $F_4$ is simply connected), but $\Orb'_K$ is not. In this case $Q'^\circ = R \simeq \SL_2$ as mentioned at the beginning of \ref{appendix_A_extension}. This implies that $l$ is even, $n=1$ and $k=0$ in Proposition \ref{prop:extension}, hence the unique $K$-equivariant admissible vector bundle $\EE$ over $\Orb_K$ extends uniquely to a twisted local system over $Y$. By the same type of computation as in Section \ref{SS_exceptional}  using the \cite[Table I]{Noel1}, we find that the preimage of $K^\circ_Q$ in $\tilde{K}_Q$ is a connected double cover, hence any admissible vector bundle $\EE$ for $\tilde{K}$ is actually $K$-equivariant. 
	
	For case (2), note that if there are two different $K$-orbits in the same $\Orb$, the intersection of their closures are of codimenision greater than $1$ by Table \ref{Table:Sing_m}, therefore any irreducible $(\A_0,K)$-module can only supported on the closure of a single $K$-orbit. For any nonlinearly admissible $\Orb_K$, if $\Orb'_K$ is empty, then we are done. If $\Orb'_K$ is nonempty, then Table \ref{Table:Sing_m} shows it is always linearly admissible. Moreover, computation shows that $T \subset \tilde{K}$ is always a connected double cover of $R^\sigma$. These imply  that $l$ is odd, $n=2$ and $k=1$ in Proposition \ref{prop:extension}. Therefore again any genuine $\tilde{K}$-equivariant admissible vector bundle $\EE$ over $\Orb_K$ extends uniquely to a twisted local system over $Y$. 
\end{proof}

Combining Theorem \ref{thm:non-normal} and Corollary \ref{Cor:support_nonnormal}, we complete the classification of all irreducible $(\U/\J_0, \tilde{K})$-modules of GK dimension $\frac{1}{2}\dim \Orb$.

\end{document}